\newtheorem{thm}{Theorem}[section]
\newtheorem{lem}[thm]{Lemma}
\newtheorem{prop}[thm]{Proposition}
\newtheorem{defn}{Definition}[section]
\theoremstyle{definition}
\newtheorem{rem}{Remark}[section]
\numberwithin{equation}{section}
\newcommand{\eps}{\varepsilon}
\newcommand{\tsfr}{\ts\frac12}
\newcommand{\hu}{\hat u}
\newcommand{\ds}{\displaystyle}
\newcommand{\ts}{\textstyle}
\newcommand{\be}{\begin{equation} \label}
\newcommand{\ee}{\end{equation}}
\newcommand{\bas}{\begin{eqnarray*}}
\newcommand{\eas}{\end{eqnarray*}}
\newcommand{\R}{\mathbb{R}}
\newcommand{\N}{\mathbb{N}}
\def\ph{\varphi}
\newcommand{\hA}{\hat A}
\newcommand{\hB}{\hat B}
\newcommand{\tT}{\widetilde T}
\newcommand{\tv}{\widetilde v}
\newcommand{\tA}{\widetilde A}
\newcommand{\tB}{\widetilde B}
\newcommand{\tu}{\widetilde u}
\begin{document}

\title[classification in GBU and RBC]{Complete classification of gradient blow-up \\
and recovery of boundary condition for \\ the viscous Hamilton-Jacobi equation}

\author[Mizoguchi]{Noriko Mizoguchi}%
\address{Department of Mathematics, Tokyo Gakugei University,
Koganei, Tokyo 184-8501, Japan }
\email{mizoguti@u-gakugei.ac.jp}

\author[Souplet]{Philippe Souplet}%
\address{Universit\'e Sorbonne Paris Nord,
CNRS UMR 7539, Laboratoire Analyse, G\'{e}om\'{e}trie et Applications,
93430 Villetaneuse, France}
\email{souplet@math.univ-paris13.fr}

 \begin{abstract}  
It is known that the Cauchy-Dirichlet problem for the super\-qua\-dratic viscous Hamilton-Jacobi equation
$u_t-\Delta u=|\nabla u|^p$,
which has important applications in stochastic control theory,
admits a unique, global viscosity solution.
Solutions thus exist in the weak sense after the appearance of singularity in finite time, 
which occurs through gradient blow-up (GBU) on the boundary.
The solutions eventually become classical again for large time, but in-between they
may undergo losses and recoveries of boundary conditions at multiple times
(as well as GBU at multiple times).

\vskip 1.5pt

In this paper we give a complete classification, namely rates and space-time profiles in one dimensional case when viscosity solutions undergo gradient blow-up (GBU) or recovery of boundary condition (RBC) at any time when such a phenomenon occurs.
These results can be modified in radial domains in general dimensions.	
Previously, upper and lower estimates of GBU or RBC rates were available only in
a special case when the basic comparison principle can be used. 
Even for type II blow-up in other PDEs, as far as we know, 
there has been no complete classification except \cite{mizo_cpam}, in which the argument 
relies on features peculiar to chemotaxis system. 
 Whereas there are many results on construction of special type II blow-up solutions of PDEs with investigation of stability/instability of bubble, determination of stability/instability of space-time profile for general solutions has not been done. In this paper, we determine whether the space-time profile for each general solution is stable or unstable.
\vskip 1.5pt

A key in our proofs is to focus on algebraic structure with respect to vanishing intersections with the singular steady state, 
as time approaches a GBU or RBC time of a viscosity solution. In turn, the GBU and RBC rates and profiles, as well as their stabillity/instability, 
can be completely characterized by the number of vanishing intersections.
We construct special solutions in bounded and unbounded intervals in both GBU and RBC cases, 
based on methods from \cite{HV94pre}, and then we apply braid group theory to get upper and lower estimates of the rates. After that, we rule out oscillation of the rates, which leads us to the complete space-time profile. 
In the process, careful construction of special solutions with specific behaviors in intermediate and outer regions, 
which is far from bubble and the RBC point, plays an essential role.
The application of such techniques to viscosity solutions is completely new.

 \end{abstract}

\maketitle

\setcounter{tocdepth}{1}
\tableofcontents

\section{Introduction and background}\label{section1} 

\subsection{The problem}
Let $p>2$ and consider the viscous Hamilton-Jacobi equation 
\begin{equation}\label{maineq}
\left\{
\begin{aligned}
u_t-\Delta u&=|\nabla u|^p && \quad  x\in\Omega, t>0,\\ 
u(x,t)&=0, &&\quad x\in \partial\Omega,t>0,\\
u(x,0)&=u_0(x), &&\quad x\in \Omega,
\end{aligned}
\right.
\end{equation}
where $\Omega$ is a smooth proper subdomain of $\R^n$.

Problem \eqref{maineq} has a rich background. 
First of all, let us recall that \eqref{maineq} arises in 
stochastic control. Namely, denoting by $(W_s)_{s>0}$ a standard Brownian motion, 
it is known from \cite{BB} that the (unique global viscosity) solution of \eqref{maineq} gives the value function of the optimal control problem associated with the stochastic differential system $dX_s =\alpha_s ds+dW_s$, with control $\alpha_s$,
distribution of rewards $u_0$ and cost function $|\alpha_s|^{p/(p-1)}$ (see~e.g.~\cite{AttSou2} for more details).
As another motivation, \eqref{maineq} corresponds to the so-called deterministic KPZ equation,
arising in a well-known model of surface growth by ballistic deposition (see \cite{kpzhang}, \cite{KS}).

Let 
\[
\mathcal{W}=\bigl\{u_0\in W^{1,\infty}(\Omega);\ u_0\ge 0,\ u_0=0 \mbox{ on } \partial\Omega\bigr\}. 
\]
For $p>1$ and $u_0\in \mathcal{W}$,
it is well known that \eqref{maineq} admits a unique, maximal classical solution $u\ge 0$ 
and that $u$ satisfies
\be{bounduMP}
\sup_{t\in(0,T^*)} \|u(\cdot,t)\|_\infty\le \|u_0\|_\infty,
\ee
by the maximum principle, where $T^*=T^*(u_0)\in (0,\infty]$ denotes its maximal existence time.
Moreover, if $p>2$ and the initial data is suitably large, then $T^*<\infty$ 
and the solution undergoes gradient blow-up 
(GBU), i.e.,
$$\lim_{t\to T_-} \|\nabla u(\cdot, t)\|_\infty=\infty$$
(see \cite{Alaa, ABG, souplet2002, HM04}; on the contrary all solutions are global and classical for $p\in (1,2]$).
However the solution survives after the blow-up time and can be continued as a {\it generalized viscosity solution}. 
More precisely, by \cite{BdaLio}, problem \eqref{maineq} admits a unique, global nonnegative solution
$$u \in C^{1,2}(\Omega\times(0,\infty))\cap C(\overline\Omega\times[0,\infty))$$
which solves the PDE in the pointwise sense in $\Omega\times(0,\infty)$
but only satisfies the boundary condition in the viscosity sense, i.e.,
\be{defviscBC}
\min\bigl(u,u_t-\Delta u-|\nabla u|^p\bigr)\le 0 \quad\hbox{ on $\partial\Omega\times(0,\infty)$,}
\ee
where, for each $(x_0,t)\in\partial\Omega\times(0,\infty)$, $[u_t-\Delta u-|\nabla u|^p](x_0,t)\le 0$ is understood in the viscosity sense, i.e.,
for any smooth function~$\psi$, if $\psi$ touches $u$ from above at $(x_0,t)$, then 
$[\psi_t-\Delta\psi-|\nabla\psi|^p](x_0,t)\le 0$.
Moreover (see \cite[Section~3]{PSloss}), $u$ still satisfies
\be{boundunifvisc}
\sup_{t\ge 0}\ \|u(\cdot,t)\|_\infty \le \|u_0\|_\infty.
\ee
This solution coincides with the (unique) classical solution in~$(0,T)$, so
throughout this paper, we shall also denote it by $u$, without risk of confusion.

\subsection{Known results on GBU behavior} 
The main issues for the description of the blow-up behavior as $t\to T_-$ are the blow-up set, time rates and space-time 
profiles.
The location of the blowup set has been studied in \cite{Esteve, LiSouplet, SZ}.
As a consequence of interior gradient estimates \cite{SZ}, it is known that GBU for problem \eqref{maineq}
 can only take place on the boundary.
Concerning the question of the gradient blowup rates as $t\to T_-$, 
it is known that the lower estimate
\begin{equation}\label{lowerGBU}
\|\nabla  u(\cdot,t)\|_\infty\geq C (T- t)^{-1/(p-2)},\quad 0<t<T,
\end{equation}
is true for any GBU solution in any dimension (cf.~\cite{PS3} and references therein).
This in particular implies that GBU is always of type~II, i.e.,~it does not follow the natural self-similar scaling of the equation
(which would lead to the smaller exponent $1/2(p-1)$ instead of $1/(p-2)$).
The corresponding upper bound for the GBU rate, first conjectured in \cite{CG} on the basis of numerical simulations,
 is known to hold for certain classes of solutions.
 See \cite{GH}, \cite{QSbook07},  \cite{ZhangLi}, \cite{PS3} for one dimensional results (for all $p>2$) and 
 the recent paper \cite{AttSou2} for the (nonradial) higher dimensional case, which is understood only for $p\in (2,3]$.
 Roughly speaking these results guarantee that the two-sided estimate
 \begin{equation}\label{GBUratecC}
C_1(T-t)^{-\frac{1}{p-2}}\leq \|\nabla u(\cdot,t)\|_\infty \leq C_2(T-t)^{-\frac{1}{p-2}},\quad 0<t<T,
\end{equation}
is valid for solutions that are increasing in time in a neighborhood of the boundary
(and some of the results cover variants of the problem involving inhomogeneous 
 terms on the right hand side or in the boundary condition, which allows 
 the existence of solutions that are time increasing everywhere).
The question whether \eqref{GBUratecC} should hold for any GBU solution of \eqref{maineq}
was answered negatively in \cite{PS3}.
Namely, it was shown that for $n=1$ and $\Omega=(0,1)$, there exists a class of solutions such that
\begin{equation}\label{upperGBUinfty}
\lim_{t\to T_-} (T- t)^{1/(p-2)}\|u_x(\cdot,t)\|_\infty=\infty.
\end{equation}
For those solutions, the more precise lower bound 
\be{uxmoresingular}
\|u_x(\cdot,t)\|_\infty\ge C(T-t)^{-2/(p-2)},\quad T/2<t<T,
\ee
 was then obtained in \cite{AttSou2},
but the existence of solutions satisfying the corresponding two-sided bound has remained as an open question.

\subsection{Known results on post GBU behavior} 

The global viscosity solution $u$ of \eqref{maineq}, whose definition was recalled in Section 1.1,
may {\it lose the boundary condition in the classical sense.} 
Indeed such a possibility was first suggested in \cite{BdaLio} and confirmed in \cite{PSloss, QR16} 
where it was shown that, for suitably large initial data, 
the solution undergoes a loss of boundary conditions (LBC) at some times $t>T^*(u_0)$, i.e.,\footnote{We see that LBC solutions, which are meant to satisfy zero boundary conditions in the generalized viscosity sense,
nevertheless have to continuously take on some positive boundary values.
This apparently paradoxical situation can however be interpreted in a more intuitive way,
when one recalls that the global viscosity solution can also be obtained as the limit 
of a sequence of global classical solutions of regularized versions of problem \eqref{maineq}, with truncated nonlinearity
(see e.g.~\cite{PSloss}). Since this convergence is monotone increasing but not uniform up to the boundary,
LBC can in this framework be seen as a more familiar
 boundary layer phenomenon.}
 $$\sup_{x\in\partial\Omega} u(x,t)>0.$$
 However, some exceptional GBU solutions without LBC were also shown to exist in \cite{PSloss, PS3},
found as separatrices between global solutions and GBU solutions with LBC (see also \cite{FPS19}).
On the other hand, it was shown in \cite{PZ} that any solution
becomes classical again for all sufficiently large time, i.e.~there exists $\tilde T=\tilde T(u_0)\ge T^*$ such that 
$$\hbox{$u\in C^{2,1}(\overline\Omega\times (\tilde T,\infty))$,\quad
with $u=0$ on $\partial\Omega\times [\tilde T,\infty)$ in the classical sense,}$$
 and furthermore $u$ decays exponentially in $C^1(\overline\Omega)$ as $t\to\infty$. 

In view of these results, a natural and important question is thus to describe the behavior of $u$ in the intermediate time range $[T^*,\tilde T]$.
In this respect, the authors in \cite{MizSou} showed that, in any space dimension, GBU, LBC and recovery of boundary condition (RBC), and regularization occur at multiple times in the time interval~$(T^*,\tilde T)$. 
Moreover, in one dimension and in radial domains in higher dimensions,
they obtained a complete classification at each time. Namely, given a boundary point, say $x=0$, 
there are only finitely many times $t>0$ such that 
$u(0,t)=0$ without $u$ being $C^1$ up to $x=0$. We call such times {\it transition times}
and denote their set by $\mathcal{T}$. Between any two consecutive elements of $\mathcal{T}$,
the solution is either classical with $u=0$ at $x=0$, or remains positive at $x=0$ (LBC).
We thus see that a time $t\in \mathcal{T}$ can be of four types:

\smallskip
$\bullet$ time of GBU with LBC: $u(0,s)=0$ for $s\to t_-$ and $u(0,s)>0$ for $s\to t_+$;

\vskip 1pt
$\bullet$ time of GBU without LBC: $u(0,s)=0$ for $s\to t_\pm$;

\vskip 1pt
$\bullet$ bouncing time (i.e.~time of RBC and GBU with LBC): $u(0,s)>0$ for $s\to t_\pm$;

\vskip 1pt
$\bullet$ time of RBC: $u(0,s)>0$ for $s\to t_-$ and $u(0,s)=0$ for $s\to t_+$.

\smallskip

Furthermore for each $ m \ge 2 $ and arbitrarily given combination of GBU types with/without LBC at $ m $ times, 
we have constructed in \cite{MizSou} a viscosity solution undergoing this exact combination of GBU.
The boundary behavior of a typical solution with GBU and LBC at multiple times is depicted in~Fig.~\ref{FigMixed}.

\begin{figure}[h]
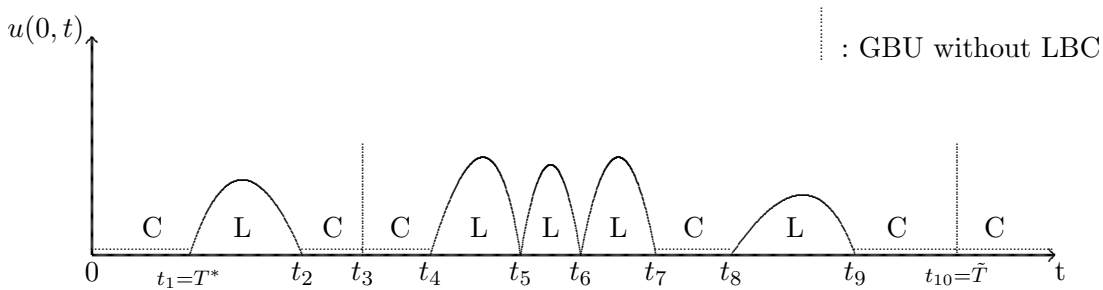

\[
\beginpicture
\setcoordinatesystem units <1cm,1cm>
\setplotarea x from -6 to 6, y from -1 to 4

\setdots <0pt>
\linethickness=1pt
\putrule from -5 0 to 7.8 0
\putrule from -5 0 to -5 2.9

\setdots <1.4pt> 
\setlinear
\plot -5 0.075 -3.7 0.075 /
\plot -2.2 0.075 -1.4 0.075 /
\plot -1.4 0.075 -0.5 0.075 /
\plot 2.5 0.075 3.5 0.075 /
\plot 5.15 0.075 6.5 0.075 /
\plot 6.5 0.075 7.8 0.075 /

\setdots <1pt> 

\plot -1.4 0  -1.4 1.5 /
\plot 6.5 0  6.5 1.5 /

\setdots <0pt> 

\setquadratic

\plot -3.7 0 -3 1 -2.2 0  /

\plot -0.5 0 0.2 1.3 0.7 0  /
\plot 0.7 0 1.1 1.2 1.5 0  /
\plot 1.5 0 2 1.3 2.5 0  /

\plot 3.5 0 4.45 0.8 5.15 0  /

\put {t} [lt] at 7.8 -.1
\put {$u(0,t)$} [rc] at -5.1 3

\setlinear 
\setdots <0pt> 
\plot 7.725 -.075 7.8 0 /
\plot 7.725 .075 7.8 0 /
\plot -5.075 2.825 -5 2.9 /
\plot -5 2.9 -4.925 2.825 /

\put {$0$}  [ct] at -5 -.1
\put {C}  [ct] at -4.2 0.5
\put {$^{t_1=T^*}$}  [ct] at -3.7 -.2
\put {L}  [ct] at -3 0.5
\put {$t_2$}  [ct] at -2.2 -.1
\put {C}  [ct] at -1.8 0.5
\put {$t_3$}  [ct] at -1.4 -.1
\put {C}  [ct] at -0.9 0.5
\put {$t_4$}   [ct] at -0.5 -.1
\put {L}  [ct] at 0.15 0.5
\put {$t_5$}   [ct] at 0.7 -.1
\put {L}  [ct] at 1.1 0.5
\put {$t_6$}   [ct] at 1.5 -.1
\put {L}  [ct] at 2 0.5
\put {$t_7$}   [ct] at 2.5  -.1
\put {C}  [ct] at 3 0.5
\put {$t_8$}   [ct] at 3.5 -.1
\put {L}  [ct] at 4.35 0.5
\put {$t_9$}   [ct] at 5.15 -.1
\put {C}  [ct] at 5.7 0.5
\put {$^{t_{10}=\tilde T}$}   [ct] at 6.5 -.1
\put {C}  [ct] at 7 0.5

\setlinear
\setdots <1.4pt> 
\plot 4.7 2.6 4.7 3.3 /
\put {:~GBU without LBC }  [ct] at 6.75 2.9

\endpicture
\] 
\caption{A solution with mixed behaviors (2 LBC, 2 GBU without LBC and 1 double bouncing);
here $\mathcal{T}=\{t_1,\dots,t_{10}\}$.}
\label{FigMixed}
\end{figure}

In the RBC case, the behavior of $u(\cdot,s)>0$ as $s\to \tau_-$, where $ \tau $ is the RBC time, is unknown in general,
except for a special case obtained earlier in \cite{PS3}.
More precisely, in the simplest case, the RBC rate 
was stated to be linear, namely $u(0,t)\sim\tau-t$.
 It turns out in our theorem below that RBC rate is not linear in general. It happened that the general RBC rate coincides with the linear rate in the special case.
In addition, the phenomenon of RBC for weak solutions does not seem to have been found in other PDEs, whereas blow-up has been studied in many papers.
As seen in our theorems below, the behavior of viscosity solutions in the RBC case is quite different from that in the GBU case. 

\subsection{Known results on type II blow-up solutions} 
 
As a general result, it was proved in \cite{Matano-Merle:CPAM}, \cite{mm_jfa11}, \cite{mizo_ma07}  that radial type II blow-up solutions to the semilinear heat equation $u_t-\Delta u=|u|^{p-1}u$, the so called Fujita equation, converge to the singular steady state locally uniformly in the spatial domain except the origin.   
Unfortunately, this does not give 
detailed information on bubbling phenomenon, which is one of the most important features in type II blow-up. 

On the other hand, there have been many papers on construction of special type II blow-up solutions, most of which 
dealt with radial solutions, with exact behavior of bubble, i.e., exact blow-up rate and space-time profile near blow-up point, in various partial differential equations.
It was originated by Herrero and Vel\'azquez in \cite{HV94}, \cite{HV94pre} for the Fujita equation.
Their argument was based on the linearization around a radial singular steady state and the comparison principle.
The method was applied to the dead-core problem, which is essentially the same as the Fujita equation,
 and to the harmonic heat flow, and just formally to the chemotaxis system
(see e.g.\cite{HVchemo}, \cite{BierSeki}). 

Merle and Rapha\"el later invented a method of construction of special type II blow-up solutions based on the linearization around a quasi-stationary  
state which is the first approximation of the bubble (the feature of solution in inner region in terms of Herrero-Vel\'azquez). 
The method has been applied to various equations in many papers.
Their method is universal in the sense that it works well for equations of essentially different types, for example
Schr\"{o}dinger equation, heat equation, harmonic map heat flow, chemotaxis system and Navier-Stokes equations, 
since they do not rely on comparison principle.
 Another advantage of the method of Merle and Rapha\"el is that it showed the stability/instability of bubble for their special solutions. 
We note that no known results based on the method of Herrero-Vel\'azquez included such information.
On the other hand, it seems that this 
method needs much tougher and longer computation than that of Herrero and Vel\'azquez (see e.g. \cite{MerRapIM}, \cite{MerRapCMP}, \cite{MerRapAM}, \cite{MerRapAJM}, \cite{DuKeMer}, \cite{RapSchHarM}, \cite{RapSchChemo}, \cite{MerRapRod}, \cite{ColMerRap}, \cite{ColGhoMasNgu}, \cite{MerRapRodSze}). 

Another way of construction was given by del Pino, Musso and Wei. They dealt with so called critical case in Fujita equation and chemotaxis system 
(see e.g. \cite{dPiMusWeiAMS}, \cite{JuadPiWei}, \cite{dPiMusWeiJFA}).

On the other hand, in \cite{mizo_ma07} (partially), \cite{mat_cm07}, \cite{mizo_tams11},  blow-up rate of all type II blow-up solutions with radial symmetry was determined, that is, for each radial type II blow-up solution $ u $, there exists $ C_1, C_2, m > 0 $ such that 
\[
C_1 (T^*-t)^{-m} \le \|u(\cdot, t)\|_\infty \le C_2 (T^*-t)^{-m}, \quad t \in (0,T^*),
\]
where $T^* < \infty $ is the blowup time of $ u $.  
We note that the coefficients $ C_1, C_2 $ in the upper and lower estimates may be  different, which prevented them from deriving the space-time profile of bubbling.
There have been no results on complete description of bubbling for all type II blow-up solutions except \cite{mizo_cpam} as far as we know. 
Since features peculiar to chemotaxis system were essentially used in \cite{mizo_cpam} and blow-up rate
 there was unique, 
 which implies nonexistence of complicated algebraic structure, the situation is simpler than ours in some sense. 
  Moreover, the stability of bubble was not treated there.

\section{Main results: GBU and RBC rates and space-time profiles}

In what follows, we consider \eqref{maineq} on a bounded interval or on the half-line, namely,
\be{equ}
\left\{\ 
\begin{aligned}
	u_t&=u_{xx}+|u_x|^p,&&\quad x\in \Omega=(0,R),\ t>0, \\
	u&=0,&&\quad x\in \partial\Omega,\ t>0, \\
	u(x,0)&=u_0(x),&&\quad x\in \Omega,
\end{aligned}
\right.
\ee
where $ u_0 \in \mathcal{W} $ and $0<R\le \infty$. 
All the results below turn out to hold true in radial domains in general dimensional space by easy modification of the arguments.

Let $u$ be the global viscosity solution of \eqref{equ}.
It is known that $u_x$ can become unbounded only near $x=0$, or near $x=R$ if $R<\infty$. 
For $T\in(0,\infty)$, we say that {\it $u$ undergoes GBU at $(x,t)=(0,T)$}
if there exists $\eta\in(0,T)$ such that
\be{defGBUtime}
\hbox{$u_x\in C([0,R)\times[T-\eta,T))$ \quad and \quad $\ds\limsup_{x\to 0,\ t\to T_-} |u_x|=\infty$.}
\ee
Note that the first part of \eqref{defGBUtime} ensures that $u(0,t)=0$ on $[T-\eta,T]$ in the classical sense (cf.~\cite{BdaLio,PS3}).
Moreover, we then have (see Proposition~\ref{prop:PS})
\be{propMPux}
\sup_{x\in (0,R/2)} |u_x(x,t)|=u_x(0,t), \quad\hbox{ for $t$ close enough to $T$.}
\ee
For $\tau\in(0,\infty)$, we say that {\it $u$ undergoes RBC at $(x,t)=(0,\tau)$}
 if there exists $\eta\in(0,\tau)$ such that, in the classical sense,
 \be{defRBCtime}
\hbox{$u(0,t)>0$ for all $t\in(\tau-\eta,\tau)$ \quad and \quad $u(0,\tau)=0$.}
\ee
When $R<\infty$, the case of GBU or RBC at $x=R$ is similar by setting $x'=R-x$.

Our aim is to completely classify the behavior of $ u $, namely, rates and space-time profile as $ t \to T_- $ and $ t \to \tau_- $.
A key in our proof is to focus an algebraic structure of solutions as $ t \to T_- $ and $ t \to \tau_- $ with respect to vanishing intersections with the singular steady state 
at boundary point at those times. 
Since the number of intersections with the singular steady state does not change as $ t \to T_+ $ or $ t \to \tau_+ $, the behaviors of viscosity solutions in those cases are not subject of this paper.  
We need delicate considerations coming from treatment of weak solutions instead of classical solutions.

\subsection{Main theorems} \label{secmainres}

In view of subsequent results, we introduce the scaling parameters 
$$\beta:=\ts\frac{1}{p-1}\in (0,1),\qquad k:=\frac{p-2}{2(p-1)}=\frac{1-\beta}{2}\in (0,\ts\frac12)$$
and the singular and regular steady states of \eqref{equ}, respectively given by
\be{defU}
U(x):=c_p x^{1-\beta},\quad x>0,
\quad\hbox{ where }
c_p:=(1-\beta)^{-1}\beta^\beta
\ee
and
\be{defUa}
U_a(x):=U(a+x)-U(a),\quad x>0\qquad (a>0).
\ee
 
When a viscosity solution $ u $ of \eqref{equ} undergoes GBU or RBC at $ (x,t)=(0,T) $ 
there exist $r\in (0,R]$, $t_0<T$ and an integer $m \ge 1$ such that
$$ 
\hbox{for all $t\in(t_0,T)$, \ $u(\cdot,t)-U$ has exactly $m$ zeros on $(0,r)$} 
$$ 
 and, denoting $0<x_1(t)<\dots<x_m(t)$ the zeros of $u(\cdot,t)-U$ on~$(0, r)$,
 we have $ \liminf_{t\to T_-}x_1(t)=0$ 
(see Proposition~\ref{ZeroNumberConst}). 
We call $n\in\{1,\dots,m\}$ 
the number of vanishing intersections between $ u(\cdot,t) $ and $U$ 
at $ (x,t) = (0,T) $, defined by
\be{defvanishnumber}
 n=\max\bigl\{i\le m;\ \liminf_{t\to T_-}x_i(t)=0\bigr\}.
\ee

The following two theorems give the complete classification of 
 bubble including the determination of its stability/instability in the GBU case.

\medskip

\begin{thm}
\label{mainThm1}
Let $p>2$, $0<R\le \infty$ and $T\in(0,\infty)$. 

\smallskip
(i) Suppose that a viscosity solution $ u $ of \eqref{equ} with $u_0\in \mathcal{W}$ undergoes GBU at $ (x,t) = (0,T) $.
Let $ n $ be the number of vanishing intersections between $ u(\cdot,t) $ and $U$ 
at $ (x,t) = (0,T) $.
Then there exists a constant $L>0$ such that  
\be{rateell}
\lim_{t\to T^-}  (T-t)^{\frac{n}{p-2}}u_x(0,t)= L
\ee
and 
\be{asympyux1}
u(x,t) = U_{a(t)}(x) + O(x^2) \quad \mbox{ and } \quad 
u_x(x,t) = U'_{a(t)}(x)+O(x),
\ee
with $a(t):= \beta u_x^{1-p}(0,t)\sim \beta L^{1-p} (T-t)^{\frac{p-1}{p-2} n}$ as $t \to T_-$.

\smallskip

(ii) For each integer $n\ge 1$, there exists $u_0\in \mathcal{W}$ such that 
the solution of \eqref{equ} satisfies \eqref{rateell} and \eqref{asympyux1} 
with $T=T^*(u_0)<\infty$ and some $L>0$.  
\end{thm}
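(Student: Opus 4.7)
The plan is to prove parts (i) and (ii) as a coupled pair. Part (ii) will furnish, for each $n\ge 1$, an explicit special solution with the prescribed GBU rate, and these will serve as barriers in a braid-theoretic argument that pins down the rate for general solutions in part (i); the inner profile \eqref{asympyux1} will then follow from a matching computation once the rate is established.

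For part (ii), I would adapt the scheme of Herrero and Vel\'azquez from \cite{HV94pre}. Linearizing the equation at the singular steady state $U$ via $u=U+v$ yields a linear operator with singular coefficient, essentially $v_t=v_{xx}+p(U')^{p-1}v_x$, whose formal self-similar eigenfunctions indexed by $n\ge 1$ decay at rates consistent with $u_x(0,t)\sim L(T-t)^{-n/(p-2)}$. Using a topological shooting / finite-dimensional modulation argument in weighted spaces adapted to the singular coefficient near $x=0$, and carefully matching to prescribed behavior in the intermediate and outer regions away from the bubble (as emphasized in the abstract), I would produce for each $n\ge 1$ a solution $u^{(n)}$ with exactly $n$ vanishing intersections with $U$ at a finite GBU time and satisfying the sharp rate.

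For part (i), given a general viscosity solution $u$ with $n$ vanishing intersections at $(0,T)$, I would first observe that the family $\{U_a\}_{a>0}$ consists of classical stationary solutions with $U_a(0)=0$ and $U_a'(0)=\beta^\beta a^{-\beta}$, so that the natural matching parameter is $a(t):=\beta u_x(0,t)^{1-p}$, which automatically satisfies $u_x(0,t)=U_{a(t)}'(0)$ and tends to $0$. Next I would apply braid group theory to the triple of curves $u(\cdot,t)$, $U$, and appropriately time-parametrized copies of the special solutions $u^{(n)}$ built in (ii): the topological invariance of the associated braid word, combined with preservation of the vanishing-intersection structure (Proposition~\ref{ZeroNumberConst}), forces $u_x(0,t)$ to lie between two positive multiples of $(T-t)^{-n/(p-2)}$. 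The inner expansions \eqref{asympyux1} then follow from a maximum-principle comparison of $u(\cdot,t)$ with $U_{a(t)}$ on a shrinking boundary neighborhood, exploiting the zero-number structure to control the difference at second order.

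The main obstacle I foresee is upgrading the two-sided rate bound to the existence of the genuine limit $L$ in \eqref{rateell}, i.e., ruling out oscillation of $(T-t)^{n/(p-2)}u_x(0,t)$. I would attack this via a blow-up/rescaling argument: any sequence $t_j\to T_-$ produces, after a self-similar rescaling whose exponents are dictated by the intersection count $n$, a limit profile that must be stationary with exactly $n$ intersections with $U$, and the classification of such profiles then forces the limiting constant to be subsequence-independent. An additional subtlety, and one of the essentially new contributions flagged in the introduction, is that $u$ is only a viscosity solution and may have lost the boundary condition at earlier times in $(0,T)$, so all the zero-number and comparison arguments used above must be carefully adapted to accommodate the generalized boundary behavior rather than the classical one.
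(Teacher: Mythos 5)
Your overall architecture matches the paper's: part (ii) via a Herrero--Vel\'azquez construction with a topological (degree) argument in similarity variables, and part (i) via a braid-group intersection comparison between the general solution, the singular steady state $U$, and rescaled copies of the special solutions, yielding the two-sided bound
$0<\liminf (T-t)^{n/(p-2)}u_x(0,t)\le\limsup(\cdots)<\infty$. Two remarks on secondary points: the profile \eqref{asympyux1} does not require any rate information or matching at all --- it is the elementary Conner--Grant/Guo--Hu estimate $|u_x(x,t)-[m^{1-p}(t)+(p-1)x]^{-1/(p-1)}|\le Mx$ (Proposition~\ref{prop:PS}), valid for every GBU solution, rewritten via $a(t)=\beta u_x^{1-p}(0,t)$; and the braid step is more delicate than "apply topological invariance to the triple" --- one must use a slightly time-compressed rescaling $\tilde u_a(x,t)=\lambda^k u_a(\lambda^{-1/2}x,t_1+\lambda^{-1}(t-t_1))$ with $\lambda<1$ so that the comparison solution blows up strictly before $T$, otherwise no contradiction arises (the naive triple $U,u_a,v$ does not work), and one must separately rule out oscillation of the outermost vanishing intersection $x_n(t)$.

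The genuine gap is your proposed mechanism for upgrading the two-sided bound to the existence of the limit $L$. You suggest a rescaling/compactness argument: rescale along $t_j\to T_-$ "with exponents dictated by $n$" and classify the limiting stationary profiles. This cannot work here, precisely because the blow-up is of type~II. Under the natural parabolic self-similar rescaling, every GBU solution converges to the \emph{same} degenerate limit, namely the singular steady state $U$ (Proposition~\ref{PropwcvU}); all information about $n$ and about the constant concentrates in a boundary layer of vanishing relative size, so the limit profile is subsequence-independent for trivial reasons and says nothing about $L$. If instead you rescale anisotropically to keep the constant visible, the rescaled functions no longer solve an autonomous equation and there is no Liouville-type classification available; moreover the candidate "stationary profiles with $n$ intersections" do not exist as such (the regular steady states $U_a$ have zero intersections with $U$, and the intermediate-region profile is the non-stationary eigenmode $e^{-\lambda_n s}\varphi_n$). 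The paper's actual argument (Lemma~\ref{LemNonOsc}) is again an intersection-comparison: if $\liminf<\limsup$, one exploits the scaling $u\mapsto\alpha^{-k}u(\sqrt\alpha\,x,T+\alpha(t-T))$, which multiplies the rate constant of the \emph{special} solution by $\alpha^q$ with $q\ne 0$ (Remark~\ref{RemThm1}), to place a special solution whose constant lies strictly between the liminf and the limsup of the general one; the oscillation then forces $u_x(0,t_i)=\bar u_x(0,t_i)$ along a sequence $t_i\to T$, i.e.\ infinitely many degenerate zeros of the difference at $x=0$, contradicting zero-number monotonicity. You would need to replace your compactness step by an argument of this type (or some other mechanism); as stated, that step would fail.
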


\begin{rem} \label{RemThm1}
	 The	 constant $L$ in \eqref{rateell} is not universal. In fact, it can take any positive value as follows: if $u$ is a solution 
	of \eqref{equ} with $R=\infty$ which satisfies \eqref{rateell}, then for any $\alpha>0$, $u_\alpha(x,t)=\alpha^{-k}
	u(\sqrt\alpha\,x,T+\alpha (t-T))$
	solves the same equation and satisfies \eqref{rateell} 
	with $L$ replaced by $\alpha^q L$ where $q=\frac{1}{2(p-1)}-\frac{\ell}{p-2}<0$. 
\end{rem}

 In view of the next statement, we define the stability of the space-time profiles with the continuity of GBU times.

\begin{defn}
\label{Defstab}
Let $u_0\in\mathcal{W}$, $n\ge 1$ be an integer and assume that the solution
$u=u(\cdot,\cdot;u_0)$ of \eqref{equ} undergoes GBU at $(x,T)=(0,T)$ and satisfies 
\eqref{rateell}-\eqref{asympyux1}.
\smallskip

(i) We say that the GBU time is {\it continuous at $ u_0 $} if for each $ \eps > 0 $ there exists $ \delta > 0 $ 
such that 
$$\hbox{$ \| \tu_0 - u_0 \|_{W^{1,\infty}} < \delta \Longrightarrow \tilde u(\cdot,\cdot;\tilde u_0)$
undergoes GBU at $ (x,t) = (0, \tT) $ for some $\tT\in(T-\eps,T+\eps)$.}$$
Otherwise, the GBU time $T$ is said to be discontinuous.
\smallskip

(ii) We say that the GBU space-time profile of $ u $ is {\it stable} if the GBU time $T$ is {\it continuous at $ u_0 $}
and if, moreover, $ \tu, \tT $ in part~(i) satisfy
\eqref{rateell}-\eqref{asympyux1} for some $\tilde L>0$. 
Otherwise, the GBU space-time profile is said to be unstable.
\end{defn}

\begin{thm}
\label{mainThm1stab}
Let $u_0,u,T,n$ be as in Theorem~\ref{mainThm1}. Then:
\smallskip

(i) The GBU time $T$ is continuous at $ u_0 $ if and only if $n$ is odd.
\smallskip

(ii) The GBU space-time profile of $ u $ is stable if and only if $n=1$.
\end{thm}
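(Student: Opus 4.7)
The plan is to combine the intersection-comparison principle for $w = u - U$ (Proposition~\ref{ZeroNumberConst}), the viscosity comparison principle, and Theorem~\ref{mainThm1} applied to perturbed solutions. A preliminary sign rigidity at the boundary is central: since $u$ is classical at $x=0$ with $u(0,t)=0$ and $u_x(0,t)<\infty$ for $t<T$, whereas $U(x)=c_p x^{1-\beta}$ has infinite slope at the origin, necessarily $u(x,t) < U(x)$ on $(0, x_1(t))$ for $t$ close to $T$. Sign alternation at $x_1(t) < \cdots < x_n(t)$ then forces $\mathrm{sign}(u - U) = (-1)^{n+1}$ on $(x_n(t), r)$, i.e.\ positive if $n$ is odd and negative if $n$ is even. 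This parity dichotomy drives both parts of the theorem.

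For part~(i), direction $n$ odd implies continuity: given $\tilde u_0$ close to $u_0$, continuous dependence on the classical interval $[0, T-\eps]$ gives that $\tilde u - U$ has at $t = T-\eps$ exactly $n$ transversal zeros $\tilde x_i \in (0, r)$ near $x_i(T-\eps)$. By Proposition~\ref{ZeroNumberConst}, the count $\tilde z(t)$ of zeros on $(0, r)$ is non-increasing, decreasing only by interior pair-annihilation (parity-preserving) or exit through $x=0^+$; exit through $x=r$ is ruled out by continuity. A Rolle argument between an escaping zero and the boundary zero of $w$ at $x=0$ shows that exit at $x=0^+$ forces $\sup_{x \in (0,r)} \tilde u_x(x, t) \to \infty$, i.e.\ GBU of $\tilde u$. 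Since $n$ is odd, parity forces $\tilde z \ge 1$ so long as $\tilde u$ is classical, while eventual regularization forces $\tilde z \to 0$; hence some zero must exit at $x = 0^+$, producing GBU of $\tilde u$ at some $\tilde T$. Sandwiching $\tilde u_0$ between $u_0 \pm 2\delta\chi$ for a fixed cutoff $\chi$, comparison of the leftmost vanishing zero $\tilde x_1(t)$ against $x_1(t)$ forces $|\tilde T - T| \to 0$ as $\delta \to 0$.

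For the direction $n$ even implies discontinuity, the critical observation is that under $\tilde u_0 = u_0 - \delta\chi$ with $\chi \ge 0$, comparison gives $\tilde u \le u$, hence $\tilde u_x(0,t) \le u_x(0,t) < \infty$ on $[0,T)$, so $\tilde u$ cannot undergo GBU strictly before $T$. Choose $\chi \ge 0$ smooth and $\equiv 1$ on $\bigcup_{i=1}^{n/2}(x_{2i-1}(T-\eps_0/2), x_{2i}(T-\eps_0/2))$, which are precisely the bumps where $u - U > 0$. Their heights $M_i(t) := \max_{[x_{2i-1}(t),x_{2i}(t)]}(u-U)$ tend to $0$ as $t \to T^-$ since $x_{2i}(t) \to 0$ and $\sup_{[0,x_{2i}(t)]}|u-U| \le 2U(x_{2i}(t)) \to 0$. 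By Harnack applied to the nonnegative linear equation satisfied by $u - \tilde u$, one has $u - \tilde u \ge c\delta$ on these bump regions for $t$ in a neighborhood of $T$. Hence for $t$ close enough to $T$, $\tilde u - U = (u-U) - (u-\tilde u) < 0$ on all bumps, forcing simultaneous interior pair-annihilation of all $n$ zeros. Therefore $\tilde z \equiv 0$ past some $t_\delta < T$, and combined with $\tilde u_x(0,\cdot)\le u_x(0,\cdot)$ this gives $\tilde u$ classical at the boundary on a neighborhood of $T$, so $\tilde u$ has no GBU in $(T-\eps_0, T+\eps_0)$. Since $\delta$ can be taken arbitrarily small, continuity fails.

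For part~(ii): if $n = 1$, part~(i) gives continuity, and the transversal unique zero at $T-\eps$ gives $\tilde n = 1$ for the perturbed GBU; applying Theorem~\ref{mainThm1} to $\tilde u$ yields \eqref{rateell}--\eqref{asympyux1} with some $\tilde L > 0$, hence stability. If $n \ge 2$ is even, part~(i) already gives discontinuity, hence instability by Definition~\ref{Defstab}(ii). If $n \ge 3$ is odd, apply the even-case construction with $\chi$ supported only on the outermost bump $(x_{n-1}, x_n)$; this annihilates that single pair, while the remaining $n-2$ (still odd) zeros escape at $x=0$ by the parity argument, giving GBU of $\tilde u$ at some $\tilde T$ close to $T$ with $\tilde n = n-2$. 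By Theorem~\ref{mainThm1}(i) the exponent in \eqref{rateell} for $\tilde u$ equals $(n-2)/(p-2) \neq n/(p-2)$, so $\tilde u$ cannot satisfy \eqref{rateell} with the original $n$, establishing instability. The main technical hurdle is the quantitative control behind pair-annihilation: one must verify that $M_i(t) \to 0$ at a rate allowing small $\delta\chi$ to suppress the bumps while $u-\tilde u$ remains of order $\delta$ on them; this rests on the asymptotic profile $u \approx U_{a(t)}$ in Theorem~\ref{mainThm1}, Harnack for $u-\tilde u$, and the sub/super-solution constructions underlying Theorem~\ref{mainThm1}(ii).
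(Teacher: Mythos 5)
Your overall strategy (sign dichotomy at the outermost vanishing intersection, zero-number monotonicity, comparison, and Theorem~\ref{mainThm1} applied to the perturbed solution) is in the spirit of the paper's proof, but two of your key steps do not hold as stated.

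First, in the direction ``$n$ odd $\Rightarrow$ continuity'' you show only that the perturbed solution must undergo GBU at \emph{some} time, with no control on \emph{when}. The parity argument is itself fragile: the zero count can also change parity by a zero exiting at $x=r$, and the sign of $\tilde u(r,\cdot)-U(r)$ is controlled only on a fixed window around $T$ (it must eventually flip, since $\tilde u(r,t)\to 0<U(r)$). More importantly, the final ``sandwiching'' step is circular: knowing $u_0-2\delta\chi\le\tilde u_0\le u_0+2\delta\chi$ gives no bound on $\tilde T$ unless you already know the GBU times of the comparison solutions converge to $T$, which is the statement being proved. The ingredient you are missing is that for $n$ odd one has $u(\cdot,T)\ge U$ near $x=0$, hence $u$ \emph{immediately loses the boundary condition} after $T$ (\cite[Proposition~7.1]{MizSou}); then $u(0,T+\eps')>0$ together with the $L^\infty$ contraction \eqref{contdepCC1a} forces $\hat u(0,T+\eps')>0$, which pins the perturbed GBU time below $T+\eps'$. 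This is how the paper obtains the upper bound on $\tilde T$; the lower bound comes from Proposition~\ref{prop:PS-LBC2}(ii) at $t=T-\eps$.

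Second, in the even case your Harnack claim ``$u-\tilde u\ge c\delta$ on the bump regions for $t$ near $T$'' is false. By Lemma~\ref{lemma:zero-curve} all the curves $x_i(t)$ tend to $0$ as $t\to T^-$, so the bumps shrink to the corner $(0,T)$, where $u-\tilde u$ is continuous and vanishes (both solutions are nonnegative and $u(0,T)=0$); hence $\sup_{\mathrm{bump}_i(t)}(u-\tilde u)\to 0$, and no uniform lower bound of order $\delta$ can hold. The race between $M_i(t)\to0$ and $(u-\tilde u)\to0$ on the shrinking bumps is exactly the delicate point, and an additive perturbation supported away from $x=0$ gives you no leverage there. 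The paper instead perturbs multiplicatively, $\hat u_0=\lambda u_0$ with $\lambda<1$, so that $\hat u_\lambda\le\lambda u\le \lambda U+O(x^2)=U-(1-\lambda)c_px^{1-\beta}+O(x^2)$ near $x=0$; the gap $(1-\lambda)U(x)\sim x^{1-\beta}$ dominates the $O(x^2)$ error in the singular gradient profile of Proposition~\ref{prop:PS-LBC}(i), which is what excludes a singularity of $\hat u_\lambda$ at $t=T$ and allows the comparison with $U_\mu$ on $[0,D]\times[T,T+\sigma]$. Finally, in your $n\ge3$ argument the interval $(x_{n-1},x_n)$ is a region where $u<U$ (the sign on $(x_i,x_{i+1})$ is $(-1)^{i+1}$), not a positive bump, so lowering $u$ there does not annihilate that pair; and the claim $\tilde n=n-2$ is unjustified — the paper's squeezing argument (the $n-1$ inner zeros of $\hat u_\lambda-U$ are trapped by the curves $x_i(t)$, which all reach $0$ at $t=T<\hat T_\lambda$) yields $\tilde n=1$, which suffices for instability.
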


We next give a complete classification in the case of RBC, which implies that the bubbles do not appear, unlike in GBU case.
Our classification result actually applies to solutions of the following more general RBC problem:
\be{equREC}
\left\{\ 
\begin{aligned}
	u_t&=u_{xx}+|u_x|^p,&&\quad\hbox{in $\Omega=(0,R)\times(0,\tau)$,} \\
	u&=0,&&\quad \hbox{on $\partial\Omega\times(0,\tau)$ in the viscosity sense,} \\
	u&>0,&&\quad \hbox{on $\{0\}\times(0,\tau)$ in the classical sense,} \\
	u(0,\tau)&=0,&&\quad \hbox{in the classical sense} \\
\end{aligned}
\right.
\ee
(cf.~\eqref{defviscBC} for the definition of the boundary conditions in the viscosity sense).

\begin{thm}
	\label{th:RBC} 
	Let $p>2$, $0 < R\le \infty$, $\tau\in(0,\infty)$, 
	 and set $Q=(0,R)\times(0,\tau)$.
	
	\smallskip
	
		(i) Let $u\in C^{2,1}(Q)\cap C_b(\overline Q)$ be a solution of problem \eqref{equREC}.
	Let $ n $ be the number of vanishing intersections between $ u(\cdot,t) $ and $U$ at $ (x,t) = (0,\tau) $. 
	Then there exists a constant $L>0$ such that 
	\be{rateell-REC}
	\lim_{t\to \tau^-} (\tau-t)^{-n} u(0,t)= L
	\ee
		 and
	\be{recovery-profile}
u(x,t) = L(\tau -t)^n  \phi_n  \bigl((\tau -t)^{-1/2} x\bigr) + o ( (\tau -t)^n ) \quad \mbox{ as } t \to \tau_-,
	\ee
uniformly for $(\tau -t)^{-1/2} x\ge 0$ bounded,
where $ \phi_n $ is the eigenfunction of 
	\[\phi_{yy}+\Bigl(\frac{p}{p-1}\frac{1}{y}-\frac{y}{2}\Bigr)\phi_y+k\phi = - \lambda \phi\]
	 with $ \phi_n (0) = 1 $ corresponding to the $ n $th eigenvalue $ \lambda_n := n - k $
(see subsection~\ref{subseceigen}).

	 	\smallskip

(ii) For each integer $n\ge 1$, there exists a solution of \eqref{equREC} 
which satisfies \eqref{rateell-REC} and \eqref{recovery-profile} 
for some $ \tau\in(0,\infty)$ and $L>0$.
\end{thm}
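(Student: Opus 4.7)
My strategy is to transport the problem to self-similar variables centered at the RBC point and use the intersection count with the singular steady state $U$ to identify which eigenmode of the linearized operator $\mathcal{L}$ governs the leading asymptotics. I set $y=x/\sqrt{\tau-t}$, $s=-\log(\tau-t)$ and analyze two related rescalings: an ``outer'' one, $V(y,s)=(\tau-t)^{-k}(u-U)(x,t)$, which on account of $\beta(p-1)=1$ linearizes $|u_x|^p$ around $|U'|^p$ and reduces the evolution of the perturbation to $V_s=\mathcal{L}V+\text{l.o.t.}$ with spectrum $\{-\lambda_n\}_{n\ge 1}$, $\lambda_n=n-k$; and an ``inner'' one, $W(y,s)=(\tau-t)^{-n}u(x,t)$, in which the nonlinearity becomes subleading and the claimed profile $L\phi_n(y)$ emerges as a limit. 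The eigenfunctions $\phi_n$ with $\phi_n(0)=1$ are of Laguerre type in $z=y^2/4$; by a Sturmian argument, $\phi_n$ carries exactly the zero pattern compatible with $n$ vanishing intersections of $u(\cdot,t)-U$ at $(0,\tau)$.

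For Part~(ii), I adapt the Herrero--Vel\'azquez construction of \cite{HV94pre} to the RBC setting. For each $n\ge 1$, working first on the half-line $R=\infty$ and then transferring to bounded intervals via outer-region modifications, I select initial data decomposed along the eigendirections of $\mathcal{L}$ so that the $n$-th unstable mode dominates, and use a finite-dimensional topological shooting argument on the unstable subspace to produce a trajectory whose inner expansion is exactly $L(\tau-t)^n\phi_n(y)$ while its outer behavior stays close to $U$. For Part~(i), once the family $\{u^{(n)}\}$ is in hand, I obtain two-sided bounds $C_1(\tau-t)^n\le u(0,t)\le C_2(\tau-t)^n$ by comparing a general solution with $\alpha u^{(m)}$ for various $\alpha>0$ and $m\ge 1$, using the zero-number (``braid'') principle to rule out every $m\ne n$: any alternative decay rate would force a different number of intersections with $U$ in the limit, contradicting the hypothesis. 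Parabolic compactness applied to $(\tau-t)^{-n}u$ then yields subsequential inner limits $\Phi$ that solve the eigenequation and must equal $L\phi_n$ by the intersection count. Uniqueness of $L$, i.e., ruling out oscillation of the rate, is obtained by a last zero-number argument applied to $u-Lu^{(n)}$ and sandwiching between $(1\pm\eps)u^{(n)}$.

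\emph{Main obstacle.} The hardest step is simultaneously ruling out oscillation of the rate and identifying the correct eigenmode from the intersection count alone, in a setting where $u$ is only a viscosity solution at the boundary and the basic comparison principle fails near $x=0$. Matching the inner scale $x\sim\sqrt{\tau-t}$, where the profile is $L\phi_n$, with the outer scale where $u$ stays close to $U$, and tracking how the $n$ intersections migrate across these two regions, is the core technical difficulty; this is precisely where the braid-theoretic arguments adapted to viscosity solutions advertised in the introduction do the heavy lifting.
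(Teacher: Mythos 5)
Your overall architecture matches the paper's: part (ii) is proved by a Herrero--Vel\'azquez-type construction in similarity variables with a finite-dimensional topological (degree) shooting over the modes $\varphi_0,\dots,\varphi_{n-1}$, and the two-sided bound $0<\liminf(\tau-t)^{-n}u(0,t)\le\limsup<\infty$ in part (i) is obtained by an intersection/braid comparison of three strands (the solution suitably rescaled, a special solution, and $U$). Two remarks on emphasis: in the RBC case the paper's construction has \emph{no} inner quasi-stationary layer --- the eigenfunction expansion $w\sim U+e^{-\lambda_n s}\varphi_n$ holds up to $y=0$, which is precisely why RBC is simpler than GBU --- so your ``inner rescaling'' is really just the self-similar variable itself; and your comparison functions $\alpha u^{(m)}$ are not solutions of the equation (the nonlinearity $|u_x|^p$ is not homogeneous of degree one), so the braid argument must be run with the parabolic rescaling $u_a(x,t)=a^{k}u(a^{-1/2}x,\tau+a^{-1}(t-\tau))$ and a slightly contracted copy $\tilde u_a$, exactly as in the GBU case.

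Where you genuinely diverge is the passage from the two-sided bound to the exact limit $L$ and the profile. You propose parabolic compactness of $(\tau-t)^{-n}(u-U)$ plus a final zero-number/non-oscillation argument; the paper instead runs a spectral dynamical-systems analysis in the weighted spaces $H^1_\rho$, $W^{1,q}_\rho$: it introduces the critical decay exponent $\mu^*=\sup\{\mu:\ \|\tilde v(s)\|_{H^1_\rho}\le Ce^{-\mu s}\}$, splits the variation-of-constants formula into low/high modes, shows $\|\tilde v(s)-Ke^{-\lambda_\ell s}\varphi_\ell\|_{H^1_\rho}=o(e^{-\lambda_\ell s})$, identifies $\ell=n$ from the braid bound, and bootstraps to $W^{1,q}_\rho\hookrightarrow L^\infty_{loc}$. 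The paper explicitly notes (Remark~5.1) that a non-oscillation lemma would recover the \emph{rate} \eqref{rateell-REC}, but it is the spectral route that delivers the uniform \emph{profile} \eqref{recovery-profile}. Your compactness step has a concrete gap here: to conclude that an eternal subsequential limit of the rescaled perturbation is a multiple of $\varphi_n$ (rather than some other bounded solution of the linearized flow), you need boundedness in the Gaussian-weighted norm, i.e.\ control of $u-U$ for $y=x/\sqrt{\tau-t}$ large, not just on compact sets of $y$; without the weighted estimates of the type $|\tilde v(y,s)|\le Ce^{-\alpha s/2}(1+y^2)$ derived from the $u_t$-bound and the $C^1$ boundary estimates for viscosity solutions (Proposition~\ref{prop:PS-LBC}), the spectral rigidity argument does not close, and the ``intersection count'' alone does not rule out non-eigenfunction limits of the linear equation. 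This large-$y$ control, which you correctly flag as the main obstacle, is exactly what needs to be supplied.
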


We see that Theorem~\ref{th:RBC}(i) applies to viscosity solutions of \eqref{equ} with $u_0\in\mathcal{W}$ as a special case.

\begin{rem} \label{RemThm1REC}
	The analogue of Remark~\ref{RemThm1} remains true for Theorem~\ref{th:RBC}.
\end{rem}

 Next going back to problem \eqref{equ} with $u_0\in\mathcal{W}$, 
 the stability of space-time profiles with the continuity of RBC times is defined in the same way as in Definition~\ref{Defstab},
 replacing GBU with RBC and \eqref{rateell}-\eqref{asympyux1} with 
  \eqref{rateell-REC}-\eqref{recovery-profile}.
The following result gives the complete classification of 
stability/instability in the RBC case.

\begin{thm}
\label{mainThmRBCstab}
Suppose that a viscosity solution $ u $ of \eqref{equ} with $u_0\in \mathcal{W}$ undergoes RBC at $ (x,t) = (0,\tau)$
and satisfies \eqref{rateell-REC}-\eqref{recovery-profile} for some $n\ge 1$. Then:
\smallskip

(i) The RBC time $\tau$ is continuous at $ u_0 $ if and only if $n$ is odd.
\smallskip

(ii) The RBC space-time profile of $ u $ is stable if and only if $n=1$.
\end{thm}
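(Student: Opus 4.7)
The plan is to parallel the argument for Theorem~\ref{mainThm1stab}, relying on two main ingredients: (a) continuous dependence of the global viscosity solution of~\eqref{equ} on $u_0\in\mathcal{W}$, which yields $C^0$-convergence of $\tilde u$ to $u$ on compact subsets of $\overline\Omega\times[0,\infty)$ avoiding transition times of~$u$; and (b) intersection counting of $w(\cdot,t):=u(\cdot,t)-U$ with zero, which is what defines the number $n$ in~\eqref{defvanishnumber}. The key geometric observation is the sign pattern of $w$: for $t$ close to $\tau^-$, $w(0,t)=u(0,t)>0$ and the alternating pattern starts with $+$ on $(0,x_1(t))$ and is $(-1)^n$ just past $x_n(t)$; for $t$ close to $\tau^+$ at which $u$ is classical with $u(0,t)=0$, one has $w<0$ in a right neighborhood of~$x=0$, since $u_x(0,t)$ is finite while $U'(0^+)=+\infty$.

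For the ``odd $\Rightarrow$ continuous'' direction of~(i), given $\tilde u_0\in\mathcal{W}$ close to $u_0$ in $W^{1,\infty}$, I would fix $s_0<\tau<s_1$ close to $\tau$ and $\rho\in(0,r)$ with $x_{n+1}(t)>\rho$ near~$\tau$. Continuous dependence transfers the sign pattern of $w$ on $(\rho,r)$ to $\tilde w:=\tilde u-U$; at $t=s_0$, the sign of $\tilde w$ on $(x_n(s_0),\rho)$ is $(-1)^n=-1$ while $\tilde u(0,s_0)>0$. If $\tilde u$ did not undergo RBC in $(s_0,s_1)$, then $\tilde u(0,\cdot)$ would stay positive throughout, so the Sturmian zero-number principle would apply to $\tilde w$ as a solution of a linear parabolic equation in the open set $\{\tilde u>0\}\supset\partial\Omega\times(s_0,s_1)$, and no zeros could enter through~$x=0$. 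Hence the count of intersections of $\tilde w$ in $(0,\rho)$ would stay $\ge n$ on $(s_0,s_1)$, which, combined with $\tilde u(\cdot,s_1)\approx u(\cdot,s_1)$ (having $\tilde w<0$ near~$0$ via the profile~\eqref{recovery-profile} applied past the original RBC), produces a parity contradiction. This forces~$\tilde u$ to undergo RBC at some $\tilde\tau\in(s_0,s_1)$.

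For the converse directions (``even $n$ $\Rightarrow$ discontinuous'' in~(i) and ``$n\ge 2$ $\Rightarrow$ unstable'' in~(ii)), the plan is to construct explicit perturbations $\tilde u_0$, using as building blocks the special solutions of~\eqref{equ} from Theorem~\ref{th:RBC}(ii) and the linearized eigenfunctions $\phi_j$ from Theorem~\ref{th:RBC}(i). For even $n$, the sign of the perturbation can be chosen so that $\tilde u(0,\cdot)$ stays strictly positive on a neighborhood of $\tau$, destroying RBC (parity no longer forces a bouncing). For $n\ge 2$ odd, an analogous perturbation switches the number of vanishing intersections to $\tilde n\ne n$, changing the profile exponent in~\eqref{rateell-REC}. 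Conversely, for $n=1$, a fixed-point argument near the principal-mode profile $L(\tau-t)\phi_1((\tau-t)^{-1/2}x)$, in the spirit of the construction of the $n=1$ family in Theorem~\ref{th:RBC}(ii), would show that any small $W^{1,\infty}$-perturbation of~$u_0$ produces an RBC with $\tilde n=1$ and $\tilde L$ close to~$L$, yielding stability.

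The main obstacle is the rigorous use of the Sturmian principle across the non-regular time~$\tau$, since both $u$ and $\tilde u$ are only viscosity solutions at the boundary near the RBC event (and possibly other transition events nearby). One must track the intersection count up to~$\tau^-$ and resume it from~$\tau^+$ separately, then glue them with~\eqref{recovery-profile} to identify the perturbed~$\tilde n$ and~$\tilde L$. Controlling the outer intersections of~$\tilde u$ through a potentially different transition time~$\tilde\tau$ (which may be far from~$\tau$ when $n$ is even) requires the careful construction of special solutions with prescribed behavior in intermediate and outer regions, of the type advertised in the abstract.
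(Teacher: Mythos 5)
Your overall strategy (continuous dependence plus counting intersections with $U$) is the right one, and your treatment of the ``$n$ odd $\Rightarrow$ continuous'' direction can be made to work, but it is organized around an unnecessary Sturmian parity contradiction while leaving unproved the one fact it actually needs: that for $n$ odd the solution is \emph{immediately regularized} after $\tau$ (you write ``for $t$ close to $\tau^+$ at which $u$ is classical'' as if this were given). This follows from $u(\cdot,\tau)<U$ on $(0,D]$ together with a comparison argument (cf.\ \cite[Proposition~7.1]{MizSou}); once you have it, Proposition~\ref{prop:PS-LBC2}(ii) gives $\hat u(0,\tau+\eps)=0$ while continuous dependence gives $\hat u(0,\tau-\eps)>0$, and the perturbed RBC time is sandwiched in $(\tau-\eps,\tau+\eps]$ with no parity argument required. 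Similarly, for $n=1$ the stability is obtained by a zero-number argument ($z(\hat u(\cdot,t)-U:[0,D])\le 1$ up to the perturbed RBC time, via Propositions~\ref{propZeroNumber0} and \ref{propZeroNumber}), not by a fixed-point construction: a fixed-point argument near the principal mode produces \emph{some} nearby solutions with $\tilde n=1$, but does not classify \emph{all} small perturbations, which is what stability demands.

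The genuine gaps are in the converse directions. For ``$n$ even $\Rightarrow$ discontinuous'' you assert that a suitably signed perturbation keeps $\tilde u(0,\cdot)>0$ on a neighborhood of $\tau$, but you give no mechanism forcing this neighborhood to have length bounded below \emph{independently} of the perturbation size; without that, the perturbed solution could recover its boundary condition arbitrarily close to $\tau$ and the RBC time would still be continuous. The paper's mechanism is the monotone perturbation $\hat u_0=\lambda u_0$, $\lambda>1$: the comparison principle gives $\hat u_\lambda\ge\lambda u$, hence $\hat u_\lambda(0,\tau)>0$ and $\hat u_\lambda(\cdot,\tau)>U$ on $(0,D]$ (since $u(\cdot,\tau)\ge U$ there when $n$ is even), so $\hat u_\lambda\ge\mu U$ on the parabolic boundary of $[0,D]\times[\tau,\tau+\sigma]$ for some $\mu>1$; because $\mu U$ is a subsolution, the barrier persists on the whole rectangle and $\hat u_\lambda(0,t)>0$ on $(\tau,\tau+\sigma]$ with $\sigma$ fixed. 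For ``$n\ge 3$ odd $\Rightarrow$ unstable'', your plan to ``switch'' the number of vanishing intersections by adding eigenfunctions of the linearized operator or special-solution data to $u_0$ does not work as stated: the eigenmode decomposition lives at the linearization around $U$ in similarity variables near $t=\tau$, and a perturbation of the initial data at $t=0$ is scrambled by the nonlinear evolution, so it does not translate into a prescribed mode perturbation at $t\approx\tau$. The correct mechanism is again the one-sided perturbation $\lambda u_0$ with $\lambda>1$ combined with zero-number monotonicity: $n-1$ of the intersections of $\hat u_\lambda-U$ are squeezed by those of $u-U$ and must vanish before the perturbed RBC time, leaving exactly one vanishing intersection, hence a profile with $\tilde n=1\ne n$.
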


\begin{rem} \label{Rem_immed}
(i) In Theorem~\ref{mainThm1}, if $n$ is odd (resp., even), then $u$ undergoes immediate LBC (resp., regularization) after GBU at $t=T$.
In Theorem~\ref{th:RBC}, if $n$ is even (resp., odd), then $u$ undergoes immediate LBC (resp., regularization) after RBC at $t=\tau$;
in the case $n$ even, $\tau$ is thus a bouncing time.
This follows from the proof of Theorems~\ref{mainThm1stab} and \ref{mainThmRBCstab}.

\smallskip

(ii) Some restricted stability such as finite codimensional stability has been observed in blow-up phenomena of other equations 
(see, e.g.,~\cite{ColRapSze, Kr, MerRapSze, ColMerRap}).
In these works, codimensional stability is established by using spectral properties of a linearized operator 
in a suitable weighted space.
However, they deal only with some special (type~I or type~II) blowup solutions.
Our method to prove stabillity/instability is completely different, 
based on the study of the vanishing intersections with the singular steady state,
and it covers general solutions of \eqref{equ} in GBU case (which is always of type II)
and RBC case which is quite different from known studies on blowup.
\end{rem}

\subsection{Ideas of proofs of the main theorems}
Theorems~\ref{mainThm1}(ii), \ref{th:RBC}(ii) immediately follow from the corresponding theorems 
on construction of special solutions (Theorems~\ref{prop:special} and \ref{prop:special-LBC}).
Although the method of Merle, Rapha\"el~et~al. works in various equations as mentioned in subsection 1.4, we adopt the method of Herrero and Vel\'azquez with modifications to adapt to the viscous Hamilton-Jacobi equation 
for the following reasons: 
\begin{itemize}
	\item When comparison principle, which is a very strong tool in parabolic equations, is applicable, the method of Herrero and Vel\'azquez seems to be simpler than that of Merle, Rapha\"el~et~al. owing to this tool. 
	\vskip 2pt
	
	\item We deal with the RBC case as well. In this case, 
	the dynamics of special solutions is determined only by the linearization around the singular steady state $ U $ instead of the quasi-stationary state (called a bubble in Merle, Rapha\"el~et~al.). 
		\vskip 2pt
		
		\item  We do not deal with the stability/instability of the special solutions since we make use of the method of Herrero and Vel\'azquez. However, making use of intersection argument, we determine the stability/instability of all solutions and not only special solutions.

\end{itemize}
We will give heuristic arguments on the construction of special solutions in Sections 4, 5. 

The proofs of our classification results Theorems~\ref{mainThm1}(i) and \ref{th:RBC}(i) rely on a sophisticated parabolic comparison method based on braid group theory.
The method originated at \cite{mat_cm07} and \cite{mizo_tams11} applying the notion of parabolic reduction defined by Matano to the three solutions introduced in  \cite{mizo_ma07} to investigate type II blow-up rate of Fujita equation.
In this paper, we deal with viscosity solutions, whose difference from classical solutions essentially appears in the RBC case.
Constructing special solutions with specific behavior in outer region, which is far from bubble and RBC point, corresponding to our purpose in various situations,  
we make use of the special solutions to apply braid group theory and to rule out oscillation of the rates.
Therefore the delicate construction of special solutions is one of essential ingredients also in the complete classification. 
Whereas bubble is a phenomenon of special GBU solutions in inner region, the behavior of the special solutions in outer region also plays an important role to describe the space-time profile of bubble of general solutions in our method. 
We have the same matter in the RBC case.

We first consider the GBU case.
Let $ u $ be a viscosity solution of \eqref{equ} which undergoes GBU with $ n\ (\ge 1) $ vanishing intersections 
$ 0 < x_1 (t) < x_2 (t) < \cdots < x_n (t) $ with $ U $  at $ (x,t) = (0,T) $.
For any $ 0 < D \ll 1 $, there exist $ \delta_0 \in (0,M D^{p/(p-1)}/2], t_0 < T $ such that 
\be{idea-eq:zeros}
x_n < D \quad \mbox{ and } \quad |u(D,t) - U(D)| \ge \delta_0 U(D) \quad \mbox{ in } [t_0, T),
\ee
where $ M = M(u_0)> 0 $ (Lemma~\ref{lemma:zero-curve}).
We show that 
\be{idea-liminfsupGBU}
0<\liminf_{t\to T_-} \, (T-t)^{\frac{n}{p-2}} u_x(0,t)\le
\limsup_{t\to T_-} \, (T-t)^{\frac{n}{p-2}} u_x(0,t)<\infty.
\ee
For $ a > 0 $, define a solution $ u_a $ by
\be{idea-eq:u_a-def}
u_a (x,t) := 
a^k u( a^{-1/2} x, T + a^{-1}(t-T)) \quad \mbox{ in } (0, a^{1/2}R) \times ((1-a)T,T).
\ee

We construct a special solution $ v $ with $ n $ vanishing intersections with $ U $ at $ (x,t) = (0,T) $ 
such that 
\[
\lim_{t \to T_-} (T-t)^{n/(p-2)} v_x (0,t) = C 
\] 
for some $ C > 0 $ and  
  \be{idea-eqndescr1}
  |U(a^{1/2} D) -  v (a^{1/2} D, t)| > \delta_1 U(a^{1/2} D) > |U(a^{1/2} D) -  u_a (a^{1/2} D, t) |,\quad  t\in [t_1, T)
  \ee
  and
  \be{idea-eqndescr2}
  z\bigl( u_a(\cdot,t_1) - U: [0, a^{1/2} D] \bigr) = z\bigl( u_a(t_1) - v(t_1): [0, a^{1/2} D]\bigr)= n
  \ee 
for some $a \gg 1 $, $ t_1 < T $, $ \delta_1 \in (0,1) $ (Theorem \ref{prop:special}, Lemma \ref{lemma:u_a}). 
Assume for contradiction that the first inequality of \eqref{idea-liminfsupGBU} does not hold.
Then there exists $ t_2 \in (t_1, T) $ such that $ v(\cdot,t) - u_a(\cdot,t) $ loses one zero (or odd number of zeros) at $ (x,t) = (0,\hat{t}) $ for some $ \hat{t} < t_2 $ close enough to $ t_2 $, and
\be{idea-eq:t_2}
	v (x, t_2) > u_a (x, t_2) \quad \mbox{ for } 0  < x \ll 1.
\ee
For $ 0 < \lambda < 1 $, let 
\[
\tu_a (x,t) := 
\lambda^k u_a ( \lambda^{-1/2} x, t_1 + \lambda^{-1} (t - t_1) ) 
\quad \mbox{ in } (0, \lambda^{1/2} a^{1/2}R) \times ( t_1, \tT )
\]
with $ \tT : = t_1 + \lambda (T - t_1)  <T$. 
For $ \lambda $ close enough to $ 1 $, \eqref{idea-eqndescr1}-\eqref{idea-eq:t_2} hold true with $ u_a $ replaced by $ \tu_a $.

On the other hand, following \cite{Ghrist-VandenBerg-Vandervorst}, we regard three solutions only with transversal intersections as a positive braid of three strands
(we review basic properties about application of braid group theory to parabolic PDE in Section 6).
Now let $ \tA_n, \tB_n $ be the braids defined in Lemma~\ref{lemma:braid-even-odd}.
Take $ 0 < \rho_1 \ll 1 $. 
For $ 0 < \rho \le \rho_1 $, the situation of $ U, \tu_a(t), v(t) $ in $ [\rho, a^{1/2} D] $ at $ t = t_1 $ is represented by $ \tA_n $.
Since $ \tu_a $ undergoes GBU at $ (x,t) = (0,\tT) $, there exists $ t_3 \in (t_2, \tT) $ such that $ v (\cdot,t) - \tu_a (\cdot, t)$ loses one zero (or odd number of zeros) at $ (x,t) = (0, \tilde{t})$ for some $ \tilde{t} < t_3 $ close to $ t_3 $, and
$ v (x, t_3) < \tu (x, t_3) $ for $ 0  < x \ll 1 $. 
Choosing $ 0 < \rho_2 \le \rho_1 $, the situation of $ U, \tu_a(t), v(t) $ in $ [\rho_2, a^{1/2} D] $ at $ t = t_3 $ is translated into~$\tB_n$. 
Roughly speaking, parabolic reduction in term of braid means a phenomenon of vanishing or collapsing
intersections between two solutions of parabolic PDE.  
The process from $ t = t_1 $ to $ t = t_3 $ implies that $ \tA_n \Rrightarrow \tB_n $.
 But on the other hand we have $ \tA_n \not\Rrightarrow \tB_n $ (Lemma \ref{lemma:braid-even-odd}). 
This contradiction implies the first inequality in \eqref{idea-liminfsupGBU}.

In order to get the upper estimate in \eqref{idea-liminfsupGBU}, we notice that all zeros of $ v_a (\cdot,t) - U$  
(and $ \tv_a(\cdot,t) - U$) locate 
in $ (0, C_1 (T-t)^{1/2} )$ for $ t \in [ t_0, T) $ with some $ C_1 > 0 $ for $ a \gg 1 $ (Theorem~\ref{prop:special}). 
If we choose $ t_0 < T $ such that $ C_1 (T-t_0)^{1/2} < D $, then
it suffices to take the same way as above with $ v, u_a $ and the spatial interval $ [ \rho_2, a^{1/2} D] $ replaced 
by $ u, v_a $ and $ [\rho_2, D] $, respectively, where $ v_a $ is defined in \eqref{idea-eq:u_a-def} with $ u $ replaced by $ v $.
Combining \eqref{idea-liminfsupGBU} with non-oscillation Lemma~\ref{LemNonOsc} implies the assertion on GBU rate of Theorem~\ref{mainThm1}. 
Then the space-time profile easily follows from a general property of solutions of  \eqref{equ}.

As seen above, we construct the special solutions not only to show the existence but also to get a key ingredient in the proof of general results, whereas the known papers aimed only at showing the existence of special type II blow-up solutions with their stability/instability. 
Therefore our construction must take care of the behavior of the special solutions at spatial infinity.

 Next, the ideas to determine the stability/instability of GBU space-time profiles with the continuity/discontinuity of GBU times are as follows.
In case $n$ is odd, then $u$ passes over $U$ for $x>0$ small at $t=T$ and one can deduce that $u$ immediately loses BC after $t=T$.
By suitable continuous dependence arguments it follows that solutions starting close to $u_0$
also undergo GBU at a time close to $T$, hence $T$ is continuous at $ u_0 $.
For $n=1$, continuous dependence and zero number arguments then show that such solutions
have only one vanishing intersection near their GBU time, hence the stability of the profile.
In case $n$ is even, $u$ falls under $U$ for $x>0$ small at $t=T$ and one can deduce that $u$ is immediately regularized after $t=T$. For initial data $\hu_0$ close to and below $u_0$, suitable continuous dependence and comparison arguments
next show that the solution $\hu$ remains classical at $t=T$ and then stays classical for some uniform amount of time,
hence the discontinuity of GBU time at $ u_0 $. Finally, for $n\ge 3$ odd and initial data $\hu_0$ close to and below $u_0$, 
the solution $\hu$ undergoes LBC at some $\tilde T>T$ close to $T$ and $\hu-U$ has at most $n$ zeros
in some neighborhood of $x=0$ for $t\in[T-\eps,\tilde T)$ and some $\eps>0$ small.
Moreover, $n-1$ zeros of $\hu-U$ are ``squeezed'' by those of $u-U$ and thus have to vanish at $t=T$.
Consequently only one vanishing intersection can remain for $\hu$ and thus the profile of $u$ is unstable.

The proof of Theorem~\ref{th:RBC}(i) (and of Theorem~\ref{mainThmRBCstab}) in the RBC case, in which features of viscosity solutions appear, is carried out along the above scenario 
except the following:
\begin{itemize}
	\item Special solutions are quite different from those in bubbling case (Theorem~\ref{prop:special-LBC});
	\vskip 1pt
	\item We need extra care for vanishing intersections at $ x = 0 $  to apply braid group theory 
	since the derivative of viscosity solution at $ x = 0 $ is $ + \infty $ at each time during loss of boundary condition
	 (Proposition~\ref{propZeroNumber}).
\end{itemize}

\section{Auxiliary results: linearized operator and properties of viscosity solutions}\label{section4}

In this section we develop a number of auxiliary tools which are required in the subsequent sections.
They concern various properties of viscosity solutions and of the linearized operator: spectral analysis, semigroup properties, heat kernel,
maximum principles, zero number.

\subsection{Bounds for GBU and LBC solutions}

In this subsection we gather some fundamental estimates for GBU and LBC viscosity solutions
that will be used repeatedly.

\begin{prop}
\label{prop:PS} \
Let $0<R\le\infty$ and let 
$ u $ be a viscosity solution of \eqref{equ} with $u_0\in \mathcal{W}$ undergoing GBU at $ (x,t) = (0,T ) $. 
Then 
\begin{equation}
\label{eq:u_t-M} 
M:=\sup_{(0,R)\times(T/2,T)} |u_t| <\infty
\end{equation}
and there exist $M_0>0$, $x_0\in(0,R/2)$ and $\eta\in(T/2,T)$, such that, for all $t \in [T-\eta,T)$,
\be{eq:BU-rate-lower} 
m(t):=u_x(0,t) = \sup_{x\in (0,R/2)} |u_x(x,t)| \ge M_0 (T-t)^{- \frac{1}{p-2}},
\ee
\be{eq:u_x-lowerupper} 
\Bigl|u_x(x,t) - \left[m^{1-p}(t) + (p-1)x \right]^{ - \frac{1}{p-1} } \Bigr| \le M x, \quad 0 < x < x_0,
\ee
and
\be{eq:u_x-lowerupperT} 
\Bigl|u_x(x,T) -U'(x) \Bigr| \le M x, \quad 0 < x < x_0.
\ee
\end{prop}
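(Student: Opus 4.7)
The plan is to handle the three claims in order, relying on known results for the type II lower bound and the maximum location of $|u_x|$, and spending the main effort on a first-order ODE comparison at fixed $t$ with the reference profile
\begin{equation*}
V(x):=[m^{1-p}(t)+(p-1)x]^{-1/(p-1)},
\end{equation*}
which solves $V'=-V^p$, $V(0)=m(t)$, and converges to $U'(x)=[(p-1)x]^{-1/(p-1)}$ as $m(t)\to\infty$.

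For \eqref{eq:u_t-M}: since $u$ is classical on $(0,R)\times(0,T)$ with $u\equiv 0$ on $\{0,R\}\times(0,T)$, the function $v:=u_t$ solves a linear parabolic equation there with $v=0$ on the lateral boundary, and, by interior parabolic regularity together with the zero lateral trace, $\|v(\cdot,T/2)\|_\infty<\infty$; the weak maximum principle applied to $\pm v$ yields \eqref{eq:u_t-M}. The first assertion in \eqref{eq:BU-rate-lower}, namely $m(t)=\sup_{(0,R/2)}|u_x(\cdot,t)|$ for $t$ near $T$, is exactly \eqref{propMPux} (from \cite{PS3}); the lower bound $m(t)\ge M_0(T-t)^{-1/(p-2)}$ is the type II lower estimate \eqref{lowerGBU} applied to this maximum.

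The core of the proof is the pointwise estimate \eqref{eq:u_x-lowerupper}. Fix $t\in[T-\eta,T)$ and set $w(x):=u_x(x,t)$. As long as $w>0$, the identity $u_{xx}=u_t-|u_x|^p$ gives $w'=u_t-w^p$. Writing $h:=w-V$, one has $h(0)=0$ and, by the mean value theorem applied to $s\mapsto s^p$ on $\{w,V\}\subset[0,\infty)$,
\begin{equation*}
h'(x)+p\,\xi(x)^{p-1}\,h(x)=u_t(x,t),
\end{equation*}
with $\xi(x)$ between $w(x)\ge 0$ and $V(x)\ge 0$. The integrating factor $I(x):=\exp\bigl(\int_0^x p\xi^{p-1}\bigr)$ being nondecreasing and $\ge 1$, the variation of constants formula gives $h(x)=I(x)^{-1}\int_0^x I(y)u_t(y,t)\,dy$, hence $|h(x)|\le Mx$ on any subinterval of $(0,R/2)$ on which $w$ stays positive.

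To propagate this to a uniform interval $(0,x_0)$ independent of $t$, define $x^*(t):=\sup\{y\le R/2:w>0\text{ on }(0,y)\}$. If $x^*(t)<x_0$, then $w(x^*(t))=0$, so the estimate forces $V(x^*(t))\le Mx^*(t)\le Mx_0$. Since $V(\cdot,t)$ is decreasing and $m(t)\to\infty$ as $t\to T_-$, one has $V(x_0,t)\to U'(x_0)$, so choosing $x_0$ small enough that $U'(x_0)>2Mx_0$, and $\eta$ small enough that $V(x_0,t)\ge U'(x_0)/2$ on $[T-\eta,T)$, yields a contradiction. This delivers \eqref{eq:u_x-lowerupper}, and then \eqref{eq:u_x-lowerupperT} follows by passing $t\to T_-$ at fixed $x\in(0,x_0)$, using the continuity of $u_x$ up to $t=T$ away from $x=0$ guaranteed by \eqref{defGBUtime} together with the pointwise convergence $V(x,t)\to U'(x)$. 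The main obstacle is precisely this last step: the local ODE estimate must be coupled with the blow-up rate of $m(t)$ to rule out a premature sign change of $u_x$ within the fixed interval $(0,x_0)$.
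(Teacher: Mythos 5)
Your core argument for \eqref{eq:u_x-lowerupper} --- the first-order ODE comparison $w'=u_t-w^p$ against $V'=-V^p$, the integrating-factor bound $|w-V|\le Mx$, and its propagation to a uniform interval $(0,x_0)$ by using the blow-up rate of $m(t)$ to rule out a premature zero of $u_x$ --- is correct and is essentially the route the paper takes (it invokes the proof of Proposition~40.16 of \cite{QSbook}, which is exactly this comparison, and then uses \eqref{eq:BU-rate-lower} to make the lower envelope positive on a uniform interval). The passage to \eqref{eq:u_x-lowerupperT} is also fine. The genuine difficulties of this proposition, however, lie in the preliminary facts, and there your treatment has real gaps.

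First, for \eqref{eq:u_t-M} you assume $u\equiv 0$ on the lateral boundary throughout $(0,T)$ and that $v=u_t$ has a zero, continuous lateral trace. Neither holds for a general viscosity solution: on $(T/2,T-\eta)$ the solution may have lost the boundary condition and undergone earlier GBU/RBC episodes, and $v$ solves a linear equation whose drift $p|u_x|^{p-2}u_x$ is unbounded near the singular boundary points, so a direct weak maximum principle on $(0,R)\times(T/2,T)$ is not available; the paper obtains \eqref{eq:u_t-M} by applying the maximum principle to the regularized (truncated) problems and passing to the limit. Second, justifying the equality $u_x(0,t)=\sup_{x\in(0,R/2)}|u_x(x,t)|$ by citing \eqref{propMPux} is circular: that display is stated in the paper as a consequence of the very proposition being proved. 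Establishing it requires the Bernstein bound on $u_x$ away from $x=0$, a maximum principle for $\pm u_x$, and a zero-number argument showing $u_{xt}(0,t)\ne 0$ for $t$ near $T$ (which in turn requires ruling out $u(\cdot,T)\equiv U_b$). Third, invoking \eqref{lowerGBU} for the lower bound $m(t)\ge M_0(T-t)^{-1/(p-2)}$ is a shortcut: that estimate is stated for the classical solution up to its first blow-up time $T^*$, whereas here $T$ may be a later GBU time of the continued viscosity solution; the paper reproves the bound by a comparison argument with the family $U_a$ (showing $\max_{t\in[t_0,T]}u(x_0,t)\ge U(x_0)$ and integrating $U'-u_x$), precisely because, as it says, some care is needed for viscosity solutions. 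Since your ODE step needs both the finiteness of $M$ and the rate of $m(t)$ as inputs, these gaps are load-bearing.
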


We give a proof since the results seem only available in special cases
(see \cite{CG, QSbook, PS3}) and some care is needed in the case of viscosity solutions.

\begin{proof} 
Property \eqref{eq:u_t-M}
follows from the maximum principle applied to regularized problems (see \cite[Section~2]{SZ} for details).

\smallskip

To prove \eqref{eq:BU-rate-lower}, let us set $r=R/2$ if $R<\infty$ and $r=1$ otherwise. 
By \cite[Theorem~3.1]{SZ} and \eqref{boundunifvisc}, we have 
\be{estimSZ1}
M_1:=\sup_{t\ge T/2} |u_x(r,t)|<\infty,
\ee
\be{estimSZ2}
\sup_{x\ge 1,\, t\ge T/2}|u_x(x,t)|<\infty\ \hbox{ if $R=\infty$.}
\ee
Since $u$ undergoes GBU at $(x,t)=(0,T)$, there exists $\eta_0\in(0,T)$ such that
$u_x\in C([0,R)\times[T-\eta_0,T))$ and $u(0,t)=0$ in $[T-\eta_0,T]$.
By the maximum principle applied to $u_x$ (cf.~\cite[Proposition~2.3]{SZ}), $v=\pm u_x$ satisfies
\be{remMPux2}
\sup_{[0,r]\times(T-\eta_0,t)} v 
\le \max\biggl\{\sup_{0<x<r}|u_x(x,T-\eta_0)|,M_1,\sup_{s\in(T-\eta_0,t)} v(0,s)\biggr\},\quad T-\eta_0<t<T.
\ee
Next we claim that
\be{remMPux3}
\hbox{ $u_{xt}(0,t)\ne 0$ for $t<T$ close to $T$.}
\ee
Property \eqref{remMPux3} follows by zero number argument applied to $u_t$ (cf.~\cite[Proposition~6.1]{PS3}),
provided $u_t(x_1,T)\ne 0$ for some $x_1\in(0,R)$.
Assume for contradiction that $u_t(\cdot,T)\equiv 0$. Then $u(\cdot,T)$ is a steady state with $u(0,T)=0$,
i.e.~$u(\cdot,T)\equiv U_b$ in $[0,R)$ for some $b\ge 0$
(where $U_0:=U$; the case $u(\cdot,T)\equiv 0$ is excluded since $u\ge e^{t\Delta}u_0>0$).
If $R=\infty$ this contradicts \eqref{boundunifvisc}. 
If $R<\infty$ this implies $u(R,T)=U_b(R)>0$, hence $u_x(R,T)=-\infty$ by \cite[Lemma~5.4]{PS3}, 
a contradiction with $u(\cdot,T)\equiv U_b$.

Now, since $u_x(0,t)\ge 0$ and $u$ undergoes GBU at $(x,t)=(0,T)$, it follows from \eqref{remMPux2} 
and \eqref{remMPux3} that $u_{xt}(0,t)>0$ for $t<T$ close to $T$
and that 
\be{ux0infty}
u_x(0,t)\to\infty\quad\hbox{as $t\to T$.}
\ee
By \eqref{estimSZ2} and \eqref{remMPux2}, 
we deduce that
$$\inf_{[0,R/2)\times(T-\eta,T)}u_x>-\infty$$
and, for some sufficiently small $\eta\in(0,\eta_0)$,
\be{ux0tsup}
u_x(0,t)=\sup_{0\le x< R/2} u_x(x,t),\quad T-\eta<t<T,
\ee
hence the equality in \eqref{eq:BU-rate-lower}.

\smallskip

Let us now show the inequality in \eqref{eq:BU-rate-lower}, based on 
arguments from the proof of \cite[Theorem~1.2]{CG} (see also \cite[Theorem~40.19]{QSbook}).
First note that, in case $R<\infty$, by taking $\eta>0$ smaller if necessary 
we may assume that
\be{ux0tsup2}
 u_x(0,t)\ge U'(R/2),\quad T-\eta<t<T.
\ee
Fix $t_0\in (T-\eta,T)$ and let
$$x_0:=\sup \bigl\{x\in (0,R); u_x(\cdot,t_0)<U' \hbox{ in $(0,x)$} \bigl\}.$$
 Since $u_x(x,t_0)<U'(x)$ for $x>0$ small, $x_0$ is well defined. 
Set $I=(0,x_0]$ if $x_0<\infty$ and $I=(0,\infty)$ if $x_0=R=\infty$.
By definition, we have
$u_x(x,t_0)<U'(x)$ in $(0,x_0)$, hence $u(x,t_0)<U(x) $ in $I$.
Also using \eqref{boundunifvisc} in case $x_0=R=\infty$, it follows that
\be{S-GBUcompUlambda}
u(x,t_0)\leq U_a(x) \quad\hbox{in $I$,\quad for all $a>0$ small.}
\ee
We claim that $x_0\in (0,R)$, hence
\be{S-GBUcompux}
u_x(x_0,t_0)=U'(x_0).
\ee
Indeed, otherwise $x_0=R$ and \eqref{S-GBUcompUlambda} implies
$u\leq U_a$ in $[0,R)\times [t_0,T)$
for $a>0$ small, by the comparison principle (in the case $R<\infty$, we apply the comparison principle for viscosity solutions).
Therefore,
$u_x(0,t)\leq U'_a(0)$ in $[t_0,T)$, which contradicts~\eqref{ux0infty}.
We next claim that 
\be{S-GBUcompU}
\max_{t\in [t_0,T]} u(x_0,t)\geq U(x_0).
\ee
Suppose the contrary. Then, for all $a>0$ small, we have
$u(x_0,t)\leq U_a(x_0)$ in $[t_0,T)$.
By \eqref{S-GBUcompUlambda} and the comparison principle, 
we deduce that
$u\leq U_a$ in $[0,x_0]\times [t_0,T)$,
leading again to a contradiction.

Now using \eqref{eq:u_t-M}, \eqref{S-GBUcompU} and taking $t_1\in[t_0,T]$ such that 
$\max_{t\in [t_0,T]} u(x_0,t)= u(x_0,t_1)$, we get 
$$M(T-t_0)\geq M(t_1-t_0)\geq\int_{t_0}^{t_1} u_t(x_0,t) \geq U(x_0)-u(x_0,t_0)
=\int_0^{x_0} (U'(x)-u_x(x,t_0))\,dx.$$
On the other hand, we have $U'(x_0)=u_x(x_0,t_0)\le u_x(0,t_0)$,
as a consequence of \eqref{S-GBUcompux} and \eqref{ux0tsup} if $x_0<R/2$, or \eqref{ux0tsup2} otherwise.
Therefore, there exists $x_1\in (0,x_0]$ such that
$U'(x_1)=u_x(0,t_0)$.
Since $U'(x)-u_x(x,t_0)>0$ on $(0,x_0)$ by the definition of $x_0$, 
we obtain
$$
M(T-t_0)
\geq\int_0^{x_1} (U'(x)-u_x(x,t_0))\,dx
=U(x_1)-x_1U'(x_1)=\frac{{U'}^{2-p}(x_1)}{(p-1)(p-2)}
= \frac{(u_x)^{2-p}(0,t_0)}{(p-1)(p-2)},
$$
and \eqref{eq:BU-rate-lower} follows.

\smallskip

Let us finally check \eqref{eq:u_x-lowerupper}.
Using \eqref{eq:u_t-M}, it follows from the proof of \cite[Proposition 40.16]{QSbook} that
\be{eq:BU-rate-upper0}
u_x(x,t) \le \left[m^{1-p}(t) + (p-1)x \right]^{ - \frac{1}{p-1} } + M x,\quad 0 < x < R/2,\ T-\eta<t<T
\ee
and
\be{eq:BU-rate-lower0}
( u_x(x,t) )_+ \ge \left[m^{1-p}(t) + (p-1)x \right]^{ - \frac{1}{p-1} } - M x,\quad 0 <x < R/2,\ T-\eta<t<T,
\ee
hence, by \eqref{eq:BU-rate-lower},
$$
( u_x(x,t) )_+ \ge \left[M_0^{1-p}(T-t)^{\frac{p-1}{p-2}} + (p-1)x \right]^{ - \frac{1}{p-1} } - M x,\quad 0 <x < R/2,\ T-\eta<t<T.
$$
Taking $\eta>0$ smaller if necessary and choosing $x_0\in(0,R/2)$ small enough, we have
$M_0^{1-p}\eta^{\frac{p-1}{p-2}}+ (p-1)x_0<(M x_0)^{1-p}$,
so that the RHS of \eqref{eq:BU-rate-lower0} is positive and 
\eqref{eq:u_x-lowerupper}-\eqref{eq:u_x-lowerupperT} follow from \eqref{eq:BU-rate-upper0} 
and~\eqref{eq:BU-rate-lower0}.
\end{proof}

We next consider general solutions of
\be{equRECut}
\left\{\ 
\begin{aligned}
	u_t&=u_{xx}+|u_x|^p,&&\quad\hbox{in $(0,R)\times(0,T)$,} \\
	u&=0,&&\quad \hbox{on $\{0\}\times(0,T)$ in the viscosity sense,} \\
\end{aligned}
\right.
\ee
without regularity assumptions on $u(\cdot,0)$ (nor conditions at $x=R$).
Unlike the case of viscosity solutions of \eqref{equ} with regular initial data $u_0\in \mathcal{W}$,
the bound \eqref{eq:u_t-M} on $u_t$ does not seem available for general viscosity solutions of \eqref{equRECut}.
We now establish such a bound, which requires more elaborate arguments.

\begin{lem} \label{lemsmoothut}
Let $p>2$, $T>0$, $R\in(0,\infty)$ and $Q=(0,R)\times(0,T)$. 
Assume that $u\in C^{2,1}(Q)\cap C(\overline Q)$ satisfies \eqref{equRECut}.
Then, for each $\sigma\in (0,T)$, there holds
\be{boundsmoothut}
\sup_{(0,R/2)\times(\sigma,T)} |u_t| \le M<\infty,
\ee
 where $M$ depends only on $p,R,\sigma$ and 
$\sup_{\overline Q}|u|<\infty$.
\end{lem}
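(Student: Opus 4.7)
My plan is to prove both the upper and lower bounds on $u_t$ by a viscosity comparison of $u$ against its time-translate $v(x,t):=u(x,t+h)$ for small $h>0$, and then let $h\to 0$. Since \eqref{equRECut} is autonomous in $t$, $v$ satisfies the same PDE on $(0,R)\times(-h,T-h)$ and inherits the same relaxed boundary condition \eqref{defviscBC} at $x=0$. If I can show $|v-u|\le Kh$ on the parabolic boundary of the cylinder $Q_*:=(0,R/2)\times(\sigma/2,T-h)$ for some $K$ independent of $h$, then the viscosity comparison principle yields $|v-u|\le Kh$ throughout $Q_*$, hence $|u_t|\le K$ on $(0,R/2)\times(\sigma,T)$ in the limit $h\to 0$.

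The right lateral face $\{R/2\}\times[\sigma/2,T-h]$ of $Q_*$ is handled by interior parabolic regularity. Applying the classical Bernstein method to $|u_x|^2$ with a cutoff supported strictly inside $(0,R)$, one obtains an interior gradient bound $|u_x|\le C_1$ on $[R/4,R/2]\times[\sigma/4,T]$, depending only on $p,R,\sigma,\|u\|_\infty$; crucially, this uses no boundary condition at $x=R$. Combining with the PDE and standard interior parabolic Schauder estimates in a thin strip around $\{R/2\}\times[\sigma/2,T]$ then yields $|u_t(R/2,t)|\le C_1'$, so $|v-u|\le C_1'h$ on this face. The left lateral face $\{0\}\times[\sigma/2,T-h]$ is handled for free: both $u$ and $v$ satisfy \eqref{defviscBC}, so the viscosity comparison principle needs no further input there.

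The main obstacle is the initial face $(0,R/2)\times\{\sigma/2\}$, where I need $|u(x,\sigma/2+h)-u(x,\sigma/2)|\le Ch$ uniformly in $x\in(0,R/2)$. For $x\in[R/4,R/2]$ this again follows from the interior step. For $x\in(0,R/4)$, where $u$ may exhibit singular boundary behavior at $x=0$ during loss-of-BC periods, I would perform a further viscosity comparison on the smaller cylinder $(0,R/4)\times[\sigma/4,\sigma/2+h]$ against barriers modeled on the singular steady state. A natural ansatz is $\bar u^+(x,t):=U(x+a)-U(a)+\mu+\epsilon(t-\sigma/4)$ with small $a>0$, $\mu\ge 0$ and $\epsilon>0$; since $U_a(x)=U(x+a)-U(a)$ is a steady state of the equation, $\bar u^+$ is a classical supersolution with $\partial_t\bar u^+=\epsilon$. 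Choosing $a,\mu,\epsilon$ so that $\bar u^+$ dominates $u$ on $\partial_p$ of this smaller cylinder (using the interior bound at $x=R/4$ and the bound $\sup|u|$) and then running the analogous subsolution argument $\bar u^-$ yields matching upper and lower bounds on $u(x,\sigma/2+h)-u(x,\sigma/2)$ linear in $h$, with slope controlled by $\epsilon$ and the constants from Step~1.

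The technical heart is the barrier construction in the previous paragraph: the barriers must simultaneously capture the possibly singular behavior of $u$ at $x=0$ during LBC periods (where $u_x(x,t)$ may diverge as $x\to 0^+$) and have uniformly bounded time derivative; they must also be tight enough near $t=\sigma/4$ that the two-sided evaluation at $t=\sigma/2$ and $t=\sigma/2+h$ yields a \emph{Lipschitz}, not merely H\"older, estimate in time. The family $\{U_a\}_{a>0}$ provides the correct asymptotic shape as $x\to 0$, and its steady-state nature is what keeps $\partial_t\bar u^\pm$ controlled; tuning the free parameters $a,\mu,\epsilon$ against $u(\cdot,\sigma/4)$ is the delicate step. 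Once these barriers are in place, Steps 1--3 combine via viscosity comparison on $Q_*$ to deliver $|u_t|\le M$ with $M=M(p,R,\sigma,\sup_{\overline Q}|u|)$, as required.
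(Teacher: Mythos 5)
There is a genuine gap at the initial face $(0,R/2)\times\{\sigma/2\}$, and it is not a technicality: the estimate you need there, $|u(x,\sigma/2+h)-u(x,\sigma/2)|\le Ch$ uniformly down to $x=0$, \emph{is} the time-Lipschitz bound the lemma asserts (at one time slice), so the time-translation comparison is circular unless that estimate is produced by independent means. Your proposed barrier argument does not produce it. If $\bar u^-\le u\le \bar u^+$ on the small cylinder with $\bar u^\pm=U_a(x)+\mu^\pm\pm\epsilon(t-\sigma/4)$, the best you can extract is
$u(x,\sigma/2+h)-u(x,\sigma/2)\le \bigl[\bar u^+(x,\sigma/2)-\bar u^-(x,\sigma/2)\bigr]+\epsilon h$,
and the bracket is the gap between the upper and lower barriers at time $\sigma/2$, which is generically $O(1)$: the profile $u(\cdot,\sigma/2)$ is an arbitrary solution and cannot be pinched to within $O(h)$ by a fixed finite-parameter family $U_a+\mu$. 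This is exactly why the paper remarks, just before the lemma, that the bound \eqref{eq:u_t-M} "does not seem available" for general solutions of \eqref{equRECut} and that "more elaborate arguments" are needed. (A secondary point: on the face $\{0\}$, comparing two functions that both merely satisfy the relaxed condition \eqref{defviscBC} does not come "for free"; the comparison principle of Barles--Da Lio needs one of them to be a strict sub-/supersolution of the boundary condition, which is why the paper subtracts a constant $A(1-\alpha)$ from its comparison function.)

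The paper's proof avoids the circularity by using a \emph{space-time scaling} comparison in the spirit of Zhao and Attouchi rather than a pure time translation: it sets $u_\alpha(x,t)=\alpha^{-2k}u(\alpha x,\sigma+\alpha^2(t-\sigma))-A(1-\alpha)$, which solves the same PDE, and the required inequality $u_\alpha(\cdot,\sigma)\le u(\cdot,\sigma)$ on the initial slice reduces to controlling $u(\alpha x,\sigma)-u(x,\sigma)$, i.e.\ a \emph{spatial} increment. That is supplied by the Bernstein gradient estimate \eqref{boundSZux} of [SZ], which holds with no regularity assumption at $t=0$ and no boundary condition. Comparison then gives $u_\alpha\le u$, and dividing by $1-\alpha$ and letting $\alpha\to1^-$ yields $2ku+xu_x+2(t-\sigma)u_t\ge -A$, whence the lower bound in \eqref{boundsmoothut}; the upper bound follows with $+A$. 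If you want to salvage your outline, you should replace the time-translate $v=u(\cdot,\cdot+h)$ by such a parabolic rescaling (or otherwise first convert the spatial gradient bound \eqref{boundSZux} into the needed initial-slice estimate); as written, the argument does not close.
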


\begin{proof}
Set $K:=\sup_{\overline Q}|u|<\infty$.
 By the Bernstein type estimate in \cite[Theorem 3.1]{SZ}, we have
\be{boundSZux}
|u_x|\le CK(t^{-1/p}+x^{-1}+(R-x)^{-1})\quad \quad\hbox{ in $(0,R)\times(0,T),$}
\ee
with $C=C(p)>0$ (for further reference we note that \eqref{boundSZux} 
remains true with $(R-x)^{-1}:=0$ in case $R=\infty$ and $K<\infty$).
Assume $R=2$ without loss of generality.

We now use a modification of an argument from \cite{Zhao,Att1}.
Fix $\sigma\in (0,T)$. For $\alpha\in(\frac12,1)$, let 
$$u_\alpha (x,t) := \alpha^{-2k} u(\alpha x, \sigma+\alpha^2(t-\sigma))-A(1-\alpha),
\quad (x,t)\in Q_\sigma:=(0,2) \times (\sigma,T),$$
where the constant $A>0$ will be chosen below.
The function $u_\alpha$ satisfies $u_{\alpha,t}-u_{\alpha,xx}=|u_{\alpha,x}|^p$ in $Q_\sigma$.
On the other hand, for any $x\in(0,1]$, we have
$$u(x,\sigma)-u(\alpha x,\sigma)=(1-\alpha)x \,u_x(\tilde x,\sigma)$$
for some $\tilde x\in (\alpha x,x)\subset(x/2,x)$, hence, by \eqref{boundSZux},
$$|u(\alpha x,\sigma)-u(x,\sigma)|\le CK(1-\alpha)x(\sigma^{-1/p}+\tilde x^{-1})\le CK(1-\alpha)(\sigma^{-1/p}+2),\quad 0<x<1.$$
Using $\alpha^{-2k}-1 \le C_1(1-\alpha)$ for all $\alpha\in(\frac12,1)$ and some $C_1>0$, it follows that
  $$\begin{aligned}
u_\alpha (x,\sigma)-u(x,\sigma) 
&= \alpha^{-2k} u(\alpha x,\sigma)-A(1-\alpha)-u(x,\sigma) \\
&= (\alpha^{-2k}-1) u(\alpha x,\sigma)+u(\alpha x,\sigma)-u(x,\sigma)-A(1-\alpha) \\
&\le \bigl[C_1K+CK(\sigma^{-1/p}+2)-A\bigr](1-\alpha)\le 0,\quad 0<x<1,
\end{aligned}$$
by choosing $A\ge [C_1+C(\sigma^{-1/p}+2)]K$.
 By \eqref{boundSZux} and parabolic regularity estimates
in $[\frac12,\frac32]\times[\sigma/2,T]$, 
we see that there exists $C_2=C_2(K,\sigma,p)>0$ such that,
for all $\alpha\in(\frac12,1)$, 
$$|\alpha^{-2k} u(\alpha, \sigma+\alpha^2(t-\sigma))-u(1,t)|\le C_2(1-\alpha),\quad \sigma<t<T$$
hence, by taking $A\ge C_2$,
$$u_\alpha (1,t)-u(1,t)\le 0,\quad \sigma<t<T.$$
Since $u=-A(1-\alpha)<0$ on $\{0\}\times(0,T)$ in the viscosity sense, it follows from the comparison principle 
for viscosity solutions \cite{BdaLio} that
$$u_\alpha (x,t)-u(x,t)\le 0,\quad (x,t)\in \tilde Q_\sigma:=(0,1) \times (\sigma,T),$$
hence
$$u(x,t)-\alpha^{-2k} u(\alpha x, \sigma+\alpha^2(t-\sigma)) \ge -A(1-\alpha),\quad (x,t)\in \tilde Q_\sigma.$$
For fixed $(x,t)\in \tilde Q_\sigma$, dividing by $1-\alpha$ and letting $\alpha\to 1^-$, we obtain
$$2ku(x,t)+xu_x(x,t)+2(t-\sigma)u_t(x,t)\ge -A.$$
Using \eqref{boundSZux} again, we deduce that
$$u_t(x,t)\ge -(2\sigma)^{-1}\bigl[A+2kK+CK(\sigma^{-1/p}+1)\bigr],\quad 0<x<1,\ 2\sigma<t<T,$$
which yields the lower part of \eqref{boundsmoothut}.
Arguing similarly with $+A$ instead of $-A$ in the definition of $u_\alpha$,
we get the upper part.
\end{proof}

The next lemma asserts that $u_x$ cannot stay bounded when the boundary conditions are lost.
This property was given in \cite{PS3} 
for the initial-boundary value problem \eqref{equ} with $u_0 \in \mathcal{W}$
(based on approximation by truncated problems),
but it does not cover our situation, which requires a different proof.

\begin{lem} \label{lemsmoothut2}
Let $p>2$, $T>0$, $R\in(0,\infty)$ and $Q=(0,R)\times(0,T)$.
Assume that $u\in C^{2,1}(Q)\cap C(\overline Q)$ satisfies \eqref{equRECut}.
If $u(0,t_0)>0$ for some $t_0\in(0,T)$, then
$$\limsup_{(x,t) \to (0,t_0)} |u_x(x,t)|=\infty.$$
\end{lem}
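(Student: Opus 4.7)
The plan is to argue by contradiction: assuming $|u_x|$ stays bounded in a neighborhood of $(0,t_0)$, I will first bootstrap regularity to make $u$ classical up to the boundary there, and then build a smooth test function that violates the viscosity boundary condition at $(0,t_0)$.

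First, suppose $|u_x|\le M$ on $Q_\delta:=(0,\delta)\times(t_0-\delta,t_0+\delta)\subset Q$. By Lemma~\ref{lemsmoothut}, after possibly shrinking $\delta$ one has $|u_t|\le L$ on $Q_\delta$, and the equation then gives $|u_{xx}|\le L+M^p$. Hence $u$ is Lipschitz up to $\{0\}\times(t_0-\delta,t_0+\delta)$; the trace $\phi(t):=u(0,t)$ is well defined and Lipschitz, and since $u(0,t_0)>0$, by continuity $\phi\ge c>0$ on some subinterval $[t_0-\eta,t_0+\eta]$. I would then consider the classical Dirichlet IBVP on $(0,\delta)\times(t_0-\eta,t_0+\eta)$ with lateral data $\phi$ and $u(\delta,\cdot)$ and initial datum $u(\cdot,t_0-\eta)$: the positivity $\phi\ge c>0$ removes the boundary degeneracy at $x=0$, and since the data are H\"older, classical parabolic theory produces a local classical solution $v$. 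By classical uniqueness (valid because $|v_x|$ is controlled) one has $v\equiv u$ on its domain of existence; the a priori bound $|v_x|=|u_x|\le M$ then prevents gradient blow-up of $v$, so $v=u$ extends classically to the whole cylinder, and Schauder estimates upgrade this to $u\in C^{2+\alpha,1+\alpha/2}$ up to $\{0\}\times(t_0-\eta,t_0+\eta)$. In particular $\phi\in C^1$ and the classical identity $\phi'(t)=u_t(0,t)=u_{xx}(0,t)+|u_x(0,t)|^p$ holds on that interval, with $|u_t(0,t_0)|$ bounded by a constant depending only on $L,M,p$.

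Next, fix $A:=M+1$, leave $B>0$ to be chosen, and define
\[
\psi(x,t):=\phi(t)+Ax-Bx^2.
\]
This is smooth, satisfies $\psi(0,t_0)=u(0,t_0)$, and using the Lipschitz bound $u(x,t)\le\phi(t)+Mx$ one gets, for $(x,t)\in[0,(A-M)/B]\times[t_0-\eta,t_0+\eta]$,
\[
\psi(x,t)-u(x,t)\ge(A-M)x-Bx^2\ge 0,
\]
so $\psi$ touches $u$ from above at $(0,t_0)$. At that point $\psi_t=\phi'(t_0)=u_t(0,t_0)$, $\psi_x=A$, $\psi_{xx}=-2B$, hence
\[
\psi_t-\psi_{xx}-|\psi_x|^p\Big|_{(0,t_0)}=u_t(0,t_0)+2B-A^p.
\]
Since $u(0,t_0)>0$, the viscosity boundary condition forces this to be $\le 0$, i.e.\ $2B\le A^p-u_t(0,t_0)$, a finite constant depending only on $p,M,L$; taking $B$ large enough produces the desired contradiction.

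The hard part, and what distinguishes the argument from the truncation approach used in \cite{PS3}, is the bootstrap that promotes the purely pointwise bound $|u_x|\le M$ to genuine classical $C^{2,1}$ regularity of $u$ up to $\{x=0\}$. Such regularity is indispensable because a merely Lipschitz trace $\phi$ would not yield an admissible smooth test function, and the parabolic residual of $\psi$ crucially involves $\phi'(t_0)$. The positivity $\phi\ge c>0$ is the key feature that removes the boundary degeneracy and allows classical well-posedness to apply; combined with the a priori gradient bound and classical uniqueness, it forces $u$ itself to be classical up to the boundary, after which the above test function construction is routine.
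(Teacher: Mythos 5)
Your final step (building a smooth function touching $u$ from above at a boundary point where $u>0$ and showing its parabolic residual is positive, contradicting \eqref{defviscBC}) is exactly the paper's mechanism. The genuine gap is the regularity bootstrap you use to make that test function smooth. You claim that from the pointwise bounds $|u_x|\le M$, $|u_t|\le L$ one can re-solve the Dirichlet IBVP with lateral datum $\phi(t)=u(0,t)$ and invoke Schauder estimates to conclude $u\in C^{2+\alpha,1+\alpha/2}$ up to $\{x=0\}$, ``in particular $\phi\in C^1$.'' This is circular: boundary Schauder regularity of the solution of a Dirichlet problem cannot exceed the regularity of the prescribed boundary datum, and here the datum \emph{is} $\phi$ itself, which you only know to be Lipschitz. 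Re-solving the IBVP with datum $\phi$ can never improve the regularity of $\phi$. Without $\phi'(t_0)$ existing (and without $u_t$ extending continuously to the boundary so that $\psi_t(0,t_0)=u_t(0,t_0)$ makes sense), your $\psi(x,t)=\phi(t)+Ax-Bx^2$ is not an admissible smooth test function and the computation of its residual at $(0,t_0)$ is meaningless. There is no reason a bounded-gradient solution should have a $C^1$ boundary trace at the particular time $t_0$.

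The paper sidesteps this entirely with an elementary real-analysis device: it does not work at $t_0$ but at a nearby time $t_\eps\in(t_0-\eps,t_0+\eps)$ where the merely Lipschitz function $m(t)=u(0,t)$ admits a \emph{one-sided quadratic upper bound} $m(t)\le m(t_\eps)+L_\eps(t-t_\eps)+M_\eps(t-t_\eps)^2$. Such a $t_\eps$ always exists by a dichotomy: if $m$ is convex on $[t_0-\eps,t_0+\eps]$ it is twice differentiable a.e.; if not, subtracting a chord produces an interior maximum point, where a linear upper bound holds. The test function $\psi(x,t)=m(t_\eps)+L_\eps(t-t_\eps)+M_\eps(t-t_\eps)^2+2Kx-A_\eps x^2$ with $A_\eps=(2K)^p+|L_\eps|$ then touches $u$ from above at $(0,t_\eps)$ using only $u(x,t)\le m(t)+Kx$, has strictly positive residual, and $u(0,t_\eps)>0$ for $\eps$ small, giving the contradiction with no boundary regularity of $u$ beyond continuity. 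If you want to salvage your argument, replace the bootstrap by this (or an equivalent) pointwise one-sided Taylor bound for the trace at a well-chosen nearby time.
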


\begin{proof}
Set $m(t)=u(0,t)$.
We claim that for each $\eps>0$, there exist $t_\eps\in(t_0-\eps,t_0+\eps)$ and $L_\eps,M_\eps,\eta_\eps>0$ such that
\be{mmteps1}
m(t)\le m(t_\eps)+L_\eps(t-t_\eps)+M_\eps(t-t_\eps)^2,\qquad t_\eps-\eta_\eps<t<t_\eps+\eta_\eps.
\ee
Indeed, if $m$ is convex on $[t_0-\eps,t_0+\eps]$, then it is well known that $m$ is twice differentiable almost everywhere 
and \eqref{mmteps1} immediately follows by choosing a time $t_\eps\in (t_0-\eps,t_0+\eps)$ where $m$ is twice differentiable.
If $m$ is not convex on $[t_0-\eps,t_0+\eps]$ then there exist $t_1,t_2$ with $t_0-\eps\le t_1<t_2\le t_0+\eps$
and $\bar t\in(t_1,t_2)$ such that $\max_{[t_1,t_2]}g=g(\bar t)>0$, where $g(t)=m(t)-m(t_1)-L(t-t_1)$
and $L=\frac{m(t_2)-m(t_1)}{t_2-t_1}$.
Consequently, $m(t)-m(t_1)-L(t-t_1)\le m(\bar t)-m(t_1)-L(\bar t-t_1)$ for all $t\in[t_1,t_2]$,
which implies \eqref{mmteps1} with $t_\eps=\bar t$, $L_\eps=L$, $M_\eps=0$ and $\eta_\eps=\min(\bar t-t_1,t_2-\bar t)$.

Next assume for contradiction that $|u_x|\le K$ in $(0,\eta)\times(t_0-\eta,t_0+\eta)$ for some $\eta,K>0$. 
Put $A_\eps=(2K)^p+|L_\eps|$ and set
$$\psi(x,t)=m(t_\eps)+L_\eps(t-t_\eps)+M_\eps(t-t_\eps)^2+2Kx-A_\eps x^2.$$
We compute $\bigl[\psi_t-\psi_{xx}- |\psi_x|^p\bigr](0,t_\eps)=L_\eps+2A_\eps-(2K)^p>0$.
On the other hand, using \eqref{mmteps1}, we deduce that
$$u(x,t)\le m(t)+Kx \le m(t_\eps)+L_\eps(t-t_\eps)+M_\eps(t-t_\eps)^2+Kx\le \psi(x,t)$$
in $[0,\min(\eta,A_\eps^{-1}K))\times(t_\eps-\eta_\eps,t_\eps+\eta_\eps)$,
hence $\psi$ is a smooth function which touches $u$ from above at $(x,t)=(0,t_\eps)$.
But since $u(0,t_\eps)>0$ for all $\eps>0$ sufficiently small, this contradicts
the definition \eqref{defviscBC} of the boundary conditions in the viscosity sense.
\end{proof}

Based on Lemmas~\ref{lemsmoothut} and \ref{lemsmoothut2}, we obtain the following estimates for 
 general viscosity solutions, including RBC solutions.

\begin{prop}
\label{prop:PS-LBC} 
Let $p>2$, $\tau>0$, $R\in(0,\infty)$ and $Q=(0,R)\times(0,\tau)$.

\smallskip

(i) Assume that $u\in C^{2,1}(Q)\cap C(\overline Q)$ satisfies \eqref{equRECut} 
and let $\sigma\in(0,\tau)$.
Then, for any $t\in[\sigma,\tau]$ such that $\limsup_{(x,s)\to (0,t)}|u_x(x,s)|=\infty$, we have
\be{eq:u-LBC-0}
| u(x,t) - u(0,t) - U(x) | \le \frac{\tilde M}{2} x^2, \quad 0 < x <  R/2,
\ee
\be{eq:u_x-LBC-0}
| u_x(x,t) - U^\prime(x) | \le \tilde M x, \quad 0 < x <  R/2,
\end{equation}
where $\tilde M>0$ depends only on $p,R,\sigma$ and $\sup_{\overline Q}|u|<\infty$.

\smallskip

(ii) Assume that $u\in C^{2,1}(Q)\cap C(\overline Q)$ is a solution of problem \eqref{equREC},
which undergoes RBC at $ (x,t) = (0,\tau)$. Then we have
\be{eq:u-M-LBC} 
u(0,t) \le M (\tau-t),\quad \tau/2<t<\tau,
\ee
\be{eq:u-LBC}
| u(x,t) - u(0,t) - U(x) | \le \frac{\tilde M}{2} x^2, \quad 0 < x <  R/2,\ \tau/2<t<\tau, 
\ee
\be{eq:u_x-LBC}
| u_x(x,t) - U^\prime(x) | \le \tilde M x, \quad 0 < x <  R/2,\ \tau/2<t<\tau,
\end{equation}
 where $M, \tilde M>0$ depend only on $p,R,\tau$ and $\sup_{\overline Q}|u|<\infty$.

\end{prop}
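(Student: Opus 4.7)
The plan is to prove part (i) by adapting the ODE comparison argument from Proposition~\ref{prop:PS}, using the interior $u_t$ bound of Lemma~\ref{lemsmoothut}. Part (ii) will then follow quickly: Lemma~\ref{lemsmoothut2} guarantees that the $\limsup$ hypothesis of part (i) holds throughout $(\tau/2,\tau)$, and the linear rate is obtained by integrating $u_t$ in time.

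For part (i), set $K:=\sup_{\overline Q}|u|$ and apply Lemma~\ref{lemsmoothut} to get a constant $M_1$, depending only on $p,R,\sigma,K$, with $|u_t|\le M_1$ on $(0,R/2)\times(\sigma,\tau)$. At a fixed time $t\in[\sigma,\tau]$ satisfying $\limsup_{(x,s)\to(0,t)}|u_x(x,s)|=\infty$, view $w(x):=u_x(x,t)$ as solving the ODE $w'(x)+|w(x)|^p=u_t(x,t)$ on $(0,R/2)$, with right-hand side uniformly bounded by $M_1$. The phase-plane/ODE comparison argument of \cite[Proposition~40.16]{QSbook} (already invoked in the proof of \eqref{eq:u_x-lowerupper}) then yields, for any nearby time $s$,
$$\Bigl|u_x(x,s)-\bigl[m^{1-p}(s)+(p-1)x\bigr]^{-1/(p-1)}\Bigr|\le \tilde M x,\qquad 0<x<R/2,$$
whenever $m(s):=\lim_{x\to 0^+}u_x(x,s)\in(0,\infty]$ exists, with $\tilde M=\tilde M(p,R,\sigma,K)$.

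Next, using the $\limsup$ hypothesis together with the interior continuity of $u_x$ on $Q$, one selects a sequence $t_n\to t$ along which $m(t_n)\to+\infty$ (or, if already $m(t)=+\infty$, one argues directly at $s=t$). Since $[m^{1-p}(s)+(p-1)x]^{-1/(p-1)}\to[(p-1)x]^{-1/(p-1)}=U'(x)$ as $m(s)\to\infty$, passing $n\to\infty$ in the above display and using the continuity of $u_x$ in the interior of $Q$ gives
$$\bigl|u_x(x,t)-U'(x)\bigr|\le \tilde M x,\qquad 0<x<R/2,$$
which is \eqref{eq:u_x-LBC-0}. Integrating this in $x$ from $0$ to $x$ and using $U(0)=0$ then yields \eqref{eq:u-LBC-0}.

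For part (ii), solutions of \eqref{equREC} satisfy $u(0,t)>0$ on $(0,\tau)$, so Lemma~\ref{lemsmoothut2} delivers the $\limsup$ hypothesis of part~(i) at every $t\in(\tau/2,\tau)$. Applying part~(i) with $\sigma=\tau/2$ gives \eqref{eq:u-LBC} and \eqref{eq:u_x-LBC} at once. For the linear rate \eqref{eq:u-M-LBC}, using $|u_t|\le M_1$ on $(0,R/2)\times(\tau/2,\tau)$, for any $x\in(0,R/2)$ and $t\in(\tau/2,\tau)$,
$$|u(x,t)-u(x,\tau)|\le\int_t^\tau|u_t(x,s)|\,ds\le M_1(\tau-t);$$
letting $x\to 0^+$, using continuity of $u$ on $\overline Q$ and $u(0,\tau)=0$, one obtains $u(0,t)\le M_1(\tau-t)$, establishing \eqref{eq:u-M-LBC}. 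The main obstacle is the limit step in part~(i): $u_x$ is only known to be continuous in the interior, and the $\limsup$ condition alone does not immediately guarantee $\lim_{x\to 0^+}u_x(x,t)=+\infty$. One must carefully show that the approximate profile $[m^{1-p}+(p-1)x]^{-1/(p-1)}$ can be replaced by $U'(x)$ with a uniform $O(x)$ error along an appropriate approximating sequence, then use interior $u_x$-continuity to transfer the bound back to time~$t$.
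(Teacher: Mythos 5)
Your proof follows essentially the same route as the paper: part (i) is the interior $u_t$-bound of Lemma~\ref{lemsmoothut} fed into the ODE comparison argument of \cite[Lemma~5.3]{PS3} (i.e.\ the phase-plane analysis behind \eqref{eq:u_x-lowerupper}), and part (ii) is obtained exactly as in the paper by integrating $u_t$ in time and invoking (i) via Lemma~\ref{lemsmoothut2}. The limit step you flag as the "main obstacle" is handled by using the two-point lower estimate \eqref{SGBUprofileLowerEst} with $y=x_n$ along a sequence $(x_n,s_n)\to(0,t)$ where $|u_x|\to\infty$ (rather than assuming $m(s)=\lim_{x\to0^+}u_x(x,s)$ exists), combined with the unconditional upper bound $u_x\le U'+Mx$; this is precisely how the paper argues in the proof of Proposition~\ref{prop:PS-LBC2}(ii).
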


\begin{proof}
 (i) Using \eqref{boundsmoothut} this follows along the lines of the proof of \cite[Lemma~5.3]{PS3}.

\smallskip

(ii) Integrating \eqref{boundsmoothut}, we get
$u(x,t) - u(x,\tau) \le M(\tau-t)$ for each $t\in(\tau/2,\tau)$ and $x\in(0,R/2)$.
Since $u$ is continuous up to the boundary we may let $x\to 0$ and \eqref{eq:u-M-LBC}  follows from $u(0,\tau)=0$.
 The remaining properties are direct consequences of assertion~(i).
\end{proof}

We next gather some useful continuous dependence properties for problem \eqref{equ}.

\begin{prop}
\label{prop:PS-LBC2} 
Let $R\in(0,\infty]$, $\Omega=(0,R)$, $u_0\in \mathcal{W}$ and let $ u $ be the global viscosity solution of~\eqref{equ}.
Let $t_0>0$. 
\smallskip

(i) We have
\be{contdepCC1}
\hat u(\cdot,t_0)\to u(\cdot,t_0) \ \hbox{in $L^\infty(\Omega)\cap C^1_{loc}(\Omega)$, \ \ as $\|\hat u_0-u_0\|_\infty\to 0$,}
\ee
where $\hat u$ denotes the global viscosity solution of \eqref{equ} with initial data $\hat u_0\in \mathcal{W}$.
\smallskip

(ii) Assume that $u(\cdot,t_0)$ is classical at $x=0$. Then there exist $\eps,C>0$ such that
for any $\hat u_0\in \mathcal{W}$ with $\|\hat u_0-u_0\|_\infty\le\eps$, 
the corresponing solution satisfies 
\be{contdep2}
\hat u_x(x,t_0)\le C \ \hbox{in $(0,R/2)$}\quad\hbox{and}\quad \hat u(0,t_0)=0.
\ee
\end{prop}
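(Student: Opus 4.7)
The plan is to establish (i) as a combination of an $L^\infty$-contraction from the comparison principle for \eqref{equ} and standard interior parabolic regularity, and then to derive (ii) by contradiction, using (i) together with Propositions~\ref{prop:PS} and~\ref{prop:PS-LBC} to exclude both LBC and near-GBU behavior of $\hat u$ at $(0,t_0)$.

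For part (i), the $L^\infty$-estimate $\|\hat u(\cdot,t)-u(\cdot,t)\|_\infty\le \|\hat u_0-u_0\|_\infty$ follows from the comparison principle in the viscosity formulation of \eqref{equ}: since the nonlinearity depends only on $u_x$, the function $u+c$ solves the same PDE for every constant $c\ge 0$, so comparing $\hat u$ with $u\pm\|\hat u_0-u_0\|_\infty$ yields the estimate (one may also pass to the limit in the classical $L^\infty$-contraction for the truncated approximations used to construct the viscosity solution). For the $C^1_{loc}$-convergence on $\Omega$, I will combine the interior Bernstein estimate \eqref{boundSZux}, uniform in $\hat u_0$ under a uniform $L^\infty$-bound on the initial data, with parabolic Schauder estimates applied to $\hat u_t-\hat u_{xx}=|\hat u_x|^p$ (whose nonlinearity is smooth once $\hat u_x$ is locally bounded); this yields uniform $C^{2+\alpha,1+\alpha/2}_{loc}(\Omega\times(0,\infty))$-bounds on the family $\{\hat u\}$, and uniqueness of the viscosity solution identifies the limit as~$u$.

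For part (ii), I argue by contradiction: suppose there exists a sequence $\hat u_{0,n}\to u_0$ in $L^\infty$ whose solutions $\hat u_n$ satisfy, after passing to a subsequence, either (A) $\hat u_n(0,t_0)>0$, or (B) $\sup_{(0,R/2)}\hat u_{n,x}(\cdot,t_0)\to+\infty$. Since $u(\cdot,t_0)$ is classical at $x=0$, there exist $C_0,x_1>0$ with $u(x,t_0)\le C_0 x$ on $[0,x_1]$, while $U(x)/x=c_p x^{-\beta}\to+\infty$ as $x\to 0_+$. I therefore fix $x^*\in(0,x_1\wedge R/2)$ small enough that $u(x^*,t_0)<\frac14 U(x^*)$ and $\tilde M(x^*)^2\le \frac12 U(x^*)$, where $\tilde M$ is the constant from Proposition~\ref{prop:PS-LBC}(i) corresponding to the uniform $L^\infty$-bound $\sup_n\|\hat u_n\|_\infty\le \|u_0\|_\infty+1$. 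In case (A), Lemma~\ref{lemsmoothut2} applied to $\hat u_n$ yields $\limsup_{(x,s)\to(0,t_0)}|\hat u_{n,x}(x,s)|=+\infty$, so Proposition~\ref{prop:PS-LBC}(i) gives $\hat u_n(x^*,t_0)\ge \hat u_n(0,t_0)+U(x^*)-\frac{\tilde M}{2}(x^*)^2\ge \frac12 U(x^*)$, contradicting the $L^\infty$-convergence at $x^*$ from~(i). In case (B), the same application of Proposition~\ref{prop:PS-LBC}(i) closes the argument whenever $\hat u_{n,x}(\cdot,t_0)$ is unbounded on $(0,R/2)$ for each $n$, since then $\limsup_{x\to 0_+}\hat u_{n,x}(x,t_0)=+\infty$ by interior $C^1$-regularity.

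The hardest step will be the borderline scenario of case (B) in which, for each $n$, $\hat u_n$ is classical at $(0,t_0)$ with $m_n:=\hat u_{n,x}(0,t_0)\to+\infty$ but $\hat u_{n,x}(\cdot,t_0)$ remains finite-valued, since Proposition~\ref{prop:PS-LBC}(i) is no longer directly applicable. To handle it, I plan to show that such a sequence must develop GBU at times $T_n\in(t_0,\infty)$ with $T_n-t_0\to 0$: if $\hat u_n$ stayed classical on a uniform future interval, the maximum principle for $\hat u_{n,x}$ on $[0,R/2]$ combined with the uniform boundary bound from \eqref{boundSZux} at $x=R/2$ would keep $m_n$ bounded, contradicting the assumption. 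Given GBU at $T_n\to t_0$, Proposition~\ref{prop:PS} applied to $\hat u_n$ near $T_n$ yields $|\hat u_n(x,T_n)-U(x)|\le \frac{M}{2}x^2$ by integrating~\eqref{eq:u_x-lowerupperT}, and the uniform $u_t$-bound of Lemma~\ref{lemsmoothut} propagates this to $|\hat u_n(x,t_0)-U(x)|\le \frac{M}{2}x^2+M(T_n-t_0)$, giving $\hat u_n(x^*,t_0)\ge \frac12 U(x^*)$ for $n$ large and the same contradiction.
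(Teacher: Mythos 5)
Part (i) and the first two scenarios of part (ii) are essentially correct and close to the paper's own argument: the $L^\infty$-contraction plus the Bernstein estimate \eqref{boundSZux} and interior parabolic estimates give (i), and the combination of Lemma~\ref{lemsmoothut2} with Proposition~\ref{prop:PS-LBC}(i) correctly rules out both $\hat u_n(0,t_0)>0$ and an unbounded gradient of $\hat u_n(\cdot,t_0)$ near $x=0$, since either forces $\hat u_n(x^*,t_0)\ge U(x^*)-\frac{\tilde M}{2}(x^*)^2$, which is incompatible with \eqref{contdepCC1} when $u(\cdot,t_0)$ is classical at $x=0$.

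The gap is in your treatment of the borderline case where $\hat u_{n,x}(\cdot,t_0)$ is bounded for each $n$ but its supremum over $(0,R/2)$ tends to infinity. You claim that if $\hat u_n$ stayed classical on a uniform future interval, the maximum principle for $\hat u_{n,x}$ on $[0,R/2]$ would keep $m_n$ bounded. It would not: the maximum principle on $[0,R/2]\times[t_0,t_0+\delta]$ bounds the \emph{later} values of $\hat u_{n,x}$ \emph{by} $m_n$ (the data at $t=t_0$ sit on the parabolic boundary), while running it on $[0,R/2]\times[\sigma,t_0]$ requires a bound on $\hat u_{n,x}(0,s)$ for $s<t_0$, which is exactly what is unknown. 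What you would actually need is an estimate of the form ``$u_x(0,t_0)\le C(\delta)$ whenever the solution remains classical on $[t_0,t_0+\delta]$'', i.e.\ an upper GBU rate bound for general solutions, and no such bound is available (cf.\ the discussion around \eqref{upperGBUinfty}). The paper avoids the GBU-time detour altogether: it invokes the fixed-time spatial lower gradient estimate \eqref{SGBUprofileLowerEst} from \cite[Lemma~5.2]{PS3} (with the constant $M$ uniform in $n$ thanks to \eqref{boundsmoothut}), which says that if $\hat u_{n,x}(x_n,t_0)$ is large at some point $x_n$ (necessarily $x_n\to0$ by \eqref{boundSZux} and \eqref{unifbounduj}), then $(\hat u_{n,x}(x,t_0))_+\ge\bigl[(\hat u_{n,x}(x_n,t_0)+Mx_n)^{1-p}+(p-1)(x-x_n)\bigr]^{-1/(p-1)}-Mx$ for all $x>x_n$ \emph{at the same time} $t_0$; letting $n\to\infty$ and using the $C^1_{loc}$ convergence from part (i) yields $u_x(x,t_0)\ge((p-1)x)^{-1/(p-1)}-Mx$, contradicting the classicality of $u(\cdot,t_0)$ at $x=0$. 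You need this (or an equivalent propagation-of-largeness estimate at fixed time) to close the borderline case; the argument as written does not.
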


\begin{proof}
(i) We know that 
\be{contdepCC1a}
\|\hat u(t)-u(t)\|_\infty \le \|\hat u_0-u_0\|_\infty,\quad 0\le t\le T
\ee
(see e.g. \cite[Theorem~3.1]{PS3}), hence the $L^\infty$ convergence in \eqref{contdepCC1}.
Next, let $\hat u_{0,j}\in \mathcal{W}$ be a sequence such that $\|u_0-\hat u_{0,j}\|_\infty\to 0$.
By \eqref{contdepCC1a} we have in particular 
\be{unifbounduj}
\sup_j \|\hat u_j\|_{L^\infty(\Omega\times(0,\infty))}<\infty
\ee
and we deduce from \eqref{boundSZux} and parabolic estimates 
that $\{\hat u_j(\cdot,t_0)\}$ is precompact in $C^1_{loc}(\Omega)$.
The assertion follows.

\smallskip

(ii) Assume for contradiction that there exists a sequence $\hat u_{0,j}\in \mathcal{W}$ such that
$$\sup_{x\in(0, R/2)}\hat u_{j,x}(x,t_0)\to \infty\quad\hbox{and}\quad\|u_0-\hat u_{0,j}\|_\infty\to 0,$$
 where $\hat u_j:=u(\hat u_{0,j};\cdot,\cdot)$.
Let $R_1=R/2$ if $R<\infty$ and $1$ otherwise.
By \cite[Lemma~5.2]{PS3} there exists a constant $M>0$ such that
\be{SGBUprofileLowerEst}
(\hat u_{j,x}(t_0,x))_+ \ge \bigl[\bigl((\hat u_{j,x}(t_0,y))_++My\bigr)^{1-p}+(p-1)(x-y)\bigr]^{-1/(p-1)}-\, Mx, \quad 0<y<x<R
\ee
(the fact that the constant $M$ can be chosen independent of $j$ follows from \eqref{boundsmoothut}).
On the other hand, by \eqref{boundSZux} and \eqref{unifbounduj}, we have $\lim_{j\to\infty} x_j=0$.
Applying \eqref{SGBUprofileLowerEst} with $y=x_j$, letting $j\to\infty$ and using \eqref{contdepCC1}, we obtain
$$(u_x(t_0,x))_+ \ge ((p-1)x)^{-1/(p-1)}-\, Mx, \quad 0<x<R,$$
hence $\lim_{x\to 0}u_x(t_0,x)=\infty$,
which contradicts the assumption that $u(\cdot,t_0)$ is classical at $x=0$.
This implies the first part of \eqref{contdep2} and the second part then follows from 
 Lemma~\ref{lemsmoothut2} and Proposition~\ref{prop:PS-LBC}(i).
\end{proof}

We end this subsection with a simple lemma that gives a sufficient condition to prevent GBU at the right boundary.

\begin{lem} \label{controlGBU1}
Let $\Omega=(0,1)$ and let $u_0\in\mathcal{W}$ satisfy $\|u_0\|_\infty\le \frac38$ and $u_0(x)=0$ on $[\frac12,1]$.
Then the solution of \eqref{equ} satisfies $u(x,t)\le 1-x$ in $[0,1]$ for all $t>0$.
\end{lem}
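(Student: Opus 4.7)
The strategy will be to construct an explicit bounded classical steady supersolution $V$ on $[0,1]$ satisfying $V\ge u_0$ and $V\le 1-x$, and then conclude by the comparison principle for viscosity solutions.

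The natural candidate is $V(x):=U_\beta(1-x)=U(\beta+1-x)-U(\beta)$ with $\beta=1/(p-1)$. Since the autonomous steady ODE $w''+|w'|^p=0$ is invariant under horizontal reflection and translation, $V$ will be a classical steady solution of the PDE on $(0,1)$ with $V(1)=0$, strictly decreasing, and $|V'|\le 1$ on $[0,1]$. Moreover, $V$ admits the representation
\[
V(x)=\int_0^{1-x}U'(\beta+s)\,ds=\int_0^{1-x}\beta^\beta(\beta+s)^{-\beta}\,ds,
\]
from which the first key bound $V(x)\le 1-x$ on $[0,1]$ follows immediately, since $(\beta+s)^{-\beta}\le\beta^{-\beta}$ for $s\ge 0$ makes the integrand bounded by $1$.

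The main step will be to verify $V\ge u_0$ on $[0,1]$ uniformly in $p>2$. This is trivial on $[1/2,1]$ where $u_0\equiv 0$; on $[0,1/2]$, by monotonicity of $V$ it reduces to proving $V(1/2)\ge 3/8\ge\|u_0\|_\infty$. Here I would apply the Hermite--Hadamard midpoint bound to the convex function $s\mapsto(\beta+s)^{-\beta}$, giving
\[
V(1/2)\ge\tfrac12\,\beta^\beta(\beta+1/4)^{-\beta}=\tfrac12\left(\tfrac{4\beta}{4\beta+1}\right)^\beta,
\]
and then check that $\phi(\beta):=\beta\ln(1+1/(4\beta))$ is increasing on $(0,1]$ (since $\phi'(\beta)=\ln(1+y)-y/(1+y)>0$ with $y:=1/(4\beta)$), so that $\phi(\beta)\le\phi(1)=\ln(5/4)$; this yields $(4\beta/(4\beta+1))^\beta\ge 4/5$ and hence $V(1/2)\ge 2/5>3/8$, with a clean uniform margin. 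This uniform-in-$p$ lower bound on $V(1/2)$ is the main technical obstacle.

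Once $V\ge u_0$ on $[0,1]$ and $V\ge 0$ on $\partial\Omega$ are established, the comparison principle for viscosity solutions of \eqref{equ} (see \cite{BdaLio}, or alternatively by comparing with the classical solutions of approximating truncated problems and passing to the monotone limit, for which standard parabolic comparison applies to $V$ as a classical supersolution) yields $u\le V$ on $[0,1]\times[0,\infty)$. Combining with $V\le 1-x$ gives $u(x,t)\le 1-x$ as claimed.
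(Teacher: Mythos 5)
Your proof is correct, but it takes a genuinely different route from the paper's. The paper simply takes the parabola $\overline u(x,t)=\frac12(1-x^2)$: one checks $\overline u_t-\overline u_{xx}-|\overline u_x|^p=1-x^p\ge 0$ on $[0,1]$, notes $\overline u\ge\frac38\ge u_0$ on $[0,\frac12]$ and $\overline u\ge 0=u_0$ on $[\frac12,1]$, and concludes $u\le\overline u=\frac12(1-x)(1+x)\le 1-x$ by the same comparison argument you invoke. Your supersolution $V(x)=U_\beta(1-x)$ is an exact reflected steady state, so the PDE inequality is automatic, and the bound $V\le 1-x$ follows cleanly from $\beta^\beta(\beta+s)^{-\beta}\le 1$; but the price is the uniform-in-$p$ verification of $V(\frac12)\ge\frac38$, which you handle correctly via the Hermite--Hadamard midpoint bound (the integrand is indeed convex) and the monotonicity of $\phi(\beta)=\beta\ln(1+\tfrac{1}{4\beta})$, yielding $V(\frac12)\ge\frac25>\frac38$. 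I checked the details: $\phi'(\beta)=\ln(1+y)-\tfrac{y}{1+y}>0$ with $y=\tfrac{1}{4\beta}$ is the standard inequality, $\phi(1)=\ln\tfrac54$, and the comparison step (either via the viscosity comparison principle of \cite{BdaLio} or via the monotone approximation by truncated problems, for which $V$ remains a classical supersolution) is legitimate since $V\ge 0$ on $\partial\Omega$. In short: both proofs work; the paper's is shorter because the parabola makes all three verifications one-line computations, while yours buys an exact steady supersolution at the cost of a sharper, $p$-uniform numerical estimate.
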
  

\begin{proof}
Set $\overline u(x,t)=\frac12(1-x^2)$. Then
$$\overline u_t-\overline u_{xx}-|\overline u_x|^p=1-x^p\ge 0.$$
Since $u_0\le \overline u(\cdot,0)$ in $[0,1]$ by our assumption, 
we get $u\le \overline u$ in $[0,1]\times[0,\infty)$
by the comparison principle (for viscosity sub-/super-solutions,
or alternatively by approximating $u$ from below by truncated problems; see e.g.~\cite{PS3}).
The conclusion follows.
\end{proof}

\subsection{Similarity variables and linearized operator} \label{secsimil}

Let us introduce the {\it similarity variables}, which is the fundamental framework for the construction of
special GBU and RBC solutions.
	Namely, for given $0<R\le\infty$ and $T>0$, we set 
	\be{defys}
	y=x/\sqrt{T-t},\qquad s=-\log(T-t)
	\ee
	and
	\be{defw}
	w(y,s)=e^{ks}u(ye^{-s/2},T-e^{-s}). 
	\ee
	By straightforward calculations, if $u$ is a classical solution of $u_t-u_{xx}=|u_x|^p$
	in the cylinder $(0,R)\times(0,T)$, then the corresponding equation for $w$ is:
	\be{eqw}
		w_s=w_{yy}-\frac{y}{2}w_y+kw+|w_y|^p,\quad (y,s)\in D,
			\ee
	where $D=\{(y,s);\  0<y<Re^{s/2},\ s>s_0\}$.
Observe that $U(y)$ (cf.~\eqref{defU}) is also a steady state of \eqref{eqw}.
The following simple proposition 
shows that for any GBU or RBC solution, the corresponding $w$ converges to $U$ in $C^1$ 
except at $ y = 0 $ as $s\to\infty$.

\smallskip

\begin{prop}   \label{PropwcvU} 
Let $p>2$, $0<R\le\infty$. 
\smallskip

(i) Let $u$ be a viscosity solution of \eqref{equ} with $u_0\in \mathcal{W}$ which undergoes GBU at $ (x,t) = (0,T) $ 
for some $ T <\infty $ and let $w$ be defined by \eqref{defw}.
Then
\be{wcvU}
\lim_{s\to\infty} w(y,s)=U(y)\quad\hbox{ in $C_{loc}([0,\infty))\cap C^1_{loc}((0,\infty))$.}
\ee

\smallskip
(ii) Let $0<\tau < \infty$, set $Q=(0,R)\times(0,\tau)$. Let
$u\in C^{2,1}(Q)\cap C(\overline Q)$ be a solution of problem~\eqref{equREC}, which
	undergoes RBC at $(x,t) = (0,\tau)$,  and let $w$ be defined by \eqref{defw} with $T=\tau$.
	Then \eqref{wcvU} holds in $C^1_{loc}([0,\infty))$.
\end{prop}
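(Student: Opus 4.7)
The plan is to translate the pointwise estimates from Propositions~\ref{prop:PS} and \ref{prop:PS-LBC} directly into the self-similar variables. The crucial structural observation is that $U$ is a steady state of \eqref{eqw}, which manifests as the scaling identities
\begin{equation*}
e^{ks}\,U(ye^{-s/2})=U(y),\qquad e^{-\beta s/2}\,U'(ye^{-s/2})=U'(y).
\end{equation*}
Thus proving $w\to U$ reduces to showing that the scaled remainders $e^{ks}[u(ye^{-s/2},T-e^{-s})-U(ye^{-s/2})]$ and $e^{-\beta s/2}[u_x(ye^{-s/2},T-e^{-s})-U'(ye^{-s/2})]$ decay in the appropriate topology.

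For part (i), I would start from \eqref{eq:u_x-lowerupper}. Using $U'(x)=[(p-1)x]^{-1/(p-1)}$ and the convexity of $r\mapsto r^{-1/(p-1)}$, one upgrades \eqref{eq:u_x-lowerupper} to
\begin{equation*}
|u_x(x,t)-U'(x)|\le Mx+C\,m^{1-p}(t)\,x^{-p/(p-1)}.
\end{equation*}
Substituting $x=ye^{-s/2}$, $t=T-e^{-s}$ and multiplying by $e^{-\beta s/2}$, both contributions tend to zero locally uniformly on $(0,\infty)$: the first behaves as $ye^{-(1+\beta)s/2}$, the second as $m^{1-p}(t)e^{s/2}\cdot y^{-p/(p-1)}$. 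The factor $m^{1-p}(t)e^{s/2}$ decays exponentially thanks to the lower rate $m(t)\ge M_0(T-t)^{-1/(p-2)}$ from \eqref{eq:BU-rate-lower}, since the exponent identity $(p-1)/(p-2)>1/2$ gives a strictly negative net exponent. This yields $w_y\to U'$ in $C_{loc}((0,\infty))$. Since $u(0,t)=0$ near $t=T$ by \eqref{defGBUtime}, one has $w(0,s)=0$; integrating $w_y$ from the origin, with an $L^1_{loc}$ dominant (as $U'(y)\sim y^{-\beta}$ with $\beta<1$), gives $w\to U$ in $C_{loc}([0,\infty))$ by dominated convergence.

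For part (ii), Proposition~\ref{prop:PS-LBC}(ii) supplies the three needed estimates. The bound \eqref{eq:u-M-LBC} translates to $e^{ks}u(0,t)\le Me^{-(1-k)s}\to 0$ (using $k<1/2$). Then \eqref{eq:u-LBC} yields
\begin{equation*}
|w(y,s)-U(y)|\le e^{ks}u(0,t)+\tfrac{\tilde M}{2}\,y^2 e^{-(1-k)s}\longrightarrow 0
\end{equation*}
uniformly on bounded $y$-intervals, while \eqref{eq:u_x-LBC} gives $|w_y(y,s)-U'(y)|\le \tilde M\,y\,e^{-(1+\beta)s/2}\to 0$ uniformly on $[0,K]$. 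Although both $w_y(\cdot,s)$ and $U'$ blow up at $y=0$ (the former because boundary conditions are lost in the RBC regime, per Lemma~\ref{lemsmoothut2}), their difference is controlled uniformly up to the boundary, which is the natural sense of $C^1_{loc}([0,\infty))$ convergence here.

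The argument is essentially algebraic and presents no conceptual obstacle: the hard analysis is encoded in Propositions~\ref{prop:PS} and \ref{prop:PS-LBC}. The single delicate point is the exponent verification $m^{1-p}(t)e^{s/2}\to 0$ in part (i), which is exactly the statement that the (type~II) GBU lower rate $(T-t)^{-1/(p-2)}$ is strictly faster than the natural self-similar scaling $(T-t)^{-1/(2(p-1))}$; everything else follows mechanically from the scaling identities above.
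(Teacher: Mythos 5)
Your proposal is correct and follows essentially the same route as the paper's proof: in part (i) one rescales \eqref{eq:u_x-lowerupper}, uses the scaling invariance of $U'$ and the lower rate \eqref{eq:BU-rate-lower} to show $e^{s/2}m^{1-p}(T-e^{-s})\to 0$, and recovers the $C_{loc}$ convergence by integrating from $y=0$ with the integrable dominant $y^{-\beta}$; in part (ii) one rescales \eqref{eq:u-M-LBC}--\eqref{eq:u_x-LBC} exactly as you do. The only cosmetic difference is that you linearize the profile term via the mean value theorem while the paper keeps $[e^{s/2}m^{1-p}+(p-1)y]^{-\beta}$ intact, which is equivalent.
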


	\begin{proof}[Proof of Proposition~\ref{PropwcvU}]
(i) Let $m(t)=u_x(0,t)>0$. By \eqref{eq:u_x-lowerupper}, we have
 \be{EstSpaceTime0}
u_x(x,t) = \bigl[m^{1-p}(t)+(p-1)x\bigr]^{-1/(p-1)}+O(x),
\ee
for all $x>0$ small and $t$ close $T$.
Consequently,
 \be{EstSpaceTime1}
 w_y(y,s)=e^{-\beta s/2}u_x(ye^{-s/2},T-e^{-s})= \bigl[e^{s/2}m^{1-p}(T-e^{-s})+(p-1)y\bigr]^{-\beta}+O(ye^{-s/2}).
 \ee
On the other hand, \eqref{eq:BU-rate-lower}  guarantees that $e^{s/2}m^{1-p}(T-e^{-s})\to 0$ as $s\to\infty$.
Combining this with \eqref{EstSpaceTime1} yields the $C^1_{loc}$ part of \eqref{wcvU}.
The $C_{loc}$ part follows by integrating \eqref{EstSpaceTime1} over $(0,y)$ and using that $w(0,s)=0$ for $s$ large.

\smallskip

(ii) By \eqref{eq:u_x-LBC} we have
$$
 w_y(y,s)=e^{-\beta s/2}u_x(ye^{-s/2},T-e^{-s})= e^{-\beta s/2}U'(ye^{-s/2})+O(ye^{-(\beta+1)s/2})
=U'(y)+O(ye^{-(\beta+1)s/2}),
$$
while \eqref{eq:u-LBC} and \eqref{eq:u-M-LBC} yield
$$\begin{aligned}
 w(y,s)
 &=e^{k s/2}u(ye^{-s/2},T-e^{-s})= e^{ks/2}[U(ye^{-s/2})+O(e^{-s})]+O\bigl(y^2e^{(\frac{k}{2}-1)s}\bigr) \\
 &= U(y)+O\bigl((1+y^2)e^{(\frac{k}{2}-1)s}\bigr).
\end{aligned}$$
The conclusion follows.
\end{proof}

In view of Proposition~\ref{PropwcvU} it will be natural to attempt to linearize \eqref{eqw} 
around $U$ (in appropriate ways that will be described later).
By direct calculation, we find that the equation for $v:=w-U$ is:
		\be{eqz}
		v_s=-\mathcal{L}v+F(v_y),
		\ee
		with linear term
		\be{eqzdefL}
		-\mathcal{L}v=v_{yy}+\Bigl(\frac{\alpha}{y}-\frac{y}{2}\Bigr)v_y+kv,\qquad \alpha=\beta+1=\frac{p}{p-1}\in (1,2),
		\ee
		and nonlinear remainder term
		\be{eqzdefF}
		F(v_y)=F(y,v_y)=|v_y+U_y|^p-U_y^p-pU_y^{p-1}v_y.
		\ee

\subsection{Eigenvalues and eigenfunctions of the linearized operator} \label{subseceigen}

We shall denote the set of nonnegative integers by $\N=\{0,1,2,\dots\}$. 
For given $\alpha>0$ and $1\le q<\infty$, we define the Banach spaces 
$$L^q_\rho=L^q_\rho(0,\infty)=\Bigl\{\varphi\in L^q_{loc}(0,\infty)\,;\ 
\|\varphi\|_{L^q_\rho}^q:=\ts\int_0^\infty \rho |\varphi|^q(y)\,dy<\infty\Bigr\},
\quad\hbox{ where } \rho(y)=y^\alpha e^{-y^2/4},$$
$$W^{1,q}_\rho=W^{1,q}_\rho(0,\infty)=\Bigl\{\varphi\in W^{1,q}_{loc}(0,\infty)\,;\ 
\|\varphi\|_{W^{1,q}_\rho}^q:=\ts\int_0^\infty \rho(|\varphi|^q+|\varphi'|^q)(y)\,dy<\infty\Bigr\}.$$
$L^2_\rho$ and $H^1_\rho:=W^{1,2}_\rho$ are Hilbert spaces,
with respective inner products 
$$(\varphi,\psi)=(\varphi,\psi)_{L^2_\rho}=\int_0^\infty \rho \varphi \psi\,dy,\qquad (\varphi,\psi)_{H^1_\rho}=\int_0^\infty \rho (\varphi \psi+\varphi'\psi')\,dy.$$
We shall also simply denote by $\|\cdot\|$ the $L^2_\rho$ norm.  Let $k\in\R$.
For each $\varphi\in H^1_\rho$, we define 
$$\mathcal{L}\varphi=-\varphi''+\Bigl(\frac{y}{2}-\frac{\alpha}{y}\Bigr)\varphi'-k\varphi=-\rho^{-1}(\rho \varphi')'-k\varphi$$
as the element of the dual $(H^1_\rho)'$, given by
$$\langle \mathcal{L}\varphi,\psi\rangle:=(\varphi',\psi')-k(\varphi,\psi)=\int_0^\infty \rho(\varphi'\psi'-k\varphi\psi)\, dy,
\quad\hbox{ for all }  \psi\in H^1_\rho.$$
We then consider $\mathcal{L}$ as an unbounded operator on $L^2_\rho$ with domain $D(\mathcal{L})=\{\varphi\in H^1_\rho: \mathcal{L}\varphi\in L^2_\rho\}$.
We note that for the viscous Hamilton-Jacobi equation in similarity variables on the half-line,
the linearized operator around the singular steady state (see Section~\ref{secsimil}) is given by $\mathcal{L}$ with
$\alpha=p/(p-1)$, where $p>2$, hence $\alpha\in (1,2)$, and $k=\frac{1-\beta}{2}$.

\smallskip
For $\Lambda\in\R$, we say that $\varphi\in H^1_\rho$ is an eigenfunction of $\mathcal{L}$ with eigenvalue $\Lambda$ if
 \be{AGaussian2a} 
(\varphi',\psi')-k(\varphi,\psi)
=\Lambda(\varphi,\psi)\quad\hbox{ for all $\psi\in H^1_\rho$.}
\ee
By standard regularity properties, any eigenfunction belongs to $C^\infty(0,\infty)$ and satisfies 
\be{AGaussian4} 
-\varphi''+\Bigl(\frac{y}{2}-\frac{\alpha}{y}\Bigr)\varphi'-k\varphi=\Lambda \varphi,\quad y>0.
\ee
Conversely, if $\varphi\in C^2(0,\infty)$ is a solution of \eqref{AGaussian4} and 
belongs to $H^1_\rho$, then it is not difficult to check that it is an eigenfunction.
We have the following spectral result concerning the operator~$\mathcal{L}$.

\begin{prop}\label{GaussianPoincare2} Let $\alpha\ge 1$, $k\in\R$.
\vskip 2pt
\noindent
(i) There exists a Hilbert basis of $L^2_\rho$ made of eigenfunctions of $\mathcal{L}$.
\vskip 2pt
\noindent
(ii) The eigenvalues of $\mathcal{L}$ are given by $\lambda_j=j-k$, $j\in \N$.
\vskip 2pt
\noindent
(iii) For each $j\in\N$, the eigenspace $E_j=\hbox{Ker}\,(\mathcal{L}-\lambda_j I)$ is of dimension one.
It is of the form $E_j=\hbox{Span}(\varphi_j)$, where $\varphi_j$ is an even polynomial of degree $2j$.
Moreover, we have $\varphi_j(0)\ne 0$ and we normalize $\varphi_j$ by $\|\varphi_j\|=1$ 
and $\varphi_j(0)>0$. Furthermore, the sign of the leading coefficient of $\varphi_j$ is~$(-1)^j$.
\vskip 2pt
\noindent
(iv) For each $j\in \N^*$, $\varphi_j$ has exactly $j$ positive zeros, and they are all simple.
\end{prop}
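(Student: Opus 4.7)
The plan is to reduce the problem to the classical spectral theory of generalized Laguerre polynomials by means of the change of variable $z = y^2/4$. Set $\nu := (\alpha-1)/2\ge 0$. A short computation, writing $\varphi(y) = L(z)$, converts the eigenvalue equation \eqref{AGaussian4} with $\Lambda\in\R$ into
\begin{equation*}
z\,L''(z) + (\nu+1-z)\,L'(z) + (k+\Lambda)\,L(z) = 0, \qquad z>0,
\end{equation*}
which is the generalized Laguerre equation with index $\nu$ and parameter $n := k+\Lambda$. Moreover the weight transforms as $\rho(y)\,dy = 2^\alpha z^\nu e^{-z}\,dz$ under $y = 2\sqrt{z}$, so the map $\varphi(y)\mapsto L(z)=\varphi(2\sqrt{z})$ is (up to a fixed positive factor) a Hilbert space isometry from $L^2_\rho(0,\infty)$ onto $L^2((0,\infty);\,z^\nu e^{-z}\,dz)$.

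Having performed this reduction, I would invoke the classical theory of generalized Laguerre polynomials $\{L_n^{(\nu)}\}_{n\ge 0}$ (valid for all $\nu>-1$): they form a complete orthogonal basis of $L^2((0,\infty);\,z^\nu e^{-z}\,dz)$, $L_n^{(\nu)}$ is a polynomial of degree $n$ with leading coefficient $(-1)^n/n!$ and value $L_n^{(\nu)}(0)=\binom{n+\nu}{n}>0$, and $L_n^{(\nu)}$ has exactly $n$ positive zeros, all simple. Defining $\varphi_n(y):=c_n L_n^{(\nu)}(y^2/4)$ with $c_n>0$ chosen so that $\|\varphi_n\|=1$, one obtains at once (i) and (ii), together with the statements in (iii)--(iv) about parity, degree, leading coefficient, value at $0$, and zeros: indeed $\varphi_n$ is a polynomial in $y^2$, hence even of degree $2n$, $\varphi_n(0)=c_n\binom{n+\nu}{n}>0$, its leading coefficient has sign $(-1)^n$, and its positive zeros are precisely $2\sqrt{z_1},\dots,2\sqrt{z_n}$ where $0<z_1<\dots<z_n$ are the zeros of $L_n^{(\nu)}$, all simple since $z=y^2/4$ is a diffeomorphism of $(0,\infty)$.

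The one remaining item is the one-dimensionality of each eigenspace $E_j$. I would obtain this directly from the ODE structure: \eqref{AGaussian4} is a second order linear equation with a regular singular point at $y=0$ whose indicial exponents are $0$ and $1-\alpha$, so, as $\alpha\ge 1$, the second Frobenius solution behaves like $y^{1-\alpha}$ (with a possible logarithmic factor when $\alpha=1$) and fails to be in $H^1_\rho$ near $y=0$. Hence the space of $H^1_\rho$-solutions to \eqref{AGaussian4} is at most one-dimensional, forcing $\dim E_j\le 1$; the Laguerre construction then gives equality. The main technical step is the change-of-variable computation itself; once that is in place, (i)--(iv) follow from well-known properties of generalized Laguerre polynomials together with the short Frobenius argument at $y=0$.
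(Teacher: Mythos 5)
Your proof is correct, and it takes a genuinely different route from the one in the paper. The substitution $z=y^2/4$ is computed correctly: one indeed gets $zL''+(\nu+1-z)L'+(k+\Lambda)L=0$ with $\nu=(\alpha-1)/2\ge 0$, and $\rho(y)\,dy=2^\alpha z^\nu e^{-z}\,dz$, so the whole spectral problem is conjugate to the generalized Laguerre problem and (i)--(iv) drop out of the classical theory; your Frobenius/Wronskian observation at $y=0$ (exponents $0$ and $1-\alpha$, second solution with $\int_0^1 y^{-\alpha}\,dy=\infty$ in the $H^1_\rho$ seminorm) correctly yields $\dim E_j\le 1$. The paper instead works directly in the $y$ variable: it builds the polynomial eigenfunctions by an explicit coefficient recursion (which is what produces formula \eqref{Pn2} advertised in Remark~\ref{RemSpec}), proves completeness by an even extension and a moment/density argument on the line (citing Szeg\H{o}), obtains (i) from the compact embedding $H^1_\rho\subset L^2_\rho$ (Lemma~\ref{GaussianPoincare1b}), the symmetry of $\mathcal{L}$ (Lemma~\ref{GaussianPoincare3}) and the spectral theorem, and gets (iv) from the general zero theory of orthogonal polynomials. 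Your route is shorter and imports the recursion, the completeness and the zero count in one stroke from Laguerre theory; the paper's route is more self-contained and makes the coefficient recursion \eqref{Pn2} explicit, which is used elsewhere in the construction of the special solutions. Two small points you should make explicit to fully close (ii) and (iii): first, that the polynomial ODE solutions, being in $H^1_\rho$, are genuine eigenfunctions in the weak sense \eqref{AGaussian2a} (this is the "conversely" remark preceding the proposition); second, that absence of any eigenvalue outside $\{j-k\}_{j\in\N}$ follows from completeness of $\{\varphi_j\}$ together with the orthogonality of eigenfunctions for distinct eigenvalues, which is immediate from the symmetric form \eqref{AGaussian2a}. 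Neither is a gap, just a sentence each.
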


\begin{rem} \label{RemSpec}
We shall see in the proof that the coefficients of $\varphi_j(y)=\sum_{i=0}^j b_{j,i} \, y^{2i}$ satisfy the recursion relation
 \be{Pn2} 
b_{j,i}=-\frac{2(i+1)(2i+1+\alpha)}{j-i}b_{j,i+1},\quad 0\le i\le j-1. 
 \ee
\end{rem}

We also have the following useful pointwise estimates for the eigenfunctions.

\begin{prop}\label{EstimEigen} Assume $\alpha\in (1,3)$ and let $\varphi_j$ be given by Proposition~\ref{GaussianPoincare2}.
Then we have
\be{EstimEigen1}
|\varphi_j(y)| \le C(j+1)^{3/2} e^{y^2/8}\quad\hbox{ and }\quad
|\varphi_j'(y)| \le C(j+1)^{5/2}  e^{y^2/8} y,\qquad y\ge 0,\ j\in\N. 
\ee
\end{prop}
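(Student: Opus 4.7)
The plan is to identify $\varphi_j$ with classical Laguerre polynomials via the change of variable $z=y^2/4$, and then to invoke known pointwise Laguerre estimates. Setting $\tilde\varphi_j(z):=\varphi_j(2\sqrt z)$, a direct computation shows that \eqref{AGaussian4} with $\Lambda=\lambda_j=j-k$ transforms into the associated Laguerre equation
\[
z\,\tilde\varphi_j''(z)+\bigl(\gamma+1-z\bigr)\tilde\varphi_j'(z)+j\,\tilde\varphi_j(z)=0,\qquad \gamma:=\tfrac{\alpha-1}{2}\in(0,1).
\]
Since $\tilde\varphi_j$ is a polynomial of degree $j$, it must be proportional to $L_j^{(\gamma)}$: that is, $\varphi_j(y)=c_j L_j^{(\gamma)}(y^2/4)$ for some $c_j>0$. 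The change of variable $z=y^2/4$ in $\|\varphi_j\|_{L^2_\rho}^2=1$, combined with the classical orthogonality relation $\int_0^\infty z^\gamma e^{-z}\bigl[L_j^{(\gamma)}(z)\bigr]^2 dz=\Gamma(j+\gamma+1)/j!$, yields $c_j=2^{-\alpha/2}\sqrt{j!/\Gamma(j+\gamma+1)}$, whence Stirling's formula gives $c_j\le C(j+1)^{-\gamma/2}$.

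Next I invoke a classical pointwise bound for normalized Laguerre polynomials (see e.g.\ Szeg\H{o}'s monograph): for every $\gamma>-1$ there is a constant $C(\gamma)$ such that
\[
\sqrt{\tfrac{j!}{\Gamma(j+\gamma+1)}}\,|L_j^{(\gamma)}(z)|\le C(\gamma)\,(j+1)^{|\gamma|/2+1/4}\,e^{z/2},\qquad z\ge 0.
\]
Combined with the expression for $c_j$ above, this gives $|\varphi_j(y)|\le C\,(j+1)^{\alpha/4}\,e^{y^2/8}$, and since $\alpha<3$ implies $\alpha/4<3/4\le 3/2$, the first inequality in \eqref{EstimEigen1} follows.

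For the derivative estimate, I use the Laguerre differentiation identity $(L_j^{(\gamma)})'(z)=-L_{j-1}^{(\gamma+1)}(z)$ and the chain rule to write $\varphi_j'(y)=-c_j\,(y/2)\,L_{j-1}^{(\gamma+1)}(y^2/4)$. Applying the normalized Laguerre bound with $\gamma$ replaced by $\gamma+1$, and absorbing the normalization shift $\sqrt{\Gamma(j+\gamma+1)/(j-1)!}\sim j^{(\gamma+1)/2}$ (which costs an extra factor of order $j^{1/2}$ compared with the first estimate), gives $|\varphi_j'(y)|\le C\,(j+1)^{\alpha/4+1}\,y\,e^{y^2/8}$. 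Since $\alpha<3$ guarantees $\alpha/4+1<7/4<5/2$, the second inequality in \eqref{EstimEigen1} follows.

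The main technical input is the normalized Laguerre bound above; it is classical but its sharp form in the turning-point region $z\sim 4j$ is nontrivial to establish. The polynomial growth needed here is much weaker than the sharp asymptotics, so the exponents $3/2$ and $5/2$ in the statement are far from optimal; the argument only requires crude control that is uniform in both $j$ and $y\ge 0$. Everything else is bookkeeping: the change of variable is routine, and the two applications of the Laguerre bound (with parameters $\gamma$ and $\gamma+1$) mirror one another.
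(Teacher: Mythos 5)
Your proof is correct, but it takes a genuinely different route from the paper's. You diagonalize the problem explicitly: the substitution $z=y^2/4$ does turn \eqref{AGaussian4} into the Laguerre equation with parameter $\gamma=(\alpha-1)/2\in(0,1)$, so $\varphi_j(y)=c_jL_j^{(\gamma)}(y^2/4)$ with $c_j=2^{-\alpha/2}\sqrt{j!/\Gamma(j+\gamma+1)}$, and everything then reduces to a uniform pointwise bound for Laguerre polynomials. The cleanest classical input here is Szeg\H{o}'s crude inequality $|L_j^{(\gamma)}(z)|\le L_j^{(\gamma)}(0)e^{z/2}=\binom{j+\gamma}{j}e^{z/2}$ (valid for $\gamma\ge 0$, which holds since $\alpha>1$); after normalization it gives $\sqrt{j!/\Gamma(j+\gamma+1)}\,|L_j^{(\gamma)}(z)|\le C(j+1)^{\gamma/2}e^{z/2}$, even slightly better than the $(j+1)^{|\gamma|/2+1/4}$ form you quote, and it certainly does not require the delicate turning-point analysis. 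Your derivative step via $(L_j^{(\gamma)})'=-L_{j-1}^{(\gamma+1)}$ and the $\sqrt{j}$ bookkeeping is also right. The paper instead proceeds self-containedly without identifying the eigenfunctions as Laguerre polynomials: it uses the variational identity $\|\varphi_j'\|^2=j$ plus a Sobolev embedding to control $\varphi_j$ away from the origin (producing the $e^{y^2/8}$ factor), a monotonicity/ODE argument near $y=0$ to bound $\varphi_j(0)\le Cj^{3/2}$ (this is where the hypothesis $\alpha<3$ enters, via integrability of $y^{(1-\alpha)/2}$), and a direct integration of $(\rho\varphi_j')'=-j\rho\varphi_j$ for the derivative. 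Your approach is shorter and actually yields sharper exponents ($\alpha/4$ and $\alpha/4+1$ instead of $3/2$ and $5/2$) for every $\alpha>1$, at the cost of importing the Laguerre identification and a classical external estimate; the paper's argument stays entirely within the weak/ODE framework it has already set up.
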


The proof of Proposition~\ref{GaussianPoincare2} relies on a series of lemmas.

\begin{lem}\label{LemImbedd}
Let $\alpha>0$ and $a\ge 1$. We have
\be{selfadj2int}
\ds\sup_{y\in(0,a)}\frac{|\psi(y)|}\zeta(y)\le C(a)\|\psi\|_{H^1_\rho},
\quad\hbox{ where } \zeta(y):=
\begin{cases}
y^{\frac{1-\alpha}{2}},&\hbox{ if $\alpha>1$} \\
 \noalign{\vskip 1mm}
(1+|\log y|)^{1/2},&\hbox{ if $\alpha=1$,}
\end{cases}
\ee
for all $\psi\in H^1_\rho$, and
\be{Imbedd-q} 
 W^{1,q}_\rho\hookrightarrow L^\infty_{loc}([0,\infty)),\quad\hbox{ if $\alpha+1<q<\infty$}.
\ee
\end{lem}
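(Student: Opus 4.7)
The plan is to reduce both embeddings to the fundamental theorem of calculus applied to an absolutely continuous representative of $\psi$ on $(0,\infty)$, anchored at a well-chosen interior point. Since $\rho(y)=y^\alpha e^{-y^2/4}$ and $a\ge 1$, the Gaussian factor is bounded between positive constants depending only on $a$ on the interval $(0,a)$, so $\rho(y)$ is comparable to $y^\alpha$ up to a factor $C(a)$. In particular $\rho$ is bounded below by a positive constant on $[a/2,a]$, so the standard one-dimensional Sobolev embedding on $(a/2,a)$ yields $\|\psi\|_{L^\infty([a/2,a])}\le C(a)\|\psi\|_{H^1_\rho}$ and, by the mean value theorem, a point $y_0\in[a/2,a]$ with $|\psi(y_0)|\le C(a)\|\psi\|_{L^2_\rho}$.

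For $y\in(0,y_0)$ I would write $\psi(y)=\psi(y_0)-\int_y^{y_0}\psi'(s)\,ds$ and apply Cauchy--Schwarz with weight $\rho$:
$$\left|\int_y^{y_0}\psi'(s)\,ds\right|\le \|\psi'\|_{L^2_\rho}\left(\int_y^{y_0}\rho^{-1}(s)\,ds\right)^{1/2}\le C(a)\|\psi'\|_{L^2_\rho}\left(\int_y^a s^{-\alpha}\,ds\right)^{1/2}.$$
A direct evaluation gives $\int_y^a s^{-\alpha}\,ds\le C(a)\,y^{1-\alpha}$ for $\alpha>1$ and $\log(a/y)\le C(a)(1+|\log y|)$ for $\alpha=1$, hence in both cases the bound $C(a)\zeta(y)^2$. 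Combining with the estimate on $|\psi(y_0)|$ and dividing by $\zeta(y)$, which is bounded below by a positive constant on $(0,a]$, yields the claim on $(0,y_0]$. On $[y_0,a]$ the bound follows at once from the Sobolev embedding on $(a/2,a)$ together with $\zeta\ge c(a)>0$ there.

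The $W^{1,q}_\rho$ embedding follows from exactly the same scheme with H\"older in place of Cauchy--Schwarz, which leads to the integral $\int_y^a s^{-\alpha/(q-1)}\,ds$. This is uniformly bounded in $y\in(0,a)$ precisely when $\alpha/(q-1)<1$, i.e.\ under the hypothesis $q>\alpha+1$, yielding $\|\psi\|_{L^\infty([0,a])}\le C(a)\|\psi\|_{W^{1,q}_\rho}$ and in particular a continuous extension of $\psi$ to $[0,a]$. The only nontrivial point is the pointwise representability of $\psi$: any function in $H^1_\rho$ (resp.~$W^{1,q}_\rho$) restricts to $H^1(\varepsilon,a)$ (resp.~$W^{1,q}(\varepsilon,a)$) for every $\varepsilon>0$ since $\rho$ is non-degenerate there, hence admits a representative absolutely continuous on $(0,\infty)$; this is standard, so I foresee no serious obstacle beyond the bookkeeping of the $a$-dependence of constants.
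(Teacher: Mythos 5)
Your proof is correct and follows essentially the same route as the paper's: both write $\psi(y)$ as an anchor value plus $\int\psi'$, apply weighted Cauchy--Schwarz (resp.\ H\"older) so that the resulting integral $\int_y^a s^{-\alpha}\,ds$ (resp.\ $\int_y^a s^{-\alpha/(q-1)}\,ds$) produces exactly $\zeta(y)^2$ (resp.\ a finite constant under $q>\alpha+1$). The only difference is how the anchor value is controlled --- the paper averages $|\psi(z)|$ over $z\in(a,a+1)$ while you select $y_0\in[a/2,a]$ via the mean value theorem and the local Sobolev embedding --- which is immaterial.
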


\begin{proof}
By H\"older's inequality, for all $y, z$ with $0<y\le a\le z\le a+1$, we have
$$
|\psi(y)|\le |\psi(z)|+\int_y^{a+1} |\psi'|z^{\frac{\alpha}{q}}z^{-\frac{\alpha}{q}}\,dz
\le |\psi(z)|+C(a)
\begin{cases}
\zeta(y)\|\psi'\|,&\hbox{if $\alpha\ge 1$ and $q=2$},\\
\noalign{\vskip 1.5mm}
 \|\psi'\|_{L^q_\rho},&\hbox{ if $\alpha<q-1$.}
\end{cases}
$$
Integrating with respect to $z\in(a,a+1)$, it follows from the first case that, for all $0<y\le a$,
$$|\psi(y)|\le \int_a^{a+1}|\psi(z)|\,dz+C(a)\zeta(y) \|\psi'\|  \le C(a)\|\psi\|+C(a)\zeta(y)\|\psi'\| 
\le C(a)\zeta(y) \|\psi\|_{H^1_\rho},$$
hence \eqref{selfadj2int},
 whereas the second case yields \eqref{Imbedd-q}.
\end{proof}

\begin{lem}\label{GaussianPoincare1}
We have
 \be{AGaussian3} 
 \int_0^\infty y^2\varphi^2\rho\, dy\le  16\|\varphi'\|^2+ 4(\alpha+1)\|\varphi\|^2,
\quad\hbox{ for all }  \varphi\in H^1_\rho.
\ee
\end{lem}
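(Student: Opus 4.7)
The strategy is to start from the trivial nonnegativity of $\int_0^\infty (y\varphi - 4\varphi')^2\rho\, dy$ (the coefficient $4$ in the square is tuned so that the term we ultimately want to isolate comes out with coefficient $-1$). Expanding this square yields
\[
0 \le \int_0^\infty y^2\varphi^2\rho\, dy - 8\int_0^\infty y\varphi\varphi'\rho\, dy + 16\|\varphi'\|^2,
\]
so the whole task reduces to computing the cross term. I would write $y\varphi\varphi' = \tfrac12 y(\varphi^2)'$ and integrate by parts, using the differential identity $(y\rho)'(y) = (\alpha+1)\rho(y) - \tfrac{y^2}{2}\rho(y)$ (immediate from $y\rho(y) = y^{\alpha+1}e^{-y^2/4}$) to obtain
\[
\int_0^\infty y\varphi\varphi'\rho\, dy = \tfrac14 \int_0^\infty y^2\varphi^2\rho\, dy - \tfrac{\alpha+1}{2}\|\varphi\|^2.
\]
Substituting back into the expansion produces $0 \le -\int y^2\varphi^2\rho + 4(\alpha+1)\|\varphi\|^2 + 16\|\varphi'\|^2$, which rearranges exactly to the claim.

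The only real work is justifying the integration by parts for a general $\varphi \in H^1_\rho$. My plan is to introduce a smooth cutoff $\chi_R$ equal to $1$ on $[0,R]$ and supported in $[0,2R]$, and to apply the scheme first to $\chi_R\varphi$: the boundary term at $\infty$ is then trivially zero, while the boundary term at $0$ in the integration by parts reads $\tfrac12 y\rho(y)\chi_R^2(y)\varphi^2(y)\big|_{y=0^+}$, which vanishes because $y\rho(y) = y^{\alpha+1}e^{-y^2/4}$ and Lemma~\ref{LemImbedd} controls $\varphi^2(y)$ by $y^{1-\alpha}$ (or by a logarithmic factor in the borderline case $\alpha=1$) near the origin. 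Letting $R\to\infty$, monotone convergence handles the left-hand side (allowing the value $+\infty$ a priori), while standard computations give $\|(\chi_R\varphi)'\|\to\|\varphi'\|$ and $\|\chi_R\varphi\|\to\|\varphi\|$, establishing the finiteness of $\int y^2\varphi^2\rho$ simultaneously with the inequality.

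The main obstacle is really just this cutoff-and-limit step: the algebraic content is elementary once one spots the right square, and the delicate point --- the boundary contribution at the degenerate endpoint $y=0$ --- is precisely what Lemma~\ref{LemImbedd} was designed to handle.
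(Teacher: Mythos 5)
Your proof is correct and is essentially the paper's argument in disguise: expanding $\int_0^\infty(y\varphi-4\varphi')^2\rho\,dy\ge0$ is exactly the Young inequality $4\rho\varphi y\varphi'\le\frac12 y^2\varphi^2\rho+8{\varphi'}^2\rho$ that the paper applies after the same integration by parts based on the identity $2y\rho'=2\alpha\rho-y^2\rho$, and it produces the same constants $16$ and $4(\alpha+1)$. The only divergence is the treatment of the boundary terms: the paper dispenses with any cutoff or pointwise bound by choosing sequences $\eps_j\to0$ and $R_j\to\infty$ along which $[y\rho\varphi^2]$ vanishes (such sequences exist simply because $\rho\varphi^2\in L^1(0,\infty)$), whereas you use a cutoff at infinity together with Lemma~\ref{LemImbedd} at the origin — both are valid, though the paper's trick has the minor advantage of working for every $\alpha>0$ rather than only the range $\alpha\ge1$ explicitly covered by Lemma~\ref{LemImbedd}.
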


\begin{proof}[Proof of Lemma~\ref{GaussianPoincare1}]
Let $0<\eps<R<\infty$. Using the identity 
$(\varphi^2y)\rho'=(\varphi^2y\rho)'-\rho(\varphi^2y)'=(\varphi^2y\rho)'-\rho \varphi^2-2\rho \varphi y\varphi'$
along with $2y\rho'=2\alpha\rho-y^2\rho$, and integrating on $(\eps,R)$, we have
\bas
\int_\eps^R \varphi^2y^2\rho
&=&2\alpha\int_\eps^R \varphi^2\rho-2\int_\eps^R (\varphi^2y)\rho'
=2(\alpha+1)\int_\eps^R\rho \varphi^2+4\int_\eps^R\rho \varphi y\varphi' -2\bigl[\rho y\varphi^2\bigr]_\eps^R \\
&\le&2(\alpha+1)\int_\eps^R\rho\varphi^2+\frac12\int_\eps^R \varphi^2y^2\rho
+8\int_\eps^R{\varphi'}^2\rho -2\bigl[\rho y\varphi^2\bigr]_\eps^R.
\eas
Since $\rho \varphi^2\in L^1(0,\infty)$ there exists sequences $\eps_j\to 0$ and $R_j\to \infty$, such that 
 $[\rho y\varphi^2](\eps_j)\to 0$ and $[\rho y\varphi^2](R_j)\to 0$.
Taking $\eps=\eps_j$ and $R=R_j$ and letting $j\to\infty$, we obtain~\eqref{AGaussian3}.
\end{proof}

\begin{lem}\label{GaussianPoincare1b}
The imbedding $H^1_\rho\subset L^2_\rho$ is compact.
\end{lem}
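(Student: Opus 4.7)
The plan is to use the standard Rellich--Kondrachov scheme, splitting the half-line into a neighborhood of the origin, a bounded middle region, and a tail, and showing that a bounded sequence in $H^1_\rho$ has small contribution in $L^2_\rho$ on the first and third regions uniformly, while one can extract an $L^2$-convergent subsequence on the middle region by classical compactness. Let $\{\psi_n\} \subset H^1_\rho$ be a bounded sequence; I would reduce the proof to producing, for each $\varepsilon > 0$, parameters $0 < \delta < R < \infty$ such that the restriction to $(\delta, R)$ admits an $L^2_\rho$-Cauchy subsequence while the complementary integrals are $\le \varepsilon$ uniformly in $n$.

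For the tail, Lemma~\ref{GaussianPoincare1} gives immediately
\[
\int_R^\infty \psi_n^2 \rho \, dy \;\le\; R^{-2} \int_0^\infty y^2 \psi_n^2 \rho \, dy \;\le\; C R^{-2}\, \|\psi_n\|_{H^1_\rho}^2,
\]
which is uniformly $\le \varepsilon$ for $R$ large. For the region near the origin, I would invoke the pointwise bound \eqref{selfadj2int} from Lemma~\ref{LemImbedd} with $a=1$, giving $|\psi_n(y)| \le C \zeta(y)\, \|\psi_n\|_{H^1_\rho}$ for $y \in (0,1)$, whence
\[
\int_0^\delta \psi_n^2 \rho \, dy \;\le\; C\, \|\psi_n\|_{H^1_\rho}^2 \int_0^\delta \zeta(y)^2 y^\alpha \, dy.
\]
When $\alpha > 1$ one has $\zeta(y)^2 y^\alpha = y$ so the integral is $\delta^2/2$, and when $\alpha = 1$ the integrand is $y(1+|\log y|)$, which is integrable at $0$ with integral tending to $0$ as $\delta \to 0^+$; in either case the bound is uniformly small in $n$ for $\delta$ small.

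On the middle annulus $[\delta, R]$ the weight $\rho$ is bounded above and below by positive constants, so the $H^1_\rho$-norm and $L^2_\rho$-norm restricted to $[\delta, R]$ are equivalent to the unweighted $H^1([\delta,R])$- and $L^2([\delta,R])$-norms. The sequence $\{\psi_n|_{[\delta,R]}\}$ is therefore bounded in $H^1([\delta,R])$, and the classical Rellich--Kondrachov theorem provides a subsequence converging in $L^2([\delta,R])$, hence in $L^2_\rho([\delta,R])$. A standard diagonal extraction over $\delta_j \to 0^+$ and $R_j \to \infty$ then yields a subsequence that is Cauchy in $L^2_\rho(0,\infty)$, proving compactness.

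The only real point of care is the behavior at the origin, where the density $y^\alpha$ permits mild singularities of functions in $H^1_\rho$; this is precisely the content of Lemma~\ref{LemImbedd}, so once the three-region decomposition is in place the argument reduces to routine applications of Lemmas~\ref{LemImbedd}, \ref{GaussianPoincare1} and Rellich--Kondrachov on bounded intervals, with no genuine obstacle.
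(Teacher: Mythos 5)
Your proof is correct and follows essentially the same route as the paper's: the tail is controlled by Lemma~\ref{GaussianPoincare1}, the origin by the pointwise bound \eqref{selfadj2int} of Lemma~\ref{LemImbedd}, and the bounded middle region by Rellich's theorem. The only (inessential) difference is that the paper handles the origin inside the bounded region via a.e.\ convergence plus dominated convergence, using that $|f_j-f|^2\rho$ is locally uniformly bounded, whereas you show the integral over $(0,\delta)$ is uniformly small and conclude by diagonal extraction; both variants are standard and valid.
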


\begin{proof}
Let $(f_j)$ be a bounded sequence in $H^1_\rho$.
There exists a subsequence, still denoted $(f_j)$, and $f\in H^1_\rho$
such that $f_j\to f$ weakly in $H^1_\rho$.
By Rellich's theorem, we may assume that $f_j\to f$ strongly in $L^2_{loc}(0,\infty)$ and a.e.~on $(0,\infty)$. 
For each $R>0$, using \eqref{AGaussian3}, we write
\bas
\|f_j-f\|^2
&=& \int_{0<y<R} |f_j-f|^2\rho\,dy+\int_{y>R} |f_j-f|^2\rho\,dy\\
&\le& \int_{0<y<R} |f_j-f|^2\rho\,dy+R^{-2}\int_{y>R} |y|^2|f_j-f|^2\rho\,dy\\
&\le& \int_{0<y<R} |f_j-f|^2\rho\,dy+CR^{-2}\bigl(\|f_j\|^2_{H^1_\rho}+\|f\|^2_{H^1_\rho}\bigr) 
\ \le \int_{0<y<R} |f_j-f|^2\rho\,dy+CR^{-2}.
\eas
Fix $\eps>0$. Choosing $R=R_0(\eps)>0$ large enough, we have
$\|f_j-f\|^2 \le \int_{0<y<R_0(\eps)} |f_j-f|^2\rho\,dy+\eps$ 
for all $j$.
 Moreover, as a consequence of \eqref{selfadj2int}, 
the sequence $\{|f_j-f|^2\rho\}_{j\ge 1}$ is bounded in $L^\infty_{loc}([0,\infty))$.
By dominated convergence, we then have $\|f_j-f\|^2 \le 2\eps$ for all large $j$.
Therefore $f_j\to f$ strongly in $L^2_\rho$ and the lemma  is proved.
\end{proof}

 Our last lemma gives the natural integration by parts formula
(this is where the restriction $\alpha\ge 1$ enters).

\begin{lem}\label{GaussianPoincare3}
Let $\alpha\ge 1$.  For all $\varphi\in D(\mathcal{L})$ and $\psi\in H^1_\rho$, we have
\be{selfadj}
\int_0^\infty (\mathcal{L}\varphi)\psi\rho\,dy=\int_0^\infty  \rho(\varphi'\psi'-k\varphi\psi)\, dy. 
\ee
It follows in particular that $\mathcal{L}$ is symmetric.
\end{lem}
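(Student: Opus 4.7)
The plan is to integrate by parts on a truncated interval $(\eps,R)\subset(0,\infty)$ and pass to the limit. For $\varphi\in D(\mathcal{L})$, the condition $\mathcal{L}\varphi\in L^2_\rho$ combined with the definition of $\mathcal{L}$ yields $-(\rho\varphi')'=\rho(\mathcal{L}\varphi+k\varphi)$ as an identity in $L^2_{loc}(0,\infty)$, so $\rho\varphi'$ is absolutely continuous on $(0,\infty)$. For any $0<\eps<R<\infty$, ordinary integration by parts then gives
\[
\int_\eps^R (\mathcal{L}\varphi)\psi\rho\,dy \;=\; -\bigl[\rho\varphi'\psi\bigr]_\eps^R + \int_\eps^R \rho(\varphi'\psi'-k\varphi\psi)\,dy.
\]
Each of $\rho(\mathcal{L}\varphi)\psi$, $\rho\varphi'\psi'$ and $\rho\varphi\psi$ belongs to $L^1(0,\infty)$ by Cauchy-Schwarz in $L^2_\rho$, so dominated convergence passes both bulk integrals to their full-line counterparts as $\eps\to 0$ and $R\to\infty$. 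The remaining task is to show that the boundary term $h(y):=\rho(y)\varphi'(y)\psi(y)$ has vanishing limits at $0$ and $+\infty$.

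First, $h\in L^1(0,\infty)$ by Cauchy-Schwarz, and
\[
h' \;=\; (\rho\varphi')'\psi+\rho\varphi'\psi' \;=\; -\rho(\mathcal{L}\varphi+k\varphi)\psi+\rho\varphi'\psi'
\]
lies in $L^1(0,\infty)$ for the same reason. Hence $h\in W^{1,1}(0,\infty)$, so $h$ extends to a continuous function on $[0,+\infty]$ with finite limits $h(0^+)$ and $h(+\infty)$, and the $L^1$-integrability together with the existence of a limit at infinity forces $h(+\infty)=0$.

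The main obstacle, and the single point at which $\alpha\ge 1$ enters, is showing $h(0^+)=0$. Here I would invoke the sharp local bound from Lemma~\ref{LemImbedd}, namely $|\psi(y)|\le C_1\zeta(y)$ on $(0,1]$ with $\zeta(y)=y^{(1-\alpha)/2}$ when $\alpha>1$ and $\zeta(y)=(1+|\log y|)^{1/2}$ when $\alpha=1$. If $h(0^+)\ne 0$, then on some interval $(0,\delta)$ one would have $\rho(y)|\varphi'(y)|\ge c/\zeta(y)$ for some $c>0$, and therefore
\[
\rho(y)|\varphi'(y)|^2 \;\ge\; \frac{c^2}{\rho(y)\zeta(y)^2}.
\]
A direct computation yields $\rho\zeta^2=y\,e^{-y^2/4}$ when $\alpha>1$ and $\rho\zeta^2=y(1+|\log y|)e^{-y^2/4}$ when $\alpha=1$; in both cases the resulting lower bound on $\rho|\varphi'|^2$ fails to be integrable on $(0,\delta)$, contradicting $\varphi\in H^1_\rho$. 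Hence $h(0^+)=0$, the boundary terms vanish in the limit, and the identity \eqref{selfadj} follows. The symmetry of $\mathcal{L}$ on $D(\mathcal{L})$ is then immediate from the symmetry in $(\varphi,\psi)$ of the right-hand side of \eqref{selfadj}.
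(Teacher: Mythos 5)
Your proof is correct, and it reaches the same integration-by-parts identity as the paper but disposes of the boundary terms by a different mechanism. The paper does not show that $\rho\varphi'\psi$ has limits at the endpoints; instead it exploits $\rho{\varphi'}^2,\rho\psi^2\in L^1(0,\infty)$ to extract sequences $\eps_j\to 0$ and $R_j\to\infty$ along which $\eps_j[\rho{\varphi'}^2](\eps_j)\to 0$ and $[\rho({\varphi'}^2+\psi^2)](R_j)\to 0$, and then combines the resulting bound $\eps_j^{(1+\alpha)/2}\varphi'(\eps_j)\to 0$ with the pointwise estimate \eqref{selfadj2int} to make the boundary term vanish along those particular sequences; the limit in the identity is then taken only along $(\eps_j,R_j)$. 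You instead observe that $h=\rho\varphi'\psi$ lies in $W^{1,1}(0,\infty)$ (using $-(\rho\varphi')'=\rho(\mathcal{L}\varphi+k\varphi)\in L^2_\rho\cdot\rho$ and Cauchy--Schwarz), so that genuine limits $h(0^+)$ and $h(+\infty)$ exist, and then rule out nonzero limits: at infinity by $L^1$-integrability, and at the origin by the contradiction $\rho{|\varphi'|}^2\ge c^2/(\rho\zeta^2)\notin L^1(0,\delta)$, which you correctly verify in both cases $\alpha>1$ and $\alpha=1$ of Lemma~\ref{LemImbedd}. Both routes use exactly the same ingredients ($L^1$ membership of the weighted squares and the local bound $|\psi|\le C\zeta$), but yours proves the stronger statement that the boundary function actually tends to zero, whereas the paper only needs (and only proves) decay along subsequences; your version is slightly longer but arguably more transparent, and in particular it handles the $\alpha=1$ case cleanly.
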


\begin{proof}
 Fix $0<\eps<1<R$. We have
\be{selfadj2}
\int_\eps^R [(\mathcal{L}\varphi)\psi-\rho(\varphi'\psi'-k\varphi\psi)]\rho\,dy
=\int_\eps^R [-(\rho \varphi')'\psi-\rho \varphi'\psi']\,dy
= \bigl[\rho \varphi'\psi\bigr]_\eps^R.
\ee
On the other hand, since $\rho{\varphi'}^2, \rho\psi^2\in L^1(0,\infty)$, 
there exist sequences $\eps_j\to 0^+$ and $R_j\to\infty$ such that
$\eps_j[\rho{\varphi'}^2](\eps_j)\to 0$ and $[\rho({\varphi'}^2+\psi^2)](R_j)\to 0$ as $j\to\infty$. 
Therefore, $\eps_j^{(1+\alpha)/2} \varphi'(\eps_j)\to 0$ and $[\rho\varphi'\psi](R_j)\to 0$, as $j\to\infty$.
By \eqref{selfadj2int}, we deduce that
$$
|\rho \varphi'\psi(\eps_j)|=
\begin{cases}
o\bigl(\eps_j^\alpha\eps_j^{-(1+\alpha)/2}\eps_j^{(1-\alpha)/2}\bigr)=o(1),&\hbox{if $\alpha>1$},\\
\noalign{\vskip 1.5mm}
 o\bigl(\eps_j\eps_j^{-1}|\log\eps_j|^{-1/2}\bigr)=o\bigl(\log\eps_j|^{-1/2}\bigr),&\hbox{if $\alpha=1$}.
\end{cases}
$$
Upon taking $\eps=\eps_j$ and $R=R_j$ in \eqref{selfadj2} and letting $j\to\infty$, this yields \eqref{selfadj}.
\end{proof}

\begin{proof}[Proof of Proposition~\ref{GaussianPoincare2}] We may assume $k=0$ without loss of generality.

\smallskip

(i) It follows from the Lax-Milgram or the Riesz representation theorem that, for all $f\in L^2_\rho$, 
there exists a unique solution $u\in H^1_\rho$ of $\mathcal{L} u+u =f$.
Indeed, this equation is equivalent to 
 \be{AGaussian2} 
 (u, \varphi)_{H^1_\rho}\equiv (u',\varphi')+(u,\varphi)=(f,\varphi)\quad\hbox{ for all $\varphi\in H^1_\rho$.}
\ee
Let $T$ be the solution operator $T:L^2_\rho \to L^2_\rho$, $f\mapsto  u$.
Taking $\varphi=u$ in \eqref{AGaussian2} and using the Cauchy-Schwarz inequality, we obtain $\|Tf\|_{H^1_\rho}\le \|f\|$,
hence $T$ is continuous.
Furthermore, it follows from Lemma~\ref{GaussianPoincare1b} that $T$ is compact.

Since $T$ is self-adjoint owing to Lemma~\ref{GaussianPoincare3}, it then follows from the spectral theorem that there exists a Hilbert basis of $L^2_\rho$ made of eigenfunctions of $T$
and this immediately provides the desired result for $\mathcal{L}$.

\smallskip

(ii)(iii) For $\Lambda\in\R$, we look for a solution of $\mathcal{L}\ph=\Lambda\ph$ under (normalized) polynomial form
$\ph(y)=\sum_{i=0}^m a_i y^i$, $a_m=1,$
with $m\in \N$. Note that such a $\ph$ belongs to $H^1_\rho$. We compute
$$\begin{aligned}
(\Lambda\ph-\mathcal{L}\ph)(y)
&=\sum_{i=0}^m i(i-1)a_i y^{i-2}
+\bigl(\ts\frac{\alpha}{y}-\frac{y}{2}\bigr) \ds\sum_{i=0}^m ia_i y^{i-1}
+\Lambda\sum_{i=0}^m a_i y^i \\
&=\sum_{i=1}^m i(i-1+\alpha)a_i y^{i-2}
+\sum_{i=0}^m \bigl(\Lambda-\ts\frac{i}{2}\bigr)a_i y^i \\
&=\sum_{i=-1}^{m-2} (i+2)(i+1+\alpha)a_{i+2} y^i
+\sum_{i=0}^m \bigl(\Lambda-\ts\frac{i}{2}\bigr)a_i y^i \\
&=\alpha a_1 y^{-1}+\sum_{i=m-1}^m \bigl(\Lambda-\ts\frac{i}{2}\bigr)a_i y^i
+\ds\sum_{i=0}^{m-2} \bigl[(i+2)(i+1+\alpha)a_{i+2}+\bigl(\Lambda-\ts\frac{i}{2}\bigr)a_i\bigr] y^i.
\end{aligned}
$$
The conditions for $\ph$ to be a solution of $\mathcal{L}\ph=\Lambda\ph$ are thus
\be{eq1}
a_1=0, 
\qquad\Lambda-\frac{m}{2}=0,
\qquad \bigl(\Lambda-\ts\frac{m-1}{2}\bigr)a_{m-1} =0
\ee
and
\be{eq2}
(i+2)(i+1+\alpha)a_{i+2}+\bigl(\Lambda-\ts\frac{i}{2}\bigr) a_i,\quad 0\le i\le m-2.
\ee
Condition \eqref{eq1} amounts to
$\Lambda=\frac{m}{2}$, $a_1=a_{m-1}=0$,
which implies in particular $m\ne 1$.
Condition \eqref{eq2} then implies that $a_i=0$ for all odd $i$, hence in particular $m$ must be even.
Now, for each even $m\in\N$, rewriting \eqref{eq2} as
$$a_i=-\frac{2(i+2)(i+1+\alpha)}{m-i}a_{i+2},\quad 0\le i\le m-2,$$
and starting from $i=m-2$ and $a_m=1$, this (uniquely) determines $a_{m-2},\dots,a_0$.
Setting $m=2j$, for each $j\in\N$, we have thus found an eigenfunction associated with the eigenvalue 
$\Lambda_j=j$. It is of the form $\varphi(y)=\sum_{i=0}^j b_{j,i} \, y^{2i}$, 
and \eqref{eq2} yields \eqref{Pn2}, so that in particular $b_{j,i}\ne 0$ for all $i=0,\dots,j$.
In particular, we may uniquely normalize $\varphi$ by $\|\varphi_j\|=1$ and $\varphi_j(0)>0$.
Moreover, the sign of $b_{j,j}$ is~$(-1)^j$, in view of \eqref{Pn2}.

Let us check that the family $\{P_j,\ j\in \N\}$ is total.
This will guarantee that there can be no eigenfunction which is linearly independent of the $P_j$,
hence no other eigenvalue, and will conclude the proof.
Thus assume that $\varphi\in L^2_\rho(0,\infty)$ is orthogonal to all $P_j$.
We want to show that $\varphi\equiv 0$.
For each $j\in\N$, since $y^{2j}$ can be expressed as a linear combination of $P_0,\dots,P_j$, 
it follows that $\int_0^\infty \rho(y)\varphi(y)y^{2j}\,dy=0$.
Now setting $\tilde\rho(x)=|x|^\alpha e^{-x^2/4}$ and $\tilde \varphi(x)=\varphi(|x|)$ for all $x\in \R$,
we have $\int_{\R} \tilde\rho(x)\tilde \varphi(x)x^m\,dx=0$ for all $m\in\N$
(noting that the integrand is an odd function when $m$ is odd).
Since the function $g(x)=|x|^{\alpha/2} e^{-x^2/8}\tilde \varphi(x)$ belongs to $L^2(\R)$ by assumption,
so does the function $h(x)=|x|^\alpha e^{-x^2/6}\tilde \varphi(x)$, and $h$ satisfies
$\int_{\R} h(x)x^m e^{-x^2/12}\,dx=\int_{\R} \tilde\rho(x)\tilde \varphi(x)x^m\,dx=0$ for all $m\in\N$.
As a consequence of \cite[Theorem~5.7.1]{Sz}, we deduce that $h\equiv 0$, hence $\varphi\equiv 0$.

\smallskip

(iv) This is an immediate consequence of general properties of orthogonal polynomials
(cf.~\cite[Theorem~3.3.1]{Sz}) and of the fact that $\varphi_j$ is even and $\varphi_j(0)\ne 0$.
\end{proof}

\begin{proof}[Proof of Proposition~\ref{EstimEigen}]
Since $\varphi_0$ is a constant, it is obviously sufficient to show the result for $j\ge 1$.
Set $\varphi=\varphi_j$.
By \eqref{AGaussian2a} with $\psi=\varphi$, we have $\|\varphi'\|^2=j$. 
For $R\ge 2$ and $0<\eps\le 1$, by Sobolev imbedding, we deduce that
\be{EstimEigen2}
\sup_{[\eps,R]} \varphi^2 \le 2\int_\eps^R [\varphi^2+{\varphi'}^2]\,dy\le 
2e^{R^2/4}\eps^{-\alpha} \int_0^\infty [\varphi^2+{\varphi'}^2]\rho\,dy\le Cje^{R^2/4}\eps^{-\alpha}.
\ee
Since $\varphi(0)>0$, $\varphi'(0)=0$ and 
$\varphi''(0)=-\frac{j}{\alpha+1}\varphi(0)<0$, we have $\varphi'(y)<0$ for $y>0$ small.
Let $y_0=\sqrt{2\alpha}$ and $y_*=\sup\{y\in (0,y_0);\, \varphi'<0 \hbox{ in } (0,y)\}$.
By \eqref{selfadj2int}, we have 
\be{EstimEigen3}
|\varphi(y)|\le Cj^{1/2}y^{(1-\alpha)/2},\quad 0<y\le y_0,
\ee
hence
$$\varphi''=-j \varphi+\Bigl(\frac{y}{2}-\frac{\alpha}{y}\Bigr)\varphi'\ge- j \varphi\ge -Cj^{3/2}y^{(1-\alpha)/2},\quad 0<y\le y_*.$$
Since $\alpha<3$, an integration gives $0\ge\varphi'\ge -Cj^{3/2}y^{(3-\alpha)/2}$ on $(0,y_*]$
and then
$$\varphi(0) \le \varphi(y_*)+Cj^{3/2}y_*^{(5-\alpha)/2}\le \varphi(y_*)+Cj^{3/2}.$$
If $y_*<y_0$, then $\varphi'(y_*)=0$ and $\varphi''(y_*)\ge 0$, hence $\varphi(y_*)\le 0$, 
whereas if $y_*=y_0$, then $\varphi(y_*)\le Cj^{1/2}$ by \eqref{EstimEigen3}.
In both cases, we get $\varphi(0)\le Cj^{3/2}$. Since $[\varphi^2+j^{-1}{\varphi'}^2]'=j^{-1}(\frac{y}{2}-\frac{\alpha}{y}){\varphi'}^2\le 0$
on $(0,y_0]$ and $\varphi'(0)=0$, we thus deduce that
$\sup_{[0,y_0]} |\varphi| = \varphi(0)\le Cj^{3/2}$. This combined with \eqref{EstimEigen2} guarantees the first part of \eqref{EstimEigen1}.
Going back to \eqref{AGaussian4}, we then obtain
$$ y^\alpha e^{-y^2/4}|\varphi'(y)|=j\Bigl|\int_0^y \varphi(z) z^\alpha e^{-z^2/4}dz\Bigr|
\le C j^{5/2} y^{\alpha+1},\quad 0<y<1$$
and, using $\lim_{y\to\infty}y^\alpha e^{-y^2/4}\varphi'(y)=0$  (since $\varphi$ is a polynomial),
$$y^\alpha e^{-y^2/4}|\varphi'(y)|=j\Bigl|\int_y^\infty \varphi(z) z^\alpha e^{-z^2/4}dz\Bigr|
\le C j^{5/2} \int_y^\infty z^\alpha e^{-z^2/8}dz\le Cj^{5/2}  y^{\alpha-1} e^{-y^2/8},\quad  y\ge 1.$$
 Combining the last two inequalities, we get the second part of \eqref{EstimEigen1}.
\end{proof}

\subsection{Semigroup properties for the linearized operator}

In view of the construction of appropriate solutions of the semilinear equation \eqref{eqz} satisfied by $v=w-U$,
we shall need good semigroup properties
for the inhomogeneous problem:
\be{eqinhomog}
\left\{\ 
\begin{aligned}
v_s&=-\mathcal{L}v+f(y,s),&&\quad y>0,\ s_0<s<s_1, \\
v(y,s_0) &=v_0(y),&&\quad  y>0.
\end{aligned}
\right.
\ee
Due to the expected boundary singularities for the solution of \eqref{eqz},
 the corresponding data $f(s)$ will not belong to the basic space $L^2_\rho$
 and it turns out that a good working space for our purposes is provided by $H'$,
 the topological dual space of $H:=H^1_\rho(0,\infty)$.
The first order of matters is thus to collect the relevant properties of the semigroup 
$(e^{-s\mathcal{L}})_{s\ge 0}$ on $H'$.

Assume $\alpha\ge 1$, $k\in\R$ and let $(\varphi_j)_{j\in\N}$ be the Hilbert basis of $L^2_\rho$ made of eigenfunctions of $\mathcal{L}$, obtained in 
Proposition~\ref{GaussianPoincare2},
 and $\lambda_j=j-k$ the corresponding eigenvalues. 
Firstly, the semigroup $(e^{-s\mathcal{L}})_{s\ge 0}$ 
is defined on $L^2_\rho$ in the standard way by 
\be{defSG1}
e^{-s\mathcal{L}}\varphi
=\sum_{j=0}^\infty e^{-\lambda_j s} (\varphi,\varphi_j)\,\varphi_j,\quad \varphi\in L^2_\rho.
\ee
Denoting by $\langle \cdot,\cdot \rangle$ the duality pairing between $H'$ and $H$,
the semigroup $(e^{-s\mathcal{L}})_{s\ge 0}$ is then extended to $H'$ by setting
\be{defSG2}
e^{-s\mathcal{L}}\phi=\sum_{j=0}^\infty e^{-\lambda_j s}\langle \phi,\varphi_j\rangle\,\varphi_j,\quad\phi\in H'
\ee
(note that this is of course consistent since, with the usual abuse of notation, the element of $\phi\in H'$ associated with 
a given $\varphi\in L^2_\rho$ is given by $\langle \phi,\psi\rangle=(\varphi,\psi)$, $\psi\in H$).
The properties of $(e^{-s\mathcal{L}})_{s\ge 0}$ on $H'$ are summarized in the following.

\begin{prop} \label{propSGHprime}
Let $\alpha\ge 1$ and $k\in\R$ and let $e^{-s\mathcal{L}}$ be defined by \eqref{defSG2}.

\smallskip

(i) $(e^{-s\mathcal{L}})_{s\ge 0}$ is a strongly continuous semigroup on $H'$ with $\|e^{-s\mathcal{L}}\|_{L(H')}\le e^{ks}$.

\smallskip

(ii) Let $\phi\in H'$ and set $W(y,s):=[e^{-s\mathcal{L}}\phi](y)$. Then 
\be{regSG1}
W \in C([0,\infty);H')\cap C^1((0,\infty);D(\mathcal{L}))
\ee 
and $W$ is a solution of $W_s+\mathcal{L}W=0$ 
with $W(0)=\phi$.
If moreover $\phi\in X$ with $X=L^2_\rho$ or $X=H$, then $W \in C([0,\infty); H)$.
\end{prop}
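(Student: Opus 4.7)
The strategy is to build a spectral Parseval identity for the $H'$ norm by constructing an orthonormal basis of $H=H^1_\rho$ from the $\varphi_j$, then read off all the asserted properties from the resulting Fourier representation.

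First, I would observe that taking $\psi=\varphi_j$ in \eqref{AGaussian2a} yields $\|\varphi_j'\|^2=\lambda_j+k=j$, hence $\|\varphi_j\|_H^2=1+j$. Setting $\psi_j:=\varphi_j/\sqrt{1+j}$, and using \eqref{AGaussian2a} with $\psi=\varphi$ for an arbitrary $\varphi\in H$, one obtains the identity $(\varphi,\varphi_j)_H=(1+j)(\varphi,\varphi_j)_{L^2_\rho}$. This shows that $\{\psi_j\}$ is orthonormal in $H$, and completeness is immediate: if $(\varphi,\psi_j)_H=0$ for all $j$ then $(\varphi,\varphi_j)_{L^2_\rho}=0$ for all $j$, so $\varphi=0$ by Proposition~\ref{GaussianPoincare2}(i). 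Via the Riesz isomorphism this yields the Parseval-type characterization
\[
\|\phi\|_{H'}^2=\sum_{j=0}^\infty \langle\phi,\psi_j\rangle^2
=\sum_{j=0}^\infty \frac{|\langle\phi,\varphi_j\rangle|^2}{1+j},\qquad \phi\in H'.
\]

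For part (i), using $(\varphi_j,\psi_i)_{L^2_\rho}=\delta_{ij}/\sqrt{1+i}$ in the defining series \eqref{defSG2} gives the clean formula
\[
\langle e^{-s\mathcal{L}}\phi,\psi_i\rangle=e^{-\lambda_i s}\langle\phi,\psi_i\rangle,\qquad i\in\N.
\]
This already shows that the partial sums of \eqref{defSG2} are Cauchy in $H'$ with
\[
\|e^{-s\mathcal{L}}\phi\|_{H'}^2=\sum_{j=0}^\infty e^{-2\lambda_j s}\langle\phi,\psi_j\rangle^2\le e^{2ks}\|\phi\|_{H'}^2,
\]
since $\lambda_j=j-k\ge -k$. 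The semigroup law follows by comparing Fourier coefficients of $e^{-(s+t)\mathcal{L}}\phi$ and $e^{-s\mathcal{L}}(e^{-t\mathcal{L}}\phi)$ on the basis $\{\psi_i\}$. Strong continuity is obtained by dominated convergence applied to $\sum(e^{-\lambda_j s}-e^{-\lambda_j s_0})^2\langle\phi,\psi_j\rangle^2$, the summand tending to $0$ pointwise and bounded by $4e^{2k(s_0+1)}\langle\phi,\psi_j\rangle^2$ for $s$ near $s_0$.

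For part (ii), I would exploit the super-polynomial decay of $e^{-\lambda_j s}$ for any fixed $s>0$. Since $\langle\phi,\varphi_j\rangle=\sqrt{1+j}\,\langle\phi,\psi_j\rangle$, the series defining $W(s)$, $\mathcal{L}W(s)$ and even $\mathcal{L}^n W(s)$ all converge absolutely in $L^2_\rho$ with norms bounded by $(\sup_j(1+j)\lambda_j^{2n}e^{-2\lambda_j s})^{1/2}\|\phi\|_{H'}$, which is finite. Thus $W(s)\in D(\mathcal{L})$ for $s>0$, $W_s=-\mathcal{L}W$ by term-by-term differentiation, and $W\in C^1((0,\infty);D(\mathcal{L}))$ follows from another dominated convergence argument on the Fourier coefficients. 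Continuity at $s=0$ in $H'$ is part~(i). Finally, if $\phi\in H$ then $\sum(1+j)\langle\phi,\psi_j\rangle^2=\|\phi\|_H^2<\infty$, and
\[
\|W(s)-\phi\|_H^2=\sum_{j=0}^\infty (1+j)(1-e^{-\lambda_j s})^2\langle\phi,\psi_j\rangle^2\To 0\quad\hbox{as }s\To 0,
\]
again by dominated convergence; the analogous argument in $L^2_\rho$ handles the case $\phi\in L^2_\rho$. The main subtlety, and the step requiring the most care, is the preliminary identification of the $H$-orthonormal basis $\{\psi_j\}$ and the corresponding representation of $H'$, since the $\varphi_j$ themselves are only orthonormal in $L^2_\rho$ and one must check carefully that the spectral pairing $\langle e^{-s\mathcal{L}}\phi,\cdot\rangle$ remains well-defined when $\phi$ is only a distribution in $H'$.
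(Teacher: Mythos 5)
Your proposal is correct and follows essentially the same route as the paper: both construct the $H$-orthonormal basis $\hat\varphi_j=(1+j)^{-1/2}\varphi_j$ from \eqref{AGaussian2a}, use the resulting coefficient representation of $H'$ (the paper via Riesz representation and Cauchy--Schwarz, you via the equivalent Parseval identity $\|\phi\|_{H'}^2=\sum_j\langle\phi,\psi_j\rangle^2$), and then read off the norm bound, semigroup law, strong continuity and the regularity in (ii) from the exponential decay of $e^{-\lambda_j s}$. One harmless slip: since $\langle\phi,\psi_j\rangle=(\phi,\psi_j)_{L^2_\rho}$ while $(\phi,\psi_j)_H=(1+j)\langle\phi,\psi_j\rangle$, the correct weight in your final display is $(1+j)^2$ for the $H$-norm (your weight $(1+j)$ computes the $L^2_\rho$-norm); the dominated convergence conclusion is unaffected in either case.
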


Although the result more or less follows from general semigroup theory (see, e.g.,~\cite{Hen81}),
we give a short proof for convenience and self-containedness.

\begin{proof}
Set $\hat\varphi_j=(1+j)^{-1/2}\varphi_j$.
By \eqref{AGaussian2a}, we have 
$$(\hat\varphi_i,\hat\varphi_j)_H=(1+i)(\hat\varphi_i,\hat\varphi_j)=
(1+i)^{1/2}(1+j)^{-1/2}(\varphi_j,\varphi_j)=\delta_{ij},$$
 hence $(\hat\varphi_j)$ is a Hilbert basis of~$H$.
Let $\mathcal{T}_{\hat\varphi_j}\in H'$ be defined by $\langle\mathcal{T}_{\hat\varphi_j},\psi\rangle=(\hat\varphi_j,\psi)_H$
for all $\psi\in H$. Then $\langle\mathcal{T}_{\hat\varphi_j},\psi\rangle=(1+j)(\hat\varphi_j,\psi)$, hence
\be{dualitySG1}
\langle \phi,\varphi_j\rangle\varphi_j=\langle \phi,\hat\varphi_j\rangle(1+j)\hat\varphi_j
=\langle \phi,\hat\varphi_j\rangle\mathcal{T}_{\hat\varphi_j}.
\ee
On the other hand, by the Riesz representation theorem, there exists $f\in H$ such that 
$a_j:=\langle \phi,\hat\varphi_j\rangle=(f,\hat\varphi_j)_H$ for all $j\in\N$, and
$\sum_{j=0}^\infty |a_j|^2=\|f\|^2_{H}=\|\phi\|^2_{H'}$. Using \eqref{dualitySG1} and the Cauchy-Schwarz inequality, it follows that
$$\begin{aligned}
\sum_{j=0}^\infty \|e^{-\lambda_j s}\langle \phi,\varphi_j\rangle\,\varphi_j\|_{H'}
&\le e^{ks} \sup_{\psi\in \overline B_H}\sum_{j=0}^\infty |\langle \phi,\varphi_j\rangle\,(\varphi_j,\psi)|
\le e^{ks}\sup_{\psi\in \overline B_H}\sum_{j=0}^\infty |a_j\,(\hat\varphi_j,\psi)_H| \\
&\le e^{ks}\Bigl(\sum_{j=0}^\infty|a_j|^2\Bigr)^{1/2} \sup_{\psi\in \overline B_H}\Bigl(\sum_{j=0}^\infty|(\hat\varphi_j,\psi)_H|^2\Bigr)^{1/2}
\le e^{ks} \|\phi\|_{H'}, 
\end{aligned}$$
hence \eqref{defSG2} converges in $H'$ for each $s\ge 0$, and $e^{-s\mathcal{L}}\in L(H')$
 with $\|e^{-s\mathcal{L}}\|_{L(H')}\le e^{ks}$.
Writing
$\langle \phi,\psi\rangle
=\langle \phi, \sum_{j=0}^\infty(\psi,\hat\varphi_j)_H\,\hat\varphi_j \rangle
=\sum_{j=0}^\infty \langle \phi, \hat\varphi_j \rangle (\psi,\hat\varphi_j)_H
=\sum_{j=0}^\infty \langle \phi, \varphi_j \rangle \,(\psi,\varphi_j)
=\langle \sum_{j=0}^\infty \langle \phi, \varphi_j \rangle\,\varphi_j,\psi \rangle$
for all $\psi\in H$, we then get $e^{-s\mathcal{L}}=I$ for $s=0$,
and the semigroup property $e^{-s\mathcal{L}}e^{-t\mathcal{L}}=e^{-(s+t)\mathcal{L}}$
follows immediately from \eqref{defSG2} and $(\varphi_j,\varphi_j)=\delta_{ij}$.

\smallskip

Next, for all $s,t\ge 0$, we have
$$\begin{aligned}
\|e^{-s\mathcal{L}}\phi-e^{-t\mathcal{L}}\phi\|_{H'}
&\le\sum_{j=0}^\infty \|(e^{-\lambda_j s}-e^{-\lambda_j t})\langle \phi,\varphi_j\rangle\,\varphi_j\|_{H'}
\le\sup_{\psi\in \overline B_H}\sum_{j=0}^\infty 
|e^{-\lambda_j s}-e^{-\lambda_j t}|\,|a_j\,(\hat\varphi_j,\psi)_H| \\
&\le \Bigl(\sum_{j=0}^\infty|e^{-\lambda_j s}-e^{-\lambda_j t}|^2|a_j|^2\Bigr)^{1/2}\to 0,\quad 
\hbox{ as } t\to s,
\end{aligned}$$
which yields the strong continuity of the semigroup, i.e., the first part of \eqref{regSG1}. 
The remaining properties follow from standard computations
justified by the decay of the exponential factors $e^{-\lambda_js}\le Ce^{-js}$.
\end{proof}

We have the following variation of constants formula for problem \eqref{eqinhomog} with data in $H'$.

\begin{prop} \label{varconstf}
Let $\alpha\ge 1$, $k\in\R$.

\smallskip

(i) Let $v_0\in H'$,  $f\in C_b((s_0,s_1];H')$ and assume that $v\in C([s_0,s_1];H')$ $\cap\ C^1((s_0,s_1];H')\cap C((s_0,s_1];H)$ is a solution of \eqref{eqinhomog}.
Then $v$ is given by 
\be{varconstv}
v(s)=e^{-(s-s_0)\mathcal{L}}v_0+\int_{s_0}^s e^{-(s-\tau)\mathcal{L}}f(\tau)\,d\tau, \quad s_0<s<s_1,
\ee
where the integral is valued in $H'$.

\smallskip

(ii) Let $v_0\in D(\mathcal{L})$,  $f\in C([s_0,s_1];D(\mathcal{L}))$ and let $v$ be given by \eqref{varconstv}.
Then $v\in C([s_0,s_1];D(\mathcal{L}))\cap\ C^1([s_0,s_1];L^2_\rho)$ 
and $v$ is a solution of \eqref{eqinhomog}.
\end{prop}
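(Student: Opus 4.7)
For part~(i), the plan is to project both sides of the claimed identity onto the eigenfunctions $\varphi_j$ and exploit the fact that $\{\varphi_j\}$ separates $H'$. Fix $s\in(s_0,s_1]$ and, for each $j\in\N$, set $c_j(\tau):=\langle v(\tau),\varphi_j\rangle$ for $\tau\in[s_0,s]$; the hypotheses on $v$ give $c_j\in C([s_0,s])\cap C^1((s_0,s])$. The key step is the identity
\[
\langle \mathcal{L}v(\tau),\varphi_j\rangle=\lambda_j c_j(\tau),\qquad \tau\in(s_0,s],
\]
which holds because $v(\tau)\in H$, so by the weak definition of $\mathcal{L}$ one has $\langle \mathcal{L}v(\tau),\varphi_j\rangle=(v'(\tau),\varphi_j')-k(v(\tau),\varphi_j)$, and this equals $\lambda_j(v(\tau),\varphi_j)=\lambda_j c_j(\tau)$ by symmetry of the bilinear form together with the eigenfunction relation~\eqref{AGaussian2a}. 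Testing $v_s+\mathcal{L}v=f$ against $\varphi_j$ then yields the scalar linear ODE $c_j'+\lambda_j c_j=f_j$ on $(s_0,s]$, with $f_j(\tau):=\langle f(\tau),\varphi_j\rangle$ bounded by the $C_b$ assumption, whose unique solution is
\[
c_j(s)=e^{-\lambda_j(s-s_0)}c_j(s_0)+\int_{s_0}^s e^{-\lambda_j(s-\tau)}f_j(\tau)\,d\tau.
\]
By the definition~\eqref{defSG2}, this right-hand side coincides with the pairing of $e^{-(s-s_0)\mathcal{L}}v_0+\int_{s_0}^s e^{-(s-\tau)\mathcal{L}}f(\tau)\,d\tau$ against $\varphi_j$, where the Bochner integral in $H'$ is well defined thanks to $\|e^{-\sigma\mathcal{L}}\|_{L(H')}\le e^{k\sigma}$ (Proposition~\ref{propSGHprime}(i)) and the boundedness of $f$. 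Since $(\hat\varphi_j)$ with $\hat\varphi_j=(1+j)^{-1/2}\varphi_j$ is a Hilbert basis of $H$, any element of $H'$ is determined by its pairings with the $\varphi_j$, which yields~\eqref{varconstv}.

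For part~(ii), I would treat $v_1(s):=e^{-(s-s_0)\mathcal{L}}v_0$ and $v_2(s):=\int_{s_0}^s e^{-(s-\tau)\mathcal{L}}f(\tau)\,d\tau$ separately. Since $v_0\in D(\mathcal{L})$, expanding $v_0=\sum_j a_j\varphi_j$ with $\sum_j\lambda_j^2|a_j|^2=\|\mathcal{L}v_0\|^2<\infty$ and using the uniform bound $\sup_{s\in[s_0,s_1]}\sum_j\lambda_j^2 e^{-2\lambda_j(s-s_0)}|a_j|^2<\infty$ gives, by termwise differentiation, $v_1\in C([s_0,s_1];D(\mathcal{L}))\cap C^1([s_0,s_1];L^2_\rho)$ with $v_1'=-\mathcal{L}v_1$. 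For $v_2$, since $\mathcal{L}$ commutes with $e^{-\sigma\mathcal{L}}$ on $D(\mathcal{L})$ and $f\in C([s_0,s_1];D(\mathcal{L}))$, one obtains
\[
\mathcal{L}v_2(s)=\int_{s_0}^s e^{-(s-\tau)\mathcal{L}}\mathcal{L}f(\tau)\,d\tau\in L^2_\rho,
\]
continuous in $s$. The splitting
\[
\frac{v_2(s+h)-v_2(s)}{h}=\frac{1}{h}\int_s^{s+h}e^{-(s+h-\tau)\mathcal{L}}f(\tau)\,d\tau+\int_{s_0}^s\frac{e^{-(s+h-\tau)\mathcal{L}}-e^{-(s-\tau)\mathcal{L}}}{h}f(\tau)\,d\tau
\]
and passage to the limit in $L^2_\rho$ (using continuity of $f$ for the first term and the convergence $h^{-1}(e^{-h\mathcal{L}}-I)\psi\to-\mathcal{L}\psi$ in $L^2_\rho$ for $\psi\in D(\mathcal{L})$ for the second) then yields $v_2'(s)=f(s)-\mathcal{L}v_2(s)$, so that $v=v_1+v_2$ solves~\eqref{eqinhomog}.

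The main obstacle will be the rigorous justification of the spectral identity $\langle \mathcal{L}v(\tau),\varphi_j\rangle=\lambda_j c_j(\tau)$ in part~(i): here $\mathcal{L}v(\tau)$ lies only in $H'$, not in $L^2_\rho$, so the pairing must be interpreted via the bilinear form $(v',\varphi_j')-k(v,\varphi_j)$ and matched with the weak eigenvalue relation~\eqref{AGaussian2a}. The symmetry of the bilinear form (Lemma~\ref{GaussianPoincare3}) is exactly the ingredient that allows test function and eigenfunction to be interchanged. Once this identity is in place, the proof of~(i) reduces to scalar ODE analysis on $[s_0,s]$ together with the density of $\{\varphi_j\}$ in $H$, while the proof of~(ii) is then standard semigroup bookkeeping enabled by the spectral representation~\eqref{defSG1}--\eqref{defSG2}.
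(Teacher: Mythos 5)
Your argument is correct, and for part~(i) it follows a genuinely different route from the paper's. The paper proves \eqref{varconstv} by the classical Duhamel/duality trick: it sets $z(\tau)=e^{-(s-\tau)\mathcal{L}}v(\tau)$, establishes $z\in C([s_0,s];H')\cap C^1((s_0,s];H)$, computes $\frac{dz}{d\tau}=e^{-(s-\tau)\mathcal{L}}f(\tau)$ via the product rule (deferring the justification to Lemma~4.1.1 of \cite{CHbook}), and integrates. You instead diagonalize: you test the equation against each $\varphi_j$, reduce to the scalar ODE $c_j'+\lambda_jc_j=f_j$, solve it, and identify the result with the $j$-th coefficient of the right-hand side of \eqref{varconstv} via \eqref{defSG2}, concluding by totality of $\{\varphi_j\}$. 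Both ingredients you need are available in the paper: the identity $\langle\mathcal{L}v(\tau),\varphi_j\rangle=\lambda_j c_j(\tau)$ is immediate from the symmetry of the defining bilinear form $(\varphi',\psi')-k(\varphi,\psi)$ together with \eqref{AGaussian2a} applied with $\psi=v(\tau)\in H$ (you do not actually need Lemma~\ref{GaussianPoincare3}, which concerns $\varphi\in D(\mathcal{L})$), and the fact that an element of $H'$ vanishing against all $\varphi_j$ is zero follows from the Riesz representation and the Hilbert basis $(\hat\varphi_j)$ of $H$ constructed in the proof of Proposition~\ref{propSGHprime}. What your route buys is that it avoids the operator-valued product rule and the regularity claim for $z$, which is the only nontrivial point the paper leaves to a reference; what it costs is that it is tied to the self-adjoint, discrete-spectrum setting, whereas the paper's duality argument would survive for a general generator. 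For part~(ii) your splitting into $v_1$ and $v_2$, the termwise differentiation of the spectral series for $v_1$, and the difference-quotient computation for $v_2$ using commutation of $\mathcal{L}$ with the semigroup on $D(\mathcal{L})$ is exactly the standard argument of \cite[Proposition~4.1.6]{CHbook} that the paper invokes, so there you coincide with the intended proof.
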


\begin{proof} (i) Assume $s_0=0$ without loss of generality and fix $s\in(0,s_1)$.
Let $z(\tau)=e^{-(s-\tau)\mathcal{L}}v(\tau)$. 
Then it is not difficult to show that $z\in C([0,s];H')\cap C^1((0, s];H)$ and that
$$\frac{dz}{d\tau}=e^{-(s-\tau)\mathcal{L}}\mathcal{L}v(\tau)+e^{-(s-\tau)\mathcal{L}}\frac{dv}{d\tau}
=e^{-(s-\tau)\mathcal{L}}f(\tau),\quad 0<\tau<s$$
(see, e.g., the proof of Lemma~4.1.1 in \cite{CHbook} for details). 
Integrating for $\tau\in(\eps,s-\eps)$ with $\eps>0$, we get
$e^{-\eps\mathcal{L}}v(s-\eps)-e^{-(s-\eps)\mathcal{L}}v(\eps)=z(s-\eps)-z(\eps)=\int_\eps^{s-\eps}e^{-(s-\tau)\mathcal{L}}f(\tau)\,d\tau$
and the conclusion follows by letting $\eps\to 0$.

\smallskip

(ii) This follows similarly as in the proof of \cite[Proposition~4.1.6]{CHbook}.
\end{proof}

\begin{rem}
(i) We stress that for $\alpha\ge 1$, the semigroup $e^{-s\mathcal{L}}$, as well as the operator $\mathcal{L}$, 
{\it does not require} any boundary conditions at $y=0$.
On the other hand, for any $\phi\in H'$, the function $W(y,s):=[e^{-s\mathcal{L}}\phi](y)$
{\it automatically} satisfies the Neumann boundary conditions $W_y(0,s)=0$ for all $s>0$
(this a consequence of the fact that $\varphi_{j,y}(0)=0$).

\smallskip

(ii) As for solutions of the inhomogeneous problem \eqref{varconstv}, they may or may not satisfy $v_y(0,s)=0$,
depending on the regularity of the right-hand side $f$ near $y=0$. 
For instance the solution $v=w-U$ of \eqref{eqz} 
corresponding to $w$ in Theorem~\ref{prop:special-LBC}, whose term
$f=F(v_y)\in L^\infty(s_0,s_1;L^\infty)$ is regular, does satisfy $v_y(0,s)=0$
(this follows from Theorem~\ref{REC-mainThm2}).
On the contrary, the solution $v=w-U$ of \eqref{eqz} corresponding to $w$ in Theorem~\ref{mainThm2}, whose term 
$f=F(v_y)$ is singular (cf.~Lemma~\ref{LemvyFvy}), does not satisfy $v_y(0,s)=0$,
but actually $v(0,y)=0$.
\end{rem}

The following lemma will allow us to apply Proposition~\ref{varconstf} to our solutions of the nonlinear problem \eqref{eqz},
taking advantage of suitable bounds satisfied by their right-hand side near the boundary, 
which can be expressed via the weighted spaces
$$L^\infty_{m,q}=\Bigl\{\phi\in L^\infty_{loc}(0,\infty);(y^q+y^{-m})^{-1}\phi\in L^\infty(0,\infty)\Bigr\}, \quad
\|\phi\|_{\infty,m,q}=\|(y^q+y^{-m})^{-1}\phi\|_\infty, \quad m,q\ge 0$$
(note that these spaces also allow polynomial growth at infinity, which will be useful in our construction
-- see below).

\begin{lem} \label{CSHprime}
Let $\alpha\ge 1$, $q\ge 0$ and $0\le m<\frac{\alpha+3}{2}$.
Then $L^\infty_{m,q}\hookrightarrow H'$ and, for all $0\le s_0<s_1<\infty$, we have 
\be{embedH}
C([s_0,s_1];L^2_{loc}(0,\infty))\cap L^\infty(s_0,s_1;L^\infty_{m,q}) \subset C([s_0,s_1];H').
\ee
\end{lem}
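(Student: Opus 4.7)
The plan is to first realize $L^\infty_{m,q}$ as a subspace of $H'=(H^1_\rho)'$ via the natural duality pairing $\langle \phi,\psi\rangle := \int_0^\infty \phi\psi\rho\,dy$. Using the pointwise bound $|\phi(y)|\le \|\phi\|_{\infty,m,q}(y^q+y^{-m})$, split the integral at $y=1$. For $y\ge 1$ the weight $y^{-m}$ is bounded, so Cauchy--Schwarz in $L^2_\rho$ combined with the Gaussian integrability of $y^{2q}\rho$ yields a bound by $C\|\psi\|_H$ with $C$ depending only on $\alpha$ and $q$. For $y\in(0,1)$ the $y^q$ contribution is trivial, and the delicate $y^{-m}$ contribution is treated using the embedding bound of Lemma~\ref{LemImbedd}, namely $|\psi(y)|\le C\zeta(y)\|\psi\|_H$, reducing matters to the finiteness of $\int_0^1 y^{(\alpha+1)/2-m}\,dy$ (plus an absorbable logarithmic factor when $\alpha=1$). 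This integral is finite exactly when $m<(\alpha+3)/2$, which is precisely where the hypothesis on $m$ enters sharply.

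For the second inclusion, fix $\phi\in C([s_0,s_1];L^2_{loc}(0,\infty))\cap L^\infty(s_0,s_1;L^\infty_{m,q})$ and a sequence $s_n\to s$ in $[s_0,s_1]$. To prove $\|\phi(s_n)-\phi(s)\|_{H'}\to 0$, I would, given $\eps>0$, estimate $|\langle \phi(s_n)-\phi(s),\psi\rangle|$ uniformly for $\|\psi\|_H\le 1$ by splitting the integral as $\int_0^\eta+\int_\eta^R+\int_R^\infty$. The two outer pieces are controlled, independently of $n$ and $\psi$, by the $L^\infty(s_0,s_1;L^\infty_{m,q})$--bound on $\phi$ together with the estimates obtained in the first step: one chooses $R$ large enough and $\eta$ small enough that each piece is bounded by $\eps/3$. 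With $\eta$ and $R$ then fixed, $\rho$ is bounded above and away from $0$ on $[\eta,R]$, so Cauchy--Schwarz gives
$$
\Bigl|\int_\eta^R (\phi(s_n)-\phi(s))\psi\rho\,dy\Bigr|\le C_{\eta,R}\|\phi(s_n)-\phi(s)\|_{L^2(\eta,R)}\|\psi\|_H,
$$
which tends to $0$ as $n\to\infty$ by the $L^2_{loc}$--continuity hypothesis.

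The only mild obstacle is the borderline case $\alpha=1$ of Lemma~\ref{LemImbedd}, where the pointwise bound for $\psi$ involves a logarithmic factor $(1+|\log y|)^{1/2}$ rather than a pure power; this is harmless because the hypothesis $m<2=(\alpha+3)/2$ still leaves $1-m>-1$, which absorbs the logarithm in the near-origin integral. No PDE or spectral input beyond the weighted Sobolev embedding of Lemma~\ref{LemImbedd} is needed.
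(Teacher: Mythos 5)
Your proof is correct and follows essentially the same route as the paper: the same split of the duality integral at $y=1$ (resp.\ at $\eta$ and $R$ for the continuity statement), the same use of Lemma~\ref{LemImbedd} to handle the $y^{-m}$ singularity near the origin via $\int_0^1 y^{\frac{\alpha+1}{2}-m}\,dy<\infty$, the same Gaussian-tail/Cauchy--Schwarz bound at infinity, and the same reduction of the middle piece to the $L^2_{loc}$-continuity hypothesis. Your explicit remark about absorbing the logarithmic factor in the borderline case $\alpha=1$ is a small point the paper leaves implicit.
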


\begin{proof}
For all $\phi\in L^\infty_{m,q}$ and $\psi\in H$, using \eqref{selfadj2int}
and the Cauchy-Schwarz' inequality, we  obtain
$$\begin{aligned}
|(\phi,\psi)|
&\le \int_0^1 |\phi\psi|y^\alpha\,dy+ \int_1^\infty |\phi\psi|\rho\,dy
\le C\|\phi\|_{\infty,m,q}\Bigl(\int_0^1 |\psi| y^{\alpha-m}\,dy+\int_1^\infty |\psi|y^q\rho\,dy\Bigr)\\
&\le C\|\phi\|_{\infty,m,q}\|\psi\|_H\Bigl\{\int_0^1 y^{\frac{\alpha+1}{2}-m}\,dy+
\Bigl(\int_1^\infty y^{2q}\rho \,dy\Bigr)^{1/2}\Bigr\}\le C\|\phi\|_{\infty,m,q}\|\psi\|_H,
\end{aligned}$$
hence $L^\infty_{m,q}\hookrightarrow H'$.
Let $f\in  C([s_0,s_1];L^2_{loc}(0,\infty))\cap L^\infty(s_0,s_1;L^\infty_{m,q})$. 
We have $|f(y,s)|\le C(y^q+y^{-m})$ for some $C>0$.
For all $s,t \in[s_0,s_1]$, $\psi\in H$ and $0<\eta<1<R$, using \eqref{selfadj2int}, we get
$$\begin{aligned}
|(f(t)-f(s),\psi)|
&\le C \int_0^\eta |\psi|y^{\alpha-m}\,dy+ \int_\eta^R |f(t)-f(s)| |\psi| \rho\, dy+ C\int_R^\infty |\psi|y^q\rho\,dy \\
&\le C\|\psi\|_H \int_0^\eta y^{\frac{\alpha+1}{2}-m}\,dy+ C\|\psi\| \|f(\cdot,s)-f(\cdot,t)\|_{L^2(\eta,R)}
+ C\|\psi\|\Bigl(\int_R^\infty y^{2q}\rho\,dy\Bigr)^{\frac12},
\end{aligned}$$
hence
$$\|f(t)-f(s)\|_{H'}\le C\eta^{\frac{\alpha+3}{2}-m}+ C \|f(\cdot,s)-f(\cdot,t)\|_{L^2(\eta,R)}
+\Bigl(\int_R^\infty y^{2q}\rho\,dy\Bigr)^{\frac12}.$$
For any given $\eps>0$, we may choose $\eta\in(0,1)$ and $R>0$ such that
$$\|f(t)-f(s)\|_{H'}\le \eps+ C\|f(\cdot,s)-f(\cdot,t)\|_{L^2(\eta,R)}.$$
Next using the 
assumption $f\in C([s_0,s_1];L^2_{loc}(0,\infty))$, it follows that there exists $\nu>0$ such that $|t-s|\le \nu
\Longrightarrow \|f(t)-f(s)\|_{H'}\le 2\eps$. This proves \eqref{embedH}.
\end{proof}

We end this subsection with a local well-posedness and comparison result on problem \eqref{equ}
for initial data with (at most linear) growth at infinity.
This will be useful for the construction of special GBU solutions which have an odd number of intersections
with the singular steady state on $(0,\infty)$ (such solutions must obviously grow at space infinity).
To this end we define the space
\be{defW1}
\mathcal{W}_1=\bigl\{\psi\in W_{loc}^{1,\infty}([0,\infty));\ \psi(0)=0,\ \psi_x\in L^\infty(0,\infty)\bigr\},
\ee
equipped with the norm $\|\psi\|_{\mathcal{W}_1} =\|\psi_x\|_\infty$.

\begin{prop} \label{RemWPw}

\smallskip

 (i) Let $R=\infty$ and $u_0\in \mathcal{W}_1$.  
There exists a unique, maximal classical solution 
$u\in C^{2,1}([0,\infty)\times(0,T^*))\cap C([0,\infty)\times[0,T^*)) \cap L^\infty_{loc}([0,T^*); \mathcal{W}_1)$
of problem \eqref{equ}.
If its maximal existence time $T^*$ is finite, then $\lim_{t\to T^*_-}\|u_x(t)\|_\infty=\infty$.
Moreover the solution operator $u_0\mapsto u(\cdot,t)$ is continuous on $\mathcal{W}_1$.

\smallskip

(ii) Let $x_0\ge 0$, $T>0$ and $D_T=(x_0,\infty)\times(0,T)$. Let $\underline v, \overline v\in C^{2,1}(D_T)\cap C([x_0,\infty)\times[0,T))$ satisfy 
$\underline v_x, \overline v_x\in L^\infty(D_T)$ and 
$$\underline v_t-\underline v_{xx}-|\underline v_x|^p\le\overline v_t-\overline v_{xx}-|\overline v_x|^p
\quad\hbox{ in $D_T$.}$$
If $\underline v\le\overline v$ on $([x_0,\infty)\times\{0\})\cup (\{x_0\}\times(0,T))$, then $\underline v\le\overline v$ in $D_T$.
\end{prop}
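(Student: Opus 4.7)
The plan is as follows. For part~(i), I would use a Duhamel fixed-point argument based on the Dirichlet heat semigroup on $[0,\infty)$. Writing $S(t)$ for this semigroup (realized via odd reflection, legitimate since $u_0(0)=0$), we look for $u$ as a fixed point of
\[
\Phi(u)(t) := S(t)u_0 + \int_0^t S(t-s)|u_x(\cdot,s)|^p\,ds
\]
in the closed ball $\{u:\sup_{[0,T]}\|u_x(\cdot,t)\|_\infty \le 2M\}$ of $C([0,T]; \mathcal{W}_1)$, where $M=\|u_0\|_{\mathcal{W}_1}$. Using the standard gradient bounds $\|\partial_x S(\sigma)f\|_\infty \le C\sigma^{-1/2}\|f\|_\infty$ for $f\in L^\infty$ and $\|\partial_x S(\sigma)f\|_\infty \le \|f_x\|_\infty$ for $f\in \mathcal{W}_1$ (the latter by differentiation under the heat-kernel integral applied to the odd extension), one obtains
\[
\|\partial_x \Phi(u)(t)\|_\infty \le M + C(2M)^p \int_0^t (t-s)^{-1/2}\,ds = M + 2C(2M)^p t^{1/2} \le 2M
\]
for $T$ small. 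The elementary estimate $\bigl||a|^p-|b|^p\bigr|\le p(|a|^{p-1}+|b|^{p-1})|a-b|$ yields contraction on possibly smaller $T$, producing a unique local solution, which parabolic regularity upgrades to $C^{2,1}$ on $[0,\infty)\times(0,T)$. Since the local existence time depends only on $\|u_0\|_{\mathcal{W}_1}$, the maximal solution either is global or satisfies $\lim_{t\to T^*_-}\|u_x(t)\|_\infty=\infty$. Continuous dependence $u_0\mapsto u(\cdot,t)$ on $\mathcal{W}_1$ follows by applying the contraction estimate to the difference of two solutions with nearby initial data on any compact subinterval of $[0,T^*)$.

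For part~(ii), set $w=\underline v-\overline v\in C^{2,1}(D_T)\cap C([x_0,\infty)\times[0,T))$. The mean value theorem in the variable $u_x$ gives
\[
w_t-w_{xx} \le |\underline v_x|^p-|\overline v_x|^p = b(x,t)\,w_x \quad\hbox{ in $D_T$,}
\]
with $|b(x,t)|\le C:=p(\|\underline v_x\|_\infty+\|\overline v_x\|_\infty)^{p-1}$. Moreover, integrating $w_x$ in $x$ and using $w(x_0,t)\le 0$ on the lateral boundary yields the a priori growth bound $w(x,t)\le K(1+x)$ in $D_T$ with $K=\|w_x\|_\infty(1+x_0)$. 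To handle the unboundedness of the strip, introduce $g_\eps(x,t)=\eps(1+x^2)e^{\lambda t}$. Using $|2b(x,t)x|\le \tfrac12\lambda x^2 + 2C^2/\lambda$, one checks that for $\lambda$ large enough depending only on $C$,
\[
g_{\eps,t}-g_{\eps,xx}-bg_{\eps,x}\ge \eps e^{\lambda t}\bigl[\tfrac12\lambda x^2+\bigl(\lambda-2-2C^2/\lambda\bigr)\bigr]\ge 0 \quad\hbox{ in $D_T$.}
\]
Consequently, $w-g_\eps$ is a subsolution of the linear parabolic operator $\partial_t-\partial_{xx}-b\partial_x$ which is strictly negative on the parabolic boundary of $D_T$, and satisfies $w-g_\eps\to-\infty$ as $x\to\infty$ uniformly for $t\in[0,T-\delta]$ for any $\delta>0$. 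The classical maximum principle applied on bounded sub-rectangles $[x_0,R]\times[0,T-\delta]$ (the maximum cannot be attained on $\{x=R\}$ for $R$ large) then gives $w\le g_\eps$ in $(x_0,\infty)\times(0,T-\delta)$. Letting $\delta\to 0$ and $\eps\to 0$ yields $w\le 0$ in $D_T$.

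The main technical point lies in part~(ii): because the domain is unbounded in $x$, one must produce an auxiliary supersolution that both dominates $w$ at infinity and respects the linearized drift from $|\cdot|^p$. The at-most-linear growth of $w$ (inherited from $\underline v_x,\overline v_x\in L^\infty$) is precisely what makes the quadratic-in-$x$, exponential-in-$t$ ansatz effective, provided $\lambda$ is chosen large with respect to the $L^\infty$ bound on $b$. Part~(i) is essentially standard once one identifies $\mathcal{W}_1$ as the correct working space, which accommodates the linear spatial growth of $u$ while keeping $u_x$ bounded and compatible with the Dirichlet heat semigroup via odd reflection.
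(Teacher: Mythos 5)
Your proposal is correct and follows essentially the same route as the paper: part (i) is the same fixed-point argument in $L^\infty(0,\tau;\mathcal{W}_1)$ using the two heat-semigroup gradient estimates, and part (ii) is the same comparison argument, where the paper subtracts $\eps\psi$ with $\psi=e^{Kt}(1+x^2)$, $K=M+2$, checks $\psi_t-\psi_{xx}-M|\psi_x|\ge 0$, and applies the maximum principle on bounded rectangles before letting $\eps\to 0$. The only cosmetic difference is that you linearize the nonlinearity as a signed drift $b(x,t)w_x$ and choose $\lambda$ large via Young's inequality, whereas the paper works with the one-sided bound $M|z_x|$ and an explicit constant.
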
 

\begin{proof}
(i) This follows from a standard fixed point argument (see e.g.~\cite{CHbook, QSbook})
in the space $L^\infty(0,\tau;\mathcal{W}_1)$,
using the heat semigroup estimates 
$\|\partial_xe^{t\Delta}\phi\|_\infty\le \|\phi_x\|_\infty$ for all $\phi\in \mathcal{W}_1$
and $\|\partial_xe^{t\Delta}\phi\|_\infty\le Ct^{-1/2}\|\phi\|_\infty$ for all $\phi\in L^\infty(0,\infty)$.

\smallskip

 (ii) By our assumptions, the function $z=\underline v-\overline v$ satisfies 
$z_t-z_{xx}\le  |\underline v_x|^p- |\overline v_x|^p\le M|z_x|$ and $z\le M(1+x)$ in $D_T$ for some $M>0$.
Set $\psi(x,t)=e^{Kt}(1+x^2)$ with $K=M+2$ and, for any $\eps>0$, let $z_\eps:=z-\eps\psi$.
We have
$$\psi_t-\psi_{xx}-M|\psi_x|=Ke^{Kt}(1+x^2)-2e^{Kt}-2Mxe^{Kt}=e^{Kt}[(M+2)(1+x^2)-2-2Mx]\ge 0,$$
hence $z_{\eps,_t}-z_{\eps,xx}-M|z_{\eps,x}|\le 0$ in $D_T$.
Moreover, we may select $R_\eps>x_0$ such that $z_\eps\le M(1+x)-\eps (1+x^2)\le 0$ in $[R_\eps,\infty)\times(0,T)$,
and we have $z_\eps\le 0$ on the parabolic boundary of $D_{\eps,T}=(x_0,R_\eps)\times(0,T)$.
We then deduce from the standard maximum principle that $z_\eps\le 0$ in $D_{\eps,T}$, hence in $D_T$,
and the conclusion follows by letting $\eps\to 0$.
\end{proof}

\subsection{Heat kernel of the linearized operator}

Let $\alpha\ge 1$, $k\in\R$.
In this section we obtain a convenient explicit formula, along with useful estimates, 
for the kernel associated with the semigroup $(e^{-s\mathcal{L}})_{s\ge 0}$.
To this end we introduce the auxiliary problem
\be{eqZtx} 
\left\{\ 
\begin{aligned}
Z_t  &=Z_{xx}+\ts\frac{\alpha}{x}Z_x,&&\quad x>0,\ t>0 \\
Z(x,0) &=\phi(x),&&\quad  x>0,
\end{aligned}
\right.
\ee
and the solution $Y=Y(z)$ of the ODE
\be{eqV-ODE} 
\left\{\ 
\begin{aligned}
&Y''+\ts\frac{\alpha}{z}Y'=Y,\quad z>0 \\
&Y(0)  =1,\quad  Y'(0)=0.
\end{aligned}
\right.
\ee
We first derive the formula and the properties of the kernel associated with problem \eqref{eqZtx}.

\begin{prop} \label{KernelEstim}
Let $\alpha\ge 1$, $k\in\R$
\smallskip

 (i) Set $\mathcal{P}:=\partial_t -\partial_{xx}-\frac{\alpha}{x}\partial_x$ and,
 for $i\in\{0,1\}$, define the kernels
$$H_i(t,x,\xi):=C_\alpha t^{-\frac{\alpha+1}{2}}\exp\Bigl[-\frac{x^2+\xi^2}{4t}\Bigr] Y^{(i)}\Bigl(\frac{x\xi}{2t}\Bigr),\ \quad x,\xi\ge 0,\ t>0,$$
where $C_\alpha>0$ is a normalization constant.
 We have
\be{PDEH} 
\mathcal{P}H_0=0,\ \quad x,\xi\ge 0,\ t>0,
\ee
and the bounds
\be{boundH1} 
0\le H_i(t,x,\xi)\le C t^{-\frac{\alpha+1}{2}}\Bigl(1\wedge \frac{x\xi}{2t}\Bigr)^i\Bigl(1+\frac{x\xi}{2t}\Bigr)^{-\alpha/2}\exp\Bigl[-\frac{(x-\xi)^2}{4t}\Bigr],
\quad i\in\{0,1\},
\ee
\be{boundH1H_0} 
 H_1\le H_0,
\ee
\be{boundH2} 
 \frac{|\partial_xH_0|}{H_0}\le Ct^{-1/2} \min\Bigl\{1,\frac{x}{\sqrt{t}}\Bigr\}\Bigl(1+\frac{|x-\xi|^2}{t}\Bigr),
\ee
\be{boundH2t} 
 \frac{|\partial_tH_0|}{H_0}\le C t^{-1}\Bigl(1+\frac{|x-\xi|^2}{t}\Bigr),
\ee
as well as
\be{normalizeH} 
\int_0^\infty H_0(t,x;\xi)\xi^\alpha d\xi=1,\quad \int_0^\infty H_1(t,x;\xi)\xi^\alpha d\xi\le 1, \qquad t>0,\ x\ge 0.
\ee

 (ii) Let $0\le m<\frac{\alpha+3}{2}$, $q\ge 0$ and let
$\phi\in C(0,\infty)\cap L^\infty_{m,q}$. Let $Z$ be defined by
\be{defZint} 
Z(x,t)=\int_0^\infty H_0(t,x;\xi)\phi(\xi)\xi^\alpha d\xi,\ \quad x\ge 0,\ t>0.
\ee
Then $Z$ enjoys the regularity properties
\be{ZC21} 
Z\in C^{2,1}([0,\infty)\times(0,\infty)),
\ee
\be{ZC21bis} 
\|Z(t)\|_{\infty,m,q}\le C(1+t^{q/2})\|\phi\|_{\infty,m,q},\quad t>0,
\ee
\be{ZC21ter}  
t^{1/2}\|Z_x(t)\|_{\infty,m,q}+t\|Z_t(t)\|_{\infty,m,q}\le C(1+t^{q/2})\|\phi\|_{\infty,m,q},\quad t>0,
\ee
and it satisfies
\be{Zpde} 
Z_t =Z_{xx}+\ts\frac{\alpha}{x}Z_x\ \hbox{ in $(0,\infty)\times(0,\infty)$},
\ee
\be{cvZphi} 
\lim_{t\to 0} Z(x,t)=\phi(x),\quad\hbox{for each $x>0$}.
\ee
Moreover, \eqref{cvZphi} remains valid for $x=0$ whenever $\phi\in C_b([0,\infty))$.
Furthermore, if also $\phi'\in L^\infty_{m+1,r}$ for some $r\ge 0$, then 
\be{computZx} 
Z_x(x,t)=\int_0^\infty H_1(t,x;\xi)\phi'(\xi)\xi^\alpha d\xi,\quad x>0,\ t>0.
\ee
\end{prop}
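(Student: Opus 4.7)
The idea is to recognize $H_0$ as the heat kernel of the radial Laplacian in ``dimension'' $\alpha+1$, so that most properties reduce to estimates on the profile~$Y$. First I would study the ODE \eqref{eqV-ODE}. A power series solution $Y(z)=\sum_{k\ge 0}a_kz^{2k}$ with $a_{k+1}=a_k/[(2k+2)(2k+1+\alpha)]$ exists with all $a_k>0$, so $Y>0$, $Y'\ge 0$, and $Y''\ge 0$. Using the Liouville substitution $Y(z)=z^{-(\alpha-1)/2}\tilde Y(z)$ reduces \eqref{eqV-ODE} to $\tilde Y''=(1+\text{lower order})\tilde Y$, giving the asymptotics $Y(z)\sim Cz^{-\alpha/2}e^z$ and $Y'(z)\sim Y(z)$ as $z\to\infty$, while $Y'(z)\sim z/(1+\alpha)$ as $z\to 0$. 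Hence $Y(z)\le C(1+z)^{-\alpha/2}e^z$ and $Y'(z)\le C\min(z,1)(1+z)^{-\alpha/2}e^z$. The bound \eqref{boundH1} follows by substituting these into the formulas for $H_i$ and combining the Gaussian factor $e^{-(x^2+\xi^2)/(4t)}$ with $e^{x\xi/(2t)}$ to obtain $e^{-(x-\xi)^2/(4t)}$. The verification $\mathcal{P}H_0=0$ is a direct computation using the ODE for $Y$. For \eqref{boundH1H_0}, the ratio $R:=Y'/Y$ satisfies the Riccati equation $R'=1-\alpha R/z-R^2$; if $R$ were to reach the value $1$ at some first $z_0>0$, then $R'(z_0)=-\alpha/z_0<0$, contradicting the definition of $z_0$, so $R\le 1$ on $[0,\infty)$.

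\medskip

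For the gradient estimates, direct differentiation of $H_0$ gives, with $z=x\xi/(2t)$,
\[
\frac{\partial_xH_0}{H_0}=\frac{\xi R(z)-x}{2t},\qquad
\frac{\partial_tH_0}{H_0}=-\frac{\alpha+1}{2t}+\frac{(x-\xi)^2+2x\xi(1-R(z))}{4t^2}.
\]
The needed control then comes from two complementary estimates on $R$: from the series, $R(z)\le z/(1+\alpha)$ for $z\le 1$, while from the Riccati equation $1-R(z)=\alpha/(2z)+O(1/z^2)$ as $z\to\infty$, so $\xi(1-R(z))\le C t/x$ and $2x\xi(1-R(z))\le C(t+x\xi\wedge t)$. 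Splitting into $z\le 1$ versus $z>1$ and writing $\xi R(z)-x=(\xi-x)+\xi(R(z)-1)$ then yields \eqref{boundH2} and \eqref{boundH2t} after routine algebra. For \eqref{normalizeH}, at $x=0$ the change of variable $\xi=2\sqrt t\,\eta$ shows the integral is constant in $t$, and the normalization constant $C_\alpha=[2^\alpha\Gamma(\tfrac{\alpha+1}{2})]^{-1}$ is determined so that this constant equals $1$; independence in $x$ then follows because $Z\equiv 1$ is the unique bounded solution of \eqref{Zpde} with initial datum $1$, and $x\mapsto\int H_0(t,x,\xi)\xi^\alpha d\xi$ satisfies the same problem. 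The inequality for $H_1$ is then immediate from $H_1\le H_0$.

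\medskip

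For part (ii), the regularity \eqref{ZC21} and the PDE \eqref{Zpde} follow from differentiation under the integral sign, legitimate by \eqref{boundH1}-\eqref{boundH2t} and the polynomial growth of~$\phi$. The weighted bounds \eqref{ZC21bis}-\eqref{ZC21ter} are obtained by splitting $|\phi(\xi)|\le\|\phi\|_{\infty,m,q}(\xi^{-m}+\xi^q)$ and treating the two contributions separately. For the $\xi^q$ part, the scaling $\xi=\sqrt t\,\eta$ together with Gaussian moment bounds yields growth at most $x^q+t^{q/2}$. For the $\xi^{-m}$ part, one splits the $\xi$-integral at $\xi=x/2$ and $\xi=2x$: in the inner region, the bound $H_0\le Ct^{-(\alpha+1)/2}$ combined with the assumption $m<(\alpha+3)/2$ gives integrability of $\xi^{\alpha-m}$ and a contribution of order $x^{\alpha-m+1}t^{-(\alpha+1)/2}e^{-cx^2/t}\le Cx^{-m}$; in the outer regions the Gaussian $e^{-(x-\xi)^2/(4t)}$ provides decay. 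Parallel estimates with the kernel bounds \eqref{boundH2}-\eqref{boundH2t} yield \eqref{ZC21ter}. The initial condition \eqref{cvZphi} is the standard approximate-identity argument: by \eqref{normalizeH}, one writes $Z(x,t)-\phi(x)=\int(\phi(\xi)-\phi(x))H_0\,\xi^\alpha d\xi$, and the mass concentrates at $\xi=x$ (uniformly when $\phi$ is bounded and continuous up to $0$).

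\medskip

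Finally, for the crucial identity \eqref{computZx}, I would establish the pointwise relation
\[
\partial_xH_0(t,x,\xi)\,\xi^\alpha=-\partial_\xi\!\bigl[H_1(t,x,\xi)\,\xi^\alpha\bigr],
\]
which is verified by direct differentiation: the $\partial_\xi$ acting on the Gaussian factor, on $\xi^\alpha$, and on $Y'(x\xi/(2t))$ combine, after invoking $Y''=Y-(\alpha/z)Y'$, to exactly cancel and reproduce $\partial_xH_0\cdot\xi^\alpha$. Integrating by parts in $\int_0^\infty\partial_xH_0\cdot\phi\cdot\xi^\alpha d\xi$ then yields \eqref{computZx}; the boundary terms vanish at $\xi=0$ because $\xi^\alpha\to 0$ (as $\alpha\ge 1$) and $H_1$ is bounded there, and at $\xi=\infty$ because the Gaussian in $H_1$ dominates the polynomial growth $|\phi'(\xi)|\le C(\xi^r+\xi^{-m-1})$. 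The main obstacle I anticipate is the derivation of the weighted estimates \eqref{ZC21bis}-\eqref{ZC21ter}: one must recover the singular factor $x^{-m}$ at $x=0$ uniformly in $t\in(0,\infty)$, which forces a careful partition of the $\xi$-integration exploiting the full strength of \eqref{boundH1}, and is precisely where the sharp range $m<(\alpha+3)/2$ enters.
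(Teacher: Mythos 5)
Your overall architecture coincides with the paper's: positivity and convexity of $Y$ from the ODE, the global bounds $Y\le C(1+z)^{-\alpha/2}e^z$ and $Y'\le C\min(z,1)Y$ combined with $e^{-(x^2+\xi^2)/4t}e^{x\xi/2t}=e^{-(x-\xi)^2/4t}$ to get \eqref{boundH1}, the decomposition $\xi R-x=(\xi-x)+\xi(R-1)$ for \eqref{boundH2}, the approximate-identity argument for \eqref{cvZphi}, the splitting of $|\phi|\le\|\phi\|_{\infty,m,q}(\xi^{-m}+\xi^q)$ with the cut at $\xi=x/2,\,2x$ for \eqref{ZC21bis}--\eqref{ZC21ter}, and the identity $\partial_xH_0\cdot\xi^\alpha=-\partial_\xi[H_1\xi^\alpha]$ (via $Y''=Y-\tfrac{\alpha}{z}Y'$) for \eqref{computZx}, which is exactly the paper's integration by parts read in the opposite direction. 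Your treatment of $Y$ via the power series and the Riccati variable $R=Y'/Y$ is a legitimate variant of the paper's comparison arguments; in particular the first-crossing argument for $R\le 1$ is a clean replacement for the paper's $({Y'}^2-Y^2)'\le 0$. Two details there deserve care: the Liouville substitution killing the first-order term is $Y=z^{-\alpha/2}\tilde Y$, not $z^{-(\alpha-1)/2}\tilde Y$ (your stated asymptotic $Y\sim Cz^{-\alpha/2}e^z$ is nonetheless correct), and the quantitative bound $1-R(z)\le C/z$ for large $z$ is not an immediate consequence of "the Riccati equation" — you must first know $R\to1$ and then extract the rate; the paper instead integrates $(Y-Y')'\le\tfrac{\alpha}{z}Y'$ together with a doubling estimate $Y(z/4)\le\tfrac{C}{z}Y(z)$, and you should supply an argument of comparable substance.

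The one step I would not accept as written is your proof of \eqref{normalizeH}. The scaling argument at $x=0$ is fine and fixes $C_\alpha$, but extending the identity to all $x>0$ by invoking ``$Z\equiv1$ is the unique bounded solution of \eqref{Zpde} with initial datum $1$'' begs the question: uniqueness of bounded solutions for the degenerate operator $\partial_t-\partial_{xx}-\tfrac{\alpha}{x}\partial_x$ on the half-line \emph{without} a boundary condition at $x=0$ is precisely the content of the maximum principle up to $x=0$ (Proposition~\ref{propMP}), a nontrivial result proved later in the paper and not available at this stage. The paper avoids this by a direct flux computation: for fixed $\xi$, $\tfrac{d}{dt}\int_0^M H_0(t,x;\xi)x^\alpha dx=M^\alpha \partial_xH_0(t,M;\xi)\to0$ as $M\to\infty$ by \eqref{boundH2}, so $\int_0^\infty H_0(t,x;\xi)x^\alpha dx$ is $t$-independent; its value is computed by letting $t\to\infty$ with monotone convergence ($Y(0)=1$, $Y'\ge0$), and the symmetry $H_0(t,x;\xi)=H_0(t,\xi;x)$ converts this into the stated $\xi$-integral. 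You should either adopt this argument or evaluate $\int_0^\infty H_0(t,x;\xi)\xi^\alpha d\xi$ directly by termwise integration of the power series of $Y$ (which collapses to a constant); as it stands, your appeal to uniqueness is a circular shortcut.
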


As a consequence we obtain the kernel associated with the semigroup $(e^{-s\mathcal{L}})_{s\ge 0}$.

\begin{prop} \label{KernelEstim2}
Assume $\alpha\ge 1$, $k\in\R$, $0\le m<\frac{\alpha+3}{2}$, $q\ge 0$ and $\phi\in C(0,\infty)\cap L^\infty_{m,q}\subset H'$. 
Then $W(\cdot,s):=e^{-s\mathcal{L}}\phi$ is given~by
\be{defWkernel}
W(y,s)=\int_0^\infty G_0(s,y;\xi)\phi(\xi)\xi^\alpha d\xi,
\quad\hbox{\ where\ \ }
G_0(s,y,\xi):=e^{ks}H_0\bigl(1-e^{-s},e^{-s/2}y,\xi\bigr),
\ee
for all $(y,s)\in Q:= [0,\infty)\times(0,\infty)$, and we have $W\in C^{2,1}(Q)$.
Moreover, $W\in C(\overline Q)$ whenever $\phi\in C_b([0,\infty))$.
 If $\phi'\in L^\infty_{m+1,r}$ for some $r\ge 0$, then 
\be{computWx} 
W_y(y, s)=\int_0^\infty G_1(s,y;\xi)\phi'(\xi)\xi^\alpha d\xi,
\quad\hbox{\ where\ \ }
G_1(s,y,\xi):=e^{(k-\frac12)s}H_1\bigl(1-e^{-s},e^{-s/2}y,\xi\bigr).
\ee
\end{prop}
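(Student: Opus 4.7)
The plan is to define $\widetilde W(y,s)$ by the right-hand side of \eqref{defWkernel} and identify it with $e^{-s\mathcal{L}}\phi$ via the change of variables $(y,s)\mapsto(x,t)=(ye^{-s/2},1-e^{-s})$ that conjugates $\mathcal{L}$ to the operator $\partial_t-\partial_{xx}-\tfrac{\alpha}{x}\partial_x$ of \eqref{eqZtx}. If $Z$ is the kernel representation \eqref{defZint} of \eqref{eqZtx} with $Z(\cdot,0)=\phi$, then
\[
w(y,s):=e^{ks}Z(ye^{-s/2},1-e^{-s})=\int_0^\infty G_0(s,y,\xi)\phi(\xi)\xi^\alpha d\xi
\]
by the very definition of $G_0$ in \eqref{defWkernel}. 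A direct chain-rule calculation using $\partial_s[ye^{-s/2}]=-\tfrac{y}{2}e^{-s/2}$ and $\partial_s[1-e^{-s}]=e^{-s}$ converts the PDE $Z_t=Z_{xx}+\tfrac{\alpha}{x}Z_x$ into $w_s=w_{yy}+(\tfrac{\alpha}{y}-\tfrac{y}{2})w_y+kw=-\mathcal{L}w$ on $Q$. The regularity $w\in C^{2,1}(Q)$ is then inherited from \eqref{ZC21}--\eqref{ZC21ter} since the change of variables is smooth for $s>0$; and $w\in C(\overline Q)$ when $\phi\in C_b$ follows from the extension of \eqref{cvZphi} to $x=0$ stated in Proposition~\ref{KernelEstim}(ii).

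The second step is to identify $w$ with $e^{-s\mathcal{L}}\phi$ as defined in \eqref{defSG2}. The strategy is to verify $w\in C([0,\infty);H')$ with $w(0)=\phi$ and $w_s+\mathcal{L}w=0$, so that uniqueness in the Cauchy problem in $H'$ (Proposition~\ref{propSGHprime}, applied to the difference $w-e^{-s\mathcal{L}}\phi$ which solves the homogeneous problem with zero initial data) forces equality. Pointwise convergence $w(y,s)\to\phi(y)$ for each $y>0$ is immediate from \eqref{cvZphi}. To upgrade this to convergence in $H'$, I use that by \eqref{ZC21bis} (translated back via the change of variables) the family $\{w(\cdot,s)\}_{s\in(0,1]}$ is bounded in $L^\infty_{m,q}$, and hence, by Lemma~\ref{CSHprime}, uniformly bounded in $H'$. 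Testing against an arbitrary $\psi\in H$ and using dominated convergence, with dominating bounds coming from \eqref{selfadj2int} near the origin and from the Gaussian weight $\rho$ at infinity, yields $\langle w(\cdot,s)-\phi,\psi\rangle\to 0$.

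Finally, for the derivative formula \eqref{computWx}, I differentiate the kernel representation in $y$: since the change of variables gives $W_y(y,s)=e^{(k-1/2)s}Z_x(ye^{-s/2},1-e^{-s})$, substitution of \eqref{computZx} produces
\[
W_y(y,s)=\int_0^\infty e^{(k-\frac{1}{2})s}H_1\bigl(1-e^{-s},ye^{-s/2},\xi\bigr)\phi'(\xi)\xi^\alpha d\xi=\int_0^\infty G_1(s,y,\xi)\phi'(\xi)\xi^\alpha d\xi,
\]
as required, with no further integration by parts needed because \eqref{computZx} already encodes that step. The main technical obstacle I expect lies in the $H'$-continuity at $s=0$: pointwise convergence is easy, but passing to the dual norm requires simultaneous control of the near-origin singularity $y^{-m}$ (whence the constraint $m<\frac{\alpha+3}{2}$ is essential, as in the proof of Lemma~\ref{CSHprime}) and the polynomial growth $y^q$ at infinity, and the bound \eqref{ZC21bis} is calibrated exactly to furnish both.
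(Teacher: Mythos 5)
Your construction of $\widetilde W$ via the change of variables, the chain-rule verification of the PDE, the regularity transferred from \eqref{ZC21}--\eqref{ZC21ter} and \eqref{cvZphi}, and the derivation of \eqref{computWx} from \eqref{computZx} all coincide with the paper's argument. The gap is in the identification $\widetilde W=e^{-s\mathcal{L}}\phi$. You invoke ``uniqueness in the Cauchy problem in $H'$'' and attribute it to Proposition~\ref{propSGHprime}, but that proposition only asserts that the semigroup orbit \emph{is} a solution of $W_s+\mathcal{L}W=0$ with the stated regularity; it contains no uniqueness statement for solutions of the homogeneous Cauchy problem in $H'$. For a semigroup acting on a dual space this uniqueness is exactly the nontrivial point, and it is where the paper spends its effort: it runs a duality argument, pairing $\psi=W-\widetilde W$ against $\zeta(\cdot,s)=e^{-(S-s)\mathcal{L}}\varphi$ and showing $\frac{d}{ds}(\psi(s),\zeta(s))=0$ via the integration-by-parts identity \eqref{selfadj}. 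The closest citable substitute inside the paper is Proposition~\ref{varconstf}(i) with $f=0$, which does yield $v\equiv e^{-(s-s_0)\mathcal{L}}v_0$, but only for solutions in the class $C([s_0,s_1];H')\cap C^1((s_0,s_1];H')\cap C((s_0,s_1];H)$.

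Either route therefore requires more regularity of $\widetilde W$ than the $C([0,\infty);H')$ continuity you verify: you must also show $\widetilde W\in C((0,\infty);H)$ (or $D(\mathcal{L})$) and $C^1((0,\infty);H')$ (or $L^2_\rho$), so that the PDE holds in the functional sense the uniqueness statement demands, not merely pointwise. The paper gets this from \eqref{ZC21ter} together with the identity $(\widetilde W_s-k\widetilde W)(y,s)=e^{(k-1)s}Z_t(ye^{-s/2},1-e^{-s})-\frac12 e^{(k-\frac12)s}yZ_x(ye^{-s/2},1-e^{-s})$, which controls $\widetilde W_s$ and $\widetilde W_y$ in the weighted spaces and hence places $\widetilde W$ in $C(0,\infty;H)\cap C^1(0,\infty;L^2_\rho)$. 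Once you add this verification and replace the citation of Proposition~\ref{propSGHprime} by Proposition~\ref{varconstf}(i) (or reproduce the duality computation), your proof closes and is essentially the paper's.
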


\begin{rem}
A related, though more complicated formula is given in \cite{HV94pre,MizADE} for the kernel $G_0$.
The formula that we obtain is more convenient in order to derive the precise estimates 
of the space derivative of $G_0$ that are crucially needed in our case.
Also we point out that although the formulas in \cite{HV94pre,MizADE} are used there for noninteger values of $\alpha$, 
they are only proved for integer values.
Our proof works for all real values.
We also note that for $\alpha=0$, one has $Y(x)=ch(x)$, 
so that one of course recovers the usual one dimensional heat kernel.
\end{rem}

\begin{proof}[Proof of Proposition~\ref{KernelEstim}](i)
{\bf Step 1.} {\it Proof of \eqref{PDEH}.} 
Write $H:=H_0=C_\alpha \mathcal{K}_1\mathcal{K}_2$ with
$$\mathcal{K}_1(t,x):=t^{-\frac{\alpha+1}{2}}\exp\Bigl[-\frac{x^2}{4t}\Bigr],\quad \mathcal{K}_2(t,x,\xi):=\exp\Bigl[-\frac{\xi^2}{4t}\Bigr]Y\Bigl(\frac{x\xi}{2t}\Bigr).$$
Direct computation yields $\mathcal{P}\mathcal{K}_1=0$ and
\be{PHeq1} 
\mathcal{P}(\mathcal{K}_1\mathcal{K}_2)=\mathcal{K}_1\mathcal{P}\mathcal{K}_2+\mathcal{K}_2\mathcal{P}\mathcal{K}_1-2\mathcal{K}_{1,x}\mathcal{K}_{2,x}=\mathcal{K}_1\mathcal{P}\mathcal{K}_2-2\mathcal{K}_{1,x}\mathcal{K}_{2,x}.
\ee
Denoting $\mathcal{E}=\exp\bigl[-\frac{\xi^2}{4t}\bigr]$ and omitting the variable $\frac{x\xi}{2t}$ in $Y, Y', Y''$ for conciseness, 
 we get
$$\mathcal{K}_{2,t}(t,x,\xi)=\mathcal{E}  \Bigl\{\frac{\xi^2}{4t^2}Y
-\frac{x\xi}{2t^2} Y'\Bigr\},\quad
\mathcal{K}_{2,x}(t,x,\xi)=\mathcal{E}  \frac{\xi}{2t}Y',
\quad \mathcal{K}_{2,xx}(t,x,\xi)=\mathcal{E}  \frac{\xi^2}{4t^2}Y''.$$
Consequently,
$$\begin{aligned}
\mathcal{P}\mathcal{K}_2
&=\mathcal{E}  \Bigl\{
\frac{\xi^2}{4t^2}Y
-\frac{x\xi}{2t^2} Y'
-\frac{\xi^2}{4t^2}Y''
-\frac{\alpha}{x}\frac{\xi}{2t}Y'\Bigr\}\\
&=\mathcal{E}  \Bigl\{
\frac{\xi^2}{4t^2}\alpha\frac{2t}{x\xi}Y'
-\frac{x\xi}{2t^2} Y'
-\frac{\alpha}{x}\frac{\xi}{2t}Y'\Bigr\}
=-\mathcal{E}  \frac{x\xi}{2t^2} Y'
=-\frac{x}{t}\mathcal{K}_{2,x} =2\frac{\mathcal{K}_{1,x}}{\mathcal{K}_1}\mathcal{K}_{2,x}.
\end{aligned}$$
Combining this with \eqref{PHeq1}, 
we obtain $\mathcal{P}(\mathcal{K}_1\mathcal{K}_2) =0$, nhence \eqref{PDEH}. 

\smallskip

{\bf Step 2.} {\it Proof of \eqref{boundH1}-\eqref{boundH1H_0}.} 
By elementary ODE arguments, one easily shows that $Y, Y', Y''>0$ for $z>0$.
Thus \eqref{eqV-ODE} yields $Y''\le Y$ and, multiplying by $Y'$, we get $({Y'}^2-Y^2)'\le 0$, hence $Y'\le Y$
for $z\ge 0$, which gives \eqref{boundH1H_0}.
In view of proving \eqref{boundH1}, we claim that
\be{claimY}
Y(z)\le C(1+z)^{-\alpha/2}e^z,\quad z\ge 0.
\ee
To this end, we set $\psi_a(z)=z^{-a}e^z$ for $a,z>0$. 
We have 
$\psi_a'=(1-az^{-1})\psi_a$ and $\psi_a''=\bigl[(1-az^{-1})^2+az^{-2}\bigr]\psi_a$,
hence
$$\psi_a''+\ts\frac{\alpha}{z}\psi_a'-\psi_a
=\bigl[\bigl(1-\frac{a}{z}\bigr)^2+\frac{a}{z^2}+\frac{\alpha}{z}\bigl(1-\frac{a}{z}\bigr)-1\bigr]\psi_a
=\bigl[\frac{\alpha-2a}{z}+\frac{a(a+1-\alpha)}{z^2}\bigr]\psi_a.$$
Putting $\psi:=\psi_b-M\psi_{b+1}=(1-Mz^{-1})z^{-b}e^z$ with $b=\alpha/2$ and $M=1+\frac{b}{2}|b+1-\alpha|$, we get
$$\begin{aligned}
\psi''+\ts\frac{\alpha}{z}\psi'-\psi
&=\ts\frac{b(b+1-\alpha)}{z^2}z^{-b}e^z-M\bigl[-\frac{2}{z}+\frac{(b+1)(b+2-\alpha)}{z^2}\bigr]z^{-b-1}e^z\\
&=\bigl\{2M+b(b+1-\alpha)-M(b+1)(b+2-\alpha)z^{-1}\bigr\}z^{-b-2}e^z\ge 0,\quad z\ge z_0
\end{aligned}$$
for some $z_0>0$. Taking $z_0$ possibly larger, we may also assume that $\psi,\psi'>0$ for $ z\ge z_0$.
Therefore, choosing $L>0$ large enough, we see that the function $\Psi:=L\psi-Y$ satisfies
$\Psi''+\alpha z^{-1}\Psi'\ge \Psi$ for all $z\ge z_0$, along with $\Psi(z_0),\Psi'(z_0)>0$.
An elementary argument then shows that $\Psi>0$ for all $z\ge z_0$.
From \eqref{claimY}, we immediately deduce \eqref{boundH1} for $i=0$.
Then, since $Y'(0)=0$ and $Y(1)=1$, we have 
$Y'\le Cz$ for $z>0$ small,
hence 
\be{boundYY0} 
0\le Y'\le C(z\wedge 1)Y,\quad z>0,
\ee
due to $Y'\le Y$, and \eqref{boundH1} for $i=1$ follows.

\smallskip
{\bf Step 3.} {\it Proof of \eqref{boundH2}--\eqref{normalizeH}.}
We first claim that
\be{boundYY} 
0\le Y-Y'\le C\min(1,z^{-1})Y,\quad z>0.
\ee
Since \eqref{boundYY} is true for $z\le 4$ due to $Y'\ge 0$, we may assume $z>4$.
First, since $Y'\le Y$ hence $Y''\ge \frac12Y$ for $z$ large,
we get  $C_1e^{z/2}\le Y(z)\le C_2e^z$ by integration, hence $Y(\frac{z}{4})\le \frac{C}{z}Y(z)$.
Then using $(Y-Y')'=Y'-Y''\le Y-Y''=\alpha z^{-1}Y'$, we obtain
$$
Y(z)-Y'(z)\le Y(1)+\alpha\int_1^z \frac{Y'(\tau)}{\tau}d\tau
\le C\Bigl(1+\int_1^{\frac{z}{4}}\frac{Y'(\tau)}{\tau}d\tau+\int_{\frac{z}{4}}^z \frac{Y'(\tau)}{\tau}d\tau\Bigr) \le 
C\Bigl(1+Y(\ts\frac{z}{4})+\ds\frac{Y(z)}{z}\Bigr),
$$
hence \eqref{boundYY}.

We now compute
\be{boundHxinit} 
H_x=C_\alpha t^{-\frac{\alpha+1}{2}}\exp\Bigl[-\frac{x^2+\xi^2}{4t}\Bigr]
\Bigl\{\frac{\xi}{2t}Y'\Bigl(\frac{x\xi}{2t}\Bigr)-\frac{x}{2t}Y\Bigl(\frac{x\xi}{2t}\Bigr)\Bigr\}.
\ee
 By \eqref{boundYY0} we get 
$$
|H_x|\le
\begin{cases}
C t^{-\frac{\alpha+1}{2}}\exp\Bigl[-\ds\frac{x^2+\xi^2}{4t}\Bigr] \ds\frac{x}{t}Y\Bigl(\frac{x\xi}{2t}\Bigr)=\frac{Cx}{t}H,
&\hbox{if $\xi\le 2x$,} \\
 \noalign{\vskip 2mm}
C t^{-\frac{\alpha+1}{2}}\exp\Bigl[-\ds\frac{x^2+\xi^2}{4t}\Bigr]Y\Bigl(\ds\frac{x\xi}{2t}\Bigr)\frac{x}{t}
\Bigl\{1+\frac{\xi^2}{t}\Bigr\}
\le  \frac{Cx}{t}\Bigl\{1+\frac{|\xi-x|^2}{t}\Bigr\}H,
&\hbox{if $\xi>2x$,}
\end{cases}
$$
which yields \eqref{boundH2} for $x\le\sqrt{t}$. 
To prove it for $x\ge\sqrt{t}$, we write:
$$\frac{\xi}{2t}Y'\Bigl(\frac{x\xi}{2t}\Bigr)-\frac{x}{2t}Y\Bigl(\frac{x\xi}{2t}\Bigr)
=\frac{\xi-x}{2t}Y\Bigl(\frac{x\xi}{2t}\Bigr)+\frac{\xi}{2t}\bigl(Y'-Y\bigr)\Bigl(\frac{x\xi}{2t}\Bigr)
=\frac{\xi-x}{2t}Y'\Bigl(\frac{x\xi}{2t}\Bigr)+\frac{x}{2t}\bigl(Y'-Y\bigr)\Bigl(\frac{x\xi}{2t}\Bigr).$$
Using \eqref{boundYY}, it follows that
$$\Bigl|\frac{\xi}{2t}Y'\Bigl(\frac{x\xi}{2t}\Bigr)-\frac{x}{2t}Y\Bigl(\frac{x\xi}{2t}\Bigr)\Bigr|
\le Ct^{-1/2}Y\Bigl(\frac{x\xi}{2t}\Bigr)\Bigl\{\frac{|\xi-x|}{\sqrt{t}}+\frac{x\wedge \xi}{\sqrt{t}}\min\Bigl(1,\frac{t}{x\xi}\Bigr)
\Bigr\}.$$
If $\frac{x\xi}{t}\le 1$, then $\frac{x\wedge \xi}{\sqrt{t}}\le 1$.
If $\frac{x\xi}{t}\ge 1$, then $\frac{x\vee \xi}{\sqrt{t}}\ge 1$, hence 
$\frac{x\wedge \xi}{\sqrt{t}}\min\bigl(1,\frac{t}{x\xi}\bigr)=\frac{\sqrt{t}}{{x\vee \xi}}\le 1$.
In all cases we thus get
$$\Bigl|\frac{\xi}{2t}Y'\Bigl(\frac{x\xi}{2t}\Bigr)-\frac{x}{2t}Y\Bigl(\frac{x\xi}{2t}\Bigr)\Bigr|
\le Ct^{-1/2}Y\Bigl(\frac{x\xi}{2t}\Bigr)\Bigl\{\frac{|\xi-x|}{\sqrt{t}}+1\Bigr\}.$$
This along with \eqref{boundHxinit}  
readily implies \eqref{boundH2} for $x\ge \sqrt{t}$.

 As for \eqref{boundH2t}, it follows from \eqref{boundYY} by writing
$$\begin{aligned}
|H_t|
&=\Bigl|-\ts\frac{\alpha+1}{2t}H+C_\alpha t^{-\frac{\alpha+1}{2}} \ds\exp\Bigl[-\frac{x^2+\xi^2}{4t}\Bigr]
\Bigl\{\frac{x^2+\xi^2}{4t^2}Y\Bigl(\frac{x\xi}{2t}\Bigr)-\frac{x\xi}{2t^2}Y'\Bigl(\frac{x\xi}{2t}\Bigr)\Bigr\}\Bigr|\\
&=\Bigl|-\ts\frac{\alpha+1}{2t}H+C_\alpha t^{-\frac{\alpha+1}{2}}\ds\exp\Bigl[-\frac{x^2+\xi^2}{4t}\Bigr] 
\Bigl\{\frac{(x-\xi)^2}{4t^2}Y\Bigl(\frac{x\xi}{2t}\Bigr)+\frac{x\xi}{2t^2}
\Bigl(Y\Bigl(\frac{x\xi}{2t}\Bigr)-Y'\Bigl(\frac{x\xi}{2t}\Bigr)\Bigr)\Bigr\}\Bigr|\\
&\le
Ct^{-1}\Bigl\{\frac{|x-\xi|^2}{t}+1\Bigr\}H.
\end{aligned}$$

To prove \eqref{normalizeH}, we fix $\xi\ge 0$ and $t_2>t_1>0$ and, using $\mathcal{P}H=0$ and \eqref{boundH2}, we obtain
$$\begin{aligned}\int_0^M H(t_2,x;\xi)x^\alpha dx-\int_0^M H(t_1,x;\xi)x^\alpha dx
&=\int_{t_1}^{t_2}\int_0^M H_t(t,x;\xi)x^\alpha dx \\
=\int_{t_1}^{t_2}\int_0^M \bigl[x^\alpha H_x(t,x;\xi)\bigr]_x dx
&=\int_{t_1}^{t_2} M^\alpha H_x(t,M;\xi)\to 0,\quad M\to\infty.
\end{aligned}$$
Therefore $I(t,\xi):=\int_0^\infty H(t,x;\xi)x^\alpha dx=I(\xi)$ for all $t>0$.
Writing
$$I(t,\xi) \exp\Bigl[\frac{\xi^2}{4t}\Bigr] =C_\alpha \int_0^\infty t^{-\frac{\alpha+1}{2}}\exp\Bigl[-\frac{x^2}{4t}\Bigr] Y\Bigl(\frac{x\xi}{2t}\Bigr)x^\alpha \, dx
=C_\alpha \int_0^\infty e^{-z^2/4} Y\Bigl(\frac{z\xi}{2\sqrt{t}}\Bigr)z^\alpha \,  dz,$$
and recalling that $Y(0)=1$ and $Y'(s)\ge 0$ for $s\ge 0$, it follows from monotone convergence that
$$I(\xi)=\lim_{t\to\infty} I(t,\xi)=C_\alpha \int_0^\infty e^{-z^2/4} z^\alpha \,  dz=1,$$
upon choosing the normalization constant $C_\alpha$. Since $H(t,x;\xi)=H(t,\xi;x)$, we deduce 
the first part of \eqref{normalizeH}, and the second part follows from $Y'\le Y$.

\smallskip

(ii) {\bf Step 4.} {\it Proof of \eqref{ZC21}--\eqref{Zpde}.}
 Let $Z$ be given by \eqref{defZint}. 
For $x\ge 0$ and $t>0$, we note that  the convergence of the integral
is guaranteed by the assumptions $|\phi(\xi)|\le C(\xi^q+\xi^{-m})$ and $\alpha-m>\frac{\alpha-3}{2}\ge -1$.
 Owing to the bounds \eqref{boundH2}, \eqref{boundH2t} on $H_x, H_t$ and $H_{xx}=H_t-\alpha x^{-1}H_x$,
we may differentiate the integral in \eqref{defZint} for $x\ge 0$ and $t>0$, and deduce \eqref{ZC21} and \eqref{Zpde}.
On the other hand, setting $N=\|\phi\|_{\infty,m,q}$, we have
$$\begin{aligned}
|Z(x,t)| &\le N\int_0^\infty H(t,x,\xi)(\xi^q+\xi^{-m}) \xi^\alpha d\xi \\
&\le CN \biggl\{x^q\int_0^{2x} H(t,x,\xi)\xi^\alpha d\xi
+ t^{-\frac{\alpha+1}{2}}\int_{2x}^\infty e^{-\frac{(x-\xi)^2}{4t}}\xi^{\alpha+q}\,d\xi  \\
&\qquad +x^{-m}\int_{x/2}^\infty H(t,x,\xi)\xi^\alpha\,d\xi
+ t^{-\frac{\alpha+1}{2}}\int_0^{x/2} e^{-\frac{(x-\xi)^2}{4t}}\xi^{\alpha-m}\,d\xi\biggr\}.
\end{aligned}$$
Noting that
$$t^{-\frac{\alpha+1}{2}}\int_{2x}^\infty e^{-\frac{(x-\xi)^2}{4t}}\xi^{\alpha+q}\,d\xi\le
t^{-\frac{\alpha+1}{2}}\int_{2x}^\infty e^{-\frac{\xi^2}{16t}}\xi^{\alpha+q}\,d\xi=
t^{\frac{q}{2}}\int_{2xt^{-1/2}}^\infty e^{-\frac{z^2}{16}}z^{\alpha+q}\,dz\le
Ct^{\frac{q}{2}}$$
and 
$$ t^{-\frac{\alpha+1}{2}}\int_0^{x/2} e^{-\frac{(x-\xi)^2}{4t}}\xi^{\alpha-m}\,d\xi
\le  t^{-\frac{\alpha+1}{2}} e^{-\frac{x^2}{16t}}\int_0^{x/2}\xi^{\alpha-m}\,d\xi
=Ct^{-\frac{\alpha+1}{2}} e^{-\frac{x^2}{16t}}x^{\alpha+1-m}\le Cx^{-m}$$
and using \eqref{normalizeH}, we get 
$|Z(x,t)|\le CN (x^q+t^{\frac{q}{2}}+x^{-m})\le CN (1+t^{\frac{q}{2}})(x^q+x^{-m})$, hence \eqref{ZC21bis}.
 By the same argument, along with \eqref{boundH2}-\eqref{boundH2t}, we get \eqref{ZC21ter}.

\smallskip
{\bf Step 5.} {\it Proof of \eqref{cvZphi}-\eqref{computZx}.}
 Fix $x>0$ and $\eps>0$. Choose $\eta\in(0,x/2)$ such that $|\phi(\xi)-\phi(x)|\le\eps$ for $|\xi-x|\le\eta$.
Using \eqref{normalizeH} and $\alpha-m>\frac{\alpha-3}{2}\ge -1$, we get
$$\begin{aligned}
|Z(x,t)-\phi(x)|
&=\Bigl|\int_0^\infty H(t,x,\xi)(\phi(\xi)-\phi(x)) \xi^\alpha\,d\xi\Bigr| \\
&\le \eps\int_{|\xi-x|<\eta}H(t,x,\xi) \xi^\alpha\,d\xi 
+\int_{|\xi-x|>\eta} H(t,x,\xi) \bigl( |\phi(x)|+C\xi^q+C\xi^{-m}\bigr)\xi^\alpha\,d\xi \\
&\le \eps+ Ct^{-\frac{\alpha+1}{2}}e^{-\frac{\eta^2}{8t}}\int_0^\infty e^{-\frac{(x-\xi)^2}{8}} 
\bigl( |\phi(x)|+\xi^q+\xi^{-m}\bigr)\xi^\alpha\,d\xi\le 2\eps
\end{aligned}$$
for $t>0$ small enough, hence \eqref{cvZphi}.
Of course, a similar argument applies for $x=0$ whenever $\phi\in C_b([0,\infty))$.

\smallskip
Finally assume $\phi'\in L^\infty_{m+1,r}$ for some $r\ge 0$, which in particular guarantees the existence of the integral in \eqref{computZx} owing to $\alpha-m>-1$ and \eqref{boundH1} for $i=1$. 
Using integration by parts, we compute
$$\begin{aligned}
\int_0^\infty &\exp\Bigl[-\frac{\xi^2}{4t}\Bigr] Y'\Bigl(\frac{x\xi}{2t}\Bigr)\phi'(\xi)\xi^\alpha d\xi
=-\int_0^\infty \Bigl\{\exp\Bigl[-\frac{\xi^2}{4t}\Bigr] Y'\Bigl(\frac{x\xi}{2t}\Bigr)\xi^\alpha \Bigr\}_\xi\phi(\xi)d\xi \\
&=-\int_0^\infty\exp\Bigl[-\frac{\xi^2}{4t}\Bigr]  \Bigl\{ \Bigl(-\frac{\xi}{2t}+\frac{\alpha}{\xi}\Bigr)
Y'\Bigl(\frac{x\xi}{2t}\Bigr)+\frac{x}{2t}Y''\Bigl(\frac{x\xi}{2t}\Bigr)\Bigr\}\xi^\alpha \phi(\xi)d\xi \\
&=\int_0^\infty\exp\Bigl[-\frac{\xi^2}{4t}\Bigr]  \Bigl\{ \Bigl(\frac{\xi}{2t}-\frac{\alpha}{\xi}\Bigr)
Y'\Bigl(\frac{x\xi}{2t}\Bigr)+\frac{x}{2t}\Bigl(\frac{2\alpha t}{x\xi}Y'\Bigl(\frac{x\xi}{2t}\Bigr)-Y\Bigl(\frac{x\xi}{2t}\Bigr)\Bigr)\Bigr\}\xi^\alpha \phi(\xi)d\xi\\
&=\int_0^\infty\exp\Bigl[-\frac{\xi^2}{4t}\Bigr]  \Bigl\{ \frac{\xi}{2t}
Y'\Bigl(\frac{x\xi}{2t}\Bigr)-\frac{x}{2t}Y\Bigl(\frac{x\xi}{2t}\Bigr)\Bigr\}\xi^\alpha \phi(\xi)d\xi.
\end{aligned}$$
Formula \eqref{computZx} then follows from
$$
Z_x(x,t)
=\int_0^\infty H_x(t,x;\xi)\phi(\xi)\xi^\alpha d\xi\\
=C_\alpha t^{-\frac{\alpha+1}{2}}e^{-\frac{x^2}{4t}} \int_0^\infty e^{-\frac{\xi^2}{4t}} 
\bigl\{\ts\frac{\xi}{2t}Y'\bigl(\frac{x\xi}{2t}\bigr)-\frac{x}{2t}Y\bigl(\frac{x\xi}{2t}\bigr)\bigr\}\phi(\xi)\xi^\alpha d\xi. \qedhere
$$
\end{proof}

\begin{proof}[Proof of Proposition~\ref{KernelEstim2}]
Let $Z(x,t)$ be given by Proposition~\ref{KernelEstim}, extended by continuity to $ \tilde Q:= [0,\infty)^2\setminus\{(0,0)\}$.
For $(y,s)\in \tilde Q$, set $\tilde W(y,s)=e^{ks}Z(ye^{-s/2},1-e^{-s})$
(which is equal to the integral in \eqref{defWkernel} for $s>0$).
Then an immediate computation using \eqref{Zpde} shows that $\tilde W$ 
satisfies $\tilde W_s=\mathcal{L}\tilde W$ in $Q$.

The result will then follow directly from Proposition~\ref{KernelEstim}
provided we show that $W=\tilde W$. To this end we use a duality argument. 
Let $\psi=W-\tilde W$.
 By Lemma~\ref{CSHprime}, \eqref{ZC21}, \eqref{ZC21bis}, \eqref{cvZphi} we have $\tilde W\in C([0,\infty);H')$.
Moreover, using \eqref{ZC21}, \eqref{ZC21ter} and 
$$(\tilde W_s-k\tilde W)(y,s)=e^{(k-1)s}Z_t(ye^{-s/2},1-e^{-s})-\ts\frac12 e^{(k-\frac12)s}yZ_x(ye^{-s/2},1-e^{-s}),$$
 we deduce that
 $\tilde W\in C(0,\infty;H)\cap C^1(0,\infty;L^2_\rho)$.
This along with \eqref{regSG1} guarantees that
$$ \psi\in C([0,\infty);H')\cap C(0,\infty;D(\mathcal{L}))\cap C^1(0,\infty;L^2_\rho).$$ 
Fix any $S>0$ and $\varphi\in H$ and set $\zeta(\cdot,s)=e^{-(S-s)\mathcal{L}}\varphi$ for $s\in(0,S)$.
Then, by Proposition~\ref{propSGHprime}, $\zeta$~solves the adjoint problem $\zeta_s-\mathcal{L}\zeta=0$ with  
$$\zeta\in C([0,S];H)\cap C^1([0,S);D(\mathcal{L}))$$
and $\zeta(S)=\varphi$. 
Setting $g(s):=\langle\psi(s),\zeta(s)\rangle$, we thus have
 $g(s)=(\psi(s),\zeta(s))$ for $s\in (0,S]$, 
$g\in C([0,S])\cap C^1(0,S)$ and we compute
$$g'(s)=(\psi,\zeta_s)+(\psi_s,\zeta)=(\psi,\mathcal{L}\zeta)-(\mathcal{L}\psi,\zeta)=0,\quad 0<s<S,$$
owing to \eqref{selfadj}. It follows that
$(\psi(S),\varphi)=(\psi,\zeta)(S)=\langle\psi,\zeta\rangle(0)=0$.
Since $\varphi$ and $S$ are arbitrary, we deduce that $\psi\equiv 0$ hence $W=\tilde W$.
\end{proof}

The following proposition gives useful smoothing properties of the 
  kernel associated with the semigroup $(e^{-s\mathcal{L}})$.

\begin{prop} \label{Prop-smoothing}
Let $\alpha\ge 1$, $k\in\R$ and set
$$[T(s)\phi](y):=\int_0^\infty G_0(s,y;\xi)\phi(\xi)\xi^\alpha d\xi,\ \quad y\ge 0,\ s>0,$$
where $G_0$ is given by \eqref{defWkernel}.
 
\smallskip

(i) For all $1<q<\infty$ and $\eps>0$, there exists $C>0$ such that, for all $\phi\in L^q_\rho$,
\be{smoothing0}
\|T(s)\phi\|_{L^q_\rho}\le e^{ks} \|\phi\|_{L^q_\rho},\quad s>0
\ee
and
\be{smoothing1}
\|\partial_y T(s)\phi\|_{L^q_\rho}\le C\bigl(1+s^{-\frac12-\eps}\bigr) e^{(k-\frac12)s}  \|\phi\|_{L^q_\rho},\quad s>0.
\ee

(ii) For all $1<q<r<\infty$, there exist $s^*=s^*(m,q)>0$ and $C>0$ such that, for all $\phi\in L^q_\rho$,
\be{smoothing2}
\|T(s)\phi\|_{L^r_\rho}\le Ce^{ks}\|\phi\|_{L^q_\rho},\quad s\ge s^*
\ee
and, if $\phi\in W^{1,q}_\rho$,
\be{smoothing3}
\|\partial_y T(s)\phi\|_{L^r_\rho}\le Ce^{(k-\frac12)s}\|\phi_y\|_{L^q_\rho},\quad s\ge s^*.
\ee
\end{prop}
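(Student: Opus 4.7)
The plan is to work from the explicit integral representations of $T(s)$ and $\partial_y T(s)$ given in Proposition~\ref{KernelEstim2}, and to exploit the pointwise kernel bounds of Proposition~\ref{KernelEstim} together with the normalizations \eqref{normalizeH}. For the contraction bound \eqref{smoothing0}, the key observation is that for each $s,y>0$ the measure $H_0(1-e^{-s},e^{-s/2}y,\xi)\xi^\alpha d\xi$ is a probability measure on $(0,\infty)$ by \eqref{normalizeH}. Jensen's inequality applied to $[T(s)\phi](y)=e^{ks}\int H_0\,\phi(\xi)\xi^\alpha d\xi$ yields
\[
|[T(s)\phi](y)|^q\le e^{kqs}\int_0^\infty H_0\bigl(1-e^{-s},e^{-s/2}y,\xi\bigr)|\phi(\xi)|^q\xi^\alpha d\xi,
\]
and Fubini reduces matters to verifying the identity $\int_0^\infty H_0(1-e^{-s},e^{-s/2}y,\xi)y^\alpha e^{-y^2/4}dy=e^{-\xi^2/4}$. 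I would establish this by the substitution $x=e^{-s/2}y$, which combines the two quadratic exponents into a single Gaussian in $x$ with rescaled time $\tilde t=t(1-t)$ (where $t=1-e^{-s}$) and rescaled variable $\tilde\xi=(1-t)\xi$; the resulting integral is then precisely the normalization $\int H_0(\tilde t,x,\tilde\xi)x^\alpha dx=1$, and the remaining exponential prefactors telescope exactly to $e^{-\xi^2/4}$, producing \eqref{smoothing0} with the sharp constant $e^{ks}$.

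For the gradient estimate \eqref{smoothing1}, I would use the corresponding formula $\partial_y G_0(s,y,\xi)=e^{(k-1/2)s}\partial_x H_0(t,e^{-s/2}y,\xi)$ together with \eqref{boundH2}, which controls $|\partial_x H_0|$ by $Ct^{-1/2}(1+|x-\xi|^2/t)H_0$. Since $\partial_y G_0$ is no longer a probability kernel, I would apply H\"older's inequality in place of Jensen, and absorb the polynomial correction into the Gaussian via the elementary bound $(1+z^2/t)e^{-z^2/(4t)}\le C_\delta e^{-z^2/(4t(1+\delta))}$. Redoing the $y$-integration of the previous paragraph with the widened Gaussian should yield a bound of the form $\|\partial_y T(s)\phi\|_{L^q_\rho}\le C_\delta t^{-1/2}e^{(k-1/2)s}\|\phi\|_{L^q_\rho}$; a judicious choice $\delta=\delta(\eps)$ then produces the required $(1+s^{-1/2-\eps})$ factor through $t=1-e^{-s}$ as $s\to 0^+$, while the decay as $s\to\infty$ comes automatically from the $e^{(k-1/2)s}$ prefactor.

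For part (ii), I would exploit the fact that for $s\ge s^*$ with $1-e^{-s^*}\ge \frac{1}{2}$, the effective time $t$ is bounded below, so that both $G_0$ and $G_1$ are pointwise dominated by $Ce^{ks}\exp\bigl[-c(e^{-s/2}y-\xi)^2\bigr]$. A Young/duality argument against a test function $\psi\in L^{r'}_\rho$ then yields \eqref{smoothing2}: the relevant double integral is seen to converge by combining the Gaussian decay of the kernel in $(y,\xi)$ with the weights $\rho(y)$ and $e^{\xi^2/(4q)}$ (the latter entering through the isometry $\phi\mapsto\phi\, e^{-\xi^2/(4q)}$ from $L^q_\rho$ onto $L^q(\xi^\alpha d\xi)$). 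Estimate \eqref{smoothing3} follows from the same argument applied to the kernel $G_1$ in the representation \eqref{computWx}, using $H_1\le H_0$ from \eqref{boundH1H_0} and picking up the extra factor $e^{-s/2}$. The main obstacle I foresee is \eqref{smoothing1}: reconciling the $t^{-1/2}$ singularity as $s\to 0$, the polynomial correction in $|x-\xi|$, and the output Gaussian weight while staying within the allowed $s^{-1/2-\eps}$ blow-up requires careful bookkeeping of the absorption constant $C_\delta$ against the modified time $t(1+\delta)$, whereas \eqref{smoothing2}--\eqref{smoothing3} amount, once $s\ge s^*$, to routine manipulations of bounded Gaussian kernels.
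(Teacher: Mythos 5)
Your Jensen argument for \eqref{smoothing0} is correct and is a genuinely different route from the paper's, which instead multiplies the equation $V_s=-\mathcal{L}V$ by $\rho\,(V^2+\eps^2)^{(q-2)/2}V$ and integrates by parts via Lemma~\ref{GaussianPoincare3}; your telescoping identity $\int_0^\infty H_0(1-e^{-s},e^{-s/2}y,\xi)\,\rho(y)\,dy=e^{-\xi^2/4}$ does hold by exactly the substitution you describe (it is the adjoint form of \eqref{normalizeH}), and it yields the sharp constant $e^{ks}$. Your part (ii) follows the paper's route (H\"older plus completion of the square), except that the correct threshold is not $1-e^{-s^*}\ge\frac12$ but $1-e^{-s^*}>\frac{r-q}{r-1}$, which can be arbitrarily close to $1$ (take $q$ near $1$ and $r$ large); this is precisely why $s^*$ must depend on $q$ and $r$.

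The genuine gap is \eqref{smoothing1}: the widened-Gaussian absorption does not close, and the claimed intermediate bound $C_\delta t^{-1/2}e^{(k-\frac12)s}\|\phi\|_{L^q_\rho}$ (which would be stronger than the proposition) is not obtainable this way. Once you replace $e^{-(x-\xi)^2/4t}$ by $C_\delta e^{-(x-\xi)^2/(4t(1+\delta))}$, the exact cancellation in the $y$-integration is destroyed: completing the square now produces a $\xi$-marginal bounded by $Ce^{-\xi^2/(4(1+ct\delta))}$ rather than $e^{-\xi^2/4}$, i.e.\ a loss $e^{ct\delta\xi^2}$ on the $\phi$ side which is not controlled by $\|\phi\|_{L^q_\rho}$ for any fixed $\delta>0$ (test with $\phi(\xi)=e^{\xi^2/4q}\xi^{-\alpha/q}(1+\xi)^{-2/q}\in L^q_\rho$). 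The H\"older variant applied to the whole kernel fares no better: the Gaussian gap in the resulting $x$-integral degenerates like $t$ as $t\to 0^+$ and produces extra negative powers of $t$ far beyond $t^{-\frac12-\eps}$. This is why the paper splits the $\xi$-integral into the near-diagonal set $E_1=\{|x-\xi|^2\le t^{1-\eps}\}$, where the polynomial factor is at most $1+t^{-\eps}$ and the kernel is left untouched so that \eqref{smoothing0} applies to $t^{-\frac12-\eps}\tilde Z$, and the far set $E_2$, where the Gaussian supplies a spare factor $\exp(-ct^{-\eps})$ that beats every loss coming from widening and from the weights. The exponent $\eps$ in $s^{-\frac12-\eps}$ arises from the width $t^{1-\eps}$ of $E_1$, not from a choice of $\delta$; some device of this kind (or an equivalent interpolation) is unavoidable and is missing from your argument.
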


\begin{proof}
(i) By density we may assume $\phi\in C_b([0,\infty))$. 
Denote $V(s):=[T(s)\phi]$, which coincides with $e^{-s\mathcal{L}}\phi$ by Proposition~\ref{KernelEstim2}.
Fix $\eps>0$ and set $\psi:=(V^2+\eps^2)^{(q-2)/2}V$ (actually $\eps=0$ will do if $q\ge 2$).
Owing to \eqref{ZC21}--\eqref{ZC21ter}, \eqref{cvZphi}  we then have $V\in C^{2,1}([0,\infty)\times(0,\infty))
\cap C^1(0,\infty;L^q_\rho)\cap C([0,\infty;L^q_\rho)$,
$V$ solves $V_s=-\mathcal{L}V$, and moreover $V(s)\in D(\mathcal{L})$ and $\psi(s)\in H$ for each $s>0$.
Multiplying by $\rho\psi$ with 
and using Lemma~\ref{GaussianPoincare3} we get
$$\frac{1}{q}\frac{d}{ds}\int \rho (V^2+\eps^2)^{q/2}=\int \rho V_s\psi=-(\mathcal{L}V,\psi)
=-(V_y,\psi_y)+k(V,\psi).$$
Since $(V_y,\psi_y)=\int\rho ((q-1)V^2+\eps^2)(V^2+\eps^2)^{(q-4)/2}V_y^2\ge 0$, we deduce
$$\frac{d}{ds}\Bigl(e^{-qks}\int \rho (V^2+\eps^2)^{\frac{q}{2}}\Bigr)
\le ke^{-qks}\Bigl((V,\psi)- \int \rho (V^2+\eps^2)^{\frac{q}{2}}\Bigr)
\le |k|e^{-qks}\eps^2\int\rho(V^2+\eps^2)^{\frac{q-2}{2}}=:J_\eps(s).$$
Integrating in time and then observing that $J_\eps(s)\to 0$ as $\eps\to 0$ uniformly for $s\ge 0$ bounded
 (consider the cases $q\ge 2$ and $q<2$ separately),
inequality \eqref{smoothing0} follows.

\smallskip

We next prove \eqref{smoothing1}. Let $Z$ be given by \eqref{defWkernel}.
Using \eqref{boundH2}, for each $\eta\in(0,1)$, we may split $\partial_xZ$ as follows:
$$\begin{aligned}
|\partial_xZ(x,t)|&\le t^{-\frac12} \int \Bigl(1+\frac{|x-\xi|^2}{t}\Bigr)H_0(t,x,\xi)|\phi(\xi)|\xi^\alpha\,d\xi \\
&\le Ct^{-\frac12-\eps} \int_{E_1}H_0(t,x,\xi)|\phi(\xi)|\xi^\alpha\,d\xi 
+C_\eta t^{-(\alpha+2)/2} \int_{E_2} \exp\Bigl[-\frac{(x-\xi)^2}{(4+\eta)t}\Bigr]|\phi(\xi)|\xi^\alpha\,d\xi \\
&\equiv Z_1(x,t)+Z_2(x,t),
\end{aligned}$$
where
$$E_1=\{\xi>0:\,|x-\xi|^2\le t^{1-\eps}\},\quad E_2=\{\xi>0:\,|x-\xi|^2>t^{1-\eps}\}.$$
To estimate $Z_2$, we use H\"older's inequality to write
$$\begin{aligned}
Z_2(x,t)
&=C_\eta t^{-(\alpha+2)/2} \int_{E_2} e^{\xi^2/4q}\exp\Bigl[-\frac{(x-\xi)^2}{(4+\eta)t}\Bigr]e^{-\xi^2/4q}|\phi(\xi)|\xi^\alpha\,d\xi \\
&=C_\eta \|\phi\|_{L^q_\rho}t^{-(\alpha+2)/2} \Bigl(\int_{E_2} \exp\Bigl[\frac{q'\xi^2}{4q}-\frac{q'(x-\xi)^2}{(4+\eta)t}\Bigr]\xi^\alpha\,d\xi\Bigr)^{1/q'}\\
&\le C_\eta \|\phi\|_{L^q_\rho}t^{-(\alpha+2)/2} \exp\Bigl[-\frac{\eta t^{-\eps}}{5}\Bigr]
\Bigl(\int_{E_2} \exp\Bigl[\frac{q'\xi^2}{4q}-\frac{q'(1-\eta)(x-\xi)^2}{(4+\eta)t}\Bigr]\xi^\alpha\,d\xi\Bigr)^{1/q'}.
\end{aligned}$$
Fix $a\in (1,q)$. Setting $z=\xi-x$ and using $(z+x)^2\le (1+a^{-1}t)x^2+(1+at^{-1})z^2$, we get
$$\begin{aligned}
Z_2(x,t)&\le C_\eta \|\phi\|_{L^q_\rho}  \exp\Bigl[-\frac{\eta t^{-\eps}}{5}\Bigr]
\Bigl(\int_{-\infty}^\infty \exp\Bigl[\frac{q'(z+x)^2}{4q}-\frac{q'(1-\eta)z^2}{(4+\eta)t}\Bigr]|z+x|^\alpha\,dz\Bigr)^{1/q'} \\
&\le C\|\phi\|_{L^q_\rho} \exp\Bigl[-\frac{\eta t^{-\eps}}{5}\Bigr]
\exp\Bigl[\frac{(1+a^{-1}t)x^2}{4q}\Bigr]\Bigl(\int_{-\infty}^\infty 
 \exp\Bigl[\Bigl(\frac{t+a}{4qt}-\frac{1-\eta}{(4+\eta)t}\Bigr)q'z^2\Bigr]|z+x|^\alpha\,dz\Bigr)^{1/q'}.
\end{aligned}$$
Now we may choose $\eta>0$ small such that, for all $t\in (0,\eta)$,
$$\frac{t+a}{4qt}-\frac{1-\eta}{(4+\eta)t}=\frac{(4+\eta)(t+a)-4q(1-\eta)}{4q(4+\eta)t}\le -\eta t^{-1},$$
hence
$$Z_2(x,t)\le C\|\phi\|_{L^q_\rho} \exp\bigl(-2ct^{-\eps}\bigr)(1+x^{\alpha/q'})\exp\Bigl[\frac{(1+a^{-1}t)x^2}{4q}\Bigr],
\quad x\ge 0,\ 0<t<\eta,$$
for some $c>0$. Using $1+a^{-1}t-\frac{1}{1-t}=\frac{(a^{-1}-1)t-a^{-1}t^2}{1-t}\le -c_1t$, it follows that
\be{smoothingZ2}
\begin{aligned}
\Bigr(\int &Z_2^q(x,t)\exp\Bigl[-\frac{x^2}{4(1-t)}\Bigr]x^\alpha dx\Bigr)^{1/q}\\
&\le C\|\phi\|_{L^q_\rho} \exp\bigl(-2ct^{-\eps}\bigr)
\Bigr(\int \exp\Bigl[\Bigl(\frac{1+a^{-1}t}{4}-\frac{1}{4(1-t)}\Bigr) x^2\Bigr](1+x^{\alpha q/q'})x^\alpha dx\Bigr)^{1/q} \\
&\le C\|\phi\|_{L^q_\rho} \exp\bigl(-2ct^{-\eps}\bigr)
\Bigr(\int e^{-c_1tx^2/4}(1+x^{\alpha q/q'})x^\alpha dx\Bigr)^{1/q}\le C\|\phi\|_{L^q_\rho} \exp\bigl(-ct^{-\eps}\bigr).
\end{aligned}
\ee

Going back to $V$ and recalling that
\be{defV2}
V(y,s)=e^{ks}Z(ye^{-s/2},1-e^{-s})=\int_0^\infty G_0(s,y;\xi)\phi(\xi)\xi^\alpha d\xi,
\ee
where $G_0(s,y,\xi):=e^{ks}H_0\bigl(1-e^{-s},e^{-s/2}y,\xi\bigr)$, we write
$$e^{(\frac12-k)s} |\partial_yV(y,s)|= |\partial_xZ(ye^{-\frac{s}{2}},1-e^{-s})|=
Z_1(ye^{-\frac{s}{2}},1-e^{-s})+Z_2(ye^{-\frac{s}{2}},1-e^{-s})\equiv V_1(y,s)+V_2(y,s).$$
Since $Z_1(x,t)\le Ct^{-(1+\eps)/2} \tilde Z(x,t)$, where $\tilde Z$ is given by \eqref{defWkernel}  with $\phi$ replaced by $|\phi|$,
it follows from \eqref{smoothing0} that 
$$\|V_1(s)\|_{L^q_\rho}\le Ce^{-s/2}(1-e^{-s})^{-\frac12-\eps}\|\phi\|_{L^q_\rho}
\le Cs^{-\frac12-\eps}\|\phi\|_{L^q_\rho}$$
for $s>0$ small.
As for $V_2$, putting $t=1-e^{-s}$, it follows from \eqref{smoothingZ2} that
$$\begin{aligned}
\|V_2(s)\|_{L^q_\rho}^q
&=\int V_2^q(y,s)e^{-y^2/4}y^\alpha dy
=\int Z_2^q(ye^{-s/2},1-e^{-s})e^{-y^2/4}y^\alpha dy\\
&=(1-t)^{-(\alpha+1)/2} \int Z_2^q(x,t)\exp\Bigl[-\frac{x^2}{4(1-t)}\Bigr]x^\alpha dx\le C\|\phi\|_{L^q_\rho}^q.
\end{aligned}$$
These estimates on $V_1, V_2$ guarantee \eqref{smoothing1} for $s>0$ small.
On the other hand, by \eqref{boundH1H_0}, \eqref{defWkernel}, \eqref{computWx} and \eqref{smoothing0}, we have
$$\|\partial_yV(s)\|_{L^q_\rho}=\|\partial_yT(s-s_1)V(s_1)\|_{L^q_\rho}\le e^{(k-\frac12)(s-s_1)} \|\partial_yV(s_1)\|_{L^q_\rho},\quad s>s_1>0.$$
This guarantees that \eqref{smoothing1} remains true for all $s>0$.

\smallskip

(ii) We adapt the proof of \cite[Lemma~2.1]{HV93}, given there for $\alpha=0$.
Using 
$$\frac{\xi^2}{q}-\frac{(x-\xi)^2}{t}=\frac{x^2}{q-t}-\Bigl(\frac{1}{t}-\frac{1}{q}\Bigr)\Bigl(\xi-\frac{qx}{q-t}\Bigr)^2
\le \frac{x^2}{q-t}-c\Bigl(\xi-\frac{qx}{q-t}\Bigr)^2,\quad 0<t<1,$$
we have, for all $t\in(0,1)$,
$$\begin{aligned}
t^{(\alpha+1)/2}|Z(x,t)|
&\le C  \int e^{\xi^2/4q}\exp\Bigl[-\frac{(x-\xi)^2}{4t}\Bigr]e^{-\xi^2/4q}|\phi(\xi)|\xi^\alpha\,d\xi \\
&\le C \|\phi\|_{L^q_\rho} \Bigl(\int \exp\Bigl[\frac{q'\xi^2}{4q}-\frac{q'(x-\xi)^2}{4t}\Bigr]\xi^\alpha\,d\xi\Bigr)^{1/q'}\\
&\le C \|\phi\|_{L^q_\rho} \exp\Bigl[\frac{x^2}{4(q-t)}\Bigr]
 \Bigl(\int \exp\Bigl[-c\Bigl(\xi-\frac{qx}{q-t}\Bigr)^2\Bigr]\xi^\alpha\,d\xi\Bigr)^{1/q'}\\
&\le C \|\phi\|_{L^q_\rho} \exp\Bigl[\frac{x^2}{4(q-t)}\Bigr]
 \Bigl(\int_{-\infty}^\infty e^{-cz^2}\Bigl|z+\frac{qx}{q-t}\Bigr|^\alpha\,dz\Bigr)^{1/q'}\\
 &\le C \|\phi\|_{L^q_\rho} \exp\Bigl[\frac{x^2}{4(q-t)}\Bigr]
 (1+x^{\alpha/q'}).
\end{aligned}$$
Consequently, setting $t_0=(r-q)/(r-1)$, we have
$$\Bigr(\int |Z|^r(x,t)\exp[-\ts\frac{x^2}{4(1-t)}]x^\alpha dx\Bigr)^{\frac{1}{r}}
\le M(t)\|\phi\|_{L^q_\rho},\quad t_0<t<1, $$
where 
$M(t)=t^{-\frac{\alpha+1}{2}}
\bigr(\ts\int \exp[(\ts\frac{r}{q-t}-\frac{1}{1-t}) \frac{x^2}{4}](1+x^{\alpha r/q'})x^\alpha dx\bigr)^{1/q}<\infty$.
Fix $t^*\in(t_0,1)$ and $s^*:=-\log(1-t^*)$. By \eqref{defV2}, it follows that 
$$\begin{aligned}
\|V(s^*)\|_{L^r_\rho}^r
&=\int |V(y,s^*)|^re^{-y^2/4}y^\alpha dy
=e^{krs^*}\int |Z(ye^{-s^*/2},1-e^{-s^*})|^re^{-y^2/4}y^\alpha dy\\
&\le C\int |Z(x,t^*)|^r\exp\Bigl[-\frac{x^2}{4(1-t^*)}\Bigr]x^\alpha dx\le C\|\phi\|_{L^q_\rho}^q.
\end{aligned}$$
This along with \eqref{smoothing0} (with $q$ replaced by $r$) yields  \eqref{smoothing2}.
The proof of  \eqref{smoothing3} is completely similar, making use of \eqref{boundH1H_0} and \eqref{computZx}.
\end{proof}

\subsection{Maximum principles for the linearized operator} \label{SubsecMP}

Assume 
\be{hypkTR}
\begin{cases}
&\alpha\ge 1,\ \ T>0, \ \ 0<R\le\infty,\ \ Q=(0,R)\times(0,T), \\
 \noalign{\vskip 1mm}
&a_1,a_2\in L^\infty_{loc}(Q),\ \ |a_i|\le 
C_1(x^i+x^{-\gamma_i}),\ \ 0\le \gamma_1<1, 
\ \ \ 0\le \gamma_2<2, \\ 
\end{cases}
\ee
for some $C_1>0$ and
\be{hypkTR2}
\begin{cases}
&w\in C^{2,1}(Q),\quad w\in C(\overline Q), \quad w_x\in L^\infty((r,R)\times(0,T))
\hbox{ for each $r>0$},\\
 \noalign{\vskip 1mm}
& 
x^\alpha w_x(x,t)\to 0,\ \hbox{ as $x\to 0$, uniformly in $t\in(0,T)$},
\end{cases}
\ee
and consider the linear operator with singularities at $x=0$:
\be{hypkTRP}
Pw:=w_t-w_{xx}-\ts\frac{\alpha}{x}w_x-a_1(x,t)w_x-a_2(x,t)w.
\ee
We first have the following maximum principle and strong maximum principle {\it up to $x=0$.}
 We stress that no boundary conditions at $x=0$ are required.

\begin{prop} \label{propMP}
Assume \eqref{hypkTR}, \eqref{hypkTR2}, $Pw\le 0$ in $Q$, $w(x,0)\le 0$ in $[0,R)$, and
$$
\begin{cases}
\hbox{$w(R,t)\le 0$ on $(0,T)$},& \hbox{if $R<\infty$,} \\
 \noalign{\vskip 1mm}
\hbox{ $w\le e^{M(1+x^2)}$ in $Q$ for some $M>0$},& \hbox{if $R=\infty$.}
\end{cases}
$$

\smallskip

(i) Then $w\le 0$ in $\overline Q$.

\smallskip

(ii) Assume in addition $w(x,0)\not\equiv 0$. Then $w<0$ in $[0,R)\times(0,T]$.
\end{prop}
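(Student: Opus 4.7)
My plan is to prove (i) by a comparison/energy argument in which the standing hypothesis $x^\alpha w_x \to 0$ uniformly in $t$ plays the role of a Neumann condition at $x = 0$, and then deduce (ii) from the classical strong maximum principle in the interior $\{x > 0\}$ combined with a boundary Hopf-type argument at $x = 0$.

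For (i), I would first reduce the case $R = \infty$ to bounded $R$ by a barrier. Fix $\nu > M$ and consider $\Phi(x,t) := e^{\lambda t + \nu(1+x^2)}$. A direct computation gives
\begin{equation*}
P\Phi = \Phi\bigl[\lambda - 2\nu(1+\alpha) - 4\nu^2 x^2 - 2\nu x\, a_1 - a_2\bigr],
\end{equation*}
and the polynomial and singular terms from $a_1, a_2$ (with $\gamma_1 < 1$, $\gamma_2 < 2$) are absorbed by $\lambda$ and by Young's inequality against $4\nu^2 x^2$, so that $P\Phi \ge 0$ on $Q$ for $\lambda$ sufficiently large. Then for any $\eps > 0$, $w - \eps\Phi$ satisfies the hypotheses of the bounded-$R$ case on $(0, R') \times (0, T)$ once $R'$ is large enough that $e^{M(1+x^2)} \le \eps\Phi$ at $x = R'$. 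Passing first $R' \to \infty$ (via the bounded-$R$ conclusion) and then $\eps \to 0^+$ gives $w \le 0$.

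For $R < \infty$, I would use an energy method. Testing $Pw \le 0$ against $w_+ x^\alpha$ and integrating over $(\eps, R) \times (0, t_0)$, and then using $(x^\alpha w_x)_x = x^\alpha(w_{xx} + \frac{\alpha}{x}w_x)$, yields
\begin{equation*}
\tfrac12 \int_\eps^R x^\alpha w_+^2(\cdot, t_0)\, dx + \int_0^{t_0}\!\!\int_\eps^R x^\alpha ((w_+)_x)^2 \le B_\eps + \int_0^{t_0}\!\!\int_\eps^R x^\alpha\bigl(|a_1 w_x w_+| + |a_2|\, w_+^2\bigr),
\end{equation*}
where the boundary contribution at $x = R$ vanishes since $w_+(R, \cdot) = 0$ and the term $B_\eps := \int_0^{t_0}\eps^\alpha w_x(\eps,t)\,w_+(\eps, t)\, dt$ tends to $0$ as $\eps \to 0^+$ thanks to the boundedness of $w_+$ on $\overline Q$ and the uniform decay hypothesis. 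Cauchy--Schwarz on the mixed term $a_1 w_x w_+$, together with the growth bounds on $a_1, a_2$, reduces the remainder to integrals weighted by $x^{\alpha - 2\gamma_1}$, $x^{\alpha - \gamma_2}$ and $x^{\alpha + 2}$; since $\alpha \ge 1$, $\gamma_1 < 1$, $\gamma_2 < 2$, a weighted Hardy inequality absorbs the singular parts into $\int_0^R x^\alpha((w_+)_x)^2$ and $\int_0^R x^\alpha w_+^2$. A Gronwall argument, together with $w_+(\cdot, 0) \equiv 0$, then forces $w_+ \equiv 0$.

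For (ii), having $w \le 0$ from (i), suppose $w(x_0, t_0) = 0$ at some $(x_0, t_0) \in (0, R) \times (0, T]$. On a small cylinder $[x_0/2, \min(3x_0/2, R_1)] \times (0, t_0]$ with $R_1 < R$, the operator $P$ is uniformly parabolic with locally bounded coefficients, so the classical interior strong maximum principle forces $w \equiv 0$ on a connected subset reaching back to $t = 0$; covering $(0, R)$ by such cylinders and using continuity up to $x = 0$ yields $w \equiv 0$ on $[0, R) \times [0, t_0]$, contradicting $w(\cdot, 0) \not\equiv 0$. The only remaining case is $w(0, t_0) = 0$: the interior argument just completed already gives $w < 0$ in $(0, R) \times (0, T]$, and then a local comparison with a subsolution of the form $\delta e^{-\mu(t_0 - t)} x^\sigma$ on a small cylinder $(0, r) \times (t_0 - \tau, t_0)$, with $\sigma$ chosen so that the $\frac{\alpha}{x}$ drift contributes the correct sign, forces $w(0, t_0) < 0$. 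The main obstacle is managing the singular coefficient $a_2 \sim x^{-\gamma_2}$ in the energy step; it is precisely the strict inequalities $\gamma_1 < 1$, $\gamma_2 < 2$ combined with $\alpha \ge 1$ that make the weighted Hardy estimate available and close the Gronwall step.
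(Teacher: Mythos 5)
Your bounded-interval energy argument and the interior strong maximum principle step are essentially the paper's proof and are sound. However, two of your barriers fail.

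First, the reduction of $R=\infty$ to bounded $R$: the comparison function $\Phi(x,t)=e^{\lambda t+\nu(1+x^2)}$ is \emph{not} a supersolution for large $x$, no matter how large $\lambda$ is. Indeed $\Phi_{xx}=(2\nu+4\nu^2x^2)\Phi$, so the bracket in $P\Phi$ contains the term $-4\nu^2x^2$, which tends to $-\infty$ as $x\to\infty$ and cannot be absorbed by the constant $\lambda$ (nor, in general, by the sign-indefinite terms $-2\nu x a_1-a_2$, which are only bounded by $C(x^2+\dots)$ in absolute value). Your phrase ``absorbed \dots by Young's inequality against $4\nu^2x^2$'' would require that term to appear with a plus sign, which it does not. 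This is the classical Tychonoff obstruction: uniqueness in the class $|w|\le e^{M(1+x^2)}$ requires a barrier with a \emph{time-dependent} exponent, valid only on a short time interval, followed by iteration. The paper uses $Z=e^{(N+Kt)\phi(x)}\psi(x)$ on $t\in(0,1/K)$, where $E_t=K\phi E$ supplies the missing $+Kx^2E$ to dominate $4(N+Kt)^2x^2E$, and the factor $\psi=3-x^\eta$ near $x=0$ generates $+\eta(\eta-1+\alpha)x^{\eta-2}$ to dominate the singular term $a_2\sim x^{-\gamma_2}$ (your static barrier also fails there, since a constant $\lambda$ cannot control $-a_2\Phi\sim -C x^{-\gamma_2}\Phi$ as $x\to 0$).

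Second, the strict negativity $w(0,t_0)<0$ in part (ii): your subsolution $\delta e^{-\mu(t_0-t)}x^\sigma$ vanishes identically at $x=0$, so even if the comparison succeeded it would only yield $w(0,t_0)\le 0$, which you already know. Moreover the comparison cannot be initialized: at $t=t_0-\tau$ you would need $w(x,t_0-\tau)\le-\delta e^{-\mu\tau}x^\sigma$ for $0<x<r$, but near $x=0$ you only know $w\le 0$ with no quantitative rate, precisely because $w(0,\cdot)$ may vanish there. The barrier must be chosen strictly negative on the whole initial slice $[0,r]$ and strictly \emph{positive at $x=0$} at the later times of interest. The paper's choice $\underline w(x,t)=\delta(t+x^\eta-2\tau)$ with $0<\eta<2-\gamma_2$ does exactly this: it is $\le\delta(r^\eta-\tau)<0$ at $t=\tau$, satisfies $P\underline w\le 0$ because $-\eta(\eta-1+\alpha)x^{\eta-2}$ dominates the singular coefficients, and gives $w(0,t)\le-\delta(t-2\tau)<0$ for $t>2\tau$. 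You need a barrier of this type; the $x^\sigma$ profile cannot deliver the conclusion.
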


For the proof we need a Hardy-type inequality, which is provided by the following simple lemma.

\begin{lem} \label{LemHardy}
Let $\alpha\ge 1$ and $\gamma<2$.
For each $\eta>0$, there exists $C>0$ depending only on $\eta,\alpha,\gamma$ such that
for all $0\le a<b$, there holds
$$\int_a^b x^{\alpha-\gamma} \phi^2
\le 2\bigl[\ts\frac{x^{\alpha-\gamma+1}}{\alpha-\gamma+1} \phi^2\bigr]_a^b
+\eta \ds\int_a^b x^\alpha {\phi'}^2
+C[1+b^{2(1-\gamma)}] \int_a^b x^\alpha  \phi^2,
\quad \phi\in C^1([a,b]).$$
\end{lem}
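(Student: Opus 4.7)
I would prove the inequality by integration by parts followed by Young's inequality and a pointwise bound on the residual weight. Since $\alpha\ge 1$ and $\gamma<2$, we have $q:=\alpha-\gamma+1>0$, so integrating the identity $(x^q\phi^2)'=qx^{q-1}\phi^2+2x^q\phi\phi'$ on $[a,b]$ yields
\[
\int_a^b x^{\alpha-\gamma}\phi^2\,dx = \left[\frac{x^q}{q}\phi^2\right]_a^b -\frac{2}{q}\int_a^b x^q\phi\phi'\,dx.
\]
Writing $x^q|\phi\phi'|=\bigl(x^{\alpha/2}|\phi'|\bigr)\bigl(x^{(\alpha-2\gamma+2)/2}|\phi|\bigr)$ and applying Young's inequality $|AB|\le\tfrac{\lambda}{2}A^2+\tfrac{1}{2\lambda}B^2$ with $\lambda=\eta q$ produces the key bound
\[
\int_a^b x^{\alpha-\gamma}\phi^2\,dx \le \left[\frac{x^q}{q}\phi^2\right]_a^b +\eta\int_a^b x^\alpha(\phi')^2\,dx +\frac{1}{\eta q^2}\int_a^b x^{\alpha-2\gamma+2}\phi^2\,dx.
\]

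It then remains to control the residual $\int_a^b x^{\alpha-2\gamma+2}\phi^2\,dx$ via the factorisation $x^{\alpha-2\gamma+2}=x^\alpha\cdot x^{2(1-\gamma)}$. When $\gamma\le 1$, the pointwise inequality $x^{2(1-\gamma)}\le b^{2(1-\gamma)}$ on $[0,b]$ gives immediately $\int x^{\alpha-2\gamma+2}\phi^2\le b^{2(1-\gamma)}\int x^\alpha\phi^2$, yielding the conclusion with coefficient $1$ on the boundary term (so the factor $2$ in the statement is slack). When $\gamma\in(1,2)$, this naive bound fails near the origin and I would instead exploit the alternative factorisation $x^{\alpha-2\gamma+2}=x^{\alpha-\gamma}\cdot x^{2-\gamma}$ (with $2-\gamma>0$) and split $[a,b]=[a,b_0]\cup[b_0,b]$, where $b_0=b_0(\eta,\alpha,\gamma)>0$ is chosen so that $b_0^{2-\gamma}/(\eta q^2)<1/2$. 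On $[a,b_0]$ the residual is bounded by $\tfrac{1}{2}\int_a^{b_0} x^{\alpha-\gamma}\phi^2\,dx$ and absorbed into the left-hand side, which produces precisely the factor $2$ on the boundary term. On $[b_0,b]$ the weight $x^{-\gamma}\le b_0^{-\gamma}$ is bounded, so the integral is dominated by $b_0^{-\gamma}\int x^\alpha\phi^2$; a standard trace estimate $\phi(b_0)^2\le C\bigl(\int x^\alpha\phi^2+\int x^\alpha(\phi')^2\bigr)$ handles the spurious boundary term at the splitting point, its $(\phi')^2$-contribution being absorbed into $\eta\int x^\alpha(\phi')^2$ after a slight rescaling of $\eta$.

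\textbf{Main obstacle.} The delicate case is $\gamma\in(1,2)$, where the residual weight $x^{\alpha-2\gamma+2}$ is more singular at $x=0$ than $x^\alpha$, so the naive pointwise bound does not apply. The splitting/absorption strategy above addresses this, but the quantitative bookkeeping---verifying that the accumulated $b$-dependence fits precisely into $C[1+b^{2(1-\gamma)}]$ and that the coefficient of $\int x^\alpha(\phi')^2$ stays equal to the prescribed $\eta$---is the subtle point; the factor $2$ on the boundary term in the statement is the precise slack needed to accommodate the absorption.
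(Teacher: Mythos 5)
Your proof is correct and follows essentially the same route as the paper: integration by parts, Young's inequality producing the residual weight $x^{\alpha+2-2\gamma}$, and a splitting of the residual at a small threshold so that the singular part is absorbed into the left-hand side (yielding the factor $2$) while the remainder is controlled by $C[1+b^{2(1-\gamma)}]\int x^\alpha\phi^2$. The only superfluous step is the trace estimate at the splitting point: since you are merely splitting the residual integral (not integrating by parts anew on each piece), no boundary term arises at $b_0$ and that device is not needed.
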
 

\begin{proof} By integration by parts, we have
$$\int_a^b x^{\alpha-\gamma} \phi^2
\le \bigl[\ts\frac{x^{\alpha-\gamma+1}}{\alpha-\gamma+1} \phi^2\bigr]_a^b
-\ts\frac{2}{\alpha-\gamma+1} \ds\int_a^b x^{\alpha-\gamma+1} \phi\phi' 
\le \bigl[\ts\frac{x^{\alpha-\gamma+1}}{\alpha-\gamma+1} \phi^2\bigr]_a^b
+\eta \ds\int_a^b x^\alpha {\phi'}^2 +C(\eta) \int_a^b x^{\alpha+2-2\gamma}\phi^2.$$
Choosing $x_0\in(0,1)$ such that $C(\eta) x_0^{2-\gamma}<\frac12$, we get
$$\int_a^b x^{\alpha-\gamma} \phi^2\le \bigl[\ts\frac{x^{\alpha-\gamma+1}}{\alpha-\gamma+1} \phi^2\bigr]_a^b
+\eta \ds\int_a^b x^\alpha {\phi'}^2 +\frac12 \int_{(a,b)\cap(0,x_0)} x^{\alpha-\gamma}\phi^2
+C(\eta)[x_0^{2(1-\gamma)}+b^{2(1-\gamma)}] \int_{(a,b)\setminus(0,x_0)}  x^\alpha\phi^2$$
and the conclusion follows.
\end{proof}

\begin{proof}[Proof of Proposition~\ref{propMP}.] 
{\bf Step 1.} {\it Proof of (i) for $R<\infty$.}
Let $\gamma=\max(2\gamma_1,\gamma_2)<2$.
Fix $\eps\in (0,R/2)$. Multiplying by $x^\alpha w_+$, integrating by parts over $(\eps,R-\eps)$
and using $w_{+,x}=\chi_{\{w>0\}}w_x$, Young's inequality and \eqref{hypkTR},
we obtain, for $t\in (0,T)$,
$$\begin{aligned}
\frac{d}{dt} \int_\eps^{R-\eps} \ts\frac{x^\alpha}{2}w_+^2
&= \int_\eps^{R-\eps} x^\alpha w_t w_+ \le \int_\eps^{R-\eps} \bigl((x^\alpha w_x)_x+x^\alpha(a_1w_x+a_2w)\bigr)w_+\\
&= \Bigl[x^\alpha w_xw_+\Bigr]_\eps^{R-\eps}+\int_\eps^{R-\eps} x^\alpha\bigl\{-(w_{+,x})^2+(a_1w_{+,x} +a_2w_+)w_+\bigr\} \\
&\le\Bigl[x^\alpha w_xw_+\Bigr]_\eps^{R-\eps}-\frac12\int_\eps^{R-\eps} x^\alpha (w_{+,x})^2+C_2\int_\eps^{R-\eps} x^{\alpha-\gamma} w_+^2.
\end{aligned}$$
Here and in what follows, $C_i$ denotes positive constants independent of $\eps$.
Next taking $\eta>0$ such that $\eta\,C_2<\frac12$ and applying Lemma~\ref{LemHardy}, we deduce that
$$
\frac{d}{dt} \int_\eps^{R-\eps} \ts\frac{x^\alpha}{2}w_+^2
\le \Bigl[x^\alpha w_xw_++2 C_2\ts\frac{x^{\alpha-\gamma+1}}{\alpha-\gamma+1} w_+^2\Bigr]_\eps^{R-\eps}+C_3\ds\int_\eps^{R-\eps} x^\alpha w_+^2
\le g_\eps(t)+C_3\ds\int_\eps^{R-\eps} x^\alpha w_+^2,
$$
where $g_\eps(t)=C_3\bigl\{|w_x|w_++w_+^2\bigr\}(R-\eps, t)+\eps^\alpha |w_x|w_+(\eps, t)$.
Integrating in time and using $w(\cdot,0)\le 0$, we get
$$
\int_\eps^{R-\eps} x^\alpha w_+^2(t)
\le\Bigl\{2\int_0^t g_\eps(s)\,ds+\int_\eps^{R-\eps} x^\alpha w_+^2(x,0)\Bigr\}e^{-2C_3t}
=2e^{-2C_3t} \int_0^t g_\eps(s)\,ds.$$
Letting $\eps\to 0$ and using $w(R,s)\le 0$ and \eqref{hypkTR2}, we deduce that $w_+(\cdot,t)\equiv 0$.

\smallskip

{\bf Step 2.} {\it Proof of (i) for $R=\infty$.}
Take $\eta\in(0,2-\gamma_2)$ and  $\phi\in C^2([0,\infty))$, $\psi\in C([0,\infty))\cap C^2(0,\infty)$,  $\phi,\psi\ge 1$, such that
$$
 \phi(x)=
\begin{cases}
1,& 0\le x\le 1, \\
 \noalign{\vskip 1mm}
x^2, & x\ge 2,
\end{cases}
\qquad
\psi(x)=
\begin{cases}
3-x^\eta,& 0\le x\le 1, \\
 \noalign{\vskip 1mm}
1, & x\ge 2.
\end{cases}
$$
 By our assumption, there exists $N>0$ such that 
\be{wexpmx2}
\sup_{t\in(0,T)} w(x,t)\le o\bigl(e^{Nx^2}\bigr),\quad x\to\infty.
\ee
Set 
$$E(x,t)=e^{(N+Kt)\phi(x)}, \qquad Z(x,t)=E(x,t)\psi(x),$$
with $K>0$ to be chosen.
We compute
\be{ZtZxx}
 Z_t=K\phi Z,\qquad Z_x=E\psi'+E_x\psi,\qquad Z_{xx}=E\psi''+2E_x\psi'+E_{xx}\psi.
\ee
For $(x,t)\in (0,1]\times(0,T)$, we have
$$\psi''+\ts\frac{\alpha}{x}\psi'=-\eta(\eta-1+\alpha)x^{\eta-2},\quad
|a_1\psi'|+|a_2\psi|\le C[x^{\eta-1-\gamma_1}+x^{-\gamma_2}].$$
Here and below, $C$ denotes a generic positive constant independent of $K$.
Consequently, there exists $x_0\in(0,1)$ (independent of $K$) such that,
for all $(x,t)\in (0,x_0]\times(0,T)$,
$$ PZ=KZ+e^{N+Kt}P\psi\ge \Bigl\{\eta(\eta-1+\alpha)x^{\eta-2}-C\bigl[x^{\eta-1-\gamma_1}
+x^{-\gamma_2}\bigr]\Bigr\}e^{N+Kt}\ge 0.$$
 Let $T_1=\min(T,1/K)$. For $(x,t)\in [2,\infty)\times(0,T_1)$, we have
$$E_{xx}+\ts\frac{\alpha}{x}E_x=\bigl[2(\alpha+1)(N+Kt)+4(N+Kt)^2x^2\bigr]E\le Cx^2E,\qquad
|a_1E_x|+|a_2E|\le Cx^2E,$$ 
hence $PZ=PE\ge (Kx^2-Cx^2)E$.
For $(x,t)\in (x_0,2)\times(0,T_1)$, we have $|E_x|+|E_{xx}|\le CE$ hence,
using \eqref{ZtZxx}, $\psi\in C^2(0,\infty)$ and $\phi,\psi\ge 1$, $PZ\ge (K-C)E$.
Taking $K>0$ large enough, we thus obtain
$PZ\ge 0$ in $(0,\infty)\times(0,T_1)$.

Now, for each fixed $\eps>0$, we set $w_\eps:=w-\eps Z$, which satisfies
$Pw_\eps=Pw-\eps PZ\le 0$ in $(0,\infty)\times(0,T_1)$.
On the other hand, by $w(\cdot,0)\le 0$ and \eqref{wexpmx2}, we have
$w_\eps\le 0$ on $((0,\infty)\times\{0\})\cup([\tilde R,\infty)\times(0,T))$ for $\tilde R=\tilde R(\eps)>0$ sufficiently large,
and $w_\eps$ moreover satisfies the assumptions in \eqref{hypkTR2} with $R$ replaced by $\tilde R$.
We thus deduce from Step~1 that $w_\eps\le 0$ in $(0,\infty)\times(0,T_1]$,
hence $w\le 0$ by letting $\eps\to 0$. 
 Repeating the argument on $[T_1,\min(T,T_1+1/K))$ (in case $T>1/K$) and so on, the conclusion follows.

\smallskip

{\bf Step 3.} {\it Proof of (ii).}
Take $x_0\in (0,R)$ such that $w(x_0,0)<0$.
Since the coefficients $\alpha x^{-1}+a_1$ and $a_2$ bounded for $x$ in compact subsets of $(0,\infty)$, for each $\eps\in (0,x_0)$,
we may apply the strong maximum principle on $(\eps,R)\times (0,T)$, to deduce that
$w<0$ in $(0,R)\times(0,T]$.

\smallskip

It remains to show that $w(0,t)<0$ on $(0,T]$. 
To this end, we use a comparison argument. 
Take $\eta\in(0,2-\gamma_2)$. 
Fixing any $\tau\in (0,T/2)$, we set 
 $\underline w(x,t)=\delta(t+x^\eta-2\tau)$, where $\delta>0$ will be selected below. 
Taking $r\in (0,\min(R/2,\tau^{1/\eta})$ sufficiently small (independent of $\delta$),
a simple computation gives
$$\begin{aligned}
\delta^{-1} P\underline w
&=1-\eta(\eta-1+\alpha)x^{\eta-2}-a_1\eta x^{\eta-1}-a_2(t+x^{\eta}-2\tau)
\\
&\le 1-\eta(\eta-1+\alpha)x^{\eta-2}+
Cx^{\eta-1-\gamma_1}
+Cx^{-\gamma_2}(T+r^{\eta}) \le 0
\end{aligned}$$
in $(0,r)\times(0,T]$. 
On the other hand we have 
 $\underline w(x,\tau)\le \delta(r^{\eta}-\tau)<0\le -w(x,\tau)$ on $[0,r]$.
 Now choosing 
$$\delta:=(T+r^{\eta})^{-1}\inf_{t\in [\tau,T]}(-w(r,t))>0,$$
we get
$\underline w(r,t)\le \delta(T+r^{\eta})\le -w(r,t)$ on $(\tau,T]$.
We then deduce from assertion (i) that
$\underline w+w\le 0$ in $[0,r)\times (\tau,T]$, hence in particular $w(0,t)\le- \delta(t-2\tau)<0$ for all $t\in (2\tau,T]$.
Since $\tau\in (0,T/2)$ was arbitrary, the assertion follows.
\end{proof}

As a consequence of Proposition~\ref{propMP}(ii), we obtain the following strong separation property up to $x=0$ 
for singular viscosity solutions of the viscous Hamilton-Jacobi equation (which turn out to satisfy assumption \eqref{uxUx}).

\begin{prop} \label{propSMP2}
Let $p>2$, $T>0$, $0<R\le\infty$, $Q=(0,R)\times(0,T)$ and $u_1, u_2\in C^{2,1}(Q)\cap C(\overline Q)$ be classical solutions of
$u_t=u_{xx}+|u_x|^p$ in $Q$, such that 
\be{uxUx}
u_x-U_x\in L^\infty(Q) 
\ee
 and $u_1(\cdot,0)\le u_2(\cdot,0)$. Also, suppose that
$$
\begin{cases}
\hbox{$[u_1-u_2](R,t)\le 0$ on $(0,T)$},& \hbox{if $R<\infty$,} \\
 \noalign{\vskip 1mm}
\hbox{$u_1-u_2\le e^{M(1+x^2)}$ in $Q$ for some $M>0$},& \hbox{if $R=\infty$.}
\end{cases}
$$
Then $u_1\le u_2$ in $[0,R)\times(0,T]$.
If moreover $u_1(\cdot,0)\not\equiv u_2(\cdot,0)$,
then $u_1<u_2$ in $[0,R)\times(0,T]$.
\end{prop}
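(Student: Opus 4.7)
The plan is to linearize the equation satisfied by $w := u_1 - u_2$ and reduce to the abstract maximum principle of Proposition~\ref{propMP}. Since both $u_i$ solve $u_t = u_{xx} + |u_x|^p$, setting
\begin{equation*}
v_\theta(x,t) := \theta u_{1,x}(x,t) + (1-\theta)u_{2,x}(x,t), \qquad b(x,t) := p\int_0^1 |v_\theta|^{p-2}v_\theta\,d\theta,
\end{equation*}
the mean value theorem applied to $v \mapsto |v|^p$ yields $|u_{1,x}|^p - |u_{2,x}|^p = b(x,t)\,w_x$, so that $w$ satisfies
\begin{equation*}
w_t - w_{xx} - \ts\frac{\alpha}{x}w_x - a_1(x,t)\, w_x = 0 \quad \text{in } Q, \qquad \hbox{where } a_1 := b - \ts\frac{\alpha}{x}.
\end{equation*}
The aim is to verify that $a_1$ satisfies \eqref{hypkTR} (with $a_2 \equiv 0$) and that $w$ satisfies \eqref{hypkTR2}, after which part~(i) of Proposition~\ref{propMP} will give $w \le 0$.

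The algebraic core of the splitting is the identity $p(U')^{p-1} = \alpha/x$, which follows from $U'(x) = \beta^\beta x^{-\beta}$ together with $(p-1)\beta = 1$ and $p\beta = \alpha$. Exploiting \eqref{uxUx}, I would write $\psi_\theta := v_\theta - U'$ with $|\psi_\theta| \le K := \max_i \|u_{i,x} - U_x\|_{L^\infty(Q)}$ uniformly on $Q$. Since $U'(x) \to \infty$ as $x \to 0$, I pick $\delta \in (0,R)$ so small that $U'(x) \ge 2K$ on $(0,\delta]$; this forces $v_\theta \ge U'/2 > 0$ there, allowing a Taylor expansion of $\tau \mapsto \tau^{p-1}$ between $U'$ and $v_\theta = U' + \psi_\theta$ to give
\begin{equation*}
a_1(x,t) = p\int_0^1\bigl[(U'+\psi_\theta)^{p-1}-(U')^{p-1}\bigr]d\theta = p(p-1)\int_0^1\!\!\int_0^1 (U'+\mu\psi_\theta)^{p-2}\psi_\theta\,d\mu\,d\theta,
\end{equation*}
whence $|a_1(x,t)| \le C(U')^{p-2} \le C x^{-\gamma_1}$ with $\gamma_1 := (p-2)/(p-1) \in (0,1)$. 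For $x \ge \delta$, the $u_{i,x}$ are bounded (from \eqref{uxUx} and the boundedness of $U'$ on $[\delta,\infty)$, using $U'(x)\to 0$ as $x\to\infty$ when $R=\infty$), so $b$ is bounded and $\alpha/x \le \alpha/\delta$; hence $|a_1|$ is bounded on that region. Combining the two regimes yields $|a_1(x,t)| \le C_1(x + x^{-\gamma_1})$ on $Q$, which is the structural bound required by \eqref{hypkTR}.

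The hypotheses \eqref{hypkTR2} for $w$ are immediate: $w \in C^{2,1}(Q) \cap C(\overline Q)$ by assumption, $w_x = (u_{1,x}-U_x)-(u_{2,x}-U_x) \in L^\infty(Q)$ by \eqref{uxUx}, and this in particular forces $x^\alpha w_x \to 0$ uniformly in $t$ as $x \to 0$. The boundary/growth assumptions on $u_1 - u_2$ in the statement match exactly those required by Proposition~\ref{propMP}, so part~(i) gives $w \le 0$ in $\overline Q$, i.e.\ $u_1 \le u_2$. If moreover $u_1(\cdot,0) \not\equiv u_2(\cdot,0)$, continuity furnishes $x_0$ with $w(x_0,0) < 0$, so $w(\cdot,0) \not\equiv 0$, and Proposition~\ref{propMP}(ii) yields the strict inequality $w < 0$ on $[0,R)\times(0,T]$. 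The main obstacle is the estimate on $a_1$ near $x=0$: it relies crucially on the bounded-perturbation assumption $u_x - U_x \in L^\infty$ (rather than any bound on $u_x$ alone) and on the arithmetic identity $p(U')^{p-1} = \alpha/x$, which makes the singular first-order coefficient of the linearized equation coincide exactly with the coefficient appearing in $\mathcal{L}$ and reduces the remainder to an integrable singularity of order $x^{-(p-2)/(p-1)}$.
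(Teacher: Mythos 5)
Your proof is correct and follows essentially the same route as the paper: linearize the difference $w=u_1-u_2$ via the mean value theorem, use the hypothesis $u_x-U_x\in L^\infty(Q)$ together with the identity $p(U')^{p-1}=\alpha/x$ to split the first-order coefficient into the exact singular term $\frac{\alpha}{x}$ plus a remainder $a_1=O(x^{-(p-2)/(p-1)})$ near $x=0$ (bounded away from $x=0$), and then invoke Proposition~\ref{propMP}(i)--(ii). Your write-up is if anything slightly more detailed than the paper's (explicit choice of $\delta$ so that $v_\theta>0$, integral form of the Taylor remainder), but the decomposition, the key exponent $\gamma_1=1-\beta$, and the appeal to the singular maximum principle are identical.
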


\begin{proof}
Setting $w=u_1-u_2$ and $g(s)=p|s|^{p-2}s$, we get
$$ 
|u_{1,x}|^p-|u_{2,x}|^p
= |u_{2,x}+w_x|^p-|u_{2,x}|^p = g(u_{2,x}+\theta w_x)w_x =  g\bigl(U_x+O(1)\bigr)w_x.
$$
We have $ g\bigl(U_x+O(1)\bigr)=O(1)$ for $1<x<R\ (\le\infty)$ and
$$ g\bigl(U_x+O(1)\bigr)= U_x^{p-1}g\bigl(1+O(x^\beta)\bigr)= \ts\frac{\alpha}{x}(1+O(x^\beta))
=\ts\frac{\alpha}{x}+O(x^{\beta-1}),\quad 0<x\le 1,$$
with $\alpha=p/(p-1)$.
Therefore the equation for $w$ is $Pw=0$ with 
$a_1$ satisfying \eqref{hypkTR} for $\gamma_1=1-\beta$ and $a_2=0$,
and $w$ satisfies \eqref{hypkTR2} owing to \eqref{uxUx}.
The conclusion thus follows from Proposition~\ref{propMP}.
\end{proof}

\subsection{Zero number properties} \label{SubsecMP2}
Denote by $z(\phi\,:\,[0,R])\in \N\cup\{\infty\}$ 
the number of sign-changes of $\phi$ on $[0,R]$
($=0$ if $\phi\ge 0$ or $\phi\le 0$).
First recall the case of classical solutions of the viscous Hamilton-Jacobi equation
up to the boundary
 (including the case of a moving boundary, which will be also needed).

\begin{prop} \label{propZeroNumber0}
Let $p>2$, $t_1>t_0$.  Let $x_0, x_1:[t_0,t_1]\to \R$ be continuous curves such that $x_0(t)<x_1(t)$
and denote $D=\{(x,t);\ t_0<t<t_1,\ x_0(t)<x<x_1(t)\}$.
Let $u_1, u_2\in C^{2,1}(\overline D)$ be classical solutions of $u_t=u_{xx}+|u_x|^p$ in~$\overline D$ and
assume that, for each $i\in\{0,1\}$, either
\be{hyp1propZeroNumber0}
\hbox{ $x_i$ is constant and $[u_1-u_2](x_i(t),t)=0$ for all $t\in[t_0,t_1]$}
\ee
or
$$ \hbox{ $[u_1-u_2](x_i(t),t)\ne 0$ for all $t\in[t_0,t_1]$.}$$
Then the following holds.

\begin{itemize}

\item[(i)] $N(t):=z\bigl([u_1-u_2](\cdot,t)\,:\,[x_0(t),x_1(t)]\bigr)$ is finite and nonincreasing on $(t_0,t_1]$;
\vskip 1pt

\item[(ii)] If $N(t_0)$ is finite, then (i) is valid on $[t_0,t_1]$;
\vskip 1pt

\item[(iii)] $N(t)$ drops at each time $t\in(t_0,t_1)$ for which $[u_1-u_2](\cdot,t)$ has a degenerate zero in $[x_0(t),x_1(t)]$.
\end{itemize}
\end{prop}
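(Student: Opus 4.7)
The plan is to reduce the result to the classical Sturm--Angenent zero-number theorem for linear parabolic equations applied to the difference $w := u_1 - u_2$. Since $u_1, u_2$ satisfy the same quasilinear equation and lie in $C^{2,1}(\overline D)$, the function $w$ is a classical solution of
\[
w_t = w_{xx} + b(x,t)\, w_x \quad \text{in } D, \qquad b(x,t) := p\int_0^1 \bigl|u_{2,x} + \theta w_x\bigr|^{p-2}\bigl(u_{2,x}+\theta w_x\bigr)\,d\theta,
\]
and the coefficient $b$ is continuous and bounded on $\overline D$. Thus $w$ solves a linear uniformly parabolic equation with bounded continuous coefficients on $\overline D$.

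I would next reduce to a fixed spatial cylinder. The assumptions (constant boundary in the Dirichlet case, or a non-vanishing boundary arc that can be replaced by a nearby smooth curve without creating or destroying sign changes) ensure that we may take $x_0, x_1$ to be Lipschitz. Then the change of variables $y = (x-x_0(t))/(x_1(t)-x_0(t))$ and $\tilde w(y,t) := w(x_0(t) + y(x_1(t)-x_0(t)), t)$ turns $w$ into a solution of a uniformly parabolic linear equation on $(0,1)\times(t_0, t_1)$ with bounded continuous coefficients, and the count $N(t)=z(\tilde w(\cdot,t):[0,1])$ is preserved under the transformation.

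I would then invoke Angenent's theorem on $\tilde w$. At each endpoint $y \in \{0,1\}$, one of two boundary situations holds: either $\tilde w \equiv 0$ there (the Dirichlet case coming from \eqref{hyp1propZeroNumber0}), or $\tilde w$ stays of one sign throughout $[t_0,t_1]$ and thus contributes no sign changes. In both regimes Angenent's theorem gives (i) finiteness of $N(t)$ on $(t_0,t_1]$ together with nonincreasingness; (ii) the same on $[t_0,t_1]$ as soon as $N(t_0)<\infty$; and (iii) a strict drop of $N$ whenever $\tilde w(\cdot,t)$ admits a degenerate zero in $[0,1]$ (either an interior multiple zero or a Dirichlet endpoint at which $\tilde w_y$ also vanishes). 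Unwinding the change of variables yields (i)--(iii) as stated.

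The main technical point is the joint handling of the two admissible boundary behaviours. On a non-vanishing side no zero is counted and none can approach the boundary (by continuity and strict non-vanishing there), so the zero count is controlled by the interior sign changes, for which Angenent's theorem applies directly. On a Dirichlet side, a boundary zero is counted and a Hopf-type argument built into Angenent's theorem ensures the corresponding drop when $\tilde w_y$ also vanishes there; this case can alternatively be reduced to the non-vanishing case by a small reflective extension of $\tilde w$ beyond the boundary with controlled sign, bringing all sign changes strictly inside.
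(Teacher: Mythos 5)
Your proof is correct and rests on the same core as the paper's: reduce to the standard Sturm--Angenent zero-number theory for a linear parabolic equation satisfied by $w=u_1-u_2$ on a fixed cylinder. The paper's own proof is two sentences long -- it cites the fixed-boundary case to Angenent and Chen--Pol\'a\v{c}ik and dispatches the moving boundary ``by a simple compactness argument,'' presumably meaning: since $w\ne 0$ in a uniform collar around a non-vanishing boundary curve, one covers $[t_0,t_1]$ by finitely many time intervals on each of which the curve stays inside a fixed rectangle contained in that collar, and the zero count on the moving domain coincides with the count on the fixed rectangle there. Your reduction is a concrete alternative: smooth the non-vanishing boundary curves inside the same collar (this is exactly where the uniform collar, hence compactness of $[t_0,t_1]$, enters your argument too), then flatten to $(0,1)\times(t_0,t_1)$ by the affine change of variables. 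The only point worth making explicit is that the transformed equation then carries first-order coefficients involving $x_0'(t),x_1'(t)$, which for Lipschitz curves are merely bounded measurable; so you must invoke the version of Angenent's theorem with $L^\infty$ first-order coefficients (which is what \cite{An88} provides), and likewise your odd-reflection treatment of a Dirichlet endpoint produces a discontinuous but bounded coefficient. With that understood, both reductions buy the same thing, and your change-of-variables route has the mild advantage of yielding a single global statement on a fixed cylinder rather than a patchwork of local ones.
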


\begin{proof}
 The case when $x_1, x_2$ are constant is covered by standard intersection-comparison theory (cf.~\cite{An88, CP96} and see also \cite{PS3, MizSou}).
 The general case is reduced to the former by a simple compactness argument.
\end{proof}

The next proposition will be useful for the proofs of our results on recovery rates,
since it will allow to apply intersection-comparison arguments to the
 RBC viscosity solutions of the viscous Hamilton-Jacobi equation under consideration.

\begin{prop} \label{propZeroNumber}
(i) Assume \eqref{hypkTR}, \eqref{hypkTR2} with $R<\infty$, 
 let $P$ be given by \eqref{hypkTRP} and set $N(t):=z(w(\cdot,t)\,:\, [0,R])$. Assume that $w$ satisfies
$Pw=0$ in $Q$ and $w\ne 0$ on $\{(0,0)\}\cup(\{R\}\times (0,T))$.
Then $N$ is finite and nonincreasing on $(0,T)$. 
Moreover, $N$ drops at each time $t\in(0,T)$ such that $w(\cdot,t)$ has a degenerate zero in $(0,R)$
or $w(0,t)=0$;
namely $N(t)<\ds\lim_{s\to t_-}N(s)$.

\smallskip

(ii) Let $p>2$, $T>0$, $0<R<\infty$, $Q=(0,R)\times(0,T)$ and $u_1, u_2\in C^{2,1}(Q)\cap C(\overline Q)$ be classical solutions of
$u_t=u_{xx}+|u_x|^p$ in $Q$ satisfying \eqref{uxUx}.
Assume that $u_1\ne u_2$ on $\{(0,0)\}\cup(\{R\}\times (0,T))$.
Then $z(u_1(\cdot)-u_2(\cdot)\,:\,[0,R])$ is finite and nonincreasing on $(0,T)$.
Moreover, it drops at each $t\in(0,T)$ such that $[u_1-u_2](\cdot,t)$ has a degenerate zero in $(0,R)$
or $[u_1-u_2](0,t)=0$.
\end{prop}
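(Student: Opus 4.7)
The plan is to reduce (ii) to (i) as in the proof of Proposition~\ref{propSMP2}, and then to prove (i) by combining the classical Angenent intersection--comparison theorem on $[\eps,R]$ (for $\eps>0$) with the up-to-$x=0$ strong maximum principle of Proposition~\ref{propMP}(ii). For (ii), set $w=u_1-u_2$ and write $|u_{1,x}|^p-|u_{2,x}|^p = a_1(x,t)\,w_x$ with $a_1 = p|u_{2,x}+\theta w_x|^{p-2}(u_{2,x}+\theta w_x)$ for some $\theta\in[0,1]$ depending on $(x,t)$. By \eqref{uxUx}, $a_1=\ts\frac{\alpha}{x}+O(x^{\beta-1})$ near $x=0$ and $a_1\in L^\infty$ for $x\ge 1$, so $a_1$ fits \eqref{hypkTR} with $\gamma_1=1-\beta<1$ and $a_2=0$; the bound $w_x\in L^\infty(Q)$ also forces $x^\alpha w_x\to 0$ uniformly, hence \eqref{hypkTR2}, and then (i) applied to $w$ gives~(ii).

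For (i), the interior analysis is classical: on any $[\eps,R]$ with $\eps\in(0,R)$ the coefficients of $P$ are bounded, and since $w(R,\cdot)\ne 0$ by hypothesis, Angenent's intersection--comparison theorem yields that $z(w(\cdot,t):[\eps,R])$ is finite, nonincreasing in $t$, and strictly drops whenever $w(\cdot,t)$ admits a degenerate zero in $(\eps,R)$. On any sub-interval of $(0,T)$ on which $w(0,\cdot)$ does not vanish, continuity of $w$ up to $x=0$ gives that $w$ keeps a constant sign on $[0,\eps]$ for $\eps$ small enough, so $N(t):=z(w(\cdot,t):[0,R])$ coincides with the interior count and inherits its properties.

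The main step is to prove that $N$ strictly drops at each $t_0\in(0,T)$ with $w(0,t_0)=0$. Suppose, for contradiction, that $\lim_{t\to t_0^-}N(t)=N(t_0)=k$. By the continuous-dependence part of Angenent's theorem, the $k$ interior sign changes of $w(\cdot,t_0)$ are the limits of the $k$ interior sign changes of $w(\cdot,t)$ for $t<t_0$ close to $t_0$, and none approaches $x=0$; therefore there exist $\eta,\delta>0$ such that $w$ has a constant sign on $[0,\eta]\times(t_0-\delta,t_0]$, say $w\ge 0$ (the other sign is symmetric). The hypothesis $w\ne 0$ on $\{R\}\times(0,T)$ together with parabolic unique continuation in the regular region $(0,R)\times(0,T)$ prevents $w$ from vanishing identically on any such space-time neighborhood, so after possibly reducing $\delta$ we may assume $w(\cdot,t_0-\delta)\not\equiv 0$ on $[0,\eta]$. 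Applying Proposition~\ref{propMP}(ii) to $v:=-w$ on the rectangle $[0,\eta]\times[t_0-\delta,t_0]$---for which $Pv=0\le 0$, $v(\cdot,t_0-\delta)\le 0$ nontrivially, $v(\eta,\cdot)\le 0$, and the tail assumption \eqref{hypkTR2} all hold---yields $v<0$ on $[0,\eta)\times(t_0-\delta,t_0]$, contradicting $w(0,t_0)=0$. Hence $w(\cdot,t)$ must carry an extra sign change in $(0,\eta)$ for every $\eta>0$ and every $t<t_0$ sufficiently close, giving $\lim_{t\to t_0^-}N(t)\ge N(t_0)+1$. The main obstacle is the absence of any boundary condition at $x=0$, which is precisely what the up-to-the-boundary strong maximum principle of Proposition~\ref{propMP}(ii) is designed to handle.
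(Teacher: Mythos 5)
Your overall strategy is the same as the paper's: reduce (ii) to (i) exactly as in Proposition~\ref{propSMP2}, use standard intersection theory away from $x=0$, and use the up-to-the-boundary strong maximum principle of Proposition~\ref{propMP}(ii) to force a drop of $N$ at times where $w(0,t)=0$. That local mechanism is correct and is essentially the paper's Lemma~\ref{Zisolation}. However, there is a genuine gap in the global assembly, and it is precisely where the main difficulty of this proposition lies.

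Your drop argument begins with ``suppose $\lim_{t\to t_0^-}N(t)=N(t_0)=k$'' and then asserts that $w$ keeps a constant sign on a full rectangle $[0,\eta]\times(t_0-\delta,t_0]$. For the limit to exist, and for $N(t)$ to equal $k$ on a whole left-neighborhood of $t_0$ (which is what you need to control the sign of $w$ on the rectangle, or at least on its parabolic boundary), you must already know that $N$ is nonincreasing near $t_0$; by your own scheme this is only known on intervals where $w(0,\cdot)$ does not vanish. So the argument implicitly assumes that $t_0$ is isolated from the left in the set $\mathcal{Z}=\{t:\ w(0,t)=0\}$ -- i.e., it assumes the discreteness of $\mathcal{Z}$, which is exactly what has to be proved. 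A priori $\mathcal{Z}$ is only a closed set and could accumulate; if it does, neither the existence of $\lim_{t\to t_0^-}N(t)$ nor the finiteness of $N(t_0)$ is available, and the claim ``$N$ is nonincreasing on $(0,T)$'' does not follow from your two local facts. The paper resolves this by (a) proving, under the hypothesis that $(t_0-\eps,t_0)\cap\mathcal{Z}=\emptyset$, not only the drop from the left but also that $t_0$ is isolated in $\mathcal{Z}$ \emph{from the right} -- this uses Proposition~\ref{propMP}(ii) forward in time starting from $t_0$, giving $\sigma w>0$ on $[0,r]\times(t_0,t_0+\eps_3]$, and also yields monotonicity of $N$ on $[t_0,t_0+\eps_3]$, which your proposal never addresses; and then (b) running a minimal-accumulation-point argument: if $\bar t$ were the smallest accumulation point of $\mathcal{Z}$, then $N$ would be finite and nonincreasing on $(0,\bar t)$ with a drop at each point of $(0,\bar t)\cap\mathcal{Z}$, forcing that set to be finite, whence $\bar t$ is left-isolated and therefore isolated by (a) -- a contradiction. (Note also that the hypothesis $w(0,0)\ne 0$ is what prevents $\mathcal{Z}$ from accumulating at $t=0$ and gets this induction started; your proposal never uses it.) Without these steps your proof establishes the local drop only under an unproved left-isolation hypothesis, and does not establish that $N$ is nonincreasing on all of $(0,T)$.

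A small additional remark: your rectangle argument only needs the sign of $w$ on the parabolic boundary (one initial time slice plus the lateral side $\{\eta\}$), not on the whole rectangle; with that observation one can weaken your hypothesis to $\liminf_{t\to t_0^-}N(t)=N(t_0)$ and argue from a single time $t_j$ with $N(t_j)=N(t_0)$. This repairs the circularity in the left-drop step, but it still does not give the right-isolation of $t_0$, the discreteness of $\mathcal{Z}$, or the monotonicity of $N$ across $t_0$, so the global statement remains unproved.
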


\begin{rem} \label{RemZeroNumber2}
We observe that a closely related result to Proposition~\ref{propZeroNumber}(i)
was proved in \cite{CP96} in the case when $\alpha$ is an integer,
but the noninteger case is crucially needed in our study (specifically $\alpha=p/(p-1)\in(1,2)$).
On the other hand, the assumption $w(0,0)\ne 0$ might be technical, as it is not needed when $\alpha$ is an integer.
However it makes our proof considerably simpler and the statement is enough for our needs. 
See also Remark~\ref{RemZeroNumber3} for an alternative assumption.
As for Proposition~\ref{propZeroNumber}(ii), a related result was proved in \cite{MizSou} but it is not sufficient here.
\end{rem}

The idea of the proof of Proposition~\ref{propZeroNumber}(i) is to suitably control the possible zeros at $x=0$,
so as to be able to apply the standard zero number theory with bounded coefficients away from $x=0$.
To this end we set $\mathcal{Z}=\{t\in (0,T);\ w(0,t)=0\}$ and first observe that, for any $0\le t_1<t_2\le T$,
\be{Zmonot}
\begin{aligned}
&\hbox{If $\mathcal{Z}\cap(t_1,t_2)=\emptyset$, then $N$ is finite and nonincreasing and $N$ drops at }\\
&\hbox{any time $t\in(t_1,t_2)$ for which $w(\cdot,t)$ has a degenerate zero on $(0,R)$.}
\end{aligned}
\ee
Indeed, for each $t_1<\tilde t_1<\tilde t_2<t_2$, we have $w\ne 0$ on $[0,\eps]\times [\tilde t_1,\tilde t_2]$
for some $\eps>0$. Since the coefficients $\alpha x^{-1}+a_1$ and $a_2$ are bounded away from $x=0$, by
standard zero number theory~\cite{An88}, property \eqref{Zmonot} holds on $(\tilde t_1,\tilde t_2)$, hence on $(t_1,t_2)$.
We next have the following lemma.

\begin{lem} \label{Zisolation}
Under the assumptions of Proposition~\ref{propZeroNumber}(i),
let $t_0\in \mathcal{Z}$ be such that $(t_0-\eps,t_0)\cap \mathcal{Z}=\emptyset$ for some $\eps>0$. Then
\be{isolated1}
\hbox{$t_0$ is an isolated point of $\mathcal{Z}$,}
\ee
\be{isolated2}
\hbox{$N$ is nonincreasing in the neighborhood of $t_0$ and
$N(t_0)<\lim_{t\to t_0^-} N(t)$.}
\ee
\end{lem}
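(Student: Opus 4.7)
The plan is to combine the strong maximum principle up to $x=0$ from Proposition~\ref{propMP}(ii) with standard zero-number theory on a sub-interval bounded away from the singular boundary. Assume without loss of generality that $w(0,t)>0$ on $(t_0-\epsilon,t_0)$; by \eqref{Zmonot}, $N$ is finite and nonincreasing there, with limit $n_-:=\lim_{t\to t_0^-}N(t)$.

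First I would show that the smallest positive zero $x_1(t)$ of $w(\cdot,t)$ in $(0,R)$ satisfies $\liminf_{t\to t_0^-}x_1(t)=0$. If instead $x_1(t)\ge r_1>0$ for all $t\in(t_0-\delta,t_0)$, then $w>0$ on $[0,r_1]\times(t_0-\delta,t_0)$, so $-w$ satisfies $P(-w)=0$, is nonpositive on the parabolic boundary of $(0,r_1)\times(t_0-\delta,t_0)$, and is nontrivial on the initial slice; Proposition~\ref{propMP}(ii) would then yield $-w(0,t_0)<0$, contradicting $w(0,t_0)=0$. From $\liminf x_1=0$, continuity of $w$ together with an iteration of the same MP argument (ruling out accumulation of subsequent sign-change curves at $x=0$) gives $w(\cdot,t_0)\le 0$ on some $(0,\eta]$; moreover $w(\cdot,t_0)\not\equiv 0$ on this interval, else the strict MP would once again contradict $w(0,t_0)=0$. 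Fix $r\in(0,\eta]$ with $w(r,t_0)<0$.

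Isolation of $t_0$ in $\mathcal{Z}$ now follows from one more application of Proposition~\ref{propMP}(ii), this time on $[0,r]\times[t_0,t_0+\delta]$ with $\delta$ small enough that $w(r,t)<0$ throughout (by continuity): since $w(\cdot,t_0)\le 0$ is nontrivial, the strict conclusion gives $w<0$ on $[0,r)\times(t_0,t_0+\delta]$, whence $w(0,t)<0\ne 0$ on $(t_0,t_0+\delta]$. Two applications of \eqref{Zmonot}, on $(t_0-\epsilon,t_0)$ and $(t_0,t_0+\delta)$, then give the monotonicity of $N$ on a two-sided neighborhood of $t_0$. For the strict drop, split $N(t)=z_L(t)+z_R(t)$ with $z_L(t):=z(w(\cdot,t):[0,r])$ and $z_R(t):=z(w(\cdot,t):[r,R])$; this decomposition is legitimate since $w(r,t)\ne 0$ near $t_0$. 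On $[r,R]$ the coefficients of $P$ are bounded, so standard zero-number theory yields $z_R(t_0)\le z_R(t^*)$ for any $t^*\in(t_0-\delta,t_0)$. Picking $t^*$ with $x_1(t^*)<r$, which is possible since $\liminf x_1=0$, forces $z_L(t^*)\ge 1$ and hence $z_R(t^*)\le n_--1$; combining with $z_L(t_0)=0$, which holds because $w(\cdot,t_0)\le 0$ on $[0,r]$, gives $N(t_0)\le n_--1$ and completes \eqref{isolated2}.

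The main obstacle is the singular coefficient $\alpha/x$ in $P$, which blocks direct appeals to classical zero-number theorems up to $x=0$; the whole scheme depends on transferring sign information across this singular boundary through the strict conclusion of Proposition~\ref{propMP}(ii), whose barrier construction at $x=0$ (Step~3 of its proof) is exactly what makes such transfer possible. A secondary subtlety is ensuring $w(\cdot,t_0)\le 0$ on a full neighborhood $(0,\eta]$ rather than only along a subsequence, which requires iterating the MP argument on the subsequent sign-change regions to rule out accumulation of zero curves of $w$ at the singular point.
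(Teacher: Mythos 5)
Your overall architecture --- the strong maximum principle up to $x=0$ from Proposition~\ref{propMP}(ii) combined with classical zero-number theory on $[r,R]$ after splitting at a point $r$ where $w(r,\cdot)\ne 0$ --- is the same as the paper's, and both your first step ($\liminf_{t\to t_0^-}x_1(t)=0$) and your final counting are in the right spirit. But one intermediate claim is genuinely false: that $w(\cdot,t_0)\le 0$ on some $(0,\eta]$. Only a single \emph{undetermined} sign can be asserted there. Concretely, take $u(x,t)=(t_0-t)^2\varphi_2\bigl(x/\sqrt{t_0-t}\,\bigr)$, where $\varphi_2$ is the second eigenfunction from Proposition~\ref{GaussianPoincare2}; this solves $Pu=0$ (with $a_1=a_2=0$), satisfies $u(0,t)=(t_0-t)^2\varphi_2(0)>0$ for $t<t_0$ and $u(0,t_0)=0$, has $\liminf_{t\to t_0^-}x_1(t)=0$ (both sign-change curves collapse to $x=0$), and yet $u(x,t_0)=b_{2,2}\,x^4>0$ for $x>0$, since the leading coefficient of $\varphi_2$ is positive. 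So the vanishing sign-change curves need not leave a nonpositive sign behind, and the ``iteration of the MP argument'' you invoke cannot produce the sign you claim; an even number of curves may collapse simultaneously. Relatedly, your justification that $w(\cdot,t_0)\not\equiv 0$ near $0$ (``else the strict MP would contradict $w(0,t_0)=0$'') does not parse: Proposition~\ref{propMP}(ii) propagates a sign forward in time and says nothing about a terminal slice vanishing identically; the correct reason comes from $w(R,t_0)\ne 0$ together with finiteness of $N(t_0)$. There is also a circularity in your ordering: your drop argument needs $z_L(t_0)=0$, i.e.\ the single-sign property at $t_0$, which you have not legitimately established beforehand.

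The repair is exactly the paper's route and does not change your strategy. First prove $N(t_0)<m:=\lim_{t\to t_0^-}N(t)$ by pure counting, \emph{without} any sign information at $t_0$: if $N(t_0)\ge m$, pick $0<y_0<\dots<y_m$ witnessing $m$ sign changes at $t_0$; by continuity these persist for $t$ slightly before $t_0$, and $\liminf x_1=0$ supplies an additional sign change in $(0,y_0)$ at such a time $t$, giving $N(t)\ge m+1$, a contradiction. This is your $z_L/z_R$ computation run backward in time, where monotonicity of the zero number is available on all of $[0,R]$ via \eqref{Zmonot}. Finiteness of $N(t_0)$ then yields $\sigma\in\{-1,1\}$ and $r\in(0,R)$ with $\sigma w(\cdot,t_0)\ge 0$ on $[0,r]$ and $\sigma w(r,t_0)>0$, and both your isolation argument (apply Proposition~\ref{propMP}(ii) to $-\sigma w$ on $[0,r]\times[t_0,t_0+\eps_3]$) and the monotonicity of $N(t)=z(w(\cdot,t):[r,R))$ after $t_0$ go through verbatim with $\sigma$ in place of your fixed negative sign.
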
 

\begin{proof}
Assume without loss of generality that $w(0,t)>0$ on $(t_0-\eps,t_0)$.
We claim that 
\be{liminfx1}
\liminf_{t\to t_0^-} x_1(t)=0, \quad\hbox{ where $x_1(t)=\sup\hskip 0.5pt\bigl\{x\in(0,R];\ w(\cdot,t)\ge 0$ on $[0,x]\bigr\}$}.
\ee
Indeed, otherwise, we would have $w\ge 0$ on $(0,\delta]\times[t_0-\delta,t_0)$ for some $\delta\in(0,\eps)$,
and by the strong maximum principle in Proposition~\ref{propMP}(ii) this would imply $w(0,t_0)>0$, a contradiction.
By \eqref{Zmonot} and \eqref{liminfx1}, there exist $\eps_1\in(0,\eps)$ and an integer $m\ge 1$ such that
\be{liminfx2}
\hbox{for all $t\in(t_0-\eps_1,t_0)$, $N(t)=m:=\lim_{t\to t_0^-} N(t)$.}
\ee

We next claim that $N(t_0)<m$. Assume the contrary. Then there exist $0<y_0<\dots<y_m<R$ such that 
$w(y_{i-1},t_0)w(y_i,t_0)<0$ for all $i\in\{1,\dots,m\}$. By continuity, for some $\eps_2\in(0,\eps_1)$, we have
$w(y_{i-1},t)w(y_i,t)<0$ for all $i\in\{1,\dots,m\}$ and $t\in [t_0-\eps_2,t_0]$.
But \eqref{liminfx1} implies the existence of $t\in [t_0-\eps_2,t_0)$ and $\tilde y\in (x_1(t),y_0)$ such that $w(\tilde y,t)<0<w(0,t)$.
Consequently $w(\cdot,t)$ has at least $m+1$ sign changes in $[0,R]$, contradicting \eqref{liminfx2}.

Now, since $N(t_0)<\infty$ and $u(R,t_0)\ne 0$, there exist $\sigma\in \{-1,1\}$ and $r\in(0,R)$ such that 
$\sigma w(\cdot,t_0)\ge 0$ on $[0,r]$ and $\sigma w(r,t_0)>0$.
By continuity, we may find $\eps_3>0$ such that $\sigma w(r,t)>0$ for all $t\in [t_0,t_0+\eps_3]$.
We deduce from Proposition~\ref{propMP}(ii) that $\sigma w>0$ on $[0,r]\times(t_0,t_0+\eps_3]$, hence in particular \eqref{isolated1}.
Moreover, we have $N(t)=z(w(\cdot,t),[r,R))$ for all $t\in [t_0,t_0+\eps_3]$.
Since the coefficients $\alpha x^{-1}+a_1$ and $a_2$ are
bounded away from $x=0$, it follows from standard zero number theory~\cite{An88}
that $N(t)$ is nonincreasing on $[t_0,t_0+\eps_3]$.
This along with \eqref{liminfx2} and $N(t_0)<m$ implies \eqref{isolated2}.
\end{proof}

\begin{proof}[Proof of Proposition~\ref{propZeroNumber}]
(i) We claim that $\mathcal{Z}$ is discrete. Assume the contrary and let $\mathcal{Z}_0\ne\emptyset$ denote the
set of accumulation points of $\mathcal{Z}$. Since $\mathcal{Z}_0$ is a closed subset of $(0,T)$
and $w(0,0)\ne 0$, we may define $\bar t=\min \mathcal{Z}_0 \in (0,T)$. 
For each $\tau\in (0,\bar t)$, $(0,\tau)\cap \mathcal{Z}$ is finite. Therefore, as a consequence of 
 \eqref{Zmonot} and Lemma~\ref{Zisolation},
$N$ is finite and nonincreasing on $(0,\tau)$, hence on $(0,\bar t)$, and it drops at each time 
$t\in (0,\bar t)\cap\mathcal{Z}$.
Therefore $(0,\bar t)\cap \mathcal{Z}$ is finite, hence there exists $\eps>0$ such that  $(\bar t-\eps,\bar t)\cap \mathcal{Z}=\emptyset$.
But Lemma~\ref{Zisolation} then implies that $\bar t$ is an isolated point of $\mathcal{Z}$, a contradiction.

\smallskip

Now, since $\mathcal{Z}$ is discrete and $w(0,0)\ne 0$, for each $\tau\in (0,T)$, $(0,\tau)\cap\mathcal{Z}$ is finite.
Therefore, as a consequence of Lemma~\ref{Zisolation},
$N$ is finite and nonincreasing on $(0,\tau)$, hence on $(0,T)$, and $N$ drops at each time $t\in \mathcal{Z}$.
By \eqref{Zmonot}, $N$ also drops at each $t$ such that $w(0,t)\ne 0$ and $w$ has a degenerate zero in $(0,R)$.
The assertion follows.

\smallskip

(ii) Arguing as in the proof of Proposition~\ref{propSMP2}, the conclusion follows from assertion (i).
\end{proof}

\begin{rem} \label{RemZeroNumber3}
(i) Proposition~\ref{propZeroNumber} remains true if instead of $w(0,0)\ne 0$, we assume more generally that
$0\not\equiv w(\cdot,0)_{|[0,r]}\ge 0$ or $\le 0$ for some $r\in (0,R]$.
Indeed, by continuity, we then have $w(x_0,t)>0$ (or $<0$) in $[0,t_0]$ for some $t_0\in (0,T)$ and $x_0\in (0,r]$,
hence $w(0,t)>0$ in $(0,t_0]$ by Proposition~\ref{propMP}(ii).
We may then apply Proposition~\ref{propZeroNumber} on $(\eps,T)$ for each $\eps\in(0,t_0]$ and the conclusion follows.

\smallskip

(ii) Instead of $\alpha x^{-1}$, our proof could handle a more general class of coefficients with singularity at $x=0$.
\end{rem} 

The following proposition shows that the number of intersections
with the singular steady state is constant near a GBU or RBC time.
It was used to define the number of vanishing intersections in subsection~\ref{secmainres}.

\begin{prop} \label{ZeroNumberConst} 	Let $0 < R\le \infty$ and $ T < \infty $.

	\smallskip
	
(i) Let $u$ be a viscosity solution of \eqref{equ} with $u_0\in \mathcal{W}$.
	If $ u $ undergoes GBU at $(x,t) = (0,T) $, then
    there exist $r\in (0,R]$, $t_1<T$ and an integer $m \ge 1$ such that
	\be{claimintersec}
	\hbox{for all $t\in(t_1,T)$, \ $u(\cdot,t)-U$ has exactly $m$ zeros on $(0,r)$} 
	\ee
	and
	\be{claimintersec-r}
	u(r,t) - U(r)\ne 0,\quad t_1\le t\le T.
	\ee
	Moreover, denoting $0<x_1(t)<\dots<x_m(t)$ the zeros of $u(\cdot,t)-U$ on~$(0,r)$, we have
	\be{claimliminfx1}
	\liminf_{t\to T_-}x_1(t)=0.
	\ee

	(ii) Set $Q=(0,R)\times(0, T)$ and
	let $u\in C^{2,1}(Q)\cap C_b(\overline Q)$ be a solution of problem \eqref{equREC}, which 
	undergoes RBC at $(x,t) = (0, T)$. Then the conclusion of assertion (i) remains valid.

    	\smallskip
	
    	 (iii) In assertions (i)-(ii)  the zeros from \eqref{claimintersec}  are nondegenerate and are $C^1$ functions of~$t$.
\end{prop}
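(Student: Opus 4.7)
I would apply the singular zero-number results of Proposition~\ref{propZeroNumber}, in the generalized form of Remark~\ref{RemZeroNumber3}(i), to the function $v(x,t):=u(x,t)-U(x)$ on a domain $(0,r)\times(t_1,T)$, with $r\in(0,R)$ and $t_1<T$ to be chosen. First I must verify that $v$ satisfies an equation of the form $Pv=0$ with $P$ as in \eqref{hypkTRP}: writing $|u_x|^p-|U'|^p=a_1(x,t)\,v_x$ with $a_1(x,t):=\int_0^1 p|U'+\theta v_x|^{p-2}(U'+\theta v_x)\,d\theta$, the expansion used in the proof of Proposition~\ref{propSMP2} gives $a_1(x,t)=\alpha/x+O(x^{\beta-1})$ near $x=0$, where $\alpha=p/(p-1)\in(1,2)$, hence $\gamma_1=1-\beta<1$; the regularity condition \eqref{hypkTR2} is satisfied at each classical time $t<T$ because $x^\alpha U'(x)=c_p(1-\beta)x\to 0$ and $u_x(\cdot,t)$ is bounded on $[0,r]$.

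Next I would choose $r$ and $t_1$. Since $u$ is bounded by $\|u_0\|_\infty$ while $U$ is unbounded on $(0,R)$ (or takes a non-matching value at $x=R$ if $R<\infty$), $u(\cdot,T)\not\equiv U$ on $(0,R)$, so continuity of $v$ yields $r\in(0,R)$ and $t_1<T$ such that $v(r,t)\ne 0$ on $[t_1,T]$. To apply Remark~\ref{RemZeroNumber3}(i) I also need $v(\cdot,t_1)$ to have a definite sign on some $[0,r']\subset[0,r]$. This follows from the fine boundary estimates: in the GBU case, \eqref{eq:u_x-lowerupper} gives $u_x(x,t)<U'(x)$ for $0<x<x_0$ (since $m^{1-p}(t)>0$ for $t<T$), which combined with $v(0,t)=0$ forces $v(\cdot,t)<0$ on $(0,\delta_t]$; in the RBC case, Proposition~\ref{prop:PS-LBC} gives $v(x,t)=u(0,t)+O(x^2)>0$ for $x$ small. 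Remark~\ref{RemZeroNumber3}(i) then yields that $N(t):=z(v(\cdot,t):[0,r])$ is finite and nonincreasing on $(t_1,T)$, with drops only at degenerate interior zeros or at times $t$ with $v(0,t)=0$; integer-valuedness of $N$ gives $N\equiv m$ on some subinterval $(t_1',T)$.

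For the lower bound $m\ge 1$ and the claim $\liminf_{t\to T^-} x_1(t)=0$, I would combine the convergence $w(y,s)\to U(y)$ in $C^1_{loc}((0,\infty))$ from Proposition~\ref{PropwcvU} with the spectral analysis of the linearized operator in similarity variables. Setting $v^{\mathrm{sim}}:=w-U$, which satisfies \eqref{eqz}, and expanding against the eigenbasis $(\varphi_j)$ of $\mathcal{L}$, the nontriviality of $v^{\mathrm{sim}}$ (since $u\not\equiv U$) combined with the $C^1_{loc}$ convergence forces the leading nonzero eigencomponent to have index $n\ge 1$. Since $\varphi_n$ has exactly $n$ simple positive zeros (Proposition~\ref{GaussianPoincare2}(iv)), $v^{\mathrm{sim}}(\cdot,s)$ has at least $n$ zeros at positions $y_i(s)$ that remain bounded, which translate to zeros $x_i(t)\sim y_i(s)\sqrt{T-t}\to 0$ of $v(\cdot,t)$ in original variables, yielding both $m\ge n\ge 1$ and the vanishing claim. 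Part~(iii) then follows from the constancy $N\equiv m$ on $(t_1',T)$: no $x_i(t)$ can be a degenerate zero (else $N$ would drop by Proposition~\ref{propZeroNumber}), and the implicit function theorem applied to $v(x_i(t),t)=0$ provides the $C^1$ dependence on $t$.

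\emph{Main obstacle.} The central difficulty is the persistent vanishing $v(0,t)=0$ throughout the GBU regime: this prevents a direct application of Proposition~\ref{propZeroNumber} and forces careful use of Remark~\ref{RemZeroNumber3}(i) together with the strict sign information near $x=0$ from \eqref{eq:u_x-lowerupper}. The lower bound $m\ge 1$ in the GBU case is also delicate and appears to require the spectral analysis of $\mathcal{L}$ in similarity variables rather than any purely comparison-based argument.
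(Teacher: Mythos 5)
There are two genuine gaps, both concentrated in the GBU case of assertion (i). First, the singular zero-number machinery of Proposition~\ref{propZeroNumber} and Remark~\ref{RemZeroNumber3}(i) cannot be applied to $v=u-U$ there. The structural hypothesis \eqref{hypkTR} fails: for each fixed $t<T$ the solution is classical at $x=0$, so $u_x(x,t)$ stays bounded while $U'(x)\to\infty$ as $x\to 0$; hence $v_x\sim -U'$ near $x=0$ and the coefficient $a_1=\int_0^1 g(U'+\theta v_x)\,d\theta$ behaves like $\alpha/(px)$ rather than $\alpha/x+O(x^{-\gamma_1})$ with $\gamma_1<1$. (The linearization you quote from the proof of Proposition~\ref{propSMP2} requires $u_x-U_x\in L^\infty$, i.e.\ \eqref{uxUx}; this holds for singular/RBC solutions but not for a solution classical at $x=0$ compared with $U$.) This is not a repairable technicality: if the hypotheses did hold, Proposition~\ref{propMP}(ii), which is exactly what Remark~\ref{RemZeroNumber3}(i) invokes, would force $v(0,t)<0$ for $t>t_1$, and Proposition~\ref{propZeroNumber} would force $N$ to drop at every time with $v(0,t)=0$ --- both absurd, since $v(0,t)\equiv 0$ throughout the GBU regime. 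The correct tool is the classical zero number with moving boundaries (Proposition~\ref{propZeroNumber0}): since $u$ is $C^1$ up to $x=0$ with $u(0,t)=0$ on $[t_1,\hat T]$ for each $\hat T<T$, one has $u<U$ on $(0,r_0]\times[t_1,\hat T]$ for some $r_0>0$, so the intersections can be counted on $[r_0,r]$, and one then lets $\hat T\to T$. (Your use of the singular theory is legitimate in the RBC case, where $u_x-U'$ is bounded by \eqref{eq:u_x-LBC} and $v(0,\cdot)>0$ before $\tau$.) You also skip the choice of $r$ when $R<\infty$ and $u(R,T)=U(R)$, which requires $u_x(R,T)=-\infty$ from \cite{PS3}.

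Second, your argument for $m\ge 1$ and \eqref{claimliminfx1} via an eigenfunction expansion of $w-U$ in similarity variables is a heuristic, not a proof, and is essentially circular. In the GBU case $w-U$ carries a singular inner layer near $y=0$ (the nonlinearity $F(v_y)$ is of order $y^{-\beta-1}$ there), the convergence in Proposition~\ref{PropwcvU} is only $C^1_{loc}$ away from $y=0$, and the identification of a leading eigencomponent of index $n\ge 1$ with bounded zero set is precisely the content of the main classification theorems, which are proved later and rely on the present proposition to define $n$. What is actually needed is much softer: if \eqref{claimliminfx1} failed, then $u<U$ on $(0,\nu]\times[T-\nu,T]$ for some $\nu>0$; since $u(\cdot,T-\nu)$ is $C^1$ up to $x=0$, one gets $u\le U_b$ on the parabolic boundary of that rectangle for $b>0$ small, hence $u\le U_b$ inside by comparison, so $u_x(0,t)\le U'(b)<\infty$, contradicting GBU. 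A symmetric argument (with Proposition~\ref{propSMP2} in place of the comparison with $U_b$) gives the RBC case.
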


\begin{proof}
We first consider the GBU case.	
If $R=\infty$, we take $r>0$ large enough, so that $U(r)>\|u_0\|_\infty\ge u(r,T)$ 
owing to \eqref{boundunifvisc}. 
If $R<\infty$ and $u(R,T)\ne U(R)$, we may take $r<R$ close to $R$ such that $u(r,T)\ne U(r)$.
If $R<\infty$ and $u(R,T)=U(R)$, then $u_x(R,T)=-\infty$ by \cite[Lemma~5.4]{PS3}, 
hence we may select $r<R$ close to $R$ such that $u(r,T)>U(r)$.
In each case, by continuity, 
\be{claimintersec3}
\hbox{there exists $t_1<T$ such that $u(r,t)\ne U(r)$ for all $t\in(t_1,T)$.}
\ee
 On the other hand, since $ u $ undergoes GBU at $(x,t) = (0,T) $, by definition
there exists $t_2\in(t_1,T)$ such that $u$ is $C^1$ up to $x=0$
for $t\in[t_1,T)$ with $u(0,t)=0$. Therefore for each $\hat T\in (t_2,T)$, we may select $r_0\in(0,r)$ such that 
$u<U$ in $(0,r_0]\times[t_1,\hat T]$.
The claim \eqref{claimintersec} then follows from Proposition~\ref{propZeroNumber0} applied with $u_1=u$, $u_2=U$,
 $x_0=r_0$ and $x_1=r$.

As for $m\ge 1$ and \eqref{claimliminfx1}, otherwise, we would have $u<U$ on $(0,\nu]\times [T-\nu,T]$ for some $\nu\in(0,T-T_1)$.
By continuity and the fact that $u(\cdot,T-\nu)\in C^1([0,\nu])$, we would get 
$u\le U_b$ on the parabolic boundary of $Q:=(0,\nu]\times [T-\nu,T)$ for $b$ small, hence $u\le U_b$ in $Q$
by the comparison principle, contradicting $\lim_{t\to T_-}u_x(0,t)=\infty$ (cf.~\eqref{eq:BU-rate-lower}).

In the RBC case, similarly as above, we get \eqref{claimintersec3} for some $r\in(0,R)$.
 Also, for $\eps>0$ small and each $\hat T\in(T-\eps,T)$, we may select $r_0\in(0,R)$ such that 
$u>U$ in $[0,r_0]\times[T-\eta,\hat T]$.
 The claim then follows from Proposition~\ref{propZeroNumber0}.
As for $m\ge 1$ and \eqref{claimliminfx1}, 
otherwise, we have $u\ge U$ on $[0,\eta]\times[\tau-\eta,\tau]$ for some $\eta>0$ small. 
Using \eqref{eq:u_x-LBC} we deduce from Proposition~\ref{propSMP2} that $u(0,\tau)>0$, which is a contradiction.

 Finally assertion (iii) follows from Proposition~\ref{propZeroNumber0} and the 
implicit function theorem. \end{proof}
 
\begin{rem} \label{rem-zerocurves}
Let $u_1,u_2$ be as in Proposition~\ref{propZeroNumber0} under assumption \eqref{hyp1propZeroNumber0}
(resp., Proposition~\ref{propZeroNumber}(ii)). Pick $T_0\in (t_0,t_1)$ and set  $I=[T_0,t_1)$, $\Omega=(x_0,x_1)$
(resp., $T_0\in (0,T)$, $I=[T_0,T)$, $\Omega=(0,R)$). Let $m$ be the number of zeros of $[u_1-u_2](\cdot,T_0)$ in $\Omega$.
It follows from these propositions and the strong maximum principle that the zeros of 
$u_1-u_2$ in $\Omega\times I$ can be represented by
$m$ continuous curves $x_i(t)$ such that:
\smallskip
\begin{itemize}

\item[1.]For $1\le i\le m$,
$x_i$ is defined on a maximal interval $J_i=\{T_0\}$ or $J_i=[T_0,\tau_i)$ with $\tau_i\in(T_0,T]$.

\smallskip

\item[2.]For $1\le i<j\le m$, we have $x_i<x_j$ on their common interval of existence.

\smallskip

\item[3.]if $\tau_i\in(T_0,T)$, then $x_i(\tau_i^-):=\lim_{t\to \tau_i^-}x_i(t)$ exists and $x_i$ ceases to exist either:

(a){\hskip 1mm}by vanishing (i.e.~$x_i(\tau_i^-)=0$), or
\smallskip

(b){\hskip 1mm}by collapsing with some of the other $x_j$ (i.e.~$x_j(\tau_i^-)=x_i(\tau_i^-)$).

\smallskip

\noindent In case (b), let $i_1<\cdots<i_k$ be the indices of all the curves which collapse together at time $\tau_i$ (including~$i$)

$\bullet$ If $k$ is even then all the collapsing curves cease to exist at $\tau_i$, i.e.~$\tau_{i_1}=\cdots=\tau_{i_k}=\tau_i$

$\bullet$ If $k$ is odd 
 then only one curve survives after $\tau_i$, namely~$\tau_{i_1}>\bar t$ and $\tau_{i_2}=\cdots=\tau_{i_k}=\tau_i$
 
(by convention the surviving curve is labeled with the smallest index).

\smallskip

\item[4.]For $1\le i\le m$, $x_i(t)$ is nondegenerate and $C^1$ for $t\in(T_0,\tau_i)$ except at times when some curves collapse with $x_i$.
\end{itemize}
\end{rem} 

\subsection{Existence of solutions with persistent singularities}

In the following proposition, we consider singular initial data in the space
$$\mathcal{W}_s=\bigl\{\psi\in W^{1,\infty}_{loc}(\Omega);\ \psi\ge 0,\ \psi_x-U_x\in L^\infty(\Omega)\bigr\}$$
and we show the existence of a unique global solution $u$ of the viscous Hamilton-Jacobi equation with persistent singularity at $x=0$
(with $u$ regular at $x=R$ in the case of a bounded interval).
We note that this result is of different nature from the well-posedness of problem \eqref{equ} mentioned in introduction,
since the initial data $u_0$ is {\it not} in the class $\mathcal{W}$ and $u$ will not be $C^1$ up to the boundary at any time $t>0$.
This existence result, and the additional regularity properties, will be useful for the construction of the special RBC viscosity solutions in Theorem~\ref{prop:special-LBC}.
By an obvious scaling argument it suffices to consider the cases $R=1$ and $R=\infty$.

\begin{prop} \label{locexistvapprox}
Let $p>2$, $R=1$ or $R=\infty$, set $\Omega=(0,R)$, $Q=\Omega\times(0,\infty)$,
$\hat Q=(\overline \Omega\setminus\{0\})\times(0,\infty)$ and let $u_0\in \mathcal{W}_s$.
If $R=1$, assume in addition that $\sup u_0\le \frac14$ and $u_0(x)=0$ on~$[\frac12,1]$.

\smallskip

(i) (Existence-uniqueness) There exists a unique $u\in C(\overline Q)\cap C^{2,1}(\hat Q)$, 
such that $u$ is a global solution of the problem
\be{equRECexist}
\left\{\ 
\begin{aligned}
u_t&=u_{xx}+|u_x|^p,&&\quad\hbox{in $Q$,} \\
u(R,t)&=0,&&\quad \hbox{in $(0,\infty)$ in the classical sense (if $R=1$),} \\
u(x,0)&=u_0(x),&&\quad \hbox{in $\Omega$} \\
\end{aligned}
\right.
\ee
and
\be{equRECregulC1}
u_x-U_x \in C_b\bigl(\overline\Omega\times(0,\infty)\bigr).
\ee
Moreover we have
\be{equRECsingux}
(u-U)_x(0,t)=0,\quad t>0
\ee
and 
\be{equRECder1}
\begin{cases}
\ \|u(t)\|_\infty\le \max(\ts\frac12,U(1)) \quad\hbox{and}\quad |u_x(1,t)|\le \max(1,U'(1)),\quad t>0,
&\quad\hbox{if $R=1$,}\\
\noalign{\vskip 1mm}
\ \hbox{$|u(x,t)|\le C(1+x+t)$ \quad in $[0,\infty)^2$ for some $C>0,$}
&\quad\hbox{if $R=\infty$.}\\
\end{cases}
\ee

\smallskip

(ii)  (Sign properties and continuous dependence) The solution $u(u_0;x,t)$ obtained in assertion~(i) enjoys the following properties.
For any $t_0>0$,
\be{equRECviscpos}
\hbox{if $u(0,\cdot)\ge 0$ on $[0,t_0]$ then }
\begin{cases}
\hbox{$u\ge 0$ on $[0,R)\times [0,t_0]$}\\
\noalign{\vskip 1mm}
\hbox{and $u(0,\cdot)=0$ on $[0,t_0]$ in the viscosity sense.}
\end{cases}
\ee
\be{equRECppty0}
\hbox{ If $R=\infty$ and $M_0:=\sup_\Omega u_0<\infty$, then $u\le M_0$ in $Q$.}
\ee
\be{equRECppty1}
\hbox{ If $u_0\le U$ in $\Omega$, then $u\le U$ in $Q$.}
\ee
Let $\hat u_0\in \mathcal{W}_s$ and assume $\hat u_0-u_0\in L^\infty(\Omega)$ in case $R=\infty$. Then
\be{equRECppty2}
\|u(u_0;\cdot,t)-u(\hat u_0;\cdot,t)\|_\infty \le \|u_0-\hat u_0\|_\infty,\quad t>0.
\ee

\smallskip

 (iii) (Additional regularity at $t=0$) Assume that, for some $A\in (0,R)$, $u_0-U$ extends to a $C^1$ function $v_0$ on $[0,A]$,
such that $v_0'(0)=0$ and $v_0''(0)$ exists.
Then, for any $x_0\in(0,A)$ and $\eps>0$, there exist $t_0>0$ such that
$$ |u_x(x,t)-u_{0,x}(x)|\le \eps x\quad\hbox{ in $(0,x_0]\times(0,t_0]$.}$$

\end{prop}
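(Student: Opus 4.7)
Set $v := u - U$ and $v_0 := u_0 - U$. By \eqref{equRECregulC1}--\eqref{equRECsingux} and the hypothesis, $v_x$ extends continuously to $\overline\Omega\times(0,\infty)$ with $v_x(0,t) = 0$, and $v_0'(x) = cx + o(x)$ as $x\to 0^+$, where $c := v_0''(0)$. Since the target estimate $|u_x - u_{0,x}|\le\eps x$ is equivalent to $|v_x - v_0'|\le\eps x$, the triangle inequality reduces matters to showing $(v_x(x,t) - cx)/x \to 0$ as $(x,t)\to(0^+,0^+)$. The natural comparison function is the explicit quadratic-linear solution
\begin{equation*}
\Psi(x,t) := v_0(0) + \tfrac{c}{2}x^2 + c(\alpha+1)t, \qquad \alpha := p/(p-1),
\end{equation*}
of the linear Bessel-type equation $\Psi_t = \Psi_{xx} + \tfrac{\alpha}{x}\Psi_x$ (verifiable by direct computation), whose gradient is exactly $\Psi_x \equiv cx$.

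Next, I would derive the PDE for $v$. Using $U_{xx} + U_x^p = 0$ and Taylor-expanding $|U_x+v_x|^p$ (legitimate because $v_x$ is bounded by some $K_0$ while $U_x\to\infty$ at $0$), one obtains
\begin{equation*}
v_t = v_{xx} + \tfrac{\alpha}{x}v_x + N(x,v_x), \qquad |N(x,\xi)|\le C x^{-1+\beta}\xi^2 \text{ for } |\xi|\le K_0,
\end{equation*}
so that $\zeta := v - \Psi$ satisfies $P\zeta = N(x,v_x)$ with $P := \partial_t - \partial_{xx} - \tfrac{\alpha}{x}\partial_x$, the right-hand side being an \emph{a priori} bounded function. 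Given $\eps>0$, pick $\delta\in(0,1)$ small (to be chosen) and $m = m(\delta)\in(0,A)$ such that $|v_0'(x) - cx|\le\delta x$ on $(0,m]$, whence $|\zeta(\cdot,0)|\le \delta x^2/2$. I would then use the simple barrier $\bar\zeta(x,t) := 2\delta x^2 + Mt$ on $Q_m := (0,m)\times(0,t_0)$. The crux is a bootstrap: once one knows $|v_x|\le K_1 x$ on $Q_m$, the forcing satisfies $|N(x,v_x)|\le CK_1^2 x^{1+\beta}$, which is bounded on $Q_m$, so $P\bar\zeta = M - 4\delta(1+\alpha) \ge |N|$ for $M$ large. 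Verifying the parabolic boundary conditions (at $t = 0$ from the bound on $|\zeta(\cdot,0)|$; at $x = m$ and $x = 0$ from continuity of $v$ up to those sides, possibly enlarging $M$), Proposition~\ref{propMP} applied to $\pm\zeta - \bar\zeta$ yields $|\zeta(x,t)|\le 2\delta x^2 + Mt$ on $Q_m$.

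The main obstacle is to (i) produce the bootstrap bound $|v_x|\le K_1 x$ and (ii) convert the pointwise estimate on $\zeta$ into the gradient estimate $|\zeta_x|\le C\delta x$. I would handle both by a local parabolic rescaling combined with interior Schauder estimates. For each target point $(x_*,t_*)\in (0,m/4]\times(0,t_0/4]$, consider
\begin{equation*}
\tilde\zeta(y,\tau) := x_*^{-2}\,\zeta\bigl(x_*(1+y/2),\,t_* + x_*^2\tau/4\bigr)
\end{equation*}
on a parabolic unit cylinder (the time window being truncated by $\tau = -t_*/x_*^2$ when $t_*<x_*^2/4$, in which case the initial bound on $\zeta$ is used on one of the faces). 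The rescaled PDE has uniformly bounded coefficients---the singular factor $\alpha/x$ transforms into $\alpha/(2+y)$---and its forcing is of order $x_*^{1+\beta}\|v_x\|_\infty^2$, hence subcritical. In a \emph{first} pass, combining the crude bound $|v_x|\le K_0$ with a preliminary barrier including an auxiliary singular term $L x^{1+\beta}$ (to absorb the cruder source $|N|\le CK_0^2 x^{-1+\beta}$) gives $\|\tilde\zeta\|_\infty\le C$, and Schauder yields $|\zeta_x(x_*,t_*)| = 2x_*|\tilde\zeta_y(0,0)|\le 2Cx_*$, establishing the bootstrap $|v_x|\le K_1 x$. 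In a \emph{second} pass with the simpler barrier $2\delta x^2 + Mt$ where $M = O(\delta)$ (now achievable since the source is $O(\delta)$), one obtains $\|\tilde\zeta\|_\infty\le C\delta$ provided $t_0\ll x_0^2$; Schauder then yields $|\zeta_x(x_*,t_*)|\le 2C\delta x_*$. For $x\in[m,x_0]$, standard interior parabolic regularity gives $v_x(\cdot,t)\to v_0'$ uniformly as $t\to 0^+$. Choosing $\delta = \eps/(2C+1)$ at the outset and combining both regimes delivers $|v_x(x,t) - v_0'(x)|\le\eps x$ on $(0,x_0]\times(0,t_0]$, as required.
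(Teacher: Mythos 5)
Your proposal addresses only assertion~(iii), and even there it presupposes the conclusions of assertion~(i): you invoke \eqref{equRECregulC1}--\eqref{equRECsingux} to know that $v_x$ extends continuously to $x=0$ with $v_x(0,t)=0$. Nothing in the proposal constructs the solution, proves uniqueness, or establishes \eqref{equRECregulC1}, \eqref{equRECsingux}, \eqref{equRECder1} or any of \eqref{equRECviscpos}--\eqref{equRECppty2}. The paper's proof spends five of its six steps on exactly this: it regularizes the problem by replacing $U$ with $U_a$ and imposing $z_{a,x}(0,t)=0$, derives uniform bounds on $z_a$ and $z_{a,x}$ (including the barrier $\frac12(1-x^2)+U_a(1)-U(1)-U_a$ to control $z_{a,x}(1,t)$ when $R=1$, and a linear-growth supersolution when $R=\infty$), passes to the limit, proves continuity in time by a translation-comparison trick, gets uniqueness from Proposition~\ref{propSMP2}, and obtains \eqref{equRECsingux} from a self-similar barrier $A[\min((x+a)t^{-1/2},y_0)]^m$ applied to $z_{a,x}$. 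None of this can be skipped, so the proposal is incomplete as a proof of the Proposition.

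For part~(iii) itself your route (sup-norm barrier for $\zeta=v-\Psi$, then rescaled interior Schauder to recover the gradient) genuinely differs from the paper's, which applies the maximum principle \emph{directly to the derivative} $Z_a=z_{a,x}$ via the comparison functions $\psi_{K,b,\sigma}=K(x+b)-\sigma x^2$ for the linearized operator $\mathcal{L}_a$, thereby obtaining the gradient bound without ever differentiating a sup bound. Your version has a concrete gap precisely at the step where the sup bound is converted into a gradient bound: the estimate $|\zeta|\le 2\delta x^2+Mt$ rescales on the cylinder centered at $(x_*,t_*)$ to $\|\tilde\zeta\|_\infty\lesssim \delta+Mt_*x_*^{-2}$, which is unbounded in the regime $x_*^2\ll t_*$ (fixed $t_*>0$, $x_*\to0$) — a regime that is unavoidable since the conclusion is claimed on all of $(0,x_0]\times(0,t_0]$. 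The Schauder estimate cannot see that the offending term $Mt$ is $x$-independent, so it does not return $|\zeta_x|\le C\delta x_*$ there. Symmetrically, at points with $t_*\lesssim x_*^2$ the backward cylinder hits $t=0$ and an interior gradient estimate requires Schauder theory up to the initial time, which needs more than a sup bound on the initial face (your $v_0$ is only $C^1$ with $v_0''(0)$ existing, not $C^{1,\alpha}$); the same objection applies to the bootstrap producing $|v_x|\le K_1x$, which is itself essentially the statement to be proved with a non-small constant. To close these gaps you would need to argue on the equation satisfied by $v_x$ (or $z_{a,x}$) itself, which is what the paper does.
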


The next proposition shows that the hypothesis in \eqref{equRECviscpos} is necessary:
starting from $u_0\ge 0$, positivity need not be preserved
and we may even have $u(0,t)<0$ for all $t>0$.
Of course, if $u_0(0,0)>0$, then $u(0,t)$ remains positive for some time by continuity.
It may then possibly touch $0$ at some $t=\tau$,
and this is precisely the kind of solutions that we construct in Theorem~\ref{prop:special-LBC}.

\begin{prop} \label{locexistvapprox2}
Let $p>2$, $R=1$ and let $u_0, u$ be as in Proposition~\ref{locexistvapprox}.
Assume in addition that $u\le U-bx^2$ in $(0,1)$ for some $b>0$.
Then $u(0,t)<0$ for all $t>0$.
\end{prop}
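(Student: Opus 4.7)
The plan is to apply the strong separation property up to the singular boundary $x=0$ provided by Proposition~\ref{propSMP2} to the pair $(u_1,u_2)=(u,U)$ on the cylinder $Q_T=(0,1)\times(0,T)$ for an arbitrary $T>0$, and then to read off the strict inequality at $x=0$.

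I will verify the hypotheses of Proposition~\ref{propSMP2} one by one. First, both functions are classical solutions of $u_t=u_{xx}+|u_x|^p$ on $Q_T$: the singular steady state $U$ satisfies $U_{xx}+|U_x|^p\equiv 0$ on $(0,\infty)$, and by Proposition~\ref{locexistvapprox}(i) one has $u\in C^{2,1}((0,1]\times(0,\infty))$ together with $u,U\in C(\overline{Q_T})$. The required bound \eqref{uxUx}, namely $u_x-U_x\in L^\infty(Q_T)$, is trivial for $U$ and holds for $u$ by \eqref{equRECregulC1}. The initial ordering $u_0\le U$ on $[0,1]$ is immediate from the assumption $u_0\le U-bx^2$ with $b>0$, while at the right endpoint $u(1,t)=0<c_p=U(1)$. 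Finally, the nontriviality $u_0\not\equiv U$ holds: under the standing hypothesis of Proposition~\ref{locexistvapprox} for $R=1$ one has $u_0\equiv 0$ on $[\tfrac12,1]$, whereas $U(\tfrac12)>0$ (alternatively, the quadratic margin $bx^2$ already rules out equality on any nontrivial subinterval).

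The strict-inequality part of Proposition~\ref{propSMP2} then yields $u(x,t)<U(x)$ for every $(x,t)\in[0,1)\times(0,T]$. Setting $x=0$ and using $U(0)=0$, we obtain $u(0,t)<0$ for all $t\in(0,T]$, and since $T>0$ was arbitrary this gives the claim.

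The only delicate point is that Proposition~\ref{propSMP2} operates \emph{up to} the singular boundary $x=0$, which rests on the singular strong maximum principle of Proposition~\ref{propMP}(ii) together with the Hardy-type inequality of Lemma~\ref{LemHardy} applied to the linearized coefficient $\alpha/x=p/((p-1)x)$ (which is of subcritical Hardy type since $\alpha\in(1,2)$). Once that technology is in place, Proposition~\ref{locexistvapprox2} is a straightforward bookkeeping verification of hypotheses, and I anticipate no further obstacle.
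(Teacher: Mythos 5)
Your proof is correct, and it takes a genuinely different route from the paper's. You reduce everything to the strong separation property of Proposition~\ref{propSMP2} applied to the pair $(u,U)$, and your verification of its hypotheses is accurate: $U$ is a (singular) steady state, $u\in C^{2,1}((0,1]\times(0,\infty))\cap C(\overline Q)$ by Proposition~\ref{locexistvapprox}(i), condition \eqref{uxUx} holds via \eqref{equRECregulC1} (with a bound that is uniform down to $t=0$ by \eqref{boundzax}), the ordering holds at $t=0$ and at $x=1$, and $u_0\not\equiv U$; the strict conclusion of Proposition~\ref{propSMP2} is indeed stated on $[0,R)\times(0,T]$, i.e.\ up to $x=0$, so $u(0,t)<U(0)=0$ follows. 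The paper argues differently: it first gets $u\le U$ from \eqref{equRECppty1}, then combines the uniform Lipschitz bound $u(x,t)-U(x)\ge u(0,t)-Lx$ with an explicit time-dependent supersolution $\overline u(x,t)=U_{a(t)}-cx^2$, $a(t)=\eta t^{1-\beta}$, borrowed from the proof of \cite[Lemma~5.5]{PS3}; this yields $u(0,t)\le (U'(a(t))+L)x-U(x)<0$ for $x$ small, and the argument is iterated on successive time intervals. The two approaches buy different things. Yours is shorter and exposes that the quadratic margin $bx^2$ is not actually needed for the qualitative conclusion ($u_0\le U$ with $u_0\not\equiv U$ suffices, since Proposition~\ref{propSMP2} converts any nontrivial one-sided ordering into strict separation at $x=0$); the hidden cost is that all the analytic work is outsourced to the singular strong maximum principle of Proposition~\ref{propMP}(ii), whose boundary barrier $\delta(t+x^\eta-2\tau)$ is doing essentially the same job as the paper's comparison function. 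The paper's route is the one that genuinely uses the hypothesis $u_0\le U-bx^2$ (to fit $U-cx^2$, $c<b$, above $u_0$ at $t=0$) and it is quantitative, producing an explicit negative upper bound for $u(0,t)$ as $t\to 0^+$ rather than bare strict negativity.
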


\begin{rem} 
For $u_0$ as in Proposition~\ref{locexistvapprox2}, by \cite{BdaLio}
there also exists a unique, global {\it nonnegative} viscosity solution $\hat u\in C^{2,1}(Q)\cap C_b(\overline Q)$ of \eqref{equ}.
This solution $\hat u$ is obviously distinct from $u$ given by Propositions~\ref{locexistvapprox}-\ref{locexistvapprox2},
but this does not contradict the uniqueness of viscosity solutions nor the uniqueness part of Proposition~\ref{locexistvapprox}.
Indeed $u$ does not satisfy $u(0,t)=0$ in the viscosity sense for $t>0$,
whereas $\hat u$ is classical up to $x=0$ for $t>0$ small by \cite[Lemma~5.5]{PS3} (and thus does not satisfy
\eqref{equRECregulC1} in Proposition~\ref{locexistvapprox}).
The same remarks apply for $t\ge t_0$ to any solution $u$ in Proposition~\ref{locexistvapprox}
 such that $u(0,t_0)=0$ and $u(0,t)<0$ for $t>t_0$.
\end{rem}

\begin{proof}[Proof of Proposition~\ref{locexistvapprox}]
Due to the singularity of the initial data and of the sought-for solution,
the proof is far from immediate and requires several steps.

\smallskip

{\bf Step 1.} {\it Approximating problem.}
Writing $z=u-U$, the equivalent equation for $z$ is
$$z_t-z_{xx}=|U'+z_x|^p-{U'}^p.$$
We shall solve the corresponding problem for $z$ by an approximation argument. 
Thus for each $a>0$, recalling that the regular steady state $U_a$ is defined in \eqref{defUa},
we consider the regularized problem for the unknown $z_a$:
\be{equRECexistregul}
\left\{\ 
\begin{aligned}
z_{a,t}&=z_{a,xx}+|U_a'+z_{a,x}|^p-|U_a'|^p,&&\quad 0<x<R, \ t>0, \\
z_{a,x}(0,t)&=0,&&\quad t>0, \\
z_{a}(R,t)&=-U(R),&&\quad t>0\quad\hbox{ (if $R=1$)}, \\
z_a(x,0)&=z_{a,0}(x),&&\quad \hbox{in $(0,R)$,} \\
\end{aligned}
\right.
\ee
where $z_{a,0}(x)=u_0(x)-U(x)$ if $R=1$ and $z_{a,0}(x)=\max\bigl(\min(u_0(x)-U(x),a^{-1}),-a^{-1}\bigr)$ if $R=\infty$.
Since $z_{a,0}\in W^{1,\infty}(0,R)$ and $U_a$ is smooth, by standard theory, problem \eqref{equRECexistregul} admits a unique maximal, classical solution
$z_a\in C(\overline\Omega\times [0,\tau_a))\cap C^{2,1}(\overline\Omega\times (0,\tau_a))
\cap L^\infty_{loc}([0,\tau_a);W^{1,\infty}(0,R))$, where $\tau_a$ denote the maximal existence time.
We also have $z_{a,x}\in C([0,\tau_a);L^2_{loc}(\overline\Omega))$.
We proceed to establish uniform a priori estimates for $z_a$ and $z_{a,x}$
that will allow us to pass to the limit as $a\to 0$ (and will in turn guarantee the global existence of the $z_a$).

\smallskip
{\bf Step 2.} {\it A priori estimates.}
First consider the case $R=1$. Set $M_+:=\sup (u_0-U)$ and $M_-:=\inf (u_0-U)$
and note that $M_+\ge M_-=-U(1)$, due to $u_0\ge 0$ and $u_0(1)=0$.
By the maximum principle, we deduce that
\be{boundza}
M_-\le z_a(x,t)\le M_+,\qquad 0\le x<1,\ 0<t<\tau_a.
\ee
We claim that
\be{parabUK}
-U'(1)-1\le z_{a,x}(1,t)\le 0.
\ee
To show this we let $\bar z_a(x,t)= \frac12(1-x^2)+U_a(1)-U(1)-U_a(x)$ and observe that
$$\mathcal{P}_a\bar z_a:=\bar z_{a,t}-\bar z_{a,xx}-|U_a'+\bar z_{a,x}|^p+|U_a'|^p=1+U_a''-x^p+|U_a'|^p=1-x^p\ge 0$$
with $\bar z_a(1,t)= -U(1)$, $\bar z_{a,x}(0,t)= -U_a'(x)<0$. 
For all $a>0$ sufficiently small we have, in $[0,\frac12]$:
$$\bar z_a(x,0)-z_a(x,0)=\bar z_a(x,0)+U(x)-u_0(x)
\ge \ts\frac12(1-x^2)+U_a(1)-U(1)-u_0(x)\ge \ts\frac18+U_a(1)-U(1)\ge 0$$
and, in $[\frac12,1]$:
$$\begin{aligned}
\bar z_a(x,0)-z_a(x,0)
&=\bar z_a(x,0)+U(x)\ge (U(x)-U_a(x))-(U(1)-U_a(1))+\ts\frac12(1-x) \ge 0.
\end{aligned}$$
It follows from the maximum principle that $\bar z_a\ge z_a$ in $\overline\Omega\times [0,\tau_a)$.
Recalling \eqref{boundza} with $M_-=-U(1)$, we get
\be{parabUK0}
0\le  z_a(x,t)-z_a(1,t)=z_a(x,t)+U(1)\le \ts\frac12(1-x^2)+U_a(1)-U_a(x).
\ee
 Dividing by $1-x$ and letting $x\to 1$, we deduce \eqref{parabUK}.

\smallskip
Next consider the case $R=\infty$.
By assumption, there exists $M>0$ such that 
$|u_0(x)-U(x)|+U(x)\le M(1+x)$, $x\in [0,\infty)$.
We claim that, for all $a>0$ sufficiently small,
\be{controlzaRinfty}
|z_a(x,t)| \le M(1+x)+M^pt \quad\hbox{in $(0,\infty)\times(0,\tau_a)$.}
\ee
To show this we let $\zeta_a(x,t)= -U_a+M(1+x)+M^pt$. 
For all $x>0$, we have $-\zeta_a(x,0)\le z_a(x,0)=u_0(x)-U(x)\le \zeta_a(x,0)$.
Also, 
$$\mathcal{P}_a\zeta_a=M^p+U_{a,xx}-M^p+|U_a'|^p=0,\quad x>0$$
and using 
$2|U_a'|^p=2^{1-p}|2U_a'|^p\le 2^{1-p}(|2U_a'-M|+M)^p\le |2U_a'-M|^p+M^p$, we get
$$\mathcal{P}_a(-\zeta_a)
=-M^p-|2U_a'-M|^p+2|U_a'|^p\le 0,\quad x>0.$$
Moreover, $\zeta_{a,x}(0,t)=-d_pa^{-\beta}+M<0$ for all $t>0$ if we choose $a>0$ small.
On the other hand, since $z_a\in L^\infty_{loc}([0,\tau_a);W^{1,\infty}(0,\infty))$, for each $\tau<\tau_a$,
there exists $R_{a,\tau}$ such that $-\zeta_a\le z_a\le\zeta_a$ on $[R_{a,\tau},\infty)\times (0,\tau)$.
We may thus apply the comparison principle to deduce that that $-\zeta_a\le z_a\le\zeta_a$
in $(0,\infty)\times(0,\tau_a)$, hence \eqref{controlzaRinfty}.

\smallskip

Now we turn to estimate $z_{a,x}$ uniformly. 
Let $M_1:=\|u_{0,x}-U'\|_\infty$ if $R=\infty$ and $M_1:=\max\{\|u_{0,x}-U'\|_\infty,1+U'(1)\}$ if $R=1$.
Set $Z_a:=z_{a,x}$, $g(s)=p|s|^{p-2}s$. Using parabolic regularity we may differentiate in $x$ and we obtain
$$Z_{a,t}-Z_{a,xx}=g(U_a'+Z_a)(U_a''+Z_{a,x})-g(U_a')U_a''
=[g(U_a'+Z_a)-g(U_a')]U_a''+g(U_a'+Z_a)\,Z_{a,x}$$
hence
\be{parabPhi1}
\mathcal{L}_aZ_a:=Z_{a,t}-Z_{a,xx}-b_a(x,t)Z_a-\tilde b_a(x,t)Z_{a,x}=0,
\ee
where 
\be{parabPhi2}
b_a(x,t)=g'(U_a'+\theta_a(x,t)Z_a)U_a'',\quad \tilde b_a(x,t)=g(U_a'+Z_a),
\ee
 with $\theta_a(x,t)\in (0,1)$.
Moreover, $Z_a(0,t)=0$ and $|Z_a(1,t)|\le M_1$ (if $R=1$) by \eqref{parabUK}.
Since $b_a\le 0$ owing to $g'\ge 0$ and $U_a''\le 0$, it follows that $\pm M_1$ are super-/sub-solutions for $Z_a$
and the maximum principle (see~e.g.~\cite[Proposition 52.10]{QSbook} which applies also in the case $R\le\infty$) yields
\be{boundzax}
|z_{a,x}|\le M_1,\qquad 0\le x<R,\ 0<t<\tau_a.
\ee

\smallskip
{\bf Step 3.} {\it Convergence of $z_a$ and existence.}
Estimates \eqref{boundza}, \eqref{controlzaRinfty} and \eqref{boundzax} guarantee that $\tau_a=\infty$.
Moreover, using \eqref{equRECexistregul} and parabolic estimates, 
it follows that $z_a$ is relatively compact in $C^{2,1}_{loc}(\hat Q)$.
 Passing to the limit $a\to 0^+$, we obtain a solution $z\in C^{2,1}(\hat Q)$ of
\be{equRECexistregul2}
\left\{\ 
\begin{aligned}
z_t&=z_{xx}+|U'+z_x|^p-|U'|^p,&&\quad 0<x<R, \ t>0, \\
z(R,t)&=-U(R),&&\quad t>0\quad\hbox{ (if $R=1$)}, \\ 
z(x,0)&=u_0(x)-U(x),&&\quad \hbox{in $(0,R)$,} \\
\end{aligned}
\right.
\ee
and 
\eqref{boundzax} guarantees that 
\be{equREClipestim}
z_x\in L^\infty(\Omega\times(0,\infty)). 
\ee
Moreover, for $R=1$, \eqref{parabUK} and \eqref{parabUK0} imply
\be{parabUK2}
-U(1)\le z\le \ts\frac12(1-x^2)-U(x)\hbox{ in $Q$ \quad and}\quad-U'(1)-1\le z_x(1,t)\le 0.
\ee
The claimed solution is then given by $u:=z+U$
(in particular \eqref{equRECder1} follows from
\eqref{controlzaRinfty} and \eqref{parabUK2}), except for the properties $u\in C(\overline Q)$, \eqref{equRECregulC1}
and \eqref{equRECsingux}, that we shall establish below.

\smallskip

Let us check properties \eqref{equRECviscpos}-\eqref{equRECppty2}.
 To prove \eqref{equRECviscpos} we assume $u(0,\cdot)\ge 0$ on $[0,t_0]$ for some $t_0>0$.
If $R=1$, we immediately get $u\ge 0$ in $[0,R)\times [0,t_0]$ by the maximum principle.
If $R=\infty$, we set $u_\eps(x,t)=u(x,t)+\eps(x^2+2t)$, which satisfies $u_{\eps,t}-u_{\eps,xx}=|u_x|^p\ge 0$.
Moreover, $u_\eps(x,t)\ge 0$ in $[R_\eps,\infty)\times [0,t_0]$ for $R_\eps>0$ large
as a consequence of \eqref{equRECder1}. 
Applying the maximum principle to $u_\eps$ in $[0,R_\eps]\times [0,t_0]$, we deduce that
$u_\eps\ge 0$ in $[0,\infty)\times [0,t_0]$, hence $u\ge 0$ by passing to the limit $\eps\to 0$.

On the other hand, for any $t\in (0,t_0]$, since $u_x(0,t)=\infty$, 
the set of smooth functions touching $u$ from above at $(0,t)$ is empty.
It then follows from Definition~\eqref{defviscBC} that 
$u(0,\cdot)=0$ on $[0,t_0]$ in the viscosity sense.

 To prove \eqref{equRECppty0}, let us fix $\eps>0$. For $a>0$, 
we set $\bar z_{a,\eps}(x,t):=M_0-U_a+\eps^p t+\eps x$, which satisfies
$$\partial_t \bar z_{a,\eps}-\partial^2_x\bar z_{a,\eps}-|U_a'+\partial_x\bar z_{a,\eps}|^p+|U_a'|^p=
\eps^p+U_a''-\eps^p+|U_a'|^p=0.$$
For $a>0$ small (depending on $\eps$), we also have  
$$\bar z_{a,\eps}(\cdot,0)\ge u_0-U+\eps x\ge
\max\bigl(\min(u_0(x)-U(x),a^{-1}),-a^{-1}\bigr)=z_a(\cdot,0)$$
and $\partial_x\bar z_{a,\eps}(0,t)=-U_a'(0)+\eps<0$.
Moreover, for each $T>0$, owing to $z_a\in L^\infty_{loc}([0,\infty);L^\infty(0,\infty))$, we have
$\bar z_{a,\eps}\ge M_0-U+\eps x \ge z_a$ in $[R_{a,\eps,T},\infty)\times [0,T)$ for some $R_{a,\eps,T}>0$ large.
It follows from the comparison principle applied in $(0,R_{a,\eps,T})\times (0,T)$ that $z_a\le\bar z_{a,\eps}$
in $(0,\infty)\times (0,T)$.
Letting $a\to 0$, next $\eps\to 0$ and then $T\to\infty$, we obtain $z\le M_0-U$ i.e., \eqref{equRECppty0}.

For \eqref{equRECppty1}, it suffices to note that if $u_0\le U$ in $\Omega$, then $z_a(\cdot,t)\le 0$ in $(0,R)$ in view of 
 \eqref{equRECexistregul} and the maximum principle,
and to let $a\to 0$.

Now, to prove \eqref{equRECppty2}, take $\hat u_0$ as in the statement 
and let $\hat z_a, \hat z$ be the corresponding solutions obtained by the above procedure. 
By the comparison principle applied to problem \eqref{equRECexistregul}
with the comparison functions $z_a\pm  \|\hat u_0-u_0\|_\infty$, we deduce that 
\be{compzahat}
|\hat z_a-z_a|\le \|\hat u_0-u_0\|_\infty.
\ee
Passing to the limit and going back to $u$, we obtain \eqref{equRECppty2}.

\smallskip

{\bf Step 4.} {\it Continuity of $u$ and uniqueness.}
We shall show that $z$, hence $u$, belongs to $C(\overline Q)$.
Fix any $\eps>0$ and pick $\hat u_0\in \mathcal{W}_s\cap C^2(\Omega)$
such that $|\hat u_0-u_0|\le \eps$ in $\Omega$.
 Assume in addition that $\sup \hat u_0\le \frac14$, $\hat u_0=0$ on~$[\frac12,1]$ if $R=1$, 
or $\sup_{x\ge 1/2}|\hat u_{0,xx}|<\infty$ if $R=\infty$.
Let $\hat z_a$ be the corresponding solutions of \eqref{equRECexistregul} obtained in Steps~2-3. 
Fix $\zeta\in C^2(\R)$ such that $\zeta(s)=0$ for $s\le 1$ and $\zeta(s)=1$ for $s\ge 2$,
 and set $\zeta_\eta(x)=\zeta(x/\eta)$. For $\eta\in(0,1/4)$ and $K>0$ to be chosen below, 
we then define the comparison function
$$z_\pm=(1-\zeta_\eta(x))\hat u_0(0)+[\zeta_\eta(\hat u_0-U)](x)\pm(Kt+\eps),\quad x\in\overline\Omega, \ t>0.$$
We first select $\eta\in (0,1/4)$ 
such that for all 
$x\in \Omega$ and $t>0$,
$$\begin{aligned}
z_-(x,t)-\hat z_a(x,0)
&=(1-\zeta_\eta)\hat u_0(0)+\zeta_\eta(\hat u_0-U)+Kt-\hat u_0(x)+U(x)-\eps \\
&=(1-\zeta_\eta)(\hat u_0(0)-\hat u_0(x)+U(x))+Kt-\eps \\
&\le \sup_{x\in(0,2\eta)}|\hat u_0(0)-\hat u_0(x)|+c_p(2\eta)^{1-\beta}-\eps+
Kt \le Kt
\end{aligned}$$
hence, arguing similarly for $z_+$,
\be{compzpm}
z_-(x,t)-Kt\le \hat z_a(x,0)\le z_+(x,t)+Kt.
\ee
On the other hand, simple computations show that
$z_{\pm,x}=z_{\pm,xx}=0$ in $(0,\eta]$ and, for some $C(\eta)>0$ independent of~$a$,
$\bigl|z_{\pm,xx}+|U_a'+z_{\pm,x}|^p-|U_a'|^p\bigr|\le C(\eta)$
in $[\eta, R)$. Consequently, 
choosing $K=C(\eta)$, we obtain
$$z_{\pm,t}-z_{\pm,xx}-|U_a'+z_{\pm,x}|^p+|U_a'|^p
=\pm K-z_{\pm,xx}-|U_a'+z_{\pm,xx}|^p+|U_a'|^p\ge 0 \ (\le 0)
\quad\hbox{ in $Q$.}$$
Since $z_-(x,0)\le \hat z_a(x,0)\le z_+(x,0)$ by \eqref{compzpm},
$z_{\pm,x}(0,t)=0$ and, if $R=1$, $z_+(1,t)\ge -U(1)$ and $z_-(1,t)\le -U(1)$,  the maximum principle implies
$$z_-(x,t)\le \hat z_a(x,t)\le z_+(x,t).$$
Combining this with \eqref{compzahat} and \eqref{compzpm}, we deduce
$$z_a(x,h)\ge \hat z_a(x,h)-\eps\ge z_-(x,h)-\eps=z_+(x,0)-(Kh+3\eps)
\ge  \hat z_a(x,0)-(Kh+3\eps)\ge z_a(x,0)-5\eps$$
for any $h\in (0,h_0)$, with $h_0=h_0(\eps)>0$ sufficiently small.
This allows to compare $z_a$ with $\bar z_a(x,t):=z_a(x,t+h)+5\eps$ through \eqref{equRECexistregul}, to deduce
$z_a(x,t+h)\ge z_a(x,t)-5\eps$ in $\overline Q$ for any $h\in (0,h_0)$.
Arguing similarly from above and letting $a\to 0$, we get $|z(x,t+h)-z(x,t)|\le 5\eps$,
hence the continuity in time of $z$. Combining with \eqref{equREClipestim}, 
we conclude that $z$ and $u\in C(\overline Q)$.
 As a consequence, uniqueness follows from Proposition~\ref{propSMP2}
(using also \eqref{equRECder1} in case $R=\infty$).

\smallskip

{\bf Step 5.} {\it Proof of $C^1$ regularity up the boundary, \eqref{equRECregulC1} and \eqref{equRECsingux}.}
To this end we shall look for a more precise estimate of $Z_a=z_{a,x}$ near $x=0$,
 based on a self-similar comparison function.
Take any $m\in(0,1)$, fix $T> 1$ and let 
$V(y)=A\,[\min(y,y_0)]^m$ for $y\ge 0$,
with $A, y_0>0$ to be chosen. Then putting $\xi=(x+a)t^{-1/2}$ for given $a>0$, we define
$$\bar Z_a(x,t)=
\begin{cases}
V(\xi),& x\ge 0,\ t>0, \\
 \noalign{\vskip 1mm}
Ay_0^m, & x\ge 0,\ t=0,
\end{cases}
$$
and note that $\|\bar Z_a(\cdot,t)-\bar Z_a(\cdot,0)\|_{L^2(\Omega)}\to 0$, as $t\to 0$.
With the notation from \eqref{parabPhi1}, \eqref{parabPhi2}, a simple computation yields, for $x+a\ne y_0t^{1/2}$,
\be{PaZa}
\mathcal{L}_a\bar Z_a=-\ts\frac12 t^{-3/2}(x+a)V'(\xi)-t^{-1}V''(y)-b_aV(\xi)-\tilde b_at^{-1/2}V'(\xi).
\ee
There is a constant $c=c(p)>0$ such that
 $(1+X)^{p-1}\le 1+cX$ and $(1-X)^{p-2}\ge 1-cX>0$ for all $X\in(0,\frac12]$.
Set $\hat M_1=cd_p^{-1}M_1$, with $M_1$ in \eqref{boundzax} and select $x_0\in (0,1)$ such that 
$\hat M_1(2x_0)^{\beta}<\frac12$. 
Fix any $a\in(0,x_0)$ and $(x,t)\in D_T:=(0,x_0)\times(0,T)$. With $\alpha=p/(p-1)$, we then have
$$\begin{aligned}
\tilde b_a(x,t)&\le p\bigl[d_p(x+a)^{-\beta}+M_1\bigr]^{p-1}\le \alpha(x+a)^{-1}\bigl[1+\hat M_1(x+a)^{\beta}\bigr],\\
b_a(x,t)&\le -p(p-1)\bigl[d_p(x+a)^{-\beta}-M_1\bigr]^{p-2}[d_p(x+a)^{-\beta}]^p
\le -\alpha(x+a)^{-2}\bigl[1-\hat M_1(x+a)^{\beta}\bigr]<0.
\end{aligned}$$
If $x+a\ne y_0t^{1/2}$, it follows from \eqref{PaZa} that
\be{PaZa2}
t\mathcal{L}_a\bar Z_a \ge-\ts\frac12 \xi V'(\xi)-V''(\xi)+\alpha\bigl[1-\hat M_1(x+a)^{\beta}\bigr]\xi^{-2}V(\xi)
-\alpha\bigl[1+\hat M_1(x+a)^{\beta}\bigr]\xi^{-1}V'(\xi).
\ee
For $x+a>y_0t^{1/2}$, we have $V'(\xi)=V''(\xi)=0$ and $V(\xi)>0$, hence the right hand side of \eqref{PaZa2} is positive,
whereas for $x+a<y_0t^{1/2}$, we have 
$$\begin{aligned}
A^{-1}t\mathcal{L}_a\bar Z_a 
&\ge -\ts\frac{m}{2}\xi^m+m(1-m)\xi^{m-2}+\alpha[1-m-(m+1)\hat M_1(x+a)^{\beta}]\xi^{m-2}\\
&\ge \bigl\{(\alpha+m)(1-m)-\bigl[\alpha(m+1)\hat M_1(2x_0)^{\beta}+\ts\frac{m}{2}y_0^2\bigr]\bigr\}\xi^{m-2}\ge 0
\end{aligned}$$
by choosing $y_0=x_0T^{-1/2}\le x_0$ and taking $x_0>0$ smaller if necessary, depending only on $M_1,p,m$.
Since $V'(y_0^-)>0=V'(y_0^+)$, it follows that $\bar Z_a$ is a weak solution of $\mathcal{L}_a\bar Z_a\ge 0$ in $D_T$.
Now choose $A=M_1y_0^{-m}$ and note that $\bar Z_a(x_0,t)=V((x_0+a)t^{-1/2})=M_1$ for all $t\in (0,T)$.
Recalling \eqref{boundzax}, we thus have $Z_a\le\bar Z_a$ on the parabolic boundary of $D_T$,
hence $Z_a\le \bar Z_a$ in $D_T$ by the maximum principle.
 Using $-\bar Z_a$ similarly as a subsolution
and passing to the limit $a\to 0$, we deduce that
$$|z_x(x,t)|\le A\,[\min(xt^{-1/2},y_0)]^m,\quad 0<x<x_0,\ 0<t<T.$$
It follows that $z_x$ extends to a function $z_x\in C(\overline\Omega\times(0,\infty))$
with $z_x(0,t)=0$, hence \eqref{equRECsingux}.

Finally, this along with \eqref{equREClipestim} and $u\in C(\overline Q)$ yields 
\eqref{equRECregulC1}. 

\smallskip
 
{\bf Step 6.} {\it Proof of assertion (iii).}
Set $N=v_{0,xx}(0)\in\R$. Owing to our assumption, there exists $x_1\in(0,\eps)$ such that
\be{v0xMeps1}
(N-\eps)x\le v_{0,x}\le (N+\eps)x\quad\hbox{ for all $x\in (0,x_1]$.}
\ee
Let $Z_a$ and $\mathcal{L}_a$ be as in Step 2.
Let $a\in(0,\delta)$, where $\delta\in(0,x_1)$ is chosen sufficiently small 
so that $|K(x+a)|+x^2\le \frac12 U_a'(x)$ for all $a,x\in(0,\delta)$.
For $K,\sigma\in\R$ and $b\in\{0,a\}$, define the comparison function $\psi=\psi_{K,b,\sigma}:=K(x+b)-\sigma x^2$.
We obtain, for $x\in (0,\delta)$, 
$$\begin{aligned}
p^{-1}&\bigl(\mathcal{L}_a \psi-2\sigma\bigr)\\
&= -\bigl(U_a'+K(x+b)-\sigma x^2\bigr)^{p-1}(U_a''+K-2\sigma x)+{U_a'}^{p-1}U_a''\\
&={U_a'}^{2p-1}\Bigl\{\Bigl[1+\ts\frac{K(x+b)-\sigma x^2}{U_a'}\Bigr]^{p-1}-1\Bigr\}
-(K-2\sigma x){U_a'}^{p-1}\Bigl[1+\ts\frac{K(x+b)-\sigma x^2}{U_a'}\Bigr]^{p-1}\\
&=(p-1){U_a'}^{2p-2}\bigl[K(x+b)-\sigma x^2\bigr]+{U_a'}^{2p-3}\,O\bigl((x+b)^2\bigr)
-(K-2\sigma x){U_a'}^{p-1}+{U_a'}^{p-2}\,O(x+b)\\
&=K{U_a'}^{p-1}\bigl[(p-1){U_a'}^{p-1}(x+b)-1\bigr]+\sigma x{U_a'}^{p-1}\bigl[2-(p-1){U_a'}^{p-1}x\bigr]+O\bigl((x+a)^\beta\bigr)\\
&=K{U_a'}^{p-1}\bigl[\ts\frac{x+b}{x+a}-1\bigr]+\sigma x{U_a'}^{p-1}\bigl[1+\ts\frac{a}{x+a}\bigr]+O\bigl((x+a)^\beta\bigr).
\end{aligned}$$
Taking $\delta>0$ smaller if necessary, it follows that, for $x\in (0,\delta)$,
$\mathcal{L}_a \psi\le 0$ if $\sigma=-1$ and either $K\le 0$, $b=a$ or $K\ge 0$, $b=0$, 
and that $\mathcal{L}_a \psi\ge 0$ if $\sigma=1$ and either $K\ge 0$, $b=a$ or $K\le 0$, $b=0$.

We thus choose $\underline\psi=\psi_{N-3\eps,b_1,-1}$ and $\overline\psi=\psi_{N+3\eps,b_2,1}$
with $b_1=a$ if $N-3\eps\le 0$, $b_1=0$ if $N-3\eps>0$, 
$b_2=a$ if $N+3\eps\ge 0$ and $b_2=0$ if $N+3\eps<0$.
Note that, since $\delta<\eps$, we have $\underline\psi(x)\le (N-2\eps)x$ 
and $\overline\psi(x)\ge (N+2\eps)x$ in $[0,\delta]$.

On the other hand, since the coefficients $b_a, \tilde b_a$ of $\mathcal{L}_a$ in 
\eqref{parabPhi2} are bounded on compact subsets of $(0,A]\times[0,1]$, 
uniformly with respect to $a>0$, and $Z_a(0,x)=v_{0,x}$ is continuous on $(0,A)$,
it follows from standard parabolic barrier arguments that 
\be{v0xMeps2}
\hbox{$Z_a$ is continuous on $(0,x_0]\times[0,1]$, uniformly in $a$.}
\ee
Therefore, in view of \eqref{v0xMeps1}, there exists $t_0\in (0,1)$ such that, for all $a\in(0,\delta)$,
$$(N-2\eps)\delta\le Z_a(\delta,t)\le (N+2\eps)\delta \quad\hbox{ for all $t\in (0,t_0]$,}$$
Moreover,  we have $\underline\psi\le v_{0,x}=0\le \overline\psi$ at $x=0$
and, by \eqref{v0xMeps1},
$$\underline\psi\le v_{0,x}\le\overline\psi\quad\hbox{ in $[0,\delta]$.}$$
It follows from the comparison principle that $\underline\psi\le Z_a\le\overline\psi$ in $(0,\delta)\times(0,t_0)$.
Passing to the limit $a\to 0$, it follows that 
$$(N-3\eps)x\le u_x-U' \le(N+3\eps)x \quad\hbox{ in $(0,\delta)\times(0,t_0)$}$$
hence, using \eqref{v0xMeps1} again,
\be{v0xMeps3}
|u_x-u_{0,x}| \le 4\eps x\quad\hbox{ in $(0,\delta)\times(0,t_0)$.}
\ee
Finally, as a consequence of \eqref{v0xMeps2}, $u_x-U'$ hence $u_x$ is continuous on $(0,A)\times[0,1]$,
thus \eqref{v0xMeps3} remains true on in $(0,x_0]\times(0,t_0)$ for a possibly smaller $t_0$,
which implies the assertion.
\end{proof}

\begin{proof}[Proof of Proposition~\ref{locexistvapprox2}]
By Proposition~\ref{locexistvapprox}, we have $u\le U$, hence in particular $u(0,t)\le 0.$ 
Also, since $u-U\in L^\infty(0,\infty;W^{1,\infty}(0,1))$, there exists $L>0$ such that 
\be{compuUL}
u(x,t)-U(x)\ge u(0,t)-Lx\quad\hbox{ in $(0,1)\times(0,\infty)$.}
\ee
On the other hand, by the proof \cite[Lemma~5.5]{PS3}, there exist $\delta,\eta>0$ and $c\in (0,b)$ such that the function 
$$\overline u(x,t)=U_{a(t)}-cx^2,\quad a(t)=\eta t^{1-\beta}$$
satisfies 
$\overline u_t-\overline u_{xx}\ge |\overline u_x|^p$ and $\overline u\ge 0$ in $Q_\delta:=(0,1)\times(0,\delta]$.
Since $u\le \overline u$ on the parabolic boundary of $Q_\delta$, 
it follows from the comparison principle that 
\be{compuUL2}
u(x,t)\le U_{a(t)}-cx^2\le U'(a(t))x\quad\hbox{ in $Q_\delta$.}
\ee
For each $t\in(0,\delta)$, combining \eqref{compuUL}, \eqref{compuUL2} and choosing $x=x(t)>0$ sufficiently small, we get
$$u(0,t)\le u(x,t)-U(x)+Lx\le (U'(a(t))+L)x-U(x)<0.$$
Finally, since $u(\cdot,\delta)\le \overline u(\cdot,0)$, we may repeat
the argument on $(\delta,2\delta]$ and so on. This yields the conclusion.
\end{proof}

\subsection{Variation of constants formula and extension property}

The construction of special GBU and RBC solutions in
Theorems~\ref{prop:special} and \ref{prop:special-LBC} 
will be carried out by working on the equation for $w$  
and on the corresponding variation of constants formula for $v=w-U$,
that we provide in Proposition~\ref{locexistvapproxExt} below.

Before that, in order to handle the case of a bounded interval,
it is convenient to modify the problem so as to keep carrying out the construction on the half-line $(0,\infty)$,
where the eigenfunctions and the kernel of the operator $\mathcal{L}$ are explicitly described.
This can be done via a suitable extension of the solution of the viscous Hamilton-Jacobi equation, given in the following lemma.
This procedure generates additional terms in the equations (for $u, w$ and $v$), 
but in the process of construction, these terms will be suitably bounded and supported far away from the singularity, 
so that the construction will not be significantly affected.

\begin{lem} \label{BernsteinEst}
Let $R,\tau>0$, $s_0=-\log \tau$ and set $Q=(0,R)\times(0,\tau)$.
Fix a cut-off function $\zeta\in C^2([0,\infty))$ such that 
 $\zeta=1$ in $[0,\frac{R}{3}]$ and $\zeta=0$ in $[\frac{R}{2},\infty)$.
Assume that $u\in C^{2,1}(Q)\cap C(\overline Q)$ satisfies 
$u_t-u_{xx}=|u_x|^p$ in $Q$
and define 
\be{defextuwv0}
\begin{aligned}
\tilde u(x,t)&=\zeta(x) u(x,t) 
&&\quad\hbox{ in $[0,\infty)\times[0,\tau)$,} \\
\tilde w(y,s)&=e^{ks}\tilde u(ye^{-s/2},\tau-e^{-s})=\zeta(ye^{-s/2}) w(y,s)
&&\quad\hbox{ in $[0,\infty)\times[s_0,\infty)$,} \\
\tilde v(y,s)&=\tilde w(y,s)-U(y) &&\quad\hbox{ in $[0,\infty)\times[s_0,\infty)$.}
\end{aligned}
\ee
Then $\tilde u$, $\tilde w$, $\tilde v$ respectively satisfy
\be{Extension1}
\begin{aligned}
\tilde u_t-\tilde u_{xx}
&= g(x,t):=|u_x|^p\zeta -2u_x \zeta_x-u\zeta_{xx}, \\
\tilde w_s-\tilde w_{yy}+\frac{y}{2}\tilde w_y-k\tilde w
&=\tilde g(y,s):=e^{(k-1)s}g(ye^{-s/2},\tau-e^{-s}), \\
\tilde v_s+\mathcal{L}\tilde v&=\tilde F(\tilde v_y,s):=\tilde g(y,s)-U_y^p-pU_y^{p-1}\tilde v_y.
\end{aligned}\ee
\end{lem}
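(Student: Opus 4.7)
My plan is to verify the three identities in sequence, each by a direct calculation involving the chain rule and the explicit form of $U$.

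For the equation on $\tilde u$, since the cutoff $\zeta$ depends only on $x$, I would apply the Leibniz rule to obtain $\tilde u_t=\zeta u_t$ and $\tilde u_{xx}=\zeta u_{xx}+2\zeta_xu_x+\zeta_{xx}u$. Subtracting these and using the PDE $u_t-u_{xx}=|u_x|^p$ satisfied in $Q$ produces the claimed formula with $g=\zeta|u_x|^p-2\zeta_xu_x-\zeta_{xx}u$. The identification $\tilde w(y,s)=\zeta(ye^{-s/2})w(y,s)$ is then immediate from the definitions of $w$ and $\tilde u$, since the factor $e^{ks}$ commutes with the cutoff.

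For the equation on $\tilde w$, I would apply the similarity-variable chain rule to the identity $\tilde w(y,s)=e^{ks}\tilde u(ye^{-s/2},\tau-e^{-s})$. Setting $x=ye^{-s/2}$, $t=\tau-e^{-s}$, one finds
\[
\tilde w_y=e^{(k-\frac12)s}\tilde u_x,\qquad \tilde w_{yy}=e^{(k-1)s}\tilde u_{xx},\qquad \tilde w_s=k\tilde w-\tfrac{y}{2}\tilde w_y+e^{(k-1)s}\tilde u_t.
\]
Combining these three gives $\tilde w_s-\tilde w_{yy}+\frac{y}{2}\tilde w_y-k\tilde w=e^{(k-1)s}(\tilde u_t-\tilde u_{xx})=\tilde g$. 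This is the same computation that, applied to $u$ itself, recovers equation \eqref{eqw}; the cutoff only modifies the right-hand side through $g$.

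For the equation on $\tilde v$, I would subtract from the PDE for $\tilde w$ the stationary identity $U_{yy}-\frac{y}{2}U_y+kU+U_y^p=0$ satisfied by the singular steady state (as recalled in Section~\ref{secsimil}). This yields $\tilde v_s-\tilde v_{yy}+\frac{y}{2}\tilde v_y-k\tilde v=\tilde g-U_y^p$. To bring the operator $\mathcal{L}$ into view, I would use the algebraic identity $pU_y^{p-1}=\alpha/y$, which follows from the explicit formula $U_y=\beta^\beta y^{-\beta}$ (an elementary computation from \eqref{defU}) together with the relations $\beta(p-1)=1$ and $\alpha=p\beta=p/(p-1)$. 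Adding and subtracting $\frac{\alpha}{y}\tilde v_y$ then rearranges the equation into
\[
\tilde v_s+\mathcal{L}\tilde v=\tilde g-U_y^p-\tfrac{\alpha}{y}\tilde v_y=\tilde g-U_y^p-pU_y^{p-1}\tilde v_y=\tilde F(\tilde v_y,s).
\]

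Each of the three identities reduces to a definition-unpacking plus a chain-rule computation, so I do not expect any genuine obstacle; the only point demanding attention is the identification $pU_y^{p-1}=\alpha/y$, which is exactly what justifies the appearance of the Bessel-type first-order term $\frac{\alpha}{y}\partial_y$ in the linearized operator $\mathcal{L}$ rather than merely the drift $-\frac{y}{2}\partial_y$ inherited from the similarity variables.
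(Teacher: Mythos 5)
Your computation is correct and is exactly the "immediate calculation" that the paper omits: the Leibniz rule for the cutoff, the similarity-variable chain rule (consistent with the derivation of \eqref{eqw}), and the subtraction of the stationary identity for $U$ together with $pU_y^{p-1}=\alpha/y$, which is the same algebra underlying \eqref{eqzdefL}--\eqref{eqzdefF}. No gaps.
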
 

We omit the proof, which follows from immediate calculations.
Based on Proposition~\ref{varconstf}, 
we now give the suitable variation of constants formulas 
in all cases (GBU and RBC cases, half-line and bounded interval).
As before it suffices to consider the cases $R=1$ and $R=\infty$.

\begin{prop} \label{locexistvapproxExt} 
Let $p>2$, $R=1$ or $R=\infty$, $s_0>0$, $\Omega=(0,R)$.
\smallskip

(i) We suppose either:
$$\begin{cases}
u_0\in \mathcal{W},		& \hbox{ if $R=1$,} \\
 \noalign{\vskip 1mm}
u_0\in \mathcal{W}_1,	& \hbox{ if $R=\infty$ with $\mathcal{W}_1$ in \eqref{defW1},} \\
\end{cases}
\leqno\hbox{\rm (GBU case)}$$
or
$$
\hbox{$u_0\in \mathcal{W}_s$,
with $\sup u_0\le \frac14$ and $u_0(x)=0$ on~$[\frac12,1]$ if $R=1$.}
\leqno\hbox{\rm (RBC case)}$$
Next we denote 
$$u=
\begin{cases}
\hbox{the maximal classical solution of \eqref{equ}},& \hbox{ in GBU case,} \\
 \noalign{\vskip 1mm}
\hbox{the global solution of \eqref{equRECexist}},& \hbox{ in RBC case,} \\
\end{cases}$$
$$S=
\begin{cases}
-\log(e^{-s_0}-T),& \hbox{in GBU case with $T:=T^*(u_0)<e^{-s_0}$,} \\
 \noalign{\vskip 1mm}
\infty,& \hbox{otherwise.} \\
\end{cases}$$
Let $\tilde v, \tilde F$ be defined by \eqref{defextuwv0}-\eqref{Extension1}, with $\zeta$ as in Lemma~\ref{BernsteinEst} if $R=1$,
or $\zeta\equiv 1$ if $R=\infty$.
Then $\tilde v$ satisfies
\be{varconstv2tilde}
\tilde v(s)=e^{-(s-s_0)\mathcal{L}}\tilde v(s_0)+\int_{s_0}^s e^{-(s-\tau)\mathcal{L}}\tilde F(\tilde v_y(\sigma))\,d\sigma, \quad s_0<s<S.
\ee
Moreover $(e^{-s\mathcal{L}})_{s\ge 0}$ in \eqref{varconstv2tilde} satisfies formulas 
\eqref{defWkernel} and \eqref{computWx} in Proposition~\ref{KernelEstim2}.

\smallskip
 (ii) The above remains true if
$u\in C^{2,1}(Q)\cap C_b(\overline Q)$ is a solution of the RBC problem \eqref{equREC},
with any $s_0>-\log \tau$ and $S=\infty$.
\end{prop}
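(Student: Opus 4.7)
The plan is to reduce the proposition to a direct application of Proposition~\ref{varconstf}(i) on each compact subinterval $[s_0, s_1] \subset [s_0, S)$, followed by an invocation of Proposition~\ref{KernelEstim2} to represent the semigroup through its kernel. The argument is essentially the same for parts (i) and (ii); the only difference lies in the underlying regularity of $u$, supplied by Propositions~\ref{RemWPw} and~\ref{locexistvapprox} in part (i) and by Proposition~\ref{prop:PS-LBC} in part (ii).

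First I would verify that $\tilde v$ is a classical solution of $\tilde v_s + \mathcal{L}\tilde v = \tilde F(\tilde v_y, \cdot)$ on $(0,\infty)\times(s_0, S)$. This is a direct computation starting from the equation for $\tilde w$ in Lemma~\ref{BernsteinEst}, using the fact that $U$ is a stationary solution of \eqref{eqw} to eliminate $U_{yy} - (y/2)U_y + kU$ (yielding $-U_y^p$), and the identity $pU_y^{p-1} = \alpha/y$ to generate the missing $-\alpha y^{-1}\tilde v_y$ drift inside $\mathcal{L}$ at the cost of adding $-p U_y^{p-1}\tilde v_y$ to the right-hand side.

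The heart of the proof is to locate $\tilde v(s_0)$ and $\tilde F(\tilde v_y(\sigma),\cdot)$ in a weighted space $L^\infty_{m,q}$ with $0\le m<(\alpha+3)/2$ and $q\ge 0$, so that Lemma~\ref{CSHprime} produces the continuous $H'$-valued maps demanded by Proposition~\ref{varconstf}(i). In the GBU setting, the classical bounds on $u$ and $u_x$ together with the at-most-linear growth of $u_0\in\mathcal{W}_1$ when $R=\infty$ (from Proposition~\ref{RemWPw}) give $\tilde v\in L^\infty$ and $\tilde v_y\in L^\infty_{\mathrm{loc}}$ with polynomial growth at infinity, feeding a finite exponent $q$. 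In the RBC setting, Proposition~\ref{locexistvapprox} (for part (i)) or Proposition~\ref{prop:PS-LBC} (for part (ii)) provides $u_x - U_x\in L^\infty$ near $x=0$, so that $\tilde v_y$ is bounded there and the singular term $p U_y^{p-1}\tilde v_y = \alpha y^{-1}\tilde v_y$ belongs to $L^\infty_{1,q}$. The explicit singular contribution $-U_y^p = -\beta^{\beta p}y^{-\alpha}$ lies in $L^\infty_{\alpha,0}$ with $\alpha<2<(\alpha+3)/2$; and when $R=1$ the cutoff defect $-2u_x\zeta_x - u\zeta_{xx}$ is smooth and supported in $x\in[R/3,R/2]$, producing in similarity variables a smooth term that, after absorbing the prefactor $e^{(k-1)s}$ into $q$-growth on $[s_0,s_1]$, is an element of $L^\infty_{0,q}$.

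Once these weighted bounds are in place, the regularity $\tilde v \in C([s_0,s_1];H')\cap C^1((s_0,s_1];H')\cap C((s_0,s_1];H)$ follows from combining the classical $C^{2,1}$ regularity of $\tilde v$ on the interior (from parabolic theory applied to the linear equation \eqref{Extension1} satisfied by $\tilde u$) with the embedding \eqref{embedH}; the inclusion in $H$ uses that $U\in H^1_\rho$, which holds since $\alpha-2\beta>-1$ for $p>2$. All hypotheses of Proposition~\ref{varconstf}(i) being met, \eqref{varconstv2tilde} follows at once, and since $\tilde v(s_0)$ and $\tilde F(\tilde v_y(\sigma),\cdot)$ also belong to $C(0,\infty)$, Proposition~\ref{KernelEstim2} applies to yield the kernel representations \eqref{defWkernel} and \eqref{computWx}. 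The main technical delicacy will be handling the singular drift $\alpha y^{-1}\tilde v_y$ uniformly in $s$ near $y=0$ in the RBC case; the extra regularity \eqref{equRECsingux} ensuring $\tilde v_y(0,s)=0$ turns out to be precisely what keeps the relevant weighted norm finite below the threshold $m=(\alpha+3)/2$.
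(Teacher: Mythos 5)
Your proposal is correct and follows essentially the same route as the paper: verify the equation for $\tilde v$, place $\tilde v(s_0)$ and $\tilde F(\tilde v_y)$ in weighted spaces $L^\infty_{m,q}$ with $m<\frac{\alpha+3}{2}$ (the key point being that the worst singularity is $U_y^p\sim y^{-\alpha}$ with $\alpha<2$), invoke Lemma~\ref{CSHprime} to get the $H'$-valued continuity, and conclude via Propositions~\ref{varconstf} and~\ref{KernelEstim2}. The paper simply bounds the whole nonlinearity at once by $C(|w_y|^p+U_y^p)\in L^\infty_{\alpha,0}$ rather than term by term, but this is a cosmetic difference.
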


\begin{proof} 
First consider the GBU case in assertion (i).
If $R=\infty$ then we have $w\in C^{2,1}([0,\infty)\times(s_0,S))\cap C([0,\infty)\times[s_0,S)) \cap L^\infty_{loc}([s_0,S); \mathcal{W}_1)$, as a consequence of Proposition~\ref{RemWPw}(i). Noting that $\mathcal{W}_1\hookrightarrow L^\infty_{0,1}$ and $U\in H=H^1_\rho(0,\infty)$,
and using Lemma~\ref{CSHprime} with $m=0$ and $q=1$, we deduce that 
\be{regulvwU}
v\in C([s_0,S);H') \cap C((s_0,S);H).
\ee
On the other hand, setting $f(y,s):=F(v_y)=|U_y+v_y|^p-U_y^p-pU_y^{p-1}v_y$, we have
\be{regulvwUf}
|f(y,s)|\le C(|v_y|^p+U_y^p)\le C(|w_y|^p+U_y^p),
\ee
 hence $f\in L^\infty_{loc}([s_0,S);L^\infty_{\alpha,0})$
with $\alpha=\beta+1=p/(p-1)$.
Since $\alpha<2$, we may apply Lemma~\ref{CSHprime} with $m:=\alpha<\frac{\alpha+3}{2}$ and $q=0$
and deduce that 
$f\in C_b((s_0,s_1];H')$ for all $s_1\in (s_0,S)$.
Since $v$ solves \eqref{eqz}, it then follows from Proposition~\ref{varconstf} that $v$ satisfies \eqref{varconstv2tilde}, and Proposition~\ref{KernelEstim2} applies as well.
 
 \smallskip
 
 If $R=1$ then \eqref{regulvwU} is still clearly satisfied with $v$ replaced by $\tilde v$.
 On the other hand, 
since $g\in L^\infty_{loc}([0,T), L^\infty(0,R))$,
we have $\tilde g,\tilde w_y\in L^\infty_{loc}([s_0,S), L^\infty(0,\infty))$
and we conclude similarly as before that \eqref{varconstv2tilde} holds.

\smallskip

Next consider the RBC case. Under the assumptions of (i), as a consequence of \eqref{equRECregulC1} and \eqref{equRECder1} in 
Proposition~\ref{locexistvapprox}, 
  also recalling that $\zeta=0$ in $[\frac{R}{2},\infty)$ if $R=1$, $\tilde v$ satisfies \eqref{regulvwU}. 
 Under the assumptions of (ii), 
it follows from \eqref{eq:u_x-LBC-0}, using also \eqref{boundSZux} and the boundedness of $u$ if $R=\infty$, that
for each $t_0\in(0,\tau)$, 
\be{bounduxU-RBCgeneral}
|u_x- U'| \le C(t_0) \quad\hbox{ in $(0,R/2)\times(t_0,\tau)$,}
\ee
hence $\tilde v$ again satisfies \eqref{regulvwU}.
If $R=\infty$, since $v$ still solves \eqref{eqz} and $f(y,s):=F(v_y)$ satisfies \eqref{regulvwUf},
we can conclude as in the RBC case.
  The case $R=1$ follows from simple modifications, using again
\eqref{equRECregulC1} or \eqref{bounduxU-RBCgeneral} to control the cut-off terms.
\end{proof}

\subsection{A technical lemma}

The following lemma provides estimates on certain Gaussian integrals with parameters.
They will be used repeatedly in the derivation of our key a priori estimate.

\begin{lem} \label{AuxilLem2}
Let $C_1,\alpha>0$. 
\smallskip

 (i) Let $m\in\{0,1\}$. There exist $C,\tilde C>0$ such that
for all $X_0, X_1, X>0$ with $4X_0\le X_1\le 2X$, we have
 \be{AuxilLem2b}
 \int_{X_0}^{X_1} e^{-C_1(X-z)^2} (1+Xz)^{-\alpha/2} z^{-2m}dz
\le\frac{\tilde C X_{1-m}^{1-2m}e^{-CX^2}}{(1+XX_0)^{\alpha/2}}
+\frac{\tilde C X_1^{1-2m}{\bf 1}_{\{X<2X_1\}}}{(1+X_1^2)^{(\alpha+1)/2}}.
\ee

 (ii) Let $m\ge 0$. There exist $C,\tilde C>0$ such that
\be{P2AuxilLem2}
I(X,Z):=\int_{Z}^\infty 
e^{-C_1(X-z)^2}(1+Xz)^{-\alpha/2} z^m dz\le  
C\bigl(1+X^{m-\alpha}{\bf 1}_{\{X\ge 1\}}\bigr), 
\quad X>0,\ Z\ge 0,
\ee
and $I(X,Z)\le Ce^{-\tilde CZ^2}$ if $0<X\le Z/2$.
\end{lem}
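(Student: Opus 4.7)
The plan is to reduce both parts to elementary one-dimensional Gaussian integral estimates via careful case analysis based on whether the peak $z=X$ of the Gaussian lies inside the integration interval or far to its right.

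For part (i), I would split according to whether $X \geq 2X_1$ or $X < 2X_1$. In the first case, every $z \in [X_0, X_1]$ satisfies $X - z \geq X/2$, so $e^{-C_1(X-z)^2} \leq e^{-CX^2}$ uniformly; extracting this factor, together with $(1+Xz)^{-\alpha/2} \leq (1+XX_0)^{-\alpha/2}$, it remains to estimate $\int_{X_0}^{X_1} z^{-2m}\,dz$, which equals $X_1 - X_0 \leq X_1$ for $m=0$ and $X_0^{-1} - X_1^{-1} \leq X_0^{-1}$ for $m=1$, giving precisely $X_{1-m}^{1-2m}$; since ${\bf 1}_{\{X<2X_1\}}=0$ here, only the first term of the RHS appears. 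In the second case, the hypothesis $X_1 \leq 2X$ forces $X \in [X_1/2, 2X_1]$, and I would split the integral at $X_1/4$. On $[X_0, X_1/4]$ one still has $X - z \geq X_1/2 - X_1/4 = X_1/4 \geq X/8$, so the previous argument reproduces the first term. On $[X_1/4, X_1]$, since $z \sim X \sim X_1$ one bounds $(1+Xz)^{-\alpha/2} \leq C(1+X_1^2)^{-\alpha/2}$ and $z^{-2m} \leq CX_1^{-2m}$, leaving $\int_{X_1/4}^{X_1} e^{-C_1(X-z)^2}\,dz \leq \min(X_1, \sqrt{\pi/C_1}) \leq CX_1(1+X_1^2)^{-1/2}$, which yields exactly the second term $CX_1^{1-2m}(1+X_1^2)^{-(\alpha+1)/2}$.

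For part (ii), I would split based on the size of $X$. When $X \leq 1$, the factor $(1+Xz)^{-\alpha/2}$ is just bounded by $1$ and a standard Gaussian moment gives $I(X,Z) \leq \int_0^\infty e^{-C_1(X-z)^2} z^m\,dz \leq C$, which controls the $O(1)$ piece. When $X \geq 1$, separate the integration region into $\{|z - X| \leq X/2\}$ (where $(1+Xz)^{-\alpha/2} \leq CX^{-\alpha}$ and $z^m \leq CX^m$, giving a contribution $\leq CX^{m-\alpha}\int e^{-C_1(X-z)^2}\,dz \leq CX^{m-\alpha}$) and $\{|z-X| \geq X/2\}$ (where $e^{-C_1(X-z)^2} \leq e^{-C_1 X^2/4}$ allows a crude bound that is $O(1)$ after absorbing polynomial factors via the exponential decay). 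For the decay statement when $X \leq Z/2$, note that $z \geq Z \geq 2X$ implies $z - X \geq Z/2$, so I use the splitting $e^{-C_1(X-z)^2} \leq e^{-C_1 Z^2/8}\cdot e^{-C_1(z-X)^2/2}$; factoring out $e^{-C_1 Z^2/8}$ reduces the remaining integral to a Gaussian moment that is uniformly bounded.

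The individual estimates are elementary calculus, so the main obstacle is bookkeeping: one must make sure the case breakdown is exhaustive, that the boundary cases $X \sim X_1$ or $X \sim Z$ are covered by whichever subcase is active, and that the exponents on $(1+X_1^2)$ and $(1+XX_0)$ are produced with the correct power $\alpha/2$ vs $(\alpha+1)/2$ — the extra $1/2$ coming precisely from the width $X_1/\sqrt{1+X_1^2}$ of the Gaussian contribution near the peak. No deep ideas enter; everything follows by choosing the right split-point and applying $e^{-C_1(X-z)^2} \leq e^{-CX^2}$ or $\int e^{-C_1(X-z)^2}\,dz \leq C$ as appropriate.
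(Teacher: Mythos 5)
Your proof is correct and follows essentially the same strategy as the paper's: split according to whether the Gaussian peak $X$ lies far to the right of $[X_0,X_1]$ or inside a comparable range, cut the integral at an intermediate point, and use $e^{-C_1(X-z)^2}\le e^{-CX^2}$ away from the peak versus $\int e^{-C_1(X-z)^2}\,dz\le C$ near it (and likewise the $|z-X|\gtrless X/2$ split in part (ii)). The only difference is organizational — the paper routes part (i) through an auxiliary estimate with a parameter $Z\in[X_1/2,2X_1]$ applied twice, while you handle the two cases directly — and your bookkeeping of the exponents $X_{1-m}^{1-2m}$, $(1+XX_0)^{\alpha/2}$ and $(1+X_1^2)^{(\alpha+1)/2}$ is accurate.
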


\begin{proof}
 (i) We first claim that, for all $Z\in [X_1/2,2X_1]$,
\be{intX0Z}
I_m:=\int_{X_0}^{Z} \frac{e^{-C_1(Z-z)^2}(1+Xz)^{-\alpha/2}dz}{z^{2m}}
\le \frac{X_{1-m}^{1-2m}e^{-CX_1^2}}{(1+XX_0)^{\alpha/2}}
+ \frac{\tilde CX_1^{1-2m}}{(1+XX_1)^{\alpha/2}(1+X_1)}.
\ee
To this end we write
$I_m=\int_{X_0}^{Z/2} +\int_{Z/2}^{Z} \equiv I_m^1+I_m^2$
and first observe that
$$I_m^1\le (1+XX_0)^{-\alpha/2}  e^{-CZ^2} \int_{X_0}^{Z/2} z^{-2m}dz\le
X_{1-m}^{1-2m} e^{-CX_1^2}(1+XX_0)^{-\alpha/2}.$$
Next, since $X_1\le 2X$ and using $\min(Z,1)\le CZ/(Z+1)\le CX_1/(X_1+1)$, we get
$$I_m^2\le \frac{C(1+XZ)^{-\alpha/2}}{Z^{2m}}  \int_{Z/2}^{Z} e^{-C_1(Z-z)^2} dz
\le  \frac{C(1+XZ)^{-\alpha/2}\min(Z,1)}{Z^{2m}}\le \frac{C(1+XX_1)^{-\alpha/2}X_1^{1-2m}}{1+X_1},$$
hence \eqref{intX0Z}.

Now, if $X\ge 2X_1$, using $(X-z)^2\ge (X-X_1)^2+(X_1-z)^2\ge CX^2+(X_1-z)^2$ for $z\le X_1$ 
and applying \eqref{intX0Z} with $Z=X_1$, we obtain (separating the cases $m=0$ and $m=1$)
$$\begin{aligned}
\int_{X_0}^{X_1} 
&e^{-C_1(X-z)^2}(1+Xz)^{-\alpha/2} z^{-2m} dz\le e^{-CX^2}\int_{X_0}^{X_1} e^{-C(X_1-z)^2}(1+Xz)^{-\alpha/2} z^{-2m}  dz\\
&\le Ce^{-CX^2} \left\{\frac{X_{1-m}^{1-2m} e^{-CX_1^2}}{(1+XX_0)^{\alpha/2}}
+\frac{X_1^{1-2m}}{(1+XX_1)^{\alpha/2}(1+X_1)}\right\}
\le \frac{CX_{1-m}^{1-2m} e^{-CX^2}}{(1+XX_0)^{\alpha/2}}.
\end{aligned}$$
If $X<2X_1$, it follows from \eqref{intX0Z}, applied with $Z=X\in [X_1/2,2X_1]$, that
$$\begin{aligned}
\int_{X_0}^{X_1} \frac{e^{-C_1(X-z)^2}z^{-2m} dz}{(1+Xz)^{\alpha/2}} 
&\le  \int_{X_0}^{X} \frac{e^{-C_1(X-z)^2}z^{-2m} dz}{(1+Xz)^{\alpha/2}}
+\int_X^{\max(X,X_1)} \frac{e^{-C_1(X-z)^2}z^{-2m} dz}{(1+Xz)^{\alpha/2}}  \\
&\le \frac{CX_{1-m}^{1-2m}e^{-CX_1^2}}{(1+XX_0)^{\alpha/2}}
+\frac{CX_1^{1-2m}}{(1+X_1^2)^{\alpha/2}(1+X_1)}
+\frac{X_1^{-2m}\min\bigl((X_1-X)_+,1\bigr)}{(1+X_1^2)^{\alpha/2}}\\
&\le \frac{CX_{1-m}^{1-2m}e^{-CX^2}}{(1+XX_0)^{\alpha/2}}
+\frac{CX_1^{1-2m}}{(1+X_1^2)^{(\alpha+1)/2}}.
\end{aligned}$$

 (ii) Set $E_1=[Z,\infty)\setminus[\frac{X}{2},\frac{3X}{2}]$, $E_2=[Z,\infty)\cap[\frac{X}{2},\frac{3X}{2}]$
and write $I=\int_{E_1}+\int_{E_2} \equiv I_1+I_2$. We have
$$I_1\le 
\int_{|z-X|\ge \frac{X}{2}} e^{-C(X-z)^2}|z|^m dz\le 2 
\int_{\frac{X}{2}}^\infty e^{-C\tau^2}(X+\tau)^m d\tau
\le \tilde C e^{-CX^2}$$
and
$$I_2\le C\frac{X^m}{{(1+X)^\alpha}}\int_{X/2}^{3X/2} e^{-C(X-z)^2} dz
=C\frac{X^m}{{(1+X)^\alpha}}  \int_0^{X/2} e^{-C\tau^2} d\tau\le C\frac{X^{m+1}}{{(1+X)^{\alpha+1}}},$$
 which readily gives \eqref{P2AuxilLem2}.
Finally, if $X\le Z/2$, then
$I(X,Z)\le$
$\int_{Z}^\infty e^{-Cz^2}z^m dz \le \tilde C 
e^{-CZ^2}.$
\end{proof}

\goodbreak

\section{Construction of special solutions: GBU case}

This section is devoted to the construction of special solutions in the GBU case.

\subsection{Main results on special GBU solutions}

We consider the GBU case on bounded intervals and the half line. 

\begin{thm}
	\label{prop:special} \ 
	Let $p>2$, $0 < R \le \infty$, $\ell\in\N^*$. For any $\eps\in(0,1)$, 
	there exists $u_0$, with $u_0\in \mathcal{W}_1$ if $R=\infty$ and $\ell$ is odd or $u_0\in \mathcal{W}$ otherwise,
	 such that $T:=T^*(u_0)<\infty$ and the solution $u$ of \eqref{equ} 
	enjoys the following properties, for some constants 
	$C, K > 0$ and $\sigma\in (0,R)$. 
	\smallskip

	\begin{itemize}

	\item[(i)] {\it (GBU rate and bubbling behavior)} 
	There holds 
\be{rateell2}
\lim_{t\to T^-}  (T-t)^{\frac{\ell}{p-2}}u_x(0,t)=C 
\ee
and, setting $a(t):= \beta u_x^{1-p}(0,t)$,
\be{asympyux2}
u(x,t) = U_{a(t)}(x) + O(x^2) \quad \mbox{ and } \quad 
u_x(x,t) = U'_{a(t)}(x)+O(x).
\ee

	\item[(ii)] {\it (intermediate region)} 
	There holds
	\be{special-intermed}
	\bigl|u(x,t)-U(x)+(T-t)^\ell \varphi_\ell \bigl((T-t)^{-1/2} x\bigr)\bigr|
	\le\eps\bigl[(T-t)^\ell +x^{2\ell}\bigr]
	\quad\hbox{in } [K(T-t)^q, \sigma],
	\ee
		where $\varphi_\ell$ is the polynomial of degree $2\ell$ given by Proposition~\ref{GaussianPoincare2} and Remark~\ref{RemSpec} 
		for $\alpha=\beta+1$ and $k=(1-\beta)/2$ (which satisfies $\varphi_\ell(0)>0$) and $q=(p-1)(\ell+p-2)[(p-2)(2p-1)]^{-1}>\frac12$. 
	
	\smallskip 
	
	\item[(iii)] {\it (outer region)}
	If $ R = \infty $, then there exists $ \delta \in (0,1) $ such that 
	\be{outerdelta}
	| u(x,t) - U(x) | \ge \delta U(x) \quad \mbox{ in } [\sigma, \infty) \times (0, T). 
	\ee
	If $ R < \infty $, then $u$ is regular at $x=R$, i.e.~$u\in C^{2,1}((0,R]\times(0,T])$ and $u(R,t)=0$ for all $t\in (0,T]$ in the classical sense.

	\smallskip
	\item[(iv)] {\it (intersections with the singular steady state)} 
	There exists $t_1\in(0,T)$ such that, for each $t\in(t_1,T)$, $u(\cdot,t)-U$ 
	has exactly $\ell$ zeros in $(0,\sigma)$,
	denoted by $0<X_1(t)<\dots<X_\ell(t)$, and they are all nondegenerate.
	Moreover,  denoting by $0<y_1<\dots<y_\ell$ the positive zeros of~$ \varphi_\ell$, we have
	$$\biggl|\frac{X_i (t)}{y_i (T-t)^{1/2}}-1\biggr| \le \eps,\qquad t_1<t<T.$$
		\end{itemize}
		
	\noindent Finally,  for $R=\infty$ and $\ell$ odd, we may take $u_0\in \mathcal{W}$
	 if we do not require property (iii).

\end{thm}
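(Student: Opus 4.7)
The plan is to work in similarity variables $y=x/\sqrt{T-t}$, $s=-\log(T-t)$, $w=e^{ks}u$, and to construct the special solution by carefully selecting $w(\cdot,s_0)$ so that, in the extended domain $y\in(0,\infty)$ obtained via the cutoff of Lemma~\ref{BernsteinEst} when $R<\infty$, the function $\tilde v=\tilde w-U$ satisfies $\tilde v(y,s)\approx -e^{-\lambda_\ell s}\varphi_\ell(y)$ as $s\to\infty$ in an intermediate range of $y$, together with a matching quasi-stationary bubble correction $U_{a(s)}-U$ near $y=0$. This is the Herrero--Vel\'azquez-style construction adapted to our viscous Hamilton--Jacobi setting. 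The central tool will be the variation of constants formula from Proposition~\ref{locexistvapproxExt} together with the explicit kernel representation and bounds of Propositions~\ref{KernelEstim2}--\ref{Prop-smoothing}.

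I would parameterize initial data by $\ell$ shooting parameters $c=(c_0,\dots,c_{\ell-1})\in\R^\ell$,
\[
\tilde v(y,s_0)\;=\;-e^{-\lambda_\ell s_0}\varphi_\ell(y)\,\chi(y)+\sum_{j=0}^{\ell-1}c_j\varphi_j(y)+\tilde v_{\mathrm{out}}(y),
\]
for some large $s_0$, where $\chi$ is a smooth cutoff and $\tilde v_{\mathrm{out}}$ encodes the outer tail required for (iii): for $R=\infty$, $\ell$ odd, an explicit linear-growth overshoot $\gtrsim\,U(x)$ at infinity (allowed by $u_0\in\mathcal{W}_1$ through Proposition~\ref{RemWPw}); for $R=\infty$, $\ell$ even, a bounded profile with $u_0\le(1-\delta)U$ far out; and for $R<\infty$, a profile matched to $u_0(R)=0$ with $u_0\le\tfrac38$ and $u_0\equiv 0$ on $[\tfrac{R}{2},R]$ so that Lemma~\ref{controlGBU1} prevents GBU at $x=R$. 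The projections $\alpha_j(s):=(\tilde v(s),\varphi_j)$ onto the $\ell$ slow modes satisfy
\[
\alpha_j'(s)+\lambda_j\alpha_j(s)\;=\;\bigl\langle \tilde F(\tilde v_y(s)),\varphi_j\bigr\rangle,\qquad 0\le j\le \ell-1,
\]
a forced linear ODE whose inhomogeneity comes from the nonlinear residual $\tilde F$ of \eqref{Extension1}.

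I would then run the trajectory forward via the integral equation \eqref{varconstv2tilde} and confine it to the tube
\[
\bigl|\,\tilde v(y,s)+e^{-\lambda_\ell s}\varphi_\ell(y)\,\bigr|\;\le\;\eta\bigl(e^{-\lambda_\ell s}+y^{2\ell}\bigr)\quad\text{in the intermediate region }Ke^{-\mu s}\le y\le Y(s),
\]
coupled with a companion pointwise bound $|\tilde v(y,s)-(U_{a(s)}(y)-U(y))|\le\eta\,y^2$ in the inner region $0<y\le Ke^{-\mu s}$, where $a(s)\sim C e^{-\lambda_\ell s/(1-\beta)}$. Closing this tube requires weighted sup-norm estimates on $\int_{s_0}^s e^{-(s-\sigma)\mathcal{L}}\tilde F(\tilde v_y(\sigma))\,d\sigma$, obtained by splitting the $\xi$-integration in the kernel representation of Proposition~\ref{KernelEstim2} into the inner/intermediate/outer zones and invoking Lemma~\ref{AuxilLem2} together with the gradient kernel bound \eqref{boundH2} and the polynomial bounds of Proposition~\ref{EstimEigen}. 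A Brouwer/Miranda topological degree argument applied to the map
\[
\Psi\;:\;c\;\longmapsto\;\bigl(\alpha_0(s^*;c),\dots,\alpha_{\ell-1}(s^*;c)\bigr)
\]
at the first exit time $s^*=s^*(c)$ from the tube then produces a parameter $c^*$ for which the trajectory remains in the tube for all $s\ge s_0$; uniqueness is not required.

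From such a solution, (ii) is immediate after undoing the similarity change of variables, and (iv) follows from Proposition~\ref{GaussianPoincare2}(iv) (the $\ell$ nondegenerate zeros $0<y_1<\dots<y_\ell$ of $\varphi_\ell$) combined with Proposition~\ref{propZeroNumber0} to rule out extra zeros in $(0,\sigma)$ and with the implicit function theorem to get the $C^1$ curves $X_i(t)\sim y_i\sqrt{T-t}$. For (i), the inner tube bound gives $\|w(\cdot,s)-U_{a(s)}\|\to 0$ in a neighborhood of $y=0$, and Proposition~\ref{prop:PS} then yields both the exact rate \eqref{rateell2} and the $C^1$ profile \eqref{asympyux2}. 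For (iii) with $R=\infty$, the strict separation $(1\pm\delta)U$ imposed on $\tilde v_{\mathrm{out}}$ in $[\sigma,\infty)$ is propagated for all $t<T$ by the comparison principle Proposition~\ref{propSMP2}; for $R<\infty$, the classicality up to $x=R$ follows from Lemma~\ref{controlGBU1}. The concluding sentence about $\mathcal{W}$ for $\ell$ odd and $R=\infty$ without (iii) follows by truncating $\tilde v_{\mathrm{out}}$ at a large radius. The main obstacle is closing the tube: the nonlinearity $\tilde F(\tilde v_y)$ contains a term $\sim U_y^{p-2}\tilde v_y^{2}\sim y^{-2\beta}\tilde v_y^{2}$ that is highly singular at $y=0$, and controlling it against the linear decay $e^{-(s-\sigma)\mathcal{L}}$ in a norm strong enough to recover \eqref{special-intermed} with an arbitrary prescribed $\eps$ is precisely where the delicate heat-kernel estimates of Proposition~\ref{KernelEstim} and the Gaussian integrals of Lemma~\ref{AuxilLem2} are indispensable.
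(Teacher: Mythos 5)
Your construction is essentially the paper's: similarity variables with the cut-off extension of Lemma~\ref{BernsteinEst}, shooting on the $\ell$ low eigenmodes, a trapping tube around $-e^{-\lambda_\ell s}\varphi_\ell$ in the intermediate region matched to a quasi-stationary bubble $U_{a(s)}$ in the inner region, closure of the tube via the variation of constants formula, the kernel bounds of Propositions~\ref{KernelEstim}--\ref{Prop-smoothing} and Lemma~\ref{AuxilLem2}, and a Brouwer degree argument in the parameters $c$. The only structural difference is minor: the paper imposes the tube solely in the intermediate region and then \emph{derives} the inner quasi-stationary behavior by a sub/supersolution comparison (Proposition~\ref{PropInOut}), rather than carrying an inner bound as part of the trapping set; this is a simplification, not an obstacle to your version.

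There is, however, one genuine gap in your derivation of assertion (i). The inner-region control obtained from the tube only pins down $a(s)$ (equivalently $w_y(0,s)$) between two \emph{distinct} constants: one gets
$\underline a_\eps e^{\frac{\lambda}{p-2}s}\le w_y(0,s)\le \overline a_\eps e^{\frac{\lambda}{p-2}s}$ with $\underline a_\eps=d_p((1+\eps)a_*)^{-\beta}<\overline a_\eps=d_p((1-\eps)a_*)^{-\beta}$, and Proposition~\ref{prop:PS} converts a known rate of $m(t)=u_x(0,t)$ into the spatial profile \eqref{asympyux2} but says nothing about the existence of the limit in \eqref{rateell2}. The matched-asymptotics heuristic $a(s)\sim Ce^{-\lambda_\ell s/(1-\beta)}$ with an exact constant is not what the rigorous tube argument delivers. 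To upgrade the two-sided bound to an actual limit the paper needs the separate non-oscillation Lemma~\ref{LemNonOsc}, proved by an intersection-comparison (zero number) argument that plays a suitably rescaled copy of the constructed solution against itself and invokes Proposition~\ref{propZeroNumber0}; without this step \eqref{rateell2} remains unproved in your argument. The rest of your deductions of (ii)--(iv) and of the outer-region alternative in (iii) match the paper's.
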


 Theorem~\ref{mainThm1}(ii) is a direct consequence of Theorem~\ref{prop:special}.
More insight about the behavior described in Theorem~\ref{prop:special} 
can be gained by reformulating the problem in similarity variables $(y,s)$ (cf.~\eqref{defys}-\eqref{defw})
which is also the fundamental framework for its proof.
 Recall that when $u$ undergoes GBU, the corresponding $w=w(y,s)$ converges to $U$ 
  in $C^1$ except at $ y = 0 $ (cf.~Proposition~\ref{PropwcvU}(i)).
The construction of the special GBU solutions will be done 
by looking for a specific mechanism of convergence. 
Theorem~\ref{prop:special} will be essentially derived from the following Theorem~\ref{mainThm2} and non-oscillation Lemma~\ref{LemNonOsc}.

\begin{thm} \label{mainThm2}
	Let $p, R, \ell, \varphi_\ell$ be as in Theorem~\ref{prop:special} and set 
	$$\lambda=\ell-k, \qquad 
	\eta=\frac{p-1}{p-2}\lambda,
	\qquad \gamma=\frac{p-1}{(p-2)(2p-1)} \lambda,
	\qquad a_*=\Bigl(\frac{\varphi_\ell(0)}{c_p}\Bigr)^{\frac{1}{1-\beta}}.
	$$
	For any $\eps\in(0,1)$, 
	there exists $u_0\in \mathcal{W}$ 
 	such that  $T:=T^*(u_0)<\infty$
	and the corresponding solution $w$ of \eqref{eqw} satisfies, for all $i\in\{0,1\}$ and all $s>s_0=-\log T$:

\smallskip
	\noindent (inner region)
	\be{Concl2mainThm2a}
	\bigl(1-\eps_i(s)\bigr) U^{(i)}_{a_+}(e^{\eta s}y)\le e^{(\lambda-i\eta)s} \,\partial_y^iw(y,s)
		\le \bigl(1+\eps_i(s)\bigr) U^{(i)}_{a_-}(e^{\eta s}y),\quad y\in [0,K e^{-\gamma s}],
	\ee

\smallskip
	\noindent  (intermediate region)
	\be{Concl2mainThm2b}
	\bigl|\partial_y^i \bigl(w(y,s)-U(y)+e^{-\lambda s}\varphi_\ell(y)\bigr)\bigr| \le \eps e^{-\lambda s} (y^i+y^{2\ell-i}),
	\quad y\in [K e^{-\gamma s}, \sigma e^{s/2}],
	\ee
	with 
	$a_\pm=(1\pm\eps)a_*$, $\eps_0(s)=0$, $\eps_1(s)=C_1e^{-(\eta-\gamma) s}$
	and some constants $\sigma=\sigma(p,\ell)\in(0,R)$, $K,C_1>0$. 
\end{thm}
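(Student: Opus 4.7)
I would follow a matched-asymptotics scheme à la Herrero--Vel\'azquez, adapted to the Hamilton--Jacobi setting via the maximum principles of \S3.5 and the spectral analysis of \S3.2--3.3. The construction splits $\{y>0\}$ at the scale $y \asymp e^{-\gamma s}$ into an \emph{inner} region, where $w(\cdot,s)$ is sandwiched by two rescaled regular steady states $e^{-\lambda s}U_{a_\pm}(e^{\eta s}\cdot)$, and an \emph{intermediate} region, where $v:=w-U$ is controlled by the linear semigroup $e^{-s\mathcal{L}}$ applied to a suitably prepared initial datum. The exponents in the statement are forced by matching: the identity $\eta(1-\beta)=\lambda$ (equivalent to $\eta = \lambda(p-1)/(p-2)$) makes the leading inner term $U(a_* + e^{\eta s}y)$ reduce to $U(y)$ on the overlap, while $c_p a_*^{1-\beta} = \varphi_\ell(0)$ makes the next-order constant $-U(a_*)$ match the boundary value $-\varphi_\ell(0)$ of the intermediate ansatz $v \sim -e^{-\lambda s}\varphi_\ell$ at $y=0$.

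\textbf{Intermediate region via variation of constants.} For $s_0\gg1$, I would pick $u_0 \in \mathcal{W}$ so that $v(\cdot,s_0)$ agrees to leading order with $-e^{-\lambda s_0}\varphi_\ell$, modulated by $\ell$ free parameters $\vec\alpha=(\alpha_0,\dots,\alpha_{\ell-1})$ tuning its projections onto $\varphi_0,\dots,\varphi_{\ell-1}$. Working with the extension $\tilde v$ from Lemma~\ref{BernsteinEst} (which brings the problem on $(0,\infty)$ even when $R<\infty$) and using formula \eqref{varconstv2tilde}, I would bootstrap in a weighted space whose norm captures
\[
\sup_{s\ge s_0} e^{\lambda s}\bigl\|\tilde v(\cdot,s) + e^{-\lambda s}\varphi_\ell\bigr\|_{H^1_\rho} \le \eps.
\]
The nonlinear Duhamel term $\int_{s_0}^s e^{-(s-\tau)\mathcal{L}}\tilde F(\tilde v_y(\tau))\,d\tau$ is $o(e^{-\lambda s})$ because $F(v_y) = O(U_y^{p-2}|v_y|^2 + |v_y|^p)$ once the inner contribution is cut off: Proposition~\ref{Prop-smoothing} gives the required smoothing, Lemma~\ref{CSHprime} the embedding into $H'$, and Proposition~\ref{KernelEstim2} the explicit kernel bounds on $G_0, G_1$. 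The projections onto the stable modes $\varphi_j$, $j>\ell$ (with $\lambda_j>\lambda$) decay faster than $e^{-\lambda s}$ automatically, so only the $\ell$ unstable/neutrally-unstable modes $\varphi_0,\dots,\varphi_{\ell-1}$ need to be handled by hand; a Brouwer-type topological shooting over $\vec\alpha$ (as in Herrero--Vel\'azquez) produces a parameter for which the $\ell$ unstable projections never escape, since otherwise the exit map would give a retraction of the parameter cube $[-1,1]^\ell$ onto its boundary.

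\textbf{Inner region via moving steady states.} On $y\in[0,Ke^{-\gamma s}]$, I would compare $w$ to $e^{-\lambda s}U_{a_\pm}(e^{\eta s}y)$ via Proposition~\ref{propSMP2}, which allows comparison up to the boundary $y=0$ with no boundary condition because of the one-sided gradient bound \eqref{uxUx}. A direct computation shows that the comparison functions are super/subsolutions of the PDE for $w$ up to errors which are negligible compared to the parabolic operator when $a_\pm=(1\pm\eps)a_*$ and $s$ is large; the boundary inequality at $y=Ke^{-\gamma s}$ is precisely the matching condition with the intermediate ansatz, and the initial comparison at $s=s_0$ is arranged from the construction of $u_0$. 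The derivative bound $\eps_1(s)=C_1 e^{-(\eta-\gamma)s}$ then follows from interior parabolic Schauder estimates applied after the scaling $z=e^{\eta s}y$. Persistence of exactly $\ell$ nondegenerate zeros of $w(\cdot,s)-U$, matching those of $\varphi_\ell$ (Proposition~\ref{GaussianPoincare2}(iv)), is controlled by the zero-number theory of \S3.6 (Proposition~\ref{propZeroNumber0}).

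\textbf{Main obstacle.} The crux is the two-sided \emph{quantitative} matching: the intermediate error from Duhamel is $o(e^{-\lambda s})$ only in an integrated/$H^1_\rho$ sense, whereas the inner comparison $U_{a_+}\le e^{\lambda s}w\le U_{a_-}$ is pointwise and demands the leading two terms $U(y)-U(a_*)$ of the inner far-field expansion to be correct to within $o(1)$. The balance between the inner nonlinear correction $O((e^{\eta s}y)^{-\beta-1})$ and the intermediate error $O(y^{2\ell})$ is exactly what forces the specific exponent $\gamma=(p-1)\lambda/[(p-2)(2p-1)]$; getting this balance right while simultaneously running the $\ell$-parameter topological shooting and ensuring that the resulting $u_0$ lies in $\mathcal{W}$ (using Lemma~\ref{controlGBU1} to preempt GBU at $x=R$ if $R<\infty$) is the most delicate synthesis step.
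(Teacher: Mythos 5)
Your overall architecture --- inner region by comparison with rescaled regular steady states, intermediate region by variation of constants plus an $\ell$-parameter topological shooting, and the matching relations fixing $\eta$, $\gamma$, $a_*$ --- is exactly the paper's (Sections 4.3--4.6). The step that would fail as written is the choice of bootstrap norm. You propose to close the argument with $\sup_s e^{\lambda s}\|\tilde v(\cdot,s)+e^{-\lambda s}\varphi_\ell\|_{H^1_\rho}\le\eps$, but this cannot serve as the trapping set: (a) the conclusion \eqref{Concl2mainThm2b} and the boundary data needed for the inner comparison at $y=Ke^{-\gamma s}$ are pointwise, and $H^1_\rho$ does not embed into $L^\infty$ near $y=0$ (Lemma~\ref{LemImbedd} only gives $|\psi(y)|\le C y^{(1-\alpha)/2}\|\psi\|_{H^1_\rho}$ with $\alpha>1$); (b) more seriously, the nonlinearity $F(v_y)\sim U_y^{p-2}v_y^2\sim y^{\beta-1}v_y^2$ cannot be estimated from an $H^1_\rho$ bound on $v$ alone, so the Duhamel term cannot be shown to be $o(e^{-\lambda s})$ without a pointwise weighted bound on $v_y$ --- which is precisely what the trapping set must encode. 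The paper therefore defines $\mathcal{A}^\theta_{s_0,s_1}$ by the pointwise bounds $|D^i(w-U+e^{-\lambda s}\varphi_\ell)|\le\theta\eps e^{-\lambda s}(y^i+y^{2\ell-i})$ for $i\in\{0,1\}$, and proves the key a priori estimate (Proposition~\ref{mainAPE}) by pointwise kernel estimates (Propositions~\ref{KernelEstim}--\ref{KernelEstim2} together with the Gaussian-integral Lemma~\ref{AuxilLem2}) rather than by semigroup smoothing in weighted Lebesgue spaces; and in the far part of the intermediate region, where the kernel estimates degrade, it switches to a supplementary comparison argument (Lemma~\ref{LemComplong}). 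You flag this pointwise-versus-integrated tension yourself as the ``main obstacle'', but resolving it requires abandoning the $H^1_\rho$ framework, not refining it. (Note also that Proposition~\ref{Prop-smoothing} is not used in the GBU construction; it enters only in the dynamical-systems proof of the RBC profile.)

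Two smaller corrections. First, the derivative bound $\eps_1(s)=C_1e^{-(\eta-\gamma)s}$ in the inner region does not follow from interior Schauder estimates after rescaling: the zeroth-order sandwich only pins $w$ between $U_{a_+}$ and $U_{a_-}$ with $a_\pm=(1\pm\eps)a_*$, which would yield at best an $O(\eps)$ multiplicative error on $w_y$, not one decaying in $s$. The paper instead runs a second comparison argument for the equation satisfied by $w_y$, with sub/supersolutions of the form $\kappa(s)\,e^{(\eta-\lambda)s}U'_{a_\pm}(e^{\eta s}y)$, $\kappa(s)=1\pm Ae^{-(\eta-\gamma)s}$ (Step 2 of the proof of Proposition~\ref{PropInOut}). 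Second, the inner comparison is not an application of Proposition~\ref{propSMP2} (which compares two solutions of the same PDE); it is a direct sub/supersolution computation for the $w$-equation on the domain $\{0<y<Ke^{-\gamma s}\}$, with both comparison functions vanishing at $y=0$ and the intermediate-region estimate supplying the data at $y=Ke^{-\gamma s}$.
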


\goodbreak
	
In more qualitative terms, Theorem~\ref{mainThm2} says that, in similarity variables, the singular region consists of two parts:
a very thin inner layer, where $w$ has a quasi-stationary behavior, given by suitable time rescalings of regular steady states;
and a larger intermediate layer, where $w$ converges exponentially to the singular steady state $U(y)$ along
the $\ell$-th eigenfunction of the linearized operator. See next subsection for a heuristic argument
leading to this two-layer expansion.

	\begin{rem} 
		For the special solutions given by Theorems~\ref{prop:special}-\ref{mainThm2}, 
	    in the range $K(T-t)^{\gamma+\frac12}\le x\le \sigma$, there holds 
		$$u_x(x,t)\sim U'(x)-C(T-t)^{\ell-\frac12}\varphi'_\ell(x/\sqrt{T-t}),\quad K(T-t)^{\gamma+\frac12}\le x\le \sigma,$$
		which gives a sharp description of the convergence of $u_x$ to its final space profile.
		In particular, for $x>0$ small, in view of the properties of $\varphi_\ell$ in Proposition~\ref{GaussianPoincare2}(iii),
		we get the second order term of the final profile
		\be{finalprofile}
		u_x(x,T)\sim U'(x)+(-1)^{\ell+1} Cx^{2\ell-1}.
		\ee
			\end{rem}

	\subsection{A heuristic argument for the GBU rates by formal matched asymptotics} \label{section3}
	
	Before, entering into the rigorous proof of Theorems~\ref{prop:special}-\ref{mainThm2}, 
	it is convenient to sketch a formal argument, which gives some evidence for the description 
	in Theorem~\ref{mainThm2}, as well as a simple way to guess the corresponding GBU rates \eqref{rateell}.
	It will also be a guideline to the rigorous proof of existence of special GBU solutions.
	Working in similarity variables, the idea is to look for an approximate solution $w$ of equation \eqref{eqw}, 
	respectively in an inner and an intermediate region. 
	
	\smallskip
	{\bf Inner approximate solution.} The approximation for $y\sim 0$ is sought for in quasi-stationary form:
	\be{defvin}
	w_{in}(y,s):=U_{a(s)}(y)=U(a(s)+y)-U(a(s)),
	\ee
	where the function $a(s)>0$, with $\lim_{s\to\infty} a(s)=0$, has to be determined.
	Heuristically this is reasonable: since $w_y$ is very large in that region,
	the dominant terms in \eqref{eqw} are expected to be $w_{yy}$ and $|w_y|^p$
	(and $w_{in,yy}$ in \eqref{defvin} satisfies $w_{in,yy}+|w_{in,y}|^p=0$).
	We note that, for $y\gg a(s)$,
	$$\begin{aligned}
		w_{in}(y,s)
		&=c_p\bigl[(a(s)+y)^{1-\beta}-(a(s))^{1-\beta}\bigr] \\
		&=c_py^{1-\beta}\bigl[\bigl(1+\ts\frac{a(s)}{y}\bigr)^{1-\beta}-\bigl(\ts\frac{a(s)}{y}\bigr)^{1-\beta}\bigr] 
		\sim c_py^{1-\beta}\bigl[1-\bigl(\ts\frac{a(s)}{y}\bigr)^{1-\beta}\bigr] \\
	\end{aligned}$$
	hence
	\be{linin}
	w_{in}(y,s)\sim U(y)-c_p(a(s))^{1-\beta},\quad y\gg a(s).
	\ee

	\smallskip
	{\bf Outer approximate solution.} 
	 We already know from Proposition~\ref{PropwcvU}
	 that $w$ stabilizes to $U$ for $y>0$
	 and that the equation for $v=w-U$ is given by 
		$$
		v_s=-\mathcal{L}v+F(v_y),
		$$
		with $\mathcal{L}, F$ defined in \eqref{eqzdefL}-\eqref{eqzdefF}.
		On the other hand, as a consequence of results in Section~\ref{subseceigen},
		the eigenvalues of the linearized operator $\mathcal{L}$ (in suitable functional setting) are 
		simple and given by $\lambda_\ell=\ell-k$, 
		for all $\ell\in\N$.
		The corresponding eigenfunction $\varphi_\ell(y)$
		is an even polynomial of degree $2\ell$, whose coefficients of even order are all nonzero,
		so that we can normalize $\varphi_\ell$ (according to a suitable weighted $L^2$ norm) 
		in such way that $\varphi_\ell(0)>0$.

		The outer approximate solution is thus given by linearization around the singular steady state
		along the $\ell$-th mode of the linearized operator. Namely,
		\be{outerapprox}
		w_{out}(y,s):=U(y)-e^{-\lambda_\ell s}\varphi_\ell(y).
		\ee
		Since the mode must be stable for this linearization to make sense, we keep only nonnegative eigenvalues, i.e. 
		$\lambda_\ell=\ell-k$, with $\ell\ge 1$.
		Recalling $\varphi_\ell(0)>0$, we note that
		\be{linout}
		w_{out}(y,s)\sim U(y)-\varphi_\ell(0) e^{-\lambda_\ell s},\quad y\ll 1.
		\ee
		We note that the outer region approximation \eqref{outerapprox} will be also the basis for the recovery case 
		(Theorems~\ref{prop:special-LBC} and \ref{REC-mainThm2}),
		but that case is actually simpler since no inner region is required.

	\smallskip
	{\bf Matching.} Now assume that the approximate solution $w_{in}$ (resp. $w_{out}$) is valid 
	in a region $0<y\le y_0(s)$ (resp., $y\ge y_0(s)$),
	where the ``free boundary'' $y_0(s)$ is such that
	$a(s)\ll y_0(s)\ll 1$.
	Then matching \eqref{linin} and \eqref{linout} at $y=y_0(s)$, we get 
	$$c_p(a(s))^{1-\beta}= \varphi_\ell(0) e^{-\lambda_\ell s},$$
	i.e.~$a(s)\sim e^{-(1-\beta)^{-1}\lambda_\ell s}$.
	Since $U_{a,y}(0)=U'(a)=((p-1)a)^{-\beta}$,
	this means that
	$w_{in,y}(0,s)\sim e^{(1-\beta)^{-1}\beta\lambda_\ell s}=\exp\bigl(\ts\frac{\ell-k}{p-2} s\bigr)$.
	Going back to \eqref{defw}, this yields
	$$u_x(0,t)=e^{-(k-\frac12)s}w_y(0,s)\sim 
	\exp\bigl[(\ts\frac{\ell-k}{p-2}+\frac12-k) s\bigr]=\exp\bigl(\ts\frac{\ell}{p-2} s\bigr)=(T-t)^{-\frac{\ell}{p-2}}.$$

\subsection{Parameters and initial data.}

The case $R<\infty$ can be reduced to $R=1$ by an obvious scaling argument.
We thus assume either $R=\infty$ or $R=1$. Set 
\be{defaphak}
\alpha=\beta+1=\ts\frac{p}{p-1}\in(1,2),\qquad k=\ts\frac{1-\beta}{2}
\ee
and let $\mathcal{L}$ be the operator described in subsection~\ref{subseceigen},
with eigenfunctions $\varphi_j$ and eigenvalues $\lambda_j=j-k$ for $j\in\N$ (cf.~Proposition~\ref{GaussianPoincare2}).
Recall that $\|\varphi_j\|=1$, and that $\|\cdot\|$ and $(\cdot,\cdot)$ 
respectively stand for the norm and inner product in $L^2_\rho$.

Let now $\ell\in \N^*$ be fixed and set $\lambda:=\lambda_\ell=\ell-k$, $\phi:=\varphi_\ell$.
Define the constants
\be{defgamma}
\eta=(1-\beta)^{-1}\lambda,\qquad \gamma=\frac{\beta}{\beta+2} \eta=\frac{\beta}{(1-\beta)(\beta+2)} \lambda,
\qquad a_*=\biggl(\frac{\phi(0)}{c_p}\biggr)^{\frac{1}{1-\beta}}
\ee
and let the constants $\eps_0, \sigma, M_0\in (0,1)$, all depending only on $p,\ell$, 
be respectively given by Proposition~\ref{PropInOut}, \eqref{controlVW} and \eqref{defM0} below.
We introduce a parameter $\eps\in(0, \eps_0]$ 
and we set
\be{defK}
\nu=M_0\eps, \qquad K:=\nu^{1-p},\qquad \tilde K= K\nu^{\frac{1}{\alpha+1}},
\ee
which satisfy $K>\tilde K>1$.
The initial time $s_0>0$ will be chosen large enough below and we stress that
$$\hbox{$s_0$ will depend only on $p,\ell,\eps$.}$$  
We denote
\be{defyi}
y_0(s)=e^{-\eta s},\quad y_1(s)=Ke^{-\gamma s},\quad y_2(s)=\sigma e^{s/2},\qquad s\ge s_0,
\ee
and observe that $y_0(s)<y_1(s)<1<y_2(s)$ for large $s_0$ (depending only on $p,\ell,\eps$).
At given time $s$, the {\it inner, intermediate and outer regions} are, respectively, $[0,y_1(s)]$, $[y_1(s),y_2(s)]$ 
and $[y_2(s),\infty)$.

 Throughout the rest of this section, 
 we will denote by $C$ a generic positive constant depending only on $p,\ell$.
By Proposition~\ref{GaussianPoincare2}, it is immediate that 
\be{controlDiphi}
|D^i\phi(y)|\le C(y^i+y^{2\ell-i}),\quad y>0,\ i\in\{0,1\},
\ee
where $D=\partial_y$, and that there exist $c,y_*>0$ such that 
\be{controlphiell}
 cy^{2\ell}\le (-1)^\ell\phi(y)\le 2cy^{2\ell},\quad y\ge y_*.
\ee
For $\theta\in (0,1]$ and $s_1\ge s_0$, we then define
$$\begin{aligned}
&\mathcal{A}^\theta_{s_0,s_1}=
\Bigl\{W\in L^\infty(s_0,s_1;\mathcal{W});\ \bigl|D^i(W-U+e^{-\lambda s}\phi)\bigr| \le \theta\eps e^{-\lambda s} (y^i+y^{2\ell-i})\\
&\qquad\qquad\qquad\qquad
\quad\hbox{ for 	all }  s_0\le s\le s_1,\ y_1(s)\le y\le y_2(s),\ i\in\{0,1\}\Bigr\}.
	\end{aligned}$$
We note that, as a difference and an additional difficulty with respect to the case of the semilinear heat equation \cite{HV94pre,MizADE}, 
we need to take into account the space derivative along with the solution itself.
In this connection, we actually need to subdivise the inner region into two parts, namely $[0,y_0(s)]$ and $[y_0(s),y_1(s)]$,
in order to cope with certain differences in the behaviors 
of $w$ and $w_y$.\footnote{The choice of $y_1(s)$ in \eqref{defyi} is suggested by an approximate matching of $w_y$; namely,
this choice guarantees that the expected behaviors of $w_y$ 
in the outer and inner regions become of the same order near the interface.
As for the matching of $w$ alone, it is more flexible, as it is actually achieved whenever $y\gg y_0(s)$
(and, in turn, this fact will require the introduction of the function $y_0(s)$ in the computations below).}

Let us now prepare our initial data. 
Fix a smooth cut-off function $\Theta_1(z)$ such that $\Theta_1=1$ for $z\le 1$,  $\Theta_1=0$ for $z\ge 2$
and $\Theta_1'\le 0$. Set $\Theta(y)=\Theta_1\bigl(\frac{1}{2\sigma} e^{-s_0/2}y\bigr)$. 
For any $d\in\R^\ell$ that satisfies
\be{condd}
\sum_{j=0}^{\ell-1} |d_j|\le \eps e^{-\lambda s_0},
\ee
we define $w_0=w_0(\cdot;d)$ as follows. We set
\be{defw0}
w_0(y):=
\begin{cases}
e^{-\lambda s_0} U_a(e^{\eta s_0}y)& \hbox{in $[0,\tilde Ke^{-\gamma s_0}]$} \\ 
\Theta(y)\Bigl\{U(y)-e^{-\lambda s_0}\phi+\ds\sum_{j=0}^{\ell-1}d_j\varphi_j\Bigr\}
&\hbox{in $(\tilde Ke^{-\gamma s_0},Re^{s_0/2})$,} 
\end{cases}
\ee
 where $a=a(\eps,s_0)$ is given by Lemma~\ref{controlinner} below.
If $\ell$ is odd and $R=\infty$, we also consider the alternative choice: 
\be{defw0odd}
w_0(y):=
\begin{cases}
e^{-\lambda s_0} U_a(e^{\eta s_0}y)& \hbox{in $[0,\tilde Ke^{-\gamma s_0}]$} \\ 
U(y)-e^{-\lambda s_0}\phi+\ds\sum_{j=0}^{\ell-1}d_j\varphi_j&\hbox{in $(\tilde Ke^{-\gamma s_0}, 2\sigma e^{s_0/2}),$} \\
b_0U(y)&\hbox{in $(2\sigma e^{s_0/2},\infty),$} \\
\end{cases}
\ee
 where 
$$b_0=b_0(d,s_0):=
1-\Bigl\{\Bigl[e^{-\lambda s_0}\phi-\ds\sum_{j=0}^{\ell-1}d_j\varphi_j\Bigr]U^{-1}\Bigr\}(2\sigma e^{s_0/2}).$$
 The choice \eqref{defw0odd} 
comes from the need to construct a solution which intersects $U$ exactly $\ell$ times on $(0,\infty)$
 (in which case $u_0$ must be unbounded and cannot belong to $\mathcal{W}$).
The above choices of $a, b_0$ will ensure that $u_0(x):=e^{-ks_0}w_0(xe^{s_0/2})$ satisfies
\be{u0space}
u_0\in
\begin{cases}
\mathcal{W}& \hbox{if $w_0$ is given by \eqref{defw0},} \\  
\noalign{\vskip 1mm}
\mathcal{W}_1&\hbox{if $w_0$ is given by \eqref{defw0odd}, $\ell$ is odd and $R=\infty$,} 
\end{cases}
\ee
where $\mathcal{W}_1$ is defined in \eqref{defW1}.
Let $u$ be the maximal classical solution of \eqref{equ} 
(whose existence follows from Proposition~\ref{RemWPw}(i) in the second case of \eqref{u0space})
and let $w=w(y,s;d)$ be 
the corresponding solution of \eqref{eqw}, defined by . 
\be{correspuw}
w(y,s)=e^{ks}u(ye^{-s/2},e^{-s_0}-e^{-s}),\quad 0\le y<Re^{s/2}.
\ee
The maximal existence time $S=S(w_0(\cdot,d))$ of $w$ is given by $S=\infty$ if $T:=T(u_0)\ge e^{-s_0}$ and $S=-\log(e^{-s_0}-T)$ otherwise.
Note that $w_0$ and $w$ also depend on 
$\eps,s_0$ 
but we shall keep this dependence implicit without risk of confusion.

Next, so as to work with unknown functions defined on the entire half-line even in the case $R=1$, we recall the extentions introduced in Lemma~\ref{BernsteinEst}:
\be{defextuwv}
\begin{aligned}
\tilde u(x,t)&=\zeta(x) u(x,t) &&\quad\hbox{ in $[0,\infty)\times[0,T)$,} \\
\tilde w(y,s)&=e^{ks}\tilde u(ye^{-s/2},e^{-s_0}-e^{-s})=\zeta(ye^{-s/2}) w(y,s) &&\quad\hbox{ in $[0,\infty)\times[s_0,S)$,} \\
\tilde v(y,s)&=\tilde w(y,s)-U(y) &&\quad\hbox{ in $[0,\infty)\times[s_0,S)$.}
\end{aligned}\ee
Here, if $R=1$, $\zeta\in C^2([0,\infty))$ is a fixed cut-off function such that 
 $\zeta=1$ in $[0,\frac{1}{3}]$ and $\zeta=0$ in $[\frac{1}{2},\infty)$,
 whereas if $R=\infty$ we just set $\zeta\equiv 1$.
We then define the key set
$$\mathcal{U}_{s_0,s_1}=\Bigl\{d\in \R^\ell; \hbox{ \eqref{condd} holds, $S(w_0(\cdot,d))>s_1$ and
$\tilde w=\tilde w(y,s;d) \in \mathcal{A}^1_{s_0,s_1}$}\Bigr\}.$$

\begin{lem} \label{controlinner}
 (i) In the second case of \eqref{u0space}, if  
 $\eps\in(0,1]$ and $d$ satisfies \eqref{condd}, then
\be{boundb0}
1<b^*\le b_0\le C,\quad s_0\gg 1,
\ee
with $b^*$ depending only on $p,\ell$.  

\smallskip

 (ii) In all cases, there exists $c_0=c_0(p,\ell)>0$ with the following property.
If $0<\eps\le (1+2c_0)^{-1}$, 
$\tilde Ke^{-\gamma s_0}\le \sqrt{\eps}$ and $d$ satisfies \eqref{condd},
then there exists $a=a(\eps,s_0)\in[(1-c_0\eps)a_*,(1+c_0\eps)a_*]$, 
such that \eqref{u0space} holds.
\end{lem}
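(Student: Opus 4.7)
The plan is to analyze directly the explicit formulas \eqref{defw0}, \eqref{defw0odd}: both parts reduce to asymptotic computations at two transition points, $y_L := \tilde K e^{-\gamma s_0}$ (the inner/intermediate interface in both constructions) and $y_R := 2\sigma e^{s_0/2}$ (the outer interface, relevant only for part~(i)). Part~(i) follows from a direct evaluation of $b_0$ at $y_R$; part~(ii) amounts to solving a scalar implicit equation for $a$ by a matching argument at $y_L$, then verifying that the resulting $u_0$ lies in the required space.

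For part~(i), I would expand $b_0 = 1 - \bigl[e^{-\lambda s_0}\phi(y_R) - \sum_{j<\ell} d_j\varphi_j(y_R)\bigr]/U(y_R)$. By Proposition~\ref{GaussianPoincare2}(iii) and Remark~\ref{RemSpec}, $\phi=\varphi_\ell$ is an even polynomial of degree $2\ell$ whose leading coefficient has sign $(-1)^\ell = -1$ (as $\ell$ is odd); hence $\phi(y_R)\sim -A\, y_R^{2\ell}$ for some $A=A(p,\ell)>0$, and using $2\ell+\beta-1 = 2\lambda$ gives
\[
\frac{e^{-\lambda s_0}\phi(y_R)}{U(y_R)} \longrightarrow -\frac{A}{c_p}(2\sigma)^{2\lambda} =: -C'<0,\qquad s_0\to\infty.
\]
Meanwhile each $\varphi_j$ has degree $2j\le 2(\ell-1)$, so $\varphi_j(y_R)/U(y_R) = O(e^{\lambda_j s_0})$ and the $d_j$-sum is bounded by $C\eps\sum_{j<\ell} e^{(j-\ell)s_0} \le C\eps e^{-s_0}$ by \eqref{condd}. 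Thus $b_0\to 1+C'$, giving the claim with $b^* = 1+C'/2$ and a suitable upper bound, for $s_0$ large.

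For part~(ii), continuity of $w_0$ at $y_L$ (where $\Theta(y_L)=1$ for large $s_0$) reads
\be{planmatch}
e^{-\lambda s_0} U_a(Y) = U(y_L) - e^{-\lambda s_0}\phi(y_L) + \sum_{j<\ell} d_j\varphi_j(y_L), \qquad Y := e^{\eta s_0} y_L = \tilde K e^{(\eta-\gamma)s_0}.
\ee
Since $\eta-\gamma>0$ we have $Y\to\infty$, and a Taylor expansion yields
\[
U_a(Y) = c_p Y^{1-\beta}\bigl[(1+a/Y)^{1-\beta} - (a/Y)^{1-\beta}\bigr] = U(Y) - c_p a^{1-\beta} + O\bigl(aY^{-\beta}\bigr).
\]
The crucial identity $(1-\beta)\eta=\lambda$ gives the cancellation $e^{-\lambda s_0}U(Y) = U(y_L)$, reducing \eqref{planmatch} to
\[
c_p a^{1-\beta} = \phi(y_L) - e^{\lambda s_0}\sum_{j<\ell} d_j\varphi_j(y_L) + O\bigl(a\, e^{-\beta(\eta-\gamma)s_0}\bigr).
\]
Since $\phi(y_L)=\phi(0)+O(y_L^2)=c_p a_*^{1-\beta}+O(e^{-2\gamma s_0})$ by the definition of $a_*$ in \eqref{defgamma}, and $|e^{\lambda s_0}\sum d_j\varphi_j(y_L)|\le C\eps$ by \eqref{condd} together with the boundedness of $\varphi_j$ near~$0$, this becomes $c_p a^{1-\beta} = c_p a_*^{1-\beta}\bigl(1 + O(\eps) + o_{s_0}(1)\bigr)$. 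As $a\mapsto c_p a^{1-\beta}$ is $C^1$ and strictly monotone near $a_*$, direct inversion yields a unique $a=a(\eps,s_0)\in[(1-c_0\eps)a_*,(1+c_0\eps)a_*]$ for some $c_0=c_0(p,\ell)$, once $s_0$ is large enough (depending on $\eps$) and $\eps\le(1+2c_0)^{-1}$.

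To conclude, I would verify membership in the correct space. With this choice of $a$, $w_0$ is continuous and piecewise smooth with bounded derivative; it vanishes at $y=0$ (inner formula) and, when $R=1$, at $y=e^{s_0/2}$ via the cut-off~$\Theta$ (using $\sigma\le 1/4$, built into Proposition~\ref{PropInOut}); nonnegativity on the support of $\Theta$ in \eqref{defw0} comes from the small-$\sigma$ requirement of the same proposition, so $u_0\in\mathcal{W}$. In the alternative case \eqref{defw0odd} ($\ell$ odd, $R=\infty$), part~(i) gives $b_0>1>0$, so $b_0U\ge 0$ and $b_0U'(y)\to 0$ at infinity, yielding $u_0\in\mathcal{W}_1$. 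The main technical obstacle is propagating the Taylor remainder $O(aY^{-\beta})$ through the division by $e^{-\lambda s_0}$ and checking it is dominated by the $O(\eps)$ contribution from the $d_j$-sum; this is manageable precisely because the scaling $\eta>\gamma$ built into \eqref{defgamma} forces $Y\to\infty$ fast enough to make $aY^{-\beta}$ vanish as $s_0\to\infty$.
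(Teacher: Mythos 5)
Your proposal is correct and follows essentially the same route as the paper: part (i) via the sign $(-1)^\ell$ of the leading coefficient of $\varphi_\ell$ evaluated at $y_R=2\sigma e^{s_0/2}$ together with the $O(e^{-s_0})$ smallness of the $d_j$-terms, and part (ii) via the matching equation at $y_L=\tilde K e^{-\gamma s_0}$, the cancellation $(1-\beta)\eta=\lambda$, the expansion $U_a(Y)=U(Y)-c_pa^{1-\beta}+O(aY^{-\beta})$, and the smallness of the remainders forced by $\tilde Ke^{-\gamma s_0}\le\sqrt\eps$ and $(\eta-\gamma)\beta=2\gamma$. The only cosmetic difference is that, since the $O(aY^{-\beta})$ remainder depends on $a$, the final "inversion" should be phrased as an intermediate-value / sign-check argument at the endpoints $a_*(1\pm c_0\eps)$ (as the paper does), which your monotonicity observation immediately supplies.
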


\begin{proof} 
 (i) Since $\lambda=\ell+(\beta-1)/2$, it follows from \eqref{controlphiell} with $\ell$ odd that, for $s_0\gg 1$,
$$-e^{-\lambda s_0}\bigl[U^{-1}\phi\bigr](2\sigma e^{\frac{s_0}{2}})
= -c_p^{-1}e^{-\lambda s_0}(2\sigma e^{\frac{s_0}{2}})^{\beta-1} \phi(2\sigma e^{\frac{s_0}{2}})
=-c_p^{-1}(2\sigma)^{\beta-1}e^{-\ell s_0} \phi(2\sigma e^{\frac{s_0}{2}})\in [\bar c,2\bar c],$$
where $\bar c=c_p^{-1}c(2\sigma)^{2\lambda}$.
On the other hand, using $|\varphi_j(y)|\le C(1+y^{2j})$ and \eqref{condd}, we get 
$$e^{\lambda s_0} \Bigl|\phi^{-1} \ds\sum_{j=0}^{\ell-1}d_j\varphi_j\Big| (2\sigma e^{s_0/2})\le C\eps (2\sigma e^{s_0/2})^{-2}
\le Ce^{-s_0}.$$
The last two inequalities guarantee \eqref{boundb0}.

\smallskip

(ii) Note that $w_0(0)=0$.
The continuity of $w_0$ at $y=\sigma e^{s_0/2}$ in 
 the second case of \eqref{u0space} 
follows from the choice of~$b_0$.
To ensure $u_0\in W^{1,\infty}_{loc}([0,R))$ 
it suffices to verify the continuity of 
$w_0$ at $\hat y=\tilde Ke^{-\gamma s_0}$, which is equivalent to
$e^{-\lambda s_0} U_a(e^{\eta s_0}\hat y)=U(\hat y)-e^{-\lambda s_0}\phi(\hat y)+\sum_{j=0}^{\ell-1}d_j\varphi_j(\hat y)$,
i.e.
$$h(a):=U_a(e^{\eta s_0}\hat y)-U(e^{\eta s_0}\hat y)+\phi(\hat y)-e^{\lambda s_0}\ds\sum_{j=0}^{\ell-1}d_j\varphi_j(\hat y)=0.$$
We note that
\be{UaTAF}
0\le U_a(y)-U(y)+c_pa^{1-\beta}\le Cay^{-\beta},\quad a,y>0,
\ee 
which follows from (for some $\theta\in (0,1)$):
$$U_a(y)-U(y)+c_pa^{1-\beta}=c_p(y+a)^{1-\beta}-c_py^{1-\beta}
=c_py^{1-\beta}\ts\bigl[\bigl(1+\frac{a}{y}\bigr)^{1-\beta}-1\bigr]
=Cy^{1-\beta}\bigl(1+\frac{\theta a}{y}\bigr)^{-\beta}\frac{a}{y}.$$
Using \eqref{condd}, \eqref{UaTAF},  $|\phi(\hat y)-c_pa_*^{1-\beta}|\le C\hat y^2$, $|\varphi_j(\hat y)|\le C$,
 $(\eta-\gamma)\beta=2\gamma$ and $\tilde Ke^{-\gamma s_0}\le \sqrt{\eps}$, it follows that 
$$\begin{aligned}
|h(a)+c_p(a^{1-\beta}-a_*^{1-\beta})|
&\le |U_a(e^{\eta s_0}\hat y)-U(e^{\eta s_0}\hat y)+c_pa^{1-\beta}|+ |\phi(\hat y)-a_*^{1-\beta}|+C\eps\\
& \le Ca(\tilde Ke^{(\eta-\gamma) s_0})^{-\beta}+C(\tilde Ke^{-\gamma s_0})^2+C\eps
\le C(\tilde Ke^{-\gamma s_0})^2+C\eps\le C_1(p,\ell)\eps.
\end{aligned}$$
Let $c_0\ge 1$ to be fixed below and assume $\eps\le (1+2c_0)^{-1}$. For $a=a_*(1\pm c_0\eps)$, we have
$$c_p|a^{1-\beta}-a_*^{1-\beta}|=c_pa_*^{1-\beta}|(1\pm c_0\eps)^{1-\beta}-1|\ge C_2(p,\ell)c_0\eps.$$
Taking $c_0=\max(1,2C_1/C_2)$, we thus have
$\pm h\bigl(a_*(1\pm c_0\eps)\bigr)<0$. 
Thus, by continuity of $h$, there must exist $a\in \bigl(a_*(1-c_0\eps),a_*+(1+c_0\eps)\bigr)$
such that $h(a)=0$, hence $w_0\in \mathcal{W}$. 

 When $R=\infty$, we note that $u_0$ is compactly supported in the first case of \eqref{u0space} 
and $u_{0,x}$ is bounded in the second case. 
To show \eqref{u0space} it thus only remains to verify that $w_0\ge 0$. 
With $\sigma_ 0(p,\ell)>0$ sufficiently small, for $s_0\gg 1$ and $\sigma\le\sigma_0$, using \eqref{condd}, we have,
on $(\tilde Ke^{-\gamma s_0},4\sigma e^{s_0/2})$:
\begin{eqnarray}
U^{-1}\Bigl|e^{-\lambda s_0}\phi-\ds\sum_{j=0}^{\ell-1}d_j\varphi_j\Bigr| {\hskip -5mm}
&&\le e^{-\lambda s_0}U^{-1}\Bigl(|\phi|+\eps\max_{1\le j\le \ell-1}|\varphi_j| \Bigr)
 \le Cy^{\beta-1}(1+y^{2\ell})e^{-\lambda s_0} \notag \\
 &&\le C\bigl[e^{((1-\beta)\gamma-\lambda) s_0}+(\sigma e^{s_0/2})^{\beta+2\ell-1}e^{(k-\ell)s_0}\bigr] \notag \\
  \noalign{\vskip 2mm}
&&\le C\bigl[ e^{-2\lambda s_0/(\beta+2)}+\sigma_0^{\beta+2\ell-1}\bigr]
\le \ts\frac12. \label{choicesigma1}
\end{eqnarray}
In view of \eqref{defw0}-\eqref{defw0odd}, using also $b_0>0$ in the second case of \eqref{u0space} 
(cf.~assertion (i)), this implies $w_0\ge 0$ on $[0,Re^{s_0/2})$.
\end{proof}

For the subsequent analysis, it will be convenient to rewrite the initial data in \eqref{defw0}
(resp., \eqref{defw0odd}) as
\be{w0tildephi}
w_0(y)=\tilde w(y,s_0)=U(y)+\sum_{j=0}^{\ell-1}d_j\varphi_j-e^{-\lambda s_0}\hat\phi,
\ee
with
\be{w0tildephi2}
\hat\phi:=
\begin{cases}
 \noalign{\vskip -1mm}
e^{\lambda s_0}\Bigl\{U(y)-e^{-\lambda s_0} U_a(e^{\eta s_0}y)+\ds\sum_{j=0}^{\ell-1}d_j\varphi_j\Bigr\}& 
\hbox{in $[0,\tilde Ke^{-\gamma s_0}]$} \\
 \noalign{\vskip -1mm}
 \phi& \hbox{in $(\tilde Ke^{-\gamma s_0},2\sigma e^{s_0/2}]$} \\
(1-\Theta)e^{\lambda s_0}\Bigl(U(y)+\ds\sum_{j=0}^{\ell-1}d_j\varphi_j\Bigr)+\Theta \phi
& \hbox{in $(2\sigma e^{s_0/2},\infty)$,} 
\end{cases}
\ee
 (resp., 
\be{w0tildephi2odd}
\hat\phi:=
\begin{cases}
 \noalign{\vskip -1mm}
e^{\lambda s_0}\Bigl\{U(y)-e^{-\lambda s_0} U_a(e^{\eta s_0}y)+\ds\sum_{j=0}^{\ell-1}d_j\varphi_j\Bigr\}& 
\hbox{in $[0,\tilde Ke^{-\gamma s_0}]$} \\
 \noalign{\vskip -1mm}
 \phi& \hbox{in $(\tilde Ke^{-\gamma s_0}, 2\sigma e^{s_0/2}]$} \\
e^{\lambda s_0}\Bigl\{(1-b_0)U(y)+\ds\sum_{j=0}^{\ell-1}d_j\varphi_j\Bigr\}
& \hbox{in $(2\sigma e^{s_0/2},\infty)$).} 
\end{cases}
\ee
The following lemma shows that, for $s_0\gg 1$, 
$\hat\phi$ is close to the eigenfunction $\phi$ in weighted norm.

\begin{lem} \label{Lemcvcphi}
 For given $\eps\in(0,1]$, we have
\be{tildephicv}
\|\hat\phi-\phi\|\to 0,
\quad  \hbox{as $s_0\to\infty$, uniformly for $d$ satisfying  \eqref{condd}.} 
\ee
\end{lem}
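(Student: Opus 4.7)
\textbf{Proof plan for Lemma~\ref{Lemcvcphi}.} I will split $\|\hat\phi-\phi\|^2 = \int_0^\infty (\hat\phi-\phi)^2 \rho\,dy$ according to the three pieces of the definition \eqref{w0tildephi2} (or \eqref{w0tildephi2odd}): the inner region $I_{in}=[0,\tilde K e^{-\gamma s_0}]$, the intermediate region $I_{mid}=(\tilde K e^{-\gamma s_0},2\sigma e^{s_0/2}]$, and the outer region $I_{out}=(2\sigma e^{s_0/2},\infty)$. On $I_{mid}$, $\hat\phi-\phi\equiv 0$ by definition, so this piece contributes $0$ for every $s_0$.

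On $I_{in}$, exploiting the homogeneity relation $U(e^{\eta s_0}y)=c_p e^{(1-\beta)\eta s_0}y^{1-\beta}=e^{\lambda s_0}U(y)$ (since $\eta(1-\beta)=\lambda$), and setting $z=e^{\eta s_0}y$, I can rewrite
\[
\hat\phi(y)=U(z)-U_a(z)+e^{\lambda s_0}\sum_{j=0}^{\ell-1}d_j\varphi_j(y).
\]
By \eqref{UaTAF}, $U(z)-U_a(z)=c_pa^{1-\beta}+O(az^{-\beta})$ with $0\le O(\cdot)\le Caz^{-\beta}$. Since Lemma~\ref{controlinner}(ii) gives $|a-a_*|\le c_0\eps a_*$ and the normalization $c_p a_*^{1-\beta}=\phi(0)$ together with the smoothness $\phi(y)=\phi(0)+O(y^2)$ near $0$, I obtain
\[
|\hat\phi(y)-\phi(y)|\le C\eps+C a z^{-\beta}+Cy^2+Ce^{\lambda s_0}\sum_{j=0}^{\ell-1}|d_j|\,|\varphi_j(y)|.
\]
The last term is bounded by $C\eps$ under \eqref{condd}. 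Taking $L^2_\rho$ over $I_{in}$ with $\rho(y)\sim y^\alpha$ near $0$, the constant and $O(y^2)$ contributions yield $O\bigl(\eps^2(\tilde K e^{-\gamma s_0})^{\alpha+1}\bigr)\to 0$. For the singular term I use $\alpha-2\beta=1-\beta>0$ (since $\alpha=\beta+1$) to get
\[
\int_{0}^{\tilde K e^{-\gamma s_0}} a^2 e^{-2\eta\beta s_0} y^{-2\beta}y^\alpha\,dy\le C e^{-2\eta\beta s_0}(\tilde K e^{-\gamma s_0})^{2-\beta}\to 0,
\]
so the full inner contribution tends to zero uniformly in $d$ satisfying \eqref{condd}.

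On $I_{out}$, in the bounded case \eqref{w0tildephi2} we have
\[
\hat\phi-\phi=(1-\Theta)\Bigl(e^{\lambda s_0}U+e^{\lambda s_0}\sum_{j=0}^{\ell-1}d_j\varphi_j-\phi\Bigr),
\]
while in the half-line case \eqref{w0tildephi2odd} we have $\hat\phi-\phi=e^{\lambda s_0}\bigl\{(1-b_0)U+\sum d_j\varphi_j\bigr\}-\phi$ with $|b_0|\le C$ by Lemma~\ref{controlinner}(i). In either case the integrand is bounded polynomially in $y$ with a prefactor at most $Ce^{2\lambda s_0}$, against the Gaussian weight $y^\alpha e^{-y^2/4}$. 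Since $y\ge 2\sigma e^{s_0/2}$ on $I_{out}$, a standard Gaussian tail estimate gives $\int_{2\sigma e^{s_0/2}}^\infty y^N e^{-y^2/4}dy\le C_N e^{-\sigma^2 e^{s_0}}$ for any fixed $N$, so the outer contribution is bounded by $e^{2\lambda s_0}e^{-\sigma^2 e^{s_0}}\to 0$ super-exponentially fast, uniformly in $d$.

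The main (but still mild) obstacle is the inner region: one must verify that the leading constant $c_pa^{1-\beta}-\phi(0)$, which does not vanish with $s_0$, is nonetheless small enough that its $L^2_\rho$ norm over the shrinking interval $I_{in}$ tends to zero, and separately that the singular Taylor remainder $az^{-\beta}$ is $L^2_\rho$-integrable; both rely crucially on the exponent arithmetic $\eta(1-\beta)=\lambda$, $\alpha-2\beta=1-\beta>0$, and the fact that $|I_{in}|\to 0$. Once these are in place, summing the three estimates yields $\|\hat\phi-\phi\|\to 0$ as $s_0\to\infty$, uniformly for $d$ satisfying \eqref{condd}.
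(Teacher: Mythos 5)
Your proof is correct and takes essentially the same route as the paper's: the same three-region decomposition, the identity $\hat\phi\equiv\phi$ on the intermediate region, the uniform bound on the shrinking inner region via \eqref{UaTAF} together with the choice of $a$ in Lemma~\ref{controlinner}, and polynomial control of the outer piece against the Gaussian weight. The only cosmetic difference is that the paper concludes by dominated convergence from the uniform dominating bound $|\hat\phi-\phi|\rho^{1/2}\le C(1+y^{2\ell})y^{\alpha/2}e^{-y^2/8}$, whereas you estimate each region's $L^2_\rho$ contribution explicitly, which in addition gives a rate.
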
 

\begin{proof}
Since $\hat\phi=\phi$ in $(\tilde Ke^{-\gamma s_0}, 2\sigma e^{s_0/2})$,
we see that $\hat\phi-\phi\to 0$ pointwise in $(0,\infty)$ as $s_0\to\infty$. 
Using $e^{\lambda s_0}U(y)= U(e^{\eta s_0}y)\ge U_a(e^{\eta s_0}y)$, it follows from \eqref{UaTAF} that
$$|\hat\phi|=\Bigl|U(e^{\eta s_0}y)-U_a(e^{\eta s_0}y)+e^{\lambda s_0}\ds\sum_{j=0}^{\ell-1}d_j\varphi_j\Bigr|
\le c_p a^{1-\beta}+C\eps\le C
\quad\hbox{in } (0,\tilde Ke^{-\eta s_0}).$$
On the other hand, using $\lambda=\ell+(\beta-1)/2$ and \eqref{boundb0}, we have 
$$
|\hat\phi-\phi|
\le |\phi|+e^{\lambda s_0}\Bigl(CU(y)+\Bigl|\ds\sum_{j=0}^{\ell-1}d_j\varphi_j\Bigr|\Bigr) 
\le Cy^{2\ell}+C y^{2\lambda+1-\beta}=Cy^{2\ell} \quad\hbox{in } (2\sigma e^{s_0/2},\infty). 
$$ 
Consequently, $|\hat\phi-\phi|\rho^{1/2}\le C (1+y^{2\ell})y^{\alpha/2}e^{-y^2/8}$ in $(0,\infty)$ 
and \eqref{tildephicv} follows by dominated convergence. \end{proof}

In the case $R=1$, the next lemma provides a suitable control of $u$ near $x=1$, which in particular rules out GBU at $x=1$.
This will be useful for the control of $w$ in the outer region.

\begin{lem} \label{Lemcontrolx1}
 Let $c_0$ be given by Lemma~\ref{controlinner}, $0<\eps\le (1+2c_0)^{-1}$ and
assume $R=1$. For $s_0\gg 1$ and any $d$ satisfying \eqref{condd}, we have
$u(x,t)\le 1-x$ in $[0,1]\times [0,T(u_0))$.
\end{lem}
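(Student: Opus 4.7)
The strategy is to apply Lemma~\ref{controlGBU1}, which requires verifying two hypotheses on $u_0(x)=e^{-ks_0}w_0(xe^{s_0/2})$: that $u_0\equiv 0$ on $[\tfrac12,1]$ and that $\|u_0\|_\infty\le \tfrac38$. Since the constant $\sigma=\sigma(p,\ell)$ is only subject to smallness constraints coming from the construction (cf.~\eqref{choicesigma1}), we may without loss of generality impose the additional requirements $4\sigma\le \tfrac12$ and $\tfrac32 c_p(4\sigma)^{1-\beta}\le \tfrac38$.

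The boundary condition on $[\tfrac12,1]$ is the simpler of the two. The cut-off $\Theta(y)=\Theta_1\bigl(\frac{1}{2\sigma}e^{-s_0/2}y\bigr)$ vanishes for $y\ge 4\sigma e^{s_0/2}$, i.e.~for $x=ye^{-s_0/2}\ge 4\sigma$. Together with $4\sigma\le \tfrac12$ and the fact that, for $s_0\gg 1$, the inner region $y\le \tilde K e^{-\gamma s_0}$ corresponds to $x\le \tilde K e^{-(\gamma+\frac12)s_0}\ll \tfrac12$, this immediately gives $u_0\equiv 0$ on $[\tfrac12,1]$.

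For the sup bound, the plan is to split $[0,1]$ into inner and outer parts. On the inner part $y\le\tilde K e^{-\gamma s_0}$, using $U_a(z)\le c_p(a+z)^{1-\beta}$, $a\le 2a_*$, and the identity $(\eta-\gamma)(1-\beta)-\lambda=-\gamma(1-\beta)<0$, one gets
\[
w_0(y)=e^{-\lambda s_0}U_a(e^{\eta s_0}y)\le Ce^{-\lambda s_0}\bigl(a+\tilde K e^{(\eta-\gamma)s_0}\bigr)^{1-\beta}\le C\tilde K^{1-\beta}e^{-\gamma(1-\beta)s_0},
\]
hence $u_0(x)=e^{-ks_0}w_0(xe^{s_0/2})=o(1)$ uniformly as $s_0\to\infty$. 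On the outer part $\tilde K e^{-\gamma s_0}\le y\le 4\sigma e^{s_0/2}$ (i.e.~$x\le 4\sigma$), the computation already performed in \eqref{choicesigma1} gives $\bigl|e^{-\lambda s_0}\phi-\sum_{j=0}^{\ell-1}d_j\varphi_j\bigr|\le \tfrac12 U$, whence $0\le w_0(y)\le \tfrac32\,\Theta(y)U(y)\le \tfrac32 U(y)$. The scaling identity $e^{-ks_0}U(xe^{s_0/2})=U(x)$ (valid since $k=(1-\beta)/2$) then yields $u_0(x)\le \tfrac32 U(x)\le \tfrac32 c_p(4\sigma)^{1-\beta}\le \tfrac38$. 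Combining the two estimates gives $\|u_0\|_\infty\le \tfrac38$ for $s_0\gg 1$.

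With both hypotheses verified, Lemma~\ref{controlGBU1} delivers $u(x,t)\le 1-x$ on $[0,1]\times[0,\infty)$, which in particular covers $[0,1]\times[0,T(u_0))$. I do not anticipate any real obstacle: the whole argument reduces to bookkeeping of the scaling factors built into the design of $w_0$, together with one extra smallness requirement on $\sigma$ that is entirely compatible with those already in force.
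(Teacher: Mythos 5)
Your proposal is correct and follows essentially the same route as the paper: both reduce the claim to verifying the hypotheses of Lemma~\ref{controlGBU1}, use the support of $\Theta$ (together with $\sigma$ small) to get $u_0=0$ on $[\frac12,1]$, and bound $\|u_0\|_\infty$ by estimating $w_0$ on $[0,4\sigma e^{s_0/2})$ via \eqref{condd} and the smallness of $\sigma$ before scaling back. The only difference is cosmetic — the paper performs one unified estimate $w_0\le U+e^{-\lambda s_0}|\phi|+\sum|d_j\varphi_j|\le Ce^{ks_0}[\sigma_1^{1-\beta}+e^{-\ell s_0}+\sigma_1^{2\ell}]$, whereas you split into inner/outer regions and reuse \eqref{choicesigma1}.
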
 

\begin{proof}
By \eqref{defw0}, we have $w_0=0$ in $[4\sigma e^{s_0/2},e^{s_0/2}]$ and $w_0(y)=e^{-\lambda s_0} U_a(e^{\eta s_0}y)\le U(y)$ in $[0,\tilde Ke^{-\gamma s_0}]$.
With $\sigma_1(p,\ell)\in(0,\frac{1}{8})$ sufficiently small and $\sigma\le\sigma_1$, using \eqref{condd}, we get,
for $s_0\gg 1$ and all $y\in[0,4\sigma e^{s_0/2})$,
\begin{eqnarray}
w_0(y)
{\hskip -2.5mm}&\le U(y)+e^{-\lambda s_0}|\phi(y)|+\ds\sum_{j=0}^{\ell-1}|d_j\varphi_j(y)|
\le c_py^{1-\beta}+Ce^{-\lambda s_0}(1+y^{2\ell}) \notag \\
&\le Ce^{ks_0}[\sigma_1^{1-\beta}+e^{-\ell s_0}+\sigma_1^{2\ell}]<\ts\frac38 e^{ks_0}.
\label{choicesigma1b}
\end{eqnarray}
Since $\sigma<\frac{1}{8}$, we deduce that $u_0(x)=e^{-ks_0}w_0(xe^{s_0/2})$
satisfies the assumptions of Lemma~\ref{controlGBU1} and the conclusion follows.
\end{proof}

\subsection{Control of $w$ in inner and outer regions assuming $\tilde w\in \mathcal{A}^1_{s_0,s_1}$.}

The following proposition shows that the property $\tilde w\in \mathcal{A}^1_{s_0,s_1}$,
which corresponds to a control of the solution in the intermediate region
in terms of the linearized behavior,
automatically yields a control of the solution in the inner region, of quasi-stationary type, 
as well as in the outer region if $R=\infty$.
In particular, this gives the precise behavior of $w_y$ at $y=0$, hence the GBU rate.
It is the analogue of \cite[Proposition~4.1]{MizADE} for the semilinear heat equation,
which simplified some arguments from the original proof of~\cite{HV94pre}.
Here its proof is longer and more technical due to the need to take into account 
the space derivative along with the solution itself.

\goodbreak

\begin{prop} \label{PropInOut}
Let $c_0$ be given by Lemma~\ref{controlinner} and assume $\eps\le \eps_0$ with 
$\eps_0= \eps_0(p,\ell)\in (0,\min(a_*,\frac{1}{1+2c_0}))$ sufficiently small.
\smallskip

 (i) If $s_0\gg 1$ then, for any $d\in \mathcal{U}_{s_0,s_1}$, we have
\be{estim0win}
e^{-\lambda s} U_{a_+}(e^{\eta s}y)\le w(y,s)\le e^{-\lambda s} U_{a_-}(e^{\eta s}y)\quad 
\hbox{ in $Q:=\bigl\{(y,s),\, s\in [s_0,s_1],\ y\in [0,K e^{-\gamma s}]\bigr\}$ }
\ee
and
\be{estim0wyin}
\bigl(1-\eps_1(s)\bigr)e^{(\eta-\lambda) s} U'_{a_+}(e^{\eta s}y)\le w_y(y,s)
\le \bigl(1+\eps_1(s)\bigr) e^{(\eta-\lambda) s} U'_{a_-}(e^{\eta s}y)\ \ \hbox{ in $Q$,}
\ee
where $a_\pm=(1\pm \sqrt\eps)a_*$,
$\eps_1(s)=CK^{\beta+1}e^{-\mu s}$, $\mu=\eta-\gamma>0$. 

\smallskip

 (ii) Let $R=\infty$. Assume either \eqref{defw0} and $\ell$ even, or \eqref{defw0odd} and $\ell$ odd.
If $s_0\gg 1$, then there exists $\delta\in(0,1)$, independent of $d$ and $s_1$, such that $u$ satisfies
\be{estimuouter}
|u(x,t)-U(x)|\ge \delta U(x) \quad \mbox{ in } [\sigma,\infty)\times[0,e^{-s_0}-e^{-s_1}).
\ee
\end{prop}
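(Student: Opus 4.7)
My plan is to control $w$ in the inner and outer regions by comparison with the quasi-stationary family $W_c(y,s):=e^{-\lambda s}U_c(e^{\eta s}y)$, obtained by rescaling the regular steady states $U_c$. The foundation is a direct computation of the nonlinear operator $\mathcal{N}w:=w_s-w_{yy}+\tfrac{y}{2}w_y-kw-|w_y|^p$ applied to $W_c$. Using that $U_c$ solves $U''_c+|U'_c|^p=0$ together with the algebraic identities $(2-p)\eta+(p-1)\lambda=0$ and $(\eta+\tfrac12)(1-\beta)=\ell$, the nonlinearity and first-order scaling terms cancel and one obtains
\[
 \mathcal{N}W_c(y,s)=e^{-\lambda s}\bigl[(\eta+\tfrac12)zU'_c(z)-\ell U_c(z)\bigr],\qquad z:=e^{\eta s}y.
\]
The pivotal algebraic fact is $zU'_c(z)/U_c(z)\ge 1-\beta=\ell/(\eta+\tfrac12)$ for every $z\ge 0$, which after the substitution $\xi:=c/(c+z)\in(0,1]$ reduces to the elementary inequality $\xi\le\xi^{1-\beta}$. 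Hence $\mathcal{N}W_c\ge 0$, so each $W_c$ is a super-solution of the $w$-equation.

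For part (i) I would then analyse the parabolic boundary of the inner cylinder $Q_{\mathrm{in}}:=\{(y,s)\colon 0\le y\le Ke^{-\gamma s},\ s_0\le s\le s_1\}$. On $y=0$ both $w$ and $W_c$ vanish identically prior to blow-up. On $y=Ke^{-\gamma s}$, the hypothesis $\tilde w\in\mathcal{A}^1_{s_0,s_1}$ combined with the Taylor expansion $\phi(y)=c_pa_*^{1-\beta}+O(y^2)$ gives
\[
 w(Ke^{-\gamma s},s)=U(Ke^{-\gamma s})-c_pa_*^{1-\beta}e^{-\lambda s}+O\bigl(e^{-\lambda s}(Ke^{-\gamma s})^2\bigr),
\]
while an analogous expansion of $W_c$ reproduces the same leading part with $a_*^{1-\beta}$ replaced by $c^{1-\beta}$. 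Because $c\mapsto c^{1-\beta}$ is strictly increasing, the choice $a_\pm=(1\pm\sqrt\eps)a_*$ creates a margin of order $\sqrt\eps\,e^{-\lambda s}$ on the lateral boundary, which strictly dominates the $\eps$-order intermediate-region error provided $s_0$ is sufficiently large. At $s=s_0$, the explicit form \eqref{defw0} together with Lemma~\ref{controlinner} gives $w_0=W_a(\cdot,s_0)$ with $a\in(a_-,a_+)$, yielding the sharp initial ordering $W_{a_+}(\cdot,s_0)<w_0<W_{a_-}(\cdot,s_0)$. The upper bound $w\le W_{a_-}$ then follows immediately from the parabolic maximum principle (Proposition~\ref{propMP} applied to the linearised equation for the difference, or Proposition~\ref{propSMP2} in the original $(x,t)$ variables), the Hardy-type singularity at $y=0$ arising from $|w_y|^p-|W_{a_-,y}|^p$ being exactly of the form handled in Subsection~\ref{SubsecMP}.

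The hard part will be the lower bound $w\ge W_{a_+}$, because $W_{a_+}$ is itself a super-solution and not a sub-solution, so naive comparison does not apply. My plan is to exploit a one-parameter family of \emph{exact} solutions $W_{c(s)}$ with $c(s)=c_0e^{C_*(s-s_0)}$ and $C_*:=\ell(p-1)/(p-2)$: the substitution $\xi=c/(c+z)$ yields the closed-form expressions $G(z,c)=\ell c_pc^{1-\beta}(1-\xi^\beta)$ and $|\partial_cU_c(z)|=\beta^\beta c^{-\beta}(1-\xi^\beta)$, hence $G(z,c)/|\partial_cU_c(z)|\equiv C_*c$ and $\mathcal{N}W_{c(s)}\equiv 0$ along this trajectory. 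On each sub-interval $[\hat s,\hat s+\delta]\subset[s_0,s_1]$ of length $\delta\lesssim\sqrt\eps/C_*$, I use $W_{c(s)}$ as a sub-solution with $c(\hat s+\delta)=a_+$, so that $c(\hat s)=a_+e^{-C_*\delta}\in(a_*,a_+)$ and the induced sub-solution sits strictly below $W_{a_+}$ on $[\hat s,\hat s+\delta]$ while matching $W_{a_+}$ at the right endpoint. A bootstrap on consecutive sub-intervals then transports the inequality $w\ge W_{a_+}$ across the whole $[s_0,s_1]$, the $\sqrt\eps$-margin on the lateral boundary being precisely what absorbs the $O(\sqrt\eps)$ drift incurred at each step. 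Once \eqref{estim0win} is secured, the derivative estimate \eqref{estim0wyin} with loss factor $\eps_1(s)=CK^{\beta+1}e^{-\mu s}$ follows by interior parabolic regularity applied to $w-W_{a_*}$ in the rescaled variable $z=e^{\eta s}y$, using $w(0,s)=W_{a_*}(0,s)=0$ to propagate the matching of derivatives at the origin.

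For part (ii), the outer region analysis splits according to the parity of $\ell$. When $\ell$ is even and $w_0$ is given by \eqref{defw0}, $u_0$ is compactly supported in $[0,4\sigma]$ and uniformly small by \eqref{choicesigma1}, so comparison with the steady-state super-solution $(1-\delta)U$ on $(\sigma,\infty)$, combined with the global $L^\infty$-bound \eqref{equRECppty0}, gives $u\le(1-\delta)U$ for a suitable $\delta\in(0,1)$. When $\ell$ is odd and $w_0$ is given by \eqref{defw0odd}, Lemma~\ref{controlinner}(i) provides $b_0\ge b^*>1$, so $u_0=b_0U\ge(1+\delta)U$ on $[2\sigma,\infty)$; the strong separation principle of Proposition~\ref{propSMP2}, applied using the $L^\infty$-control of $u_x-U_x$ afforded by Proposition~\ref{RemWPw}, then propagates this strict ordering on $[\sigma,\infty)\times[0,e^{-s_0}-e^{-s_1})$. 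In both cases, the subtle point is to choose $\delta$ uniformly in $d$ and in $s_1$, which is exactly what the cutoff structure of the initial data and the choice of $\sigma$ in \eqref{choicesigma1} are designed to accommodate.
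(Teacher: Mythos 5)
Your preliminary computation is correct: $\mathcal{N}W_c\ge 0$ with $W_c(y,s)=e^{-\lambda s}U_c(e^{\eta s}y)$, and your identification of the moving family $W_{c(s)}$ with $c'=(\eta+\tfrac12)c$ as exact solutions is also right (they are just the steady states $U_{c_0}(x)$ rewritten in similarity variables). The boundary margin of order $\sqrt\eps\,e^{-\lambda s}$ versus the $\eps\,e^{-\lambda s}$ error on $\{y=Ke^{-\gamma s}\}$ is the same mechanism the paper uses, and the upper bound $w\le W_{a_-}$ by direct comparison with the supersolution is sound. Part (ii) is also essentially the paper's argument (comparison with $(1\pm\delta)U$ on $[\sigma,\infty)$), though you should make explicit that the lateral boundary control at $x=\sigma$ comes from $\tilde w\in\mathcal{A}^1_{s_0,s_1}$ together with the sign of $\varphi_\ell$ at $y=\sigma e^{s/2}$ (the paper's \eqref{controlsignell}), and your citation of \eqref{equRECppty0} (an RBC statement) is out of place here.

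The lower bound, however, has a genuine gap, and it is the heart of the proposition. Since $\partial_cU_c<0$, the exact solutions $W_{c(s)}$ with $c$ \emph{increasing} satisfy $W_{c(s)}\ge W_{a_+}$ whenever $c(s)\le a_+$: your sub-solution sits \emph{above} $W_{a_+}$, not below it. Consequently, on each sub-interval $[\hat s,\hat s+\delta]$ the comparison requires the initial datum $w(\cdot,\hat s)\ge W_{c(\hat s)}$ with $c(\hat s)=a_+e^{-C_*\delta}<a_+$, which is \emph{strictly stronger} than the conclusion $w(\cdot,\hat s)\ge W_{a_+}$ delivered by the previous step; the bootstrap does not close. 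If instead you propagate forward from whatever constant $b$ you control at $\hat s$, the exact solution drifts to $W_{be^{C_*\delta}}<W_b$, so after time $s-s_0$ you only retain $w\ge W_{ae^{C_*(s-s_0)}}$, i.e.\ $u\ge U_{a_0}$ for a \emph{fixed} steady state --- far weaker than \eqref{estim0win}. The paper's resolution is different and essential: it multiplies the profile by a constant $\kappa>1$, and the computation of $\mathcal{P}(\kappa e^{-\lambda s}U_b)$ produces the extra term $(1-\kappa^{p-1})|U_b'|^p<0$, which for $\kappa-1\gtrsim K^{\beta+1}e^{[(\eta-\gamma)(\beta+1)-2\eta]s_0}$ dominates the small positive remainder $e^{-2\eta s}[(\eta+\tfrac12)\xi U_b'-\ell U_b]$ and turns the profile into a genuine subsolution; the size of $\kappa-1$ is then checked to be compatible with the $\sqrt\eps$ boundary margin. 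Separately, the derivative bound \eqref{estim0wyin} cannot be extracted by ``interior parabolic regularity'': it is a two-sided pointwise estimate valid up to $y=0$ with a specific decaying error $\eps_1(s)=CK^{\beta+1}e^{-\mu s}$, and the paper obtains it by a second, independent comparison argument for $w_y$ (which solves its own parabolic equation $\mathcal{P}_1 w_y=0$), using barriers $\bigl(1+(-1)^mAe^{-\mu s}\bigr)e^{(\eta-\lambda)s}U_b'(\xi)$ and the boundary value $w_y(0,s)$ already pinned down by \eqref{estim0win}. Both of these ingredients are missing from your plan.
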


\begin{proof}
Denote $\mathcal{I}_1:=(0,\tilde Ke^{-\gamma s_0})$,
$\mathcal{I}_2:=(\tilde Ke^{-\gamma s_0},Ke^{-\gamma s_0})$ and recall that, by Lemma~\ref{controlinner}, we have
\be{aastar}
|a-a_*|\le c_0a_*\eps.
\ee

{\bf Step 1.} {\it Proof of \eqref{estim0win}.}
For $\kappa\in(1/2,2)$ and $b\in(a_*/2,2a_*)\setminus\{a\}$ to be specified below, we shall consider the comparison function
\be{defwtilde}
\hat w(y,s)=\kappa e^{-\lambda s} U_b(\xi),\qquad \xi=e^{\eta s}y\in [0,K e^{(\eta-\gamma)s}].
\ee
We note right away the identities
\be{UbxiIdent}
\xi U'_b-(1-\beta)U_b=bd_p\bigl[b^{-\beta}-(b+\xi)^{-\beta}\bigr],\qquad
\xi U''_b+\beta U'_b=b\beta d_p(b+\xi)^{-\beta-1}.
\ee

$\bullet$ We first check that $\hat w$ is a sub-/supersolution in $Q$.
We have, omitting the variable $\xi$ for conciseness,
$$\begin{aligned}
\mathcal{P}\hat w
&:=\hat w_s-\hat w_{yy}+\ts\frac{y}{2}\hat w_y-k\hat w-|\hat w_y|^p \\
&=-\lambda\kappa e^{-\lambda s} U_b + \kappa \eta e^{(\eta-\lambda) s} yU'_b
-\kappa e^{(2\eta-\lambda) s} U''_b+\kappa \ts\frac{y}{2} e^{(\eta-\lambda) s}U'_b
-k\kappa e^{-\lambda s} U_b-\kappa^p e^{p(\eta-\lambda) s}{U'_b}^p  \\
&=-(\lambda+k)\kappa e^{-\lambda s} U_b + \kappa(\eta+\tsfr) e^{-\lambda s} \xi U'_b
-\kappa e^{(2\eta-\lambda) s} U''_b -\kappa^p e^{p(\eta-\lambda) s}{U'_b}^p.
\end{aligned}$$
Using $U''_b+{U'_b}^p=0$, $p(\eta-\lambda)=2\eta-\lambda$,
$\lambda+k=(\eta+\tsfr)(1-\beta)$ and \eqref{UbxiIdent}, 
we get
$$\begin{aligned}
\mathcal{P}\hat w
&=\kappa e^{(2\eta-\lambda) s} \left\{(1 -\kappa^{p-1}) {U'_b}^p
+e^{-2\eta s}\bigl[(\eta+\tsfr)  \xi U'_b-(\lambda+k)U_b\bigr]\right\}\\
&=\kappa e^{(2\eta-\lambda) s}(b+\xi)^{-\beta-1} \bigl\{(1 -\kappa^{p-1}) d_p^p
+b c_p(\lambda+k)\bigl[b^{-\beta}-(b+\xi)^{-\beta}\bigr](b+\xi)^{\beta+1}e^{-2\eta s}\bigr\}.\\
\end{aligned}$$
We deduce that, for $C_1=C_1(p,\ell)>0$ sufficiently large,
\be{condkappa1}
\mathcal{P}\hat w\
\begin{cases}
\,\ge 0& \hbox{if $\kappa\le 1$} \\
\noalign{\vskip 1mm}
\,\le 0& \hbox{if $\kappa-1 \ge C_1K^{\beta+1} e^{[(\eta-\gamma)(\beta+1)-2\eta]s_0}.$}
\end{cases}
\ee
Indeed, for the second case, we note that since $(\eta-\gamma)(\beta+1)-2\eta=-(1+\beta)\gamma-(1-\beta)\eta<0$ and 
$Ke^{(\eta-\gamma)s}\ge K\ge 1$, we then have
$\kappa^{p-1}-1 \ge (p-1)C_1K^{\beta+1} e^{[(\eta-\gamma)(\beta+1)-2\eta]s}\ge CC_1(b+Ke^{(\eta-\gamma) s})^{\beta+1}e^{-2\eta s}
\ge CC_1(b+\xi)^{\beta+1}e^{-2\eta s}$.

\smallskip
$\bullet$ Let us next compare $\hat w$ and $w$ on the parabolic boundary.
Set $\Delta_0=\hat w(\cdot,s_0)-w_0$.
We claim~that for $C_2=C_2(p,\ell)>0$ sufficiently small,
\be{condkappa2}
0\le\frac{\kappa-1}{b-a}\le C_2(\tilde Ke^{(\eta-\gamma)s_0})^{\beta-1}
\Longrightarrow
\hbox{$\Delta_0$ has the same sign as $a-b$ on $\mathcal{I}_1$.}
\ee
Indeed, on $\mathcal{I}_1$,
$\Delta_0(y)=\kappa e^{-\lambda s_0} U_b(e^{\eta s_0}y)-e^{-\lambda s_0} U_a(e^{\eta s_0}y)$ has the sign of
$$h(\xi):=\kappa[(\xi+b)^{1-\beta}-b^{1-\beta}]-[(\xi+a)^{1-\beta}-a^{1-\beta}],
\quad\hbox{ where } \xi:=ye^{\eta s_0}\in (0,\tilde Ke^{(\eta-\gamma)s_0}).$$
Setting $J_\xi(a)=(\xi+a)^{1-\beta}-a^{1-\beta}$ and noting that 
$J'_\xi(a)=(1-\beta)[(\xi+a)^{-\beta}-a^{-\beta}]$,
we get, for some $c$ between $a$ and $b$,
$$\begin{aligned}
\frac{h(\xi)}{b-a}
&= \frac{\kappa-1}{b-a}[(\xi+b)^{1-\beta}-b^{1-\beta}]+\frac{J_\xi(b)-J_\xi(a)}{b-a} \\
&= \frac{\kappa-1}{b-a}[(\xi+b)^{1-\beta}-b^{1-\beta}]+(1-\beta)[(\xi+c)^{-\beta}-c^{-\beta}].
\end{aligned}$$
For $\xi\le c$, since $b,c\in (a_*/2,2a_*)$, we deduce that, for some $\theta\in (0,1)$,
$$\begin{aligned}
\frac{h(\xi)}{b-a}
&=\Bigl\{\frac{\kappa-1}{b-a}(1-\beta)(\theta\xi+b)^{-\beta}-\beta(1-\beta)(\theta\xi+c)^{-\beta-1}\Bigr\}\xi \\
&= (1-\beta)(\theta\xi+c)^{-\beta-1}\xi\Bigl\{\frac{\kappa-1}{b-a}\frac{(\theta\xi+c)^{\beta+1}}{(\theta\xi+b)^\beta}-\beta\Bigr\}
\le (1-\beta)(\theta\xi+c)^{-\beta-1}\xi(CC_2-\beta)\le 0
\end{aligned}$$
due to $\tilde Ke^{(\eta-\gamma)s_0}\ge \tilde K\ge 1$ and the hypothesis in \eqref{condkappa2}, whereas, for $\xi\ge c$, 
$$\frac{h(\xi)}{b-a}\le \frac{\kappa-1}{b-a}\xi^{1-\beta}-(1-\beta)[1-2^{-\beta}]c^{-\beta}
\le \frac{\kappa-1}{b-a}(\tilde Ke^{(\eta-\gamma)s_0})^{1-\beta}-C\le C_2-C\le 0.$$
This proves claim \eqref{condkappa2}.
We next claim that for sufficiently small $\eps_0(p,\ell)>0$ and $C_3(p,\ell)>0$,
\be{condkappa3}
\begin{aligned}
& K e^{-\gamma s_0}\le\eps,\quad |b-a_*|\ge  a_*\sqrt\eps,\quad 
0\le\frac{\kappa-1}{b-a_*}\le C_3(Ke^{(\eta-\gamma)s_0})^{\beta-1}\\
&\qquad\qquad\qquad \Longrightarrow
\hbox{$\Delta_0$ has the same sign as $a_*-b$ on $\mathcal{I}_2.$}
\end{aligned}
\ee
Indeed, on $\mathcal{I}_2$, 
$\Delta_0=\kappa e^{-\lambda s_0} U_b(e^{\eta s_0}y)-U(y)+e^{-\lambda s_0}\phi(y)-\sum_{j=0}^{\ell-1}d_j\varphi_j(y)$ has the sign of
$$h_\kappa(y):=\kappa U_b(e^{\eta s_0}y)-U(e^{\eta s_0}y)+\phi(y)-e^{\lambda s_0}\ds\sum_{j=0}^{\ell-1}d_j\varphi_j(y).$$
Assuming the hypothesis in \eqref{condkappa3} and 
using \eqref{condd},  $\phi'(0)=0$ and $\eta-\gamma= \frac{2}{\beta} \gamma$, we have
$$\begin{aligned}
|h_1(y)+c_p(b^{1-\beta}-a_*^{1-\beta})|
&\le |U_b(e^{\eta s_0}y)-U(e^{\eta s_0}y)+c_pb^{1-\beta}|+|\phi(y)-c_pa_*^{1-\beta}|
+|e^{\lambda s_0}\ds\sum_{j=0}^{\ell-1}d_j\varphi_j(y)|\\
&\le Cb(e^{\eta s_0}y)^{-\beta}+ C(Ke^{-\gamma s_0})^2+C\eps\le C\eps.
\end{aligned}$$
Consequently, for some $c$ between $a_*$ and $b$, we get
$$\begin{aligned}
\frac{h_\kappa(y)}{b-a_*}
&=\frac{\kappa-1}{b-a_*} U_b(e^{\eta s_0}y)+\frac{h_1(y)+c_p(b^{1-\beta}-a_*^{1-\beta})}{b-a_*} -d_pc^{-\beta}\\
&\le\frac{\kappa-1}{b-a_*} (Ke^{(\eta-\gamma)s_0})^{1-\beta}+\frac{C\eps}{b-a_*}-d_p(2a_*)^{-\beta}
\le C_3+C\sqrt\eps-d_p(2a_*)^{-\beta}\le 0.
\end{aligned}$$
We then claim that for sufficiently small $\eps_0(p,\ell)>0$ and $C_4(p,\ell)>0$,
\be{condkappa4}
\begin{aligned}
&Ke^{-\gamma s_0}\le\eps,\quad |b-a_*|\ge  a_*\sqrt\eps,\quad 
0\le\frac{\kappa-1}{b-a_*}\le C_4(Ke^{(\eta-\gamma)s_0})^{\beta-1}\\
&\qquad\ \Longrightarrow
\hbox{$\Delta:=\hat w-w$ has the same sign as $a_*-b$ on $\Gamma:=\{(Ke^{-\gamma s},s);\ s\in[s_0,s_1]\}$.}
\end{aligned}
\ee
Indeed, $\Delta$ has the sign of
$h_\kappa(Ke^{-\gamma s})$ on $\Gamma$, where $h_\kappa(y):=\kappa U_b(e^{\eta s}y)-e^{\lambda s}w(y,s)$.
Since $\hat w\in \mathcal{A}^1_{s_0,s_1}$, we have in particular
$|w(y,s)-U(y)+e^{-\lambda s}\phi(y)|\le \eps e^{-\lambda s}$
hence $|e^{\lambda s}w(y,s)-U(e^{\eta s}y)+\phi(y)|\le \eps$ on $\Gamma$.
Assuming the hypothesis in \eqref{condkappa4} and 
using \eqref{UaTAF},  $\phi'(0)=0$  and $\eta-\gamma= 
\frac{2}{\beta} \gamma$, we have
$$\begin{aligned}
|h_1(y)+c_p(b^{1-\beta}-a_*^{1-\beta})|
&\le |U_b(e^{\eta s}y)-U(e^{\eta s}y)+c_pb^{1-\beta}|+|U(e^{\eta s}y)-e^{\lambda s}w(y,s)-\phi(y)| \\
&\qquad +|\phi(y)-c_pa_*^{1-\beta}| \le Cb(e^{\eta s}y)^{-\beta}+ \eps+C (Ke^{-\gamma s})^2
\le C\eps\ \hbox{ on $\Gamma$.}
\end{aligned}$$
Consequently, for some $c$ between $a_*$ and $b$, we get
$$\begin{aligned}
\frac{h_\kappa(y)}{b-a_*}
&=\frac{\kappa-1}{b-a_*} U_b(e^{\eta s}y)+\frac{h_1(y)+c_p(b^{1-\beta}-a_*^{1-\beta})}{b-a_*} -d_pc^{-\beta}\\
&\le\frac{\kappa-1}{b-a_*} (Ke^{(\eta-\gamma)s})^{1-\beta}+\frac{ C\eps}{b-a_*}- d_p(2a_*)^{-\beta}
\le C_4+C\sqrt\eps- d_p(2a_*)^{-\beta}\le 0 \ \ \hbox{ on $\Gamma$,}
\end{aligned}$$
which proves claim \eqref{condkappa4}.
Also we have $\hat w=w=0$ at $y=0$.

\medskip

Now, we choose $b=a_\pm=(1\pm\sqrt\eps)a_*$, 
hence $\tsfr a_*\sqrt\eps \le |b-a|\le 2a_*\sqrt\eps$ by \eqref{aastar}
for $\eps\le\eps_0(p,\ell)$ sufficiently small.
We then choose $\kappa=\kappa_\pm=1\pm\tsfr\tilde Ca_* \sqrt\eps(Ke^{(\eta-\gamma)s_0})^{\beta-1}$,
where $\tilde C=\min(C_2,C_3,C_4)$, and consider the corresponding $\hat w_\pm$ given by \eqref{defwtilde}.
Taking $s_0$ large enough, we see that 
the assumptions in \eqref{condkappa2}--\eqref{condkappa4} are satisfied and that, moreover, 
$$\kappa_+-1=\tsfr\tilde Ca_* \sqrt\eps(Ke^{(\eta-\gamma)s_0})^{\beta-1} \ge CK^{\beta+1} e^{[(\eta-\gamma)(\beta+1)-2\eta]s_0},$$
so that the condition in \eqref{condkappa1} is also satisfied.
We deduce that $\hat w_+$ is a subsolution and $w_-$ is a supersolution in $Q$.
It follows from the comparison principle that 
$e^{-\lambda s} U_{a_+}(e^{\eta s}y)\le \hat w_+\le w\le \hat w_-\le e^{-\lambda s} U_{a_-}(e^{\eta s}y)$ in~$Q$.

\smallskip

{\bf Step 2.} {\it Proof of \eqref{estim0wyin}.}
Consider the operator
\be{defP1z}
\mathcal{P}_1z:=z_s-z_{yy}+\frac{y}{2}z_y+(\tsfr-k)z-p|z|^{p-2}zz_y
\ee
and observe that $\mathcal{P}_1w_y=(\mathcal{P}w)_y=0$.
For $m\in\{1,2\}$, 
$b\in(a_*/2,2a_*)\setminus\{a\}$, $\mu\in(0,\eta+\gamma]$ to be specified below and $A>0$ such that 
\be{condAmu}
Ae^{-\mu s_0}\le\tsfr,
\ee
we shall consider the comparison function
$$\hat z=\hat z_m(y,s)=
\kappa(s) e^{(\eta-\lambda)s} U'_b(\xi),\quad s\ge s_0,
\quad\hbox{ where $\xi=e^{\eta s}y$ and $\kappa(s)=1+(-1)^mAe^{-\mu s}\ge \tsfr$}.$$

$\bullet$ We first check that $\hat z$ is a sub-/supersolution of $\mathcal{P}_1\hat z=0$ in $Q$.
 We compute, omitting the variable $\xi$ for conciseness,
$$\begin{aligned}
\mathcal{P}_1\hat z
&=\kappa'(s) e^{(\eta-\lambda)s} U'_b +(\eta-\lambda)\kappa e^{(\eta-\lambda)s} U'_b
+\eta\kappa y e^{(2\eta-\lambda) s}U''_b-\kappa e^{(3\eta-\lambda) s} U'''_b \\
&\qquad +\kappa \ts\frac{y}{2} e^{(2\eta-\lambda) s}U''_b
+(\tsfr-k)\kappa e^{(\eta-\lambda)s} U'_b-p\kappa^p e^{[\eta+p(\eta-\lambda)]s}{U'_b}^{p-1}U''_b\\
&=(-1)^{m+1}A\mu e^{(-\mu+\eta-\lambda)s} U'_b+(-\lambda-k+\eta+\tsfr)\kappa e^{(\eta-\lambda)s} U'_b \\
&\qquad\qquad + \kappa(\eta+\tsfr) e^{(\eta-\lambda)s} \xi U''_b
-\kappa e^{(3\eta-\lambda) s} U'''_b -p\kappa^p e^{[\eta+p(\eta-\lambda)]s}{U'_b}^{p-1}U''_b.
\end{aligned}$$
Using $U'''_b=-p{U'_b}^{p-1}U''_b=p{U'_b}^{2p-1}$, $p(\eta-\lambda)=2\eta-\lambda$,
$\lambda+k=(\eta+\tsfr)(1-\beta)$ and \eqref{UbxiIdent}, we~get
$$\begin{aligned}
\mathcal{P}_1\hat z
&=\kappa e^{(3\eta-\lambda) s} \Bigl\{
(-1)^{m+1}A\kappa^{-1}\mu e^{-(\mu+2\eta)s} U'_b+p(\kappa^{p-1}-1) {U'_b}^{2p-1} \\
&\qquad\qquad \qquad\qquad\qquad\qquad\qquad
+e^{-2\eta s}\bigl[(\eta+\tsfr)  \xi U''_b+(\eta+\tsfr-\lambda-k)U'_b\bigr] \Bigr\}\\
&=\kappa {U'_b}^{2p-1}e^{(3\eta-\lambda) s} \Bigl\{(-1)^{m+1}A\kappa^{-1}\mu e^{-(\mu+2\eta)s} {U'_b}^{2(1-p)}+ \\
&\qquad\qquad \qquad\qquad\qquad\qquad\qquad
+p(\kappa^{p-1}-1) +(\eta+\tsfr)  e^{-2\eta s}\bigl[\xi U''_b+\beta U'_b\bigr]{U'_b}^{1-2p}\Bigr\}\\
&=\kappa {U'_b}^{2p-1}e^{(3\eta-\lambda) s} \Bigl\{(-1)^{m+1}A\kappa^{-1}\mu e^{-(\mu+2\eta)s}\beta^2 (b+\xi)^2 \\
&\qquad\qquad \qquad\qquad\qquad\qquad\qquad
+Cbe^{-2\eta s}(b+\xi)+p\Bigl[\bigl(1+(-1)^mAe^{-\mu s}\bigr)^{p-1}-1\Bigr] \Bigr\}.
\end{aligned}$$
Since $b+\xi\le b+Ke^{(\eta-\gamma) s}  \le CKe^{(\eta-\gamma)s}$ owing to $Ke^{(\eta-\gamma)s}\ge K\ge 1$, 
we deduce that 
$$\begin{aligned}
(-1)^m\mathcal{P}_1\hat z_m
&\ge\kappa  {U'_b}^{2p-1} e^{(3\eta-\lambda) s} \left\{A e^{-\mu s}(\bar C-C\mu e^{-2\eta s} (b+\xi)^2)
+(-1)^mC e^{-2\eta s}(b+\xi)\right\}\\
&\ge\kappa {U'_b}^{2p-1} e^{(3\eta-\lambda) s} \left\{A e^{-\mu s}(\bar C-C\mu K^2 e^{-2\gamma s})
-(1+(-1)^{m-1}) CKe^{-(\eta+\gamma)s}\right\},\\
\end{aligned}$$
 with $\bar C=\bar C(p)>0$. Therefore, there exists $C_5=C_5(p,\ell)>0$ such that, assuming
\be{condKmu}
C_5\mu K^2 e^{-2\gamma s_0}\le 1,
\ee
we have
\be{condkappaD1}
\begin{cases}
\ \mathcal{P}_1\hat z_2\ge 0& \hbox{} \\
\noalign{\vskip 1mm}
\ \mathcal{P}_1\hat z_1\le 0& \hbox{if $\mu\le\eta+\gamma$ and $A\ge C_5Ke^{(\mu-\gamma-\eta)s_0}$.}
\end{cases}
\ee

\smallskip
$\bullet$ Let us next compare $\hat z$ and $w_y$ on the parabolic boundary.
Set $\Delta^m_{1,0}=\hat z_m(\cdot,s_0)-w'_0$.
Since $U'_a(\xi)=d_p(a+\xi)^{-\beta}$ and $w'_0(y)=e^{(\eta-\lambda) s_0} U'_a(e^{\eta s_0}y)$ on $\mathcal{I}_1$,
we obviously have 
\be{condkappaD1b}
\hbox{$b\ge a$ and $m=1$ (resp., $b\le a$ and $m=2$)\ $\Longrightarrow\ \Delta^m _{1,0}\le 0$ (resp., $\ge 0$)  on 
$\mathcal{I}_1$.}
\ee

On $\mathcal{I}_2$, we have
$w'_0(y)=U'(y)-e^{-\lambda s_0}\phi'(y)+\sum_{j=0}^{\ell-1}d_j\phi'_j(y)$.
In view of \eqref{condd}, $\phi''(0)<0$ and $\phi'_j(0)=0$ for any $j\in\N$, we have
$w'_0(y)\ge U'(y)+e^{-\lambda s_0}(c_1-C \eps)y\ge U'(y)=e^{(\eta-\lambda)s} U'(e^{\eta s}y)$
 on $\mathcal{I}_2$ for sufficiently small $\eps_0(p,\ell)>0$,
hence
\be{condkappaD1c}
\Delta^1_{1,0}\le 0 \quad\hbox{in $\mathcal{I}_2$}.
\ee
We claim that, for $C_6=C_6(p,\ell)>0$ sufficiently large,
\be{condkappaD2}
\bigl(\mu\le \eta-\gamma,\quad A\ge C_6K^{\beta+1}e^{(\mu+\gamma-\eta)s_0}\bigr)
\Longrightarrow \Delta^2_{1,0}\ge 0 \quad\hbox{in $\mathcal{I}_2$}.\ee
Indeed $\Delta^2_{1,0}$ has the sign of 
$\hat h(y):=\kappa U'_b(e^{\eta s_0}y)-U'(e^{\eta s_0}y)+e^{-\eta s_0}\phi'(y)-e^{(\lambda-\eta) s_0}\sum_{j=0}^{\ell-1}d_j\phi'_j(y)$
in~$\mathcal{I}_2$, and
\be{hkappa1}
\begin{aligned}
\hat h(y)
&\ge\kappa d_p(b+e^{\eta s_0}y)^{-\beta}-d_p(e^{\eta s_0}y)^{-\beta}-Ce^{-\eta s_0}y\\
&\ge(\kappa-1)d_p(b+e^{\eta s_0}y)^{-\beta}
+d_p(e^{\eta s_0}y)^{-\beta}[(1+be^{-\eta s_0}y^{-1})^{-\beta}-1]-Ce^{-\eta s_0}y\\
&\ge Ae^{-\mu s_0}d_p(b+e^{\eta s_0}y)^{-\beta}-Ce^{-(\beta+1)\eta s_0}y^{-\beta-1}-Ce^{-\eta s_0}y\\
&\ge Ae^{-\mu s_0}d_p(b+e^{\eta s_0}y)^{-\beta}-Ce^{-\eta s_0}y,
\end{aligned}
\ee
where we used \eqref{condd} and $e^{-\eta s_0}y\ge e^{-(\beta+1)\eta s_0}y^{-\beta-1}$ owing to 
$y\ge e^{-\gamma s_0}$. 
Since the assumption in \eqref{condkappaD2} implies
$ Ae^{-\mu s_0} \ge C_6 K^{\beta+1}e^{(\gamma-\eta) s_0} 
=C_6e^{(\beta-1)\eta s_0}(Ke^{-\gamma s_0})^{\beta+1}
\ge CC_6(b+e^{\eta s_0}y)^{\beta}e^{-\eta s_0}y$ due to $Ke^{-\gamma s_0}\ge y$ and $e^{\eta s_0}y\ge 1$, we get
$\hat h(y)\ge 0$, and claim \eqref{condkappaD2} follows.

\smallskip
We then consider the sign of $\Delta^m_1:=\hat z_m-w_y$ on $\Gamma$.
We see that $\Delta^m_1$ has the same sign as
$h_\kappa(Ke^{-\gamma s})$ on $\Gamma$, where $\hat h(y):=\kappa U'_b(e^{\eta s}y)-e^{(\lambda-\eta) s}w_y(y,s)$.
Since $\hat w\in \mathcal{A}^1_{s_0,s_1}$, we have in particular
\be{wyinA}
|e^{(\lambda-\eta) s}w_y(y,s)-U'(e^{\eta s}y)+e^{-\eta s}\phi'(y)|\le \eps e^{-\eta s}y
\quad\hbox{on $\Gamma$.}
\ee
Also, for some $\bar c_\ell,\bar y_\ell>0$, we have $\phi'(y)\le -\bar c_\ell y$ for $y\le\bar y_\ell$.
For $m=1$, assuming $\eps\le \bar c_\ell$ and 
\be{condKmu1}
K e^{-\gamma s_0}\le \bar y_\ell,
\ee
we thus have
$\hat h(y) \le U'(e^{\eta s}y)-e^{(\lambda-\eta) s}w_y(y,s)\le (\phi'(y)+\eps) e^{-\eta s}y\le 0$ on $\Gamma$, hence
\be{condkappaD3}
\Delta^1_1\le 0\ \hbox{on $\Gamma$}.
\ee
Next, for $m=2$, using \eqref{wyinA} and the same calculation as in \eqref{hkappa1}, we get
$$\begin{aligned}
\hat h(y)
\ge\kappa d_p(b+e^{\eta s}y)^{-\beta}-d_p(e^{\eta s}y)^{-\beta}-Ce^{-\eta s}y
\ge Ae^{-\mu s}d_p(b+e^{\eta s}y)^{-\beta}-Ce^{-\eta s}y.
\end{aligned}$$
Since $K^{\beta+1}e^{(\gamma-\eta) s}=
K^{\beta+1}e^{-[(1-\beta)\eta+(\beta+1)\gamma] s}=e^{(\beta-1)\eta s}(Ke^{-\gamma s})^{\beta+1}
\ge C(b+e^{\eta s}y)^{\beta}e^{-\eta s}y$ on $\Gamma$,
we deduce that, for $C_7=C_7(p,\ell)>0$ sufficiently large,
 \be{condkappaD4}
\bigl(\mu\le \eta-\gamma,\quad A\ge C_7K^{\beta+1}e^{(\mu+\gamma-\eta)s_0}\bigr)
\Longrightarrow \Delta^2_1\ge 0 \quad\hbox{on $\Gamma$}.\ee

Now choose $b=a_+=(1+\sqrt\eps)a_*$ for $\hat z_1$, $b=a_-=(1-\sqrt\eps)a_*$ for $\hat z_2$, 
and $\mu=\eta-\gamma$, $A= \hat CK^{\beta+1}$, where $\hat C=\max(C_5,C_6,C_7)$.
Then the assumptions in \eqref{condkappaD1}, \eqref{condkappaD1b}, \eqref{condkappaD2}, \eqref{condkappaD4}
are fulfilled, and conditions \eqref{condAmu}, \eqref{condKmu}, \eqref{condKmu1} are satisfied for $s_0\gg 1$.
Moreover, as a consequence of \eqref{estim0win} and $\hat w=w=0$ at $y=0$, we have
$$ U'(a_+)e^{(\eta-\lambda) s} \le w_y(0,s)\le U'(a_-)e^{(\eta-\lambda) s},\quad s_0\le s\le s_1,$$
hence $\hat z_1\le w_y\le \hat z_2$ at $y=0$.
Thus $\hat z_1$ is a subsolution and $\hat z_2$ a supersolution and
we infer from the comparison principle that $ \hat z_1\le w_y\le \hat z_2$ in~$Q$.
Inequality \eqref{estim0wyin} follows. 

\smallskip
{\bf Step 3.} {\it Proof of \eqref{estimuouter}.} 
Here we assume $R=\infty$.
Since $\hat w\in \mathcal{A}^1_{s_0,s_1}$, taking $\eps_0(p,\ell)>0$ smaller if necessary, we deduce from
\eqref{controlphiell} that
$$
(-1)^{\ell+1} \bigl[w(\sigma e^{s/2},s)-U(\sigma e^{s/2})\bigr] \ge  \ts\frac12 e^{-\lambda s}\phi(\sigma e^{s/2}) 
\ge \frac{c}{2}\sigma^{2\ell} e^{-\lambda s}e^{\ell s}= \frac{c}{2}\sigma^{2\ell} e^{k s},
\quad s_0<s<s_1.
$$
Going back to $u$ through \eqref{correspuw}, we get
\be{controlsignell}
(-1)^{\ell+1} \bigl[u(\sigma,t)-U(\sigma)\bigr] \ge \ts\frac{c}{2}\sigma^{2\ell},
\quad 0<t<e^{-s_0}-e^{-s_1}.
\ee

If $\ell$ is odd, then we have $u(x,0)\ge b^*U(x)$ for $x\ge 2\sigma$ due to \eqref{defw0odd}, and
$$\Bigl[-e^{-\lambda s_0}\phi(y)+\ds\sum_{j=0}^{\ell-1}d_j\varphi_j(y)\Bigr]U^{-1}(y) \ge
c_p^{-1}y^{\beta-1}[cy^{2\ell}-\ell\eps C(1+y^{2\ell-2})]e^{-\lambda s_0}\ge \ts\frac{c}{2c_p},
\quad \sigma e^{\frac{s_0}{2}} \le y\le 2\sigma e^{\frac{s_0}{2}}$$ 
for $s_0\gg 1$, hence $u(x,0)\ge (1+\ts\frac{c}{2c_p})U(x)$ for $\sigma\le x\le 2\sigma$.
Setting $\delta=\min(b^*-1, \ts\frac{c}{2c_p},\ts\frac{c}{2}\sigma^{2\ell})>0$ 
and noting that $(1+\delta)U$ is a subsolution of the PDE in \eqref{equ},
it follows from the comparison principle (cf.~ Proposition~\ref{RemWPw}(ii))
that $u\ge (1+\delta)U$ in $[\sigma,\infty)\times(0,e^{-s_0}-e^{-s_1})$.

Next assume that $\ell$ is even.
Since $|\varphi_j(y)|\le C(1+y^{2j})$, we deduce from \eqref{controlphiell} and $\eps<1$ that
$$\Bigl[-e^{-\lambda s_0}\phi(y)+\ds\sum_{j=0}^{\ell-1}d_j\varphi_j(y)\Bigr]U^{-1}(y) \le 
c_p^{-1}y^{\beta-1}[\ell\eps C(1+y^{2\ell-2})-cy^{2\ell}]e^{-\lambda s_0}\le -\ts\frac{c}{2c_p},
\quad y\ge \sigma e^{s_0/2}$$ 
for $s_0\gg 1$. Consequently, by \eqref{defw0}, since $0\le\Theta\le 1$, we have 
$u(x,0)\le (1-\ts\frac{c}{2c_p})U(x)$ for $x\ge\sigma$
if $s_0\gg 1$.
This along with \eqref{controlsignell} guarantees the existence of $\delta\in(0,1)$ 
(independent of $d,s_1$) such 
that $u\le (1-\delta)U$ on $(\{\sigma\}\times (0,e^{-s_0}-e^{-s_1}))\cup([\sigma,\infty)\times\{0\})$.
Noting that $(1-\delta)U$ is a supersolution of the PDE in \eqref{equ},
it follows from the comparison principle that $u\le (1-\delta)U$ in $[\sigma,\infty)\times(0,e^{-s_0}-e^{-s_1})$.
This completes the proof of \eqref{estimuouter}.
\end{proof}

\subsection{Topological argument and proof of Theorems~\ref{prop:special}, \ref{mainThm2}}

Define the map
\be{defmapP}
P(d;s_0,s_1)=\bigl(p_0(d;s_0,s_1),\dots,p_{\ell-1}(d;s_0,s_1)\bigr),\qquad\hbox{where } p_j=(\tilde v(s_1),\varphi_j).
\ee
In view of the topological argument used in the proof of Theorem~\ref{prop:special},
the crucial ingredient will be the following key a priori estimate.
It shows that $\mathcal{A}^1_{s_0,s_1}$ constitutes a trapping region,
from which the solution cannot escape at $s=s_1$ if $P(d;s_0,s_1) = 0$.

\begin{prop} \label{mainAPE}
For any $\eps\in(0, \eps_0]$, there exists $\bar s_0>0$ such that, 
if $s_1\ge s_0\ge \bar s_0$ and $d\in \mathcal{U}_{s_0,s_1}$ satisfy $P(d;s_0,s_1) = 0$, then 
$\tilde w(\cdot,\cdot;d) \in \mathcal{A}^{1/2}_{s_0,s_1}$ 
 and, moreover, 
$\ds\sum_{j=0}^{\ell-1} |d_j|\le \ts\frac{\eps}{2} e^{-\lambda s_0}$.
\end{prop}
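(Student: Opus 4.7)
The plan is to exploit the spectral decomposition of $\mathcal{L}$ from Proposition~\ref{GaussianPoincare2} together with the variation of constants formula \eqref{varconstv2tilde} of Proposition~\ref{locexistvapproxExt}. Setting $c_j(s) := (\tilde v(s), \varphi_j)$ for $j \in \N$ and testing \eqref{varconstv2tilde} against $\varphi_j$, we obtain
\[
c_j(s) = e^{-(s-s_0)\lambda_j}\, c_j(s_0) + \int_{s_0}^s e^{-(s-\tau)\lambda_j}\bigl\langle \tilde F(\tilde v_y(\tau)), \varphi_j\bigr\rangle\, d\tau, \quad s_0 \le s \le s_1.
\]
The initial data expansion \eqref{w0tildephi} gives $c_j(s_0) = d_j\mathbf{1}_{\{j<\ell\}} - e^{-\lambda s_0}(\hat\phi, \varphi_j)$, where Lemma~\ref{Lemcvcphi} guarantees $(\hat\phi, \varphi_j) \to \delta_{j,\ell}$ as $s_0 \to \infty$, uniformly in $d$ satisfying \eqref{condd}. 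Imposing $P(d; s_0, s_1) = 0$, i.e.\ $c_j(s_1) = 0$ for $0 \le j \le \ell-1$, leads to the explicit representation
\[
d_j = e^{-\lambda s_0}(\hat\phi, \varphi_j) - \int_{s_0}^{s_1} e^{(\tau-s_0)\lambda_j}\bigl\langle \tilde F(\tilde v_y(\tau)), \varphi_j\bigr\rangle\,d\tau, \qquad 0 \le j \le \ell - 1,
\]
while for $j \ge \ell$ the coefficient $c_j(s)$ is fully determined by $c_j(s_0)$ and the nonlinearity.

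\textbf{Key nonlinear estimate.} The heart of the argument will be to establish, for $\tau \in [s_0, s_1]$ and $j$ in the relevant range, a bound of the schematic form
\[
\bigl|\bigl\langle \tilde F(\tilde v_y(\tau)), \varphi_j\bigr\rangle\bigr| \le C_j\bigl(\eps^2 + e^{-\mu s_0}\bigr) e^{-2\lambda\tau}
\]
for some $\mu = \mu(p,\ell) > 0$, proved by splitting the weighted integral into the three regions $[0,y_1(\tau)]$, $[y_1(\tau), y_2(\tau)]$ and $[y_2(\tau),\infty)$. In the intermediate region, the $\mathcal{A}^1$-bound forces $|\tilde v_y| \le C\eps e^{-\lambda\tau}(y + y^{2\ell-1}) \ll U_y$, so Taylor expansion of $F$ around $0$ yields $|F(\tilde v_y)| \le C U_y^{p-2}\tilde v_y^2 \le C\eps^2 e^{-2\lambda\tau}\,y^{-\beta(p-2)}(y^2 + y^{4\ell-2})$, which is integrable against $\rho|\varphi_j|$ thanks to the polynomial growth bounds \eqref{EstimEigen1} and the Gaussian estimates of Lemma~\ref{AuxilLem2}. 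In the inner region, Taylor expansion breaks down since $|\tilde v_y|$ is comparable to $U_y$; one instead rewrites $F(\tilde v_y) = |w_y|^p - U_y^p - pU_y^{p-1}(w_y - U_y)$ and uses Proposition~\ref{PropInOut} to approximate $w$ by the quasi-stationary profile $e^{-\lambda s}U_{a(s)}(e^{\eta s}y)$, for which the identity $U_{a,yy} + |U_{a,y}|^p = 0$ produces cancellation, leaving a remainder that is $O(e^{-\mu s_0})$. The integrability against $\rho = y^\alpha e^{-y^2/4}$ at $y = 0$ then uses the crucial condition $\alpha = p/(p-1) > 1$. The outer region is handled via the cut-off terms of Lemma~\ref{BernsteinEst} in the case $R = 1$ (using Lemma~\ref{Lemcontrolx1} to bound $u$ in $[R/3, R/2]$), or via \eqref{estimuouter} when $R = \infty$, the Gaussian decay of $\rho$ providing exponential smallness in $s$ in both cases.

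\textbf{Reconstruction and main obstacle.} Substituting the key estimate into the representation for $d_j$, one uses $\lambda_j < \lambda$ to bound $\int_{s_0}^{s_1}e^{(\tau-s_0)\lambda_j}e^{-2\lambda\tau}\,d\tau \le Ce^{-\lambda s_0}$, yielding $|d_j| \le C(\eps^2 + o_{s_0}(1)) e^{-\lambda s_0}$ for $j < \ell$; choosing $\eps_0$ small and $\bar s_0$ large forces $\sum_{j<\ell}|d_j| \le (\eps/2)e^{-\lambda s_0}$. To upgrade to $\mathcal{A}^{1/2}$-membership, one reconstructs $\tilde v(y, s)$ in the intermediate region from its spectral coefficients: the $j = \ell$ mode produces exactly $-e^{-\lambda s}\phi(y)$ up to an $O(e^{-\mu s_0}e^{-\lambda s})$ correction, the modes $j < \ell$ contribute at most $O(\eps^2 e^{-\lambda s})$ by the previous step, and the modes $j > \ell$ are controlled by the nonlinear bound and the spectral gap $\lambda_{\ell+1} - \lambda = 1$. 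The pointwise bounds weighted by $y^i + y^{2\ell - i}$ on $\tilde v$ and $\partial_y\tilde v$ in the intermediate region will be obtained by combining the kernel representation of Proposition~\ref{KernelEstim2} with the smoothing estimate \eqref{smoothing1} of Proposition~\ref{Prop-smoothing} applied to the Duhamel term. The main obstacle will be the inner-region contribution to the nonlinear pairing: keeping a genuinely quadratic $\eps^2$ improvement there, while tolerating the polynomial growth $C(j+1)^{5/2}e^{y^2/8}$ of $\varphi_j'$ from \eqref{EstimEigen1} and the loss of smallness of $\tilde v_y$ relative to $U_y$, requires carefully extracting the quasi-stationary cancellation from Proposition~\ref{PropInOut} and tracking the $j$-dependence so that it does not spoil the reconstruction in the intermediate region weighted with $y^{2\ell-i}$.
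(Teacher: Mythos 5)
Your overall skeleton (Duhamel formula in $H'$, projection onto the eigenbasis, region-by-region estimation of $\tilde F(\tilde v_y)$, and the use of $P(d;s_0,s_1)=0$ to re-express the low modes as backward-in-time integrals) does match the paper's strategy. However, there are two genuine problems.

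First, your ``key nonlinear estimate'' $|(\tilde F(\tilde v_y(\tau)),\varphi_j)|\le C_j(\eps^2+e^{-\mu s_0})e^{-2\lambda\tau}$ is false in general. In the inner region $F(\tilde v_y)$ is nonnegative and genuinely of size $y^{-\beta-1}$ (no cancellation is available against $\varphi_j>0$ near $y=0$: the quasi-stationary profile solves the ODE $w_{yy}+|w_y|^p=0$, not the linearization around $U$), so the inner contribution to the pairing is comparable to $\varphi_j(0)\int_0^{y_0(\tau)}y^{-\beta-1}y^\alpha\,dy\sim (j+1)^{3/2}e^{-\eta\tau}$ with $\eta=\frac{p-1}{p-2}\lambda$. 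Since $\eta<2\lambda$ whenever $p>3$, the decay rate $e^{-2\lambda\tau}$ cannot hold, and there is no $\eps^2$ prefactor in any case. What saves the argument (and is what the paper proves in Lemma~\ref{LemFvytau}) is the weaker bound $C(j+1)^{3/2}e^{-\bar\eta\tau}$ with $\bar\eta=\min(\eta,2\lambda)>\lambda$: the strict inequality $\bar\eta>\lambda>\lambda_j$ is all that is needed in the Duhamel integrals, and the $\eps$-smallness of $d_j$ comes instead from $(\hat\phi,\varphi_j)\to 0$ for $j\ne\ell$ (Lemma~\ref{Lemcvcphi}) together with taking $s_0$ large. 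Your ``main obstacle'' of extracting a quadratic gain in the inner region is therefore a red herring; the actual mechanism is the exponential smallness of the width $y_0(\tau)=e^{-\eta\tau}$ of that region.

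Second, and more seriously, the reconstruction of the pointwise weighted bounds defining $\mathcal{A}^{1/2}_{s_0,s_1}$ cannot be carried out by spectral series plus the smoothing estimates of Proposition~\ref{Prop-smoothing} alone. The eigenfunctions obey only $|\varphi_j(y)|\le C(j+1)^{3/2}e^{y^2/8}$ and the $L^q_\rho$ smoothing translates into pointwise control with Gaussian, not polynomial, weight; neither yields the bound $|D^i(\tilde w-U+e^{-\lambda s}\phi)|\le \frac{\eps}{2}e^{-\lambda s}(y^i+y^{2\ell-i})$ uniformly up to $y=\sigma e^{s/2}$, nor the factor $y^i$ forcing the derivative estimate to vanish linearly at $y=0$. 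The paper handles this by (a) direct estimates on the explicit kernels $G_0,G_1$ of Proposition~\ref{KernelEstim2} (the factor $(1\wedge\frac{x\xi}{2t})^i$ in \eqref{boundH1} is what produces $y^i$), used both for short times $s\in(s_0,s_0+1]$ and at the single outer point $y=y_2(s)$; (b) the eigenfunction expansion only on the bounded range $y\in[y_1(s),R_1]$; and (c) a comparison-principle argument with explicit barriers of the form $(-1\pm B\nu)e^{-\lambda s}\phi\mp\nu e^{-\Lambda s}y^{2\ell}$ (and an analogous construction for $w_y$) to propagate the estimate into $[R_1,\sigma e^{s/2}]$ (Lemma~\ref{LemComplong}). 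This barrier step, together with the short-time kernel analysis, is the bulk of the proof and is missing from your proposal; without it the membership in $\mathcal{A}^{1/2}_{s_0,s_1}$ is not established.
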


Since the proof is very long and technical, in order not to interrupt the main line of arguments,
we postpone it to subsection~\ref{section6}.

\begin{prop} \label{TopolArg1}
Let $\eps,s_0$ be as in Proposition~\ref{mainAPE}.
If $\mathcal{U}_{s_0,s_1}\neq\emptyset$ with some $s_1 > s_0$, then 
$${\rm deg}(P(\cdot;s_0,s_1)) =1,$$
where ${\rm deg}(P(\cdot;s_0,s_1))$, denotes the degree of $P(\cdot;s_0,s_1)$ with respect to $0$ in $\mathcal{U}_{s_0,s_1}$.
\end{prop}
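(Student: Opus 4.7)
The plan is to reduce $\deg(P(\cdot;s_0,s_1),\mathcal{U}_{s_0,s_1},0)$ to the initial-time map $P(\cdot;s_0,s_0)$ via a homotopy in the terminal time $s_1^\tau=(1-\tau)s_0+\tau s_1$, and to compute the latter explicitly. From \eqref{w0tildephi} and the $L^2_\rho$-orthonormality of $(\varphi_j)_{j\in\N}$ one reads off
\[
p_j(d;s_0,s_0) = d_j - e^{-\lambda s_0}(\hat\phi,\varphi_j), \qquad 0\le j\le \ell-1.
\]
By Lemma~\ref{Lemcvcphi} and Cauchy--Schwarz, $|(\hat\phi,\varphi_j)|=|(\hat\phi-\varphi_\ell,\varphi_j)|\le \|\hat\phi-\varphi_\ell\|\to 0$ as $s_0\to\infty$ uniformly for $d$ satisfying \eqref{condd}, using $(\varphi_\ell,\varphi_j)=0$ for $j<\ell$. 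Hence on $\overline{D}_0:=\{d\in\R^\ell:\sum_{j=0}^{\ell-1}|d_j|\le\eps e^{-\lambda s_0}\}$, the map $P(\cdot;s_0,s_0)$ differs from the identity by $o(e^{-\lambda s_0})$ for $s_0$ large; it is nonvanishing on $\partial D_0$ and a linear homotopy to the identity gives $\deg(P(\cdot;s_0,s_0),D_0,0)=1$. A direct check shows that its unique zero lies inside $\mathcal{U}^0:=\mathcal{U}_{s_0,s_0}$, so by excision $\deg(P(\cdot;s_0,s_0),\mathcal{U}^0,0)=1$.

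For the general terminal time, set $P^\tau:=P(\cdot;s_0,s_1^\tau)$ on $\mathcal{U}^\tau:=\mathcal{U}_{s_0,s_1^\tau}$, a monotone family of compact subsets of $\R^\ell$ with $\mathcal{U}^1=\mathcal{U}_{s_0,s_1}$; continuous dependence of $\tilde w$ on $d$ (via parabolic estimates and the variation-of-constants formula) makes $(d,\tau)\mapsto P^\tau(d)$ continuous. The plan is to show that $\tau\mapsto \deg(P^\tau,\mathcal{U}^\tau,0)$ is locally constant on $[0,1]$, hence constant. Fix $\tau_0\in[0,1]$. By Proposition~\ref{mainAPE}, every zero of $P^{\tau_0}$ in $\mathcal{U}^{\tau_0}$ lies in the compact ``doubly strict interior''
\[
\mathcal{Z}_{\tau_0}:=\bigl\{d\in\R^\ell:{\textstyle\sum_j|d_j|}\le\tfrac{\eps}{2}e^{-\lambda s_0},\ \tilde w(\cdot,\cdot;d)\in\mathcal{A}^{1/2}_{s_0,s_1^{\tau_0}}\bigr\}.
\]
Using compactness of $\mathcal{Z}_{\tau_0}$ and joint continuity of $\tilde w$ in $d$ and $s$, I would introduce an open ``buffer'' set $\mathcal{V}\supset\mathcal{Z}_{\tau_0}$ defined by relaxed inequalities (e.g.\ $\sum_j|d_j|<\tfrac{3\eps}{4}e^{-\lambda s_0}$ and $\tilde w\in\mathcal{A}^{3/4}_{s_0,s_1^{\tau_0}}$, possibly extended slightly in time) with $\overline{\mathcal V}\subset\mathcal{U}^\tau$ for all $\tau$ in some neighborhood $I$ of $\tau_0$, and such that all zeros of $P^\tau$ for $\tau\in I$ still lie in $\mathcal V$. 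Excision then reduces local invariance to invariance on the fixed set $\mathcal V$, which follows from the straight-line homotopy $\sigma\mapsto P^{(1-\sigma)\tau_0+\sigma\tau}$ provided it does not vanish on $\partial\mathcal V$.

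The main obstacle is this boundary non-vanishing on $\partial\mathcal V$. Ball-saturation points of $\mathcal V$ are immediately ruled out, because Proposition~\ref{mainAPE} applied at the homotopy terminal time $T_\sigma=(1-\sigma)s_1^{\tau_0}+\sigma s_1^\tau$ forces the stronger bound $\sum_j|d_j|\le\tfrac{\eps}{2}e^{-\lambda s_0}$. For $\mathcal A^{3/4}$-saturation points $(y^*,s^*)$, if $P^{(1-\sigma)\tau_0+\sigma\tau}(d)$ were to vanish then Proposition~\ref{mainAPE} would force $\tilde w\in \mathcal A^{1/2}_{s_0,T_\sigma}$, contradicting the $\mathcal A^{3/4}$-saturation at $(y^*,s^*)$ since $\mathcal A^{1/2}\subsetneq\mathcal A^{3/4}$, provided $T_\sigma\ge s^*$ holds along the whole homotopy. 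The delicate point is to guarantee $T_\sigma\ge s^*$ uniformly on $\partial\mathcal V$: this is achieved by shrinking $I$ (and correspondingly tuning the time-thickness of $\mathcal V$) in function of the uniform modulus of continuity of $s\mapsto\tilde w(\cdot,s;d)$ provided by the parabolic regularity estimates and of the ``slack'' $\eps/4$ between the $\mathcal A^{1/2}$ conclusion of Proposition~\ref{mainAPE} and the $\mathcal A^{3/4}$-threshold used to define $\mathcal V$. This yields local constancy, hence constancy of the degree on $[0,1]$, and evaluating at $\tau=0$ using the first paragraph concludes $\deg(P(\cdot;s_0,s_1),\mathcal{U}_{s_0,s_1},0)=1$.
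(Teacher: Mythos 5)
Your proposal is correct and follows essentially the same route as the paper: the identity $p_j(d;s_0,s_0)=d_j-e^{-\lambda s_0}(\hat\phi,\varphi_j)$ together with Lemma~\ref{Lemcvcphi} gives $\deg(P(\cdot;s_0,s_0))=1$ by a linear homotopy to the identity, and the degree is then transported to $s_1$ by a homotopy in the terminal time, with Proposition~\ref{mainAPE} guaranteeing that zeros never reach $\partial\mathcal{U}_{s_0,s_1^\tau}$. The only difference is that you spell out the varying-domain homotopy invariance via excision and buffer sets, whereas the paper simply invokes it.
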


With Proposition~\ref{mainAPE} at hand, the proof is completely similar to \cite{HV94pre,MizADE}. We include it for completeness.

\begin{proof}
By \eqref{w0tildephi},
we have $p_j(d;s_0,s_0) = d_j-e^{-\lambda s_0}(\varphi_j,\hat\phi)$ for $j=0,\dots,\ell-1$.
Owing to Lemma~\ref{Lemcvcphi}, for $s_0\gg 1$, we have, for all $d\in\partial U_{s_0,s_0}$ and $\tau\in [0,1]$,
$$\sum_{j=0}^{\ell-1} |d_j+\tau (p_j(d;s_0,s_0)-d_j)|\ge \sum_{j=0}^{\ell-1} |d_j|
-e^{-\lambda s_0} \sum_{j=0}^{\ell-1} |(\varphi_j,\hat\phi)|
=e^{-\lambda s_0}\Bigl(\eps-\sum_{j=0}^{\ell-1} |(\varphi_j,\hat\phi)|\Bigr)>0.$$
Letting $I$ be the identity mapping in $\R^\ell$, it follows that
$$I(d)+\tau (P(d;s_0,s_0)-I(d))\ne 0 \quad\hbox{ on $\partial \mathcal{U}_{s_0,s_0}$ for $\tau\in [0,1]$}$$
hence, by the homotopy invariance of the degree, 
$$deg (P(\cdot;s_0,s_0),0,\mathcal{U}_{s_0,s_0})= deg (I,0,\mathcal{U}_{s_0,s_0}) = 1.$$
By Proposition~\ref{mainAPE}, for any $s_1 > s_0$, there is no $d\in\partial U_{s_0,s_1}$ such that $P(d;s_0,s_1)=0$. 
 Therefore, the homotopy invariance of the degree implies the conclusion.
\end{proof}

\begin{prop} \label{TopolArg2}
Let $\eps,s_0$ be as in Proposition~\ref{mainAPE}.
Then $\mathcal{U}_{s_0,s_1}\neq\emptyset$ for all $s_1>s_0$.
\end{prop}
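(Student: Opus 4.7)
The plan is to argue by contradiction via a continuity/shooting strategy, combining the a priori trapping estimate of Proposition~\ref{mainAPE} with the degree computation of Proposition~\ref{TopolArg1}. First I would observe that $d=0$ lies in $\mathcal{U}_{s_0,s_0}$: choosing $d=0$ in \eqref{defw0} (or \eqref{defw0odd}) makes $\tilde w(\cdot,s_0)-U+e^{-\lambda s_0}\phi$ vanish identically throughout the intermediate region $[y_1(s_0),y_2(s_0)]$, so the defining inequalities of $\mathcal{A}^1_{s_0,s_0}$ are trivially satisfied. I would then set
$$S^*:=\sup\bigl\{s\ge s_0:\mathcal{U}_{s_0,s}\ne\emptyset\bigr\};$$
continuity in $s$ of the solution starting from $d=0$ gives $S^*>s_0$. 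Assume for contradiction that $S^*<\infty$; I will derive a contradiction with the very definition of $S^*$.

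Next I would prove that $\mathcal{U}_{s_0,S^*}$ is itself nonempty by a compactness argument. Pick sequences $s_n\nearrow S^*$ with $s_n<S^*$ and $d_n\in\mathcal{U}_{s_0,s_n}$. By \eqref{condd}, $(d_n)$ is bounded in $\R^\ell$ and, up to extraction, $d_n\to d^*$ with $\sum_j|d^*_j|\le\eps e^{-\lambda s_0}$. By the continuous dependence results of Proposition~\ref{prop:PS-LBC2} (or of Proposition~\ref{RemWPw}(i) when $R=\infty$ and $\ell$ is odd), the corresponding viscosity solutions, and hence their rescaled counterparts $\tilde w(\cdot,\cdot;d_n)$, converge to $\tilde w(\cdot,\cdot;d^*)$ uniformly on compact subsets of $(0,\infty)\times[s_0,S^*)$, in a topology strong enough to preserve both the $C^0$ and $C^1$ inequalities defining~$\mathcal{A}^1$. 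Combining this with the uniform inner and outer estimates of Proposition~\ref{PropInOut} prevents any loss of regularity before $s=S^*$ and, passing to the limit in the (closed) inequalities defining $\mathcal{A}^1_{s_0,S^*}$, yields $\tilde w(\cdot,\cdot;d^*)\in\mathcal{A}^1_{s_0,S^*}$; that is, $d^*\in\mathcal{U}_{s_0,S^*}$.

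I would then invoke Proposition~\ref{TopolArg1} at $s_1=S^*$: since $\mathcal{U}_{s_0,S^*}\ne\emptyset$, the degree of $P(\cdot;s_0,S^*)$ with respect to $0$ in $\mathcal{U}_{s_0,S^*}$ equals one, so there exists $d^{**}\in\mathcal{U}_{s_0,S^*}$ with $P(d^{**};s_0,S^*)=0$. Proposition~\ref{mainAPE} then forces $\tilde w(\cdot,\cdot;d^{**})\in\mathcal{A}^{1/2}_{s_0,S^*}$ and $\sum_j|d^{**}_j|\le\tfrac{\eps}{2}e^{-\lambda s_0}$; in particular, at $s=S^*$, the inequalities defining $\mathcal{A}^1$ hold with a factor $1/2$ of slack and $d^{**}$ lies strictly inside the ball given by \eqref{condd}. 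Another application of continuous dependence and of continuity in $s$ then produces $\delta>0$ such that $\tilde w(\cdot,s;d^{**})\in\mathcal{A}^1_{s_0,s}$ and \eqref{condd} holds on $[s_0,S^*+\delta]$; that is, $d^{**}\in\mathcal{U}_{s_0,S^*+\delta}$. This contradicts the definition of $S^*$, forcing $S^*=\infty$ and proving the proposition.

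The main obstacle lies in the compactness step: one must verify that the limit solution $\tilde w(\cdot,\cdot;d^*)$ exists classically all the way up to $s=S^*$, and that the $C^1$-type inequalities defining $\mathcal{A}^1$ pass to the limit. This relies on carefully patching the trapping-region control in the intermediate region with the inner and outer a priori bounds of Proposition~\ref{PropInOut}, so as to produce a uniform $C^1$ estimate on compact subsets of $(0,\infty)$, together with standard parabolic interior estimates for equation \eqref{eqw} away from the singular point $y=0$.
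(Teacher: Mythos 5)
Your proof is correct and follows essentially the same continuation argument as the paper: define $s_*=\sup\{s\ge s_0:\mathcal{U}_{s_0,s}\ne\emptyset\}$, show $s_*>s_0$ via continuity at $d=0$, and combine compactness, Proposition~\ref{TopolArg1}, Proposition~\ref{mainAPE} and continuous dependence to push past $s_*$. The only (immaterial) difference is one of ordering: the paper applies Proposition~\ref{TopolArg1} at each $s_n<s_*$ to extract a limit of points already satisfying $P(d_n;s_0,s_n)=0$, so Proposition~\ref{mainAPE} applies directly to the limit, whereas you first establish $\mathcal{U}_{s_0,s_*}\ne\emptyset$ from arbitrary $d_n$ and then invoke the degree argument once at $s_1=s_*$ to produce the required zero of $P$.
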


\begin{proof}
Put 
$s_* = \sup\{s\ge s_0;\ \mathcal{U}_{s_0,s}\ne\emptyset\}$.
 Since $u_0$ is $C^1$ on $(x_0, 2\sigma)$ with $x_0=\tilde Ke^{(\gamma+\frac12)s_0}$, it follows
from standard parabolic theory that $u, u_x$ are continuous in 
$(x_0, 2\sigma)\times[0,t_0]$
for $t_0>0$ small. Consequently, taking for instance $d=0$, we see that $0\in \mathcal{U}_{s_0,s}$ for $s>s_0$ close to $s_0$, 
hence $s_* > s_0$.  
Assume that $s_*<\infty$. Taking a sequence $(s_n)$ with $s_n\to s_*$ as $n\to\infty$,
for each $n$ there is $d_n\in \mathcal{U}_{s_0,s_n}$ such that $P(d_n;s_0,s_n)=0$ by Proposition~\ref{TopolArg1}. 
 Since $(d_n)$ is bounded, we may assume without loss of generality that $d_n\to d_*$ as $n\to\infty$.
  Then, by continuous dependence with respect to initial data, we obtain $d_*\in \mathcal{U}_{s_0,s_*}$ and $P(d_*;s_0,s_*)= 0$. From Proposition~\ref{mainAPE},
  we have $\tilde w=\tilde w(y,s;d_*) \in \mathcal{A}^{1/2}_{s_0,s_*}$. 
  By continuity, we get $\tilde w \in \mathcal{A}^1_{s_0,s_*+\delta}$ 
  for some $\delta > 0$. This contradicts the definition of $s_*$, which completes the proof.
\end{proof}

\begin{proof}[Proof of Theorem~\ref{mainThm2}.] 
Let $w_0$ be given by \eqref{defw0} or \eqref{defw0odd}\footnote{Recalling \eqref{u0space}, we note that the case \eqref{defw0} will be sufficient for Theorem~\ref{mainThm2}; however the case 
\eqref{defw0odd} will be used in the proof of Theorem~\ref{prop:special}.}
 and $s_0$ be as in Proposition~\ref{mainAPE}.
Take a sequence $\{s_n\} \subset (s_0,\infty)$ with $s_n\to\infty$ as $n\to\infty$. From Proposition~\ref{TopolArg2}, 
for each $n$ there exists $d_n\in \mathcal{U}_{s_0,s_n}$, hence $\tilde w(y, s; d_n)\in \mathcal{A}^1_{s_0,s_n}$. 
Since $\{d_n\}$ is bounded, we may assume that $d_n\to \bar d$ as $n\to\infty$ for some $\bar d$.
By continuous dependence, it holds that $\tilde w(y, s; \bar d)\in \mathcal{A}^1_{s_0,\infty}$,
hence $w$ exists globally and 
\eqref{Concl2mainThm2b} is satisfied. 
Property~\eqref{Concl2mainThm2a} then follows from Proposition~\ref{PropInOut}.
Theorem~\ref{mainThm2} is proved. 

For later use, we note that \eqref{Concl2mainThm2a} yields
\be{firstrate}
{\underline a\hskip 0.6pt}_\eps e^{\frac{\lambda}{p-2}s} \le w_y(0,s)\le \overline a_\eps e^{\frac{\lambda}{p-2}s},
\quad\hbox{for all $s>s_0$,} 
\ee
where 
${\underline a\hskip 0.6pt}_\eps=d_p((1+\eps)a_*)^{-\beta}$, 
$\overline a_\eps=d_p((1-\eps)a_*)^{-\beta}$ and $d_p=\beta^\beta$.
\end{proof}

The following lemma plays a crucial role to rule out oscillation of the coefficients of the GBU rates.
It is a consequence of Theorem~\ref{mainThm2} and of an intersection-comparison argument.

\begin{lem}  \label{LemNonOsc} 
	Let $0<R\le\infty$.
	If a viscosity solution $\hat u$ 
	of \eqref{equ} with $\hat u_0\in \mathcal{W}$ or $\hat u_0\in \mathcal{W}_1$
	undergoes GBU at $(x,t) =(0, T) $ with $T<\infty$, then
	for any integer $\ell\ge 1$, we have
	\be{NonOsc1}
	0\le \liminf_{t\to T_-} \, (T-t)^{\frac{\ell}{p-2}} \hat u_x(0,t)= 
	\limsup_{t\to T_-} \, (T-t)^{\frac{\ell}{p-2}} \hat u_x(0,t) \le\infty.
	\ee
\end{lem}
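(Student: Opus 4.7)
Argue by contradiction: assume that for some $\ell\ge 1$ the rescaled quantity $\Psi_\ell(t):=(T-t)^{\ell/(p-2)}\hat u_x(0,t)$ satisfies $m:=\liminf_{t\to T_-}\Psi_\ell(t)<\limsup_{t\to T_-}\Psi_\ell(t)=:M$, and pick any finite $L\in(m,M)\cap(0,\infty)$. The plan is to exhibit, via Theorem~\ref{prop:special} combined with the scaling $(x,t)\mapsto(\sqrt{\alpha}\,x,T+\alpha(t-T))$ of Remark~\ref{RemThm1} (performed on the half-line and, if $R<\infty$, restricted to $(0,R)$), a special solution $v$ of the PDE on $(0,R)\times(0,T)$ with GBU at $(0,T)$ and $(T-t)^{\ell/(p-2)}v_x(0,t)\to L$ as $t\to T_-$.

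Since $\Psi_\ell$ oscillates across $L$ and $v_x(0,t)\sim L(T-t)^{-\ell/(p-2)}$, there exist sequences $t_n,s_n\to T_-$ along which $\hat u_x(0,t_n)>v_x(0,t_n)$ and $\hat u_x(0,s_n)<v_x(0,s_n)$. Setting $\hat a(t):=\beta\hat u_x^{1-p}(0,t)$ and $a(t):=\beta v_x^{1-p}(0,t)$, Proposition~\ref{prop:PS} applied to both solutions (cf.\ \eqref{eq:u_x-lowerupper}--\eqref{asympyux1}) yields
\[
(\hat u-v)(x,t)=U_{\hat a(t)}(x)-U_{a(t)}(x)+O(x^2)\quad\text{for }0<x\ll 1,
\]
uniformly for $t$ near $T$. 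Since $b\mapsto U_b(x)$ is strictly decreasing in $b$ for each fixed $x>0$, the sign of $\hat u-v$ just to the right of $x=0$ coincides with the sign of $\hat u_x(0,t)-v_x(0,t)$ and therefore flips between $t_n$ and $s_n$.

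To convert this flipping into a contradiction, I invoke intersection-comparison. Since both solutions satisfy \eqref{uxUx} on $(0,R)\times(T-\eta,T)$ for some $\eta>0$, Proposition~\ref{propSMP2} rules out any pointwise ordering $\hat u\le v$ or $\hat u\ge v$ on $[0,R)$ for $t<T$, so $(\hat u-v)(\cdot,t)$ has at least one zero in $(0,R)$ for each such $t$. I then choose $r^*\in(0,R)$ at which $[\hat u-v](r^*,t)$ keeps a fixed nonzero sign on $[T-\eta,T)$: for $R=\infty$ this is ensured by combining the outer estimate \eqref{outerdelta} on $v$ with \eqref{boundunifvisc} applied to $\hat u$, and for $R<\infty$ by taking $r^*$ where the two final profiles differ --- replacing $v$ by a second special solution with a different rate constant, distinguished by the second-order term \eqref{finalprofile}, if needed. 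Proposition~\ref{propZeroNumber0} then applies on $[\epsilon,r^*]$ for small $\epsilon>0$ (alternatively Proposition~\ref{propZeroNumber}(ii) on $[0,r^*]$ via Remark~\ref{RemZeroNumber3}(i)) and gives that the zero count $N(t)$ is finite and nonincreasing. The parity of $N$ is determined by the product of the endpoint signs; each flip of the sign at $0^+$, with the sign at $r^*$ fixed, changes this parity, so $N$ must strictly decrease at each flip. This bounds the number of flips and contradicts the existence of both sequences $(t_n)$ and $(s_n)$.

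The main obstacle lies precisely in this last step, because zeros of $\hat u-v$ can accumulate at $x=0$ as $t\to T_-$ (both solutions share the same leading singular profile $U$ there). This accumulation is handled by the refined zero-number theory of Proposition~\ref{propZeroNumber} together with Remark~\ref{RemZeroNumber3}(i), tailored to the boundary point $x=0$, combined with the explicit inner expansion $\hat u-v\sim U_{\hat a}-U_a$ that characterises the sign near $0$ without tracking individual accumulating zeros.
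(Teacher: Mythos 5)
Your overall strategy (contradiction, comparison with a suitably rescaled special solution whose rate is pinched strictly between the liminf and the limsup, then intersection--comparison) is the same as the paper's, but there is a genuine logical gap in how you obtain the comparison solution. You invoke Theorem~\ref{prop:special}(i) (plus the scaling of Remark~\ref{RemThm1}) to produce a special solution $v$ with $(T-t)^{\ell/(p-2)}v_x(0,t)\to L$ exactly. In the paper, however, the existence of that exact limit \eqref{rateell2} for the special solutions is itself \emph{deduced from} Lemma~\ref{LemNonOsc}: the construction (Theorem~\ref{mainThm2}) only yields the two-sided bound \eqref{firstrate}, ${\underline a\hskip 0.6pt}_\eps e^{\lambda s/(p-2)}\le w_y(0,s)\le \overline a_\eps e^{\lambda s/(p-2)}$, and the limit is then extracted via the non-oscillation lemma. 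So your argument is circular as written. The repair is exactly the paper's device: pick $\hat L_1<\hat L_2$ between the liminf and limsup of $\hat u$'s rate, choose $\eps$ so small that $\overline a_\eps/{\underline a\hskip 0.6pt}_\eps<\hat L_2/\hat L_1$, and rescale the Theorem~\ref{mainThm2} solution so that its (possibly oscillating) rate is trapped in $(\hat L_1,\hat L_2)$; the derivative at $x=0$ then still crosses that of $\hat u$ along a sequence $t_i\to T$.

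Two further points on the contradiction mechanism. First, the paper's route is simpler than your parity argument: at each crossing time $t_i$ one has $\bar u(0,t_i)=\hat u(0,t_i)=0$ and $\bar u_x(0,t_i)=\hat u_x(0,t_i)$, i.e.\ a degenerate zero of $\bar u-\hat u$ \emph{at the endpoint} $x=0$, and Proposition~\ref{propZeroNumber0}(iii) forces the finite, nonincreasing zero number to drop there; infinitely many crossings give the contradiction directly, with no need to track the sign of $\hat u-v$ on the shrinking inner scale $x\sim a(t)$ (where your $O(x^2)$ comparison with $U_{\hat a}-U_a$ would require extra care, and where a fixed left endpoint $\eps>0$ would not see the sign flip at all). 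Second, your treatment of the right endpoint is vague for $R<\infty$ ("a second special solution\ldots if needed"); the paper instead builds the special solution on a shorter interval $(0,\bar R)$ so that after rescaling it vanishes identically at $x_0=R/2$ while $\hat u(x_0,t)>0$, giving a fixed sign there for free. Finally, Proposition~\ref{propSMP2} does not "rule out" an ordering between $\hat u$ and $v$ (it is a comparison principle, not an anti-ordering statement), but that step is not needed anyway.
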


\begin{proof}
	Assume for contradiction that \eqref{NonOsc1} fails and pick $0<\hat L_1<\hat L_2<\infty$ such that
	\be{NonOsc1neg}
	\liminf_{t\to T_-} \, (T-t)^{\frac{\ell}{p-2}} \hat u_x(0,t)
	< \hat L_1 <\hat L_2 < \limsup_{t\to T_-} \, (T-t)^{\frac{\ell}{p-2}} \hat u_x(0,t).
	\ee
	Taking $\eps\in(0, 1)$ 
	small enough so that $\overline a_\eps/{\underline a\hskip 0.6pt}_\eps<\hat L_2/\hat L_1$
	(with $\overline a_\eps,{\underline a\hskip 0.6pt}_\eps$ defined after \eqref{firstrate}), we set 
	\be{NonOsc2}
	N=\bigl(\ts\frac{{\underline a\hskip 0.6pt}_\eps\hat L_2}{\overline a_\eps\hat L_1}\bigr)^{1/2}>1,
	\quad m={\frac{1}{2(p-1)}-\frac{\ell}{p-2}}<0, ,
	\quad \tau=\bigl(\frac{\hat L_1 N}{{\underline a\hskip 0.6pt}_\eps}\bigr)^{1/m}.
	\ee
	Let $x_0=R/2$ if $R<\infty$, $x_0=1$ if $R=\infty$, and set $\bar R=x_0\sqrt{\tau}$. 
	Let $w$ be the global solution of \eqref{eqw} given by Theorem~\ref{mainThm2} applied for
	$\Omega=(0,\bar R)$ and the above choice of $\eps$.
	Set 
	$$u(x,t)=(T-t)^k w\bigl(x(T-t)^{-1/2},-\log(T-t)\bigr),\quad (x,t)\in[0,\bar R]\times[T-e^{-s_0},T).$$
	By \eqref{firstrate}, we have
	\be{NonOsc3}
	{\underline a\hskip 0.6pt}_\eps\le L_1:=\liminf_{t\to T} \, (T-t)^{\frac{\ell}{p-2}} u_x(0,t)\le L_2:=
	\limsup_{t\to T} \, (T-t)^{\frac{\ell}{p-2}} u_x(0,t)\le \overline a_\eps.
	\ee
	Set $\bar t=T+\tau(t-T)$, $\bar u(x,t)=\tau^{-k}u(\sqrt\tau\,x,\bar t)$, 
	and $Q:=(0,x_0)\times (t_0,T)$ with $t_0=T-\tau^{-1}e^{-s_0}$.
	The function $\bar u\in C^{2,1}(Q)\cap C(\overline Q)$ is then a classical solution of \eqref{equ} in $Q$,
	which satisfies the equality $(T-t)^{\frac{\ell}{p-2}} \bar u_x(0,t)=\tau^m (T-\bar t)^{\frac{\ell}{p-2}} u_x(0,\bar t)$ hence, 
	by \eqref{NonOsc2}, \eqref{NonOsc3},
	\be{NonOsc4}
	\hat L_1<\liminf_{t\to T} \, (T-t)^{\frac{\ell}{p-2}} \bar u_x(0,t)=\tau^m L_1
	<\tau^m L_2=\limsup_{t\to T} \, (T-t)^{\frac{\ell}{p-2}} \bar u_x(0,t)<\hat L_2.
	\ee
	It follows from \eqref{NonOsc1neg} and
	 \eqref{NonOsc4} that there exists an increasing sequence $t_i\to T$ such that $\bar u_x(0,t_i)=\hat u_x(0,t_i)$,
	hence $[\bar u-\hat u](\cdot,t_i)$ has a degenerate zero at $x=0$.
	This leads to a contradiction with the intersection-comparison principle (Proposition~\ref{propZeroNumber0}).
The latter can be applied because $\bar u(x_0,t)=\tau^{-k}u(\bar R,t)=0<\hat u(x_0,t)$. 
\end{proof}

\begin{proof}[Proof of Theorem~\ref{prop:special}.]
Pick $0<a<1<b$ and $c>0$ such that $|\varphi_\ell(y)|\ge cy^{2\ell}$ for all $y\in[b,\infty)$ and $\varphi_\ell(y)\ge c$ for all $y\in[0,a]$.
For $\eps\in(0,\min(c/2,1))$, let $w=w_\eps(y,s)$ be the special solution of \eqref{eqw} given by 
 the proof of Theorem~\ref{mainThm2} (either with \eqref{defw0} or \eqref{defw0odd}),
and let $u=u(x,t)$ be the solution of \eqref{equ} obtained from $w$ through the transformation \eqref{defw}, setting $T:=e^{-s_0}$. 

Let us first prove assertion (iv).
Take $\bar s>s_0$ such that $[a,b]\subset [K e^{-\gamma \bar s},\sigma e^{\bar s/2}]$.
By \eqref{Concl2mainThm2a}-\eqref{Concl2mainThm2b}, for all $s\ge \bar s$, the function $v:=w-U$ satisfies
$v(y,s)\le e^{-\lambda s} U_{a_-}(e^{\eta s}y)-U(y)<e^{-\lambda s} U(e^{\eta s}y)-U(y)=0$
for all $y\in (0,K e^{-\gamma s}]$ and 
$v(y,s)\ne 0$ for all $y\in[K e^{-\gamma s},a]\cup[b,\sigma e^{s/2})$. 
Moreover, 
\eqref{Concl2mainThm2b} guarantees that $\|v(\cdot,s)-\varphi_\ell\|_{C^1([a,b])}\to 0$ as $\eps\to 0$,
uniformly for $s\ge \bar s.$ 
Since the $\ell$ positive zeros $y_1<\dots<y_\ell$ 
of $\varphi_\ell$ are simple and located in $(a,b)$, 
we deduce that, for all $\eps$ sufficiently small, $v(\cdot,s)-\varphi_\ell$ has exactly $\ell$ zeros in $(0,\sigma e^{s/2})$,
which are simple and converge to $y_1,\dots,y_\ell$ as $\eps\to 0$.
Taking $\eps=\eps(\delta)$ sufficiently small (without loss of generality) and restating in terms of $u(x,t)$, this yields assertion~(iv). 

 Property \eqref{rateell2} in assertion (i) follows from \eqref{firstrate} and Lemma~\ref{LemNonOsc}.
The space-time profile of $u_x$ in \eqref{asympyux2} is 
a consequence of \eqref{eq:u_x-lowerupper} and we get that of $u$ by integration.
Assertion (ii) follows from \eqref{Concl2mainThm2b} with $i=0$.

To check assertion~(iii), let us first consider the case $R=\infty$.
When $\ell$ is even we take $w_0$ given by \eqref{defw0}, hence $u_0\in\mathcal{W}$.
When $\ell$ is odd we take $w_0$ given by \eqref{defw0odd}, hence $u_0\in\mathcal{W}_1$
(note that this is the only case where we need $u_0\not\in \mathcal{W}$; cf.~the last part of the theorem).
Then \eqref{outerdelta} follows from \eqref{estimuouter}.
Finally, when $R=1$, assertion~(iii) is guaranteed by Lemma~\ref{Lemcontrolx1}.
\end{proof}

\subsection{Proof Proposition~\ref{mainAPE}: the key a priori estimate}\label{section6}

This section is devoted to the proof of Proposition~\ref{mainAPE} and we make the following conventions 
throughout:

\vskip 2pt

$\bullet${\hskip 1.5mm}$C$ will denote a generic positive constant depending only on $p,\ell$;

\vskip 1pt

$\bullet${\hskip 1.5mm}the required largeness of $s_0\gg 1$ will depend on the parameter $\eps$, but not on $d$. 

\vskip 1pt

\noindent 
If $R=1$, we shall consider the extension $\tilde F$ defined in Lemma~\ref{BernsteinEst},
whereas if $R=\infty$ we just set $\tilde F=F$.
We shall make use of the variation of constants formula for $\tilde v$, given by \eqref{varconstv2tilde}
in Proposition~\ref{locexistvapproxExt},
where the initial data is given by
\be{defv0}
\tilde v(y,s_0)=\tilde w(y,s_0)-U(y)=w_0(y)-U(y)=\sum_{j=0}^{\ell-1}d_j\varphi_j-e^{-\lambda s_0}\hat\phi.
\ee

\subsubsection{First estimates.}

The following lemma gives pointwise estimates of the nonlinear term in the different regions.

\begin{lem} \label{LemvyFvy}
If $s_1\ge s_0\gg 1$ then, for any $d\in \mathcal{U}_{s_0,s_1}$ and $s\in[s_0,s_1]$, we have
\be{controlvybasic}
|\tilde v_y(y,s)|\le
\begin{cases}
Cy^{-\beta}& \hbox{ for $0< y\le y_0(s)$} \\
Ce^{-\eta s}y^{-\beta-1} +CK^{\beta+1}e^{-\mu s}y^{-\beta}
& \hbox{ for $y_0(s)\le y\le y_1(s)$} \\
Ce^{-\lambda s}(y+y^{2\ell-1})& \hbox{ for $y_1(s)\le y\le y_2(s)$} \\
Ce^{-\beta s/2}& \hbox{ for $y\ge y_2(s)$}
\end{cases}
\ee
where $\mu=\eta-\gamma$, and 
\be{controlFvybasic}
0\le \tilde F(\tilde v_y(y,s))\le
\begin{cases}
Cy^{-\beta-1}& \hbox{ for $0< y\le y_0(s)$} \\
C e^{-2\eta s}y^{-\beta-3} +CK^{2(\beta+1)}e^{-2\mu s}y^{-\beta-1}
& \hbox{ for $y_0(s)\le y\le y_1(s)$} \\
C e^{-2\lambda s}y^{\beta+1} \bigl(1+y^{4(\ell-1)}\bigr) & \hbox{ for $y_1(s)\le y\le y_2(s)$} \\
Ce^{-(\beta+1)s/2}& \hbox{ for $y\ge y_2(s)$.}  
\end{cases}
\ee
 \end{lem}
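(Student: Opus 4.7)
The plan is to establish \eqref{controlvybasic} region by region using Proposition~\ref{PropInOut} on the inner part $[0, y_1(s)]$ and the defining bound of $\mathcal{A}^1_{s_0,s_1}$ on the intermediate part; then \eqref{controlFvybasic} will follow pointwise from the identity $\tilde F(\tilde v_y) = |U_y + \tilde v_y|^p - U_y^p - pU_y^{p-1}\tilde v_y$ (valid wherever $\zeta = 1$, with nonnegativity by convexity of $s \mapsto |s|^p$), together with a separate cutoff-term estimate near $y = y_2$.

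On the deep inner region $[0, y_0(s)]$, \eqref{estim0wyin} gives $|w_y(y,s)| \le Ce^{(\eta - \lambda)s} = Ce^{\eta\beta s}$ (since $U'_{a_\pm}(e^{\eta s}y)$ is $O(1)$ when $e^{\eta s}y \le 1$), and the identity $\eta\beta = \eta - \lambda$ together with $U'(y) = d_p y^{-\beta} \ge d_p e^{\eta\beta s}$ yields $|\tilde v_y| \le CU'(y) \le Cy^{-\beta}$. On the transition region $[y_0(s), y_1(s)]$, where $e^{\eta s}y \ge 1$ hence $a_\pm e^{-\eta s}y^{-1} \le C$, I would Taylor-expand
\[
e^{(\eta - \lambda)s}U'_{a_\pm}(e^{\eta s}y) = d_p y^{-\beta}\bigl(1 + a_\pm e^{-\eta s}y^{-1}\bigr)^{-\beta} = U'(y) + O\bigl(e^{-\eta s}y^{-\beta-1}\bigr),
\]
and combine with the relative error factor $(1 \pm \epsilon_1(s))$ from \eqref{estim0wyin}, with $\epsilon_1(s) = CK^{\beta+1}e^{-\mu s}$, to obtain $|\tilde v_y| \le Ce^{-\eta s}y^{-\beta-1} + CK^{\beta+1}e^{-\mu s}y^{-\beta}$.

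On the intermediate region $[y_1(s), y_2(s)]$ the bound $|\tilde v_y| \le \epsilon e^{-\lambda s}(y + y^{2\ell - 1})$ is immediate from $\tilde w \in \mathcal{A}^1_{s_0,s_1}$ with $i = 1$, and $\epsilon \le 1$ absorbs $\epsilon$ into the constant. On the outer region $y \ge y_2(s)$, standard Bernstein-type estimates \eqref{boundSZux} (together with Lemma~\ref{Lemcontrolx1} when $R = 1$, which keeps $u$ classical at $x = R$) give $|u_x(x,t)| \le C$ for $x \ge \sigma/2$, whence $|w_y(y,s)| = e^{(k - 1/2)s}|u_x| \le Ce^{-\beta s/2}$; the cutoff contribution $|\zeta_y w|$ to $\tilde w_y$ is likewise bounded by $Ce^{-s/2} \cdot e^{ks} \le Ce^{-\beta s/2}$ using $|u| \le \|u_0\|_\infty$, and $|U'(y)| \le C\sigma^{-\beta}e^{-\beta s/2}$ closes the estimate.

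For \eqref{controlFvybasic} I would distinguish cases according to the relative size of $|\tilde v_y|$ and $U_y = d_p y^{-\beta}$. In Region~1, where both are $O(y^{-\beta})$, each of the three terms in $\tilde F$ is bounded by $Cy^{-\beta p} = Cy^{-\beta - 1}$ (using $\beta p = \beta + 1$). In Regions~2 and 3 one verifies that $|\tilde v_y| \le \frac12 U_y$ for $s_0$ large enough (Region~2) and $\sigma$ small enough (Region~3, using $y \le \sigma e^{s/2}$ and the explicit powers), which justifies the Taylor-type bound $\tilde F(\tilde v_y) \le C U_y^{p-2}\tilde v_y^2 = Cy^{\beta-1}\tilde v_y^2$ (from $\beta(p-2) = 1 - \beta$); squaring the $\tilde v_y$-bound and redistributing powers of $y$ yields the stated expressions. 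In Region~4, all three Taylor terms as well as the cutoff-induced contribution from $\tilde g$ (bounded by $Ce^{(k-1)s}\|u\|_{C^1([R/3,R/2])} \le Ce^{-(\beta+1)s/2}$ in the case $R = 1$) are $O(e^{-\beta p s/2}) = O(e^{-(\beta+1)s/2})$. The main obstacle will be the bookkeeping in the transition Region~2, where the two asymptotic contributions — the leading-order cancellation $U'(y) - e^{(\eta-\lambda)s}U'_{a_\pm}(e^{\eta s}y)$ and the $\epsilon_1(s)$ relative error — come with different $y$-dependencies and must be tracked simultaneously; verifying $|\tilde v_y| \le \frac12 U_y$ throughout this range reduces to the two smallness conditions $e^{-\eta s}y^{-1} \le 1$ (which is exactly $y \ge y_0(s)$) and $K^{\beta+1}e^{-\mu s} \ll 1$ (satisfied for $s_0$ large depending on $K$, hence on $\epsilon$).
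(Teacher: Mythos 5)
Your treatment of the first three regions of \eqref{controlvybasic}, and of all of \eqref{controlFvybasic}, matches the paper's proof in substance: the inner cases come from \eqref{estim0wyin} via $e^{\beta\eta s}U'_{a}(e^{\eta s}y)\le U'(y)$ (deep inner part) and the expansion $(1+ay^{-1}e^{-\eta s})^{-\beta}=1+O(e^{-\eta s}y^{-1})$ plus the $\eps_1(s)$ relative error (transition part); the intermediate case is the definition of $\mathcal{A}^1_{s_0,s_1}$; and the bound on $\tilde F$ is the second-order Taylor/mean-value estimate $F(v_y)=\frac{p(p-1)}{2}|U_y+\bar\theta v_y|^{p-2}v_y^2$ with $|U_y+\bar\theta v_y|\le Cy^{-\beta}$, which is exactly what the paper does (it does not even need your factor $\tfrac12 U_y$, only the one-sided bound $|v_y|\le Cy^{-\beta}$, but your smallness check is correct). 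Your power bookkeeping $(p-2)\beta=1-\beta$, $\beta p=\beta+1$ is right.

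There is, however, a genuine gap in the outer region $y\ge y_2(s)$ of \eqref{controlvybasic}. You invoke the Bernstein estimate \eqref{boundSZux} to get $|u_x|\le C$ for $x\ge\sigma/2$, but that estimate reads $|u_x|\le CK(t^{-1/p}+x^{-1}+(R-x)^{-1})$ with $K=\sup|u|$, and it fails to deliver the conclusion in both relevant cases: for $R=1$ the term $(R-x)^{-1}$ is unbounded as $x\to R$, so \eqref{boundSZux} gives no uniform bound on $[\sigma/2,1)$ (Lemma~\ref{controlGBU1} only controls $u$, hence $u_x$ at the single point $x=1$, not on a neighborhood); and for $R=\infty$ with $\ell$ odd the construction uses the unbounded initial data \eqref{defw0odd}, so $K=\infty$ and \eqref{boundSZux} does not apply at all. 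The paper closes this region differently: it applies the maximum principle to $z=u_x$, which solves $z_t-z_{xx}=p|z|^{p-2}zz_x$, on $(\sigma,R)\times(0,T-e^{-s_1})$, controlling $z$ on the parabolic boundary by the intermediate-region estimate \eqref{estimwyy2} at $x=\sigma$ (i.e.\ $y=y_2(s)$), an explicit computation \eqref{estimwys0} on the initial data for $y\ge\sigma e^{s_0/2}$, and Lemma~\ref{Lemcontrolx1} at $x=R$ when $R=1$. You would need to replace your Bernstein step by this (or an equivalent) barrier/maximum-principle argument for $u_x$; the remaining pieces of your proof, including the cutoff-term estimates via \eqref{Ftildeg1} in the fourth case of \eqref{controlFvybasic}, then go through as in the paper.
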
 

\begin{proof}
By \eqref{estim0wyin} and recalling $\eta-\lambda=\beta\eta$, for $y\in (0,y_1(s)]$, we have
$$ (1-\eps_1(s))e^{\beta\eta s} U'_{a_+}(e^{\eta s}y)-U'(y)\le v_y(y,s)  
\le (1+\eps_1(s))e^{\beta\eta s} U'_{a_-}(e^{\eta s}y)-U'(y),$$
with $\eps_1$ given in Proposition~\ref{PropInOut}.
Since $0\le e^{\beta\eta s} U'_a(e^{\eta s}y)\le e^{\beta\eta s}U'(e^{\eta s}y)=U'(y)$, this particular gives the first case in \eqref{controlvybasic}.
Next, using $0\le 1-(1+h)^{-\beta} 
\le\beta h$, we have, for $a\in\{a_+,a_-\}$,
$$\begin{aligned}
|e^{\beta\eta s} U'_a(e^{\eta s}y)-U'(y)|
&= d_p\bigl|  e^{\beta\eta s} (a+ye^{\eta s})^{-\beta}-y^{-\beta}\bigr| 
= d_p\bigl| (y+ae^{-\eta s})^{-\beta}-y^{-\beta} \bigr| \\
&= d_p\bigl|y^{-\beta} (1+ay^{-1}e^{-\eta s})^{-\beta}-1\bigr|
\le Ce^{-\eta s}y^{-\beta-1}.
\end{aligned}$$
Since $0\le \eps_1(s) e^{\beta\eta s} U'_a(e^{\eta s}y)\le CK^{\beta+1}e^{-\mu s}y^{-\beta}$, 
this ensures the second case in \eqref{controlvybasic}.
As for the third case, it follows from $\tilde w\in \mathcal{A}^1_{s_0,s_1}$.

To verify the fourth case, we shall apply the maximum principle to the equation 
\be{eqnzux}
z_t-z_{xx}=p|z|^{p-2}zz_x
\ee
satisfied by $z=u_x$.
Since $\tilde w\in \mathcal{A}^1_{s_0,s_1}$, at $y=y_2(s)=\sigma e^{s/2}$, we have
\be{estimwyy2}
|w_y|\le Cy^{-\beta}+Ce^{-\lambda s}y^{2\ell-1}
\le Ce^{-\beta s/2}+Ce^{(k-\ell+\frac{2\ell-1}{2})s}=Ce^{-\beta s/2}.
\ee
Also, by \eqref{condd}--\eqref{defw0odd} and \eqref{boundb0}, for $y\in [\sigma e^{s_0/2},\infty)$, we have
\begin{eqnarray}
 \noalign{\vskip -2mm}
 \hskip -6mm |w_y(y,s_0)|
\hskip -6mm &&\le \ts\frac{1}{2\sigma}e^{-\frac{s_0}{2}}|\Theta_1'\bigl(\ts\frac{1}{2\sigma}e^{-\frac{s_0}{2}}y\bigr)|\Bigl|U-e^{-\lambda s_0}\phi
+\ds\sum_{j=0}^{\ell-1}d_j\varphi_j\Bigr|  \notag \\
 \noalign{\vskip -2mm}
\hskip -6mm &&\qquad\qquad\qquad +\Theta_1\bigl(\ts\frac{1}{2\sigma}e^{-\frac{s_0}{2}}y\bigr)\Bigl|U'-e^{-\lambda s_0}\phi'+\ds\sum_{j=0}^{\ell-1}d_j\phi'_j\Bigr| 
+ C e^{-\frac{\beta s_0}{2}} \label{estimwys0} \\
 \noalign{\vskip -1mm}
&&\le C\Bigl\{\ts\frac{1}{2\sigma}e^{ -\frac{s_0}{2}}\bigl(y^{1-\beta}+e^{-\lambda s_0}y^{2\ell}\bigr)
+y^{-\beta}+e^{-\lambda s_0}y^{2\ell-1}\Bigr\} \chi_{\{y\le 4\sigma e^{\frac{s_0}{2}}\}} + Ce^{-\frac{\beta s_0}{2}} \le Ce^{-\frac{\beta s_0}{2}}. \notag
\end{eqnarray}
Moreover, in the case $R=1$, Lemma~\ref{Lemcontrolx1} guarantees  
\be{bounduux1a}
\|u(t)\|_\infty\le 1,\quad 0<t<T
\ee
and
$|u_x(1,t)|\le 1,\ 0<t<T$.
Expressing \eqref{estimwyy2}, \eqref{estimwys0} in terms of $u_x$ (cf.~\eqref{defw}),
we see that $|u_x|\le C$ on the parabolic boundary of 
$(\sigma,R)\times (0,T-e^{-s_1})$.
Also recall that the solution of problem \eqref{equ} with \eqref{u0space} satisfies  
$u_x\in L^\infty((0,R)\times (0,\tau))$ for each $\tau\in (0,T)$.
In view of  this, we may apply the maximum principle (see~e.g.~\cite[Proposition 52.4]{QSbook})
 to equation \eqref{eqnzux} to deduce that 
\be{bounduux2}
|u_x|\le C\quad\hbox{ in $[\sigma,R]\times [0,T-e^{-s_1}]$.}
\ee
Since 
$$\tilde w_y(y,s)=e^{-\beta s/2}\bigl[\zeta'(ye^{-s/2})u(ye^{-s/2},T-e^{-s})+\zeta(ye^{-s/2})u_x(ye^{-s/2},T-e^{-s})\bigr]$$
by \eqref{defextuwv},  the fourth case in \eqref{controlvybasic} follows from \eqref{bounduux2},
 using also \eqref{bounduux1a} in case $R=1$.

On the other hand, for some $\bar\theta\in(0,1)$, we have
\be{Fvy2}
0\le F(v_y)=|U_y+v_y|^p-U_y^p-pU_y^{p-1}v_y=\frac{p(p-1)}{2}|U_y+\theta v_y|^{p-2}(v_y)^2.
\ee
Note that
$e^{-\lambda s}(y+y^{2\ell-1})\le Ce^{-\lambda s}e^{(2\ell-1)s/2}=Ce^{-\beta s/2}\le Cy^{-\beta}$ for $y\in[y_1(s),y_2(s)]$.
Assuming $s_0$ large enough so that $K^{\beta+1}e^{-\mu s_0}\le 1$, we see from \eqref{controlvybasic} that $|U_y+\theta v_y|\le Cy^{-\beta}$ for $y\le y_2(s)$.
and that $|U_y+\theta v_y|\le Ce^{-s/2}$ for $y\ge y_2(s)$.
This along with \eqref{controlvybasic}, \eqref{Fvy2} and $(p-2)\beta=1-\beta$, readily yields 
the first three cases in \eqref{controlFvybasic}, as well as the fourth case when $R=\infty$.

To check the fourth case when $R=1$,
note from \eqref{Extension1} that
\be{Ftildeg1}
\tilde F(\tilde v_y,s)=e^{(k-1)s}g(ye^{-s/2},T-e^{-s})-U_y^p-pU_y^{p-1}\tilde v_y,\quad y\ge y_2(s),
\ee
where $g(x,t):=|u_x|^p\zeta -2u_x \zeta_x-u\zeta_{xx}$. Since $|g(x,t)|\le C$
in $[\sigma,R]\times [0,T-e^{-s_1}]$ in view of \eqref{bounduux2} and \eqref{boundunifvisc}, we deduce that
$$|\tilde F(\tilde v_y,s)|\le Ce^{(k-1)s}+CU_y^p+C\tilde v_y^p\le Ce^{-(\beta+1)s/2},\quad y\ge y_2(s).
\qedhere$$
\end{proof}

We next estimate the Fourier coefficients of the nonlinear term with
respect to the eigenfunctions of the linearized operator.

\begin{lem}  \label{LemFvytau}
 Set $\bar\eta=\min(\eta,2\lambda)$. If $s_1\ge s_0\gg 1$ then, for any $d\in \mathcal{U}_{s_0,s_1}$, we have
\be{Fvytau}
|(\tilde F(\tilde v_y(\tau)),\varphi_j)|\le C(j+1)^{3/2}e^{-\bar\eta\tau},\quad j\in\N,\ \tau\in[s_0,s_1].
\ee
\end{lem}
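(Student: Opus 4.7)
The strategy is to estimate the inner product
$I_j(\tau):=\int_0^\infty \tilde F(\tilde v_y(y,\tau))\,\varphi_j(y)\,\rho(y)\,dy$
directly by pointwise bounds, rather than by Cauchy-Schwarz (which would only give a weighted $L^2$ estimate of $\tilde F$ without extracting the correct $j$-dependence). Using the eigenfunction estimate $|\varphi_j(y)|\le C(j+1)^{3/2}e^{y^2/8}$ from Proposition~\ref{EstimEigen} and $\rho(y)=y^\alpha e^{-y^2/4}$, the integrand is bounded by $C(j+1)^{3/2}\,|\tilde F(\tilde v_y)|\,y^\alpha e^{-y^2/8}$. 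The plan is then to split the integral at the three natural thresholds $y_0(s)=e^{-\eta s}$, $y_1(s)=Ke^{-\gamma s}$ and $y_2(s)=\sigma e^{s/2}$, and in each region insert the corresponding pointwise bound on $\tilde F(\tilde v_y)$ supplied by \eqref{controlFvybasic}.

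The four contributions then work out as follows, using crucially that $\alpha=\beta+1$. In the innermost layer $(0,y_0]$, the powers of $y$ cancel ($y^{\alpha}\cdot y^{-\beta-1}=1$), so the integral reduces to a bounded Gaussian times the length $y_0$ of the interval, yielding $C(j+1)^{3/2}e^{-\eta\tau}$. In the intermediate inner layer $[y_0,y_1]$, the term $Ce^{-2\eta\tau}y^{-\beta-3}$ produces an integrand proportional to $e^{-2\eta\tau}y^{-2}$, whose integral is controlled by $y_0^{-1}=e^{\eta\tau}$, giving again $C(j+1)^{3/2}e^{-\eta\tau}$; the term $CK^{2(\beta+1)}e^{-2\mu\tau}y^{-\beta-1}$ integrates to $CK^{2\alpha+1}e^{-(2\mu+\gamma)\tau}$, and one checks $2\mu+\gamma=2\eta-\gamma>\eta$ since $\gamma<\eta$, so this is also absorbed into $C(j+1)^{3/2}e^{-\eta\tau}$. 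In the intermediate outer layer $[y_1,y_2]$, the bound $Ce^{-2\lambda\tau}y^{\beta+1}(1+y^{4(\ell-1)})$ combined with the Gaussian weight gives a finite integral independent of $\tau$, contributing $C(j+1)^{3/2}e^{-2\lambda\tau}$. Finally, on $[y_2,\infty)$ the bound $Ce^{-(\beta+1)\tau/2}$ is multiplied by $\int_{y_2}^\infty y^\alpha e^{-y^2/8}dy$, which is super-exponentially small in $e^{\tau}$ and hence negligible. Summing, $|I_j(\tau)|\le C(j+1)^{3/2}\bigl(e^{-\eta\tau}+e^{-2\lambda\tau}\bigr)\le C(j+1)^{3/2}e^{-\bar\eta\tau}$ with $\bar\eta=\min(\eta,2\lambda)$.

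The main point, and the only step requiring real care, is the cancellation $\alpha-\beta-1=0$ that makes the inner-region integrals convergent at $y=0$ with sharp rate, together with verifying the algebraic inequality $2\mu+\gamma>\eta$ so that the second inner contribution does not dominate. Everything else is bookkeeping over the four regions. The two decay rates $e^{-\eta\tau}$ and $e^{-2\lambda\tau}$ come respectively from the quasi-stationary inner layer (where $\tilde v_y$ is of size $U'$) and from the linearized intermediate layer (where $\tilde v$ is of size $e^{-\lambda\tau}\varphi_\ell$, so $\tilde F(\tilde v_y)\sim (\tilde v_y)^2$ is of size $e^{-2\lambda\tau}$), and their minimum $\bar\eta$ is intrinsic to the construction.
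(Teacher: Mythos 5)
Your proof is correct and follows essentially the same route as the paper: the same four-region splitting at $e^{-\eta\tau}$, $Ke^{-\gamma\tau}$, $\sigma e^{\tau/2}$, the same pointwise bounds on $\tilde F(\tilde v_y)$ from \eqref{controlFvybasic}, the same cancellation $y^\alpha\, y^{-\beta-1}=1$ in the inner layers, and the same check that $2\mu+\gamma>\eta$ so the $K$-dependent term is absorbed for $s_0\gg1$. The only cosmetic difference is that in the two outermost regions the paper applies Cauchy--Schwarz with $\|\varphi_j\|=1$ (which there even gives a $j$-independent bound), whereas you use the pointwise eigenfunction estimate throughout; both work, so your opening remark dismissing Cauchy--Schwarz is slightly overstated but harmless.
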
 

\begin{proof} 
We write
$$|(\tilde F(\tilde v_y(\tau)),\varphi_j)|\le 
\int_0^\infty |\tilde F(\tilde v_y(\tau))| |\varphi_j| \rho\, dy
=\int_0^{e^{-\eta\tau}}+\int_{e^{-\eta\tau}}^{Ke^{-\gamma\tau}}+\int_{Ke^{-\gamma\tau}}^{\sigma e^{\tau/2}}
+\int_{\sigma e^{\tau/2}}^\infty \equiv \sum_{n=1}^4 J_n.$$
Using \eqref{controlFvybasic}, \eqref{defK}, \eqref{EstimEigen1}, the Cauchy-Schwarz' inequality
 and $\|\varphi_j\|=1$, we obtain
$$\begin{aligned}
J_1&\le C \int_0^{e^{-\eta\tau}} y^{-(\beta+1)}y^\alpha |\varphi_j|dy
= C \int_0^{e^{-\eta\tau}} |\varphi_j|dy\le C(j+1)^{3/2} e^{-\eta\tau}, \\
J_2&\le C\int_{e^{-\eta\tau}}^{Ke^{-\gamma \tau}} 
\bigl(e^{-2\eta\tau} y^{-\beta-3} +K^{\beta+1}e^{-2\mu\tau} y^{-\beta-1}\bigr) |\varphi_j| y^\alpha dy\\
&\le C(j+1)^{3/2} \int_{e^{-\eta\tau}}^{Ke^{-\gamma \tau}}  \bigl(e^{-2\eta\tau}y^{-2}+K^{\beta+1}e^{-2\mu\tau}\bigr) dy 
\le C(j+1)^{3/2} \bigl(e^{-\eta\tau}+K^{\beta+2}e^{-(2\mu+\gamma)\tau}\bigr) \\
&\le C(j+1)^{3/2}e^{-\eta\tau} \bigl(1+\nu^{1-2p}e^{-\mu\tau}\bigr)\le C(j+1)^{3/2}e^{-\eta\tau},\\
J_3&\le C e^{-2\lambda\tau} \int_{Ke^{-\gamma\tau}}^{\sigma e^{\tau/2}} 
y^{\beta+1} \bigl(1+y^{4(\ell-1)}\bigr) |\varphi_j|\rho\le 
C e^{-2\lambda\tau} \|\varphi_j\| =C e^{-2\lambda\tau}, \\ 
J_4&\le Ce^{-(\beta+1)\tau/2}\int_{\sigma e^{\tau/2}}^\infty |\varphi_j|\rho\le 
Ce^{-(\beta+1)\tau/2} \|\varphi_j\|\Bigl(\int_{\sigma e^{\tau/2}}^\infty e^{-y^2/4}\,dy\Bigr)^{1/2}
\le Ce^{-C e^\tau}, 
\end{aligned}$$
hence \eqref{Fvytau}.
\end{proof}

We next show that, if the projections of $\tilde v$ on the lower eigenmodes vanish at the final time $s_1$,
then the corresponding coefficients $d_i$ of the initial data have to be small when $s_0\gg 1$.

\begin{lem}  \label{Lemdsmallnu}
If $s_1\ge s_0\gg 1$ then, for any $d\in \mathcal{U}_{s_0,s_1}$ such that $P(d;s_0,s_1)=0$, we have 
\be{dsmall}
|d_j|\le \nu e^{-\lambda s_0},\quad j\in\{0,\dots,\ell-1\}.
\ee
\end{lem}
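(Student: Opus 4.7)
The plan is to take the $L^2_\rho$-inner product of the variation of constants formula \eqref{varconstv2tilde} (at $s = s_1$) with each $\varphi_j$, $j \in \{0,\dots,\ell-1\}$, and then exploit the assumption $P(d;s_0,s_1) = 0$ to solve for $d_j$ and bound it. Concretely, since $\varphi_j$ is an eigenfunction of $\mathcal{L}$ with eigenvalue $\lambda_j = j-k$, the action of the semigroup on the $j$th mode is just multiplication by $e^{-(s-s_0)\lambda_j}$, as follows from \eqref{defSG2}. Taking inner products in \eqref{varconstv2tilde} with $\varphi_j$, using $p_j = (\tilde v(s_1),\varphi_j) = 0$, the expression \eqref{defv0} of $\tilde v(\cdot,s_0)$ together with the orthonormality of $(\varphi_j)$, and multiplying through by $e^{\lambda_j(s_1-s_0)}$ yields the key identity
\[
d_j = e^{-\lambda s_0}(\hat\phi,\varphi_j) - \int_{s_0}^{s_1} e^{\lambda_j(\tau-s_0)} (\tilde F(\tilde v_y(\tau)),\varphi_j)\, d\tau.
\]

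I will next show that both terms on the right are $o(e^{-\lambda s_0})$ as $s_0 \to \infty$, uniformly in $d$ satisfying \eqref{condd} and in $s_1 \ge s_0$. For the first term, since $\phi = \varphi_\ell \perp \varphi_j$ in $L^2_\rho$ for $j < \ell$, I will use Lemma~\ref{Lemcvcphi} and Cauchy--Schwarz to obtain
\[
|(\hat\phi,\varphi_j)| = |(\hat\phi-\phi,\varphi_j)| \le \|\hat\phi-\phi\|,
\]
which tends to $0$ as $s_0 \to \infty$. For the second term, Lemma~\ref{LemFvytau} provides $|(\tilde F(\tilde v_y(\tau)),\varphi_j)| \le C(j+1)^{3/2} e^{-\bar\eta\tau}$ with $\bar\eta = \min(\eta, 2\lambda)$. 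The crucial observation is that $\eta = \frac{p-1}{p-2}\lambda > \lambda$ and $2\lambda > \lambda$, so $\bar\eta > \lambda > \lambda_j$ for every $j \le \ell-1$; hence
\[
\left|\int_{s_0}^{s_1} e^{\lambda_j(\tau-s_0)}(\tilde F(\tilde v_y(\tau)),\varphi_j)\,d\tau\right|
\le \frac{C(\ell)}{\bar\eta - \lambda_j}\, e^{-\bar\eta s_0} \le C e^{-\bar\eta s_0}.
\]

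Combining the two bounds gives
\[
|d_j| \le e^{-\lambda s_0}\bigl(\|\hat\phi-\phi\| + C e^{-(\bar\eta-\lambda)s_0}\bigr),
\]
and choosing $s_0 = s_0(p,\ell,\eps)$ sufficiently large (using Lemma~\ref{Lemcvcphi}) makes the bracketed quantity smaller than $\nu = M_0\eps$, yielding \eqref{dsmall}. There is no substantial obstacle here: the only point requiring a moment's care is the diagonal action of $e^{-s\mathcal{L}}$ on the $\varphi_j$ in the $H'$-valued formulation of Proposition~\ref{locexistvapproxExt}, but this is immediate from the defining formula \eqref{defSG2}. The decisive ingredient is the strict spectral gap $\bar\eta > \lambda > \lambda_j$, which makes the nonlinear contribution negligible compared to $e^{-\lambda s_0}$.
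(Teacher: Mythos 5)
Your proposal is correct and follows essentially the same route as the paper: project the variation of constants formula onto $\varphi_j$, use $(\tilde v(s_1),\varphi_j)=0$ to solve for $d_j$, and control the two resulting terms via Lemma~\ref{Lemcvcphi} (with the orthogonality $(\hat\phi,\varphi_j)=(\hat\phi-\phi,\varphi_j)$) and Lemma~\ref{LemFvytau} together with the spectral gap $\bar\eta>\lambda>\lambda_j$. No gaps.
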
 

\begin{proof}
Taking inner product of \eqref{varconstv2tilde} with $\varphi_j$ and using \eqref{defSG2}, we get
$$(\tilde v(s),\varphi_j)=e^{-\lambda_j(s-s_0)}(\tilde v(s_0),\varphi_j)+\int_{s_0}^s e^{-\lambda_j(s-\tau)}(\tilde F(\tilde v_y(\tau)),\varphi_j)\,d\tau,\quad j\in\N.$$
For each $j\in\{0,\dots,\ell-1\}$, using $(\tilde v(s_1),\varphi_j)=0$ and multiplying with $e^{-\lambda_j(s-s_1)}$, we get, for all $s\in [s_0,s_1]$,
\be{v0phij}
e^{-\lambda_j(s-s_0)}(\tilde v(s_0),\varphi_j)=-\int_{s_0}^{s_1} e^{-\lambda_j(s-\tau)}(\tilde F(\tilde v_y(\tau)),\varphi_j)\,d\tau,\quad j\in\{0,\dots,\ell-1\}.
\ee
Let $j\in\{0,\dots,\ell-1\}$. It follows from \eqref{v0phij} with $s=s_0$ and \eqref{defv0} that
$$|d_j|\le e^{-\lambda s_0}|(\hat\phi,\varphi_j)|+\int_{s_0}^{s_1} e^{-\lambda_j(s_0-\tau)}|(\tilde F(\tilde v_y(\tau)),\varphi_j)|\,d\tau.$$
Then using $(\hat\phi,\varphi_j)=(\hat\phi-\phi,\varphi_j)$, \eqref{tildephicv}, \eqref{Fvytau}, 
$\bar\eta>\lambda>\lambda_j$, we deduce that,
for $s_0\gg 1$,
$$
|d_j|
\le \tsfr\nu e^{-\lambda s_0}+Ce^{-\lambda_j s_0}\ds\int_{s_0}^\infty e^{(\lambda_j-\bar\eta)\tau}\,d\tau
=  \tsfr\nu e^{-\lambda s_0}+Ce^{-\bar\eta s_0}\le \nu e^{-\lambda s_0}.
\qedhere
$$
\end{proof}

To proceed with estimating $\tilde v$, using \eqref{varconstv2tilde}, we split $\tilde v$ as
\be{defS123}
\tilde v=S_1+S_2+S_3,\ \hbox{ where }\ 
\begin{cases}
S_1(y,s)&=-e^{-\lambda s}(\hat\phi,\phi)\phi,\\
\noalign{\vskip 1mm}
S_2(y,s)&=\ds\sum_{j=0}^{\ell-1}d_je^{-\lambda_j(s-s_0)}\varphi_j-\sum_{j\ne\ell} e^{-\lambda s_0}e^{-\lambda_j(s-s_0)}(\hat\phi,\varphi_j)\varphi_j, \\
S_3(y,s)&=\ds\int_{s_0}^s e^{-(s-\tau)\mathcal{L}}\tilde F(\tilde v_y(\tau))\,d\tau.
\end{cases}
\ee
The following pointwise bounds for the initial data of $S_2$ will be needed below.

\begin{lem}  \label{LemestimS2s0}
If $s_1\ge s_0\gg 1$ then, for any $d\in \mathcal{U}_{s_0,s_1}$ such that $P(d;s_0,s_1)=0$ we have
\be{estimS2}
|S_2(y,s_0)|\le
\begin{cases}
Ce^{-\lambda s_0}& \hbox{ in $D_1:=(0,\tilde K e^{-\gamma s_0}]$} \\
C\nu e^{-\lambda s_0}(1+y^{2\ell})& \hbox{ in 
$D_2:=[\tilde K e^{-\gamma s_0},2\sigma e^{s_0/2}]$} \\
C y^{2\ell}& \hbox{ in $D_3:=[2\sigma e^{s_0/2},\infty)$, }
\end{cases}
\ee
and
\be{estimDS2}
|\partial_yS_2(y,s_0)|\le
\begin{cases}
Cy^{-\beta}& \hbox{ in $D_{1,1}:=(0,e^{-\eta s_0}]$} \\
C \tilde K^{\beta+2} y^{-\beta-1}  e^{-\eta s_0}& \hbox{ in $D_{1,2}:=[e^{-\eta s_0},\tilde K e^{-\gamma s_0}]$} \\
C\nu e^{-\lambda s_0}(y+y^{2\ell-1})& \hbox{ in $D_2$} \\
C y^{2\ell-1}& \hbox{ in $D_3$.}
\end{cases}
\ee
\end{lem}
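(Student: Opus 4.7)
The plan is to evaluate $S_2(\cdot,s_0)$ directly from the decomposition $\tilde v = S_1+S_2+S_3$ in \eqref{defS123}: since $S_3(\cdot,s_0)=0$ and $S_1(y,s_0) = -e^{-\lambda s_0}(\hat\phi,\phi)\phi(y)$, using \eqref{defv0} I obtain
\begin{equation*}
S_2(y,s_0) = \sum_{j=0}^{\ell-1} d_j\varphi_j(y) - e^{-\lambda s_0}\hat\phi(y) + e^{-\lambda s_0}(\hat\phi,\phi)\phi(y).
\end{equation*}
I then plug in the piecewise formula \eqref{w0tildephi2} (or \eqref{w0tildephi2odd}) for $\hat\phi$ on each of $D_1,D_2,D_3$ and combine it with three ingredients: the smallness $|d_j|\le \nu e^{-\lambda s_0}$ from Lemma~\ref{Lemdsmallnu}; the norm convergence $\|\hat\phi-\phi\|\to 0$ from Lemma~\ref{Lemcvcphi}, which via Cauchy--Schwarz yields $|(\hat\phi,\phi)-1|\le \nu$ for $s_0\gg 1$; and the pointwise polynomial bounds $|\varphi_j^{(i)}(y)|\le C(y^i+y^{2j-i})$ coming from Proposition~\ref{GaussianPoincare2}(iii).

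The intermediate region $D_2$ is the easiest: there $\hat\phi = \phi$, so two of the three terms collapse to $e^{-\lambda s_0}[(\hat\phi,\phi)-1]\phi$, whose prefactor is $O(\nu)$, and the claimed bounds on $S_2$ and $\partial_y S_2$ follow from $|\phi(y)|\le C(1+y^{2\ell})$, $|\phi'(y)|\le C(y+y^{2\ell-1})$ together with the bound on $d_j$. In the outer region $D_3$, the piecewise form of $\hat\phi$ reduces $\tilde v(y,s_0)$ to $-U(y)$ on $[4\sigma e^{s_0/2},\infty)$ under \eqref{defw0} and to $(b_0-1)U(y)$ on all of $D_3$ under \eqref{defw0odd} (with $b_0$ bounded by Lemma~\ref{controlinner}(i)), while on the transitional interval $[2\sigma e^{s_0/2},4\sigma e^{s_0/2}]$ the cut-off satisfies $|\Theta'|\le Ce^{-s_0/2}$; combining this with $|U(y)|\le Cy^{2\ell}$ and $|U'(y)|\le Cy^{2\ell-1}$ for $y\ge 1$, and $e^{-\lambda s_0}|\phi^{(i)}(y)|\le Cy^{2\ell-i}$ for $y\ge 2\sigma e^{s_0/2}$, the claimed $Cy^{2\ell}$ and $Cy^{2\ell-1}$ bounds follow.

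The main technical obstacle is the inner region $D_1$, where the explicit form of $\hat\phi$ produces the cancellation
\begin{equation*}
\tilde v(y,s_0) = -U(y) + e^{-\lambda s_0}U_a(e^{\eta s_0}y).
\end{equation*}
The identity $\lambda = (1-\beta)\eta$ built into \eqref{defgamma} gives the scaling relation $e^{-\lambda s_0}U(e^{\eta s_0}y) = U(y)$, so applying \eqref{UaTAF} to $U_a(e^{\eta s_0}y)$ rewrites the sum as $-c_p a^{1-\beta}e^{-\lambda s_0} + O(ay^{-\beta}e^{-\eta s_0})$. Since $y^{-\beta}e^{-\eta s_0}\le e^{-\lambda s_0}$ whenever $y\ge e^{-\eta s_0}$, this yields the $Ce^{-\lambda s_0}$ bound on that part of $D_1$, while for $y\le e^{-\eta s_0}$ I use instead $|U(y)|\le Cy^{1-\beta}\le Ce^{-\lambda s_0}$ together with the concavity estimate $U_a(e^{\eta s_0}y)\le U_a'(0)e^{\eta s_0}y = d_p a^{-\beta}e^{\eta s_0}y$. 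For $\partial_y S_2$ on $D_{1,1}\cup D_{1,2}$ the analogous step uses the identity
\begin{equation*}
-U'(y) + e^{\beta\eta s_0}U_a'(e^{\eta s_0}y) = d_p\bigl[(y+ae^{-\eta s_0})^{-\beta} - y^{-\beta}\bigr],
\end{equation*}
estimated by triangle inequality for $y\le ae^{-\eta s_0}$ (yielding the $Cy^{-\beta}$ bound on $D_{1,1}$) and by the mean value theorem for $y\ge ae^{-\eta s_0}$ (yielding the $C a e^{-\eta s_0}y^{-\beta-1}$ bound on $D_{1,2}$). The residual $e^{-\lambda s_0}(\hat\phi,\phi)\phi'(y)$ contribution, controlled via $|\phi'(y)|\le Cy$ near the origin, is absorbed thanks to the calibration $(\beta+2)\gamma = \beta\eta$ fixed in \eqref{defgamma}, which is precisely what makes the secondary $Ce^{-\lambda s_0}y$ term stay below the leading singular term $C e^{-\eta s_0}y^{-\beta-1}$ throughout $D_{1,2}$.
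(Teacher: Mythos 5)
Your proposal is correct and follows essentially the same route as the paper's proof: the same identity $S_2(\cdot,s_0)=\tilde v_0+e^{-\lambda s_0}(\hat\phi,\phi)\phi$, the same inputs (Lemma~\ref{Lemdsmallnu} for the $d_j$, Lemma~\ref{Lemcvcphi} for $(\hat\phi,\phi)$, the polynomial bounds on $\varphi_j$), the same cancellation between $U$ and $e^{-\lambda s_0}U_a(e^{\eta s_0}\cdot)$ in $D_1$, and the same calibration $y^{\beta+2}\le \tilde K^{\beta+2}e^{-\beta\eta s_0}$ to absorb the $ye^{-\lambda s_0}$ term on $D_{1,2}$. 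The only (immaterial) difference is that on $D_1$ the paper bounds $|v_0|$ in one stroke via the monotonicity of $h(z)=a^{1-\beta}+z^{1-\beta}-(a+z)^{1-\beta}$, whereas you split at $y=e^{-\eta s_0}$ and use \eqref{UaTAF} plus concavity.
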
 

\begin{proof}
From \eqref{defS123} and \eqref{w0tildephi} we have
\be{splitS2b}
S_2(y,s_0)
=\tilde v_0+e^{-\lambda s_0}(\hat\phi,\phi)\phi 
=\sum_{j=0}^{\ell-1}d_j\varphi_j+e^{-\lambda s_0}\bigl((\hat\phi,\phi)\phi-\hat\phi\bigr).
\ee
Let us first consider the range $D_1$.
Observe that the function $h(z):=a^{1-\beta}+z^{1-\beta}-(a+z)^{1-\beta}$ satisfies 
$0\le h(z)\le a^{1-\beta}$ for $z\ge 0$ (due to $h'(z)\ge 0$, $h(0)=0$). 
Therefore, by \eqref{defw0}, we have, for all $y\in D_1$, 
$$
|v_0(y)|
= \bigl|e^{-\lambda s_0} U_a(ye^{\eta s_0})-U(y)\bigr| = \Bigl|c_p e^{-\lambda s_0} \bigl[(a+ye^{\eta s_0})^{1-\beta}-a^{1-\beta}-(ye^{\eta s_0})^{1-\beta}\bigr]
\Bigr|\le Ce^{-\lambda s_0}.
$$
Also, for all $y\in D_1$,
$$\begin{aligned}
|v_{0,y}(y)|
&= \bigl|e^{-\lambda s_0}e^{\eta s_0} U'_a(ye^{\eta s_0})-U'(y)\bigr| 
= d_p\bigl|  e^{(\eta-\lambda)s_0} (a+ye^{\eta s_0})^{-\beta}-y^{-\beta}\bigr| \\
&= d_p\bigl| (y+ae^{-\eta s_0})^{-\beta}-y^{-\beta} \bigr|
= d_p y^{-\beta}\bigl|(1+ay^{-1}e^{-\eta s_0})^{-\beta}-1\bigr|\le
 C\min(y^{-\beta}, y^{-\beta-1}  e^{-\eta s_0}).
\end{aligned}$$
Consequently,
$$|S_2(y,s_0)|\le Ce^{-\lambda s_0},\quad
|\partial_yS_2(y,s_0)|\le  C\min(y^{-\beta}, y^{-\beta-1}  e^{-\eta s_0}) + Cye^{-\lambda s_0},
\ \quad y\in D_1.$$
Moreover, we have 
$ye^{-\lambda s_0}\le \tilde K^{\beta+2} y^{-\beta-1}  e^{-\eta s_0}$ in $D_1$,
due to $y^{\beta+2}  \le \tilde K^{\beta+2} e^{-\beta\eta s_0}= \tilde K^{\beta+2} e^{-(\eta-\lambda) s_0}$.
This implies \eqref{estimS2}-\eqref{estimDS2} in the range $D_1$.

Next, by \eqref{splitS2b} and \eqref{w0tildephi}--\eqref{w0tildephi2odd}, we have
$S_2(y,s_0)=\sum_{j=0}^{\ell-1}d_j\varphi_j+e^{-\lambda s_0}\bigl((\hat\phi,\phi)-1\bigr)\phi$ in $D_2$.
Consequently, \eqref{estimS2}-\eqref{estimDS2} in $D_2$
follows from \eqref{controlDiphi} and Lemmas~\ref{Lemcvcphi}, \ref{Lemdsmallnu}.
Finally, from \eqref{defw0} (resp., \eqref{defw0odd}), we have
$$
S_2(y,s_0)=(\Theta(y)-1)U(y)+\Theta(y)\ds\sum_{j=0}^{\ell-1}d_j\varphi_j
+e^{-\lambda s_0}\bigl((\hat\phi,\phi)-\Theta(y)\bigr)\phi
\quad\hbox{in $D_3$}
$$
(resp., $S_2(y,s_0)=(b_0-1)U(y)+e^{-\lambda s_0}(\hat\phi,\phi)\phi$).
This along with \eqref{controlDiphi} guarantees \eqref{estimS2}-\eqref{estimDS2} 
in the range $D_3$. \end{proof}

\goodbreak

\subsubsection{Short time estimate of $w$ and $w_y$.}

In this subsection, we shall prove:

\begin{prop} \label{Propwshort}
If $s_1\ge s_0\gg 1$ then, for any $d\in \mathcal{U}_{s_0,s_1}$ such that $P(d;s_0,s_1)=0$, we have
\be{estimM2}
|D^i(w-U+e^{-\lambda s}\phi)|\le  M_2\nu e^{-\lambda s}(y^i+y^{2\ell-i}),\quad s\in (s_0,s_0+1],
 y\in [y_1(s),y_2(s)],
\ee
for $i\in\{0,1\}$, with $M_2=M_2(p,\ell)>0$.
\end{prop}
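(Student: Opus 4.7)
The plan is to estimate each of the three pieces in the decomposition \eqref{defS123} of $\tilde v$, together with the extra term $e^{-\lambda s}\phi$, and thus control $\tilde v+e^{-\lambda s}\phi$ and its first derivative in the range $y\in[y_1(s),y_2(s)]$, $s\in(s_0,s_0+1]$. A preliminary useful observation is that for any such $(y,s)$, $y$ lies entirely inside the interval $D_2=[\tilde K e^{-\gamma s_0},2\sigma e^{s_0/2}]$ of Lemma~\ref{LemestimS2s0}: the lower bound follows from $y_1(s)\ge Ke^{-\gamma(s_0+1)}=(K/\tilde K)e^{-\gamma}\tilde Ke^{-\gamma s_0}$ together with $K/\tilde K=\nu^{-1/(\alpha+1)}\ge e^{\gamma}$ once $\eps$ is small, and the upper bound follows from $y_2(s)\le\sqrt e\,\sigma e^{s_0/2}<2\sigma e^{s_0/2}$. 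Hence target points are well separated from the regions $D_1$ and $D_3$, which will be crucial for the kernel estimates.

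For the eigenmode piece, I would write $S_1+e^{-\lambda s}\phi=-e^{-\lambda s}(\hat\phi-\phi,\phi)\,\phi$. By Cauchy--Schwarz and Lemma~\ref{Lemcvcphi}, $|(\hat\phi-\phi,\phi)|\le\|\hat\phi-\phi\|\le\nu$ once $s_0$ is sufficiently large (depending only on $\eps$), and the pointwise bound $|D^i\phi(y)|\le C(y^i+y^{2\ell-i})$ from \eqref{controlDiphi} then yields the desired envelope. For $S_2$, I use $S_2(s)=e^{-(s-s_0)\mathcal{L}}S_2(\cdot,s_0)$ together with Proposition~\ref{KernelEstim2} to write
$$S_2(y,s)=\int_0^\infty G_0(s-s_0,y,\xi)\,S_2(\xi,s_0)\,\xi^{\alpha}\,d\xi,$$
and analogously for $\partial_y S_2$ using $G_1$ and $\partial_\xi S_2(\cdot,s_0)$. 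Split the $\xi$-integral according to $D_1,D_2,D_3$ and invoke Lemma~\ref{LemestimS2s0}. The $D_2$ contribution carries the factor $\nu$ already in the initial-data bound, and, via the kernel bound \eqref{boundH1} integrated through Lemma~\ref{AuxilLem2}, produces $C\nu e^{-\lambda s}(y^i+y^{2\ell-i})$. In $D_1$ (resp.~$D_3$) the initial-data bound does not carry $\nu$, but the separation $|y-\xi|\ge cy$ (resp.~$|y-\xi|\ge c\xi$) combined with the Gaussian factor $e^{-c(y-\xi)^2/t}$ in $H_0$, with $t=1-e^{-(s-s_0)}\le 1-e^{-1}$, yields exponential smallness far below $\nu$ once $s_0$ is large.

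For $S_3$, Proposition~\ref{KernelEstim2} gives
$$S_3(y,s)=\int_{s_0}^{s}\!\!\int_0^\infty G_0(s-\tau,y,\xi)\,\tilde F(\tilde v_y(\xi,\tau))\,\xi^{\alpha}\,d\xi\,d\tau,$$
and a similar formula with $G_1$ for $\partial_y S_3$. Using the four-region pointwise bound on $\tilde F(\tilde v_y)$ from Lemma~\ref{LemvyFvy}, the intermediate zone $[y_1(\tau),y_2(\tau)]$ gives the dominant contribution: there $|\tilde F|\le Ce^{-2\lambda\tau}\xi^{\beta+1}(1+\xi^{4(\ell-1)})$, and integration via Lemma~\ref{AuxilLem2} together with $\tau\in[s_0,s_0+1]$ yields a bound of size $Ce^{-2\lambda s_0}(y^i+y^{2\ell-i})\le \nu e^{-\lambda s}(y^i+y^{2\ell-i})$ provided $s_0$ is large enough that $e^{-\lambda s_0}\le\nu$. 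The inner zones $(0,y_0(\tau)]$ and $[y_0(\tau),y_1(\tau)]$ produce singular pointwise bounds but are supported at $\xi\le Ke^{-\gamma\tau}\ll y$, so the Gaussian factor in \eqref{boundH1} (applied via Lemma~\ref{AuxilLem2}(i)) absorbs the singularity; note that the convenient identity $\alpha-\beta-1=0$ collapses the innermost weighted integrand to $G_0$ alone. The outer zone $[y_2(\tau),\infty)$ contributes a negligible Gaussian tail.

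The main obstacle is the careful bookkeeping of kernel estimates in $S_2$ and $S_3$: several subregions must be handled, the Gaussian factor has to be balanced against polynomial or singular source bounds, and the target envelope $y^i+y^{2\ell-i}$ must be recovered in each case. Lemma~\ref{AuxilLem2} is tailored to precisely these integrations, and the proof largely reduces to a disciplined case-by-case application of it, combined with the observation that the prefactor $\nu$ comes either from the $\nu$-smallness of the initial data in $D_2$ via Lemma~\ref{Lemdsmallnu} (for $S_2$) or from $e^{-\lambda s_0}\le\nu$ for large $s_0$ (for $S_3$).
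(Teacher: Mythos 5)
Your overall route is the same as the paper's: decompose $\tilde v$ as $S_1+S_2+S_3$ via \eqref{defS123}, absorb $e^{-\lambda s}\phi$ into $S_1$ using Lemma~\ref{Lemcvcphi} and \eqref{controlDiphi}, and estimate $S_2$, $S_3$ through the kernel representation of Proposition~\ref{KernelEstim2} together with the pointwise data bounds of Lemmas~\ref{LemestimS2s0} and \ref{LemvyFvy} and the Gaussian integrals of Lemma~\ref{AuxilLem2}. This is exactly how the paper proves the two ingredients (Lemmas~\ref{LemS2short} and \ref{LemS3short}) from which the proposition is deduced.

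There is, however, a genuine gap in how you produce the factor $\nu$ near the inner edge $y\approx y_1(s)$. For the $D_1$ contribution to $S_2$ you claim that the separation $|y-\xi|\ge cy$ together with the Gaussian factor gives ``exponential smallness far below $\nu$ once $s_0$ is large''. This is false: for $y=y_1(s)=Ke^{-\gamma s}$ and $s=s_0+1$ one has $t=1-e^{-1}$ of order one while $(y-\xi)^2\le Cy^2\le CK^2e^{-2\gamma s_0}\to 0$, so the Gaussian factor tends to $1$ as $s_0\to\infty$; the smallness actually degrades, it does not improve. What the Gaussian really does is neutralize the singular prefactor $t^{-(\alpha+1)/2}$ (via $(y/\sqrt{t})^{\alpha+1}e^{-cy^2/t}\le C$); the factor $\nu$ then comes from the ratio of supports, $(\tilde Ke^{-\gamma s_0}/y)^{\alpha+1}\le C(\tilde K/K)^{\alpha+1}=C\nu$, i.e.\ from the tuned choice $\tilde K=K\nu^{1/(\alpha+1)}$ in \eqref{defK}. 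The same issue arises for the inner-zone contributions $\xi\le Ke^{-\gamma\tau}$ to $S_3$: the raw bound there is of size $e^{-\eta s}y^{-\beta-i}$, and converting it into $\nu e^{-\lambda s}y^i$ uses $y\ge Ke^{-\gamma s}$ with $K^{\beta}=\nu^{-1}$ (inequality \eqref{condbetaeta1}), not $e^{-\lambda s_0}\le\nu$ and not any exponential gain. Without identifying these mechanisms the contributions near $y_1(s)$ are only $O(e^{-\lambda s}(y^i+y^{2\ell-i}))$, which misses the crucial factor $\nu$ and therefore does not give the contraction from $\mathcal{A}^1_{s_0,s_1}$ to $\mathcal{A}^{1/2}_{s_0,s_1}$ on which the whole topological argument rests. (A smaller omission: the derivative case $i=1$ on $D_1$ requires the further splitting $D_{1,1}\cup D_{1,2}$ of Lemma~\ref{LemestimS2s0}, since the bound on $\partial_yS_2(\cdot,s_0)$ changes across $y=e^{-\eta s_0}$, and the resulting powers of $\tilde K/K$ must still be checked to be at most $\nu$.)
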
 

The proof of Proposition~\ref{Propwshort} requires the short time estimation of $S_2$ and $S_3$.
This is done in the following Lemmas~\ref{LemS2short} and \ref{LemS3short}, respectively.

\begin{lem} \label{LemS2short}
If $s_1\ge s_0\gg 1$ then, for any $d\in \mathcal{U}_{s_0,s_1}$ such that $P(d;s_0,s_1)=0$, we have
$$|D^iS_2(y,s)|\le  C\nu e^{-\lambda s}(y^i+y^{2\ell-i}),\quad s\in (s_0,s_0+1],
\ \ y\in [y_1(s),y_2(s)].$$
\end{lem}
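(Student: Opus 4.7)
The starting point is the observation that, since $(S_2(s_0),\varphi_\ell)=0$ and $S_2(s_0)=\sum_{j=0}^{\ell-1}d_j\varphi_j-\sum_{j\ne\ell}e^{-\lambda s_0}(\hat\phi,\varphi_j)\varphi_j$ expands only on eigenmodes $\varphi_j$ with $j\ne\ell$, one directly checks from \eqref{defS123} and $e^{-(s-s_0)\mathcal{L}}\varphi_j=e^{-\lambda_j(s-s_0)}\varphi_j$ that
\begin{equation}\label{planS2semig}
S_2(\cdot,s)=e^{-(s-s_0)\mathcal{L}}S_2(\cdot,s_0),\quad s\ge s_0.
\end{equation}
In view of Lemma~\ref{LemestimS2s0}, $S_2(\cdot,s_0)\in L^\infty_{m,q}$ for suitable $m<\frac{\alpha+3}{2}$ and $q=2\ell$, and similarly $\partial_yS_2(\cdot,s_0)$ satisfies the assumption on $\phi'$ in Proposition~\ref{KernelEstim2}. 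Hence the kernel representations \eqref{defWkernel} and \eqref{computWx} apply, giving
\begin{equation}\label{planS2kernel}
S_2(y,s)=\int_0^\infty G_0(s-s_0,y,\xi)S_2(\xi,s_0)\xi^\alpha d\xi,\quad \partial_yS_2(y,s)=\int_0^\infty G_1(s-s_0,y,\xi)\partial_\xi S_2(\xi,s_0)\xi^\alpha d\xi.
\end{equation}

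The plan is then to split each integral according to the partition $D_1\cup D_2\cup D_3$ (respectively $D_{1,1}\cup D_{1,2}\cup D_2\cup D_3$ for the derivative), plug in the pointwise bounds \eqref{estimS2} and \eqref{estimDS2}, and use the kernel bounds \eqref{boundH1}--\eqref{normalizeH} together with the auxiliary Gaussian integral estimates of Lemma~\ref{AuxilLem2}. Set $\tau:=s-s_0\in(0,1]$, $t:=1-e^{-\tau}\in(0,1-e^{-1}]$, $x:=e^{-\tau/2}y$; thus $x$ and $y$ are comparable and $t\asymp\tau$. The $D_2$ contribution is the main term: using the bound $|S_2(\xi,s_0)|\le C\nu e^{-\lambda s_0}(1+\xi^{2\ell})$ together with $\int_0^\infty G_0(\tau,y,\xi)(1+\xi^{2\ell})\xi^\alpha d\xi\le C(1+y^{2\ell})$ (from \eqref{normalizeH} and Lemma~\ref{AuxilLem2}(ii)) and $e^{-\lambda s_0}\le e^\lambda e^{-\lambda s}$, this piece yields $C\nu e^{-\lambda s}(1+y^{2\ell})$, which is dominated by $C\nu e^{-\lambda s}(y^0+y^{2\ell})$ on $[y_1(s),y_2(s)]$. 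An entirely analogous computation, based on the bound $|\partial_\xi S_2(\xi,s_0)|\le C\nu e^{-\lambda s_0}(\xi+\xi^{2\ell-1})$ on $D_2$ and the kernel estimate \eqref{boundH1} for $i=1$, handles the $D_2$ contribution to $\partial_yS_2$, giving $C\nu e^{-\lambda s}(y+y^{2\ell-1})$.

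The contributions from $D_1$ (and $D_{1,1},D_{1,2}$ for the derivative) and from $D_3$ must produce the same bound but without benefit from the initial $\nu$ factor; the $\nu$ has to be recovered either from the smallness of the integration domain or from Gaussian decay. On $D_1$, the volume in weighted measure satisfies $\int_0^{\tilde Ke^{-\gamma s_0}}\xi^\alpha d\xi=\frac{\tilde K^{\alpha+1}}{\alpha+1}e^{-(\alpha+1)\gamma s_0}$ and, crucially, $\tilde K^{\alpha+1}=K^{\alpha+1}\nu$ by \eqref{defK}. Combined with the uniform bound $G_0\le C\tau^{-(\alpha+1)/2}$, this produces exactly one factor of $\nu$, and since $(\alpha+1)\gamma\ge\lambda$ (true by the choice of $\gamma$ in \eqref{defgamma} for suitable $\eps$), the remaining exponential factors combine into $C\nu e^{-\lambda s}$. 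The singular derivative bounds $\xi^{-\beta}$ on $D_{1,1}$ and $\tilde K^{\beta+2}\xi^{-\beta-1}e^{-\eta s_0}$ on $D_{1,2}$ are integrable against $\xi^\alpha d\xi$ (since $\alpha-\beta=1$), and Lemma~\ref{AuxilLem2}(i) applied with $X=xt^{-1/2}$, $X_0=0$ or $e^{-\eta s_0}t^{-1/2}$, $X_1=\tilde Ke^{-\gamma s_0}t^{-1/2}$ yields precisely the targeted bound $C\nu e^{-\lambda s}(y+y^{2\ell-1})$ after using $\tilde K^{\beta+2}=K^{\beta+2}\nu^{(\beta+2)/(\alpha+1)}=K^{\beta+2}\nu$. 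For $D_3$, we use that $\xi\ge 2\sigma e^{s_0/2}$ and $y\le\sigma e^{s/2}\le\sigma e^{1/2}e^{s_0/2}<2\sigma e^{s_0/2}$ imply $\xi-y\ge c\sigma e^{s_0/2}$ for some $c>0$, so Lemma~\ref{AuxilLem2}(ii) (applied in the regime $X\le Z/2$) gives Gaussian decay $e^{-\tilde C\sigma^2 e^{s_0}}$, which absorbs any polynomial factor $y^{2\ell}$ and any power of $\nu^{-1}$ provided $s_0\gg 1$.

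The main technical obstacle is the $D_1$ contribution: the pointwise bound $|S_2(\xi,s_0)|\le Ce^{-\lambda s_0}$ has no $\nu$ factor and the Gaussian $\exp[-(x-\xi)^2/(4t)]$ is essentially useless there because $|y-\xi|\lesssim e^{-\gamma s_0}$ and $t\asymp \tau$ may be larger, so no Gaussian decay is gained. The $\nu$ factor must come entirely from the volume of $D_1$, which is the reason the parameter $\tilde K=K\nu^{1/(\alpha+1)}$ was defined in \eqref{defK}; bookkeeping this relation, together with the extra $y^\alpha$ weight at small $\xi$, yields the required estimate. Once these three regional contributions are combined, taking $s_0$ large enough to control the absolute constants against $\nu$ produces the constant $M_2=M_2(p,\ell)$ and concludes the proof.
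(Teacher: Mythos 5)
Your overall strategy (semigroup/kernel representation of $S_2$, splitting of the integral according to the regions of Lemma~\ref{LemestimS2s0}, Gaussian kernel bounds plus Lemma~\ref{AuxilLem2}) is the same as the paper's, and your treatment of the $D_2$ and $D_3$ contributions is essentially correct. However, there is a genuine gap in your treatment of the $D_1$ contribution, and it occurs precisely at the point you flag as the main obstacle. You assert that the Gaussian factor is ``essentially useless'' on $D_1$ and that the factor $\nu$ must come entirely from the weighted volume $\int_0^{\tilde Ke^{-\gamma s_0}}\xi^\alpha d\xi= \frac{1}{\alpha+1}K^{\alpha+1}\nu\, e^{-(\alpha+1)\gamma s_0}$, combined with the crude bound $G_0\le C\tau^{-(\alpha+1)/2}$. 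The resulting estimate carries an uncompensated factor $\tau^{-(\alpha+1)/2}$ with $\tau=s-s_0$, which diverges as $s\downarrow s_0$; the fixed constant $e^{-(\alpha+1)\gamma s_0}$ cannot absorb it. (Your fallback claim that $(\alpha+1)\gamma\ge\lambda$ is also false in general: $(\alpha+1)\gamma=\frac{\beta}{1-\beta}\lambda<\lambda$ whenever $p>3$; this is harmless since $e^{-\lambda s_0}\le e^{\lambda}e^{-\lambda s}$ on the time interval considered, but it does not rescue the $\tau$-singularity.)

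The correct mechanism is the opposite of what you claim: the Gaussian is exactly what saves the $D_1$ estimate. Since $y\ge y_1(s)=Ke^{-\gamma s}$ and $\xi\le \tilde Ke^{-\gamma s_0}$ with $K/\tilde K=\nu^{-1/(\alpha+1)}\ge 2e^{1/2}$, one has $e^{(s_0-s)/2}y\ge 2\xi$ throughout $D_1$, hence the Gaussian is bounded by $\exp[-C y^2/\tau]$. The product $\tau^{-(\alpha+1)/2}\exp[-Cy^2/\tau]\le Cy^{-(\alpha+1)}$ removes the short-time singularity, and then
$$\Bigl(\frac{\tilde Ke^{-\gamma s_0}}{y}\Bigr)^{\alpha+1}\le C\Bigl(\frac{\tilde K}{K}\Bigr)^{\alpha+1}=C\nu$$
produces the factor $\nu$: it comes from the ratio of the width of $D_1$ to the distance from $y$ to $D_1$, not from the volume alone. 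The analogous issue affects your sketch for $\partial_y S_2$ on $D_{1,1}\cup D_{1,2}$, where the same $\exp[-Cy^2/\tau]$ factor is needed to trade negative powers of $\tau$ for negative powers of $y$ before invoking $y\ge Ke^{-\gamma s}$. With this correction the rest of your argument goes through.
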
 

\begin{proof}
Take $i\in\{0,1\}$, $s\in (s_0,s_0+1]$ and $y\in (y_1(s),y_2(s))$.
Since $S_2(\cdot,s)=e^{-(s-s_0)\mathcal{L}}S_2(\cdot,s_0)$, by Proposition~\ref{KernelEstim2},
\eqref{boundH1} and
using $1-e^{s_0-s}\ge \frac12(s-s_0)$, 
it follows that
$$\begin{aligned}
|D^iS_2(y,s)|
&\le \int_0^\infty G_i(s-s_0,y,\xi)|D^iS_2(\xi,s_0)|\xi^\alpha d\xi \\
&= \int_0^{e^{-\eta s_0}}+\int_{e^{-\eta s_0}}^{\tilde K e^{-\gamma s_0}}+ \int_{\tilde K e^{-\gamma s_0}}^{2\sigma e^{s_0/2}}+ \int_{2\sigma e^{s_0/2}}^\infty\equiv \sum_{n=1}^4 S^i_{2,n},
\end{aligned}$$
where $|D^iS_2(\xi,s_0)$ will be estimated by Lemma~\ref{LemestimS2s0} and
\be{defGi}
G_i(s-s_0,y,\xi)\le C(s-s_0)^{-\frac{\alpha+1}{2}}\Bigl(1\wedge \frac{y\xi}{s-s_0}\Bigr)^i\Bigl(1+\frac{y\xi}{s-s_0}\Bigr)^{-\frac{\alpha}{2}}
\exp\Bigl[-\frac{C(e^{(s_0-s)/2}y-\xi)^2}{s-s_0}\Bigr].
\ee

\smallskip
$\bullet$ {\bf Estimate of $S^0_{2,1}$ and $S^0_{2,2}$.}
Let 
\be{defM1}
M_1(p)=(\ts\frac12 e^{-1/2})^{\alpha+1}.
\ee
Since $\nu\le M_0\le M_1$ (recall \eqref{defK}), we have 
$K/\tilde K=\nu^{-1/(\alpha+1)}\ge 2e^{1/2}$, hence
$e^{(s_0-s)/2}y\ge e^{-\frac12}K e^{-\gamma s_0}\ge 
2\xi$
for all $\xi\in(0,\tilde K e^{-\gamma s_0})$. Consequently,
$$\begin{aligned}
S_{2,1}^0+S_{2,2}^0
&\le Ce^{-\lambda s_0} (s-s_0)^{-\frac{\alpha+1}{2}}
\int_0^{\tilde K e^{-\gamma s_0}}
\exp\Bigl[-\frac{C\bigl(e^{(s_0-s)/2}y-\xi\bigr)^2}{s-s_0}\Bigr]\xi^\alpha d\xi \\
&\le Ce^{-\lambda s_0}(s-s_0)^{-\frac{\alpha+1}{2}} \exp\Bigl[-\frac{Cy^2}{s-s_0}\Bigr]
(\tilde K e^{-\gamma s_0})^{\alpha+1} \\
&\le Ce^{-\lambda s_0} \Bigl(\frac{y}{\sqrt{s-s_0}}\Bigr)^{\alpha+1} \exp\Bigl[-\frac{Cy^2}{s-s_0}\Bigr]
\Bigl(\frac{\tilde K e^{-\gamma s_0}}{y}\Bigr)^{\alpha+1}
\le C\Bigl(\frac{\tilde K}{K}\Bigr)^{\alpha+1} e^{-\lambda s_0} = C\nu e^{-\lambda s_0}.
\end{aligned}$$

\smallskip
$\bullet$ {\bf Estimate of $S^1_{2,1}$ and $S^1_{2,2}$.}
Recalling $\alpha=\beta+1$, we have
$$\begin{aligned}
S_{2,1}^1
&\le C(s-s_0)^{-\frac{\alpha+1}{2}}\int_0^{e^{-\eta s_0}} \frac{y\xi}{s-s_0}
\exp\Bigl[-\frac{C\bigl(e^{(s_0-s)/2}y-\xi\bigr)^2}{s-s_0}\Bigr]\xi^\alpha \xi^{-\beta}d\xi \\
&\le Cy^{-\alpha-2}\Bigl(\frac{y}{\sqrt{s-s_0}}\Bigr)^{\alpha+3}\exp\Bigl[-\frac{Cy^2}{s-s_0}\Bigr]\int_0^{e^{-\eta s_0}} \xi^2 d\xi 
\le Cy^{-\alpha-2}e^{-3\eta s_0},
\end{aligned}$$
$$\begin{aligned}
S_{2,2}^1
&\le C\tilde K^{\beta+2} e^{-\eta s_0} (s-s_0)^{-\frac{\alpha+1}{2}}
\int_{e^{-\eta s_0}}^{\tilde K e^{-\gamma s_0}}
\frac{y\xi}{s-s_0}\exp\Bigl[-\frac{C\bigl(e^{(s_0-s)/2}y-\xi\bigr)^2}{s-s_0}\Bigr]
\xi^{-\beta-1}\xi^\alpha d\xi \\
&\le  C\tilde K^{\beta+2} y e^{-\eta s_0} (s-s_0)^{-\frac{\alpha+3}{2}} \exp\Bigl[-\frac{Cy^2}{s-s_0}\Bigr]
\int_0^{\tilde K e^{-\gamma s_0}}
\xi d\xi\\
&=C\tilde K^{\beta+2} y^{-\alpha-2}e^{-\eta s_0} \Bigl(\frac{y}{\sqrt{s-s_0}}\Bigr)^{\alpha+3} \exp\Bigl[-\frac{Cy^2}{s-s_0}\Bigr]
(\tilde K e^{-\gamma s_0})^2 
\le C\tilde K^{\beta+4}y^{-\alpha-2}e^{-(\eta+2\gamma) s_0}.
\end{aligned}$$
Since $y^{\alpha+3}e^{2\gamma s_0}  \ge (Ke^{-\gamma s_0})^{\beta+4}e^{2\gamma s_0} 
=K^{\beta+4}  e^{-\beta\eta s_0}=K^{\beta+4}  e^{(\lambda-\eta)s_0}$
and $\eta>\gamma$, it follows from \eqref{defK} that
$$S_{2,1}^1+S_{2,2}^1
\le Cy^{-\alpha-2}\tilde K^{\beta+4}e^{-(\eta+2\gamma) s_0}\le C\Bigl(\frac{\tilde K}{K}\Bigr)^{\beta+4}ye^{-\lambda s_0}
\le C\nu y e^{-\lambda s_0}.$$

\smallskip
$\bullet$ {\bf Estimate of $S^i_{2,3}$.}
Set $E_y=(\tilde K e^{-\gamma s_0}, 2\sigma e^{s_0/2})$,
$E^1_y=E_y\cap (y/2,2y)$,
$E^2_y=E_y\setminus (y/2,2y)$
and
$$S_{2,3}^i= \int_{E_y^1} G_i(s-s_0,y,\xi)|D^iS_2(\xi,s_0)|\xi^\alpha d\xi+\int_{E_y^2} G_i(s-s_0,y,\xi)|D^iS_2(\xi,s_0)|\xi^\alpha d\xi
\equiv S_{2,3}^{i,1}+S_{2,3}^{i,2}.$$
Putting $\hat y=\frac{e^{(s_0-s)/2}y}{\sqrt{s-s_0}}$ 
and using the change of variables $\xi=z\sqrt{s-s_0}$, we get  
$$\begin{aligned}
S_{2,3}^{i,1}
&\le C\nu e^{-\lambda s_0} (s-s_0)^{-\frac{\alpha+1}{2}} \int_{E_y^1}
 \Bigl(1+\frac{y\xi}{s-s_0}\Bigr)^{-\alpha/2}\exp\Bigl[-C\Bigl(\hat y-\frac{\xi}{\sqrt{s-s_0}}\Bigr)^2\Bigr] (\xi^i+\xi^{2\ell-i})\xi^\alpha d\xi \\
&\le C\nu e^{-\lambda s_0}(s-s_0)^{-\frac{\alpha}{2}}  (y^i+y^{2\ell-i})   
 \Bigl(1+\frac{y^2}{s-s_0}\Bigr)^{-\alpha/2} y^\alpha\int_{y/2}^{2y}
\exp\Bigl[-C\Bigl(\hat y-\frac{\xi}{\sqrt{s-s_0}}\Bigr)^2\Bigr]  \frac{d\xi}{\sqrt{s-s_0}}\\
&\le C\nu e^{-\lambda s_0}  (y^i+y^{2\ell-i})   
\Bigl(\frac{s-s_0+y^2}{y^2}\Bigr)^{-\alpha/2} 
 \int_0^\infty e^{-C(\hat y-z)^2}  dz\le C\nu e^{-\lambda s_0}(y^i+y^{2\ell-i}).
\end{aligned}$$
Next, using $s\in (s_0,s_0+1]$ and $|\xi-e^{(s_0-s)/2}y|\ge C\xi$ for $\xi\in {E_y^2}$, we obtain
$$\begin{aligned}
S_{2,3}^{i,2}
&\le C\nu e^{-\lambda s_0}(s-s_0)^{-\frac{\alpha+1}{2}}
\int_{E_y^2} \Bigl(\frac{y\xi}{s-s_0}\Bigr)^i\exp\Bigl[-\frac{\tilde C\xi^2}{s-s_0}\Bigr]
(\xi^i+\xi^{2\ell-i})\xi^\alpha d\xi\\
&=C\nu e^{-\lambda s_0}(s-s_0)^{-\frac{\alpha+1}{2}}y^i
\int_{E_y^2} \Bigl(\frac{\xi^2}{s-s_0}\Bigr)^i\exp\Bigl[-\frac{\tilde C\xi^2}{s-s_0}\Bigr]
(1+\xi^{2(\ell-i)})\xi^\alpha d\xi\\
&\le C\nu e^{-\lambda s_0} y^i  
\int_0^\infty e^{-\tilde Cz^2}z^{2 i}(1+z^{2(\ell- i)})z^\alpha dz
\le C\nu e^{-\lambda s_0} y^i.
\end{aligned}$$

\smallskip
$\bullet$ {\bf Estimate of $S^i_{2,4}$.}
We have $\xi-e^{(s_0-s)/2}y\ge \xi/2$ 
for any $y\le \sigma e^{s_0/2}$ and $\xi\ge 2\sigma e^{s_0/2}$, hence
$$\begin{aligned}
S^i_{2,4}
&\le C(s-s_0)^{-\frac{\alpha+1}{2}} 
\int_{2\sigma e^{s_0/2}}^\infty
 \Bigl(\frac{y\xi}{s-s_0}\Bigr)^i
 \exp\Bigl[-\frac{C\xi^2}{s-s_0}\Bigr] \xi^{2\ell-i+\alpha} d\xi \\
 & \le C(s-s_0)^{-\frac{\alpha+1+2i}{2}}  y^i \int_{2\sigma e^{s_0/2}}^\infty
 \exp\Bigl[-\frac{C\xi^2}{s-s_0}\Bigr] \xi^{2\ell+\alpha} d\xi \\
& =C(s-s_0)^{\ell-i-\frac12}y^i \int_{2\sigma e^{s_0/2}/\sqrt{s-s_0}}^\infty e^{-Cz^2} z^{2\ell+\alpha} dz
\le C  y^i\exp\bigl[-Ce^{s_0}\bigr]
\le  C\nu e^{-\lambda s_0} y^i.
\end{aligned}$$
Gathering the above estimates, the lemma follows. 
\end{proof}

We now turn to $S_3$. For later purpose, we shall actually prove a slightly more general estimate,
replacing $\int_{s_0}^s$ in $S_3$ by $\int_{\bar s}^s$ with any $\bar s\in \bigl[\max(s_0,s-1),s\bigr]$,
and considering any $s_1\ge s_0$ (not necessarily $s_1\le s_0+1$).
Indeed, the key point in the following argument is that the integration is made on a time interval of 
bounded length (say, at most $1$).
 
\begin{lem}  \label{LemS3short}
Assume $s_1\ge s_0\gg 1$ and let $d\in \mathcal{U}_{s_0,s_1}$ be such that $P(d;s_0,s_1)=0$.
Then, for any $s\in (s_0,s_1]$, $\bar s\in \bigl[\max(s_0,s-1),s\bigr]$ and $i\in\{0,1\}$, we have
\be{controlS3short0}
|D^i\hat S_3(y,s)|:=\Bigl|\int_{\bar s}^s D^ie^{-(s-\tau)\mathcal{L}}\tilde F(\tilde v_y(\tau))\,d\tau\Bigr| 
\le  C\nu e^{-\lambda s}(y^i+y^{2\ell-i}), \qquad y\in [y_1(s),y_2(s)].
\ee
In particular, we have 
\be{controlS3short}
|D^iS_3(y,s)|\le  \nu e^{-\lambda s}(y^i+y^{2\ell-i}), \qquad s_0\le s\le s_0+1\le s_1,\ y\in [y_1(s),y_2(s)], \ i\in\{0,1\}.
\ee
\end{lem}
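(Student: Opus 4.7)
The plan is to parallel the proof of Lemma~\ref{LemS2short}, with the initial datum $S_2(\cdot, s_0)$ replaced by the nonlinear source $\tilde F(\tilde v_y(\tau))$ and an extra integration in $\tau$. By Proposition~\ref{KernelEstim2} one may write
\[
|D^i \hat S_3(y,s)| \le \int_{\bar s}^s\int_0^\infty |D^i G_0(s-\tau, y, \xi)|\,|\tilde F(\tilde v_y(\tau))(\xi)|\,\xi^\alpha\,d\xi\,d\tau,
\]
where the kernel and its spatial derivative are controlled by \eqref{boundH1}--\eqref{boundH2} combined with the scaling $G_0(s,y,\xi) = e^{ks}H_0(1-e^{-s}, e^{-s/2}y, \xi)$. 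For $i=1$ the bound \eqref{boundH2} produces an additional factor $(s-\tau)^{-1/2}$ which is integrable on $[\bar s, s]$, while the Gaussian factor $\exp\bigl[-C(e^{(s_0-\tau)/2}y - \xi)^2/(s-\tau)\bigr]$ controls the spatial decay.

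For each $\tau \in [\bar s, s]$, I split the $\xi$-integral into the four source regions matching \eqref{controlFvybasic}, namely $R_1(\tau) = (0, y_0(\tau)]$, $R_2(\tau) = [y_0(\tau), y_1(\tau)]$, $R_3(\tau) = [y_1(\tau), y_2(\tau)]$, and $R_4(\tau) = [y_2(\tau), \infty)$. In each region I substitute the pointwise bound from \eqref{controlFvybasic} and evaluate the Gaussian-type $\xi$-integrals using Lemma~\ref{AuxilLem2}: part~(i) handles $R_1$ and $R_2$, where the target $y \in [y_1(s), y_2(s)]$ is much larger than $\xi$ (by the analogue of the ratio estimate with $M_1$ from \eqref{defM1}) so that the Gaussian factor is super-small; the $R_3$ contribution is further split into a near-diagonal portion $\xi \in (y/2, 2y)$ and a far portion, treated exactly as $S_{2,3}^{i,1}$ and $S_{2,3}^{i,2}$ in the proof of Lemma~\ref{LemS2short}; and part~(ii) handles $R_4$ where the rapid decay $\exp(-Ce^\tau)$ renders the contribution super-exponentially small. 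Then integration in $\tau$ over $[\bar s, s]$, an interval of length at most $1$, turns every exponential $e^{-c\tau}$ into something comparable to $e^{-cs}$ up to bounded constants, and the singular factor $(s-\tau)^{-1/2}$ from the $i=1$ case integrates to $O(1)$.

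The main obstacle is the extraction of the small prefactor $\nu$ in the conclusion. The hardest contribution is that of $R_3$, for which the crude bound \eqref{controlFvybasic} would only produce $O(e^{-\lambda s})$; it must be sharpened by decomposing $\tilde v_y = D(\tilde v + e^{-\lambda\tau}\phi) - e^{-\lambda\tau}\phi'$ and plugging the defining smallness $|D(\tilde v + e^{-\lambda\tau}\phi)| \le \eps e^{-\lambda\tau}(y+y^{2\ell-1})$ from $\tilde w \in \mathcal{A}^1_{s_0,s_1}$ into the quadratic form $\tilde F(\tilde v_y) = \tfrac{p(p-1)}{2}|U_y + \theta\tilde v_y|^{p-2}\tilde v_y^2$, carefully matching the weights of $\phi'$ against $U_y^{p-2}$. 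The contributions of $R_1$ and $R_2$ supply smallness through the powers of $K = \nu^{1-p}$ from \eqref{defK} (together with the exponential decays $e^{-\eta\tau}$, $e^{-\mu\tau}$ and the Gaussian smallness for $\xi \ll y$), which must be balanced against the exponents $\eta, \gamma, \mu, \lambda, \alpha, \beta$ to guarantee that the overall sum is at most $C\nu e^{-\lambda s}(y^i + y^{2\ell-i})$. Estimate \eqref{controlS3short} then follows from \eqref{controlS3short0} by taking $\bar s = s_0$ when $s \in (s_0, s_0+1]$ and absorbing the constant $C$ into the definition $\nu = M_0\eps$ via an appropriate choice of $M_0$.
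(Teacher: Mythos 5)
Your overall architecture matches the paper's: represent $D^i\hat S_3$ via the kernel of Proposition~\ref{KernelEstim2}, split the $\xi$-integral into the four source regions of \eqref{controlFvybasic}, estimate the resulting Gaussian integrals with Lemma~\ref{AuxilLem2}, and exploit that the time interval has length at most $1$. The problem lies in your treatment of the intermediate-region contribution, which you yourself single out as the hardest step. Your premise that the crude bound \eqref{controlFvybasic} ``would only produce $O(e^{-\lambda s})$'' there is false: since $\tilde F(\tilde v_y)=\frac{p(p-1)}{2}|U_y+\bar\theta\tilde v_y|^{p-2}\tilde v_y^2$ and $|\tilde v_y|\le Ce^{-\lambda \tau}(y+y^{2\ell-1})$ in that region, the third case of \eqref{controlFvybasic} already carries the quadratic factor $e^{-2\lambda\tau}$, and feeding it through the kernel yields $Ce^{-2\lambda s}(y^i+y^{\alpha+4(\ell-1)})$. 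The surplus decay $e^{-\lambda s}$ is precisely what produces the $\nu$: writing $y^{\alpha+4(\ell-1)}=y^{2\ell-1}y^{\alpha+2\ell-3}$ and using $y\le\sigma e^{s/2}$ together with $\lambda=\frac12(2\ell+\alpha-2)$, one gets $e^{-\lambda s}y^{\alpha+2\ell-3}\le Ce^{-s/2}\le\nu$ for $s_0\gg1$, hence $Ce^{-2\lambda s}y^{\alpha+4(\ell-1)}\le\nu e^{-\lambda s}y^{2\ell-1}$. No sharpening is needed; the smallness of this piece comes from taking $s_0$ large, not from $\eps$.

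Worse, the sharpening you propose cannot work. Decomposing $\tilde v_y=D(\tilde v+e^{-\lambda\tau}\phi)-e^{-\lambda\tau}\phi'$ and squaring leaves the term $e^{-2\lambda\tau}(\phi')^2$, which carries an $O(1)$ constant and no factor of $\eps$: membership in $\mathcal{A}^1_{s_0,s_1}$ only makes the deviation from $-e^{-\lambda\tau}\phi'$ small, not $\phi'$ itself. So executed literally, your plan stalls at $Ce^{-2\lambda s}(\cdots)$ with no visible $\nu$, and the gap must be closed exactly as above. A secondary imprecision: for the regions $R_1,R_2$ the Gaussian factor $\exp[-Cy^2/(s-\tau)]$ is not ``super-small'' uniformly, since $y$ can be as small as $Ke^{-\gamma s}$ while $s-\tau$ ranges up to $1$; the paper instead integrates carefully in $\tau$ (via the substitution $\zeta=y^2/(s-\tau)$ and Lemma~\ref{AuxilLem2}(i)) and extracts $\nu=K^{-\beta}$ from $y\ge Ke^{-\gamma s}$ through \eqref{condbetaeta1} — your identification of the powers of $K$ as the source of smallness there is nonetheless the right idea.
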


\begin{proof}
Take $i\in\{0,1\}$, $s\in (\bar s,s_1)$ and $y\in (y_1(s),y_2(s))$.
Also, for $\tau\in [s_0,s)$, we shall use the notation
$$X(\tau)=e^{(\tau-s)/2}(s-\tau)^{-1/2}y,\qquad X_j(\tau)=(s-\tau)^{-1/2}y_j(\tau),\ \ j\in\{0,1,2\}$$
and we observe that $X(\tau)\in(e^{(\tau-s)/2}X_1,e^{(\tau-s)/2}X_2)\subset (e^{-1/2}X_1,X_2)$.
Using Proposition~\ref{KernelEstim2}, \eqref{boundH1} with $i=0$, \eqref{boundH2} and $1-e^{-t}\ge Ct$ for $t\in (0,1]$, we have
$$\begin{aligned}
|D^i\hat S_3(y,s)|
&\le C \int_{\bar s}^s\int_0^\infty (s-\tau)^{-\frac{\alpha+1+i}{2}}
\Bigl(1\wedge \frac{e^{\frac{\tau-s}{2}}y}{\sqrt{s-\tau}}\Bigr)^i\\ 
&\qquad\qquad \times\Bigl(1+\frac{e^{\frac{\tau-s}{2}}y\xi}{s-\tau}\Bigr)^{-\frac{\alpha}{2}}\exp\Bigl[-C\frac{\bigl(e^{\frac{\tau-s}{2}}y-\xi\bigr)^2}{s-\tau}\Bigr] |\tilde F(\tilde v_y(\xi,\tau))| \xi^\alpha d\xi d\tau. 
\end{aligned}$$
By the change of variables $z=(s-\tau)^{-1/2}\xi$, it follows that
\begin{eqnarray}
|D^i\hat S_3(y,s)|
\hskip -6mm &&\le\int_{\bar s}^s\int_0^\infty 
 (s-\tau)^{-\frac{i}{2}} e^{-C(X(\tau)-z)^2}\bigl(1\wedge X(\tau)\bigr)^i 
 \bigl(1+X(\tau)z\bigr)^{-\frac{\alpha}{2}} \bigl|\tilde F\bigl(\tilde v_y(z\sqrt{s-\tau},\tau)\bigr)\bigr| 
 z^\alpha dz d\tau \notag\\
&&=\int_{\bar s}^s\int_0^{X_0(\tau)} + \int_{\bar s}^s\int_{X_0(\tau)}^{X_1(\tau)}  + \int_{\bar s}^s\int_{X_1(\tau)}^{X_2(\tau)} 
+ \int_{\bar s}^s\int_{X_2(\tau)}^\infty \equiv \sum_{n=1}^4 S^i_{3,n}(y,s). \label{splitS3}
\end{eqnarray}

\smallskip
$\bullet$ {\bf Estimate of $S^i_{3,1}$.}
 Using \eqref{controlFvybasic} and $X(\tau)\ge 2X_0(\tau)$, we get
$$\begin{aligned}
S_{3,1}^i
&\le C\int_{\bar s}^s\int_0^{X_0(\tau)} (s-\tau)^{-\frac{\alpha+i}{2}} 
e^{-C(X(\tau)-z)^2}  
dz d\tau\\
&\le C\int_{\bar s}^s (s-\tau)^{-\frac{\alpha+i}{2}} X_0(\tau)e^{-CX^2(\tau)} d\tau
\le Ce^{-\eta s}\int_{\bar s}^s (s-\tau)^{-\frac{\alpha+1+i}{2}} e^{-Cy^2/(s-\tau)}d\tau.
\end{aligned}$$
On the other hand, by \eqref{defK}, since $y^{\beta+2i}\ge (Ke^{-\gamma s})^{\beta+2i}\ge K^\beta e^{-(\beta+2)\gamma s}= K^\beta e^{(\lambda-\eta)s}$, we have
\be{condbetaeta1}
y^{-\beta-i}\le \nu e^{(\eta-\lambda)s}y^i.
\ee
Using the change of variables $\zeta=y^2/(s-\tau)$, hence $s-\tau=y^2/\zeta$,  
$d\tau=y^2\zeta^{-2}d\zeta$ and $(s-\tau)^{-\frac{\alpha+1+i}{2}}=y^{-(\alpha+1+i)}\zeta^{\frac{\alpha+1+i}{2}}$, 
along with $\alpha>1$ and \eqref{condbetaeta1}, we obtain,  
$$S_{3,1}^i
\le C e^{-\eta s}y^{1-\alpha-i}\int_0^\infty\zeta^{\frac{\alpha+i-3}{2}} 
e^{-C\zeta}  d\zeta\le C e^{-\eta s}y^{-\beta-i}  
\le C\nu y^ie^{-\lambda s}.$$

\smallskip
$\bullet$ {\bf Estimate of $S^i_{3,2}$.}  
By \eqref{controlFvybasic} and Lemma~\ref{AuxilLem2}(i), we have
$$
\begin{aligned}
S_{3,2}^i
&\le Ce^{-2\eta s} \int_{\bar s}^s  \int_{X_0(\tau)}^{X_1(\tau)}
(s-\tau)^{-\frac{\alpha+2+i}{2}} e^{-C(X(\tau)-z)^2}(1+X(\tau)z)^{-\alpha/2} z^{-2} dz d\tau\\
&\qquad + \bar Ce^{-2\mu s} \int_{\bar s}^s  \int_{X_0(\tau)}^{X_1(\tau)}
(s-\tau)^{-\frac{\alpha+i}{2}} e^{-C(X(\tau)-z)^2}(1+X(\tau)z)^{-\alpha/2} dz d\tau\\
&\le Ce^{-2\eta s} \int_{\bar s}^s \frac{(s-\tau)^{-\frac{\alpha+2+i}{2}}e^{-C\frac{y^2}{s-\tau}}d\tau}{X_0(\tau)}
+Ce^{-2\eta s} \int_{\bar s}^s \frac{(s-\tau)^{-\frac{\alpha+2+i}{2}}d\tau}{(1+X_1^2(\tau))^{\frac{\alpha+1}{2}}X_1(\tau)}\\
&\qquad \bar Ce^{-2\mu s}  \int_{\bar s}^s X_1(\tau)(s-\tau)^{-\frac{\alpha+i}{2}}e^{-C\frac{y^2}{s-\tau}}d\tau
+\bar Ce^{-2\mu s}  \int_{\bar s}^s \frac{{X_1(\tau)}(s-\tau)^{-\frac{\alpha+i}{2}}d\tau}{(1+X_1^2(\tau))^{\frac{\alpha+1}{2}}}
\equiv \sum_{j=1}^4 S_{3,2}^{i,j},
\end{aligned}
$$
where $\bar C=CK^{2(\beta+1)}$.
Using $y_0(\tau)\ge e^{-\eta s}$, $y_1(\tau)\le CKe^{-\gamma s}$ for $\tau\in (\bar s,s)$,
 and the change of variables $t=s-\tau$, we~have
$$\begin{aligned}
S_{3,2}^{i,1}
&\le Ce^{-2\eta s}\int_0^{s-\bar s} \frac{t^{-\frac{\alpha+1+i}{2}}e^{-Cy^2/t}}{y_0(\tau)}\, dt
\le Ce^{-\eta s} y^{-\alpha-1-i}
\int_0^{s-\bar s} (y^2/t)^{\frac{\alpha+1+i}{2}}e^{-Cy^2/t}\, dt\\
S_{3,2}^{i,3}
&\le \bar Ce^{-2\mu s}\int_0^{s-\bar s} y_1(\tau)t^{-\frac{\alpha+1+i}{2}}e^{-Cy^2/t}\, dt
\le \bar CKe^{-(2\mu+\gamma) s} y^{-\alpha-1-i}
\int_0^{s-\bar s} (y^2/t)^{\frac{\alpha+1+i}{2}}e^{-Cy^2/t}\, dt.
\end{aligned}$$
Recalling $\mu=\eta-\gamma$ and 
using the change of variables $u=\frac{y^2}{t}$, hence $t=\frac{y^2}{u}$, $dt=-\frac{y^2}{u^2}du$, we~obtain
$$\begin{aligned}
S_{3,2}^{i,1}+S_{3,2}^{i,3}
&\le C(1+e^{-\mu s}K^{2\beta+3}) e^{-\eta s} y^{-\alpha-1-i}
\int_0^{s-\bar s} (y^2/t)^{\frac{\alpha+1+i}{2}}e^{-Cy^2/t}\, dt \\
&\le C(1+e^{-\mu s_0}K^{2\beta+3})e^{-\eta s} y^{1-\alpha-i}\int_0^\infty u^{\frac{\alpha+i-3}{2}}e^{-Cu}\, du
\le Ce^{-\eta s} y^{-\beta-i}\le C\nu y^ie^{-\lambda s},
\end{aligned}
$$
due to $\alpha>1$, 
 \eqref{condbetaeta1} and $s_0\gg 1$.
Next, by the inequality $Ke^{-\gamma s}\le y_1(\tau)\le CKe^{-\gamma s}$ for $\tau\in (\bar s,s)$
and the change of variables $t=s-\tau$, we have
$$\begin{aligned}
S_{3,2}^{i,2}
&\le Ce^{-2\eta s} \int_0^{s-\bar s} 
\frac{t^{-\frac{\alpha+i+1}{2}}}{(1+t^{-1}y_1^2(\tau))^{(\alpha+1)/2}y_1(\tau)}\, dt 
\le CK^{-1}e^{(\gamma-2\eta)s} \int_0^{s-\bar s} 
\frac{t^{-\frac{\alpha+i+1}{2}}}{[1+t^{-1}e^{-2\gamma s}]^{(\alpha+1)/2}}\, dt \\
S_{3,2}^{i,4}
&\le \bar Ce^{-2\mu s} \int_0^{s-\bar s} 
\frac{t^{-\frac{\alpha+i+1}{2}}y_1(\tau)}{(1+t^{-1}y_1^2(\tau))^{(\alpha+1)/2}}\, dt 
\le \bar CK e^{-(2\mu+\gamma)s} \int_0^{s-\bar s} 
\frac{t^{-\frac{\alpha+i+1}{2}}}{[1+t^{-1}e^{-2\gamma s}]^{(\alpha+1)/2}}\, dt.
\end{aligned}$$
Using $\mu=\eta-\gamma$ and the change of variables $z=te^{2\gamma s}$, 
we get
$$\begin{aligned}
S_{3,2}^{i,2}+S_{3,2}^{i,4}
&\le \bar CK e^{(\gamma-2\eta)s}\int_0^{s-\bar s}  
\frac{t^{-\frac{\alpha+i+1}{2}}\, dt}{[1+(te^{2\gamma s})^{-1}]^{(\alpha+1)/2}}
\le \bar CKe^{(\gamma(\alpha+i)-2\eta)s}  \int_0^{s-\bar s}  
\frac{(te^{2\gamma s})^{-\frac{\alpha+i+1}{2}}\, e^{2\gamma s} dt}{[1+(te^{2\gamma s})^{-1}]^{(\alpha+1)/2}}\\
&\le \bar C K e^{(\gamma(\alpha+i)-2\eta)s} \int_0^{s-\bar s}  
(te^{2\gamma s})^{-i/2}[1+te^{2\gamma s}]^{-(\alpha+1)/2}\, e^{2\gamma s} dt\\
&\le \bar C K e^{(\gamma(\alpha+i)-2\eta)s}\int_0^\infty z^{-i/2}[1+z]^{-(\alpha+1)/2}\, dz
\le \bar C K e^{(\gamma(\alpha+i)-2\eta)s} \le \nu y^ie^{-\lambda s},
\end{aligned}
$$
owing to $\bar C K e^{(\gamma(\alpha+i)-2\eta)s}e^{\lambda s}y^{-i}\le 
CK^{2\beta+3}e^{(\gamma(\alpha+2)-2\eta+(1-\beta)\eta)s} 
\le CK^{2\beta+3}e^{-2(p-1)\gamma s_0}$ and $s_0\gg 1$.

\smallskip
$\bullet$ {\bf Estimate of $S^i_{3,3}$.}
By \eqref{controlFvybasic} and Lemma~\ref{AuxilLem2}(ii), we have
$$\begin{aligned}
S_{3,3}^i
&\le  C e^{-2\lambda s}\int_{\bar s}^s\int_{X_1(\tau)}^{X_2(\tau)} (s-\tau)^{-\frac{i}{2}} e^{-C(X(\tau)-z)^2}\bigl(1\wedge X(\tau)\bigr)^i \\
 & \qquad\qquad\qquad\qquad\qquad \bigl(1+X(\tau)z\bigr)^{-\frac{\alpha}{2}}
 \bigl(z\sqrt{s-\tau}\bigr)^\alpha \bigl(1+(z\sqrt{s-\tau})^{4(\ell-1)}\bigr) z^\alpha dz d\tau \\
 \noalign{\vskip 2mm}
 &\le C e^{-2\lambda s} \bigl[S_{3,3}^{i,2\alpha}+S_{3,3}^{i,2\alpha+4(\ell-1)}\bigr],
 \end{aligned}$$
where, for $m>\alpha$,
$$\begin{aligned}
S_{3,3}^{i,m}
&:=\int_{\bar s}^s (s-\tau)^{\frac{m-\alpha-i}{2}}  \bigl(1\wedge X(\tau)\bigr)^i\int_0^\infty e^{-C(X(\tau)-z)^2} 
 (1+X(\tau)z)^{-\alpha/2} z^m dz d\tau \\
 & \le C\int_{\bar s}^s (s-\tau)^{\frac{m-\alpha-i}{2}}  \bigl(1\wedge X(\tau)\bigr)^i 
 \bigl(1+X^{m-\alpha}(\tau)\bigr) d\tau \\
&\le C \int_{\bar s}^s (s-\tau)^{\frac{m-\alpha-i}{2}}\bigl((s-\tau)^{-\frac{i}{2}}y^{i}+(s-\tau)^{\frac{\alpha-m}{2}}y^{m-\alpha}\bigr)d\tau \\
&\le C\int_{\bar s}^s \bigl((s-\tau)^{\frac{m-\alpha}{2}-i}y^{i}+(s-\tau)^{-\frac{i}{2}}y^{m-\alpha}\bigr)d\tau 
\le C(y^i+y^{m-\alpha}).
\end{aligned}$$
Consequently,
$S_{3,3}^i\le  C e^{-2\lambda s}(y^i+y^{\alpha+4(\ell-1)})$.
 If $s_0\gg 1$, since
$y^{\alpha+2\ell-3}\le e^{(\alpha+2\ell-3)s/2}\le \nu e^{\lambda s}$ for $y\le y_2(s)$ owing to 
$\frac12(2\ell+\alpha-3) < \lambda=\frac12(2\ell+\alpha-2)$, 
we have $Ce^{-2\lambda s}y^{\alpha+4(\ell-1)}\le\nu e^{-\lambda s}y^{2\ell-1}$, hence we conclude that
\be{estimS3-4}
S_{3,3}^i\le \nu e^{-\lambda s}(y^i+y^{2\ell-i}).
\ee

\smallskip
$\bullet$ {\bf Estimate of $S^i_{3,4}$.}
By \eqref{controlFvybasic}, we have
$$S_{3,4}^i\le 
Ce^{-\alpha s/2}\int_{\bar s}^s(s-\tau)^{-i/2}  \bigl(1\wedge X(\tau) \bigr)^i
\int_{X_2(\tau)}^\infty e^{-C(X(\tau)-z)^2}(1+X(\tau)z)^{-\alpha/2}  z^\alpha dz d\tau.$$
If $y\le y_2(s)/2$, then 
$X(\tau)=e^{(\tau-s)/2}(s-\tau)^{-1/2}y\le e^{(\tau-s)/2}(s-\tau)^{-1/2}y_2(s)/2=X_2(\tau)/2$ for all $\tau\in (\bar s,s)$.
Therefore, applying Lemma~\ref{AuxilLem2}(ii) and using 
 $X_2(\tau)=\sigma(s-\tau)^{-1/2}e^{\tau/2}\ge C(s-\tau)^{-1/2}e^{s/2}$, we get
$$ S_{3,4}^i
\le Ce^{-\alpha s/2}y^i \int_{\bar s}^s(s-\tau)^{-i}\exp[-C(s-\tau)^{-1}e^s] d\tau
\le Cy^i\exp[-Ce^s]\le C\nu y^ie^{-\lambda s}.
$$  
On the other hand, if $y_2(s)/2\le y\le y_2(s)$, then
$$  
S_{3,4}^i\le Ce^{-\alpha s/2}   \int_{\bar s}^s(s-\tau)^{-i/2}d\tau
\le Ce^{-\alpha s/2}   \le \nu e^{-\lambda s} y^{2\ell-1}
$$  
 for $s_0\gg 1$, since
 $\nu y^{2\ell-1}\ge C\nu e^{(\ell-\frac{1}{2})s}\ge C e^{(\lambda-\frac{\alpha}{2})s}$,
owing to $2\ell-1>2\ell-2  
=2\lambda-\alpha$.
Consequently,
\be{estimS3-4b}
S_{3,4}^i\le \nu e^{-\lambda s}(y^i+y^{2\ell-i}).
\ee
Finally, inequality \eqref{controlS3short0} follows by combining the above estimates.
\end{proof}

\begin{proof}[Proof of Proposition~\ref{Propwshort}]
This is a direct consequence of \eqref{controlDiphi}, \eqref{defS123} and Lemmas~\ref{Lemcvcphi}, \ref{LemS2short}, \ref{LemS3short}.
\end{proof}

\subsubsection{Long time estimate of $w$ and $w_y$.}

In this subsection, we shall prove:

\begin{prop} \label{Propwlong}
If $s_0\gg 1$ and $s_1\ge s_0+1$ then, for any $d\in \mathcal{U}_{s_0,s_1}$ such that $P(d;s_0,s_1)=0$, we have
\be{estimM3}
|D^i(\tilde w-U+e^{-\lambda s}\phi)|\le  M_3\nu e^{-\lambda s}(y^i+y^{2\ell-i}),\quad s\in (s_0+1,s_1],
\  y\in [y_1(s),y_2(s)],
\ee
for $i\in\{0,1\}$, with $M_3=M_3(p,\ell)>0$.
\end{prop}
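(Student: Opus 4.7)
The strategy is to use the same three-part decomposition $\tilde v + e^{-\lambda s}\phi = (S_1 + e^{-\lambda s}\phi) + S_2 + S_3$ employed in the proof of Proposition~\ref{Propwshort} (cf.~\eqref{defS123}), but adapted to the longer time range $s\in(s_0+1,s_1]$. Writing $S_1 + e^{-\lambda s}\phi = -e^{-\lambda s}(\hat\phi-\phi,\phi)\phi$, the first piece is bounded by $C\|\hat\phi-\phi\|\, e^{-\lambda s}|D^i\phi(y)| \le \nu e^{-\lambda s}(y^i+y^{2\ell-i})$ for $s_0\gg 1$, via Lemma~\ref{Lemcvcphi} and \eqref{controlDiphi}, uniformly in $s$. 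For $S_3$, I would split $\int_{s_0}^s = \int_{s_0}^{s-1} + \int_{s-1}^s$: the recent part is directly controlled by Lemma~\ref{LemS3short} applied with $\bar s = s-1$, while the earlier part $\int_{s_0}^{s-1}$ involves only increments $s-\tau\ge 1$, so that $1-e^{-(s-\tau)}$ is bounded away from $0$ and the kernel $G_i(s-\tau,y,\xi)$ of Proposition~\ref{KernelEstim2} enjoys uniform bounds via \eqref{boundH1}--\eqref{boundH2}. Combining these with the pointwise decay \eqref{controlFvybasic} of $\tilde F(\tilde v_y)$ zone-by-zone, as in the proof of Lemma~\ref{LemS3short}, should produce a bound compatible with the target rate.

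The main obstacle is the estimate of $S_2$, because $S_2(s) = e^{-(s-s_0)\mathcal{L}}S_2(s_0)$ projects onto all eigenmodes $j\ne\ell$, including the dangerous low mode $\varphi_0$ with eigenvalue $\lambda_0=-k<0$, on which the semigroup \emph{grows} as $e^{k(s-s_0)}$. To tame this, I would exploit the hypothesis $P(d;s_0,s_1)=0$ through the identity \eqref{v0phij} evaluated at $s=s_0$, which rewrites
\[
(S_2(s_0),\varphi_j) = (\tilde v(s_0),\varphi_j) = -\int_{s_0}^{s_1} e^{\lambda_j(\tau - s_0)}\bigl(\tilde F(\tilde v_y(\tau)),\varphi_j\bigr)\,d\tau, \qquad j<\ell.
\]
Feeding in $|(\tilde F,\varphi_j)|\le C(j+1)^{3/2}e^{-\bar\eta\tau}$ from Lemma~\ref{LemFvytau}, where $\bar\eta=\min(\eta,2\lambda)>\lambda>\lambda_j$, then yields $|e^{-\lambda_j(s-s_0)}(S_2(s_0),\varphi_j)|\le C e^{-\bar\eta s}$, which comfortably dominates the would-be exponential growth. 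For the high modes $j>\ell$, one uses $(S_2(s_0),\varphi_j) = -e^{-\lambda s_0}(\hat\phi-\phi,\varphi_j)$ together with the faster decay $e^{-\lambda_j(s-s_0)}$, summing via Parseval and Lemma~\ref{Lemcvcphi}.

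The remaining and most technical step is to convert these eigencoefficient bounds into the required \emph{pointwise} estimates on the intermediate region $[y_1(s),y_2(s)]$, since by Proposition~\ref{EstimEigen} the eigenfunctions $\varphi_j$ grow pointwise as $e^{y^2/8}$. The plan is to work directly with the kernel representation $S_2(y,s) = \int_0^\infty G_0(s-s_0,y,\xi)S_2(\xi,s_0)\xi^\alpha\,d\xi$ from Proposition~\ref{KernelEstim2}, inserting the pointwise bounds on $|S_2(\xi,s_0)|$ and $|\partial_y S_2(\xi,s_0)|$ supplied by Lemma~\ref{LemestimS2s0}, itself relying on $|d_j|\le\nu e^{-\lambda s_0}$ from Lemma~\ref{Lemdsmallnu}. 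In the regime $s-s_0\ge 1$, the kernel enters its large-time asymptotic form and \eqref{boundH1}--\eqref{boundH2} give bounds of the type $G_i(s-s_0,y,\xi)\le Ce^{k(s-s_0)}(1+e^{-(s-s_0)/2}y\xi)^{-\alpha/2}e^{-C(e^{-(s-s_0)/2}y-\xi)^2}$. The amplification factor $e^{k(s-s_0)}$ is exactly the source of the low-mode growth identified above, and the corresponding smallness needed to absorb it must come from the same cancellation provided by $P(d;s_0,s_1)=0$; concretely, this should be realised by splitting the $\xi$-integration along $D_1\cup D_2\cup D_3$ from Lemma~\ref{LemestimS2s0} and applying Lemma~\ref{AuxilLem2} together with careful tracking of powers of $y$, so that the contributions on $D_2\cup D_3$ (which are already small in $\nu$ and localised near the initial intermediate range) integrate to the desired $M_3\nu e^{-\lambda s}(y^i+y^{2\ell-i})$.
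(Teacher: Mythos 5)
Your decomposition cannot close as written, because the low-mode cancellation coming from $P(d;s_0,s_1)=0$ is applied to $S_2$ alone, whereas it only operates on $S_2$ and $S_3$ \emph{jointly}. From \eqref{v0phij} at $s=s_0$ and Lemma~\ref{LemFvytau} you do get $|(\tilde v(s_0),\varphi_j)|\le Ce^{-\bar\eta s_0}$ for $j<\ell$, but propagating this by the semigroup gives $|e^{-\lambda_j(s-s_0)}(S_2(s_0),\varphi_j)|\le Ce^{-\lambda_j(s-s_0)}e^{-\bar\eta s_0}$, which for $j=0$ (eigenvalue $\lambda_0=-k<0$) grows like $e^{k(s-s_0)}$ and eventually overwhelms the target $\nu e^{-\lambda s}$; your claimed bound $Ce^{-\bar\eta s}$ is not what the computation yields. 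The same growth afflicts the $j<\ell$ modes of the early Duhamel piece $\int_{s_0}^{s-1}$ inside $S_3$. What actually decays is the \emph{sum}: by \eqref{v0phij}, $e^{-\lambda_j(s-s_0)}(\tilde v(s_0),\varphi_j)+\int_{s_0}^{s}e^{-\lambda_j(s-\tau)}(\tilde F(\tilde v_y(\tau)),\varphi_j)\,d\tau=-\int_{s}^{s_1}e^{-\lambda_j(s-\tau)}(\tilde F(\tilde v_y(\tau)),\varphi_j)\,d\tau=O(e^{-\bar\eta s})$. This is exactly why the paper abandons the $S_1,S_2,S_3$ splitting for $s\ge s_0+1$ and passes to the regrouped decomposition $\tilde v=I_0+I_1+I_2+I_3$ of \eqref{defI0123}, in which $I_1$ collects the low-mode parts of the initial data and of the Duhamel term together as $-\sum_{j<\ell}\varphi_j\int_s^{s_1}(\cdots)\,d\tau$ (Lemma~\ref{LemI1long}).

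There is a second gap in the passage from mode or kernel estimates to pointwise bounds on all of $[y_1(s),y_2(s)]$. The eigenfunction expansion yields pointwise control only for bounded $y$, since $|\varphi_j(y)|\le C(j+1)^{3/2}e^{y^2/8}$ and $y$ ranges up to $\sigma e^{s/2}$; this is why the paper restricts that argument to $y\le R_1$ (Lemma~\ref{LemI2long}). On the other hand, the kernel representation acting on the pointwise data of Lemma~\ref{LemestimS2s0} unavoidably carries the amplification $e^{k(s-s_0)}$ (cf.~\eqref{normalizeH} and \eqref{defWkernel}), so for $y=O(1)$ it overshoots the target $\nu e^{-\lambda s}$ by a factor $e^{\ell(s-s_0)}$; no splitting of the $\xi$-integration over $D_1\cup D_2\cup D_3$ can recover a Fourier-side cancellation from pointwise initial bounds. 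The kernel route does succeed at the single curve $y=y_2(s)$, because there the target $\nu e^{-\lambda s}y_2^{2\ell-i}=\nu\sigma^{2\ell-i}e^{(k-i/2)s}$ is itself of size $e^{ks}$ — this is the content of Lemma~\ref{LemS2long}. The missing ingredient is the paper's Lemma~\ref{LemComplong}: with the estimate established on $\{y\le R_1\}$, on $\{y=y_2(s)\}$, and at $s=s_0+1$ (Proposition~\ref{Propwshort}), one fills in $[R_1,y_2(s)]\times(s_0+1,s_1]$ by the maximum principle, using explicit sub/supersolutions of the form $\bigl(-1\pm(-1)^\ell B\nu\bigr)e^{-\lambda s}\phi\mp\nu e^{-\Lambda s}y^{2\ell}$ and the analogous barriers for $w_y$. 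Without such a comparison step (or an equivalent device), your argument does not reach the large-$y$ part of the intermediate region.
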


For this purpose it will be convenient to use a different decomposition of $\tilde v$.
Namely, if $\tilde w\in\mathcal{A}^1_{s_0,s_1}$ satisfies $P(d;s_0,s_1)=0$, then
\be{defI0123}
\tilde v=I_0+I_1+I_2+I_3,\ \hbox{ where }\ 
\begin{cases}
I_0(y,s)&=-(\hat\phi,\phi)e^{-\lambda s}\phi(y), \\
\noalign{\vskip 1mm}
 I_1(y,s)&=-\ds\sum_{j=0}^{\ell-1} \varphi_j(y)\int_s^{s_1} e^{-\lambda_j(s-\tau)}(\tilde F(\tilde v_y(\tau)),\varphi_j)\,d\tau,\\
\noalign{\vskip 1mm}
 I_2(y,s)&=-\ds\sum_{j=\ell+1}^\infty e^{-\lambda s_0}(\hat\phi,\varphi_j)e^{-\lambda_j(s-s_0)}\varphi_j(y),\\
\noalign{\vskip 1mm}
 I_3(y,s)&=\ds\sum_{j=\ell}^\infty \varphi_j(y)\int_{s_0}^s e^{-\lambda_j(s-\tau)}(\tilde F(\tilde v_y(\tau)),\varphi_j)\,d\tau.
\end{cases}
\ee
Equality \eqref{defI0123} follows from 
\be{defI0123b}
\tilde v(s)=\sum_{i=0}^\infty 
 e^{-\lambda_j(s-s_0)}(\tilde v(s_0),\varphi_j)\varphi_j(y)
+\sum_{j=0}^\infty \varphi_j(y)\int_{s_0}^s e^{-\lambda_j(s-\tau)}(\tilde F(\tilde v_y(\tau)),\varphi_j)\,d\tau,
\ee
along with \eqref{v0phij}
and $\tilde v(s_0)=\sum_{j=0}^{\ell-1}d_j\varphi_j-e^{-\lambda s_0}\hat\phi$.
The outline of proof of Proposition~\ref{Propwlong} is then as follows:
\vskip 1pt

$\bullet$ Estimation of $I_1$

$\bullet$ Estimation of $I_2$ and $I_3$ in the intermediate region for {\it bounded} $y$, 
i.e.~$y\in [y_1(s), R_1]$ (with suitably chosen large $R_1$)

$\bullet$ Estimation of $w$ and $w_y$ at the right boundary of the intermediate region $y=y_2(s)$

$\bullet$ Estimation of $w$ and $w_y$ in the remaining part of the intermediate region
i.e.~$y\in [R_1,y_2(s)]$.

\smallskip

We start with $I_1$, which is easy to estimate globally.

\begin{lem} \label{LemI1long} 
If $s_0\gg 1$ and $s_1\ge s_0+1$ then, for any $d\in \mathcal{U}_{s_0,s_1}$, we have
\be{controlDI1}
|D^iI_1(y,s)|\le  C\nu e^{-\lambda s} (y^i+y^{2\ell-i}),\quad\hbox{ for all $s\in [s_0+1,s_1]$, $y>0$ and $i\in\{0,1\}$.}
\ee
\end{lem}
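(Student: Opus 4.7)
The plan is to estimate $I_1$ directly from its defining formula, exploiting the fact that the sum is finite (only the $\ell$ indices $j = 0, \dots, \ell-1$ appear). The three ingredients we will combine are: (a) the Fourier coefficient bound $|(\tilde F(\tilde v_y(\tau)),\varphi_j)| \le C(j+1)^{3/2} e^{-\bar\eta\tau}$ from Lemma~\ref{LemFvytau}, which holds uniformly for $d\in\mathcal{U}_{s_0,s_1}$ once $s_0\gg 1$; (b) pointwise bounds on the low-mode eigenfunctions coming from Proposition~\ref{GaussianPoincare2}(iii) — since each $\varphi_j$ is an even polynomial of degree $2j\le 2(\ell-1)$ with $\varphi_j'(0)=0$, we have
\[
|\varphi_j(y)|\le C(1+y^{2\ell}),\qquad |\varphi_j'(y)|\le C(y+y^{2\ell-1}),\qquad y\ge 0,
\]
which matches the required polynomial factor $y^i+y^{2\ell-i}$ for $i\in\{0,1\}$; and (c) the fact that $\lambda_j = j-k \le \ell-1-k$ is strictly smaller than $\bar\eta = \min(\eta,2\lambda)$, since $\eta>\lambda>\ell-1-k$ and $2\lambda>\lambda$.

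With these in hand, the time integral in the definition of $I_1$ is easily controlled: for each $j\le\ell-1$, applying (a) and (c),
\[
\Bigl|\int_s^{s_1} e^{-\lambda_j(s-\tau)}(\tilde F(\tilde v_y(\tau)),\varphi_j)\,d\tau\Bigr|
\le C e^{-\lambda_j s}\int_s^{\infty} e^{(\lambda_j-\bar\eta)\tau}\,d\tau
\le \frac{C}{\bar\eta-\lambda_j}\,e^{-\bar\eta s}\le Ce^{-\bar\eta s}.
\]
Multiplying by $|D^i\varphi_j(y)|$ from (b) and summing the finitely many terms yields
\[
|D^i I_1(y,s)|\le C e^{-\bar\eta s}(y^i+y^{2\ell-i}),\qquad s\in[s_0+1,s_1],\ y\ge 0,\ i\in\{0,1\}.
\]
Finally, since $\bar\eta>\lambda$, we write $e^{-\bar\eta s}= e^{-(\bar\eta-\lambda)s}\,e^{-\lambda s}\le e^{-(\bar\eta-\lambda)s_0}\,e^{-\lambda s}$, and choosing $s_0$ large enough (depending only on $\eps$, hence on $\nu=M_0\eps$) so that $e^{-(\bar\eta-\lambda)s_0}\le\nu$, we obtain \eqref{controlDI1}.

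There is no real obstacle: the finiteness of the sum trivializes the bookkeeping on $j$, and the strict gap $\bar\eta-\lambda_j>0$ gives integrability of the time integral on $[s,\infty)\supset[s,s_1]$. The only point requiring care is checking that $\bar\eta>\lambda_{\ell-1}$, which reduces to the evident inequalities $\eta>\lambda$ and $2\lambda>\lambda$; and the extraction of the gain factor $\nu$, which simply uses the largeness assumption $s_0\gg 1$ (allowed since $s_0$ is permitted to depend on $\eps$). The much harder estimates of $I_2$ and $I_3$ — which involve the full infinite series and require the heat kernel bounds of Proposition~\ref{KernelEstim2} in the style of Lemma~\ref{LemS3short} — will be postponed to the subsequent lemmas.
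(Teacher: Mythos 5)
Your proof is correct and follows essentially the same route as the paper: combine the Fourier coefficient bound of Lemma~\ref{LemFvytau} with the polynomial pointwise bounds on the finitely many low modes $\varphi_j$, use $\bar\eta>\lambda_j$ to integrate in time, and absorb the surplus decay $e^{-(\bar\eta-\lambda)s}$ into the factor $\nu$ by taking $s_0$ large. The only difference is cosmetic: the paper records a marginally stronger version (sum up to $j=\ell$ and time integral starting at $s-1$) because it is reused later in the estimate of $I_3$, but for the lemma as stated your argument is complete.
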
 

\begin{proof}
For later purpose, we actually show a slightly stronger estimate, namely:
\be{controlDI1b}
\begin{aligned}
&\sum_{j=0}^\ell |D^i\varphi_j(y)|\int_{s-1}^{s_1} e^{-\lambda_j(s-\tau)}|(\tilde F(\tilde v_y(\tau)),\varphi_j)|\,d\tau
\le C(y^i+y^{2\ell-i})\sum_{j=0}^\ell
e^{-\lambda_j s} \int_{s-1}^\infty e^{(\lambda_j-\bar\eta)\tau}\,d\tau \\
&\quad\le C(y^i+y^{2\ell-i}) e^{-\bar\eta s},\quad
s\in [s_0+1,s_1],\ y>0,
\end{aligned}
\ee
for $s_0\gg 1$, which is a consequence of inequality \eqref{Fvytau}
and $\bar\eta>\lambda_\ell$. 
Estimate \eqref{controlDI1} is then an immediate 
consequence of \eqref{controlDI1b}.
\end{proof}

We shall now estimate $I_2$ and $I_3$ in the intermediate region for bounded $y$. 
More precisely, we choose $R_1>1, C_1>0$ (depending only on $p,\ell$) such that 
\be{choiceR}
(-1)^\ell\phi(y)\ge C_1y^{2\ell},\quad (-1)^\ell\phi'(y)\ge C_1y^{2\ell-1},\qquad y\ge R_1,
\ee
which is obviously possible in view of Proposition~\ref{GaussianPoincare2}.

\begin{lem}  \label{LemI2long}
If $s_0\gg 1$ and $s_1\ge s_0+1$ then, for any $d\in \mathcal{U}_{s_0,s_1}$, we have, for $m\in\{2,3\}$,
\be{controlDIm}
|D^iI_m(y,s)|\le  C\nu e^{-\lambda s} y^i,\quad\hbox{ for all $s\in [s_0+1,s_1]$, $y\in [y_1(s),R_1]$ and $i\in\{0,1\}$}.
\ee
Consequently,
\be{controlDIm2}
|D^i(v+e^{-\lambda s}\phi)\le  M_4\nu e^{-\lambda s} y^i,\quad\hbox{ for all $s\in [s_0+1,s_1]$,
$y\in [y_1(s),R_1]$ and $i\in\{0,1\}$},
\ee
 with $M_4=M_4(p,\ell)>0$.
\end{lem}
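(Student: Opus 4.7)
The strategy is to bound $I_2$ and $I_3$ separately in the bounded range $y \in [y_1(s), R_1]$, then combine with Lemma~\ref{LemI1long} and a direct estimate of $I_0 + e^{-\lambda s}\phi$ to obtain \eqref{controlDIm2}. The smallness factor $\nu$ will be tracked through three independent sources: Lemma~\ref{Lemcvcphi}, giving $\|\hat\phi - \phi\| \le C\nu$ for $s_0 \gg 1$; the bound $e^{-\bar\eta s} \le \nu e^{-\lambda s}$ for $s \ge s_0 \gg 1$, which follows from $\bar\eta > \lambda$; and the short-time estimate of Lemma~\ref{LemS3short}.

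For $I_2$, I would rewrite $I_2(\cdot,s) = -e^{-\lambda s_0}\,e^{-(s-s_0)\mathcal{L}}(\hat\phi - P_\ell\hat\phi)$, where $P_\ell$ is the orthogonal projection in $L^2_\rho$ onto $\mathrm{span}(\varphi_0,\dots,\varphi_\ell)$. Since $\phi = \varphi_\ell$ lies in the range of $P_\ell$, the best-approximation property gives $\|\hat\phi - P_\ell\hat\phi\| \le \|\hat\phi - \phi\| \le C\nu$ for $s_0 \gg 1$. Expanding $I_2$ in the eigenbasis, applying Cauchy--Schwarz in the summation index, and invoking the pointwise bounds $|\varphi_j(y)| \le C(j+1)^{3/2}$ and $|\varphi_j'(y)| \le C(j+1)^{5/2}y$ from Proposition~\ref{EstimEigen} (valid for bounded $y$), the dominant decay factor $e^{-\lambda_{\ell+1}(s-s_0)}$ combined with $e^{-\lambda s_0}$ produces $e^{-\lambda s}e^{-(s-s_0)}$; for $s \ge s_0 + 1$, this extra factor is a numerical constant. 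Convergence of $\sum_{j \ge \ell+1}(j+1)^{3+2i}e^{-\lambda_j}$ then yields $|D^iI_2(y,s)| \le C\nu e^{-\lambda s}y^i$.

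For $I_3$, I would split the time integral at $\tau = s-1$. Writing $Q$ for the orthogonal projection onto modes $j \ge \ell$, the short-time contribution $\int_{s-1}^s Q\,e^{-(s-\tau)\mathcal{L}}\tilde F(\tilde v_y(\tau))\,d\tau$ decomposes as $\int_{s-1}^s e^{-(s-\tau)\mathcal{L}}\tilde F\,d\tau$ minus the projection onto the finitely many lower modes. The first piece is directly controlled by Lemma~\ref{LemS3short} (with $\bar s = s-1$) by $C\nu e^{-\lambda s}(y^i + y^{2\ell-i})$; the second, a finite sum over $j \in \{0,\dots,\ell-1\}$, is bounded via \eqref{Fvytau} by $Ce^{-\bar\eta s}\,y^i \le C\nu e^{-\lambda s}y^i$. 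For the long-time part, eigenfunction expansion combined with \eqref{Fvytau} reduces matters to estimating $\int_{s_0}^{s-1} e^{-\lambda_j(s-\tau)}e^{-\bar\eta\tau}\,d\tau$: when $j = \ell$ the integral is of order $e^{-\lambda s}e^{-(\bar\eta-\lambda)s_0} = O(\nu e^{-\lambda s})$; when $j > \ell$ it is $O\bigl(\nu e^{-\lambda s}e^{\bar\eta-\lambda_j}/(\lambda_j-\bar\eta)\bigr)$, and after multiplication by $|D^i\varphi_j(y)|$ and summation the series $\sum (j+1)^{3+2i}e^{-\lambda_j}/(\lambda_j-\bar\eta)$ converges provided finitely many indices with small denominator are treated separately. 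In both contributions the term $y^{2\ell-i}$ is absorbed into $y^i$ via $y^{2\ell-i} \le R_1^{2\ell-2i}y^i$ on $[y_1(s),R_1]$.

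To conclude \eqref{controlDIm2}, I would write $\tilde v + e^{-\lambda s}\phi = (I_0 + e^{-\lambda s}\phi) + I_1 + I_2 + I_3$ and observe that $I_0 + e^{-\lambda s}\phi = -(\hat\phi - \phi, \phi)\,e^{-\lambda s}\phi$, using $(\phi,\phi) = 1$. Cauchy--Schwarz and Lemma~\ref{Lemcvcphi} give $|(\hat\phi-\phi,\phi)| \le C\nu$, and since $\phi'(0) = 0$ yields $|\phi'(y)| \le Cy$ on bounded $y$, one obtains $|D^i(I_0 + e^{-\lambda s}\phi)| \le C\nu e^{-\lambda s}y^i$. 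Lemma~\ref{LemI1long} controls $|D^iI_1| \le C\nu e^{-\lambda s}(y^i + y^{2\ell-i})$, absorbed in the same way. The main obstacle will be the long-time portion of $I_3$: simultaneously handling the polynomial growth $(j+1)^{3/2}$ of eigenfunctions on bounded $y$, the potentially small denominator $\lambda_j - \bar\eta$ for the finitely many indices where $\lambda_j$ is close to $\bar\eta$, and the need to preserve both the $\nu$ smallness and the correct $y$-power $y^i$ forces the careful case split described above.
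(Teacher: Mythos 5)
Your proposal is correct and follows essentially the same route as the paper: the same decomposition $\tilde v=I_0+I_1+I_2+I_3$, the same time-splitting of $I_3$ at $\tau=s-1$ with the short-time piece handled by Lemma~\ref{LemS3short} and the lower-mode correction by \eqref{Fvytau}, and the same use of Lemma~\ref{Lemcvcphi} and Proposition~\ref{EstimEigen} to extract the factor $\nu$ and control the eigenfunction growth. The only (harmless) variations are that for $I_2$ you use the best-approximation property plus Cauchy--Schwarz where the paper bounds $\sup_{j\ge\ell+1}|(\hat\phi,\varphi_j)|\le\nu$ directly, and for the long-time part of $I_3$ you integrate $e^{-\lambda_j(s-\tau)}e^{-\bar\eta\tau}$ exactly (forcing your case-split on small denominators $\lambda_j-\bar\eta$), whereas the paper first bounds $e^{-\lambda_j(s-\tau)}\le e^{-\lambda(s-\tau)}e^{\ell-j}$ for $s-\tau\ge1$, which makes the $\tau$-integral uniform in $j$ and avoids that issue altogether.
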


\begin{proof}
{\bf Step 1.} {\it Proof of \eqref{controlDIm} for $m=2$.} 
For $s\ge s_0+1$ and $j\ge\ell$, we have
$$e^{-\lambda s_0}e^{-\lambda_j(s-s_0)}=e^{-\lambda s}e^{-(\lambda_j-\lambda)(s-s_0)}
=e^{-\lambda s}e^{-(j-\ell)(s-s_0)}\le e^{-\lambda s}e^{\ell-j}.$$
Also, by Lemma~\ref{Lemcvcphi} and since $\|\phi\|=1$, we have $\sum_{j\ne\ell} |(\hat\phi,\varphi_j)|^2
=\|\hat\phi\|^2- |(\hat\phi,\phi)|^2\to \|\phi\|^2-\|\phi\|^4=0$, as $s_0\to\infty$, uniformly for 
$d$ satisfying \eqref{condd}.
Therefore, taking $s_0\gg 1$, we have $\sup_{j\ge\ell+1} |(\hat\phi,\varphi_j)|\le \nu$
and, by \eqref{EstimEigen1}, we get, for all $y\in [y_1(s),R_1]$,
$$\begin{aligned}
|D^iI_2(y,s)|
&=\Bigl|\sum_{j=\ell+1}^\infty  e^{-\lambda s_0}(\hat\phi,\varphi_j) e^{-\lambda_j(s-s_0)} D^i\varphi_j(y)\Bigr|
\le Ce^{-\lambda s} \sum_{j=\ell+1}^\infty  e^{-j}|(\hat\phi,\varphi_j)||D^i\varphi_j(y)|\\
&\le Ce^{R_1^2/8}\nu e^{-\lambda s}y^i \sum_{j=\ell+1}^\infty  j^{5/2}e^{-j}\le 
C\nu e^{-\lambda s}y^i.
\end{aligned}$$

{\bf Step 2.} {\it Proof of \eqref{controlDIm} for $m=3$.} We have
$$\begin{aligned}
D^iI_3(y,s)
&=\int_{s_0}^s e^{-\lambda_j(s-\tau)}\sum_{j=\ell}^\infty  (\tilde F(\tilde v_y(\tau)),\varphi_j)D^i\varphi_j(y)  \,d\tau
=\int_{s_0}^{s-1}+\int_{s-1}^s \equiv D^iI_{3,1}+D^iI_{3,2}
\end{aligned}$$
and we further split $D^iI_{3,2}(y,s)=D^iI_{3,2}^1-D^iI_{3,2}^2$, with
$$D^iI_{3,2}^1=\int_{s-1}^s D^i e^{-(s-\tau)\mathcal L}(\tilde F(\tilde v_y(\tau)) \,d\tau,\quad 
D^iI_{3,2}^2=\sum_{j=0}^{\ell-1} D^i\varphi_j(y) \int_{s-1}^s e^{-\lambda_j(s-\tau)}(\tilde F(\tilde v_y(\tau)),\varphi_j) \,d\tau.$$
By \eqref{controlS3short0} in Lemma~\ref{LemS3short} and \eqref{controlDI1b}, we have
$$|D^iI_{3, 1}^1(y,s)|+|D^iI_{3, 2}^2(y,s)|\le  C\nu e^{-\lambda s} y^i,\quad\hbox{ for all $s\in [s_0+1,s_1]$ and $y\in [y_1(s),R_1]$}.$$
In view of estimating $D^iI_{3,1}$, we note that, for all $\tau\in (s_0,s-1)$ and $j\ge\ell$, we have $e^{-\lambda_j(s-\tau)}$ $=e^{-\lambda(s-\tau)}e^{(\lambda-\lambda_j)(s-\tau)}\le e^{-\lambda(s-\tau)}e^{\lambda-\lambda_j} =e^{-\lambda(s-\tau)}e^{\ell-j}$. 
Combining this with \eqref{EstimEigen1} and \eqref{Fvytau} yields
$$\begin{aligned}
|D^iI_{3,1}|
&\le \sum_{j=\ell}^\infty |D^i\varphi_j(y)| \int_{s_0}^{s-1} e^{-\lambda_j(s-\tau)} |(\tilde F(\tilde v_y(\tau)),\varphi_j)| \,d\tau \\
&\le Ce^{R_1^2/8}y^i \sum_{j=\ell}^\infty j^{5/2} e^{-j} \int_{s_0}^{s-1} j^{3/2}e^{-\lambda(s-\tau)} 
e^{-\bar\eta\tau}\,d\tau\\
&\le Ce^{R_1^2/8}y^i e^{-\lambda s} \Bigl(\sum_{j=\ell}^\infty j^4 e^{-j}\Bigr) \int_{s_0}^{s-1} 
e^{(\lambda-\bar\eta)\tau}\,d\tau 
\le Ce^{R_1^2/8}y^i e^{-\lambda s} e^{(\lambda-\bar\eta)s_0} \le 
\nu y^i e^{-\lambda s}.
 \end{aligned}$$
The conclusion follows.

\smallskip

{\bf Step 3.} {\it Proof of \eqref{controlDIm2}.} 
This is a direct consequence of \eqref{defI0123}, \eqref{controlDI1}, \eqref{controlDIm}
and Lemma~\ref{Lemcvcphi}. 
\end{proof}

We next estimate $v$ and $v_y$ at the outer boundary of the intermediate region.
For this purpose, we go back to the decomposition \eqref{defS123}.

\begin{lem}  \label{LemS2long}
Set $y_2=y_2(s)=\sigma e^{s/2}$. 
If $s_0\gg 1$ and $s_1\ge s_0+1$ then, for any $d\in \mathcal{U}_{s_0,s_1}$ such that $P(d;s_0,s_1)=0$, we have, for $m\in\{2,3\}$,
\be{controlDSm}
|D^iS_m(y_2,s)|\le C \nu e^{-\lambda s} y_2^{2\ell-i},\quad\hbox{ for all $s\in [s_0+1,s_1]$ and $i\in\{0,1\}$}.
\ee
Consequently,
\be{controlDSm2}
\bigl|D^i\bigl(v+e^{-\lambda s}\phi\bigr)(y_2,s)\bigr| 
\le  M_5\nu e^{-\lambda s} y_2^{2\ell-i},\quad\hbox{ for all $s\in [s_0+1,s_1]$ and $i\in\{0,1\}$}.
\ee
 with $M_5=M_5(p,\ell)>0$.
\end{lem}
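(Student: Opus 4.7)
The plan is to estimate $D^i S_2(y_2,s)$ and $D^i S_3(y_2,s)$ separately via the heat kernel representation, then deduce \eqref{controlDSm2} by combining these bounds. Writing $\tilde v + e^{-\lambda s}\phi = (1-(\hat\phi,\phi))e^{-\lambda s}\phi + S_2 + S_3$ (using $S_1 = -(\hat\phi,\phi)e^{-\lambda s}\phi$), Lemma~\ref{Lemcvcphi} gives $|1-(\hat\phi,\phi)| \le \nu$ for $s_0 \gg 1$, and \eqref{controlphiell} gives $|D^i\phi(y_2)| \le C y_2^{2\ell-i}$ since $y_2 \gg R_1$. Also, at $y=y_2$ we have $x = \sigma < 1/3$, hence the cutoff $\zeta$ equals $1$ there and $\tilde v(y_2,s) = v(y_2,s)$, so \eqref{controlDSm} will yield \eqref{controlDSm2}.

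For $S_2$: since $S_2(\cdot,s) = e^{-(s-s_0)\mathcal{L}} S_2(\cdot,s_0)$, Proposition~\ref{KernelEstim2} yields
\[
D^i S_2(y_2,s) = \int_0^\infty G_i(s-s_0,y_2,\xi) D^i S_2(\xi,s_0) \xi^\alpha \, d\xi.
\]
Because $s-s_0 \ge 1$, the kernel parameter $t := 1-e^{-(s-s_0)} \in [1-e^{-1},1)$ is bounded away from $0$, and the Gaussian in $H_i(t,e^{-(s-s_0)/2}y_2,\xi)$ concentrates sharply around $\xi_* := e^{-(s-s_0)/2}y_2 = \sigma e^{s_0/2}$, which lies inside $D_2 = [\tilde K e^{-\gamma s_0}, 2\sigma e^{s_0/2}]$ for $s_0 \gg 1$. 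I split the $\xi$-integral over the regions $D_1,D_2,D_3$ of Lemma~\ref{LemestimS2s0}. The dominant $D_2$-contribution, where $|D^i S_2(\xi,s_0)| \le C\nu e^{-\lambda s_0}(\xi^i + \xi^{2\ell-i})$, gives, after tracking the factor $e^{k(s-s_0)}$ and the Gaussian of width $O(1)$ around $\xi_*$, a bound $C\nu e^{-\lambda s_0} e^{k(s-s_0)} \xi_*^{2\ell-i} = C\nu e^{-\lambda s} y_2^{2\ell-i}$ (using the identity $\lambda + k = \ell$ and $y_2 = e^{(s-s_0)/2}\xi_*$). The contributions from $D_1$ and $D_3$ are controlled by $\exp[-C(\xi-\xi_*)^2]$, yielding exponentially small remainders.

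For $S_3$: split $S_3 = \int_{s_0}^{s-1} + \int_{s-1}^s \equiv S_3^{(1)} + S_3^{(2)}$. The second piece $S_3^{(2)}$ is exactly the quantity controlled by Lemma~\ref{LemS3short} with $\bar s = s-1$, yielding $|D^i S_3^{(2)}(y_2,s)| \le C\nu e^{-\lambda s}(y_2^i + y_2^{2\ell-i})$. For $S_3^{(1)}$, I again use the kernel representation
\[
D^i S_3^{(1)}(y_2,s) = \int_{s_0}^{s-1} \int_0^\infty G_i(s-\tau,y_2,\xi)\, \tilde F(\tilde v_y(\xi,\tau))\, \xi^\alpha \, d\xi\, d\tau,
\]
where for each $\tau \in [s_0,s-1]$ the parameter $1-e^{-(s-\tau)}$ is again bounded away from $0$, and the Gaussian concentrates near $\xi_*(\tau) = e^{-(s-\tau)/2}y_2 = \sigma e^{\tau/2} = y_2(\tau)$ -- i.e., right at the right edge of the intermediate region at time $\tau$. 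Using the bounds on $\tilde F(\tilde v_y)$ from Lemma~\ref{LemvyFvy} (third and fourth cases: $Ce^{-2\lambda\tau}\xi^{\beta+1}(1+\xi^{4(\ell-1)})$ and $Ce^{-(\beta+1)\tau/2}$ respectively), the dominant $\xi$-contribution near $\xi_*(\tau)$ yields, after the Gaussian integration, a factor $C e^{k(s-\tau)} e^{-2\lambda\tau} y_2(\tau)^{2\ell-i+\alpha}$ or similar, and the $\tau$-integral then converges thanks to $\lambda + k = \ell$ and the decay $e^{-\lambda\tau}$ overcoming the growth $e^{(\ell-i/2)\tau}$ from $\xi_*(\tau)^{2\ell-i}$, producing the target bound $C\nu e^{-\lambda s} y_2^{2\ell-i}$.

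The main obstacle is the bookkeeping of exponents in the $S_3^{(1)}$ double integral, where the Gaussian center moves with $\tau$ and the nonlinear bound from Lemma~\ref{LemvyFvy} depends nontrivially on the region containing $\xi_*(\tau)$. The key cancellation throughout comes from the identity $\lambda + k = \ell$, which reconciles the kernel growth $e^{k(s-\tau)}$, the eigenfunction scale $\phi(y_2) \sim y_2^{2\ell}$, and the required growth $e^{-\lambda s} y_2^{2\ell-i}$; and the extra factor $\nu$ comes either from Lemma~\ref{Lemdsmallnu} and Lemma~\ref{Lemcvcphi} in the $S_2$ estimate, or from Lemma~\ref{LemFvytau} and the assumption $\tilde w \in \mathcal{A}^1_{s_0,s_1}$ for $S_3$.
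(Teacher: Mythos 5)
Your proof is correct and follows essentially the same route as the paper's: the same kernel representation for $S_2$ with the Gaussian centered at $\sigma e^{s_0/2}$ and the splitting over $D_1,D_2,D_3$, the same time-splitting of $S_3$ at $s-1$ with Lemma~\ref{LemS3short} handling the recent piece and the moving-center kernel estimate handling $\int_{s_0}^{s-1}$, and the same bookkeeping via $\lambda+k=\ell$ to assemble \eqref{controlDSm2}. The only cosmetic difference is that the paper makes the contribution of the inner region $\xi\in(0,Ke^{-\gamma\tau})$ in the $S_3$ estimate explicit (using the singular bound $|\tilde F|\le C\xi^{-\alpha}$ there, killed by the factor $e^{-Ce^\tau}$ from the Gaussian), which your sketch subsumes under the "exponentially small remainders".
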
 

\begin{proof}
{\bf Step 1.} {\it Proof of \eqref{controlDSm} for $m=2$.} 
Let $i\in\{0,1\}$ and $s\in [s_0+1,s_1]$. Since $S_2(\cdot,s)=e^{-(s-s_0)\mathcal{L}}S_2(\cdot,s_0)$, by Proposition~\ref{KernelEstim2} 
and \eqref{boundH1} it follows that
\be{DiS2}
|D^iS_2(y_2,s)|\le \int_0^\infty G_i(s-s_0,y_2,\xi)|D^iS_2(\xi,s_0)|\xi^\alpha d\xi
= \int_0^{\tilde K e^{-\gamma s_0}}+ \int_{\tilde K e^{-\gamma s_0}}^{2\sigma e^{s_0/2}}+ \int_{2\sigma e^{s_0/2}}^\infty\equiv \sum_{n=1}^3 S^i_{2,n},
\ee
where
\begin{eqnarray}
G_i(s-s_0,y_2,\xi)
\hskip -6mm &&\le C e^{(k-\frac{i}{2})(s-s_0)}(1-e^{s_0-s})^{-\frac{\alpha+1}{2}} 
\Bigl(1+\frac{e^{(s_0-s)/2}y_2\xi}{1-e^{s_0-s}}\Bigr)^{-\frac{\alpha}{2}}
\exp\Bigl[-C\frac{\bigl(e^{(s_0-s)/2}y_2-\xi\bigr)^2}{1-e^{s_0-s}}\Bigr] \notag \\
&&\le C e^{-\lambda s}y_2^{2\ell-i} e^{(\frac{i}{2}-k)s_0}
\bigl(1+\sigma e^{s_0/2}\xi\bigr)^{-\frac{\alpha}{2}}\exp\Bigl[-C\bigl(\sigma e^{s_0/2}-\xi\bigr)^2\Bigr],
\label{Giss0}
\end{eqnarray}
owing to $1-e^{s_0-s}\ge C>0$ for $s\ge s_0+1$ and
\be{lambday2}
e^{-\lambda s}y_2^{2\ell-i}=\sigma^{2\ell-i} e^{(\ell-\frac{i}{2}-\lambda)s}=\sigma^{2\ell-i} e^{(k-\frac{i}{2})s}.
\ee
$\bullet$ {\bf Estimate of $S^i_{2,1}$.} 
Using Lemma~\ref{LemestimS2s0} and 
$\sigma e^{s_0/2}-\xi\ge \frac{\sigma}{2}e^{s_0/2}$ for $\xi\in [0, \tilde K e^{-\gamma s_0}]$, 
we get for $s_0\gg 1$,
$$S_{2,1}^0\le C e^{-\lambda s_0}e^{-\lambda s}y_2^{2\ell} e^{-ks_0}
\int_0^{\tilde K e^{-\gamma s_0}} e^{-Ce^{s_0}} 
\xi^\alpha d\xi \le 
Ce^{-\lambda s}y_2^{2\ell} e^{-Ce^{s_0}}
\le C\nu e^{-\lambda s}y_2^{2\ell},$$
$$S_{2,1}^1\le C \tilde K^{\beta+1}e^{-\lambda s}y_2^{2\ell-1} e^{(\frac{1}{2}-k)s_0}
\int_0^{\tilde K e^{-\gamma s_0}} e^{-Ce^{s_0}} \xi d\xi \le 
Ce^{-\lambda s}y_2^{2\ell-1}e^{-Ce^{s_0}}
\le C\nu e^{-\lambda s}y_2^{2\ell-1}.$$
$\bullet$ {\bf Estimate of $S^i_{2,2}$.} 
Set $\tilde E=(\tilde K e^{-\gamma s_0}, 2\sigma e^{s_0/2})$, $\tilde I= (\frac{\sigma}{2} e^{s_0/2},2\sigma e^{s_0/2})$,
$\tilde E^1=\tilde E\setminus \tilde I$, $\tilde E^2=\tilde E\cap \tilde I$. 
Using Lemma~\ref{LemestimS2s0} and $|\sigma e^{s_0/2}-\xi|\ge C\xi$ for $\xi\in \tilde E^1$, we obtain
$$\begin{aligned}
&\int_{\tilde K e^{-\gamma s_0}}^{2\sigma e^{s_0/2}}
\bigl(1+\sigma e^{s_0/2}\xi\bigr)^{-\frac{\alpha}{2}}\exp\Bigl[-C\bigl(\sigma e^{s_0/2}-\xi\bigr)^2\Bigr]
(\xi^i+\xi^{2\ell-i})\xi^\alpha d\xi \\
&\qquad \le\int_{\tilde E^1} e^{-C\xi^2}(\xi^i+\xi^{2\ell-i})\xi^{\alpha}d\xi+
e^{(\ell-\frac{i}{2})s_0} \int_{\tilde E^2} \exp\Bigl[-C\bigl(\sigma e^{s_0/2}-\xi\bigr)^2\Bigr]d\xi
\le Ce^{(\ell-\frac{i}{2})s_0}.
\end{aligned}$$
Since $\lambda=\ell-k$, it follows that
\be{estS22i}
S_{2,2}^i\le C\nu e^{-\lambda s_0}e^{-\lambda s}y_2^{2\ell-i} e^{(\frac{i}{2}-k)s_0} e^{(\ell-\frac{i}{2})s_0}
=C\nu e^{-\lambda s}y_2^{2\ell-i}.
\ee
$\bullet$ {\bf Estimate of $S^i_{2,3}$.} 
Using Lemma~\ref{LemestimS2s0} and $(\xi-\sigma e^{s_0/2})^2\ge C(\xi^2+e^{s_0})$ for 
$\xi\ge 2\sigma e^{s_0/2}$, we get
\begin{eqnarray}
S_{2,3}^i
\hskip -6mm &&\le  C e^{-\lambda s}y_2^{2\ell-i} e^{(\frac{i}{2}-k)s_0}
\int_{2\sigma e^{s_0/2}}^\infty\exp\Bigl[-C\bigl(\sigma e^{s_0/2}-\xi\bigr)^2\Bigr] \xi^{2\ell-i+\alpha} d\xi  \notag \\
&&\le C e^{-\lambda s}y_2^{2\ell-i} e^{(\frac{i}{2}-k)s_0} e^{-Ce^{s_0}} 
\int_0^\infty e^{-C\xi^2}\xi^{2\ell-i+\alpha} d\xi
\le  \nu e^{-\lambda s}y_2^{2\ell-i}. \label{estS23i} 
\end{eqnarray}
Gathering the above estimates gives the desired conclusion.

\medskip

{\bf Step 2.} {\it Proof of \eqref{controlDSm} for $m=3$.} 
We shall use the splitting
$$\begin{aligned}
D^iS_3(y,s)
&=\int_{s_0}^s D^ie^{-(s-\tau)\mathcal{L}}\tilde F(\tilde v_y(\tau)) \,d\tau
=\int_{s_0}^{s-1}+\int_{s-1}^s \equiv D^iS_{3,1}+D^iS_{3,2}.
\end{aligned}$$
By \eqref{controlS3short0} in Lemma~\ref{LemS3short}, we have
$$|D^iS_{3,2}(y,s)|\le C\nu e^{-\lambda s} y^{2\ell-i},\quad\hbox{ for $y=y_2(s)$ and $s\in [s_0+1,s_1]$}.$$
Let us thus estimate $D^iS_{3,1}$.
By Proposition~\ref{KernelEstim2}, \eqref{boundH1} with $i=0$ and \eqref{boundH2}, we have
\be{DiS3}
\begin{aligned}
|D^iS_{3,1}(y_2,s)|
&\le \int_{s_0}^{s-1}\int_0^\infty |D^i_yG(s-\tau,y_2,\xi)| |\tilde F(\tilde v_y(\xi,\tau))| \xi^\alpha d\xi d\tau \\
&= \int_{s_0}^{s-1}\int_0^{K e^{-\gamma\tau}}+ 
\int_{s_0}^{s-1}\int_{K e^{-\gamma\tau}}^{\sigma e^{\tau/2}}+ 
\int_{s_0}^{s-1}\int_{\sigma e^{\tau/2}}^\infty\equiv \sum_{n=1}^3 S^{i,n}_{3,1},
\end{aligned}
\ee
where
\begin{eqnarray}
|D^i_yG(s-\tau,y_2,\xi)|
\hskip -6mm 
&&\le C e^{(k-\frac{i}{2})(s-\tau)}(1-e^{\tau-s})^{-\frac{\alpha+1+i}{2}}
\Bigl(1+\frac{e^{\frac{\tau-s}{2}}y_2\xi}{1-e^{\tau-s}}\Bigr)^{-\frac{\alpha}{2}}
\exp\Bigl[-C\frac{\bigl(e^{\frac{\tau-s}{2}}y_2-\xi\bigr)^2}{1-e^{\tau-s}}\Bigr] \notag \\
&&\le C e^{-\lambda s}y_2^{2\ell-i} e^{\beta\tau/2} \bigl(1+\sigma e^{\tau/2}\xi\bigr)^{-\frac{\alpha}{2}}
\exp\Bigl[-C\bigl(\sigma e^{\tau/2}-\xi\bigr)^2\Bigr], \label{DiyG2}
\end{eqnarray}
owing to $1-e^{\tau-s}\ge C>0$ for $\tau\le s-1$, $\frac{1}{2}-k=\beta/2$ and \eqref{lambday2}.
\smallskip

$\bullet$ {\bf Estimate of $S^{i,1}_{3,1}$.} By \eqref{controlFvybasic}, we have
$|\tilde F(\tilde v_y(\xi,\tau))| \le C\xi^{-\alpha}$ for all $\xi\in (0,Ke^{-\gamma\tau})$ and $\tau\in(s_0,s_1)$. Consequently,
$$\begin{aligned}S^{i,1}_{3,1}
\le Ce^{-\lambda s}y_2^{2\ell-i}\int_{s_0}^{s-1} e^{\frac{\beta\tau}{2}} 
\int_0^{K e^{-\gamma\tau}} e^{-Ce^\tau} d\xi d\tau 
\le CK e^{-\lambda s}y_2^{2\ell-i} \int_{s_0}^\infty e^{(\frac{\beta}{2}-\gamma)\tau} e^{-Ce^\tau}d\tau 
\le \nu e^{-\lambda s}y_2^{2\ell-i}.
\end{aligned}$$

$\bullet$ {\bf Estimate of $S^{i,2}_{3,1}$.} 
Since $\exp[-C\bigl(\sigma e^{\tau/2}-\xi\bigr)^2]\le e^{-Ce^\tau}e^{-C\xi^2}$
 for $\xi\in [K e^{-\gamma\tau},\frac12\sigma e^{\tau/2}]$, we have
$$\begin{aligned}
&\int_{K e^{-\gamma\tau}}^{\sigma e^{\tau/2}} 
 \bigl(1+\sigma e^{\frac{\tau}{2}}\xi\bigr)^{-\frac{\alpha}{2}}
\exp\Bigl[-C\bigl(\sigma e^{\frac{\tau}{2}}-\xi\bigr)^2\Bigr]\bigl(1+\xi^{4(\ell-1)}\bigr) \xi^{2\alpha} d\xi \\
&\le
e^{-Ce^\tau} \int_{K e^{-\gamma\tau}}^{\frac{\sigma}{2}e^{\tau/2}} 
e^{-C\xi^2}\bigl(1+\xi^{4(\ell-1)}\bigr) \xi^{2\alpha} d\xi+
e^{(\frac{\alpha}{2}+2(\ell-1))\tau}\int_{\frac{\sigma}{2}e^{\tau/2}}^{\sigma e^{\tau/2}} 
e^{-C\bigl(\sigma e^{\frac{\tau}{2}}-\xi\bigr)^2}d\xi 
\le Ce^{(\frac{\alpha}{2}+2(\ell-1))\tau}.
 \end{aligned}$$
Using \eqref{controlFvybasic}, $k=(1-\beta)/2$, $-2\lambda+\alpha-\frac12+2(\ell-1) 
=2k+\beta+\frac12-2 
=-\frac12$ and $s_0\gg 1$, we deduce that
\begin{eqnarray}
S^{i,2}_{3,1}
\hskip -6mm&&\le  C e^{-3\lambda s}y_2^{2\ell-i}
\int_{s_0}^{s-1}  e^{\frac{\beta\tau}{2}} \int_{K e^{-\gamma\tau}}^{\sigma e^{\tau/2}} 
 \bigl(1+\sigma e^{\frac{\tau}{2}}\xi\bigr)^{-\frac{\alpha}{2}}
e^{-C\bigl(\sigma e^{\frac{\tau}{2}}-\xi\bigr)^2}\bigl(1+\xi^{4(\ell-1)}\bigr) \xi^{2\alpha} d\xi d\tau \notag \\
&&\le  C e^{-3\lambda s}y_2^{2\ell-i}
\int_{s_0}^{s-1}  e^{(\alpha-\frac12+2(\ell-1))\tau}d\tau
\le  Cy_2^{2\ell-i} e^{(-3\lambda+\alpha-\frac12+2(\ell-1))s}
\le  \nu e^{-\lambda s}y_2^{2\ell-i}. \label{Si231}
\end{eqnarray}

$\bullet$ {\bf Estimate of $S^{i,3}_{3,1}$.} 
Observing that
$$\begin{aligned}
&\int_{\sigma e^{\tau/2}}^\infty \bigl(1+\sigma e^{\tau/2}\xi\bigr)^{-\frac{\alpha}{2}}
\exp\Bigl[-C\bigl(\sigma e^{\tau/2}-\xi\bigr)^2\Bigr] \xi^\alpha d\xi
\le Ce^{-\alpha\tau/4}\int_{\sigma e^{\tau/2}}^\infty \xi^{\frac{\alpha}{2}}
\exp\Bigl[-C\bigl(\sigma e^{\tau/2}-\xi\bigr)^2\Bigr] d\xi\\
&\qquad =e^{-\alpha\tau/4}\int_0^\infty (\sigma e^{\tau/2}+z)^{\frac{\alpha}{2}} e^{-Cz^2} dz
\le C\int_0^\infty e^{-Cz^2} dz+Ce^{-\alpha\tau/4}\int_0^\infty z^{\frac{\alpha}{2}} e^{-Cz^2} dz\le C,
 \end{aligned}$$
 it follows that
\begin{eqnarray}
S^{i,3}_{3,1}
\hskip -6mm&&\le Ce^{-\lambda s}y_2^{2\ell-i}
\int_{s_0}^{s-1}  e^{-\tau/2}  \int_{\sigma e^{\tau/2}}^\infty
\bigl(1+\sigma e^{\tau/2}\xi\bigr)^{-\frac{\alpha}{2}}
\exp\Bigl[-C\bigl(\sigma e^{\tau/2}-\xi\bigr)^2\Bigr] \xi^\alpha d\xi d\tau \notag \\
&&\le Ce^{-\lambda s}y_2^{2\ell-i}\int_{s_0}^{s-1}  e^{-\tau/2} d\tau 
\le Ce^{-s_0/2}e^{-\lambda s}y_2^{2\ell-i}. \label{Si331}
\end{eqnarray}
Combining the above estimates yields the conclusion.
 
 \medskip

{\bf Step 3.} {\it Proof of \eqref{controlDSm2}.} 
This is a direct consequence of \eqref{defS123}, \eqref{controlDSm}
and Lemma~\ref{Lemcvcphi}. 
\end{proof}

Finally, we shall estimate $w$ and $w_y$ in the remaining part $y\ge R_1$ of the intermediate region.
To this end, we shall use a comparison argument, combined with the already obtained estimates.

\begin{lem}  \label{LemComplong}
 Set
\be{DefDEstimComplong}
D:=\bigl\{(y,s);\ s\in [s_0+1,s_1],\ y\in [R_1,\sigma e^{s/2}]\bigr\}.
\ee
If $s_0\gg 1$ and $s_1\ge s_0+1$ then, for any $d\in \mathcal{U}_{s_0,s_1}$ such that $P(d;s_0,s_1)=0$, we have
\be{EstimComplong}
|D^i(v+e^{-\lambda s}\phi)|\le  C\nu e^{-\lambda s} y^{2\ell-i}\quad\hbox{ in $D$ for $i\in\{0,1\}$}.
\ee
\end{lem}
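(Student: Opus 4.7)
The strategy is a parabolic comparison on $D$ exploiting that $e^{-\lambda s}(-1)^\ell\phi$ lies in the kernel of the linear operator $P := \partial_s + \mathcal{L}$. Since $v$ satisfies $v_s + \mathcal{L}v = F(v_y) \ge 0$ and $\mathcal{L}\phi = \lambda\phi$, the function $z := v + e^{-\lambda s}\phi$ obeys $Pz = F(v_y) \ge 0$ in $D$. By \eqref{choiceR}, $\chi(y,s) := (-1)^\ell e^{-\lambda s}\phi(y)$ is strictly positive on $D$ with $\chi \ge C_1 e^{-\lambda s}y^{2\ell}$, and $P\chi = 0$. The key step is the change of unknown $h := z/\chi$: a direct computation yields
\begin{equation*}
h_s - h_{yy} + \Bigl(\frac{y}{2} - \frac{\alpha}{y} - \frac{2\phi'(y)}{\phi(y)}\Bigr)h_y = \frac{F(v_y)}{\chi} =: g \qquad \text{in } D,
\end{equation*}
a linear parabolic equation with \emph{no zeroth-order term} on a region bounded in $y$ away from the singularity at $y=0$.

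The boundary and initial data for $h$ are derived by dividing the known bounds for $z$ by $\chi$: from \eqref{controlDIm2} at $y=R_1$, \eqref{controlDSm2} at $y=y_2(s)$, and \eqref{estimM2} at $s=s_0+1$, combined with $\chi \ge C_1 e^{-\lambda s}y^{2\ell}$ and $R_1 \ge 1$, one obtains $|h| \le C_0\nu$ on $\partial_p D$ for some $C_0 = C_0(p,\ell)$. For the source $g$, the pointwise bound $|F| \le Ce^{-2\lambda s}y^{\beta+1}(1+y^{4(\ell-1)})$ from \eqref{controlFvybasic} gives $|g| \le Ce^{-\lambda s}(y^{\beta+1-2\ell} + y^{\beta+2\ell-3})$. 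On $[R_1,y_2(s)]$ the first term contributes at most $CR_1^{\beta+1-2\ell}e^{-\lambda s}$, while when $\ell \ge 2$ the second is bounded by $C\sigma^{\beta+2\ell-3}e^{[(\beta+2\ell-3)/2-\lambda]s}$, and the exact cancellation
$-\lambda + (\beta+2\ell-3)/2 = -1$
(using $\lambda = \ell - k$ and $k = (1-\beta)/2$) produces
\begin{equation*}
\|g(\cdot,\tau)\|_{L^\infty(D(\tau))} \le CR_1^{\beta+1-2\ell}e^{-\lambda\tau} + C\sigma^{\beta+2\ell-3}e^{-\tau},
\end{equation*}
so that $\int_{s_0+1}^{s_1}\|g(\cdot,\tau)\|_\infty\,d\tau \le \nu$ for $s_0$ large enough; the case $\ell=1$ is easier since $\beta+2\ell-3<0$.

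Now apply the constant-in-$y$ supersolution $\Psi(s) := C_0\nu + \int_{s_0+1}^s\|g(\cdot,\tau)\|_\infty\,d\tau$: since the equation for $h$ has no zeroth-order term, $\Psi_s = \|g(\cdot,s)\|_\infty \ge g(y,s)$, so $\Psi$ satisfies the equation with a supersolution sign, and $\Psi \ge |h|$ on $\partial_p D$ by the previous paragraph. The standard parabolic maximum principle then gives $|h| \le \Psi \le C\nu$ in $D$; restoring $z = \chi h$ yields $|z| \le C\nu e^{-\lambda s}y^{2\ell}$ on $D$, which is the $i=0$ case of \eqref{EstimComplong}. For the derivative bound ($i=1$), a parabolic gradient estimate of Bernstein type applied to the linear equation for $h$ on $D$ (whose coefficients are smooth and bounded on each subdomain of $D$ bounded in $y$, the global supremum of $h$ being already controlled), together with the $\partial_p D$ bound on $h_y$ coming from $z_y = \chi_y h + \chi h_y$ and the $i=1$ parts of \eqref{controlDIm2}-\eqref{controlDSm2}-\eqref{estimM2}, yields $|h_y| \le C\nu$ on $D$; combined with $|\chi_y(y,s)| \le Ce^{-\lambda s}y^{2\ell-1}$ this gives $|z_y| \le C\nu e^{-\lambda s}y^{2\ell-1}$ in $D$. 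The main obstacle is the quantitative control of $g$: the decay estimate uses the exact exponent cancellation $-\lambda+(\beta+2\ell-3)/2=-1$, which leaves no slack, and the growth of the drift coefficient in $y$ must be handled by the constant-in-$y$ supersolution (avoiding any semigroup-theoretic argument that would require boundedness of the drift).
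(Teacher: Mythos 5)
Your argument for the case $i=0$ is correct and is essentially a repackaging of the paper's barrier construction: where the paper uses the explicit sub/supersolutions $(-1\mp(-1)^\ell B\nu)e^{-\lambda s}\phi$ and $(-1\pm(-1)^\ell B\nu)e^{-\lambda s}\phi-\nu e^{-\Lambda s}y^{2\ell}$ (with $\Lambda=\lambda+\frac12$ providing the slack needed to absorb $F(v_y)$), you divide by the positive kernel element $\chi=(-1)^\ell e^{-\lambda s}\phi$ and absorb the source into a time-dependent constant $\Psi(s)$; the exponent computation $-\lambda+\frac{\beta+2\ell-3}{2}=-1$ that makes $\|g(\cdot,\tau)\|_\infty$ integrable is exactly the same cancellation the paper exploits in \eqref{compbarv}. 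The sign, boundary-data and source estimates all check out for $i=0$.

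The case $i=1$, however, has a genuine gap, and in fact two. First, the arithmetic of your final step fails: from $z_y=\chi_y h+\chi h_y$ with $|\chi|\sim e^{-\lambda s}y^{2\ell}$, a bound $|h_y|\le C\nu$ only yields $|\chi h_y|\le C\nu e^{-\lambda s}y^{2\ell}$, which exceeds the target $C\nu e^{-\lambda s}y^{2\ell-1}$ by a factor of $y$, and $y$ ranges up to $\sigma e^{s/2}$ in $D$. What you actually need is the decaying bound $|h_y|\le C\nu y^{-1}$. Second, no generic Bernstein or interior gradient estimate will deliver even $|h_y|\le C\nu$ uniformly on $D$: the drift coefficient $\frac{y}{2}-\frac{\alpha}{y}-\frac{2\phi'}{\phi}$ is unbounded on $D$ (of order $y$ near the right boundary), so any local gradient estimate carries constants depending on the local size of the drift and degrades by powers of $y$ as $y\to\sigma e^{s/2}$ — the very regime where the estimate is needed. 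This is why the paper does not attempt to differentiate the $i=0$ argument; it instead runs a separate nonlinear comparison for $w_y$ itself, using the operator $\mathcal{P}_1 z=z_s-z_{yy}+\frac{y}{2}z_y+\frac{\beta}{2}z-p|z|^{p-2}zz_y$ satisfied by $w_y$ and the barriers $z_m=U'-bW_0+(-1)^m\nu W_1$ with $W_0=e^{-\lambda s}\phi'$, $W_1=e^{-\Lambda s}y^{2\ell-1}$. The sharp power $y^{2\ell-1}$ there comes from a precise cancellation between the expansion of $\mathcal{N}(V+W)-\mathcal{N}V$ (namely the term $\alpha(yW_y-W)/y^2$) and $\mathcal{L}_1W_0$, which in turn requires the smallness condition \eqref{controlVW} on $\sigma$. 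Your proposal contains no substitute for this mechanism, so the derivative half of \eqref{EstimComplong} is not established.
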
 

\begin{proof} 
 Let $B=C_1^{-1}\bigl(1+\max(2M_2, M_4, M_5)\bigr)$, where
$C_1$ is the constant in \eqref{choiceR} and
 $M_2, M_4, M_5$ are respectively from \eqref{estimM2},  \eqref{controlDIm2}, \eqref{controlDSm2}.
Also set $\Lambda=\lambda+\frac12$ and recall the notation
$$-\mathcal{L}v=v_{yy}+\Bigl(\frac{\alpha}{y}-\frac{y}{2}\Bigr)v_y+kv,
\quad F(v_y)=|v_y+U_y|^p-U_y^p-pU_y^{p-1}v_y.$$

{\bf Step 1.} {\it Case $i=0$.} We define
$$\begin{aligned}
\underline v(y,s)&=\bigl(-1-(-1)^\ell B\nu\bigr)\,v_1 \\
\bar v(y,s)&=\bigl(-1+(-1)^\ell B\nu\bigr)\,v_1-\nu{\hskip 1pt}v_2,
\end{aligned}
\quad\hbox{where } \ v_1(y,s)=e^{-\lambda s}\phi(y),\ \ v_2(y,s)=e^{-\Lambda s}y^{2\ell}. 
$$
By 
 \eqref{estimM2}, \eqref{choiceR}, \eqref{controlDIm2} and \eqref{controlDSm2}, taking $s_0\gg 1$, we see that
$$\underline v(y,s)\le -e^{-\lambda s}\phi(y)- BC_1\nu e^{-\lambda s}y^{2\ell}\le v(y,s)
\le -e^{-\lambda s}\phi(y)+(BC_1-1)\nu e^{-\lambda s} y^{2\ell}\le \bar v(y,s)
\quad\hbox{ on $\partial_PD$.}$$
We obviously have $\partial_s\underline v+\mathcal{L}\underline v=0\le F(v_y)=\partial_s v+\mathcal{L}v$ in $D$.
We claim that $\bar v$ satisfies
\be{EstimComplong1}
\bar v_s+\mathcal{L}\bar v\ge F(v_y)\quad\hbox{in $D$.}
\ee
Using $\Lambda=\ell-k+\frac12$, we compute
\be{ineqtildev1}
\begin{aligned}
\partial_s v_2+\mathcal{L}v_2
&=-\Lambda e^{-\Lambda s}y^{2\ell}-e^{-\Lambda s}\bigl\{2\ell(2\ell-1)y^{2\ell-2}+2\ell\alpha y^{2\ell-2}+(k-\ell)y^{2\ell}\bigr\}\\
&=-e^{-\Lambda s}\bigl\{(\Lambda+k-\ell) y^{2\ell}+2\ell(2\ell-1)y^{2\ell-2}+2\ell\alpha y^{2\ell-2}\bigr\}\le 
-\tsfr e^{-\Lambda s}y^{2\ell}.
\end{aligned}
\ee
By Lemma~\ref{LemvyFvy}, we have $0\le F(v_y)\le C 
e^{-2\lambda s}y^{4\ell+\beta-3}$ in $D$.
Since $\partial_s v_1+\mathcal{L}v_1=0$, taking $s_0\gg 1$, 
we then have
\be{compbarv}
\bar v_s+\mathcal{L}\bar v-F(v_y)
\ge \ts\frac{\nu}{2} e^{-\Lambda s}y^{2\ell}-C 
e^{-2\lambda s}y^{4\ell+\beta-3}
\ge e^{-\Lambda s}y^{2\ell}\bigl\{\ts\frac{\nu}{2} -Ce^{(\Lambda-2\lambda)s}y^{2\ell+\beta-3}\bigr\}\ge 0,
\ee
since $e^{(\Lambda-2\lambda)s}y^{2\ell+\beta-3}\le e^{(\Lambda-2\lambda+\ell+\frac{\beta-3}{2})s}=e^{-s/2}$ in $D$,
hence \eqref{EstimComplong1}.
It follows from the maximum principle that $\underline v\le v\le\bar v$ in $D$, which guarantees \eqref{EstimComplong} for $i=0$.

\medskip

{\bf Step 2.} {\it Case $i=1$.}
For this case it will be more convenient to consider the operator
\be{defP1}
\mathcal{P}_1z:=\mathcal{L}_1 z-\mathcal{N}z,\quad\hbox{ where } 
\mathcal{L}_1z=z_s-z_{yy}+\ts\frac{y}{2}z_y+\ts\frac{\beta}{2}z,\quad \mathcal{N}z=p|z|^{p-2}zz_y.
\ee
We have $\mathcal{P}_1w_y=\bigl(w_s-w_{yy}+\ts\frac{y}{2}w_y-kw-|w_y|^p\bigr)_y=0$. 
Set 
$$V=U',\quad\psi=\phi', \quad b=1+(-1)^{\ell+m} B \nu$$
 and let $m\in\{0,1\}$.
We define
\be{defzm}
z_m(y,s)=V+W 
\quad\hbox{where} 
\ W=-bW_0+(-1)^m\nu\,W_1,
\ \ \ W_0=e^{-\lambda s}\psi,\ \ \ W_1=e^{-\Lambda s}y^{2\ell-1}.
\ee
We shall show that $z_0$ is a subsolution of $\mathcal{P}_1z=0$ in $D$, and $z_1$ a supersolution.

By \eqref{estimM2},  
\eqref{choiceR}, \eqref{controlDIm2} and \eqref{controlDSm2}, 
 taking $s_0\gg 1$, we see that
$$z_0\le V-e^{-\lambda s}\psi(y)-(BC_1-1)\nu e^{-\lambda s} y^{2\ell-1}\le w_y
\le V-e^{-\lambda s}\psi(y)+(BC_1-1)\nu e^{-\lambda s} y^{2\ell-1} \le z_1
\ \hbox{on $\partial_PD$.}$$
Also, for $\sigma=\sigma_2(p,\ell)\in (0,\min(\sigma_0,\sigma_1)]$ sufficiently small 
(where $\sigma_0,\sigma_1$ were respectively given by \eqref{choicesigma1} and \eqref{choicesigma1b}), we have
\be{controlVW}
\bigl|\ts\frac{W}{V}\bigl|+\bigl|\frac{W_y}{V_y}\bigl|\le Cy^{\beta+2\ell-1}e^{(k-\ell)s} 
\le C\sigma^{\beta+2\ell-1}e^{(k-\ell+\frac{\beta+2\ell-1}{2})s} 
=C\sigma_2^{\beta+2\ell-1}\le \frac12\ \ \hbox{ in $D$}.
\ee
Since $U_{yy}+U_y^p=0$ and $yU_y=(1-\beta)U$, we have
$\mathcal{L}_1V=-V_{yy}+\ts\frac{y}{2}V_y+\ts\frac{\beta}{2}V=pV^{p-1}V_y=\mathcal{N}V$,
hence
\be{computVW1}
\mathcal{P}_1 z_m=\mathcal{L}_1V+\mathcal{L}_1W-\mathcal{N}(V+W)=
\mathcal{N}V-\mathcal{N}(V+W)-b\mathcal{L}_1W_0+(-1)^m \nu\mathcal{L}_1W_1.
\ee
Differentiating the equality 
$0=[e^{-\lambda s}\phi]_s+\mathcal{L}[e^{-\lambda s}\phi]=
e^{-\lambda s}\bigl\{-\lambda\phi-\phi_{yy}-\bigl(\frac{\alpha}{y}-\frac{y}{2}\bigr)\phi_y-k\phi\bigr\}$,
and using $k=(1-\beta)/2$, we get
\be{computVW2}
\mathcal{L}_1W_0=e^{-\lambda s}\bigl\{-\lambda\psi-\psi_{yy}+\ts\frac{y}{2}\psi_y+\frac{\beta}{2}\psi\bigr\}
=e^{-\lambda s}\frac{\alpha}{y^2}\bigl(y\psi_y-\psi\bigr).
\ee
Moreover, using $\Lambda=\ell-k+\frac12$, we have
\be{computVW3}
\mathcal{L}_1W_1=e^{-\Lambda s}\bigl\{-\Lambda y^{2\ell-1}-(2\ell-1)(2\ell-2)y^{2\ell-3}+
(\ell-\tsfr)y^{2\ell-1}+\ts\frac{\beta}{2}y^{2\ell-1}\bigr\}\le -\frac12 e^{-\Lambda s}y^{2\ell-1}.
\ee
On the other hand, by elementary computation, $(1+X)^{p-1}(1+Y)=1+(p-1)X+Y+O(X^2+Y^2)$ for $|X|,|Y|\le 1/2$. 
Using \eqref{controlVW}, $V_y=-\beta y^{-1}V$ and $(p-1)V^{p-2}V_y=-\beta y^{-2}$, we then obtain
$$\begin{aligned}
\mathcal{N}(V+W)-\mathcal{N}V
&=p(V+W)^{p-1}(V_y+W_y)-pV^{p-1}V_y
=pV^{p-1}V_y\bigl\{\bigl(1+\ts\frac{W}{V}\bigr)^{p-1}\bigl(1+\ts\frac{W_y}{V_y}\bigr)-1\bigr\}\\
&=pV^{p-1}V_y\Bigl\{(p-1)\ts\frac{W}{V}+\frac{W_y}{V_y}
+O\bigl(\ts\frac{W^2}{V^2}+\ts\frac{W_y^2}{V_y^2}\bigr)\Bigr\}\\
&=p(p-1)V^{p-2}V_y\Bigl\{W-yW_y+O\bigl(\ts\frac{W^2+y^2W_y^2}{V}\bigr)\Bigr\}
=\frac{\alpha(yW_y-W)}{y^2} +O\bigl(\ts\frac{W^2+y^2W_y^2}{y^{2-\beta}}\bigr).
\end{aligned}$$
Combining this with \eqref{computVW1}--\eqref{computVW3}, we deduce that 
$$\begin{aligned}
(-1)^m\mathcal{P}_1z_m
&\le (-1)^m\ts\frac{\alpha}{y^2}(W-yW_y) 
+(-1)^mbe^{-\lambda s}\frac{\alpha}{y^2}\bigl(\psi-y\psi_y\bigr)
+C\ts\frac{W^2+y^2W_y^2}{y^{2-\beta}}-\ts\frac{\nu}{2} e^{-\Lambda s} y^{2\ell-1}\\
&\le \ts\frac{\alpha\nu}{y^2}\bigl(W_1-y\partial_y W_1\bigl)
-\ts\frac{\nu}{2} e^{-\Lambda s} y^{2\ell-1} +Ce^{-2\lambda s}y^{\beta+4\ell-4}\\
&\le -\ts\frac{\nu}{2} e^{-\Lambda s} y^{2\ell-1} +Ce^{-2\lambda s}y^{\beta+4\ell-4}
= e^{-\Lambda s} y^{2\ell-1}\bigl[-\ts\frac{\nu}{2} +Ce^{(\Lambda-2\lambda)s}y^{\beta+2\ell-3}\bigr]\le 0
\end{aligned}$$
in $D$ for $s_0\gg 1$ (where the last inequality follows similarly as in \eqref{compbarv}). By the comparison principle, it follows that
 $z_0\le w_y\le z_1$ in $D$, which guarantees \eqref{EstimComplong} for $i=1$.
\end{proof}

\begin{rem} \label{remzm}
The proof of Lemma~\ref{LemComplong} more generally shows the following.
For $s_1\ge s_0+1$, let $D$ defined by \eqref{DefDEstimComplong}, where $R_1$ satisfies 
 \eqref{choiceR} and $\sigma\in(0,\sigma_2]$, where $\sigma_2(p,\ell)$ is given by \eqref{controlVW}.
Let $u\in C^{2,1}(\overline D)$ be a solution of $u_t-u_{xx}=|u_x|^p$ in $D$ and let $v, F$ be defined from $u$ as above.
Assume that, for some $c\in\{-1,1\}$ and $c_1,c_2>0$, $v$ satisfies 
$F(v_y)\le c_1e^{-2\lambda s}y^{4\ell+\beta-3}$ in $D$ and
\be{controlLemComplong}
|D^i(v+ce^{-\lambda s}\phi)|\le  c_2 e^{-\lambda s} y^{2\ell-i},\quad i\in\{0,1\}
\ee
 on the parabolic boundary of $D$.
 It $s_0\gg 1$ (depending on $p,\ell,c_1,c_2$), then \eqref{controlLemComplong} remains true in $D$ 
 with $c_2$ replaced by $C(p,\ell)c_2$.
\end{rem}

\begin{proof}[Proof of Proposition~\ref{Propwlong}]
This is a direct consequence of Lemmas~\ref{LemI2long} and \ref{LemComplong}.
\end{proof}

\subsection{Completion of proof of Proposition~\ref{mainAPE}}
Let 
\be{defM0}
M_0=M_0(p,\ell)=\ts\frac{1}{2} \min\bigl\{M_1,M_2^{-1},M_3^{-1},\ell^{-1}\bigr\},
\ee 
where $M_1, M_2, M_3$ are respectively given by \eqref{defM1}, \eqref{estimM2} and \eqref{estimM3}.
Since $\eps\in(0, \eps_0]$ and $\nu=M_0\eps$, by Propositions~\ref{Propwshort} and \ref{Propwlong},
if $s_1\ge s_0\gg 1$ then, for any $d\in \mathcal{U}_{s_0,s_1}$ such that $P(d;s_0,s_1)=0$, we have
$$|D^i(w-U+e^{-\lambda s}\phi)|\le  \ts\frac{\eps}{2} e^{-\lambda s}(y^i+y^{2\ell-i}),\quad s\in [s_0,s_1],\ K e^{-\gamma s}\le y\le \sigma e^{s/2},\ i\in\{0,1\},$$
hence $\tilde w\in \mathcal{A}^{1/2}_{s_0,s_1}$. 
 Moreover, we have $\sum_{j=0}^{\ell-1} |d_j|\le \ts\frac{\eps}{2} e^{-\lambda s_0}$
by Lemma~\ref{Lemdsmallnu}. The proof is complete.

\section{Construction of special solutions: RBC case}  \label{REC-section5}

\subsection{Main results on special RBC solutions} 

In this section, we construct special solutions in the RBC case modifying the proof of Theorems~\ref{prop:special}, \ref{mainThm2}.
	
\begin{thm}
	\label{prop:special-LBC} 
	Let $p>2$, $0 < R \le \infty$, $Q=(0,\infty)\times (0,\tau)$, $\ell\in \N^*$ and let $\varphi_\ell$ be as in Theorem~\ref{prop:special}.
	For any $\eps\in(0,\varphi_\ell(0))$, there exist $\tau>0$ and a nonnegative 
	solution $u\in C(\overline Q)\cap C^{2,1}(Q)$ of 
	\eqref{equREC} with the following properties,
	for some constant $\sigma\in(0,R)$. 
	\smallskip
	
	\begin{itemize}

	\item[(i)]  {\it (space-time behavior)} 
	There holds
	\be{REC-spacetimeglobal}
	\bigl|u(x,t)-U(x)-(T-t)^\ell \varphi_\ell \bigl((T-t)^{-1/2} x\bigr)\bigr|
	\le\eps\bigl[(T-t)^\ell +x^{2\ell}\bigr]
	\quad\hbox{in } [0, \sigma].
	\ee
		\item[(ii)] {\it (outer region)}
	If $ R = \infty $, then there exists $ \delta \in (0,1) $ such that 
	\be{REC-outerdelta}
	| u(x,t) - U(x) | \ge \delta U(x) \quad \mbox{ in } [\sigma, \infty ) \times (0, \tau). 
	\ee
	If $ R < \infty $, then $u$ is regular at $x=R$, i.e.~$u\in C^{2,1}((0,R]\times(0,\tau])$ 
	and $u(R,t)=0$ for all $t\in (0,\tau]$ in the classical sense.

	\smallskip
	\item[(iii)]  {\it (intersections with the singular steady state)} 
	The solution $u$ satisfies assertion (iv) of Theorem~\ref{prop:special} with $T $ replaced by $\tau$.
	
		\end{itemize}
		\smallskip 
		
	\noindent Finally, for $R=\infty$, we may take $u\in C_b(\overline Q)$
	 if either $\ell$ odd or if we do not require property (ii).

\end{thm}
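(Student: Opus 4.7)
The plan is to adapt the similarity-variables, trapping-region, topological-shooting strategy of Theorems~\ref{prop:special} and \ref{mainThm2}, but in a substantially simpler form because the RBC case has no bubble and hence no inner quasi-stationary layer. Set $s_0 = -\log\tau$ large, $y = x(\tau-t)^{-1/2}$, $s = -\log(\tau-t)$, $w(y,s) = e^{ks}u(ye^{-s/2},\tau-e^{-s})$, and $v = w - U$. Matching the target profile \eqref{REC-spacetimeglobal} through the transformation shows that the desired asymptotic is
\[
w(y,s) = U(y) + e^{-\lambda s}\varphi_\ell(y) + o(e^{-\lambda s}), \qquad \lambda = \ell - k,
\]
in an intermediate region $[0, y_2(s)]$ with $y_2(s) = \sigma e^{s/2}$, so we linearize $v$ around~$U$ along the $\ell$-th eigenmode.

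For the initial data at $s = s_0$ I would mirror \eqref{defw0}--\eqref{w0tildephi2odd}, but with the sign of the $\phi$-term reversed: for $d = (d_0,\dots,d_{\ell-1}) \in \R^\ell$ with $\sum|d_j| \le \eps e^{-\lambda s_0}$, set
\[
w_0(y;d) = U(y) + e^{-\lambda s_0}\hat\phi(y) + \sum_{j=0}^{\ell-1} d_j\varphi_j(y),
\]
where $\hat\phi$ equals $\varphi_\ell$ on the main range and is cut off (or blended into $b_0 U$ when $R = \infty$ and $\ell$ is even) for large $y$, exactly as in \eqref{w0tildephi2}, \eqref{w0tildephi2odd}. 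The choice of sign ensures $w_0(0;d) > 0$ so that $u_0(x) = e^{-ks_0}w_0(xe^{s_0/2})$ lies in the singular class $\mathcal{W}_s$ and is positive on $(0,R)$; when $R = 1$ one further requires $u_0 \le \tfrac14$ and $u_0 = 0$ on $[\tfrac12,1]$ via the cut-off. Proposition~\ref{locexistvapprox} then supplies a unique global solution $u \in C(\overline Q)\cap C^{2,1}(\hat Q)$ with $u_x - U' \in L^\infty$, which is exactly the regularity framework we need. The topological shooting is then set up as in subsection 4.5: define the trapping region
\[
\mathcal{A}^\theta_{s_0,s_1} = \Bigl\{W : \bigl|D^i(W - U - e^{-\lambda s}\phi)\bigr| \le \theta\eps e^{-\lambda s}(y^i + y^{2\ell-i}) \text{ on } [0,y_2(s)],\ i\in\{0,1\}\Bigr\},
\]
the admissible set $\mathcal{U}_{s_0,s_1}$, and the projection map $P(d;s_0,s_1) = ((\tilde v(s_1),\varphi_j))_{j=0}^{\ell-1}$ of \eqref{defmapP}. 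With the analogue of Proposition~\ref{mainAPE} in hand, Propositions~\ref{TopolArg1}, \ref{TopolArg2} go through verbatim via degree theory and continuous dependence (for the latter, using Proposition~\ref{locexistvapprox}(ii)), producing a limiting $d^* \in \R^\ell$ for which $\tilde w \in \mathcal{A}^1_{s_0,\infty}$.

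The key a priori estimate (the RBC analogue of Proposition~\ref{mainAPE}) is considerably easier than in the GBU case because there is no inner region: by Proposition~\ref{prop:PS-LBC}(i) and the regularity \eqref{equRECregulC1}, $\tilde v_y$ is uniformly bounded on $[0,y_2(s)]$, so the pointwise bound $|F(\tilde v_y)| \le C U_y^{p-2}\tilde v_y^2 \le C e^{-2\lambda s}(1 + y^{4(\ell-1)}) y^{\beta+1}$ holds throughout the intermediate region, and the nonlinear contributions $S^i_{3,1}$, $S^i_{3,2}$ of Lemma~\ref{LemS3short} simply drop out. Consequently the spectral bound $|(\tilde F(\tilde v_y(\tau)),\varphi_j)| \le C(j+1)^{3/2}e^{-2\lambda\tau}$ suffices, Lemmas~\ref{Lemdsmallnu}--\ref{LemS2long} carry over with only cosmetic changes, and the final comparison argument of Lemma~\ref{LemComplong} (sign-flipped to match $w \sim U + e^{-\lambda s}\phi$, i.e., $v_1 = -e^{-\lambda s}\phi$ replaced by $+e^{-\lambda s}\phi$ throughout) closes the estimate on the full range $[0,y_2(s)]$ via Remark~\ref{remzm}. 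Assertion~(ii) for $R = \infty$ follows from the outer analogue of Proposition~\ref{PropInOut}(ii): since $w_0 - U$ has the sign of $\phi$ outside the matching region, Proposition~\ref{RemWPw}(ii) with the sub-/supersolutions $(1\pm\delta)U$ yields \eqref{REC-outerdelta}; for $R < \infty$, Lemma~\ref{controlGBU1} provides the regularity at $x = R$. Assertion~(iii) follows as in the proof of Theorem~\ref{prop:special}(iv): the $C^1_{loc}$-convergence $v(\cdot,s) \to \varphi_\ell$ on a compact interval containing the $\ell$ simple positive zeros of $\varphi_\ell$, combined with the one-signedness of $v$ on both sides of that interval, yields exactly $\ell$ nondegenerate zeros of $u(\cdot,t) - U$ on $(0,\sigma)$ with the claimed location.

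The main obstacle will be verifying that the constructed $u$ actually undergoes RBC at $t = \tau$ in the sense of \eqref{equREC}, namely that $u(0,t) > 0$ on $(0,\tau)$ and $u(0,\tau) = 0$ classically, together with the viscosity boundary condition on $(0,\tau)$. The limit $u(0,\tau) = 0$ will follow from \eqref{REC-spacetimeglobal} evaluated at $x = 0$ (giving $|u(0,t)| \le 2\eps(\tau-t)^\ell \to 0$) combined with the continuity $u \in C(\overline Q)$ from Proposition~\ref{locexistvapprox}(i). Positivity $u(0,t) > 0$ on $(0,\tau)$ is more delicate: it follows from the lower bound $u(0,t) \ge (\varphi_\ell(0) - \eps)(\tau-t)^\ell > 0$ coming from the same asymptotic, provided $\eps < \varphi_\ell(0)$. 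The viscosity boundary condition on $(0,\tau)$ is then obtained from \eqref{equRECviscpos} in Proposition~\ref{locexistvapprox}(ii), since on any interval $[0,t_0] \subset [0,\tau)$ one has $u(0,\cdot) \ge 0$ (noting that without this the solution from Proposition~\ref{locexistvapprox} is not a priori a viscosity solution of \eqref{equREC}; this is the one place where positivity up to $\tau$ is genuinely needed).
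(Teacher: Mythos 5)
Your proposal follows essentially the same route as the paper's proof: same sign-flipped initial data \eqref{REC-defw0}--\eqref{REC-defw0even}, same use of Proposition~\ref{locexistvapprox} for global existence with persistent singularity, the same simplified trapping-region/degree argument without an inner layer, and the same mechanism ($\eps<\varphi_\ell(0)$ plus \eqref{equRECviscpos}) for positivity and the viscosity boundary condition. The one slip is that regularity at $x=R$ for $R<\infty$ comes from Proposition~\ref{locexistvapprox}(i) itself (whose hypotheses $\sup u_0\le\frac14$ and $u_0=0$ on $[\frac12,1]$ you correctly impose), not from Lemma~\ref{controlGBU1}, which requires $u_0\in\mathcal{W}$ and therefore does not apply to the singular data used here.
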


\begin{rem} \label{RemNonoscRBC}
 Although \eqref{REC-spacetimeglobal} at $x=0$ only gives
$C_1(\tau-t)^{\ell} \le u(0,t)\le C_2(\tau-t)^{\ell}$ for some constants $C_1,C_2>0$,
$u$ actually satisfies 
\be{REC-limitC}
\lim_{t\to \tau^-} (\tau-t)^{-\ell} u(0,t)=C
\ee
 for some $C>0$, as a consequence of Theorem \ref{th:RBC}(i),
that we will prove in subsection~\ref{SecProofBraid2}. Let us point out that a non-oscillation lemma similar to Lemma~\ref{LemNonOsc} holds also in the RBC case, 
and~\eqref{REC-limitC} could be deduced 
from such lemma. However, the space-time profile \eqref{recovery-profile} (which implies \eqref{REC-limitC})
will be established in subsection~\ref{SecProofBraid2} for general RBC solutions by using dynamical systems methods.
Thus the non-oscillation lemma is not needed here
(unlike in the GBU case were dynamical systems methods do not seem easily applicable
due to the existence of a boundary layer or inner region).
\end{rem} 

Theorem~\ref{prop:special-LBC} is obtained as a consequence of the following existence result for the corresponding problem in similarity variables.

\begin{thm} \label{REC-mainThm2}
Let $p>2$, $0<R\le\infty$, $\ell\in\N^*$, $\lambda=\ell-k$ and let $\phi=\varphi_\ell$ be as in Theorem~\ref{mainThm2}. 
Set $D=\{(y,s);\  0<y<Re^{s/2},\ s>s_0\}$, $\Sigma=\{(y,s);\  y=Re^{s/2},\ s>s_0\}$. For any $\eps\in(0,1)$, there exist $s_0,\sigma>0$ and a nonnegative classical solution 
$w\in C(\overline D)\cap C^{2,1}(D\cup\Sigma)$ of 
\be{REC-eqw2}
\left\{\ 
\begin{aligned}
w_s&=w_{yy}-\frac{y}{2}w_y+kw+|w_y|^p,&&\quad\hbox{ in $D$,} \\
w&=0,&&\quad \hbox{ on $\Sigma$ in the classical sense (if $R<\infty$),} \\
\end{aligned}
\right.
\ee
such that, for all $s>s_0$:
\be{REC-Concl2mainThm2}
\begin{aligned}
\ \bigl|w(y,s)-U(y)-e^{-\lambda s}\phi(y)\bigr| &\le \eps e^{-\lambda s} (1+y^{2\ell}),
&&y\in [0, \sigma e^{s/2}],\\
\ \bigl|w_y(y,s)-U'(y)-e^{-\lambda s}\phi'(y))\bigr| &\le \eps e^{-\lambda s} (y+y^{2\ell-1}),
&&y\in (0, \sigma e^{s/2}].
\end{aligned}
\ee
\end{thm}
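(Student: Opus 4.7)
The plan is to adapt the topological trapping argument of Herrero--Vel\'azquez used for Theorem~\ref{mainThm2}, with crucial simplifications stemming from the absence of an inner boundary layer in the RBC case: the expected solution behaves like $U(y)+e^{-\lambda s}\phi(y)$ all the way down to $y=0$, so no two-scale matching is needed. For given $\eps$, I would fix $s_0\gg 1$ and $\sigma\in(0,R)$ small, and for each parameter $d=(d_0,\dots,d_{\ell-1})\in\R^\ell$ with $|d_j|\le \eps e^{-\lambda s_0}$ prescribe an initial profile
\[
w_0(y)=U(y)+e^{-\lambda s_0}\phi(y)+\sum_{j=0}^{\ell-1}d_j\varphi_j(y)
\]
in the intermediate range, modified by a suitable cut-off in the outer region $y\ge 2\sigma e^{s_0/2}$ (with signs chosen via the leading behavior of $\phi$ at infinity so that $u_0\le U$ there). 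The resulting $u_0(x):=e^{-ks_0}w_0(xe^{s_0/2})$ lies in the singular class $\mathcal{W}_s$, and Proposition~\ref{locexistvapprox} then produces a unique global classical solution $u$ with persistent singularity $u_x-U'\in C_b$, from which I recover $w(y,s):=e^{ks}u(ye^{-s/2},e^{-s_0}-e^{-s})$.

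Define the trapping region and projection map
\begin{align*}
\mathcal{A}^\theta_{s_0,s_1}&=\Bigl\{W:\ |D^i(W-U-e^{-\lambda s}\phi)|\le\theta\eps e^{-\lambda s}(y^i+y^{2\ell-i}),\ s\in[s_0,s_1],\ y\in[0,\sigma e^{s/2}],\ i=0,1\Bigr\},\\
P(d;s_0,s_1)&=\bigl((\tilde v(s_1),\varphi_j)\bigr)_{j=0,\dots,\ell-1},
\end{align*}
where $\tilde v=\tilde w-U$ satisfies the variation of constants formula of Proposition~\ref{locexistvapproxExt}(ii). The core of the proof is the key a priori estimate: if a parameter $d$ yields a solution in $\mathcal{A}^1_{s_0,s_1}$ with $P(d;s_0,s_1)=0$, then in fact the solution lies in $\mathcal{A}^{1/2}_{s_0,s_1}$. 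Once this is established, the degree argument of Propositions~\ref{TopolArg1}--\ref{TopolArg2} transfers verbatim and, by compactness, produces a global solution on $[s_0,\infty)$ satisfying \eqref{REC-Concl2mainThm2}. In the case $R<\infty$ the Dirichlet condition on $\Sigma$ is built into the regularized problem \eqref{equRECexist}, while in the case $R=\infty$ the cut-off choice ensures an analogue of Proposition~\ref{PropInOut}(ii).

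For the a priori estimate I would split $\tilde v-e^{-\lambda s}\phi=S_2+S_3$ with $S_2=e^{-(s-s_0)\mathcal{L}}[\tilde v(s_0)-e^{-\lambda s_0}\phi]$ and $S_3=\int_{s_0}^s e^{-(s-\tau)\mathcal{L}}\tilde F(\tilde v_y(\tau))\,d\tau$. The decisive simplification with respect to Section~\ref{section6} is that, under the trap assumption, $|\tilde v_y|\le Ce^{-\lambda s}(y+y^{2\ell-1})$ remains bounded uniformly up to $y=0$, so the identity \eqref{Fvy2} yields
\[
0\le\tilde F(\tilde v_y)\le C U_y^{p-2}\tilde v_y^2\le Ce^{-2\lambda s}\bigl(y^{1+\beta}+y^{4\ell-3+\beta}\bigr),\qquad y\in[0,\sigma e^{s/2}],
\]
a fully regular integrand with no singularity at $y=0$. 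Consequently $|(\tilde F(\tilde v_y(\tau)),\varphi_j)|\le C(j+1)^{3/2}e^{-2\lambda\tau}$ with sharper decay than in Lemma~\ref{LemFvytau}, and the argument of Lemma~\ref{Lemdsmallnu} applied with this bound forces $|d_j|\le\nu e^{-\lambda s_0}$ for some small $\nu=\nu(\eps)$. The bounds on $S_2$ and $S_3$ then follow from the kernel estimates of Proposition~\ref{KernelEstim2} combined with the short-time/long-time splitting of Lemmas~\ref{LemS2short}--\ref{LemS2long}, \ref{LemS3short}, \ref{LemI1long}, each step being substantially simpler since the troublesome contributions from the inner region disappear. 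Finally, the outer bounded range $y\in[R_1,\sigma e^{s/2}]$ is closed by a comparison argument for the linearized operator as in Lemma~\ref{LemComplong} and Remark~\ref{remzm}.

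The main obstacle will be a careful treatment near $y=0$ under the weaker regularity of solutions with persistent singularity: I need to verify that $\tilde v_y=O(e^{-\lambda s}(y+y^{2\ell-1}))$ indeed extends up to $y=0$ (not just on $y\ge y_1(s)$ as in the GBU case), which requires combining the continuous dependence and boundary regularity properties from Proposition~\ref{locexistvapprox}(iii) and \eqref{equRECsingux}, together with the Neumann-type condition $(u-U)_x(0,t)=0$. A secondary, but important, issue is arranging the outer cut-off so that the resulting $u_0$ satisfies all hypotheses of Proposition~\ref{locexistvapprox} (size and support conditions when $R<\infty$), and so that $u$ stays nonnegative with $u(0,t)>0$ on $[0,\tau)$ and eventually $u(0,\tau)=0$ defines the RBC time $\tau$; here Proposition~\ref{locexistvapprox2} provides the sign control that guarantees $\tau$ exists and is finite.
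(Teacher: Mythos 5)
Your proposal follows essentially the same route as the paper's Section~\ref{REC-section5}: the same singular initial data built from $U+e^{-\lambda s_0}\phi+\sum_j d_j\varphi_j$ with an outer cut-off, existence via Proposition~\ref{locexistvapprox}, the same trapping set and degree argument, the key simplification that $\tilde F(\tilde v_y)$ is regular up to $y=0$ so that $|(\tilde F(\tilde v_y(\tau)),\varphi_j)|\le Ce^{-2\lambda\tau}$, and the same comparison closure on $[R_1,\sigma e^{s/2}]$ via Remark~\ref{remzm}. The only quibble is peripheral: the positivity of $u(0,t)$ on $[0,\tau)$ and the identification of the RBC time come directly from the constructed estimate \eqref{REC-Concl2mainThm2} at $y=0$ together with \eqref{equRECviscpos} (using $\eps<\phi(0)$), not from Proposition~\ref{locexistvapprox2}, which concerns the opposite sign situation.
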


The proof of Theorem~\ref{REC-mainThm2} is similar to, but simpler than, the proof of Theorem~\ref{mainThm2}. 
Since $w$ is now sought to be positive at $y=0$, it is natural to consider an approximate solution
of the form 
$$w\sim U+e^{-\lambda s}\phi(y).$$
Therefore, we do not need any inner region with quasi-stationary behavior
but more simply look for an eigenfunction expansion of $v=w-U$ which holds up to the boundary $y=0$
(combined as before with an outer region to reconnect with the regular part of the solution).
Since we are thus looking for a solution with persistent singularities, we shall work with the initial boundary value problem~\eqref{equRECexist} (recast in similarity variables).

 \subsection{Initial data and topological argument}

Again it suffices to consider the cases $R=\infty$ and $R=1$. 
 We keep the notation in the paragraph containing \eqref{defaphak}.
 Let the constants $\sigma\in (0,\frac18)$ and $M_0\in (0,1)$,
depending only on $p,\ell$, be respectively given by Lemma~\ref{controlinner-RBC} and~\eqref{defM0-REC}.
We introduce a parameter $\eps\in\bigl(0,\min\{1,\ts\frac12\phi(0)\}\bigr)$ and set $\nu=M_0\eps$.
The initial time $s_0>0$ will be chosen large enough below and will depend only on $p,\ell,\eps$. 
We denote $y_2(s)=\sigma e^{s/2}$. 
Finally, we fix a smooth cut-off function $\Theta_1(z)$ such that $\Theta_1=1$ for $z\le 1$,  $\Theta_1=0$ for $z\ge 2$
and $\Theta_1'\le 0$. Set $\Theta(y)=\Theta_1\bigl(\ts\frac{1}{2\sigma}e^{-s_0/2}y\bigr)$.

For any $d\in\R^\ell$ that satisfies \eqref{condd}, we 
define $v_0=v_0(\cdot,d)$ as follows:
\be{REC-defw0}
v_0(y):=
\Theta(y)\Bigl\{e^{-\lambda s_0}\phi+\ds\sum_{j=0}^{\ell-1}d_j\varphi_j\Bigr\} +(\Theta(y)-1)U(y).
\ee
If $\ell$ is even and $R=\infty$, we also consider the alternative choice:
\be{REC-defw0even}
v_0(y):=
\begin{cases}
e^{-\lambda s_0}\phi+\ds\sum_{j=0}^{\ell-1}d_j\varphi_j&\hbox{in $[0, 2\sigma e^{s_0/2}],$} \\
b_1U(y)&\hbox{in $(2\sigma e^{s_0/2},\infty),$} \\
\end{cases}
\ee
where $b_1=b_1(d,s_0):=\Bigl\{\Bigl[e^{-\lambda s_0}\phi+\ds\sum_{j=0}^{\ell-1}d_j\varphi_j\Bigr]U^{-1}\Bigr\}(2\sigma e^{s_0/2})$
(which ensures the continuity of $v_0$).
 The choice \eqref{REC-defw0even} comes from
the need to construct a solution which intersects $U$ exactly $\ell$ times on $(0,\infty)$ 
 (in which case $u_0$ must be unbounded).
Observe also that, instead of a minus sign in front of the term $e^{-\lambda s_0}\phi$ in 
\eqref{defw0}-\eqref{defw0odd},
we now have a positive sign.
We then denote $w_0(y)=U(y)+v_0(y)$ and $u_0(x)=e^{-ks_0}w_0(xe^{s_0/2})$.

\begin{lem} \label{controlinner-RBC}
Let $\sigma_2$ be given by \eqref{controlVW}.
For $\sigma=\sigma(p,\ell)\in\bigl(0,\sigma_2]$ sufficiently small, 
we have the following properties for $s_0\gg 1$.
\smallskip
\begin{itemize}
\item[(i)]$u_0\ge 0$ in $[0,R)$.
\smallskip

\item[(ii)]  Under assumption \eqref{REC-defw0even}, for $\eps\in(0,1]$ and $d$ satisfying
 \eqref{condd}, we have $b_*\le b_1\le C$, for some $b_*(p,\ell)>0$.
\smallskip

\item[(iii)] If $R=1$, then $\sup u_0\le\frac14$ 
and $u_0(x)=0$ for $x\ge \frac12$.
\end{itemize}
\end{lem}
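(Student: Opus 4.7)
The result is essentially parallel to Lemma~\ref{controlinner}, but simpler since the initial data carries no inner quasi-stationary layer: the entire profile is built directly from the singular steady state $U$ plus the eigenfunction perturbation $e^{-\lambda s_0}\phi$. I will prove the three assertions in turn, relying on the scaling $u_0(x) = e^{-ks_0} w_0(xe^{s_0/2})$ and the identity $e^{-ks_0} U(xe^{s_0/2}) = U(x)$ throughout.

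For part (i), the plan is to split the $y$-range into two regions. First, for $y$ in a small interval $[0, y_\#]$ (with $y_\#=y_\#(p,\ell)$ chosen so that $\phi \ge \phi(0)/2$ there), the positivity of $w_0$ will come from the fact that $\phi(0) > 0$: using $|d_j| \le \eps e^{-\lambda s_0}$, $\eps < \phi(0)/2$, and $|\varphi_j(y)| \le C(1+y^{2j})$, one gets
\[
w_0(y) \ge U(y) + e^{-\lambda s_0}\bigl[\phi(0)/2 - C\eps\bigr] \ge 0 \quad\text{on } [0,y_\#].
\]
Second, for $y$ in the remaining range inside the support of $\Theta$ (or inside $[0, 2\sigma e^{s_0/2}]$ in the alternative definition \eqref{REC-defw0even}), the argument of \eqref{choicesigma1} can be applied verbatim, replacing $-\phi$ by $+\phi$: using $|\phi(y)| \le C(1+y^{2\ell})$ and $\lambda = \ell-k$, one finds
\[
U^{-1}\Bigl|e^{-\lambda s_0}\phi + \sum_{j=0}^{\ell-1} d_j\varphi_j\Bigr| \le C\bigl[e^{-2\lambda s_0/(\beta+2)}+\sigma^{\beta+2\ell-1}\bigr] \le \frac{1}{2}
\]
for $\sigma(p,\ell)$ sufficiently small and $s_0 \gg 1$, which yields $w_0 \ge U/2 \ge 0$. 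In the alternative case, the extension $w_0 = (1+b_1)U$ on $(2\sigma e^{s_0/2},\infty)$ is then nonnegative by virtue of~(ii).

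For part (ii), I will mimic the computation of \eqref{boundb0}. By \eqref{controlphiell}, the leading contribution to $b_1$ as $s_0\to\infty$ is
\[
e^{-\lambda s_0}\bigl[\phi U^{-1}\bigr]\bigl(2\sigma e^{s_0/2}\bigr) \longrightarrow c_p^{-1}(-1)^\ell c\,(2\sigma)^{2\ell+\beta-1},
\]
which is strictly positive in the case $\ell$ even that concerns us, and bounded above by a constant $C(p,\ell)$. The remaining contribution from $\sum d_j\varphi_j$ is estimated by $|d_j|\le \eps e^{-\lambda s_0}$ together with $|\varphi_j|\le C(1+y^{2j})$ and $j\le \ell-1$, giving $O(e^{-s_0})$ uniformly in $d$. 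Combining these yields the two-sided bound $b_* \le b_1 \le C$ for $s_0\gg 1$.

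For part (iii), the vanishing $u_0(x) = 0$ on $[1/2,1]$ is immediate, since under $R=1$ we use definition \eqref{REC-defw0} with cut-off $\Theta$, so $w_0 \equiv 0$ for $y \ge 4\sigma e^{s_0/2}$, i.e.\ $u_0 \equiv 0$ for $x \ge 4\sigma$, and $4\sigma \le 1/2$ once we further restrict $\sigma\le 1/8$. For $\sup u_0 \le 1/4$, the bound $U(x) \le c_p(4\sigma)^{1-\beta}$ on $[0,4\sigma]$ combined with the elementary scaling estimates $e^{-ks_0}e^{-\lambda s_0}|\phi(xe^{s_0/2})| \le C(x^{2\ell}+e^{-s_0})$ and $e^{-ks_0}\sum |d_j\varphi_j(xe^{s_0/2})| \le C\eps e^{-s_0}$ (all following from the identity $j-k-\lambda = j-\ell$ and $j\le \ell-1$) will give
\[
\sup_{0\le x\le 4\sigma} u_0(x) \le c_p(4\sigma)^{1-\beta} + C(4\sigma)^{2\ell} + Ce^{-s_0} \le \frac{1}{4}
\]
for $\sigma(p,\ell)$ small enough and $s_0$ large. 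The only conceptually delicate point in the whole proof is guaranteeing $w_0 \ge 0$ near $y=0$, where $U$ vanishes and positivity must be extracted from $\phi(0) > 0$; this is precisely the reason for the hypothesis $\eps < \phi(0)$ in the statement of Theorem~\ref{prop:special-LBC}.
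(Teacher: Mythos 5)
Your proposal is correct and follows essentially the same route as the paper: for (i) it splits at a fixed $y_*$ where $\phi\ge\phi(0)/2$ and bounds $U^{-1}|v_0|$ by $C\sigma^{\beta+2\ell-1}\le\frac12$ beyond it, for (ii) it repeats the computation of \eqref{boundb0} with the sign of $\phi$ flipped and $(-1)^\ell=1$, and for (iii) it uses the support of $\Theta$ together with $U(x)\le c_p(4\sigma)^{1-\beta}$. The only (shared, harmless) imprecision is that near $y=0$ the bound $\sum_j|d_j\varphi_j|\le C(p,\ell)\eps e^{-\lambda s_0}$ requires $\eps$ small relative to $\phi(0)/C$, not merely $\eps<\phi(0)/2$, which is absorbed by further shrinking the admissible range of $\eps$.
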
  

\begin{proof}
(i) Take $y_*>0$ such that
$\phi(y)>\phi(0)/2$ on $[0,y_*]$. Assumptions \eqref{REC-defw0}-\eqref{REC-defw0even} and \eqref{condd} guarantee that $w_0\ge 0$ on $[0,y_*]$.
On the other hand, by \eqref{controlDiphi}, for $\sigma\le\sigma_1$ and $y\in [y_*,2\sigma e^{s_0/2}]$, we have
$$U^{-1}\Bigl|e^{-\lambda s_0}\phi-\ds\sum_{j=0}^{\ell-1}d_j\varphi_j\Bigr|
\le e^{-\lambda s_0}U^{-1}\Bigl(|\phi|+\eps\max_{1\le j\le \ell-1}|\varphi_j| \Bigr)
\le Cy^{\beta-1+2\ell} e^{(k-\ell)s_0}
\le C\sigma_1^{\beta+2\ell-1}\le \ts\frac12.$$
Consequently $w_0\ge 0$ on $[y_*,Re^{s_0/2}]$.
\smallskip

 (ii) It is similar to that of Lemma~\ref{controlinner}(i).
\smallskip

(iii) This easily follows from the support properties of $\Theta$ and the fact that $U(0)=0$, by taking 
$\sigma=\sigma(p,\ell)$ sufficiently small.
\end{proof}

Since, by Lemma~\ref{controlinner-RBC}, $u_0$ satisfies all the assumptions of Proposition~\ref{locexistvapprox},
this guarantees the existence of a global solution $u$ of \eqref{equRECexist}-\eqref{equRECregulC1}.
Let $w=w(y,s;d)$ be the corresponding solution of \eqref{eqw} defined by 
$$ 
w(y,s)=e^{ks}u(ye^{-s/2},e^{-s_0}-e^{-s}),\quad 0\le y<Re^{s/2}.
$$ 
So as to work with unknown functions defined on the entire half-line, we recall the extentions introduced in Lemma~\ref{BernsteinEst}:
$$ 
\begin{aligned}
\tilde u(x,t)&=\zeta(x) u(x,t) &&\quad\hbox{ in $[0,\infty)\times[0,\infty)$,} \\
\tilde w(y,s)&=e^{ks}\tilde u(ye^{-s/2},e^{-s_0}-e^{-s})=\zeta(ye^{-s/2}) w(y,s) &&\quad\hbox{ in $[0,\infty)\times[s_0,\infty)$,} \\
\tilde v(y,s)&=\tilde w(y,s)-U(y) &&\quad\hbox{ in $[0,\infty)\times[s_0,\infty)$,}
\end{aligned}
$$ 
where for $R=1$, $\zeta\in C^2([0,\infty))$ is a fixed cut-off function such that 
 $\zeta=1$ in $[0,\frac{1}{3}]$ and $\zeta=0$ in $[\frac{1}{2},\infty)$,
 whereas for $R=\infty$ we just set $\zeta=1$.
 Note that due to the support properties of $\Theta, \zeta$, we have $\tilde v(\cdot,s_0)\equiv v_0$.
For $\theta\in (0,1]$ and $s_1\ge s_0$, we define
$$\begin{aligned}
&\mathcal{A}^\theta_{s_0,s_1}=
\Bigl\{V\in L^\infty(s_0,s_1;W^{1,\infty}([0,\infty)));\ \bigl|D^i(V-e^{-\lambda s}\phi)\bigr| \le \theta\eps e^{-\lambda s} (y^i+y^{2\ell-i})\\
&\qquad\qquad\qquad\qquad
\quad\hbox{ for 	all }  s_0\le s\le s_1,\ 0\le y\le y_2(s),\ i\in\{0,1\}\Bigr\}
	\end{aligned}$$
and
$$\mathcal{U}_{s_0,s_1}=\Bigl\{d\in \R^\ell; \hbox{ \eqref{condd} holds 
and $\tilde v=\tilde v(y,s;d) \in \mathcal{A}^1_{s_0,s_1}$}\Bigr\}.$$
Note that we can rewrite the initial data
 in \eqref{REC-defw0} (resp., \eqref{REC-defw0even}) as
\be{REC-w0tildephi}
v_0(y)=\sum_{j=0}^{\ell-1}d_j\varphi_j+e^{-\lambda s_0}\hat\phi,
\ee
with
\be{REC-w0tildephi2}
\hat\phi:=
\begin{cases}
 \noalign{\vskip -1mm}
 \phi& \hbox{in $[0,2\sigma e^{s_0/2}]$} \\
 (\Theta-1)e^{\lambda s_0}\Bigl(U(y)+\ds\sum_{j=0}^{\ell-1}d_j\varphi_j\Bigr)+\Theta \phi
& \hbox{in $(2\sigma e^{s_0/2},\infty)$}\\
 \noalign{\vskip -1mm}
\end{cases}
\ee
 (resp.,
\be{REC-w0tildephi2odd}
\hat\phi:=
\begin{cases}
 \noalign{\vskip -1mm}
 \phi& \hbox{in $[0,2\sigma e^{s_0/2}]$} \\
 e^{\lambda s_0}\Bigl\{b_1U(y)-\ds\sum_{j=0}^{\ell-1}d_j\varphi_j\Bigr\}
& \hbox{in $(2\sigma e^{s_0/2},\infty)$).}
\end{cases}
\ee

By similar arguments as in the proof of Lemma~\ref{Lemcvcphi} and \eqref{estimuouter}, we obtain that
\be{REC-tildephicv}
\|\hat\phi-\phi\|\to 0,
\quad  \hbox{as $s_0\to\infty$, uniformly for 
$d$ satisfying \eqref{condd}}
\ee
and that for $R=\infty$ and $s_0\gg 1$,
 if either \eqref{REC-defw0} holds and $\ell$ is odd, or \eqref{REC-defw0even} holds and $\ell$ is even,
 then, for any $d\in \mathcal{U}_{s_0,s_1}$,
\be{REC-estimuouter}
|u(x,t)-U(x)|\ge \delta U(x) \quad \mbox{ in } [\sigma,\infty)\times[0,e^{-s_0}-e^{-s_1}),
\ee
with $\delta\in(0,1)$ independent of $d$ and $s_1$.

Let the map $P(d;s_0,s_1)$ be defined in \eqref{defmapP}.
Analogous to Proposition~\ref{mainAPE}, we shall establish the following key a priori estimate, which is the main ingredient of the topological argument used in the proof of Theorem~\ref{REC-mainThm2}.

\begin{prop} \label{REC-mainAPE}
There exists $\bar s_0>0$ 
such that
if $s_1\ge s_0\ge \bar s_0$ and $d\in \mathcal{U}_{s_0,s_1}$ satisfy $P(d;s_0,s_1) = 0$, then 
$\tilde v(\cdot,\cdot;d) \in \mathcal{A}^{1/2}_{s_0,s_1}$
 and, moreover, 
$\ds\sum_{j=0}^{\ell-1} |d_j|\le \ts\frac{\eps}{2} e^{-\lambda s_0}$.
\end{prop}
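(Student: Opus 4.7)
The plan is to adapt the strategy used for Proposition~\ref{mainAPE} in Section~\ref{section6}, taking advantage of the substantial simplifications available in the RBC setting. The starting point is the variation of constants formula \eqref{varconstv2tilde} from Proposition~\ref{locexistvapproxExt}(ii), valid on $[s_0,s_1]$, with initial data $\tilde v(s_0) = v_0 = \sum_{j=0}^{\ell-1}d_j\varphi_j + e^{-\lambda s_0}\hat\phi$ from \eqref{REC-w0tildephi}. The hypothesis $P(d;s_0,s_1)=0$ translates into the $\ell$ identities
\[
e^{-\lambda_j(s_1-s_0)}(\tilde v(s_0),\varphi_j) + \int_{s_0}^{s_1} e^{-\lambda_j(s_1-\tau)}(\tilde F(\tilde v_y(\tau)),\varphi_j)\,d\tau = 0,\qquad j=0,\ldots,\ell-1,
\]
which allow us to decompose $\tilde v = I_0 + I_1 + I_2 + I_3$ exactly as in \eqref{defI0123}, with $I_0 = +(\hat\phi,\phi)e^{-\lambda s}\phi$ (positive sign here, opposite to the GBU case). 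The quantity $\tilde v - e^{-\lambda s}\phi$ is then $(I_0 - e^{-\lambda s}\phi) + I_1 + I_2 + I_3$, whose first term is $o(e^{-\lambda s})\phi$ by \eqref{REC-tildephicv}.

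The crucial simplification with respect to the GBU case is that the construction has no inner layer: the target behavior $w \sim U + e^{-\lambda s}\phi$ holds all the way down to $y = 0$, and $\tilde w \in \mathcal{A}^1_{s_0,s_1}$ gives
\[
|\tilde v_y(y,s)| \le C e^{-\lambda s}(y + y^{2\ell-1}),\qquad 0 \le y \le y_2(s).
\]
Using $\tilde F(\tilde v_y) = \frac{p(p-1)}{2}|U_y + \theta \tilde v_y|^{p-2}\tilde v_y^2$ with $|U_y| \sim y^{-\beta}$ dominating near $y=0$, one obtains the pointwise bound
\[
|\tilde F(\tilde v_y)| \le C e^{-2\lambda s} y^{\beta+1}(1 + y^{4(\ell-1)}),\qquad 0 \le y \le y_2(s),
\]
which is regular at $y=0$ (integrable against the weight $\rho$), whereas the GBU analog of Lemma~\ref{LemvyFvy} had a $y^{-\beta-1}$ singularity forcing the introduction of the auxiliary scales $y_0(s), y_1(s)$. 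Outside $y_2(s)$, the cutoff mechanism of Lemma~\ref{BernsteinEst} together with \eqref{bounduxU-RBCgeneral} gives $|\tilde F(\tilde v_y)| \le Ce^{-(\beta+1)s/2}$ exactly as in the GBU case.

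With these pointwise bounds in hand, I follow the three-step program of Section~\ref{section6}: (i) estimate the Fourier coefficients as in Lemma~\ref{LemFvytau}, obtaining $|(\tilde F(\tilde v_y(\tau)),\varphi_j)| \le C(j+1)^{3/2}e^{-2\lambda\tau}$ via \eqref{EstimEigen1}, but now without the need to split the integration interval; (ii) project the final-time vanishing relations onto $\varphi_j$ and exploit $2\lambda > \lambda_j$ for $j<\ell$ together with \eqref{REC-tildephicv} to derive $|d_j| \le \nu e^{-\lambda s_0}$, giving the second conclusion of the proposition; (iii) estimate $I_0, I_1, I_2, I_3$ separately on $[s_0, s_0+1]$ using the kernel representation \eqref{defWkernel}--\eqref{computWx} with the same Gaussian/parameter estimates of Lemma~\ref{AuxilLem2}, and on $[s_0+1, s_1]$ by combining the eigenfunction expansion for bounded $y \in [0, R_1]$ with a propagation argument via the kernel up to the outer boundary $y = y_2(s)$. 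The bound is closed by the comparison argument of Remark~\ref{remzm}, applied on the region $(R_1, y_2(s))\times (s_0+1, s_1]$ with the super-/subsolutions $\bar v = e^{-\lambda s}\phi \pm (B\nu e^{-\lambda s} y^{2\ell} + \nu e^{-\Lambda s} y^{2\ell})$ and the analogous construction for $v_y$ via the operator $\mathcal{P}_1$ in \eqref{defP1}. Choosing $M_0 = M_0(p,\ell)$ small enough so that all accumulated constants satisfy $C M_0 \le \frac{1}{2}$ yields $\tilde v \in \mathcal{A}^{1/2}_{s_0,s_1}$.

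The principal obstacle will be to verify that the kernel-based bounds on $I_2, I_3$ are sharp enough near $y=0$ to match the target rate $\eps e^{-\lambda s}(y^i + y^{2\ell-i})$ in $\mathcal{A}^{1/2}$, in particular the weight $y^i$ when $i=1$: here one must take advantage of the Neumann property $G_{1}(s,0,\xi) \equiv 0$ built into the kernel formula~\eqref{computWx}, together with the $(1 \wedge \frac{y\xi}{s-\tau})^i$ factor in~\eqref{defGi}, to absorb the gain of $y$. A secondary delicate point is the validation of the comparison step near $y=0$: since the RBC solution has $u(0,t)>0$ on $[s_0,s_1]$ with $u_x(0,t)=+\infty$, Proposition~\ref{propMP} is the correct tool, and the regularity $v_y(0,s)=0$ provided by Proposition~\ref{locexistvapprox}(i) (cf.~\eqref{equRECsingux}) together with \eqref{bounduxU-RBCgeneral} ensures the hypothesis \eqref{hypkTR2} on $x^\alpha w_x$, so that the maximum principle applies up to the boundary without any boundary condition at $y=0$. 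Once these two points are handled, the remaining computations are straightforward simplifications of the GBU arguments of Lemmas~\ref{LemS2short}--\ref{LemComplong}.
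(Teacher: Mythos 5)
Your plan follows the paper's own proof of Proposition~\ref{REC-mainAPE} essentially verbatim: the same short-time decomposition $\tilde v=S_1+S_2+S_3$ and long-time decomposition into $I_0,\dots,I_3$, the same regularized pointwise bound $|\tilde F(\tilde v_y)|\le Ce^{-2\lambda s}y^{\beta+1}(1+y^{4(\ell-1)})$ replacing the singular bounds of Lemma~\ref{LemvyFvy}, the same derivation of $|d_j|\le\nu e^{-\lambda s_0}$ from the projected final-time conditions, and the same three-region long-time treatment ($y\in[0,R_1]$ by eigenfunction expansion, $y=y_2(s)$ by the kernel representation, $y\in[R_1,y_2(s)]$ by the comparison argument of Remark~\ref{remzm} with $c=-1$). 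Your ``secondary delicate point'' about validating the comparison step near $y=0$ is superfluous: as your own plan states, the comparison is applied only on $(R_1,y_2(s))\times(s_0+1,s_1]$, well away from the singular boundary, while the region $[0,R_1]$ is handled entirely by the eigenfunction expansion, so no maximum principle up to $y=0$ is needed in this proposition.
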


We postpone its proof to the next subsection.
As a consequence of Proposition~\ref{REC-mainAPE}, we have the following two propositions, whose proofs are completely similar to 
those of Propositions~\ref{TopolArg1} and \ref{TopolArg2} and are thus omitted.
 We just mention that the property $s_*>s_0$ in the proof of Proposition~\ref{REC-TopolArg2}
is now a consequence of the continuity result in Proposition~\ref{locexistvapprox}(iii).
In the proof of Proposition~\ref{REC-TopolArg2} (and of Theorem~\ref{REC-mainThm2}), we use the 
continuous dependence property from \eqref{equRECppty2}
(which is enough to pass to the limit also in the estimate of $w_y$, by using finite differences).

\begin{prop} \label{REC-TopolArg1}
Let $s_0$ be as in Proposition~\ref{REC-mainAPE}.
If $\mathcal{U}_{s_0,s_1}\neq\emptyset$ with some $s_1\ge s_0$, then 
$${\rm deg}(P(\cdot;s_0,s_1)) =1,$$
where ${\rm deg}(P(\cdot;s_0,s_1))$, denotes the degree of $P(\cdot;s_0,s_1)$ with respect to $0$ in $\mathcal{U}_{s_0,s_1}$.
\end{prop}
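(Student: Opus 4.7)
The plan is to mimic the proof of Proposition~\ref{TopolArg1}, adjusting for the sign change that appears in the RBC setup. First, using the explicit form of the initial data \eqref{REC-w0tildephi}, I would compute
\[
p_j(d;s_0,s_0) = (\tilde v(\cdot,s_0),\varphi_j) = d_j + e^{-\lambda s_0}(\hat\phi,\varphi_j), \qquad j=0,\ldots,\ell-1.
\]
Since $\phi=\varphi_\ell$ and the eigenfunctions $\{\varphi_j\}$ are orthonormal, one has $(\hat\phi,\varphi_j) = (\hat\phi-\phi,\varphi_j)$ for $j<\ell$, and the convergence \eqref{REC-tildephicv} combined with Cauchy--Schwarz yields $\sum_{j=0}^{\ell-1}|(\hat\phi,\varphi_j)| < \eps/2$ for $s_0\gg 1$, uniformly for $d$ satisfying \eqref{condd}.

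Next, I would run the straight-line homotopy $H(d,\tau):=d+\tau(P(d;s_0,s_0)-d)$ between the identity and $P(\cdot;s_0,s_0)$. For every $d\in\partial\mathcal{U}_{s_0,s_0}$ on which \eqref{condd} saturates, so that $\sum_{j=0}^{\ell-1}|d_j|=\eps e^{-\lambda s_0}$, and every $\tau\in[0,1]$,
\[
\sum_{j=0}^{\ell-1}|d_j+\tau(p_j(d;s_0,s_0)-d_j)| \;\ge\; \sum_{j=0}^{\ell-1}|d_j| - e^{-\lambda s_0}\sum_{j=0}^{\ell-1}|(\hat\phi,\varphi_j)| \;\ge\; \tfrac{\eps}{2}e^{-\lambda s_0}>0,
\]
so $H$ never vanishes on that portion of the boundary. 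Homotopy invariance of the degree therefore yields $\deg(P(\cdot;s_0,s_0),0,\mathcal{U}_{s_0,s_0})=\deg(I,0,\mathcal{U}_{s_0,s_0})=1$.

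To pass from $s_0$ to $s_1$, I would invoke Proposition~\ref{REC-mainAPE}: whenever $d\in\mathcal{U}_{s_0,s}$ satisfies $P(d;s_0,s)=0$ for some $s\in[s_0,s_1]$, the proposition forces $\tilde v(\cdot,\cdot;d)\in\mathcal{A}^{1/2}_{s_0,s}$ and $\sum_{j=0}^{\ell-1}|d_j|\le\tfrac{\eps}{2}e^{-\lambda s_0}$, placing $d$ strictly in the interior of $\mathcal{U}_{s_0,s}$. In particular $P(\cdot;s_0,s)$ has no zeros on $\partial\mathcal{U}_{s_0,s}$. Combined with the continuous dependence of $\tilde v$ on $(d,s)$ provided by Proposition~\ref{locexistvapprox} (together with \eqref{equRECppty2}), which makes the map $s\mapsto P(\cdot;s_0,s)$ continuous, homotopy invariance of the degree along the parameter $s\in[s_0,s_1]$ gives
\[
\deg(P(\cdot;s_0,s_1),0,\mathcal{U}_{s_0,s_1}) \;=\; \deg(P(\cdot;s_0,s_0),0,\mathcal{U}_{s_0,s_0}) \;=\; 1,
\]
as desired. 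The only place where genuine analytic work enters is Proposition~\ref{REC-mainAPE}, which prevents zeros of $P$ from escaping through $\partial\mathcal{U}_{s_0,s}$ as $s$ varies; everything else is a routine topological degree computation essentially identical to the GBU case.
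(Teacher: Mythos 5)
Your proposal is correct and follows exactly the route the paper intends: the authors omit the proof of Proposition~\ref{REC-TopolArg1}, stating it is "completely similar" to that of Proposition~\ref{TopolArg1}, and your argument is precisely that adaptation (sign flip in $p_j(d;s_0,s_0)$ from \eqref{REC-w0tildephi}, smallness of $(\hat\phi,\varphi_j)$ via \eqref{REC-tildephicv}, straight-line homotopy to the identity at $s=s_0$, and then homotopy in $s$ justified by Proposition~\ref{REC-mainAPE} keeping zeros of $P$ away from $\partial\mathcal{U}_{s_0,s}$).
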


\begin{prop} \label{REC-TopolArg2}
Let $s_0$ be as in Proposition~\ref{REC-mainAPE}.
Then $\mathcal{U}_{s_0,s_1}\neq\emptyset$ for all $s_1\ge s_0$.
\end{prop}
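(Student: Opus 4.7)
\medskip

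\noindent\textbf{Plan of proof of Proposition~\ref{REC-TopolArg2}.}
The plan is to mimic the proof of Proposition~\ref{TopolArg2}: set
$s_* := \sup\{s\ge s_0\,;\ \mathcal{U}_{s_0,s}\ne\emptyset\}$ and show by contradiction that $s_*=\infty$. First I would check the opening step $s_*>s_0$, then, assuming $s_*<\infty$, build a sequence of admissible data by degree theory, pass to a limit by continuous dependence, and finally reopen the trapping region past $s_*$ using the improved estimate of Proposition~\ref{REC-mainAPE}. The continuous dependence ingredient and the opening step are precisely the two places where the RBC setting departs from the GBU proof, since the relevant solutions here are singular viscosity solutions carrying the persistent $U$-singularity at $y=0$.

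For the opening step, I would take $d=0$, so that in similarity variables $\tilde v(\cdot,s_0)=v_0=\Theta e^{-\lambda s_0}\phi+(\Theta-1)U$. Near $y=0$ one has $\Theta\equiv 1$ and $\phi$ is an even polynomial with $\phi'(0)=0$ and $\phi''(0)$ finite, so $u_0-U$ fulfills the hypotheses of Proposition~\ref{locexistvapprox}(iii). That proposition then yields $|u_x(x,t)-u_{0,x}(x)|\le \tfrac{\eps}{2} x$ on $(0,x_0]\times(0,t_0]$ for small $t_0>0$, and this translates (via the scaling defining $\tilde v$, and the already available control of $\tilde v$ itself by the continuity of $u$ up to $t=0$ in Proposition~\ref{locexistvapprox}) into $\tilde v(\cdot,\cdot;0)\in\mathcal{A}^1_{s_0,s}$ for all $s$ slightly larger than $s_0$; thus $0\in\mathcal{U}_{s_0,s}$ and $s_*>s_0$.

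For the contradiction step, let $s_n\nearrow s_*$. By Proposition~\ref{REC-TopolArg1}, $\deg(P(\cdot;s_0,s_n))=1$, so there exists $d_n\in\mathcal{U}_{s_0,s_n}$ with $P(d_n;s_0,s_n)=0$. Constraint \eqref{condd} bounds $(d_n)$, so up to extraction $d_n\to d_*$. To identify $d_*$ as a member of $\mathcal{U}_{s_0,s_*}$ I would invoke the continuous dependence estimate \eqref{equRECppty2} in Proposition~\ref{locexistvapprox}(ii): this gives uniform convergence of the corresponding viscosity solutions $u(u_0(d_n);\cdot,t)\to u(u_0(d_*);\cdot,t)$, hence of $\tilde v(\cdot,s;d_n)\to\tilde v(\cdot,s;d_*)$ in $L^\infty$ for each $s\in[s_0,s_*]$; convergence of $\tilde v_y$ (needed to verify the $i=1$ component of the defining inequality of $\mathcal{A}^1_{s_0,s_*}$) is recovered via finite differences together with the $L^\infty$ convergence, as indicated in the remark following Proposition~\ref{REC-TopolArg2}. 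One then concludes that $\tilde v(\cdot,\cdot;d_*)\in\mathcal{A}^1_{s_0,s_*}$ and $P(d_*;s_0,s_*)=0$. Proposition~\ref{REC-mainAPE} upgrades this to $\tilde v(\cdot,\cdot;d_*)\in\mathcal{A}^{1/2}_{s_0,s_*}$, and by continuity in $s$ one gets $\tilde v(\cdot,\cdot;d_*)\in\mathcal{A}^{1}_{s_0,s_*+\delta}$ for some $\delta>0$, contradicting the maximality of $s_*$.

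The hardest point, and the only genuine novelty with respect to the GBU proof, is the passage to the limit $d_n\to d_*$ at the level of the space derivative $\tilde v_y$ up to $y=0$: the solutions of \eqref{equRECexist} involved here all satisfy $u_x(0,t)=+\infty$ for positive times, so one cannot pretend to have classical continuous dependence of $u_x$ at $y=0$. This is why the reasoning is restricted to the intermediate range $0<y\le y_2(s)$ and why the $L^\infty$ contraction \eqref{equRECppty2}, combined with interior parabolic regularity coming from \eqref{equRECregulC1} and the finite-difference trick, is the right tool rather than a $C^1$ continuous dependence statement; once this is accepted the remainder of the argument is a direct transcription of the GBU case.
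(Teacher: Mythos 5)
Your proposal is correct and follows exactly the route the paper takes: the paper omits the proof as "completely similar" to Proposition~\ref{TopolArg2}, noting only that $s_*>s_0$ now comes from the continuity result in Proposition~\ref{locexistvapprox}(iii) and that the passage to the limit (including the $\tilde v_y$ estimate) uses \eqref{equRECppty2} together with finite differences — which are precisely the two adaptations you identify and justify.
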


\begin{proof}[Proof of Theorem~\ref{REC-mainThm2}]
 Let $v_0$ be given by \eqref{REC-defw0} or \eqref{REC-defw0even}\footnote{We note that the case \eqref{REC-defw0} will be sufficient for Theorem~\ref{REC-mainThm2}; however the case 
\eqref{REC-defw0even} will be used in the proof of Theorem~\ref{prop:special-LBC}.}
 and $s_0$ be as in Proposition~\ref{REC-mainAPE}.
Take a sequence $\{s_n\} \subset (s_0,\infty)$ with $s_n\to \infty$ as $n\to\infty$. From Proposition~\ref{REC-TopolArg2}, 
for each $n$ there exists $d_n\in \mathcal{U}_{s_0,s_n}$, hence $\tilde v(y, s; d_n)\in \mathcal{A}^1_{s_0,s_n}$. 
Since $\{d_n\}$ is bounded, we may assume that $d_n\to \bar d$ as $n\to\infty$ for some $\bar d$. 
We deduce $\tilde v(y, s; \bar d)\in \mathcal{A}^1_{s_0,\infty}$ by continuous dependence.
The corresponding solution $w$ of \eqref{REC-eqw2} thus satisfies~\eqref{REC-Concl2mainThm2}.

Set $\tau=e^{-s_0/2}$. Since $u_0\ge 0$ and $u(0,t)>0$ for all $t\in(0,\tau)$ by \eqref{REC-Concl2mainThm2}
 and $\eps<\phi(0)$, we have $u>0$ in $(0,R)\times [0,\tau)$ by \eqref{equRECviscpos}, hence $w>0$.
Moreover, if $R=1$, we have $w=0$ on $\Sigma$ in the classical sense by \eqref{equRECexist}.
The theorem is proved.
\end{proof}

\begin{proof}[Proof of Theorem~\ref{prop:special-LBC}]
Let $w$ be given by Theorem~\ref{REC-mainThm2} and let
$u(x,t)=(\tau-t)^k w\bigl(x(\tau-t)^{-1/2},$ $-\log(\tau-t)\bigr)$ with $\tau=e^{-s_0}$ be the corresponding solution of 
\eqref{equRECexist}.
Since $w\ge 0$, it follows from \eqref{equRECviscpos} that
$u$ satisfies the boundary conditions in \eqref{equREC}$_2$ at $x=0$ in the viscosity sense.

 Assertion (i) is an immediate consequence of \eqref{REC-Concl2mainThm2}.
To check assertion~(ii), let us first consider the case $R=\infty$.
If $\ell$ is odd (resp., even), we take $v_0$ given by \eqref{REC-defw0} (resp., \eqref{REC-defw0even}).
Then \eqref{REC-outerdelta} follows from \eqref{REC-estimuouter}.
When $R=1$, assertion~(ii) is guaranteed by Proposition~\ref{locexistvapprox}(i).
 As for assertion (iii), it follows similarly as in the proof of Theorem~\ref{prop:special}. 

 Finally, to show the last part of the theorem, we note that, when $R=\infty$, $u_0$ is bounded whenever $v_0$
is given by \eqref{REC-defw0} (which is possible if either $\ell$ is odd or property (ii) is not required).
We then deduce from \eqref{equRECppty0} that $u\in C_b(\overline Q)$.
\end{proof}

\subsection{Proof of Proposition~\ref{REC-mainAPE}.}\label{REC-section6}

It is similar to, but easier than, the proof of Proposition~\ref{mainAPE}.
To avoid lengthy repetitions, we will therefore often refer to the latter
and only indicate the necessary changes.
In this subsection:

\vskip 2pt

$\bullet$ $C$ will denote a generic positive constant depending only on $p,\ell$; 

\vskip 1pt

$\bullet$ the required largeness of $s_0\gg 1$ will depend on the parameter $\eps$, but not on $d$.

\vskip 2pt

\noindent We shall make use of the variation of constants formula for $\tilde v$, given by \eqref{varconstv2tilde}
in Proposition~\ref{locexistvapproxExt}, with initial data $v_0$ in \eqref{REC-defw0}.

\smallskip

Proposition~\ref{REC-mainAPE} will be an immediate consequence of the following short-time and long-time estimates.

\begin{prop} \label{REC-Propwshort}
If $s_1\ge s_0\gg 1$ then, for any $d\in \mathcal{U}_{s_0,s_1}$ such that $P(d;s_0,s_1)=0$, we have
\be{est-REC-Propwshort}
|D^i(v-e^{-\lambda s}\phi)|\le  M_1\nu e^{-\lambda s}(y^i+y^{2\ell-i}),\quad s\in (s_0,s_0+1],\ 0\le y\le \sigma e^{s/2},\ i\in\{0,1\},
\ee
with $M_1=M_1(p,\ell)>0$. 
\end{prop}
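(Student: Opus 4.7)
\textbf{Proof plan for Proposition~\ref{REC-Propwshort}.} The strategy closely parallels the short-time analysis developed for Proposition~\ref{Propwshort} in the GBU case, but is considerably simpler because in the RBC setting $\tilde v$ is regular up to $y=0$ (no inner quasi-stationary layer) and the nonlinearity $\tilde F(\tilde v_y)$ contains no boundary singularity. The plan is to use the variation of constants formula \eqref{varconstv2tilde} to decompose
\[
\tilde v = S_1+S_2+S_3, \qquad S_1(y,s):=e^{-\lambda s}(\hat\phi,\phi)\phi(y),
\]
where $S_2$ gathers the contribution from initial modes $\varphi_j$, $j\ne \ell$, namely
\[
S_2(y,s)=\sum_{j=0}^{\ell-1} d_j e^{-\lambda_j(s-s_0)}\varphi_j+\sum_{j\ne\ell}e^{-\lambda s_0}e^{-\lambda_j(s-s_0)}(\hat\phi,\varphi_j)\varphi_j,
\]
and $S_3(y,s)=\int_{s_0}^s e^{-(s-\tau)\mathcal L}\tilde F(\tilde v_y(\tau))\,d\tau$. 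Since
\[
S_1-e^{-\lambda s}\phi=e^{-\lambda s}\bigl((\hat\phi,\phi)-1\bigr)\phi,
\]
Lemma~\ref{Lemcvcphi} (adapted to \eqref{REC-w0tildephi2}-\eqref{REC-w0tildephi2odd}, whose proof carries over verbatim using \eqref{REC-tildephicv}) shows $|D^i(S_1-e^{-\lambda s}\phi)|\le \nu e^{-\lambda s}(y^i+y^{2\ell-i})$ for $s_0\gg1$. The goal is then reduced to bounding $|D^iS_2|$ and $|D^iS_3|$ by $C\nu e^{-\lambda s}(y^i+y^{2\ell-i})$ on $[0,\sigma e^{s/2}]$ for $s\in(s_0,s_0+1]$.

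First I would prove the RBC analogue of Lemma~\ref{Lemdsmallnu}: from $(\tilde v(s_1),\varphi_j)=0$ and the eigenmode expansion, \eqref{v0phij} holds (with $\hat\phi$ of opposite sign), hence
\[
d_j = -e^{-\lambda s_0}(\hat\phi-\phi,\varphi_j)-\int_{s_0}^{s_1} e^{\lambda_j(\tau-s_0)}(\tilde F(\tilde v_y(\tau)),\varphi_j)\,d\tau
\]
(the first term coming from $(\phi,\varphi_j)=0$ for $j<\ell$). The first summand is $o(e^{-\lambda s_0})$ by \eqref{REC-tildephicv}. For the second, I establish the RBC analogue of Lemma~\ref{LemFvytau}: using $\tilde v\in\mathcal A^1_{s_0,s_1}$, so $|\tilde v_y|\le C e^{-\lambda s}(y+y^{2\ell-1})$ on $[0,y_2(s)]$ and $|\tilde v_y|\le Ce^{-\beta s/2}$ on $[y_2(s),\infty)$, together with $F(v_y)=\tsfr p(p-1)|U_y+\theta v_y|^{p-2}v_y^2$, gives the pointwise bound
\[
0\le\tilde F(\tilde v_y)\le C e^{-2\lambda s}y^{\beta+1}\bigl(1+y^{4(\ell-1)}\bigr)\quad \text{on } [0,y_2(s)],
\]
and an exponentially small tail for $y\ge y_2(s)$, yielding $|(\tilde F(\tilde v_y(\tau)),\varphi_j)|\le C(j+1)^{3/2}e^{-2\lambda \tau}$. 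Integrating gives $|d_j|\le \nu e^{-\lambda s_0}$.

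For $|D^iS_2|$, using the kernel formula of Proposition~\ref{KernelEstim2} with the initial bounds $|D^iv_0|\le C(\nu e^{-\lambda s_0}+e^{-\lambda s_0})(y^i+y^{2\ell-i})$ in $[0,2\sigma e^{s_0/2}]$ (from \eqref{REC-w0tildephi2} with $|d_j|\le\nu e^{-\lambda s_0}$ and \eqref{REC-tildephicv}), plus polynomial control in $[2\sigma e^{s_0/2},\infty)$, I would split the integral into $[0,2\sigma e^{s_0/2}]$ (Gaussian concentration around $y$) and the tail (exponentially small thanks to $\exp[-C(e^{(s_0-s)/2}y-\xi)^2/(s-s_0)]$ and $s-s_0\le 1$). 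The computations directly mimic the estimates of $S_{2,3}^{i,1}$ and $S_{2,3}^{i,2}$ in Lemma~\ref{LemS2short} and yield $|D^iS_2|\le C\nu e^{-\lambda s}(y^i+y^{2\ell-i})$. For $|D^iS_3|$ the proof of Lemma~\ref{LemS3short} applies almost verbatim; only the inner integrals $S^i_{3,1},S^i_{3,2}$ (which handled the boundary-layer regions) are no longer needed, and the intermediate estimate $S^i_{3,3}$ suffices, using the sharper pointwise control $|\tilde F(\tilde v_y)|\le Ce^{-2\lambda s}y^{\beta+1}(1+y^{4(\ell-1)})$.

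\textbf{Main obstacle.} The only delicate point is keeping track of the exact powers $y^i+y^{2\ell-i}$ for $i=0,1$ in the kernel integrals: when $y$ is small, Gaussian concentration forces the integration variable $\xi$ to be close to $e^{(s_0-s)/2}y$, whereas when $y$ is of order $y_2(s)$, large $\xi$ contribute. I expect this to be handled by the same splitting $E_y^1=[y/2,2y]$, $E_y^2$ of its complement used in the proof of Lemma~\ref{LemS2short}, combined with the Gaussian estimate \eqref{boundH2} of $\partial_yH_0$ to treat the $i=1$ case. Since the time window $(s_0,s_0+1]$ is short and the kernel behaves essentially as a one-step heat propagator on the finite-length interval, the standard change of variables $\xi=z\sqrt{s-s_0}$ combined with Lemma~\ref{AuxilLem2} completes the estimates.
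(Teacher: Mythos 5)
Your proposal is correct and follows essentially the same route as the paper: the same decomposition $\tilde v=S_1+S_2+S_3$ with $S_1=e^{-\lambda s}(\hat\phi,\phi)\phi$, the same preliminary bounds ($0\le\tilde F(\tilde v_y)\le Ce^{-2\lambda s}y^{\beta+1}(1+y^{4(\ell-1)})$ on $[0,y_2(s)]$, the Fourier-coefficient bound $Ce^{-2\lambda\tau}$, and $|d_j|\le\nu e^{-\lambda s_0}$ from $P(d;s_0,s_1)=0$), and the same kernel estimates carried over from the GBU short-time lemmas with the inner-layer integrals dropped. The only imprecision is that the kernel must be applied to $S_2(\cdot,s_0)=v_0-e^{-\lambda s_0}(\hat\phi,\phi)\phi$, which carries the crucial factor $\nu$ (via $|d_j|\le\nu e^{-\lambda s_0}$ and $(\hat\phi,\phi)\to 1$), rather than to $v_0$ itself whose bound $Ce^{-\lambda s_0}$ would not yield \eqref{est-REC-Propwshort}; your parenthetical shows you have the right ingredients, so this is a slip of notation rather than a gap.
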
 

\begin{prop} \label{REC-Propwlong}
If $s_1\ge s_0\gg 1$ then, for any $d\in \mathcal{U}_{s_0,s_1}$ such that $P(d;s_0,s_1)=0$, we have
\be{est-REC-Propwlong}
|D^i(v-e^{-\lambda s}\phi)|\le  M_2\nu e^{-\lambda s}(y^i+y^{2\ell-i}),\quad s\in (s_0+1,s_1],\ 
0\le y\le \sigma e^{s_0/2},\ i\in\{0,1\},
\ee
with $M_2=M_2(p,\ell)>0$. 
\end{prop}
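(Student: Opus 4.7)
The proof follows the same scheme as Proposition~\ref{Propwlong}, with substantial simplifications afforded by the absence of an inner/boundary-layer region in the RBC setting. The plan is to decompose $\tilde v = I_0 + I_1 + I_2 + I_3$, parallel to \eqref{defI0123}, using the initial condition $\tilde v(\cdot,s_0) = \sum_{j=0}^{\ell-1} d_j\varphi_j + e^{-\lambda s_0}\hat\phi$ from \eqref{REC-w0tildephi}, but now with $I_0 = +(\hat\phi,\phi)e^{-\lambda s}\phi$ (the opposite sign from the GBU case). The hypothesis $P(d;s_0,s_1)=0$ together with the eigenfunction projection of the Duhamel formula (the analogue of \eqref{v0phij}) eliminates the low-frequency modes from $\tilde v(s_0)$, so that $I_1$ is expressed as a backward-in-time integral of the nonlinear term against $\varphi_0,\dots,\varphi_{\ell-1}$.

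First I would derive the appropriate analogues of Lemmas~\ref{LemvyFvy} and \ref{LemFvytau}. Since $\tilde v \in \mathcal{A}^1_{s_0,s_1}$, the pointwise bound $|\tilde v_y| \le C e^{-\lambda s}(y + y^{2\ell-1})$ holds throughout $[0, y_2(s)]$ without any inner region (so $\tilde F(\tilde v_y) \le C e^{-2\lambda s} y^{\beta+1}(1 + y^{4(\ell-1)})$ there), while the maximum principle applied to \eqref{eqnzux}, combined with the boundary control of $u_x$ afforded by Proposition~\ref{locexistvapprox}(i), yields $|\tilde v_y| \le C e^{-\beta s/2}$ for $y \ge y_2(s)$. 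Projecting onto eigenfunctions as in Lemma~\ref{LemFvytau} then gives $|(\tilde F(\tilde v_y(\tau)),\varphi_j)| \le C(j+1)^{3/2} e^{-2\lambda\tau}$, so the effective rate is $\bar\eta = 2\lambda > \lambda_\ell$; consequently Lemma~\ref{Lemdsmallnu} adapts verbatim to give $|d_j| \le \nu e^{-\lambda s_0}$ for $j=0,\dots,\ell-1$.

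Next I would estimate each piece along the same four-step pattern as for Proposition~\ref{Propwlong}. For $I_1$, the estimate \eqref{controlDI1b} applies verbatim and yields $|D^i I_1| \le C\nu e^{-\lambda s}(y^i + y^{2\ell-i})$ globally. For $I_2$ and $I_3$ on a bounded region $[0, R_1]$, the pointwise eigenfunction bounds from Proposition~\ref{EstimEigen}, together with the decay factors $e^{-(j-\ell)(s-s_0)}$ and $e^{-(j-\ell)(s-\tau)}$ for $j \ge \ell+1$ (respectively $j \ge \ell$), together with the short-time contribution controlled by the analogue of Lemma~\ref{LemS3short}, reproduce Lemma~\ref{LemI2long} up to constants. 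At the outer boundary $y = y_2(s) = \sigma e^{s/2}$, I apply the kernel representation of Propositions~\ref{KernelEstim}--\ref{KernelEstim2} to the alternative decomposition $\tilde v = S_1 + S_2 + S_3$ (with $S_1 = +(\hat\phi,\phi)e^{-\lambda s}\phi$), exploiting the Gaussian factor $\exp[-C(\sigma e^{\tau/2} - \xi)^2]$ in $H_0$ to show that the contribution of the initial data concentrated on $[0, 2\sigma e^{s_0/2}]$ and of the nonlinear term concentrated on $[0, \sigma e^{\tau/2}]$ both produce $O(\nu e^{-\lambda s} y_2^{2\ell-i})$, paralleling Lemma~\ref{LemS2long}.

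Finally, on the intermediate region $D = \{(y,s);\ s \in [s_0+1, s_1],\ R_1 \le y \le \sigma e^{s/2}\}$, I invoke Remark~\ref{remzm} with $c = +1$: the pointwise boundary bounds just obtained on $\{y = R_1\} \cup \{y = y_2(s)\} \cup \{s = s_0+1\}$ (using Proposition~\ref{REC-Propwshort} on the initial slice), together with the upper bound $\tilde F(\tilde v_y) \le C e^{-2\lambda s} y^{4\ell+\beta-3}$ valid in $D$, feed directly into the sub/super-solution construction underlying Lemma~\ref{LemComplong} and yield the desired estimate on all of $D$. The main technical subtlety is that the barrier functions $\underline v, \bar v, z_0, z_1$ built in the proof of Lemma~\ref{LemComplong} must have their leading $\phi$-term attached with a $+$ rather than a $-$ sign, since now $v \approx +e^{-\lambda s}\phi$; this is precisely the generality accommodated by Remark~\ref{remzm}, so no new computation is required. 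Combining this with the interior estimate on $[0, R_1]$ from step (b) and with Proposition~\ref{REC-Propwshort} on $[s_0, s_0+1]$ gives \eqref{est-REC-Propwlong}.
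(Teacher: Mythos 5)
Your proposal is correct and follows essentially the same route as the paper: the $I_0+I_1+I_2+I_3$ decomposition with the sign of the $\phi$-term flipped, the rate $2\lambda$ for the projected nonlinear term, the estimates of $I_1$ globally, of $I_2,I_3$ on $[0,R_1]$, of $v$ at $y=y_2(s)$ via the kernel and the $S$-decomposition, and the comparison argument of Remark~\ref{remzm} on $[R_1,\sigma e^{s/2}]$, all glued to the short-time estimate. One cosmetic slip: in the paper's convention Remark~\ref{remzm} controls $v+c e^{-\lambda s}\phi$, so the RBC case requires $c=-1$ (not $c=+1$ as you wrote), though your verbal description of the sign flip of the barrier is substantively correct.
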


In view of their proofs, we first collect a number of preliminary estimates.

\begin{lem} \label{REC-LemvyFvy}
If $s_1\ge s_0\gg 1$ then, for any $d\in \mathcal{U}_{s_0,s_1}$ and $s\in[s_0,s_1]$, we have
\be{REC-controlvybasic}
|\tilde v_y(y,s)|\le
\begin{cases}
Ce^{-\lambda s}(y+y^{2\ell-1})& \hbox{ for $0\le y\le y_2(s)$} \\
Ce^{-\beta s/2}& \hbox{ for $y\ge y_2(s)$},
\end{cases}
\ee
\be{REC-controlFvybasic}
0\le \tilde F(\tilde v_y(y,s))\le
\begin{cases}
Ce^{-2\lambda s}y^{\beta+1} \bigl(1+y^{4(\ell-1)}\bigr) & \hbox{ for $0\le y\le y_2(s)$} \\
Ce^{-(\beta+1)s/2}& \hbox{ for $y\ge y_2(s)$,}  
\end{cases}
\ee
and
\be{REC-Fvytau}
|(\tilde F(\tilde v_y(\tau)),\varphi_j)|\le C e^{-2\lambda\tau},\quad j\in\N,\ \tau\in[s_0,s_1].
\ee
If, moreover, $P(d;s_0,s_1)=0$, then
\be{REC-dsmall}
|d_j|\le \nu e^{-\lambda s_0},\quad j\in\{0,\dots,\ell-1\}.
\ee

 \end{lem}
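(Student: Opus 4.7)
The plan is to establish the four estimates sequentially, leveraging the fundamental hypothesis $d\in\mathcal{U}_{s_0,s_1}$ (which directly gives $\tilde v\in\mathcal{A}^1_{s_0,s_1}$, hence
$|D^i(\tilde v-e^{-\lambda s}\phi)|\le\eps e^{-\lambda s}(y^i+y^{2\ell-i})$ for $0\le y\le y_2(s)=\sigma e^{s/2}$) together with the polynomial bound $|D^i\phi(y)|\le C(y^i+y^{2\ell-i})$ from \eqref{controlDiphi}. This situation is notably simpler than the GBU case of Lemma~\ref{LemvyFvy} because there is no inner quasi-stationary layer: the eigenfunction expansion controls $\tilde v$ uniformly up to $y=0$, so the singular boundary contributions (the $y^{-\beta}$ blow-ups) from the previous section do not appear.

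For \eqref{REC-controlvybasic} in the range $0\le y\le y_2(s)$, I would combine the $\mathcal{A}^1$ bound with $|\phi'(y)|\le C(y+y^{2\ell-1})$ and triangle inequality. For $y\ge y_2(s)$, I would mirror Step~4 of the proof of Lemma~\ref{LemvyFvy}: since $\tilde w_y(y,s)=e^{-\beta s/2}\bigl[\zeta'(xe^{-s/2})u+\zeta u_x\bigr](x,t)$ with $x=ye^{-s/2}$, it suffices to control $u$ and $u_x$ on $[\sigma,R)\times[0,e^{-s_0}-e^{-s_1})$. Boundedness of $u$ is provided by \eqref{equRECder1} (when $R=1$) or by \eqref{equRECppty0} and the explicit form of $u_0$ from \eqref{REC-defw0}--\eqref{REC-defw0even}. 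To bound $u_x(\sigma,t)$ I evaluate $w_y$ at $y=y_2(s)$ using the $\mathcal{A}^1$ estimate together with $U_y(y_2(s))=d_p\sigma^{-\beta}e^{-\beta s/2}$, which translates into $|u_x(\sigma,t)|\le C$ uniformly in $t$; the initial bound comes by direct differentiation of \eqref{REC-defw0}. Applying the maximum principle to $z=u_x$ in equation \eqref{eqnzux} on $[\sigma,R)\times(0,e^{-s_0}-e^{-s_1})$ (using \eqref{equRECppty0} and, if $R=\infty$, the decay at spatial infinity coming from the linear growth control in \eqref{equRECder1}) yields $|u_x|\le C$ and hence $|\tilde w_y|\le Ce^{-\beta s/2}$, which proves the second case of \eqref{REC-controlvybasic}.

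For \eqref{REC-controlFvybasic}, I use the mean value form $F(v_y)=\frac{p(p-1)}{2}|U_y+\theta v_y|^{p-2}v_y^2$ with $\theta\in(0,1)$ (cf.~\eqref{Fvy2}). The key check is that $|v_y|\le Cy^{-\beta}$ throughout $[0,y_2(s)]$: indeed $e^{-\lambda s}(y+y^{2\ell-1})\le C\sigma^{2\ell-1+\beta}y^{-\beta}$ on this range, because at $y\le y_2(s)$ we have $y^{-\beta}\ge\sigma^{-\beta}e^{-\beta s/2}$ while $e^{-\lambda s}(y+y^{2\ell-1})\le C\sigma^{2\ell-1}e^{(k-\frac12)s}$ and $(k-\tfrac12)=-\beta/2$. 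Hence $|U_y+\theta v_y|\le Cy^{-\beta}$, and since $(p-2)\beta=1-\beta$, we obtain $F(v_y)\le Cy^{\beta-1}v_y^2\le Ce^{-2\lambda s}y^{\beta+1}(1+y^{4(\ell-1)})$. The second case follows by the same identity with $|U_y|+|v_y|\le Ce^{-\beta s/2}$ at $y=y_2(s)$ and beyond. Estimate \eqref{REC-Fvytau} then follows by splitting $(\tilde F(\tilde v_y),\varphi_j)=\int_0^{y_2(\tau)}+\int_{y_2(\tau)}^\infty$: using Cauchy--Schwarz and $\|\varphi_j\|=1$, the first integral is bounded by $Ce^{-2\lambda\tau}\bigl(\int_0^\infty y^{2(\beta+1)}(1+y^{4(\ell-1)})^2\rho\,dy\bigr)^{1/2}\le Ce^{-2\lambda\tau}$ (uniformly in $j$, the crucial simplification over Lemma~\ref{LemFvytau}); the second integral is super-exponentially small due to the Gaussian weight $\rho$.

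Finally, for \eqref{REC-dsmall}, I apply the variation of constants formula \eqref{varconstv2tilde} and test against $\varphi_j$ for $0\le j\le\ell-1$. The assumption $P(d;s_0,s_1)=0$ gives $(\tilde v(s_1),\varphi_j)=0$, so by the same manipulation as in \eqref{v0phij},
\[
(\tilde v(s_0),\varphi_j)=-\int_{s_0}^{s_1}e^{-\lambda_j(s_0-\tau)}(\tilde F(\tilde v_y(\tau)),\varphi_j)\,d\tau.
\]
Since $(\tilde v(s_0),\varphi_j)=d_j+e^{-\lambda s_0}(\hat\phi,\varphi_j)$ by \eqref{REC-w0tildephi}, and $(\phi,\varphi_j)=0$ for $j\ne\ell$, one has $(\hat\phi,\varphi_j)=(\hat\phi-\phi,\varphi_j)$, which is made arbitrarily small by \eqref{REC-tildephicv} for $s_0\gg1$. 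The time integral is bounded via \eqref{REC-Fvytau} by $Ce^{-\lambda_j s_0}\int_{s_0}^\infty e^{(\lambda_j-2\lambda)\tau}d\tau\le Ce^{-2\lambda s_0}/(2\lambda-\lambda_j)$, which is $o(e^{-\lambda s_0})$ because $\lambda_j<\lambda$. Combining both contributions yields $|d_j|\le\nu e^{-\lambda s_0}$ for $s_0\gg1$. The main obstacle is the $y\ge y_2(s)$ case of \eqref{REC-controlvybasic}, as it requires transporting the information carried by the $\mathcal{A}^1$ norm (defined on $[0,y_2(s)]$) into a global $L^\infty$ control on $u_x$ away from the singularity, despite the fact that $u$ itself may be unbounded under \eqref{REC-defw0even} when $R=\infty$; the cut-off $\zeta$ in $\tilde w$ and the maximum principle argument are what make this work.
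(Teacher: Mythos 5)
Your proof is correct and follows essentially the same route as the paper's: the $\mathcal{A}^1$ membership for the inner/intermediate range, the mean-value identity \eqref{Fvy2} combined with $|U_y+\theta v_y|\le Cy^{-\beta}$ for \eqref{REC-controlFvybasic}, Cauchy--Schwarz against $\varphi_j$ for \eqref{REC-Fvytau}, and the variation-of-constants projection with \eqref{REC-tildephicv} for \eqref{REC-dsmall}. One minor imprecision: to legitimize the maximum principle for $z=u_x$ on $(\sigma,\infty)$ when $R=\infty$, what is needed is the a priori boundedness of $u_x$ itself, which comes from \eqref{equRECregulC1} (i.e.\ $u_x-U_x\in C_b$), not from the linear growth of $u$ in \eqref{equRECder1} as you suggest.
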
 

\begin{proof}
The first case of \eqref{REC-controlvybasic} follows from $\tilde v\in \mathcal{A}^1_{s_0,s_1}$.
The proof ot the second case of \eqref{REC-controlvybasic},
completely similar to that of the fourth case of \eqref{controlvybasic}, is a consequence of \eqref{equRECsingux} and of the estimate 
\be{bounduux2-REC}
|u_x|\le C\quad\hbox{ in $Q:=(\sigma,R)\times (0,T-e^{-s_1})$.}
\ee
As before, \eqref{bounduux2-REC} is obtained by
appying the maximum principle to the equation $z_t-z_{xx}=p|z|^{p-2}zz_x$ satisfied by $z=u_x$ in $Q$
and using \eqref{equRECsingux}, \eqref{equRECder1}
(the application of the maximum principle is licit also for $R=\infty$ since $u_x\in L^\infty(Q)$ owing to \eqref{equRECregulC1}).

\smallskip

To prove \eqref{REC-controlFvybasic}, noting that
$e^{-\lambda s}(y+y^{2\ell-1})\le Ce^{-\lambda s}e^{(2\ell-1)s/2}=Ce^{-\beta s/2}\le Cy^{-\beta}$ for $y\in[0,y_2(s)]$,
we deduce from \eqref{REC-controlvybasic} that $U_y+|v_y|\le Cy^{-\beta}$ for $y\le y_2(s)$,
and $U_y+|v_y|\le Ce^{-\beta s/2}$ for $y\ge y_2(s)$.
The first case of \eqref{REC-controlFvybasic}, and the second case for $R=\infty$, then readily follows from \eqref{REC-controlvybasic} and \eqref{Fvy2}. As for the second case for $R=1$, it follows from \eqref{Ftildeg1} and \eqref{bounduux2-REC}.
\smallskip

To show \eqref{REC-Fvytau}, using \eqref{REC-controlFvybasic}, the Cauchy-Schwarz' inequality and $\|\varphi_j\|=1$, we write
$$\begin{aligned}
|(F(v_y(\tau)),\varphi_j)|
&\le Ce^{-2\lambda\tau} \int_0^{\sigma e^{\tau/2}} [y^{\beta+1} \bigl(1+y^{4(\ell-1)}\bigr)]|\varphi_j|\rho
+Ce^{-(\beta+1)\tau/2}\int_{\sigma e^{\tau/2}}^\infty |\varphi_j|\rho \\
&\le Ce^{-2\lambda\tau} 
+Ce^{-(\beta+1)\tau/2} \Bigl(\int_{\sigma e^{\tau/2}}^\infty \rho\,dy\Bigr)^{1/2}
\le Ce^{-2\lambda\tau}+Ce^{-C e^\tau} \le Ce^{-2\lambda\tau}. 
\end{aligned}$$
Finally, based on \eqref{REC-Fvytau}, the proof of \eqref{REC-dsmall} is completely similar to that of Lemma~\ref{Lemdsmallnu}.
\end{proof}

In view of the proof of Proposition~\ref{REC-Propwshort}, similar to \eqref{defS123}, we split $\tilde v$ as
\be{REC-defS123}
\tilde v=S_1+S_2+S_3,\ \hbox{ where }\ 
\begin{cases}
S_1(y,s)&=e^{-\lambda s}(\hat\phi,\phi)\phi,\\
S_2(y,s)&=\ds\sum_{j=0}^{\ell-1}d_je^{-\lambda_j(s-s_0)}\varphi_j+\sum_{j\ne\ell} 
e^{-\lambda s_0}e^{-\lambda_j(s-s_0)}(\hat\phi,\varphi_j)\varphi_j, \\
S_3(y,s)&=\ds\int_{s_0}^s e^{-(s-\tau)\mathcal{L}}\tilde F(\tilde v_y(\tau))\,d\tau.
\end{cases}
\ee
We record the following pointwise bounds for the initial data of $S_2$.

\begin{lem}  \label{REC-LemestimS2s0}
If $s_1\ge s_0\gg 1$ then, for any $d\in \mathcal{U}_{s_0,s_1}$ such that $P(d;s_0,s_1)=0$, we have
\be{REC-estimS2}
|S_2(y,s_0)|\le
\begin{cases}
C\nu e^{-\lambda s_0}(1+y^{2\ell})& \hbox{ in 
$D_1:=[0,2\sigma e^{s_0/2}]$,} \\
C y^{2\ell}& \hbox{ in $D_2:=[2\sigma e^{s_0/2},\infty)$, }
\end{cases}
\ee
and
\be{REC-estimDS2}
|\partial_yS_2(y,s_0)|\le
\begin{cases}
C\nu e^{-\lambda s_0}(y+y^{2\ell-1})& \hbox{ in $D_1$,} \\
C y^{2\ell-1}& \hbox{ in $D_2$.}
\end{cases}
\ee
\end{lem}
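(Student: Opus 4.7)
The plan starts from a key identity that recasts $S_2(\cdot,s_0)$ directly in terms of the initial datum $v_0$. From the definition \eqref{REC-defS123} of $S_2$ at $s=s_0$, combined with the $L^2_\rho$ expansion $\hat\phi = \sum_{j\ge 0}(\hat\phi,\varphi_j)\varphi_j$ (which gives $\sum_{j\ne \ell}(\hat\phi,\varphi_j)\varphi_j = \hat\phi - (\hat\phi,\phi)\phi$), I will derive
\[
S_2(y,s_0) = \sum_{j=0}^{\ell-1} d_j\varphi_j(y) + e^{-\lambda s_0}\bigl[\hat\phi(y) - (\hat\phi,\phi)\phi(y)\bigr] = v_0(y) - e^{-\lambda s_0}(\hat\phi,\phi)\,\phi(y),
\]
the last equality being \eqref{REC-w0tildephi}. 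Since both sides are continuous, this is a pointwise identity, and it reduces the lemma to pointwise estimates of $v_0$ together with control of the single scalar $(\hat\phi,\phi)$. By Cauchy--Schwarz and \eqref{REC-tildephicv},
$|(\hat\phi,\phi)-1| = |(\hat\phi-\phi,\phi)| \le \|\hat\phi-\phi\| \to 0$ uniformly for $d$ satisfying \eqref{condd}, so for $s_0 \gg 1$ one has $|(\hat\phi,\phi)-1| \le \nu$. The assumption $P(d;s_0,s_1)=0$ enters only via \eqref{REC-dsmall}, giving $|d_j|\le \nu e^{-\lambda s_0}$.

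In $D_1=[0,2\sigma e^{s_0/2}]$ the cut-off $\Theta$ in \eqref{REC-defw0} is identically $1$, so $v_0 = e^{-\lambda s_0}\phi + \sum_{j=0}^{\ell-1}d_j\varphi_j$ and the identity simplifies to
\[
S_2(y,s_0) = \sum_{j=0}^{\ell-1} d_j\varphi_j(y) + e^{-\lambda s_0}\bigl(1 - (\hat\phi,\phi)\bigr)\phi(y).
\]
Inserting \eqref{REC-dsmall}, the bound $|1-(\hat\phi,\phi)|\le \nu$, and the polynomial bounds \eqref{controlDiphi} for $\varphi_j^{(i)}$ and $\phi^{(i)}$ immediately yields the $D_1$ part of \eqref{REC-estimS2}--\eqref{REC-estimDS2}. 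The only minor point requiring care will be the derivative sum $\sum d_j\varphi_j'$, whose worst contribution is of order $y + y^{2\ell-3}$: this is absorbed into $y + y^{2\ell-1}$ by splitting $y\le 1$ and $y\ge 1$ (the case $\ell=1$ is trivial because $\varphi_0' \equiv 0$). The alternative initial datum \eqref{REC-defw0even} coincides with \eqref{REC-defw0} throughout $D_1$, so no separate argument is needed there.

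In $D_2=[2\sigma e^{s_0/2},\infty)$ I will plug the explicit form of $v_0$ from \eqref{REC-defw0} into the identity, obtaining
\[
S_2(y,s_0) = (\Theta-1)U + \Theta\sum_{j=0}^{\ell-1}d_j\varphi_j + e^{-\lambda s_0}\bigl[\Theta - (\hat\phi,\phi)\bigr]\phi,
\]
and each summand will be of order $y^{2\ell}$ for $y\ge 1$: $U \le Cy^{1-\beta}\le Cy^{2\ell}$; $|\varphi_j|\le Cy^{2\ell-2}$ with $\nu e^{-\lambda s_0}y^{2\ell-2}\le Cy^{2\ell}$ since $y^2 \ge C e^{s_0}$ on $D_2$; and $|\phi|\le Cy^{2\ell}$. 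For the derivative, the extra ingredient is that $\mathrm{supp}(\Theta') \subset [2\sigma e^{s_0/2}, 4\sigma e^{s_0/2}]$ with $|\Theta'|\le Ce^{-s_0/2}$ and $y\asymp e^{s_0/2}$ on that support: the cut-off terms $\Theta'U$, $\Theta'\sum d_j\varphi_j$, $e^{-\lambda s_0}\Theta'\phi$ each reduce to $Cy^{2\ell-1}$ by trading $e^{-s_0/2}$ against matching powers of $y$, while the remaining terms involving $U'$, $\varphi_j'$, $\phi'$ follow directly from $|U'|\le Cy^{-\beta}\le Cy^{2\ell-1}$ and \eqref{controlDiphi}. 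Case \eqref{REC-defw0even} is even simpler since there $v_0=b_1 U$ in $D_2$ with $|b_1|\le C$ by Lemma~\ref{controlinner-RBC}(ii), so $S_2 = b_1 U - e^{-\lambda s_0}(\hat\phi,\phi)\phi$ and both bounds are immediate. The proof will amount to routine bookkeeping; the only genuinely conceptual step is the derivation of the identity together with the uniform-in-$d$ bound $|(\hat\phi,\phi)-1|\le \nu$, which is where $s_0 \gg 1$, hypothesis $P(d;s_0,s_1)=0$, and \eqref{REC-tildephicv} are used decisively.
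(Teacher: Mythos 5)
Your proposal is correct and follows essentially the same route as the paper: it starts from the identity $S_2(\cdot,s_0)=v_0-e^{-\lambda s_0}(\hat\phi,\phi)\phi$ (the paper's \eqref{REC-splitS2b}), simplifies in $D_1$ to $\sum_{j<\ell}d_j\varphi_j+e^{-\lambda s_0}(1-(\hat\phi,\phi))\phi$ using \eqref{REC-w0tildephi}, and then invokes \eqref{controlDiphi}, \eqref{REC-tildephicv} and \eqref{REC-dsmall}, with the explicit forms \eqref{REC-defw0}--\eqref{REC-defw0even} handling $D_2$. The only difference is that you spell out the bookkeeping (the cut-off derivative terms, the $\ell=1$ degenerate case, the absorption of $y^{2\ell-3}$) that the paper leaves implicit.
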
 

\begin{proof}
From \eqref{REC-defS123} we have
\be{REC-splitS2b}
S_2(y,s_0)=v_0-e^{-\lambda s_0}(\hat\phi,\phi)\phi.
\ee
By \eqref{REC-w0tildephi}-\eqref{REC-w0tildephi2odd}, we get in particular
$S_2(y,s_0)=\sum_{j=0}^{\ell-1}d_j\varphi_j+e^{-\lambda s_0}\bigl(1-(\hat\phi,\phi)\bigr)\phi$ in $D_1$,
so that \eqref{REC-estimS2}-\eqref{REC-estimDS2} in $D_1$
follow from \eqref{controlDiphi}, \eqref{REC-tildephicv} and \eqref{REC-dsmall}.
As for \eqref{REC-estimS2}-\eqref{REC-estimDS2} in the range $D_2$, they easily follow from
 \eqref{controlDiphi}, \eqref{REC-defw0}, \eqref{REC-defw0even}, \eqref{REC-dsmall} and \eqref{REC-splitS2b}. \end{proof}

\goodbreak

Recalling \eqref{REC-tildephicv} and \eqref{REC-defS123},
Proposition~\ref{REC-Propwshort} is a direct consequence of the following two lemmas,
which respectively estimate $S_2$ and $S_3$.

\begin{lem} \label{REC-LemS2short}
If $s_1\ge s_0\gg 1$ then, for any $d\in \mathcal{U}_{s_0,s_1}$ such that $P(d;s_0,s_1)=0$, we have
$$|D^iS_2(y,s)|\le  C\nu e^{-\lambda s}(y^i+y^{2\ell-i}),\quad s\in (s_0,s_0+1],\ 0\le y\le y_2(s),\ i\in\{0,1\}.$$
\end{lem}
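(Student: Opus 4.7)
The plan is to exploit the variation of constants representation $S_2(\cdot,s) = e^{-(s-s_0)\mathcal{L}}S_2(\cdot,s_0)$, which by Proposition~\ref{KernelEstim2} together with \eqref{boundH1} gives
$$|D^iS_2(y,s)| \le \int_0^\infty G_i(s-s_0,y,\xi)|D^iS_2(\xi,s_0)|\xi^\alpha d\xi,$$
where $G_i$ satisfies the Gaussian-type bound analogous to \eqref{defGi} (derived from \eqref{boundH1}, \eqref{boundH2} with $1-e^{s_0-s}\ge \tfrac12(s-s_0)$). The crucial feature of the RBC setting is that, by Lemma~\ref{REC-LemestimS2s0}, the initial bound $|D^iS_2(\xi,s_0)|\le C\nu e^{-\lambda s_0}(\xi^i+\xi^{2\ell-i})$ is already valid on the whole intermediate range $D_1 = [0,2\sigma e^{s_0/2}]$, so there is no need to handle a separate inner sub-region (as was required in the proof of Lemma~\ref{LemS2short}).

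First I would split the integral as $\int_0^\infty = \int_{D_1} + \int_{D_2}$, where $D_2 = [2\sigma e^{s_0/2},\infty)$. For the $D_1$ part, I would further split into $E_y^1 = D_1\cap (y/2, 2y)$ and $E_y^2 = D_1\setminus(y/2,2y)$. On $E_y^1$, following the argument for $S_{2,3}^{i,1}$ in the GBU proof, I use the bound $(1+X(\tau)z)^{-\alpha/2}$ with $\xi\sim y$ together with the change of variables $\xi = z\sqrt{s-s_0}$ and the boundedness of the resulting Gaussian integral, to obtain a bound of the form $C\nu e^{-\lambda s_0}(y^i+y^{2\ell-i})\le C\nu e^{-\lambda s}(y^i+y^{2\ell-i})$ (using $s-s_0\le 1$). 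On $E_y^2$ the distance $|\xi-e^{(s_0-s)/2}y|$ is at least $C\xi$, and following the argument for $S_{2,3}^{i,2}$ one gets the same bound, with the factor $y^i$ coming from $(y\xi/(s-s_0))^i$ and the $(1+y^{2\ell-2i})$ factor absorbed by the Gaussian.

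For the $D_2$ piece, I argue as for $S_{2,4}^i$ in the proof of Lemma~\ref{LemS2short}: for $y\le\sigma e^{s/2}\le e^{1/2}\sigma e^{s_0/2}$ and $\xi\ge 2\sigma e^{s_0/2}$ we have $\xi - e^{(s_0-s)/2}y \ge \xi/2$ (provided $s_0$ is large enough, since $e^{(s_0-s)/2}\le 1$ and $e^{s/2} \le e^{1/2} e^{s_0/2}$). Using $|D^iS_2(\xi,s_0)|\le C\xi^{2\ell-i}$ on $D_2$ together with the exponential factor $\exp[-C\xi^2/(s-s_0)]\le \exp[-Ce^{s_0}]$, the resulting integral is bounded by $Cy^i \exp[-Ce^{s_0}]\le C\nu e^{-\lambda s}y^i$ for $s_0$ large. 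Collecting these estimates yields the claim.

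The main technical point, and likely the only delicate one, is the splitting in the $D_1$ integral that produces the additive structure $y^i + y^{2\ell-i}$ rather than only the larger factor $y^{2\ell-i}$; this requires carefully separating the near-diagonal contribution (where the kernel is peaked at $\xi\approx y$ and one uses $|D^iS_2(\xi,s_0)|\le C\nu e^{-\lambda s_0}(y^i+y^{2\ell-i})$) from the off-diagonal contribution (where the Gaussian tail controls powers of $\xi$ to yield only $y^i$). All other estimates reduce to standard Gaussian integrals of the type already encountered in the GBU proof, simplified by the absence of the inner quasi-stationary layer and by the boundedness of the time interval $s-s_0\le 1$.
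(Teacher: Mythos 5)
Your proposal is correct and follows essentially the same route as the paper: the paper likewise writes $S_2(\cdot,s)=e^{-(s-s_0)\mathcal{L}}S_2(\cdot,s_0)$, splits the kernel integral into $[0,2\sigma e^{s_0/2}]$ and $[2\sigma e^{s_0/2},\infty)$, further decomposes the first piece into the near-diagonal set $(y/2,2y)$ and its complement, and then repeats verbatim the estimates for $S_{2,3}^{i,1}$, $S_{2,3}^{i,2}$, $S_{2,4}^i$ from the GBU Lemma~\ref{LemS2short}, the only difference being exactly the simplification you identify (no inner sub-region, since Lemma~\ref{REC-LemestimS2s0} gives the bound $C\nu e^{-\lambda s_0}(\xi^i+\xi^{2\ell-i})$ on all of $D_1$). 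Your verification that $\xi-e^{(s_0-s)/2}y\ge\xi/2$ on $D_2$ is also exact and needs no largeness of $s_0$.
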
 

\begin{proof}
Since the proof is very similar to that of Lemma~\ref{LemS2short}, we only indicate the necessary changes.
For $i\in\{0,1\}$, we write
$$|D^iS_2(y,s)|\le \int_0^\infty G_i(s-s_0,y,\xi)|D^iS_2(\xi,s_0)|\xi^\alpha d\xi
= \int_0^{2\sigma e^{s_0/2}}+ \int_{2\sigma e^{s_0/2}}^\infty\equiv S^i_{2,3}+S^i_{2,4},$$
where $G_i$ is given by \eqref{defGi}, and we now use the splitting $S^i_{2,3}=\int_{E^1_y}+\int_{E^2_y}$ 
where $E^1_y=(0, 2\sigma e^{s_0/2})\cap (y/2,2y)$ and $E^2_y=(0, 2\sigma e^{s_0/2})\setminus (y/2,2y)$.
Arguing exactly as in the proof of Lemma~\ref{LemS2short}, we then obtain
$S^i_{2,3}\le C\nu e^{-\lambda s_0}(y^i+y^{2\ell-i})$, as well as
$S^i_{2,4}\le C\nu e^{-\lambda s_0}y^i$.
\end{proof}

Turning to $S_3$, we obtain the following.
 
\begin{lem}  \label{REC-LemS3short}
Assume $s_1\ge s_0\gg 1$ and let $d\in \mathcal{U}_{s_0,s_1}$ be such that $P(d;s_0,s_1)=0$.
Then, for any $s\in (s_0,s_1]$, $\bar s\in \bigl[\max(s_0,s-1),s\bigr]$ and $i\in\{0,1\}$, we have
\be{REC-controlS3short0}
|D^i\hat S_3(y,s)|:=\Bigl|\int_{\bar s}^s D^ie^{-(s-\tau)\mathcal{L}}\tilde F(\tilde v_y(\tau))\,d\tau\Bigr| 
\le  C\nu e^{-\lambda s}(y^i+y^{2\ell-i}), \qquad y\in [0,y_2(s)].
\ee
In particular, we have 
\be{REC-controlS3short}
|D^iS_3(y,s)|\le  \nu e^{-\lambda s}(y^i+y^{2\ell-i}), \qquad s_0\le s\le s_0+1\le s_1,\ y\in [0,y_2(s)], \ i\in\{0,1\}.
\ee
\end{lem}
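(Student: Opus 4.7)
The plan is to mirror the proof of Lemma~\ref{LemS3short}, with the crucial simplification that the RBC setting has no inner region and the nonlinear term $\tilde F(\tilde v_y)$ is no longer singular near $y=0$, so the decomposition of the spatial integral becomes two-piece rather than four-piece. Using Proposition~\ref{KernelEstim2} I would write
$$|D^i\hat S_3(y,s)|\le \int_{\bar s}^s\int_0^\infty |G_i(s-\tau,y,\xi)|\,|\tilde F(\tilde v_y(\xi,\tau))|\,\xi^\alpha\,d\xi\,d\tau,$$
apply the kernel bounds \eqref{boundH1} and \eqref{boundH2}, invoke $1-e^{\tau-s}\ge C(s-\tau)$ valid for $s-\tau\le 1$, and substitute $z=(s-\tau)^{-1/2}\xi$. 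Setting $X(\tau)=e^{(\tau-s)/2}(s-\tau)^{-1/2}y$ and $X_2(\tau)=(s-\tau)^{-1/2}y_2(\tau)$, I would split the $z$-integral as $\int_{\bar s}^s\int_0^{X_2(\tau)}+\int_{\bar s}^s\int_{X_2(\tau)}^\infty\equiv T_1^i+T_2^i$.

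For $T_1^i$, I would use the first case of \eqref{REC-controlFvybasic} and apply Lemma~\ref{AuxilLem2}(ii) to the $z$-integral with powers $m=2\alpha$ and $m=2\alpha+4(\ell-1)$, exactly as in the estimate of $S^i_{3,3}$ in Lemma~\ref{LemS3short}. This produces a bound of the form $T_1^i\le Ce^{-2\lambda s}(y^i+y^{\alpha+4(\ell-1)})$. Since $2\lambda-(\alpha+2\ell-3)=1>0$, the factor $y^{\alpha+2\ell-3}$ is dominated by $e^{(\alpha+2\ell-3)s/2}$ on $[0,y_2(s)]$, so that $Ce^{-2\lambda s}y^{\alpha+4(\ell-1)}\le C y^{2\ell-1}e^{-\lambda s}\cdot e^{((\alpha+2\ell-3)/2-\lambda)s}$ is absorbed into $\nu e^{-\lambda s}y^{2\ell-1}$ for $s_0$ large.

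For $T_2^i$, the second case of \eqref{REC-controlFvybasic} provides $|\tilde F|\le Ce^{-(\beta+1)\tau/2}$, and the analysis mirrors that of $S^i_{3,4}$ in Lemma~\ref{LemS3short}: when $y\le y_2(s)/2$ we exploit $X(\tau)\le X_2(\tau)/2$ together with the second assertion of Lemma~\ref{AuxilLem2}(ii) to gain a super-exponential decay factor $\exp[-Ce^s]$, trivially absorbed into $\nu y^i e^{-\lambda s}$; when $y_2(s)/2\le y\le y_2(s)$ we bound the contribution by $Ce^{-\alpha s/2}$, and since $2\ell-1>2\lambda-\alpha$ the quantity $\nu y^{2\ell-1}e^{-\lambda s}\ge C\nu e^{(\ell-1/2-\lambda)s}$ dominates $Ce^{-\alpha s/2}$ for $s_0\gg 1$. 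Combining $T_1^i$ and $T_2^i$ yields \eqref{REC-controlS3short0}; the refined estimate \eqref{REC-controlS3short} then follows by taking $\bar s=s_0$ and absorbing the generic constant $C$ into $\nu=M_0\eps$ through the definition of $M_0$.

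I do not anticipate any substantial obstacle, as the proof is essentially a direct transcription of the intermediate and outer region estimates from Lemma~\ref{LemS3short}. The only point requiring careful bookkeeping is tracking the exponents $\alpha=\beta+1$, $k=(1-\beta)/2$ and $\lambda=\ell-k$ to verify that the residual factors $y^{\alpha+4(\ell-1)}$ (from $T_1^i$) and $e^{-\alpha s/2}$ (from $T_2^i$ near $y=y_2(s)$) can indeed be absorbed into $\nu e^{-\lambda s}(y^i+y^{2\ell-i})$ for $s_0$ sufficiently large, which is precisely the two strict inequalities $\alpha+2\ell-3<2\lambda$ and $2\ell-1>2\lambda-\alpha$ verified above.
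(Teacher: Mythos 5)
Your proposal is correct and follows essentially the same route as the paper: the paper's proof likewise keeps only the two pieces $\int_0^{X_2(\tau)}$ and $\int_{X_2(\tau)}^\infty$ (denoted $S^i_{3,3}$ and $S^i_{3,4}$ there), applies \eqref{REC-controlFvybasic}, and then invokes the estimates \eqref{estimS3-4} and \eqref{estimS3-4b} from Lemma~\ref{LemS3short} verbatim, with the same exponent checks $\alpha+2\ell-3<2\lambda$ and $2\ell-1>2\lambda-\alpha$ that you verify. The only cosmetic difference is that the paper absorbs the remaining constants by taking $s_0$ large rather than through the definition of $M_0$.
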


\begin{proof}
Again, the proof is very similar to that of Lemma~\ref{LemS3short} and we only indicate the necessary changes.
Take $s\in (\bar s,s_1)$ and $y\in (0,y_2(s))$.
For $\tau\in [s_0,s)$, we denote
$$X(\tau)=e^{(\tau-s)/2}(s-\tau)^{-1/2}y,\qquad X_2(\tau)=(s-\tau)^{-1/2}y_2(\tau)$$
and observe that $X(\tau)\in(0,X_2)$.
Arguing as for \eqref{splitS3}, we now have
$$
\begin{aligned}
|D^i\hat S_3(y,s)|
&\le\int_{\bar s}^s\int_0^\infty 
 (s-\tau)^{-\frac{i}{2}} e^{-C(X(\tau)-z)^2}\bigl(1\wedge X(\tau)\bigr)^i
 \bigl(1+X(\tau)z\bigr)^{-\frac{\alpha}{2}} F\bigl(v_y(z\sqrt{s-\tau},\tau)\bigr) z^\alpha dz d\tau \\
&=\int_{\bar s}^s\int_0^{X_2(\tau)} 
+ \int_{\bar s}^s\int_{X_2(\tau)}^\infty \equiv S^i_{3,3}(y,s)+S^i_{3,4}(y,s).
\end{aligned}
$$
Using estimate \eqref{REC-controlFvybasic}, the proofs of \eqref{estimS3-4} and \eqref{estimS3-4b} then directly give
$$S_{3,3}^i+S_{3,4}^i \le \nu e^{-\lambda s}(y^i+y^{2\ell-i}). \qedhere$$
\end{proof}

We next turn to the proof of Proposition~\ref{REC-Propwlong}.
For this purpose, we shall use the decomposition $\tilde v=-I_0+I_1-I_2+I_3$
where $I_j$ are defined in \eqref{defI0123} with $\hat\phi$ given by 
\eqref{REC-w0tildephi2}-\eqref{REC-w0tildephi2odd}.
This decomposition is valid whenever $\tilde v\in\mathcal{A}^1_{s_0,s_1}$ satisfies $P(d;s_1,s_2)=0$,
as a consequence of \eqref{v0phij}, \eqref{defI0123b}
and $v_0=\sum_{j=0}^{\ell-1}d_j\varphi_j+e^{-\lambda s_0}\hat\phi$.
The outline of proof of Proposition~\ref{REC-Propwlong} is then as follows:
\vskip 2pt

$\bullet$ Estimation of $I_1$

$\bullet$ Estimation of $I_2$ and $I_3$ for $y\in [0,R_1]$ (with suitably chosen large $R_1$)

$\bullet$ Estimation of $v$ and $v_y$ at $y=y_2(s)$

$\bullet$ Estimation of $v$ and $v_y$ in the remaining part $y\in [R_1,y_2(s)]$.

\smallskip

We start with $I_1$, which is easy to estimate globally.

\begin{lem} \label{REC-LemI1long} 
If $s_0\gg 1$ and $s_1\ge s_0+1$ then, for any $d\in \mathcal{U}_{s_0,s_1}$, we have
\be{REC-controlDI1}
|D^iI_1(y,s)|\le  C\nu e^{-\lambda s} (y^i+y^{2\ell-i}),\quad\hbox{ for all $s\in [s_0+1,s_1]$, $y>0$ and $i\in\{0,1\}$.}
\ee
\end{lem}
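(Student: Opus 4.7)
The plan is to exploit two facts in succession: the pointwise polynomial bound on the low eigenfunctions, and the Fourier decay estimate $|(\tilde F(\tilde v_y(\tau)),\varphi_j)|\le Ce^{-2\lambda\tau}$ already recorded in \eqref{REC-Fvytau}. This parallels the argument for Lemma~\ref{LemI1long} but is noticeably simpler, since in the RBC regime the nonlinearity $F(v_y)$ is not singular at $y=0$ (cf.~\eqref{REC-controlFvybasic}), so we do not need a weighted $(j+1)^{3/2}$ factor nor a separate exponent $\bar\eta=\min(\eta,2\lambda)$; the single rate $2\lambda$ does all the work.

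First I would record that, for each $j\in\{0,\dots,\ell-1\}$, $\varphi_j$ is an even polynomial of degree $2j\le 2(\ell-1)$ with $\varphi_j'(0)=0$ (Proposition~\ref{GaussianPoincare2}), so that $|D^i\varphi_j(y)|\le C(y^i+y^{2\ell-i})$ on $(0,\infty)$ with $C=C(p,\ell)$ and $i\in\{0,1\}$. Then, bounding the finite sum inside the integral and applying \eqref{REC-Fvytau},
$$|D^iI_1(y,s)|\le C(y^i+y^{2\ell-i})\sum_{j=0}^{\ell-1}\int_s^{s_1}e^{-\lambda_j(s-\tau)}e^{-2\lambda\tau}\,d\tau.$$
Since $\lambda_j=j-k<\lambda=\ell-k$, hence $\lambda_j-2\lambda<-\lambda<0$, a direct computation gives
$$\int_s^{s_1}e^{-\lambda_j(s-\tau)}e^{-2\lambda\tau}\,d\tau=e^{-\lambda_j s}\int_s^{s_1}e^{(\lambda_j-2\lambda)\tau}\,d\tau\le \frac{e^{-2\lambda s}}{2\lambda-\lambda_j}\le Ce^{-2\lambda s},$$
which leads to $|D^iI_1(y,s)|\le C(y^i+y^{2\ell-i})e^{-2\lambda s}$ for all $s\in[s_0+1,s_1]$.

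To convert this into the target bound $C\nu e^{-\lambda s}(y^i+y^{2\ell-i})$, I will choose $s_0$ large enough (depending only on $p,\ell,\eps$) to guarantee $e^{-\lambda s_0}\le\nu$. Then $e^{-2\lambda s}=e^{-\lambda s}\cdot e^{-\lambda s}\le\nu e^{-\lambda s}$ for every $s\ge s_0$, and \eqref{REC-controlDI1} follows. There is no real obstacle in this step: the eigenvalue gap $\lambda_j<\lambda<2\lambda$ for $j<\ell$ ensures that the convolution of $e^{-\lambda_j t}$ with the forcing $e^{-2\lambda\tau}$ still decays at the faster rate $e^{-2\lambda s}$, with room to spare. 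The only care needed is to trade one factor of $e^{-\lambda s}$ for the small parameter $\nu$, which is harmless because $\nu=M_0\eps$ is independent of $s_0$ and the latter is free to be chosen as large as necessary. I note that the same computation in fact extends to $j=\ell$ (since $\lambda_\ell=\lambda<2\lambda$), giving the stronger global bound $\sum_{j=0}^{\ell}|D^i\varphi_j(y)|\int_s^{s_1}e^{-\lambda_j(s-\tau)}|(\tilde F(\tilde v_y(\tau)),\varphi_j)|\,d\tau\le C\nu e^{-\lambda s}(y^i+y^{2\ell-i})$, which is the natural analogue of \eqref{controlDI1b} and will be convenient for the subsequent estimates of $I_3$.
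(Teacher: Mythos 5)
Your proof is correct and follows essentially the same route as the paper: both reduce the lemma to the Fourier decay estimate \eqref{REC-Fvytau}, the polynomial bounds $|D^i\varphi_j|\le C(y^i+y^{2\ell-i})$, and the elementary integral computation exploiting $\lambda_j<2\lambda$, and both record the slightly stronger intermediate bound (the paper's \eqref{REC-controlDI1b}, which also includes $j=\ell$ and integrates from $s-1$, exactly as you note at the end) for later use in estimating $I_3$. The final conversion $e^{-2\lambda s}\le\nu e^{-\lambda s}$ for $s_0\gg1$ is left implicit in the paper but is the same harmless step you make explicit.
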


\begin{proof}
For later purpose, we actually show a slightly stronger estimate, namely:
\be{REC-controlDI1b}
\begin{aligned}
&\sum_{j=0}^\ell |D^i\varphi_j(y)|\int_{s-1}^{s_1} e^{-\lambda_j(s-\tau)}|(\tilde F(\tilde v_y(\tau)),\varphi_j)|\,d\tau
\le C(y^i+y^{2\ell-i})\sum_{j=0}^\ell
e^{-\lambda_j s} \int_{s-1}^\infty e^{(\lambda_j-2\lambda)\tau}\,d\tau\\
&\quad\le C(y^i+y^{2\ell-i}) e^{-2\lambda s},\quad
s\in [s_0+1,s_1],\ y>0,
\end{aligned}
\ee
for $s_0\gg 1$, which is a consequence of inequality \eqref{REC-Fvytau}.
Estimate \eqref{REC-controlDI1} is then an immediate consequence of \eqref{REC-controlDI1b}.
\end{proof}

We now estimate $I_2$ and $I_3$ for $y\in [0,R_1]$, where $R_1>0$ (depending only on $p,\ell$) 
is chosen to satisfy \eqref{choiceR}.
By using \eqref{REC-Fvytau}, \eqref{REC-controlS3short0}, \eqref{REC-controlDI1b}
and arguing along the lines of the proof of Lemma~\ref{LemI2long}, we obtain:

\begin{lem}  \label{REC-LemI2long}
If $s_0\gg 1$ and $s_1\ge s_0+1$ then, for any $d\in \mathcal{U}_{s_0,s_1}$, we have, for $m\in\{2,3\}$,
\be{REC-controlDIm}
|D^iI_m(y,s)|\le  C\nu e^{-\lambda s} y^i,\quad\hbox{ for all $s\in [s_0+1,s_1]$, $y\in [0,R_1]$ and $i\in\{0,1\}$}.
\ee
Consequently,
\be{REC-controlDIm2}
|D^i(v-e^{-\lambda s}\phi)\le  C\nu e^{-\lambda s} y^i,\quad\hbox{ for all $s\in [s_0+1,s_1]$,
$y\in [0,R_1]$ and $i\in\{0,1\}$}.
\ee
\end{lem}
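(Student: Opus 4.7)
The plan is to follow the blueprint of Lemma~\ref{LemI2long}, exploiting two simplifications specific to the RBC setting: the estimate is required only on the bounded interval $[0,R_1]$ with no inner layer, and by~\eqref{REC-Fvytau} the Fourier coefficients $|(\tilde F(\tilde v_y(\tau)), \varphi_j)|$ decay uniformly in $j$ like $e^{-2\lambda\tau}$ (rather than with the minimum exponent $\bar\eta$ used in the GBU case). The key analytic ingredient is that, because $\varphi_j$ is an even polynomial with $\varphi_j'(0)=0$, Proposition~\ref{EstimEigen} combined with Taylor expansion at $y=0$ yields $|D^i\varphi_j(y)| \leq C(j+1)^{5/2}\, y^i$ on $[0,R_1]$ for $i\in\{0,1\}$. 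It is this factor $y^i$ — rather than just a constant — that matches the $y^i$ on the right-hand side of~\eqref{REC-controlDIm}.

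\textbf{Estimation of $I_2$ and $I_3$.} For $I_2$: \eqref{REC-tildephicv} gives $|(\hat\phi, \varphi_j)| \leq \|\hat\phi-\phi\| \leq \nu$ for every $j\ne\ell$ once $s_0\gg 1$, uniformly in $d$. Combining with the identity $e^{-\lambda s_0}e^{-\lambda_j(s-s_0)} = e^{-\lambda s}e^{-(j-\ell)(s-s_0)} \leq e^{-\lambda s}e^{\ell-j}$ for $j\geq\ell+1$ and $s\geq s_0+1$, the eigenfunction bound above, and summability of $\sum j^{5/2}e^{-j}$, yields $|D^i I_2(y,s)| \leq C\nu e^{-\lambda s} y^i$. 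For $I_3$, split $I_3 = I_{3,1} + I_{3,2}$ at $\tau=s-1$. Further split $I_{3,2} = \hat S_3 - \sum_{j<\ell}\varphi_j(y)\int_{s-1}^s e^{-\lambda_j(s-\tau)}(\tilde F,\varphi_j)\,d\tau$, bounding the first piece by~\eqref{REC-controlS3short0} and the second by the eigenfunction bound together with~\eqref{REC-controlDI1b}. For $I_{3,1}$, use $e^{-\lambda_j(s-\tau)} \leq e^{-\lambda(s-\tau)}e^{\ell-j}$ for $j\geq\ell$ and $\tau\leq s-1$, combined with~\eqref{REC-Fvytau} and the eigenfunction bound, to obtain $|D^i I_{3,1}(y,s)| \leq Cy^i e^{-\lambda s}\int_{s_0}^{s-1} e^{-\lambda\tau}\,d\tau \leq \nu y^i e^{-\lambda s}$ once $s_0\gg 1$.

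\textbf{Conclusion.} Writing $v - e^{-\lambda s}\phi = ((\hat\phi,\phi)-1)e^{-\lambda s}\phi + I_1 - I_2 + I_3$, the first term is controlled by $|(\hat\phi,\phi)-1|\leq \nu$ (consequence of~\eqref{REC-tildephicv} and $\|\phi\|=1$) together with $|D^i\phi(y)|\leq Cy^i$ on $[0,R_1]$. For $I_1$, rather than appealing directly to Lemma~\ref{REC-LemI1long} (which gives only $\leq C\nu e^{-\lambda s}(y^i + y^{2\ell-i})$, too weak near $y=0$), use that this is a \emph{finite} sum over $j<\ell$: the same eigenfunction estimate and~\eqref{REC-Fvytau}, together with elementary integration of $e^{(\lambda_j-2\lambda)\tau}$ over $[s, s_1]$ (where $\lambda_j - 2\lambda < 0$ for $j<\ell$), give $|D^i I_1(y,s)| \leq Ce^{-2\lambda s}y^i \leq C\nu e^{-\lambda s}y^i$ for $s_0\gg 1$. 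No substantive obstacle arises; the argument is a careful bookkeeping exercise, markedly cleaner than Lemma~\ref{LemI2long} thanks to the absence of an inner region.
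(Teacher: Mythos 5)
Your proposal is correct and follows essentially the route the paper intends: the paper's proof of this lemma is literally a pointer back to Lemma~\ref{LemI2long}, using \eqref{REC-Fvytau}, \eqref{REC-controlS3short0} and \eqref{REC-controlDI1b}, and your bookkeeping (the $I_2$ sum with $\sup_{j\ne\ell}|(\hat\phi,\varphi_j)|\le\nu$, the split of $I_3$ at $\tau=s-1$, and the decomposition $\tilde v=-I_0+I_1-I_2+I_3$ for the ``consequently'' part) matches it. Two minor corrections: first, no Taylor expansion is needed for the eigenfunction bound, since Proposition~\ref{EstimEigen} already gives $|\varphi_j'(y)|\le C(j+1)^{5/2}e^{y^2/8}\,y$, which on $[0,R_1]$ is exactly the $y^i$ factor you want. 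Second, your claim that the bound $C\nu e^{-\lambda s}(y^i+y^{2\ell-i})$ from Lemma~\ref{REC-LemI1long} is ``too weak near $y=0$'' is mistaken: on $[0,R_1]$ one has $y^{2\ell-i}\le \max(1,R_1^{2\ell-2i})\,y^i$ because $2\ell-2i\ge 0$, so that estimate already yields $C\nu e^{-\lambda s}y^i$ and your separate re-derivation of the $I_1$ bound, while valid, is unnecessary.
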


We next estimate $v$ and $v_y$ at $y=y_2(s)$.
For this purpose, we go back to the decomposition~\eqref{REC-defS123}.

\begin{lem}  \label{REC-LemS2long}
Set $y_2=y_2(s)=\sigma e^{s/2}$. 
If $s_0\gg 1$ and $s_1\ge s_0+1$ then, for any $d\in \mathcal{U}_{s_0,s_1}$ such that $P(d;s_0,s_1)=0$, we have, for $m\in\{2,3\}$,
\be{REC-controlDSm}
|D^iS_m(y_2,s)|\le C \nu e^{-\lambda s} y_2^{2\ell-i},\quad\hbox{ for all $s\in [s_0+1,s_1]$ and $i\in\{0,1\}$}.
\ee
Consequently,
\be{REC-controlDSm2}
\bigl|D^i\bigl(v-e^{-\lambda s}\phi\bigr)(y_2,s)\bigr| 
\le  C\nu e^{-\lambda s} y_2^{2\ell-i},\quad\hbox{ for all $s\in [s_0+1,s_1]$ and $i\in\{0,1\}$}.
\ee
\end{lem}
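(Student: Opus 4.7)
The plan is to mimic closely the proof of the GBU analogue Lemma~\ref{LemS2long}, exploiting three crucial simplifications that arise in the RBC setting: (a) no inner quasi-stationary region is present, so the initial datum $S_2(\cdot,s_0)$ admits the single bound $|D^iS_2(\xi,s_0)|\le C\nu e^{-\lambda s_0}(\xi^i+\xi^{2\ell-i})$ on the whole of $D_1=[0,2\sigma e^{s_0/2}]$ from Lemma~\ref{REC-LemestimS2s0}; (b) the nonlinear term $\tilde F(\tilde v_y)$ satisfies only the ``intermediate'' estimate of \eqref{REC-controlFvybasic}, namely $C e^{-2\lambda s}y^{\beta+1}(1+y^{4(\ell-1)})$ for $y\in[0,y_2(s)]$, with no singular inner contribution; (c) the short-time piece of $S_3$ is already controlled by Lemma~\ref{REC-LemS3short}. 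Throughout, the relevant kernel estimates from Proposition~\ref{KernelEstim2} and the pointwise bounds \eqref{boundH1}--\eqref{boundH2} will be applied at $y=y_2=\sigma e^{s/2}$, yielding the same basic prefactor as in \eqref{Giss0}, namely
\[
|D^i_y G_0(s-s_0,y_2,\xi)|\le Ce^{-\lambda s}y_2^{2\ell-i}e^{(\frac{i}{2}-k)s_0}\bigl(1+\sigma e^{s_0/2}\xi\bigr)^{-\alpha/2}\exp\bigl[-C(\sigma e^{s_0/2}-\xi)^2\bigr],
\]
using $1-e^{s_0-s}\ge C>0$ for $s\ge s_0+1$ together with the identity $e^{-\lambda s}y_2^{2\ell-i}=\sigma^{2\ell-i}e^{(k-\frac{i}{2})s}$.

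For $S_2$, I will split
\[
|D^iS_2(y_2,s)|\le \int_0^{2\sigma e^{s_0/2}}+\int_{2\sigma e^{s_0/2}}^\infty\equiv S^i_{2,a}+S^i_{2,b}.
\]
The inner piece $S^i_{2,a}$ is handled exactly as the estimate of $S^i_{2,2}$ in Lemma~\ref{LemS2long}: using the bounds of Lemma~\ref{REC-LemestimS2s0} in $D_1$ and the standard splitting $\tilde E^1\cup\tilde E^2$ with $\tilde E^2=(\frac{\sigma}{2}e^{s_0/2},2\sigma e^{s_0/2})$, the Gaussian factor yields exactly the estimate \eqref{estS22i}, producing $C\nu e^{-\lambda s}y_2^{2\ell-i}$. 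The outer piece $S^i_{2,b}$ is treated as in \eqref{estS23i}: the inequality $(\xi-\sigma e^{s_0/2})^2\ge C(\xi^2+e^{s_0})$ on $[2\sigma e^{s_0/2},\infty)$ produces a factor $e^{-Ce^{s_0}}$, which easily absorbs the polynomial bound $|D^iS_2(\xi,s_0)|\le C\xi^{2\ell-i}$ and delivers a contribution much smaller than $\nu e^{-\lambda s}y_2^{2\ell-i}$.

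For $S_3$, I write $D^iS_3=D^iS_{3,1}+D^iS_{3,2}$ with $D^iS_{3,2}=\int_{s-1}^s$ and $D^iS_{3,1}=\int_{s_0}^{s-1}$. The short-time part $D^iS_{3,2}$ at $y=y_2$ is directly controlled by \eqref{REC-controlS3short0} in Lemma~\ref{REC-LemS3short}, which gives $|D^iS_{3,2}(y_2,s)|\le C\nu e^{-\lambda s}y_2^{2\ell-i}$. For $D^iS_{3,1}$, I combine the kernel bound above with the $\tilde F$ estimate \eqref{REC-controlFvybasic}, splitting the spatial integral as
\[
|D^iS_{3,1}(y_2,s)|\le \int_{s_0}^{s-1}\!\!\int_0^{\sigma e^{\tau/2}}+\int_{s_0}^{s-1}\!\!\int_{\sigma e^{\tau/2}}^\infty \equiv S^{i,a}_{3,1}+S^{i,b}_{3,1}.
\]
The inner part $S^{i,a}_{3,1}$ is estimated exactly as $S^{i,2}_{3,1}$ in \eqref{Si231}: since $F(v_y)\le Ce^{-2\lambda\tau}\xi^{\beta+1}(1+\xi^{4(\ell-1)})$ and the Gaussian factor gives $e^{-Ce^\tau}$ away from $\sigma e^{\tau/2}$ and a narrow polynomial window near it, the final bound is $Cy_2^{2\ell-i}e^{(-3\lambda+\alpha-\frac12+2(\ell-1))s}\le \nu e^{-\lambda s}y_2^{2\ell-i}$ for $s_0\gg1$. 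The outer part $S^{i,b}_{3,1}$ is handled as $S^{i,3}_{3,1}$ in \eqref{Si331}, with $F(v_y)\le Ce^{-(\beta+1)\tau/2}$, and gives a bound $Ce^{-s_0/2}e^{-\lambda s}y_2^{2\ell-i}\le \nu e^{-\lambda s}y_2^{2\ell-i}$.

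Combining these yields \eqref{REC-controlDSm}. The consequent estimate \eqref{REC-controlDSm2} follows by noting that from \eqref{REC-defS123} we have $\tilde v-e^{-\lambda s}\phi=(\hat\phi,\phi-\phi)\phi + S_2+S_3 + [(\hat\phi,\phi)-1]e^{-\lambda s}\phi$, so using \eqref{controlDiphi} and $|(\hat\phi,\phi)-1|\le\|\hat\phi-\phi\|\to 0$ from \eqref{REC-tildephicv}, the $S_1$-contribution is $o(1)e^{-\lambda s}y_2^{2\ell-i}$ at $y=y_2$, which for $s_0\gg 1$ is absorbed into $C\nu e^{-\lambda s}y_2^{2\ell-i}$. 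The main technical hurdle is keeping track of the correct powers of $\sigma e^{s/2}$ throughout the Gaussian integrations, as was the case in Lemma~\ref{LemS2long}; once the simplifications noted above are accounted for, the calculations transfer essentially verbatim.
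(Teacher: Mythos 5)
Your proposal is correct and follows essentially the same route as the paper: the same splitting of $S_2$ at $\xi=2\sigma e^{s_0/2}$ handled via the analogues of \eqref{estS22i}--\eqref{estS23i}, the same decomposition $S_3=S_{3,1}+S_{3,2}$ with the short-time part absorbed by Lemma~\ref{REC-LemS3short} and the long-time part estimated as in \eqref{Si231}--\eqref{Si331} using \eqref{REC-controlFvybasic}, and the same use of \eqref{REC-tildephicv} to absorb the $S_1$ contribution. (The only blemish is the vacuous term $(\hat\phi,\phi-\phi)\phi\equiv 0$ in your final decomposition; the correct identity $S_1-e^{-\lambda s}\phi=[(\hat\phi,\phi)-1]e^{-\lambda s}\phi$ appears right after and is what you actually use.)
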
 

\begin{proof}
Let $i\in\{0,1\}$ and $s\in [s_0+1,s_1]$. 
First consider the case $m=2$.
Similar to \eqref{DiS2}, we have
$$|D^iS_2(y_2,s)|\le \int_0^\infty G_i(s-s_0,y_2,\xi)|D^iS_2(\xi,s_0)|\xi^\alpha d\xi
= \int_0^{2\sigma e^{s_0/2}}+ \int_{2\sigma e^{s_0/2}}^\infty\equiv S^i_{2,2}+S^i_{2,3},$$
where $G_i$ satisfies \eqref{Giss0}.
Property \eqref{REC-controlDSm} for $m=2$ then follows by estimating $S^i_{2,2}$ and $S^i_{2,3}$ along the lines of proof of 
\eqref{estS22i}-\eqref{estS23i},
replacing the sets $\tilde E^1$, $\tilde E^2$ with 
$\tilde E^1=(0, 2\sigma e^{s/2})\setminus (\frac{\sigma}{2} e^{s_0/2},2\sigma e^{s_0/2})$, 
$\tilde E^2=(0, 2\sigma e^{s/2})\cap(\frac{\sigma}{2} e^{s_0/2},2\sigma e^{s_0/2})$
and using Lemma~\ref{REC-LemestimS2s0} instead of Lemma~\ref{LemestimS2s0}.

\smallskip
We next consider the case $m=3$. We use the splitting
$$\begin{aligned}
D^iS_3(y,s)
&=\int_{s_0}^s D^ie^{-(s-\tau)\mathcal{L}}\tilde F(\tilde v_y(\tau)) \,d\tau
=\int_{s_0}^{s-1}+\int_{s-1}^s \equiv D^iS_{3,1}+D^iS_{3,2}.
\end{aligned}$$
By \eqref{REC-controlS3short0} in Lemma~\ref{REC-LemS3short}, we have
\be{REC-controlDSm3}
|D^iS_{3,2}(y,s)|\le C\nu e^{-\lambda s} y^{2\ell-i},\quad\hbox{ for $y=y_2(s)$ and $s\in [s_0+1,s_1]$}.
\ee
Let us thus estimate $D^iS_{3,1}$.
Similar to \eqref{DiS3}, we have
$$\begin{aligned}
|D^iS_{3,1}(y_2,s)|
&\le \int_{s_0}^{s-1}\int_0^\infty |D^i_yG(s-\tau,y_2,\xi)|\tilde F(\tilde v_y(\xi,\tau)) \xi^\alpha d\xi d\tau \\
&= \int_{s_0}^{s-1}\int_0^{\sigma e^{\tau/2}}+ 
\int_{s_0}^{s-1}\int_{\sigma e^{\tau/2}}^\infty\equiv  S^{i,2}_{3,1}+S^{i,3}_{3,1},
\end{aligned}$$
where $D^i_yG$ satisfies \eqref{DiyG2}.
Arguing along the lines of proof of \eqref{Si231}-\eqref{Si331}, 
using \eqref{REC-controlFvybasic} instead of \eqref{controlFvybasic},
we obtain $S^{i,2}_{3,1}+S^{i,2}_{3,1} \le \nu e^{-\lambda s}y_2^{2\ell-i}$.
This along with \eqref{REC-controlDSm3} yields \eqref{REC-controlDSm} for $m=3$.

\medskip
Property \eqref{REC-controlDSm2} is then a direct consequence of \eqref{REC-defS123}, \eqref{REC-controlDSm} and \eqref{REC-tildephicv}.
\end{proof}

Finally, $v$ and $v_y$ are estimated in the remaining part $y\in [R_1,y_2(s)]$
by means of the following lemma.

\begin{lem}  \label{REC-LemComplong}
If $s_0\gg 1$ and $s_1\ge s_0+1$ then, for any $d\in \mathcal{U}_{s_0,s_1}$ such that $P(d;s_0,s_1)=0$, we have
\be{REC-EstimComplong}
|D^i(v-e^{-\lambda s}\phi)|\le  C\nu e^{-\lambda s} y^{2\ell-i},\quad\hbox{ for all $s\in [s_0+1,s_1]$, $y\in [R_1,y_2(s)]$
and $i\in\{0,1\}$}.
\ee
\end{lem}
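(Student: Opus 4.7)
The plan is to reduce this statement to the general comparison principle already extracted as Remark~\ref{remzm}, which is the sign-symmetric distillation of the GBU proof of Lemma~\ref{LemComplong}. Since Remark~\ref{remzm} holds for either sign $c\in\{-1,1\}$ in the ansatz $v+ce^{-\lambda s}\phi$, we simply apply it with $c=-1$ to obtain the desired control on $v-e^{-\lambda s}\phi$ throughout $D:=\{(y,s)\,;\ s\in[s_0+1,s_1],\ y\in[R_1,\sigma e^{s/2}]\}$.

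First I would verify the pointwise bound on the nonlinear term. By \eqref{REC-controlFvybasic} and since $R_1\ge 1$ (cf.~\eqref{choiceR}), for $(y,s)\in D$ we have
\[
0\le F(v_y)\le Ce^{-2\lambda s}y^{\beta+1}\bigl(1+y^{4(\ell-1)}\bigr)\le Ce^{-2\lambda s}y^{4\ell+\beta-3},
\]
the last inequality using $y^{\beta+1}\le y^{4\ell+\beta-3}$ for $y\ge R_1\ge 1$ and $\ell\ge 1$. Hence the hypothesis $F(v_y)\le c_1e^{-2\lambda s}y^{4\ell+\beta-3}$ in the remark holds with $c_1=C(p,\ell)$.

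Next I would verify \eqref{controlLemComplong} with $c=-1$ on the three parts of $\partial_PD$. On the bottom $\{s=s_0+1\}\times[R_1,y_2(s_0+1)]$, Proposition~\ref{REC-Propwshort} yields $|D^i(v-e^{-\lambda s}\phi)|\le M_1\nu e^{-\lambda s}(y^i+y^{2\ell-i})\le 2M_1\nu e^{-\lambda s}y^{2\ell-i}$ since $y\ge R_1\ge 1$. On the inner lateral boundary $\{y=R_1\}\times[s_0+1,s_1]$, estimate \eqref{REC-controlDIm2} of Lemma~\ref{REC-LemI2long} gives $|D^i(v-e^{-\lambda s}\phi)|\le C\nu e^{-\lambda s}R_1^i\le CR_1^{2(i-\ell)}\nu e^{-\lambda s}y^{2\ell-i}$, which is of the required form with $c_2=C(p,\ell)\nu$. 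On the outer lateral boundary $\{y=y_2(s)\}\times[s_0+1,s_1]$, estimate \eqref{REC-controlDSm2} of Lemma~\ref{REC-LemS2long} directly provides $|D^i(v-e^{-\lambda s}\phi)(y_2,s)|\le C\nu e^{-\lambda s}y_2^{2\ell-i}$. Thus \eqref{controlLemComplong} holds on $\partial_PD$ with $c=-1$, $c_1=C(p,\ell)$ and $c_2=C(p,\ell)\nu$.

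Applying Remark~\ref{remzm} (for $s_0\gg 1$ depending only on $p,\ell$ since $c_1,c_2$ depend only on $p,\ell$ and $\nu$), the bound \eqref{controlLemComplong} propagates into all of $D$ with $c_2$ replaced by $C(p,\ell)c_2$, which yields exactly \eqref{REC-EstimComplong}. I do not expect any real obstacle here: the only nontrivial check is that the RBC nonlinearity $F(v_y)$ still lies below the Gaussian-polynomial barrier used in the construction of the supersolutions $\bar v$ and $z_m$ of Lemma~\ref{LemComplong}, and this is immediate from the intermediate-region bound \eqref{REC-controlFvybasic}, which has the same structure as its GBU analogue on the same range of $y$. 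The use of $c=-1$ in place of $c=+1$ merely reverses the sign of the leading term in the comparison functions of \eqref{defzm} and the analogous $\underline v,\bar v$ in Step~1, and the smallness condition \eqref{controlVW} on $\sigma$ (which we have already enforced via $\sigma\le\sigma_2$) ensures that the linearization remains dominant exactly as in the GBU case.
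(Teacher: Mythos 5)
Your proof is correct and follows essentially the same route as the paper, whose proof of this lemma is precisely the combination of Remark~\ref{remzm} with $c=-1$, the nonlinearity bound \eqref{REC-controlFvybasic}, the bottom estimate \eqref{est-REC-Propwshort} at $s=s_0+1$, the inner estimate \eqref{REC-controlDIm2} at $y=R_1$, and the outer estimate \eqref{REC-controlDSm2}. The only minor imprecision is that $c_2=C(p,\ell)\nu$ depends on $\eps$ through $\nu$, so the required largeness of $s_0$ depends on $\eps$ as well as on $p,\ell$ — which is permitted by the conventions of that section.
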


\begin{proof} This follows by combining Remark~\ref{remzm}, applied with $c=-1$, with
Lemma~\ref{REC-LemvyFvy}, \eqref{est-REC-Propwshort} for $s=s_0+1$, \eqref{REC-controlDIm2} for $y=R_1$
and \eqref{REC-controlDSm2}.
\end{proof}

\begin{proof}[Proof of Proposition~\ref{REC-Propwlong}]
Let 
\be{defM0-REC}
M_0=M_0(p,\ell)=\ts\frac12\min\{M_1^{-1},M_2^{-1},\ell^{-1}\},
\ee 
where $M_1, M_2$ are respectively given by \eqref{est-REC-Propwshort} and \eqref{est-REC-Propwlong}.
Since $\nu=M_0\eps$, the proposition is a direct consequence of \eqref{REC-dsmall} and 
Propositions~\ref{REC-LemI2long} and \ref{REC-LemComplong}.
\end{proof}

\section{Application of Braid group to PDE} \label{SecBraid} 

In this section we gather results on braid group with simple proofs, which play crucial role in the proofs of 
Theorems~\ref{mainThm1}(i) and \ref{th:RBC}(i), 
for readers' convenience (see \cite{mat_cm07}, \cite{mizo_tams11}).
We first recall fundamental properties on braid group $ {\bf G} $ 
of three strands.
Denote by $ X, Y $ the generators of $ {\bf  G} $ as in the following figure

\vspace{0.2cm}

 \hskip 5cm
\includegraphics[width=6cm, height=1.8cm]{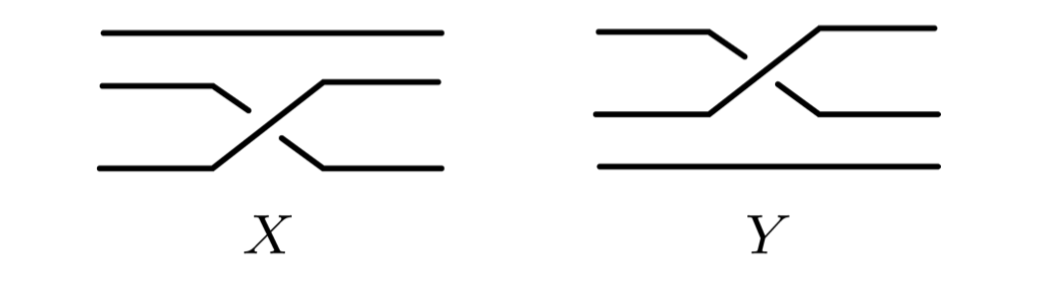}

\noindent and by $ I $ the trivial braid of $ {\bf G} $.

 \hskip 4.1cm
\includegraphics[width=8.5cm, height=2cm]{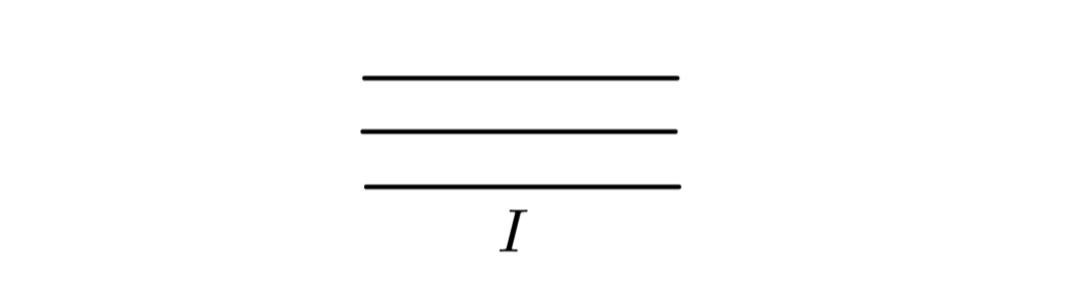}

\noindent Artin's formula 
\begin{equation}
\label{eq:Artin}
X Y X = Y X Y
\end{equation}
is well-known.
Owing to \eqref{eq:Artin}, there holds
\begin{equation}
\label{eq:basic}
\begin{array}{l}
X Y^2 X Y^2 = Y X^2 Y X^2 = X Y X^2 Y X = Y X Y^2 X Y  \\
\;\;\; = X^2 Y X^2 Y = Y^2 X Y^2 X = X Y X Y X Y = Y X Y X Y X,
\end{array}
\end{equation}
and for each positive integer $ k $,
\begin{equation}
\label{eq:XYX-2k}
Y^{2k} X Y = X Y X^{2k} \quad \mbox{ and } \quad X^{2k} Y X = Y X Y^{2k}.
\end{equation}

If $ A \in {\bf G} $ contains neither $ X^{-1} $ nor $ Y^{-1} $, then $ A $ is called a positive braid. 
Denote by $ {\bf G}^+ $ the semigroup of positive braids in $ {\bf G} $.
Let us explain the motivation to deal with positive braids in this paper following \cite{Ghrist-VandenBerg-Vandervorst},  \cite{Ghrist-Vandervorst}.
Let $ v_1, v_2, v_3 $ be solutions of a parabolic equation
\begin{equation}
	\label{eq:v} 
		v_t = \alpha (x) v_{xx} + \beta (x) v_x + f (x, v, v_x) \quad \mbox{ in } (a, b) \times (T_1,T_2)
\end{equation}
which do not intersect for $ (x,t) \in \{a,b\} \times [T_1, T_2] $.
Here $ \alpha, \beta, f $ are smooth and $ \alpha $ is positive for $ x \in [a, b] $ and $ t \in [ T_1, T_2 ] $.

\vspace{0.2cm}
\includegraphics[width=16cm, height=6.5cm]{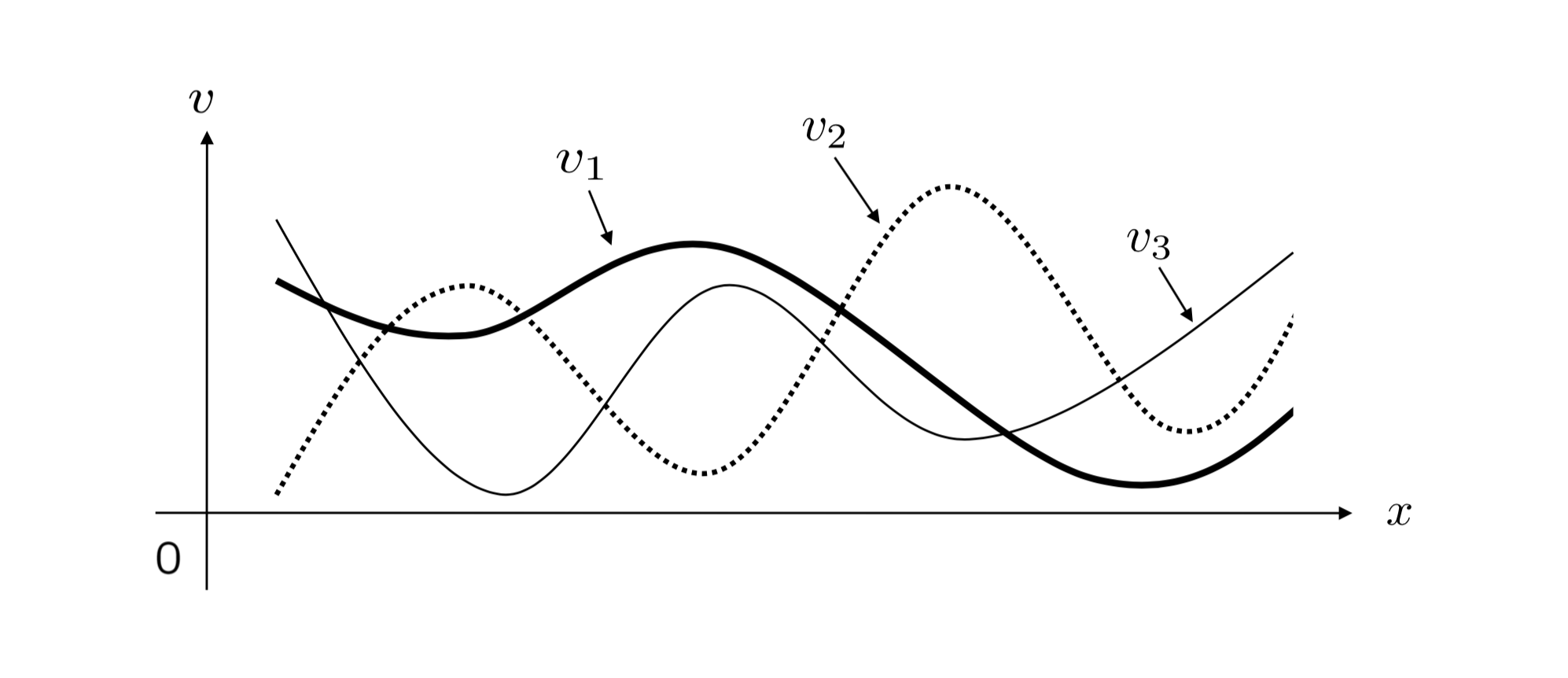}
 \vskip 1mm
\centerline{Fig.~1}

\vskip 5mm

\noindent
Fix $ t \in (T_1, T_2)$ arbitrarily. When
\be{transv-int}
\begin{aligned}
&\hbox{$ v_i (\cdot,t) $ and $ v_j(\cdot,t) $  with $ i \neq j $ ($ i, j = 1, 2, 3 $)} \\
&\hbox{transversally intersect at each of their intersections}
\end{aligned}
\ee
 (see Fig.~1), let us consider these three solutions $ v_1(\cdot,t), v_2(\cdot,t), v_3(\cdot,t) $ in the space 
$ \{ (\partial_x v, x, v) : x \in [a, b]\} $ (see Fig.~2).

\vspace{0.2cm}
\includegraphics[width=16cm, height=6.5cm]{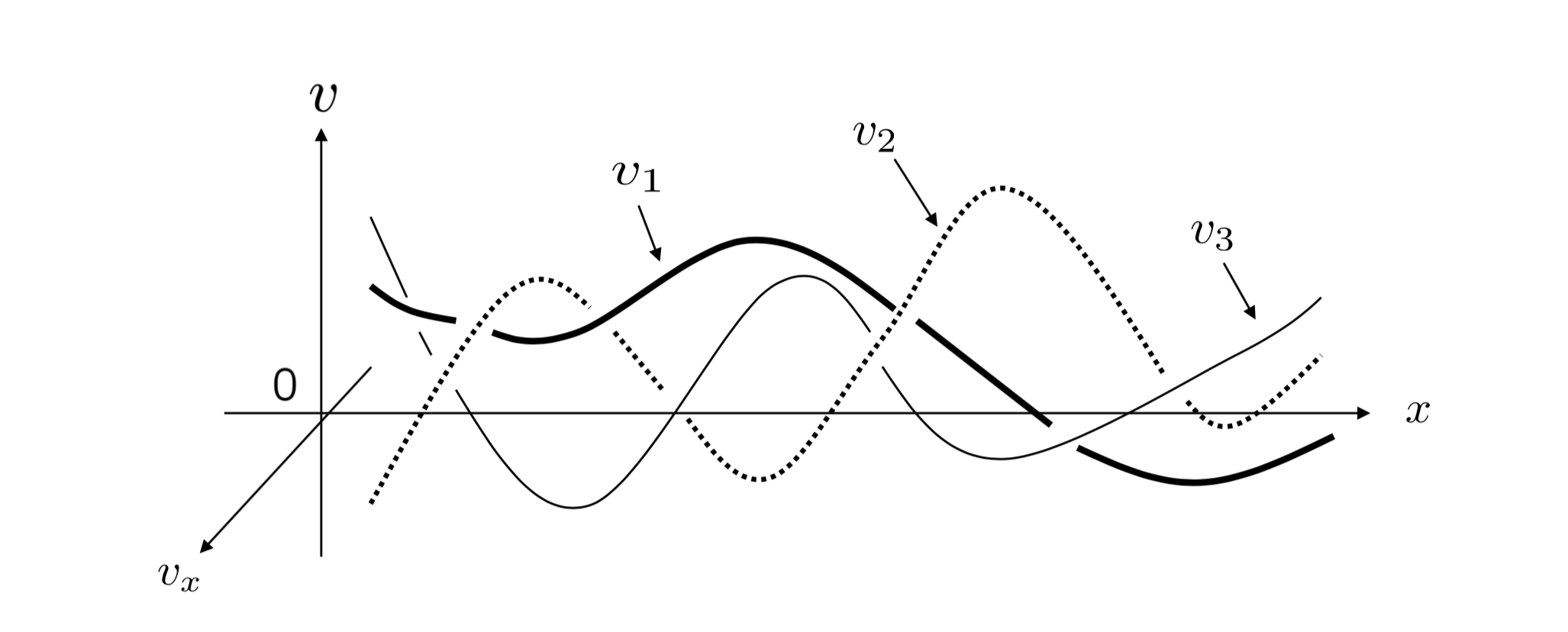}
 \vskip 1mm
\centerline{ Fig.~2}

\vskip 5mm
\vspace{0.3cm} 

\noindent
Then $G(t):=(v_1(\cdot,t), v_2(\cdot,t), v_3(\cdot,t)) $
can be regarded as an element of $ {\bf G}^+ $ like the following figure.

\vspace{0.1cm}
\hskip 1.5cm
\includegraphics[width=12cm, height=4.5cm]{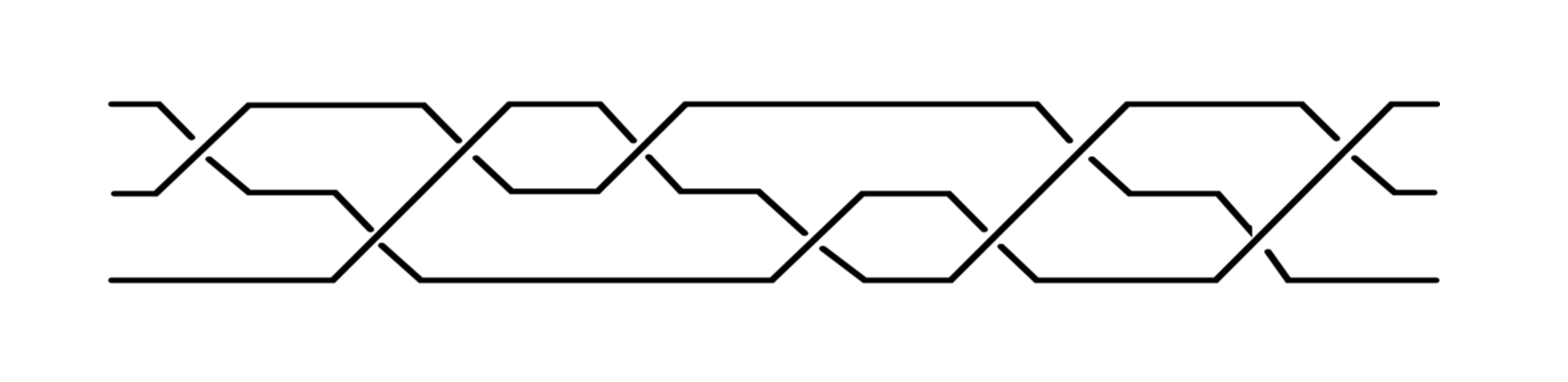}
\vspace{-0.1cm}

For $ A, B \in {\bf G} $, $ A $ is topologically equivalent to $ B $ if and only if $ A $ is modified to $ B $ by applying \eqref{eq:Artin} at most finitely many times. 
The following lemma is easily shown.

\goodbreak

\begin{lem}
\label{lemma:basic}
Let $ X, Y $ be the generators of $ {\bf G} $. 
\begin{description}
\item [{\rm (i)}] For $ A \in {\bf G}^+ $,  there holds 
\[
X^2 Y X^2 Y A = A X^2 Y X^2 Y. 
\]
\item [{\rm (ii)}] For any positive integer $ k $, there holds
\[
(Y X^2 Y)^k X^{2k} = (Y X^2 Y X^2)^k = X^{2k} (Y X^2 Y)^k.
\]
\end{description}
\end{lem}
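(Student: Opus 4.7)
The plan is to reduce everything to the observation that, by \eqref{eq:basic}, the element
\[
C := X^2 Y X^2 Y
\]
admits the alternative expressions $C = Y X^2 Y X^2 = X Y X^2 Y X = X Y X Y X Y = Y X Y X Y X$, and that this implies $C$ lies in the centralizer of $\{X,Y\}$ in $\mathbf{G}^+$, hence in the centralizer of all of $\mathbf{G}^+$.

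For part (i), I would first note that since $\mathbf{G}^+$ is generated by $X$ and $Y$ as a semigroup, it suffices to prove $CX = XC$ and $CY = YC$. The second identity is immediate from the equality $X^2YX^2Y = YX^2YX^2$ in \eqref{eq:basic}: indeed $CY = X^2YX^2Y\cdot Y$ while $YC = Y\cdot X^2YX^2Y = (YX^2YX^2)\cdot Y = X^2YX^2Y\cdot Y$. For the first identity, I would read \eqref{eq:basic} in the form $XYX^2YX = X^2YX^2Y$, which rewrites as $X\cdot(XYX^2Y)\cdot X = X\cdot C$ on the right and as $C\cdot X$ on the left after rebracketing $X^2YX^2YX = X\cdot XYX^2YX$. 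The conclusion $CA = AC$ for every $A \in \mathbf{G}^+$ then follows by a straightforward induction on the word length of $A$.

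For part (ii), let $A = C = YX^2YX^2 = X^2YX^2Y$. Then $(YX^2YX^2)^k = A^k$ is just a rewriting. To prove $(YX^2Y)^k X^{2k} = A^k$ I would induct on $k$: the case $k=1$ is $YX^2Y\cdot X^2 = YX^2YX^2 = A$, and for the inductive step I would write
\[
(YX^2Y)^{k+1} X^{2(k+1)} = YX^2Y \cdot [(YX^2Y)^k X^{2k}] \cdot X^2 = YX^2Y\cdot A^k \cdot X^2,
\]
then apply part (i) to commute $A^k$ past $X^2$, obtaining $YX^2Y\cdot X^2\cdot A^k = A\cdot A^k = A^{k+1}$. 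The identity $X^{2k}(YX^2Y)^k = A^k$ is proved by the symmetric induction, again using that $A^k$ commutes with $X^2$.

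I do not expect a real obstacle: the content of the lemma is essentially the centrality of $(XY)^3$ in the braid group $B_3$, which is already encoded in the long string of equalities in \eqref{eq:basic}. The only care required is the bookkeeping to turn each needed commutation into a direct application of either Artin's relation \eqref{eq:Artin} or one of the rewritings in \eqref{eq:basic}.
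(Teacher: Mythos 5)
Your proposal is correct and follows essentially the same route as the paper: part (i) is reduced to the two generator commutations $CX=XC$ and $CY=YC$ (which you obtain directly from the equalities $XYX^2YX=X^2YX^2Y=YX^2YX^2$ already recorded in \eqref{eq:basic}, where the paper re-derives them from Artin's relation), followed by the same induction on word length, and part (ii) is the same induction on $k$ using the centrality established in (i). No gaps.
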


\begin{proof}
(i) It suffices to show
\[
X^2 Y X^2 Y X = X X^2 Y X^2 Y \quad \mbox{ and } \quad X^2 Y X^2 Y Y = Y X^2 Y X^2 Y. 
\]
Owing to \eqref{eq:Artin}, \eqref{eq:basic}, we have
\begin{eqnarray*}
& & X^2 Y X^2 Y X = X^2 YX XYX = X^2 YX YXY = X^2 XYX XY \\
& & \quad = X^3 YX^2 Y = X X^2 Y X^2 Y
\end{eqnarray*}
and 
\begin{eqnarray*}
& & X^2 Y X^2 Y Y = X XYX XY^2 = X YXY XY^2 = YXY XYXY \\
& & \quad = YX XYX XY = YX^2 YX^2 Y.
\end{eqnarray*}
(ii) For $ k = 1 $ the two equalities are immediate from \eqref{eq:basic}.
Suppose that the first equality holds for $ k $. 
Owing to \eqref{eq:basic} and (i) above, we have 
\begin{eqnarray*}
& & (Y X^2 Y X^2)^{k+1} = Y X^2 Y X^2 (Y X^2 Y X^2)^k = Y X^2 Y X^2 (Y X^2 Y)^k X^{2k} \\
& & \quad = (Y X^2 Y)^k Y X^2 Y X^2 X^{2k} = (Y X^2 Y)^{k+1} X^{2(k+1)}.
\end{eqnarray*}
The induction implies the first equality.
The second equality is similarly shown. 
\end{proof}

In application of braid group theory to parabolic equation, we need a notion corresponding to vanishing intersections between solutions of a parabolic equation, which is parabolic reduction defined by Matano.
Let $ A, B \in {\bf G}^+ $.
It is said that $ B $ is a simple parabolic reduction of $ A $ if there exist $ C, D \in {\bf G}^+ $ such that 
\[
 A = C X^2 D, \; B = C D \quad \mbox{ or } \quad A = C Y^2 D, \; B = C D,
\]
which is denoted by $ A \Rrightarrow_1 B $.
If $ A $ is modified to $ B $ through finitely many simple parabolic reductions, that is, 
there exist $ A_1, A_2, \cdots, A_k \in {\bf G}^+ $ with some positive integer $ k $ such that 
$ A \Rrightarrow_1 A_1 \Rrightarrow_1 A_2 \Rrightarrow_1 \cdots \Rrightarrow_1 A_k \Rrightarrow_1 B $, 
then $ B $ is called a parabolic reduction of $ A $, and it is denoted by $ A \Rrightarrow B $.  
We then have the following reduction principle for parabolic evolution:
\be{reduction-principle}
\hbox{If $t_1<t_2$ and \eqref{transv-int} holds for $t\in\{t_1,t_2\}$, then $G(t_1) \Rrightarrow G(t_2)$.}
\ee

The following result, which plays an essential role in calculations on parabolic reduction, was shown independently in Proposition 5.6 of 
\cite{mat_cm07} and Lemma 3.1 of \cite{mizo_tams11}.
We note that the assertions of Lemma~\ref{lemma:crucial} are not trivial since one cannot multiply $ H^{-1} $.

\begin{lem}
\label{lemma:crucial} \
Let $ A, B, H \in {\bf G}^+ $.
If $ H A \Rrightarrow H B $, then $ A \Rrightarrow B $.
If $ A H \Rrightarrow B H $, then $ A \Rrightarrow B $.
\end{lem}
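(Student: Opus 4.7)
The plan is to prove the first statement by induction on $n$, the number of simple parabolic reductions in a chain $HA = W_0 \Rrightarrow_1 W_1 \Rrightarrow_1 \cdots \Rrightarrow_1 W_n = HB$; the second statement will then follow by an entirely symmetric argument, exploiting the fact that the positive braid monoid ${\bf G}^+$, identified with $B_3^+$, is both left- and right-cancellative. For the base case $n = 0$, the words $HA$ and $HB$ are topologically equivalent, and since $B_3^+$ is a Garside (hence cancellative) monoid, one may left-cancel $H$ to obtain that $A$ is topologically equivalent to $B$, whence $A \Rrightarrow B$ trivially in zero steps.

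For the inductive step, I would analyze the first reduction $HA \Rrightarrow_1 W_1$. Writing $HA \sim C Z^2 D$ for some $C, D \in {\bf G}^+$ and $Z \in \{X, Y\}$ with $W_1 = CD$ (where $\sim$ denotes topological equivalence under Artin's relation $XYX = YXY$), the extracted square $Z^2$ falls into one of three configurations relative to the factorization $H \cdot A$: either (a) the $Z^2$ can be localized inside the $A$-portion, giving $A \sim A_1 Z^2 A_2$ and $W_1 \sim H (A_1 A_2)$, so that $A \Rrightarrow_1 A_1 A_2$ and the induction hypothesis applied to $H(A_1 A_2) \Rrightarrow HB$ (which uses $n-1$ steps) delivers $A_1 A_2 \Rrightarrow B$, hence $A \Rrightarrow B$; or (b) the $Z^2$ is localized inside the $H$-portion, giving $W_1 \sim H' A$ with $H \Rrightarrow_1 H'$; or (c) the $Z^2$ straddles the $H|A$ boundary, in which case Artin's relation is used to reassociate and reduce to case (a) or (b).

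The main obstacle is configuration (b): the inductive hypothesis does not directly apply to $H' A \Rrightarrow HB$, and one cannot simply invert the step $H \Rrightarrow_1 H'$ since ${\bf G}^+$ has no inverses. My plan for this case is a rearrangement argument within the full reduction sequence. Since the terminal word $HB$ must admit $H$ as a left factor, I would show that for any ``internal-to-$H$'' reduction appearing in the chain there exists a later compensating step whose combined effect, modulo Artin moves and left-cancellation in $B_3^+$, can be replaced by an equivalent pair of reductions acting purely on the $A$-portion. Iterating this normalization procedure across the sequence removes all reductions localized in the $H$-prefix and produces a chain of exactly $n$ simple reductions exhibiting $A \Rrightarrow B$. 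The right-cancellation statement, asserting $AH \Rrightarrow BH$ implies $A \Rrightarrow B$, is then obtained by the mirror argument with the roles of left and right exchanged and right-cancellativity of $B_3^+$ invoked in place of left-cancellativity.
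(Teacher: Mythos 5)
The paper does not actually prove this lemma: it cites Proposition 5.6 of \cite{mat_cm07} and Lemma 3.1 of \cite{mizo_tams11}, and explicitly warns that ``the assertions of Lemma~\ref{lemma:crucial} are not trivial since one cannot multiply $H^{-1}$.'' Your proposal runs head-on into exactly that non-triviality and does not resolve it. The base case and case (a) of your induction are fine (left/right cancellativity of the positive braid monoid is a standard Garside-theoretic fact), but case (b) --- a simple reduction that deletes a square drawn from the $H$-prefix --- is the entire content of the lemma, and for it you offer only a plan (``I would show that \dots there exists a later compensating step \dots'') with no argument. It is not clear that such a compensating step exists, nor that your ``normalization procedure'' terminates or preserves the reduction relation: after $H\Rrightarrow_1 H'$ the remaining chain is $H'A\Rrightarrow HB$, whose two sides no longer share a common prefix, so the induction hypothesis does not apply and you cannot ``invert'' the step in a monoid without inverses. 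Asserting that the deficit in the $H$-part must eventually be repaid by an extra square extractable from the $A$-part is precisely the statement that needs proof.

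A secondary but real problem is that your trichotomy (a)/(b)/(c) is not well-defined as stated. Writing $HA\sim CZ^2D$ only identifies the element of ${\bf G}^+$; the Artin relation $XYX=YXY$ can interleave letters originating from $H$ with letters originating from $A$ arbitrarily, so ``the extracted square is localized inside the $A$-portion'' has no intrinsic meaning without first setting up a letter-tracking (or divisibility/lattice) formalism and proving that every extractable square can be attributed to one of your three configurations. Without that, even case (a) is not rigorously available. To repair the argument you would need either the combinatorial bookkeeping carried out in the cited references, or a genuinely different structural argument (e.g.\ via left gcd's in the Garside lattice) showing that $H$ remains a left divisor of every intermediate word up to trading squares into the right factor.
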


The following result was proved independently in Lemma 5.11 of \cite{mat_cm07} and Lemma 3.4 of \cite{mizo_tams11}.
We make use of it in determination of all GBU rates in Theorem~\ref{mainThm1}(i) 
 (except for the asymptotic equality of the coefficients in the upper and lower estimates).

\begin{lem}
\label{lemma:braid-even-odd} \
For positive integer $ k $, let 
\[
 \tA_{2k} = (X Y^2 X)^k Y^{2k}, \; \tA_{2k+1} = (X Y^2 X)^k X Y X^{2k+1},
\]
\[ 
\tB_{2k} = X^2 Y^{2k} X Y^{2k} X, \; \tB_{2k+1} = X^2 Y^{2k+1} X^{2k+1} Y.
\]
Then $ \tA_{2k} \not\Rrightarrow \tB_{2k} $ and $ \tA_{2k+1} \not\Rrightarrow \tB_{2k+1} $.
\end{lem}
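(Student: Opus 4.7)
I would prove that $\tilde A_n \not\Rrightarrow \tilde B_n$ by combining a word-length obstruction, which handles the easy cases, with a matrix invariant coming from the reduced Burau representation of $\mathbf{G}$, supplemented by the product-of-parabolics structure of a chain of parabolic reductions.

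\textbf{Length count.} Each simple parabolic reduction decreases the total word length by exactly $2$, and length is well-defined on $\mathbf{G}$ because the relation $XYX = YXY$ preserves length. Thus a chain $\tilde A_n \Rrightarrow \tilde B_n$ consists of exactly $m := \tfrac12(|\tilde A_n| - |\tilde B_n|)$ simple reductions. Direct counting gives $|\tilde A_{2k}| = 6k$, $|\tilde B_{2k}| = 4k+4$, $|\tilde A_{2k+1}| = 6k+3$, $|\tilde B_{2k+1}| = 4k+5$, so $m = k-2$ (even case, requiring $k \ge 2$) and $m = k-1$ (odd case, requiring $k \ge 1$). The case $k=1$ (even) is ruled out immediately by $|\tilde A_2| < |\tilde B_2|$.

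\textbf{Burau obstruction.} For the remaining cases, consider the reduced Burau representation $\rho: \mathbf{G} \to SL_2(\mathbb Z)$ at $t=-1$, given by
\[
\rho(X) = \begin{pmatrix}1 & 1\\ 0 & 1\end{pmatrix}, \qquad
\rho(Y) = \begin{pmatrix}1 & 0\\ -1 & 1\end{pmatrix}.
\]
Using $\rho(XY^2X) = \begin{pmatrix}-1 & 0\\ -2 & -1\end{pmatrix}$, an induction on $k$ yields
\[
\rho(\tilde A_{2k}) = (-1)^k I, \qquad
\rho(\tilde A_{2k+1}) = (-1)^k J, \qquad J := \begin{pmatrix}0 & 1\\-1 & 0\end{pmatrix},
\]
while the entries of $\rho(\tilde B_n)$ are explicit polynomials in $k$ of degree $2$, so that $\rho(\tilde B_n) \neq \pm I, \pm J$ for every $k \ge 1$. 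This alone settles the zero-reduction cases ($k=2$ even, $k=1$ odd), since $\tilde A_n \neq \tilde B_n$ in $\mathbf{G}$. For $m \ge 1$, I would rewrite the chain of reductions as a braid identity $\tilde A_n = R_1 R_2 \cdots R_m \cdot \tilde B_n$, where each $R_i$ is a $\mathbf{G}$-conjugate of $X^2$ or $Y^2$, so each $\rho(R_i)$ is a trace-$2$ (parabolic) element of $SL_2(\mathbb Z)$, and then extract a contradiction from trace and norm comparisons between the prescribed matrix $\rho(\tilde A_n)\rho(\tilde B_n)^{-1}$ and what a product of $m$ such parabolic conjugates can produce.

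\textbf{Main obstacle.} The hard part is the last step: to make the growth comparison sharp enough to exclude reductions for every $k$, one must bound the entries of products of parabolic conjugates of $X^2, Y^2$ in $SL_2(\mathbb Z)$ via the word-length of the conjugating braids (which is in turn controlled by $|\tilde A_n|$). An alternative, more combinatorial route---closer in spirit to the original proofs in \cite{mat_cm07, mizo_tams11}---would be to apply Lemma \ref{lemma:crucial} iteratively to cancel common prefixes or suffixes of $\tilde A_n$ and $\tilde B_n$ using the relations in Lemma \ref{lemma:basic} and \eqref{eq:XYX-2k}, thereby performing a descent on $k$ down to the base cases already handled by the Burau step; identifying the correct descent step, compatible with the monotone length decrease under parabolic reduction, is the key combinatorial challenge of that approach.
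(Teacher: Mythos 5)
Your length count and the Burau computation at $t=-1$ are correct as far as they go: the word length is well defined on $\mathbf{G}^+$ since the relation \eqref{eq:Artin} is homogeneous, each simple reduction drops it by $2$, and one checks $\rho(\tA_{2k})=(-1)^kI$, $\rho(\tA_{2k+1})=(-1)^kJ$ while $\rho(\tB_n)$ has entries quadratic in $k$. This genuinely disposes of the cases where the required number $m$ of simple reductions is $\le 0$ (namely $k\le 2$ in the even case and $k=1$ in the odd case), and it is a different mechanism from the paper's.

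The gap is that these are only the degenerate cases; for all $k\ge 3$ (even) and $k\ge 2$ (odd) you have $m\ge 1$, and there you offer only a strategy whose key step you yourself flag as the ``main obstacle''. That step is unlikely to be closable. The condition $\tA_n=R_1\cdots R_m\tB_n$ with each $R_i$ a conjugate of $X^2$ or $Y^2$ is a purely group-theoretic \emph{necessary} condition, and it carries almost no information here: both $\tA_n$ and $\tB_n$ map to the identity in $S_3$, so $\tA_n\tB_n^{-1}$ automatically lies in the normal closure of $\{X^2,Y^2\}$ (the pure braid group), and the exponent-sum bookkeeping is exactly consistent with a product of $m$ positive conjugates. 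Passing to $SL_2(\Z)$ loses even more: the set of products of $m$ trace-$2$ parabolic conjugates grows rapidly with $m$, while the target matrix $\rho(\tA_n)\rho(\tB_n)^{-1}$ has entries of size only $O(k^2)$ with $m\sim k$ factors available, so neither a trace nor an entry-growth comparison can be expected to exclude such a factorization for general $k$ (it happens to work for $m=1$, where the trace must equal $2$, but not beyond). The real obstruction to parabolic reduction lives in the positive monoid $\mathbf{G}^+$, not in any linear quotient. This is precisely what the paper's method exploits: for Lemma~\ref{lemma:braid-even-odd} it cites \cite{mat_cm07,mizo_tams11}, and it demonstrates the technique on the mirror statement (Lemma~\ref{lemma:braid-even-odd-2}) via the auxiliary Lemma~\ref{lemma:main-braid}, where one assumes the reduction, repeatedly applies the relations \eqref{eq:basic}, \eqref{eq:XYX-2k} and the cancellation Lemma~\ref{lemma:crucial} to strip both sides down to words written purely in $X^2$ and $Y^2$, and then reads off a contradiction by counting the $Y^2$ factors and inspecting the last letter. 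Your ``alternative combinatorial route'' is exactly this argument, but the explicit sequence of rewritings and cancellations is the entire content of the proof, and you have not supplied it.
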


We need the following for Theorem~\ref{th:RBC}(i) in the RBC case.

\begin{lem}
\label{lemma:braid-even-odd-2} \
For positive integer $ k $, let 
\[
 \hA_{2k} = (Y X^2 Y)^k X^{2k}, \; \hA_{2k+1} = (Y X^2 Y)^k Y X Y^{2k+1},
\] 
\[
\hB_{2k} = Y^2 X^{2k} Y X^{2k} Y, \; \hB_{2k+1} = Y^2 X^{2k+1} Y^{2k+1} X.
\]
Then $ \hA_{2k} \not\Rrightarrow \hB_{2k} $ and $ \hA_{2k+1} \not\Rrightarrow \hB_{2k+1} $.
\end{lem}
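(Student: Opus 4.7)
The plan is to reduce Lemma~\ref{lemma:braid-even-odd-2} to Lemma~\ref{lemma:braid-even-odd} by exploiting the $X \leftrightarrow Y$ symmetry intrinsic to the braid group $\mathbf{G}$ of three strands. Since the defining Artin relation \eqref{eq:Artin}, namely $XYX = YXY$, is invariant under transposition of the two generators, there exists a unique group automorphism $\sigma: \mathbf{G} \to \mathbf{G}$ with $\sigma(X) = Y$ and $\sigma(Y) = X$; this automorphism is obviously involutive, and it clearly restricts to an automorphism of the semigroup $\mathbf{G}^+$ of positive braids.

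The heart of the argument is then to verify that $\sigma$ is compatible with the parabolic reduction relation, in the sense that for any $A, B \in \mathbf{G}^+$ one has $A \Rrightarrow B$ if and only if $\sigma(A) \Rrightarrow \sigma(B)$. For a single simple reduction $A \Rrightarrow_1 B$ coming from a factorization $A = C X^2 D$, $B = C D$, the image factorization $\sigma(A) = \sigma(C) Y^2 \sigma(D)$, $\sigma(B) = \sigma(C)\sigma(D)$ exhibits $\sigma(A) \Rrightarrow_1 \sigma(B)$, and the case of a $Y^2$ reduction is symmetric; iterating along any chain of simple reductions yields the full equivalence, and involutivity of $\sigma$ gives the reverse implication.

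The last step is purely bookkeeping: one checks directly from the definitions that
\[
\sigma(\tA_{2k}) = \hA_{2k},\quad \sigma(\tA_{2k+1}) = \hA_{2k+1},\quad \sigma(\tB_{2k}) = \hB_{2k},\quad \sigma(\tB_{2k+1}) = \hB_{2k+1},
\]
so that if $\hA_{2k} \Rrightarrow \hB_{2k}$ (resp.~$\hA_{2k+1} \Rrightarrow \hB_{2k+1}$) held, then applying $\sigma$ would yield $\tA_{2k} \Rrightarrow \tB_{2k}$ (resp.~$\tA_{2k+1} \Rrightarrow \tB_{2k+1}$), contradicting Lemma~\ref{lemma:braid-even-odd}. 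The only conceptual point to be careful about is the compatibility of $\sigma$ with parabolic reduction, and even this is essentially immediate from the definition of $\Rrightarrow_1$ together with the fact that $\sigma$ exchanges the two admissible ``reduction letters'' $X^2$ and $Y^2$; I do not anticipate any genuine obstacle. An alternative, more computational route would be to adapt directly the proof of Lemma~\ref{lemma:braid-even-odd} with $X$ and $Y$ interchanged throughout, but the symmetry approach sketched above is cleaner and treats Lemma~\ref{lemma:braid-even-odd} as a black box.
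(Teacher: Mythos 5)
Your proposal is correct, and the symmetry argument is sound: the transposition $X\leftrightarrow Y$ does extend to an involutive automorphism $\sigma$ of $\mathbf G$ (the Artin relation \eqref{eq:Artin} is symmetric in the two generators), it preserves $\mathbf G^+$, and since a simple parabolic reduction is defined by a factorization $A=CX^2D$ or $A=CY^2D$ \emph{in the semigroup}, applying the automorphism to such an identity produces a factorization of the same type with $X^2$ and $Y^2$ exchanged; hence $A\Rrightarrow B$ iff $\sigma(A)\Rrightarrow\sigma(B)$. The bookkeeping $\sigma(\tA_n)=\hA_n$, $\sigma(\tB_n)=\hB_n$ also checks out, so the lemma does follow from Lemma~\ref{lemma:braid-even-odd} applied as a black box.

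This is, however, a genuinely different route from the paper's. The paper does not invoke the $X\leftrightarrow Y$ symmetry; instead it gives a self-contained algebraic proof: it first establishes an auxiliary non-reduction statement (Lemma~\ref{lemma:main-braid}: $(YX^2Y)^kX^{2k+1}\not\Rrightarrow Y^2X^{2k+1}Y^{2k}$) by a contradiction argument that repeatedly uses the Artin relation, the identities \eqref{eq:basic}--\eqref{eq:XYX-2k}, Lemma~\ref{lemma:basic} and the cancellation Lemma~\ref{lemma:crucial}, ending with a parity/word-counting argument on products of $X^2$ and $Y^2$; it then derives both the even and odd cases of the present lemma from that auxiliary statement by right-multiplying by $X$ and cancelling. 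The paper does remark that the lemma ``can be obtained similarly to the proof of Lemma~\ref{lemma:braid-even-odd}'' — which is your fallback computational route — but opts for the explicit computation for self-containedness, since Lemma~\ref{lemma:braid-even-odd} itself is only cited from \cite{mat_cm07,mizo_tams11} without proof. Your argument is shorter and conceptually cleaner, at the cost of leaning entirely on the cited Lemma~\ref{lemma:braid-even-odd}; the paper's version gives the reader an independent verification within the same algebraic framework. Both are valid.
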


Lemma \ref{lemma:braid-even-odd-2} can be obtained similarly to the proof of Lemma \ref{lemma:braid-even-odd} due to \cite{mat_cm07} 
and \cite{mizo_tams11}. 
For readers' convenience, we give a simple proof here based only on algebraic computations like \cite{mat_cm07}, 
though the method in \cite{mizo_tams11} works well also in this case.
We have an auxiliary result to prove Lemma~\ref{lemma:braid-even-odd-2}.

\begin{lem}
\label{lemma:main-braid} \
Let $ A_{2k} = (Y X^2 Y)^k X^{2k+1} $ and $ B_{2k} = Y^2 X^{2k+1} Y^{2k} $ for a positive integer $ k $.
Then $ A_{2k} \not\Rrightarrow B_{2k} $.
\end{lem}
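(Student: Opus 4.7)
I would proceed by induction on $k$, combining the central identity $A_{2k} = \Delta^2\cdot A_{2(k-1)}$ supplied by Lemma~\ref{lemma:basic}(ii) with the cancellation principle of Lemma~\ref{lemma:crucial}. The driving observation, used in both the base case and to pin down the shape of intermediate braids in the induction, is that the word $Y^2X^{2k+1}Y^{2k}$ representing $B_{2k}$ is \emph{Artin-rigid}: a direct inspection of all length-$3$ subwords $YYX$, $YXX$, $XXX$, $XXY$, $XYY$, $YYY$ shows that neither $XYX$ nor $YXY$ occurs, so no Artin relation is ever applicable to this word. Consequently $B_{2k}$ admits a unique positive word representation.

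\textbf{Base case $k=1$.} Both $A_2 = YX^2YX^3$ and $B_2 = Y^2X^3Y^2$ have word-length $7$, and since each elementary parabolic reduction strictly decreases word-length by $2$, the relation $A_2\Rrightarrow B_2$ would force $A_2=B_2$ as elements of $\mathbf{G}$. But by the rigidity observation above, $B_2$'s Artin-equivalence class consists of the single word $Y^2X^3Y^2$, whereas $A_2 = \Delta^2\cdot X$ admits the representation $XYX^2YX^2$ (obtained by expanding $(XYX)^2\cdot X$). Since the two literal words are distinct and the class of $B_2$ is a singleton, $A_2\neq B_2$, a contradiction.

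\textbf{Inductive step.} Assume $A_{2(k-1)}\not\Rrightarrow B_{2(k-1)}$ and suppose for contradiction that $A_{2k}\Rrightarrow B_{2k}$. Using $(YX^2Y)\cdot X^2 = \Delta^2$ and the centrality of $\Delta^2 = X^2YX^2Y$ from Lemma~\ref{lemma:basic}(i), we rewrite
\[ A_{2k} = (YX^2Y)^{k-1}\cdot (YX^2Y)X^2\cdot X^{2k-1} = \Delta^2\cdot (YX^2Y)^{k-1}X^{2k-1} = \Delta^2\cdot A_{2(k-1)}. \]
Hence $\Delta^2\cdot A_{2(k-1)}\Rrightarrow B_{2k}$. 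My plan is then to consume a bounded number of the $k-1$ available reductions to ``dismantle'' the central prefix $\Delta^2 = X^2YX^2Y$ step-by-step into a form that matches a corresponding left prefix of $B_{2k}$, and to apply Lemma~\ref{lemma:crucial} to cancel this common prefix. This would yield $A_{2(k-1)}\Rrightarrow B_{2(k-1)}$, contradicting the inductive hypothesis.

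\textbf{Main obstacle.} The inductive cancellation is delicate precisely because the rigidity of $Y^2X^{2k+1}Y^{2k}$ prevents $B_{2k}$ from having any left factor equal to $\Delta = XYX$ or $YXY$: the unique word for $B_{2k}$ begins with $YY$, so none of the standard Garside left-divisors of $\Delta^2$ can be peeled off of $B_{2k}$. Consequently Lemma~\ref{lemma:crucial} cannot be invoked directly with $H=\Delta^2$; one must instead bookkeep which of the $k-1$ parabolic reductions lie ``inside'' the $\Delta^{2k}$-prefix of $A_{2k}$ versus those that interact with the trailing $X$, and exploit the explicit reduction $X^2YX^2Y\Rrightarrow YX^2Y\Rrightarrow Y^2\Rrightarrow 1$ of a single $\Delta^2$-block to reveal a shared prefix at an intermediate stage. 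This combinatorial bookkeeping is the main work of the proof and mirrors the strategy used by Matano in \cite{mat_cm07} for Lemma~\ref{lemma:braid-even-odd}; the reward is that after canceling the $\Delta^2$-prefix (at the cost of three reductions) one is left with an analogous reduction between braids of the form of $A_{2(k-1)}$ and $B_{2(k-1)}$, closing the induction.
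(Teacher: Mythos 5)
Your base case is sound: since every simple parabolic reduction drops word length by exactly $2$ and $|A_2|=|B_2|=7$, the relation $A_2\Rrightarrow B_2$ would force $A_2=B_2$ in $\mathbf{G}^+$, and your rigidity observation (no subword $XYX$ or $YXY$ occurs in $Y^2X^3Y^2$, so its positive-equivalence class is a singleton) correctly rules that out. The problem is the inductive step, which is where the whole difficulty of the lemma lives and which you do not actually carry out: you state a ``plan'' and defer the ``combinatorial bookkeeping'' that you yourself identify as the main work. Moreover the plan cannot succeed as described. A length count shows that $A_{2k}\Rrightarrow B_{2k}$ consists of exactly $k-1$ simple reductions while $A_{2(k-1)}\Rrightarrow B_{2(k-1)}$ would consist of $k-2$, so passing from the former to the latter may spend only \emph{one} reduction, whereas your proposed dismantling $X^2YX^2Y\Rrightarrow YX^2Y\Rrightarrow Y^2\Rrightarrow I$ of the prefix costs three. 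There is also a conceptual obstruction: a simple reduction $CZ^2D\Rrightarrow_1 CD$ uses an arbitrary factorization of the braid as an element of $\mathbf{G}^+$, not of a fixed word, so ``which reductions lie inside the $\Delta^2$-prefix'' is not a well-defined notion; this is precisely why Lemma~\ref{lemma:crucial} is nontrivial and why it can only be invoked after a genuinely common factor $H$ has been exhibited on both sides of a $\Rrightarrow$ — which, as you concede, $X^2YX^2Y$ is not for $B_{2k}$.

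For comparison, the paper's proof is not an induction at all. Assuming $A_{2k}\Rrightarrow B_{2k}$, it rewrites both sides via \eqref{eq:basic} and Lemma~\ref{lemma:basic}(ii) and applies Lemma~\ref{lemma:crucial} three times to cancel explicitly exhibited common left factors (first $Y^2X$, then $X^{2(k-1)}$, then — after multiplying both sides on the right by $YX^2$ and exploiting the centrality of $X^2YX^2Y$ — the factor $X^2Y$). This lands on a purported reduction $X^2Y^2X^2(Y^2X^2)^{k-2}Y^2\Rrightarrow Y^{2k}X^2$ between words built solely from the blocks $X^2$ and $Y^2$; for such words every simple reduction just deletes one block while preserving the order of the rest, so the conclusion is refuted by counting the $Y^2$ blocks and noting that the left-hand word ends in $Y^2$ rather than $X^2$. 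If you wish to rescue your induction you must supply an argument of comparable explicitness for the one-step descent; as written, your proposal establishes only the case $k=1$.
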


{\bf Proof.} \
Assume for contradiction that $ A_{2k} \Rrightarrow B_{2k} $.
Then we have 
\[
(Y^2 X Y^2 X)^k X = (Y X^2 Y X^2)^k X = A_{2k} \Rrightarrow Y^2 X X^{2k} Y^{2k} 
\]
by \eqref{eq:basic} and Lemma \ref{lemma:basic} (ii).
Owing to this and Lemma \ref{lemma:crucial}, we derive
\[
(Y^2 X Y^2 X)^{k-1} Y^2 X X = Y^2 X (Y^2 X Y^2 X)^{k-1} X \Rrightarrow X^{2k} Y^{2k}
\]
and hence
\[
X^{2(k-1)} (Y X^2 Y)^{k-1} Y^2 X^2 = (Y X^2 Y X^2)^{k-1} Y^2 X^2 = (Y^2 X Y^2 X)^{k-1} Y^2 X^2 \Rrightarrow X^{2k} Y^{2k}.
\]
by \eqref{eq:basic} and Lemma \ref{lemma:basic} (ii).
Owing to Lemma \ref{lemma:crucial}, we get
\[
Y X^2 (Y^2 X^2)^{k-2} Y^3 X^2 = (Y X^2 Y)^{k-1} Y^2 X^2 \Rrightarrow X^2 Y^{2k}.
\]
Multiplying both sides by $ Y X^2 $ from right yields 
\begin{eqnarray*}
& & X^2 Y X^2 Y Y X^2 (Y^2 X^2)^{k-2} Y^2 = Y X^2 (Y^2 X^2)^{k-2} Y^2 X^2 Y X^2 Y \\
& & \quad = Y X^2 (Y^2 X^2)^{k-2} Y^2 Y X^2 Y X^2 = Y X^2 (Y^2 X^2)^{k-2} Y^3 X^2 Y X^2 \\
& & \quad \Rrightarrow X^2 Y^{2k} Y X^2 = X^2 Y  Y^{2k} X^2
\end{eqnarray*}
by \eqref{eq:basic} and Lemma \ref{lemma:basic}(i).
From Lemma \ref{lemma:crucial}, we obtain $ X^2 Y^2 X^2 (Y^2 X^2)^{k-2} Y^2 \Rrightarrow Y^{2k} X^2 $.
The left-hand side consists of product of $ X^2 $ and $ Y^2 $ and hence its simple parabolic reduction is carried out by dropping 
$ X^2 $ or $ Y^2 $ with their orders kept.
 Both sides contain the same number of $ Y^2 $s, which implies that the parabolic reduction does not lose any $ Y^2 $.
Therefore the parabolic reduction is impossible since  last term on the left-hand side is not $ X^2 $.
The contradiction completes the proof.
\hfill $ \Box $
\vspace{0.5cm}

{\bf Proof of Lemma \ref{lemma:braid-even-odd-2}.} 
Assume for contradiction that $ \hA_{2k} \Rrightarrow \hB_{2k} $.
Multiplying this by $ X $ from right and using \eqref{eq:XYX-2k} yields
\[
(Y X^2 Y)^k X^{2k+1} \Rrightarrow Y^2 X^{2k} Y X^{2k} Y X = Y^2 X^{2k} Y^2 X Y^{2k} 
\Rrightarrow Y^2 X^{2k+1} Y^{2k},
\]
i.e., $ A_{2k} \Rrightarrow B_{2k} $.
Since $ A_{2k} \not\Rrightarrow B_{2k} $ by Lemma \ref{lemma:main-braid}, 
the contradiction implies the first assertion.

It follows from \eqref{eq:Artin}, \eqref{eq:XYX-2k} that
\begin{eqnarray*}
& & \hA_{2k+1} = (Y X^2 Y)^k Y X Y^{2k} Y = (Y X^2 Y)^k X^{2k} Y X Y = (Y X^2 Y)^k X^{2k} X Y X \\
& & \quad  = (Y X^2 Y)^k X^{2k+1} Y X.
\end{eqnarray*}
Assume for contradiction that $ \hA_{2k+1} \Rrightarrow \hB_{2k+1} $.
Then we have $$ (Y X^2 Y)^k X^{2k+1} Y X \Rrightarrow Y^2 X^{2k+1} Y^{2k+1} X.$$
Owing to Lemma \ref{lemma:crucial}, we have $ (Y X^2 Y)^k X^{2k+1} \Rrightarrow Y^2 X^{2k+1} Y^{2k} $, i.e., $ A_{2k} \Rrightarrow B_{2k} $,
Since $ A_{2k} \not\Rrightarrow B_{2k} $ by Lemma \ref{lemma:main-braid}, the contradiction implies the second assertion.
\hfill $ \Box $

\section{Complete classification: proof of Theorems~\ref{mainThm1}{\rm(i)}, 
 \ref{mainThm1stab}, \ref{th:RBC}{\rm(i)}
 and \ref{mainThmRBCstab}} \label{SecProofBraid1}

In \cite{mizo_ma07}, to investigate all type II blowup rates in Fujita equation, 
suitable three solutions of the equation transformed in backward self-similar variables, 
 analogous to \eqref{eqw}, were introduced and partial result was obtained there.
In \cite{mizo_tams11}, all type II blowup rates except the coefficients were determined applying braid group theory 
to the three solutions together with behavior of solutions in the transformed form.
In \cite{mat_cm07}, the three solutions due to \cite{mizo_ma07}, \cite{mizo_tams11} were 
converted to the corresponding solutions of the original equation and then the braid group theory was applied to them.
Although properties of special solutions are clearer in transformed equation, we use the three solutions converted to the original equation 
\eqref{equ} since analytic evaluation of solutions to apply the braid group theory is simpler in the original equation than in 
the transformed one in the viscous Hamilton-Jacobi equation. 
Moreover, we give simpler proof 
without several steps in \cite{mat_cm07} (and \cite{GMW}), \cite{mizo_tams11} 
owing to some properties peculiar to the viscous Hamilton-Jacobi equation.

\subsection{Proof of Theorems \ref{mainThm1}(i) and \ref{mainThm1stab}} 
Let $0<R\le\infty$ and let 
$ u $ be a viscosity solution of \eqref{equ} with $u_0\in \mathcal{W}$ undergoing GBU at $ (x,t) = (0,T ) $.
The proof of Theorem \ref{mainThm1}(i) will use the following rescaled version of $u$:
\begin{equation}
\label{eq:u_a-def}
u_a (x,t) := 
a^k u( a^{-1/2} x, T + a^{-1}(t-T)) \quad \mbox{ in } (0, a^{1/2}R) \times ((1-a)T, T).
\end{equation}
 The following lemma guarantees that the rescaled solution $u_a$ is suitably close to the singular steady state $U$
for large $a$.

\begin{lem} 
\label{lemma:u_a} \
There exist constants $\eta\in(0,T)$, $D_0\in(0,1)$, $C, C_1>0$ and $a_0>1$ depending on $u$ such that,
for each $m\in(0,1/4]$ and $D\in(0,D_0]$, 
$u_a$ enjoys the following properties. For all $a\ge a_0$,
\be{eq:u_a-upper2} 
 - C\Bigl\{1+a^{ -2q }(T-t) \Bigr\} a^{- m} 
 \le u_a(x,t) - U(x) \le Ca^{-m} \qquad \mbox{in } (0, a^q)\times(T -a \eta, T), 
\ee
\be{eq:u_a-upper3} 
-  CD^{ \frac{p}{p-1} }  \Bigl\{1+a^{ - 2q }(T-t) \Bigr\} U(x) 
\le u_a(x,t) - U(x) \le CD^{ \frac{p}{p-1} } U(x) \quad \mbox{ in } 
(a^q, a^{1/2} D)\times(T -a \eta, T),
\ee
where $q= \frac{1}{2} \{ \frac{p}{2(p-1)} - m \}\in (0,1/2)$.
 For all ${\eps\in(0,1)}$ and $t_1<T$,  
\be{uU0x}
\sup_{(x,t)\in [\eps, a^{1/2}D]\times[t_1,T)} |u_{a,x}(x,t)-U_x(x)|\to 0,\quad\hbox{as } a\to\infty,
\ee
and there exists $ a_1 = a_1 (t_1, \eps, D) > 1 $ such that, for all $ a \ge a_1 $,  
\be{eq:u_a-lower4} 
(u_a)_x (x,t) \ge  C_1\eps^{-\beta} \qquad \mbox{in } [0, \eps]\times[t_1, T).
\ee
\end{lem}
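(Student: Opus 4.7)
The key point is the scale invariance
\be{eq:scalinginv-plan}
a^k\,U(a^{-1/2}x)=U(x),\qquad a,x>0,
\ee
which follows from $k=(1-\beta)/2$ and makes the singular steady state a fixed point of the rescaling \eqref{eq:u_a-def}. As an immediate consequence,
\be{eq:master-plan}
u_a(x,t)-U(x)=a^k\bigl[u(\xi,\tau)-U(\xi)\bigr],\quad \xi:=a^{-1/2}x,\ \tau:=T+a^{-1}(t-T),
\ee
so every statement will be deduced from pointwise control of $u-U$ and $u_x-U'$ already contained in Proposition~\ref{prop:PS}.

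Integrating \eqref{eq:u_x-lowerupper} in $x$ (using $u(0,t)=0$) first gives $|u(x,t)-U_{a(t)}(x)|\le\tfrac{M}{2}x^2$ with $a(t):=\beta m^{1-p}(t)$; combining this with $0\le U(x)-U_{a(t)}(x)\le U(a(t))=c_p a(t)^{1-\beta}$ and the consequence $a(t)^{1-\beta}\le C(T-t)$ of \eqref{eq:BU-rate-lower} (using that $(1-\beta)(p-1)/(p-2)=1$) yields the working inequality
\[
-C(T-t)-\tfrac{M}{2}x^2\le u(x,t)-U(x)\le\tfrac{M}{2}x^2,\qquad 0<x<x_0,\ t\in[T-\eta,T).
\]
Plugged into \eqref{eq:master-plan} this gives
\[
-Ca^{k-1}(T-t)-\tfrac{M}{2}a^{k-1}x^2\le u_a(x,t)-U(x)\le\tfrac{M}{2}a^{k-1}x^2\quad\text{in }(0,a^{1/2}x_0)\times(T-a\eta,T).
\]
For \eqref{eq:u_a-upper2} I would invoke the identities $k-1+2q=-m$ and $k-1=-(2q+m)$ (both consequences of $k=(1-\beta)/2$ and $2q=(1+\beta)/2-m$): the spatial term is bounded by $\tfrac{M}{2}a^{-m}$ on $\{x<a^q\}$ and the time term rewrites as $Ca^{-2q-m}(T-t)$, giving exactly the claimed form. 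For \eqref{eq:u_a-upper3} the natural benchmark is $U(x)$ itself: the identity $k+(1+\beta)/2=1$ yields $a^{k-1}x^2/U(x)=c_p^{-1}a^{k-1}x^{1+\beta}\le c_p^{-1}D^{p/(p-1)}$ on $\{x<a^{1/2}D\}$, while the time-error term is absorbed into $CD^{p/(p-1)}a^{-2q}(T-t)U(x)$ for $a$ large; indeed $U(x)\ge c_p a^{(1-\beta)q}$ for $x\ge a^q$ and $k-1+2q-(1-\beta)q=-(1+\beta)(\tfrac12-q)<0$, so the required ratio decays like $a^{-(1+\beta)(1/2-q)}$ and is $\le D^{p/(p-1)}$ once $a\ge a_0$ is large enough (depending also on $D,m$).

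For the gradient statements the same scaling is applied directly to \eqref{eq:u_x-lowerupper}. The chain rule yields $(u_a)_x(x,t)=a^{-\beta/2}u_x(\xi,\tau)$ and, after absorbing $a^{1/2}$ inside the bracket, $a^{-\beta/2}[m^{1-p}(\tau)+(p-1)\xi]^{-\beta}=[a^{1/2}m^{1-p}(\tau)+(p-1)x]^{-\beta}$, whence
\[
\bigl|(u_a)_x(x,t)-[a^{1/2}m^{1-p}(\tau)+(p-1)x]^{-\beta}\bigr|\le Ma^{-(1+\beta)/2}x.
\]
Since \eqref{eq:BU-rate-lower} and $T-\tau=a^{-1}(T-t)$ give $a^{1/2}m^{1-p}(\tau)\le Ca^{-p/[2(p-2)]}(T-t_1)^{(p-1)/(p-2)}\to 0$, this quantity is $\le(p-1)\eps$ for $a\ge a_1(t_1,\eps)$ uniformly on $[t_1,T)$; then on $[0,\eps]$ the leading bracket is $\ge[2(p-1)\eps]^{-\beta}$ while the remainder is $O(a^{-(1+\beta)/2}\eps)$, proving \eqref{eq:u_a-lower4} with $C_1=\tfrac12[2(p-1)]^{-\beta}$. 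For \eqref{uU0x} I would further compare the leading bracket to $U'(x)=((p-1)x)^{-\beta}$ via the elementary inequality $|(A+B)^{-\beta}-B^{-\beta}|\le CAB^{-\beta-1}$ for $0<A\le B$: after rescaling this produces a total error of order $Ma^{-\beta/2}D+Ca^{1/2}m^{1-p}(\tau)\eps^{-\beta-1}$, both summands tending to zero uniformly in $t\in[t_1,T)$ as $a\to\infty$.

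\textbf{Main obstacle.} No new idea beyond \eqref{eq:scalinginv-plan} and Proposition~\ref{prop:PS} is needed; the proof is a careful bookkeeping of exponents, all reducing to the three identities $k=(1-\beta)/2$, $2q+m=(1+\beta)/2$ and $k+(1+\beta)/2=1$. The only delicate point is to allow the threshold $a_0$ to depend on $D$ and $m$ (and $a_1$ on $t_1,\eps,D$) in order to absorb the time-error term into the benchmark $CD^{p/(p-1)}a^{-2q}(T-t)U(x)$ in the intermediate region $\{a^q<x<a^{1/2}D\}$.
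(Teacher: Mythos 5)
Your argument is essentially the paper's own proof: both reduce everything to the bounds of Proposition~\ref{prop:PS}, integrate \eqref{eq:u_x-lowerupper} to get $-C(T-t+x^2)\le u-U\le \tfrac{M}{2}x^2$ near $x=0$, rescale using the invariance $a^kU(a^{-1/2}x)=U(x)$, and read off the exponents (the paper writes $a^{-p/(2(p-1))}$ where you write $a^{k-1}$; these are equal). Your treatments of \eqref{uU0x} and \eqref{eq:u_a-lower4} also match the paper's (mean-value comparison of the bracket with $U'$, and direct rescaling of the lower bound in \eqref{eq:u_x-lowerupper}).

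The one place where your bookkeeping falls short of the statement is the time-error term in the lower bound of \eqref{eq:u_a-upper3}. You absorb $Ca^{k-1}(T-t)$ into $CD^{p/(p-1)}a^{-2q}(T-t)\,U(x)$ by using $U(x)\ge c_pa^{(1-\beta)q}$ on $\{x>a^q\}$ and then demanding $a^{-(1+\beta)(1/2-q)}\le CD^{p/(p-1)}$, which forces $a_0$ to depend on $D$ (and $m$) — you acknowledge this yourself. But the lemma asserts a threshold $a_0$ depending only on $u$. The paper avoids the issue by a different factoring: for $x>a^q$ it writes
$(T-t)+x^2=\bigl(x^{-2}(T-t)+1\bigr)x^2\le \bigl(a^{-2q}(T-t)+1\bigr)x^2$,
and then bounds $a^{k-1}x^2\le c_p^{-1}D^{p/(p-1)}U(x)$ exactly as for the spatial term; this yields \eqref{eq:u_a-upper3} for all $a$ with no largeness condition tied to $D$ or $m$. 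Since in the application ($D$ is fixed before $a$ is sent to infinity) your weaker version would still suffice, this is a small correction rather than a conceptual gap, but as written your proof establishes a weaker statement than the one claimed, and the fix is the one-line factoring above.
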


\begin{proof}
 Let $\eta, x_0, M$ be given by Proposition~\ref{prop:PS} and let $0<D\le D_0:=x_0$.
We first derive the upper estimates of $ u_a $.
It follows from the upper part of \eqref{eq:u_x-lowerupper} that 
$$u_x (x,t) \le U^\prime (x) + M x \quad \mbox{ in } (0,D) \times [T-\eta, T),$$
hence
\be{eq:u_a-x-upper0} 
(u_a)_x (x,t) \le U^\prime (x) + M a^{ - \frac{p}{2(p-1)} } x \quad \mbox{ in } (0,a^{1/2} D) \times [T-a\eta, T).
\ee
By integration, we get
\be{eq:u_a-x-upper} 
u_a (x,t) \le U(x) + \frac{M}{2} a^{ - \frac{p}{2(p-1)} } x^2 \quad \mbox{ in } (0, a^{1/2} D) \times [T-a\eta, T),
\end{equation}
hence in particular the upper part of \eqref{eq:u_a-upper2}.
On the other hand, \eqref{eq:u_a-x-upper} also yields the upper part of \eqref{eq:u_a-upper3} 
since
$$x^2=x^{\frac{p}{p-1}}\ts\frac{U(x)}{c_p}\le a^{\frac{p}{2(p-1)}}D^{\frac{p}{p-1}} \ts\frac{U(x)}{c_p}
\quad\hbox{ for $x\le a^{1/2}D$}.$$

We next derive the lower estimates of $ u_a $. 
It follows from \eqref{eq:BU-rate-lower} and the lower part of \eqref{eq:u_x-lowerupper} that 
\begin{equation}
\label{eq:u_x-lower2} 
u_x(x,t)  \ge \left\{ M_0^{1-p} (T-t)^{\frac{p-1}{p-2} }+ (p-1)x \right\}^{ - \frac{1}{p-1} } - M x 
\quad \mbox{ in } (0,D) \times (T-\eta,T).
\end{equation}
Integrating \eqref{eq:u_x-lower2} in $ (0,x) $ yields
\begin{eqnarray*}
u(x,t) & \ge & \int_0^x \left[ \{ M_0^{1-p} (T-t)^{ \frac{p-1}{p-2} } + (p-1)\xi \}^{- \frac{1}{p-1} } - M \xi \right] d \xi \\
& = & \frac{1}{p-2} \{ M_0^{1-p} (T-t)^{ \frac{p-1}{p-2} } + (p-1) x \}^{ \frac{p-2}{p-1} } 
- \frac{1}{p-2} \{ M_0^{1-p} (T-t)^{ \frac{p-1}{p-2} } \}^{ \frac{p-2}{p-1} } - \frac{M}{2} x^2 \\
& \ge & U(x) - C (T-t+x^2)  \qquad \mbox{in }  (0, D) \times [T-\eta, T).
\end{eqnarray*} 
Here and in what follows,  $ C, C_1 $ are constants  varying from line to line, which depend only on $ u $.
This implies that
\begin{equation}
\label{eq:u_a-lower} 
u_a(x,t) \ge U(x) - Ca^{ - \frac{p}{2(p-1)} } \bigl( (T-t)+x^2 \bigr)\quad \mbox{ in } 
(0, a^{1/2} D] \times [T-a\eta, T),
\end{equation}
hence in particular the lower part of \eqref{eq:u_a-upper2}.
When $ a^{ \frac{1}{2} \{ \frac{p}{2(p-1)} - m \} } < x < a^{1/2} D $, hence
$x^{-2} \le a^{m- \frac{p}{2(p-1)}}$, \eqref{eq:u_a-lower} yields
$$\begin{aligned}
u_a (x,t) - U(x) 
& \ge - CU(x) a^{ - \frac{p}{2(p-1)}} \bigl( x^{-2}(T-t) + 1\bigr)  x^{2 - \frac{p-2}{p-1} }\\
& \ge - CU(x) \left\{ a^{ m- \frac{p}{2(p-1)}  }  (T-t) + 1\right\}
a^{ - \frac{p}{2(p-1)} } x^{\frac{p}{p-1} } 
 \ge - CU(x) \left\{ a^{-2q}  (T-t) + 1\right\} D^{\frac{p}{p-1} },
\end{aligned}$$
that is, the lower part of \eqref{eq:u_a-upper3}.

Let us finally show \eqref{uU0x} and \eqref{eq:u_a-lower4}. 
 Fix any $\eps\in(0,1)$ and $t_1<T$.
The upper part of \eqref{uU0x} follows from \eqref{eq:u_a-x-upper0}.
From \eqref{eq:u_x-lower2}, we deduce that, for  all $(x,t)\in (a^{-1/2} \eps,D) \times (T-\eta, T)$, 
$$\begin{aligned}
	U'(x)-u_x(x,t)
	&\le \left\{(p-1)x \right\}^{ -\frac{1}{p-1} }-\left\{ M_0^{1-p} (T-t)^{\frac{p-1}{p-2} }+ (p-1)x \right\}^{ - \frac{1}{p-1} }
	+Mx\\
	&\le  C_p(a^{-1/2} \eps)^{ - \frac{p}{p-1} }M_0^{1-p} (T-t)^{\frac{p-1}{p-2} }+Mx
\end{aligned}$$
 with $C_p=(p-1)^{ - \frac{p}{p-1} -1}$, where we used the mean value theorem.
Since $U'(x)\equiv a^{-\frac{1}{2(p-1)}} U'( a^{-\frac12} x)$, it follows that,
for all $(x,t)\in (\eps,a^{1/2}D) \times (T-a\eta, T)$, 
$$\begin{aligned}
	U'(x)-u_{a,x}(x,t)
	&\le a^{-\frac{1}{2(p-1)}}
	\left\{  C_p(a^{-1/2} \eps)^{ - \frac{p}{p-1} } M_0^{1-p} (a^{-1}(T-t))^{\frac{p-1}{p-2} }+Ma^{-1/2}x \right\}\\
	&\le 
	\left\{  C_p \eps^{ - \frac{p}{p-1} }M_0^{1-p}a^{-\frac{p}{2(p-2)} }(T-t)^{\frac{p-1}{p-2} }
	+Ma^{ - \frac{p}{2(p-1)} }x \right\},
\end{aligned}$$
which ensures the lower part of \eqref{uU0x}.

By \eqref{eq:u_x-lower2}, for all $(x,t)\in [ 0,a^{1/2} D) \times [T-a\eta,T)$, we have
$$(u_a)_x(x,t) \ge a^{ - \frac{1}{2(p-1)} } \left[ \left\{ C(a^{-1} (T-t))^{ \frac{p-1}{p-2} } + (p-1)a^{-1/2} x \right\}^{ - \frac{1}{p-1} } 
- M a^{-1/2} x \right].$$ 
We may thus choose $a_1 = a_1 (t_1, \eps ,D) >1 $ such that for all $ a \ge a_1$,
we have $T-t_1<a\eta$, $\eps<a^{1/2} D$ 
and for all $(x,t)\in [0, \eps] \times [t_1, T) $,
\begin{eqnarray*}
(u_a)_x (x,t) 
& \ge & a^{ - \frac{1}{2(p-1)} } \Bigl[ \Bigl\{ Ca^{ - \frac{p-1}{p-2} } 
 (T-t_1)^{ \frac{p-1}{p-2} } + (p-1)a^{-1/2} \eps \Bigr\}^{ - \frac{1}{p-1} }- M a^{-1/2} \eps \Bigr] \\
& \ge & C_1 a^{ - \frac{1}{2(p-1)} } \bigl( a^{-1/2}  \eps \bigr)^{- \frac{1}{p-1}}= C_1  \eps^{- \frac{1}{p-1}}.
\qedhere
\end{eqnarray*}
\end{proof}

 The next lemma rules out any possibility of oscillations of the vanishing intersections.

\begin{lem}
	\label{lemma:zero-curve}
Let $n$ be the number of vanishing intersections between $ u(\cdot, t) $ and $U$ 
at $ (x,t) = (0,T) $
(cf.~\eqref{defvanishnumber}). 
 Denoting these intersections by $ 0 < x_1(t) < x_2 (t) < \cdots < x_n (t) $, we have
\be{limitxn}
 \lim_{t \to T_-} x_n (t) = 0.
 \ee
 \end{lem}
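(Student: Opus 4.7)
The plan is to argue by contradiction. Assume $L:=\limsup_{t\to T_-} x_n(t)>0$. Combined with $\liminf_{t\to T_-} x_n(t)=0$ and the $C^1$-regularity of $x_n$ on $(t_1,T)$ from Proposition~\ref{ZeroNumberConst}(iii), for each generic $\epsilon\in(0,L)$ the curve $x_n$ crosses the value $\epsilon$ transversally at an infinite sequence of times $\sigma_k\to T_-$, while $\liminf x_n=0$ produces another sequence $\tau_k\to T_-$ with $x_n(\tau_k)<\epsilon/2$. By ordering, the curves $x_i$ with $i>n$ stay bounded below by some $\rho>0$ for $t$ close to $T$ (if any), so only the vanishing zeros $x_1,\dots,x_n$ interact with the range $(0,\epsilon)$.

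The core idea is to apply the zero number principle (Proposition~\ref{propZeroNumber0}) to the pair $(u,U_a)$ on the fixed interval $[0,r]$, for a small parameter $a>0$ to be chosen. Both $u$ and $U_a$ are classical solutions of $u_t=u_{xx}+|u_x|^p$ (with $U_a$ a steady state), and $(u-U_a)(0,t)\equiv 0$ for $t\in[T-\eta,T)$ by the definition of GBU at $(0,T)$ together with $U_a(0)=0$, giving the constant-boundary hypothesis of the proposition at $x=0$; at the right endpoint $x=r$ the difference is nonzero for $t$ close to $T$ by the choice of $r$ from Proposition~\ref{ZeroNumberConst} and continuity in $a$. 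This yields that $N_a(t):=z\bigl((u-U_a)(\cdot,t);[0,r]\bigr)$ is finite and nonincreasing on some $(t_2,T)$.

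The next step is to express $N_a(t)$ in terms of the sign pattern of $u-U$. Since $U-U_a>0$ on $(0,\infty)$ with $(U-U_a)(0)=0$ and $(U-U_a)(x)\to c_p a^{1-\beta}$ for $x\gg a$, the zero set of $u-U_a$ is obtained by perturbing that of $u-U$: one extra zero necessarily appears near $x=0$ (since $u_x(0,t)$ is finite but $U_a'(0)\sim a^{-\beta}$ is very large, forcing $u-U_a<0$ just to the right of $0$); in each interval $(x_{2k},x_{2k+1})$ where $u<U$, either two zeros of $u-U_a$ appear (when $\max(U-u)$ there exceeds $U-U_a$) or none appear otherwise. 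Choosing $a>0$ so small that the negative interval of $u-U$ bracketing $\epsilon$ satisfies this exceedance condition uniformly along $\sigma_k$, while at times $\tau_k$ the vanishing zeros all lie below $\epsilon/2$ so that the configuration of negative intervals near $\epsilon$ is structurally different, one deduces that $N_a$ takes different values on the two subsequences in a way that forces an increase of $N_a$ somewhere, contradicting the monotonicity established in the previous step.

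The main obstacle will be making the zero-counting rigorous and uniform as $a\to 0^+$ and $t\to T_-$. One must establish a quantitative lower bound on $|u-U|$ in the negative intervals bracketing $\epsilon$ that exceeds the perturbation $U-U_a$ uniformly in $t$ along the relevant sequences; this will require the profile estimates of Proposition~\ref{prop:PS} combined with the $C^{2,1}$-continuity of $u$ away from the boundary, to track how the sign structure of $u-U$ is carried through the oscillation. One also must handle the possibility that several of the vanishing curves $x_i$ with $i\le n$ cluster simultaneously in $(0,\epsilon)$, which may require selecting $\epsilon$ generically so as to avoid additional coincidences at $x_i(\sigma_k)=\epsilon$ for $i<n$.
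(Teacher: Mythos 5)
Your approach contains a fatal gap: comparison of $u$ with the fixed steady states $U_a$ cannot detect the oscillation of $x_n$. The decisive consequence of the contradiction hypothesis (and the first step of the paper's proof) is that if $R_0:=\limsup_{t\to T_-}x_n(t)>0$ while $\liminf_{t\to T_-}x_n(t)=0$, then by continuity of the zero curve every $\xi\in(0,R_0)$ satisfies $u(\xi,t_k)=U(\xi)$ along some sequence $t_k\to T$, whence $u(\cdot,T)\equiv U$ on $(0,R_0]$ and $u(\cdot,t)\to U$ uniformly on compact subsets of $(0,R_0]$. Consequently, for any \emph{fixed} $a>0$ and any $0<\delta<R_0/2$, one has $u-U_a\ge \tfrac12\bigl(U(\delta)-U_a(\delta)\bigr)>0$ on $[\delta,R_0-\delta]$ for all $t$ close to $T$: the function $u-U_a$ has no zeros at all in the region where $x_n$ oscillates, so $N_a(t)$ is blind to whether $x_n(t)$ sits near $0$ or near $\eps$. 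The ``structurally different sign configuration'' you invoke at the times $\sigma_k$ versus $\tau_k$ concerns the sign of $u-U$, whose amplitude tends to $0$ on compact subsets of $(0,R_0]$, not the sign of $u-U_a$, which is eventually constant there; no increase of $N_a$ can be extracted. Letting $a\to0$ together with $t\to T$ does not help, because the zero-number monotonicity must be applied to one fixed pair of solutions over a time interval, and the quantitative lower bound on $|u-U|$ that you flag as ``the main obstacle'' genuinely fails. Moreover, even a sign change of $u-U_a$ at a fixed interior point between two times is compatible with a zero curve merely passing through that point and never forces the total count to go up.

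The paper's proof rests on an ingredient absent from your proposal: the special GBU solutions of Theorem~\ref{prop:special}. Having deduced $u(\cdot,T)\equiv U$ on $(0,R_0]$, it takes a special solution $v$ with $q>m$ nondegenerate intersections with $U$ (where $m$ is the total intersection number of $u$ with $U$ on $(0,r)$), all confined to the self-similar region $\{x\le C(T-t)^{1/2}\}$; rescales it to $v_a$ with $a\gg1$ so that, by Lemma~\ref{lemma:u_a}, $v_a$ is so close to $U$ that $z\bigl(v_a(\cdot,t_2)-u(\cdot,t_2):[0,r]\bigr)=m$; and time-rescales $u$ to $\tu$ with an earlier singular time $\tT<T$. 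Near $t=\tT$ the function $\tu$ equals $U$ on a fixed interval $(0,\lambda^{1/2}R_0]$ while $v_a-U$ still has its $q$ zeros inside that interval, so $z(v_a-\tu)=q>m$, contradicting the monotonicity from Proposition~\ref{propZeroNumber0}. The essential point is that the comparison object must itself be a time-dependent solution carrying more intersections with $U$ than $u$ does, concentrated at the self-similar scale; a fixed steady state cannot play this role.
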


\begin{proof}
	By Proposition~\ref{ZeroNumberConst},  there exist $r\in (0,R]$, $t_0<T$ and an integer $m\ge 1$ such that
	\be{mzeros1}
	\hbox{for all $t\in(t_0,T)$, \ $u(\cdot,t)-U$ has exactly $m$ (nondegenerate) zeros on $(0,r)$}
	\ee
	 and
	\be{mzeros2}
	  u(r,t) - U(r)\ne 0,\quad t_0\le t\le T.
	\ee
	Assume for contradiction that \eqref{limitxn} not hold, i.e.,  $R_0:=\limsup_{t\to T} x_n(t)>0$.
    Then $ u(x,T) = U(x) $ for all $ x \in (0, R_0] $ and $R_0<r$.

	Take an integer $ q > m$.
	Owing to Theorem~\ref{prop:special} (after a time shift), there exist $t_1\in(t_0,T)$ and a solution $ v $ of \eqref{equ} on $(0,\infty)\times(t_1,T)$ undergoing GBU at $ t = T $, for which 
	\be{mzeros3}
	\hbox{for all $t\in(t_1,T)$, \ $v(\cdot,t)-U$ has exactly $q$ (nondegenerate) zeros on $(0,\infty),$}
	\ee
		\be{mzeros4}
		X_q(t) \le C (T-t)^{1/2},\quad t_1<t<T
		\ee
    with some $ C > 0 $, where $ 0 < X_1(t) < X_2(t) < \cdots < X_q(t) $ are the zeros of $ v(\cdot, t) - U$.	
	For $ a > 1 $, let
	\[
	v_a (x,t) := 
	a^k v( a^{-1/2} x, T + a^{-1}(t-T)) \quad \mbox{ in } (0, a^{1/2} R) \times ((1-a)T, T).
	\]
	 By \eqref{mzeros1}-\eqref{mzeros2} and Lemma~\ref{lemma:u_a} applied to $v$, choosing $t_2\in(t_1,T)$ close enough to $T$ and then $ a \gg 1 $, we have
	\be{v_a-u0}
	z( v_a(\cdot,t_2) - u(\cdot, t_2): [0,r] ) = m
	\quad\hbox{ and }\quad v_a(r,t) - u(r,t)\ne 0,\quad t_2\le t\le T.	
	\ee
			For $ \lambda < 1 $, let
	\[
	\tu (x,t) := 
	\lambda^k u ( \lambda^{-1/2} x, t_2 + \lambda^{-1} (t - t_2) ) 
	\quad \mbox{ in } (0, \lambda^{1/2} R) \times ( t_2, \tT )
	\]
	with $ \tT : = t_2 + \lambda(T- t_2) <T$.
	By \eqref{v_a-u0}, we may find $ \lambda < 1 $ close enough to $ 1 $ so that
	\[
	z( v_a (\cdot, t_2) - \tu(\cdot, t_2): [0,\lambda^{1/2} r] ) = m 
	 \quad\hbox{ and }\quad v_a(r,t) - \tu(r,t)\ne 0,\quad t_2\le t\le \tT
	\]
	 hence, by Proposition~\ref{propZeroNumber0},
	\be{v_a-u}
	z( v_a(\cdot,t) - \tu(\cdot, t): [0,r] )  \le m, \quad t \in [t_2, T).
	\ee
	On the other hand, using \eqref{mzeros3}-\eqref{mzeros4} and the fact that $ \tu(x,\tT) = U(x) $ for $ x \in (0, \lambda^{1/2}R_0] $,
	 we easily see that, for $ t\in [t_2, \tT)$ close enough to $ \tT $, 
	\[
	z( v_a (\cdot,t) - \tu(\cdot, t): [0,\lambda^{1/2} r] ) = q.
	\]
	By the choice $ q> m $, we reach a contradiction.
\end{proof}	

\begin{proof}[Proof of Theorem \ref{mainThm1}{\rm(i)}] 
We shall show that
\be{liminfsupGBU}
0<\liminf_{t\to T} \, (T-t)^{\frac{n}{p-2}} u_x(0,t)\le
\limsup_{t\to T} \, (T-t)^{\frac{n}{p-2}} u_x(0,t)<\infty.
\ee
This combined with non-oscillation Lemma~\ref{LemNonOsc} implies \eqref{rateell}.
The space-time profile of $u_x$ in \eqref{asympyux1} is then a consequence of \eqref{eq:u_x-lowerupper}
and we get that of $u$ by integration.

\smallskip

 {\bf Step 1.} {\it Preparations based on PDE.} 
 By Lemma~\ref{lemma:zero-curve}, we have $ \lim_{t \to T_-} x_n (t) = 0 $.  
Therefore, for any $ 0 < D \ll 1 $ there exist $ t_0 = t_0(D)<T$ and 
$\delta _0= \delta_0 (D)>0$ 
such that
\begin{equation}
\label{eq:zeros} 
  z\bigl( u(\cdot,t) - U:  (0, D] \bigr) = n
 \quad \mbox{ and } \quad |u(D,t) - U(D)| \ge \delta_0 U(D), \quad t\in [t_0, T). 
\end{equation}
Let $ v $ be a special solution given in Theorem \ref{prop:special} with $ \ell=n $ and $\Omega=(0,\infty)$.
By a time shift we may assume that $v$ undergoes GBU at $(x,t)=(0,T)$.

Owing to Theorem \ref{prop:special}, Lemma \ref{lemma:u_a} and \eqref{eq:zeros},  
 we may choose $0<D\ll 1$ and then 
$a \gg 1 $, $ t_1 < T $, $ \delta_1 \in (0,1) $,  such that 
\be{eqndescr1}
|U(a^{1/2} D) -  v (a^{1/2} D, t)| > \delta_1 U(a^{1/2} D) > |U(a^{1/2} D) -  u_a (a^{1/2} D, t) | >0, \quad t\in [t_1, T), 
\ee
\be{eqndescr2b}
 z\bigl( u_a(\cdot,t) - U:  (0, a^{1/2} D] \bigr) = z\bigl( v( \cdot, t)-U:  (0, a^{1/2} D]\bigr)= n, \quad t\in [t_1, T),
\ee
\be{eqndescr2}
z\bigl( u_a( \cdot,t_1) - v( \cdot, t_1):  (0, a^{1/2} D]\bigr)= n,
\ee
\be{eqndescr3}
  v (x,  t_1) < u_a (x, t_1), \quad 0  < x \ll 1.
\ee

 {\bf Step 2.} {\it Choice of suitable solutions and braid interpretation.}
We now prove the first inequality in \eqref{liminfsupGBU}. 
Assume for contradiction that the first inequality in \eqref{liminfsupGBU} does not hold.
Then there exists $ t_2 \in (t_1,  T) $ 
 such that $ v(t) - u_a(t) $ loses one zero (or odd number of zeros) at $ x = 0 $   
and some $ \hat t_2< t_2 $ close enough $ t_2 $, and 
\begin{equation}
\label{eq:t_2}
 v (x, t_2) > u_a (x, t_2), \quad 0  < x \ll 1. 
\end{equation}
For $ 0 < \lambda < 1 $, let 
\[
\tu_a (x,t) := 
 \lambda^k u_a ( \lambda^{-1/2} x, t_1 + \lambda^{-1} (t - t_1) ) 
\quad \mbox{ in } (0, \lambda^{1/2} a^{1/2} R) \times ( t_1, \tT )
\]
with $ \tT : = t_1 + \lambda(T- t_1) $. 
For $ 0 < \lambda < 1 $ close enough to $ 1 $, \eqref{eqndescr1}-\eqref{eq:t_2} 
 hold true with $ u_a $ replaced by $ \tu_a $. 
 We shall denote by $\tilde x_1(t)<\dots<\tilde x_n(t)$ and $X_1(t)<\dots<X_n(t)$,
the zeros of $ \tu_a(\cdot, t)-U $ and 
of $ v(\cdot, t)-U $ in $ (0, a^{1/2} D] $, respectively.

Take $ 0 < \rho_1 \ll 1 $. 
For $ 0 < \rho \le \rho_1 $, the situation of $ U, \tu_a(\cdot, t_1), v(\cdot, t_1) $ in $ [\rho, a^{1/2} D] $ is represented by $ \tA_n $, namely, topologically equivalent to $ \tA_n $. In order to explain more, lifting the solutions at $ t = t_1 $ to three dimensional space as stated in 
Section~\ref{SecBraid} , the situation of $ U, \tu_a(\cdot, t_1), v(\cdot, t_1) $ 
 in $ [\rho, a^{1/2} D] $ means that $ \tu_a(\cdot, t_1) $ coils $ U$ 
 closely enough and they are inside a big coil $ v(\cdot, t_1) $ (see Fig.~3 and recall Lemma \ref{lemma:u_a}).
	Shifting the big coil $ v (\cdot, t_1) $ to the left and the 
	 small coil $\tu_a(\cdot, t_1) $ to the right, which are topologically equivalent deformations, the resulting braid is $ \tA_n $ (see Fig.~4). 

 \vskip 5mm
 \hskip 1.3cm
\includegraphics[width=13cm, height=4cm]{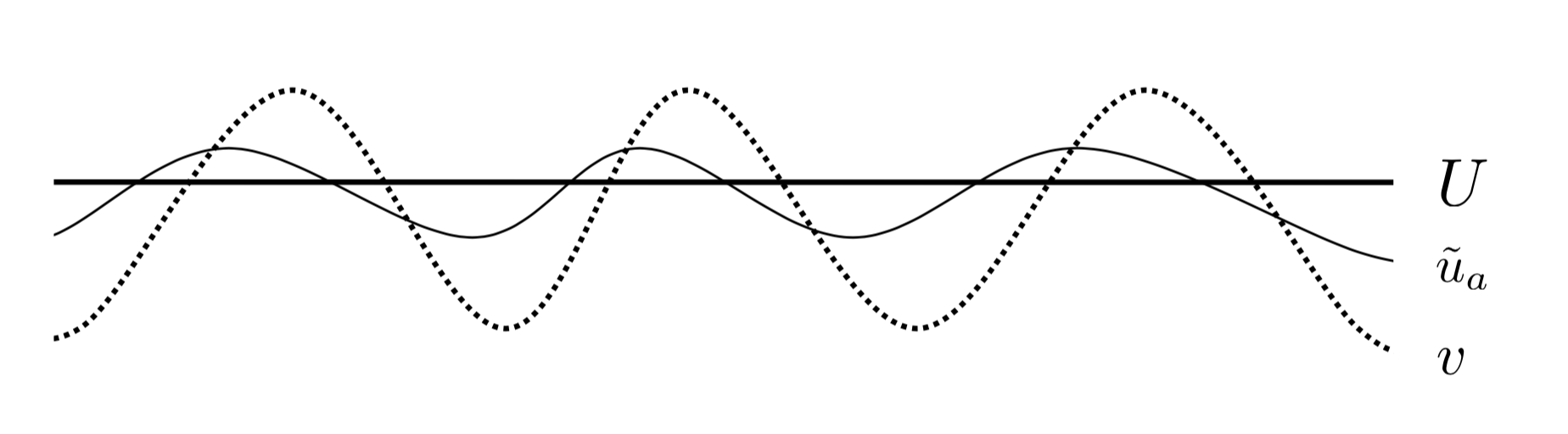}
 \vskip 1mm
   \centerline{Fig.~3}
 
  \vskip 5mm
  
    \hskip 1.5cm
\includegraphics[width=13cm, height=3.5cm]{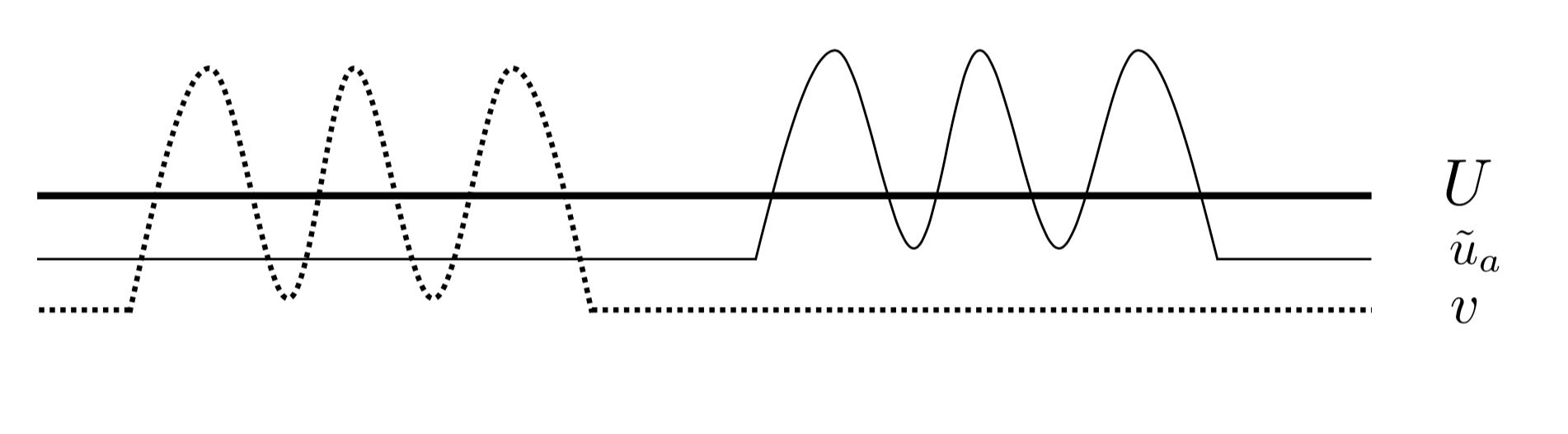}

 \vskip 1mm
  \centerline{Fig.~4}
 \vskip 5mm

 {\bf Step 3.} {\it Time evolution of the solutions in terms of braid.}
Since $ \tu_a $ undergoes GBU at  $ (x,t) = (0,\tT) $ with $ \tT < T $ 
 and $\lim_{t\to \tilde T}\tilde x_n(t)=0$,
there exists $ t_3 \in (t_2, \tT) $ 
such that $ v (\cdot, \hat t) - \tu_a (\cdot,\hat t) $ loses one zero (or odd number of zeros) at $ x = 0 $ for some 
$\hat t<t_3$ close to $ t_3 $, and  
$$ v (x, t_3) 
< \tu_a (x, t_3),\quad 0  < x \ll 1$$
and
\be{eqndescr4}
 \tilde x_n(t_3)<X_1(t_3).
\ee
Choose $ 0 < \rho_2 \le \rho_1 $.
The situation of $ U, \tu_a(\cdot,t_3), v (\cdot, t_3)$ in $ [\rho_2, a^{1/2} D] $ is translated into $\tB_n$  (see Fig.~6). 
Indeed, by \eqref{eqndescr1} and \eqref{eqndescr4}, $ \tu_a(\cdot, t_3) $ and $ v(\cdot, t_3) $ have at least two intersections 
 $x'\in(\tilde x_n(t_3),X_1(t_3))$ and $x'' \in(X_n(t_3),a^{1/2} D)$.
There may be more intersections between $ \tu_a(\cdot, t_3) $ and $ v(\cdot, t_3) $, whose existence are unknown (see Fig.~5).
We delete such uncertain intersections by parabolic reduction.
The first elements $ X^2 $ in $\tB_n$ correspond to vanishing intersections between $\tu_a (\cdot, t) $ and $ v(\cdot,t) $ at 
 $ x = 0 $ just before $ t = t_2, t_3 $.
 They are invisible in real figure at $ t = t_3 $ and regarded as hidden\footnote{In appendix we give an alternative proof of Step~3 
for readers who are not familiar with application of braid group theory, where explicit intersections are considered instead of ``hidden'' intersections}
 beyond $ x = \rho_2 $.
They give no effect to other part, but reflect the hypothesis for contradiction. 
The resulting braid is $ \tB_n $ (see Fig.~6).

 \vskip 5mm
 \hskip 1.5cm
\includegraphics[width=13cm, height=3.5cm]{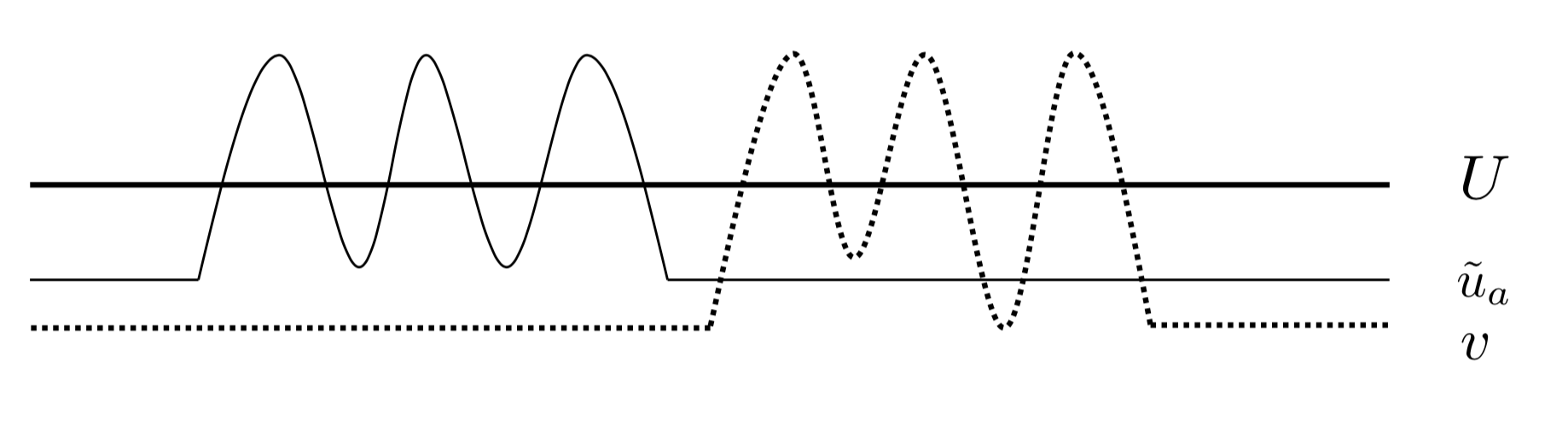}
   \vskip 1mm
  
  \centerline{Fig.~5}
  
   \vskip 5mm
    \hskip 1.5cm
\includegraphics[width=13cm, height=3.5cm]{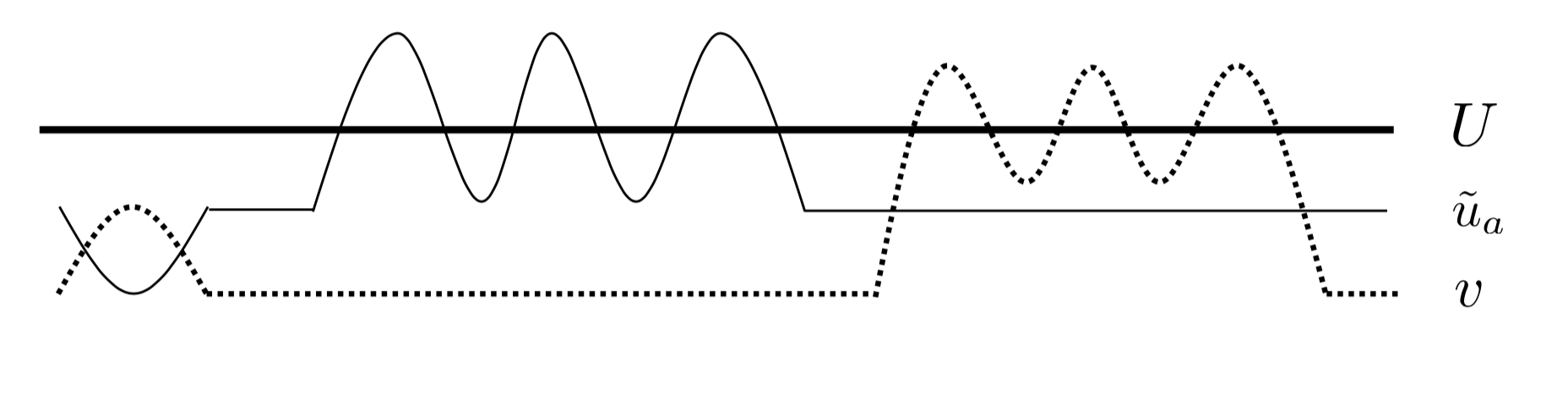}
 \vskip 1mm
  \centerline{Fig.~6}
   \vskip 5mm   
 \noindent Owing to reduction principle for parabolic evolution (cf.~\eqref{reduction-principle}),
   the process from $ t = t_1 $ to $ t = t_3 $ implies that $ \tA_n \Rrightarrow \tB_n $.
But we have $ \tA_n \not\Rrightarrow \tB_n $ by Lemma \ref{lemma:braid-even-odd}. 
This contradiction implies the first inequality in \eqref{liminfsupGBU}.

\smallskip

 {\bf Step 4.} {\it Proof of last inequality in \eqref{liminfsupGBU}.}
In order to prove this, we notice that all zeros of $ v_a (\cdot,t) - U $ (and $ \tv_a(\cdot, t) - U $) locate 
in $ (0, C (T-t)^{1/2} )$ for $ t \in [ t_0, T) $ with some $ C > 0 $ for $ a \gg 1 $ by Theorem \ref{prop:special}. 
If we choose $ t_0 < T $ such that $ C (T-t_0)^{1/2} < D $, then
it suffices to take the same way as above with $ v, u_a $ and the spatial interval $ [ \rho_2, a^{1/2} D] $ replaced 
by $ u, v_a $ and $ [\rho_2, D] $, respectively, where $ v_a $ is defined in \eqref{eq:u_a-def} with $ u $ 
replaced by $ v $.
\end{proof}

\begin{rem} \label{rem_braid}
	
	(i) Whereas the way to apply braid group theory to parabolic PDE is rather simple, 
	a naive choice of three solutions is not successful. It is important how to choose appropriate three solutions.
	In fact, if one applies the braid group theory to $ U, u_a, v$ instead of $ U, \tu_a, v $ above, then there is no contradiction.
	It is crucial to make the tricky choice $ \tu_a $ there.

	(ii) In \cite{mat_cm07}, 
	while the proof of the upper estimate of blow-up rate was given, the  
	proof of the lower estimate was omitted, just stating that one can exchange 
	the roles of the solution under consideration and
	of the special one in the case of lower estimate. 
	As seen above, we must take an interval $ [0, a^{1/2}D] $ with $ a \gg 1 $ in the proof of the lower estimate, whereas a different interval $ [0, D] $ is used in the proof of the upper one.
	In order for the argument of \cite{mat_cm07} to work also for the lower estimate, one needs additional information, 
	for example, like that all zeros of $ u (\cdot,t) - U$  
	approaching $ x = 0 $ as $ t \to T_- $ locate inside backward self-similar region.
	Otherwise, it is not assured that intersections between $ u_a (\cdot,t) $ and $ U$ 
	vanishing at $ (x,t) = (0,T) $ are included in $ [0, D] $, which changes the corresponding braid.  
	It was mentioned in the proof of Theorem 5.7 of \cite{mat_cm07} that 
	complete proofs would be given in a separate paper.
	However, the separate paper has not appeared even as a preprint 
	(and this point was not mentioned in \cite{GMW} either, where the same argument was used).
\end{rem}

\smallskip 
We finally prove Theorem~\ref{mainThm1stab}
which determines the stability/instability of the GBU space-time profile with the continuity/discontinuity of GBU times.

\begin{proof}[Proof of Theorem~\ref{mainThm1stab}] 
{\bf Step 1.} {\it Preliminaries.}
By our assumptions and Lemma~\ref{lemma:zero-curve}, there exists $\eta>0$ such that $u$ is classical at $x=0$ for $t\in [T-\eta,T)$ 
and there exist $\sigma\in(0,\eta)$, $\delta>0$ and $D\in(0,R)$ such that
\be{xnD1} 
x_n (t) < D \  \mbox{ and } \  u(\cdot,t)-U\ne 0 \ \hbox{ in $(x_n(t),D]$,} \quad \mbox{ for all } t\in[T-\sigma, T),
\ee
and
\be{xnD2} 
u(D,t) - U(D) \left\{
\begin{array}{ll}
	\ge 2\delta \quad  &\mbox{ if } n \mbox{ is odd}\\ 
	\noalign{\vskip 1mm}
	\le -2\delta \quad &\mbox{ if } n \mbox{ is even}
\end{array}
\right.
\qquad \hbox{ for all $t\in [T-\sigma,T+\sigma]$.}
\ee
Let $\eps\in(0,\sigma)$. By Proposition~\ref{prop:PS-LBC2}, for
$\hat u_0\in \mathcal{W}$ with $\|\hat u_0-u_0\|_\infty \ll 1$, we get
\be{hatTT2a}	
\hu(D,t) - U(D) \left\{
\begin{array}{ll}
	\ge \delta \quad  &\mbox{ if } n \mbox{ is odd}\\ 
	\noalign{\vskip 1mm}
	\le -\delta \quad &\mbox{ if } n \mbox{ is even}
\end{array}
\right.
\qquad \hbox{ for all $t\in [T-\sigma,T+\sigma]$}
\ee
and
\be{hatTT2a0}	
\sup_{x\in(0,R/2)} \hu_x(x,T-\eps)<\infty\quad\hbox{and}\quad\hu(0,T-\eps)=0.
\ee

\smallskip

{\bf Step 2.} {\it Continuity of GBU time for $n$ odd and stability of the GBU space-time profile for $n=1$.}
If $n$ is odd, then $u(\cdot,T)\ge U$ on $(0,D]$
and it follows from \cite[Proposition~7.1]{MizSou} that $u$ immediately loses BC after $t=T$.
Thus, for $\eps>0$ small, by Proposition~\ref{prop:PS-LBC2}(i), 
we have $\hat u(0,T+\eps)>0$ when
$\|\hat u_0-u_0\|_\infty \ll 1$. We deduce from \eqref{hatTT2a0}
 that $\hu$ undergoes GBU at $(0,\hat T)$ for some time $\hat T\in (T-\eps,T+\eps)$.
This proves continuity of GBU time at $ u_0 $ for $n$ odd.
Note that we may consider the smallest such $\hat T$
(recall that there can be only finitely many), 
hence $\hu$ is classical at $x=0$ on $[T-\eps,\hat T)$.

\smallskip

For $n=1$, by \eqref{xnD1} and Proposition~\ref{prop:PS-LBC2}
 (recalling that the zeros of $u(\cdot,T-\eps) - U$ in $(0, D]$ are nondegenerate), 
we see that $z(\hu(\cdot,T-\eps) - U : [0, D])=1$ when $\|\hat u_0-u_0\|_\infty \ll 1$.
We thus deduce from \eqref{hatTT2a} and Proposition~\ref{propZeroNumber0} that 
$z(\hu(\cdot,t) - U : [0, D])\le 1$ for all $t\in[T-\eps,\hat T)$.
Consequently, the GBU profile of $\hu$ at $(x,t)=(0,\hat T)$ satisfies $n=1$.
Hence the GBU space-time profile with $n=1$ is stable.

\smallskip

{\bf Step 3.} {\it Discontinuity of GBU time for $n$ even and instability of the GBU profile for $n\ge 3$ odd.}
Consider general $n\ge 2$. 
From now on we take $\hu_0=\lambda u_0$ with $\lambda<1$ close to $1$ 
and denote by $\hu_\lambda$ the corresponding solution of \eqref{equ}.
Since $\overline u:=\lambda u$ satisfies $\overline u_t-\overline u_{xx}-|\overline u_x|^p
=\lambda(u_t-u_{xx}-\lambda^{p-1}|u_x|^p)\ge 0$ in $(0,R)\times(0,\infty)$, it follows from the comparison principle
for viscosity solutions that 
\be{orderhulambda}
\hu_\lambda\le \lambda u\quad\hbox{ in $(0,R)\times(0,\infty)$.}
\ee
By Proposition~\ref{prop:PS-LBC}(i), we infer that
\be{huxfinite}
\sup_{(0,R/2]\times[T-\eta,T]} \hu_{\lambda,x}<\infty,
\ee
hence
\be{huxfinite1}
\hbox{$\hu_\lambda<U$ in $(0,\rho]\times[T-\eta,T]$,\  
for some $\rho=\rho_\lambda\in(0,D)$.}
\ee

If $n$ is even, then $\hu_\lambda(\cdot,T)<u(\cdot,T)\le U$ in $(0,D]$
 (hence in particular $u$ becomes immediately classical at $x=0$ after $t=T$, by \cite[Proposition~7.1]{MizSou}).
In view of \eqref{hatTT2a} and \eqref{huxfinite}, there exists $\mu>0$ (depending on $\hat u_0$) such that
$$\hat u\le U_\mu\quad\hbox{on $([0,D]\times\{T\}) \cup (\{D\}\times[T,T+\sigma])$},$$
hence $\hu_\lambda\le U_\mu$ in $[0,D]\times[T,T+\sigma]$ by the comparison principle.
Consequently, $\hu_\lambda$ is classical at $x=0$ for $t\in [T-\eta,T+\sigma]$,
which proves that GBU time is discontinuous at $ u_0 $.

Finally assume that $n\ge 3$ is odd. 
Let $\hat T_\lambda$ be $ \hat T $ in Step 2 for $ \hat u_\lambda $, which now satisfies $\hat T_\lambda \in (T,T+\eps)$.
By \eqref{xnD1}, \eqref{xnD2}, \eqref{orderhulambda} and Proposition~\ref{prop:PS-LBC2},
we see that, for $\lambda<1$ close to $1$,
$$z\bigl(\hu_\lambda(\cdot,T-\eps) - U : (x_{n-1}(T-\eps),D)\bigr)=1.$$ 
From Proposition~\ref{propZeroNumber0}, we first deduce that 
$z\bigl(\hu_\lambda(\cdot,t) - U : (x_{n-1}(t),D)\bigr)=1$ for all $t\in[T-\eps,T)$
and then, also using \eqref{huxfinite1}, that $z(\hu_\lambda(\cdot,T) - U : [0,D])=1$.
By a further application of Proposition~\ref{propZeroNumber0}, we get
$z(\hu_\lambda (\cdot, t) - U : [0, D])\le 1$ for $t\in [T,\hat T_\lambda)$.
Therefore the GBU profile of $\hu_\lambda$ at $(x,t)=(0,\hat T)$ satisfies $n=1$. This proves that the GBU profile is unstable 
for $n\ge 3$ odd.
\end{proof}

\normalcolor

\subsection{Proof of Theorem \ref{th:RBC}(i)}  \label{SecProofBraid2}

Let $0<\tau < \infty$, set $Q=(0,R)\times(0,\tau)$ and 
	let $u\in C^{2,1}(Q)\cap C_b(\overline Q)$ be a solution of problem~\eqref{equREC}, 
	which undergoes RBC at $(x,t) = (0,\tau)$.	
Similarly to the GBU case, we have the following lemma which 
guarantees that the rescaled solution $u_a$ is suitably close to the singular steady state $U$ for large $a$.

\begin{lem}
\label{lemma:LBC}
There exist constants $\eta\in(0,\tau)$ and $C>0$ depending on $u$ such that,
for each $m\in(0,1/4]$, $a>1$ and $D\in(0,1)$,
the solution $u_a$, defined by
\begin{equation}
\label{eq:u_a-def-LBC}
u_a (x,t) := 
a^k u( a^{-1/2} x, \tau + a^{-1}(t- \tau)) \quad \mbox{ in } (0, a^{1/2}R) \times ((1-a)\tau, \tau),
\end{equation}
enjoys the following properties:
\be{eq:u_a-upper2-LBC} 
 - Ca^{- m}
 \le u_a(x,t) - U(x) \le C \Bigl\{1+a^{ -2q }(\tau-t) \Bigr\} a^{-m} \qquad \mbox{in } (0, a^q)\times(\tau -a \eta, \tau), 
\ee
\be{eq:u_a-upper3-LBC} 
-  CD^{ \frac{p}{p-1} }  U(x)
\le u_a(x,t) - U(x) \le C \Bigl\{1+a^{ - 2q }(\tau-t) \Bigr\}D^{ \frac{p}{p-1} } U(x) \quad \mbox{ in } 
(a^q, a^{1/2} D)\times(\tau -a \eta, \tau),
\ee
where $q= \frac{1}{2} \{ \frac{p}{2(p-1)} - m \}\in (0,1/2)$, and  
\be{uU0x-RBC}
\sup_{(x,t)\in (0, a^{1/2} D]\times[\tau-\eta,\tau)} |u_{a,x}(x,t)-U_x(x)|\to 0,\quad\hbox{as } a\to\infty.  
\ee 
\end{lem}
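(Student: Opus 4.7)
The plan is to mirror the proof of Lemma~\ref{lemma:u_a}, substituting the RBC asymptotic expansion provided by Proposition~\ref{prop:PS-LBC}(ii) for the GBU estimates of Proposition~\ref{prop:PS}. Taking $\eta = \tau/2$, that proposition supplies, for all $(x,t)\in(0,R/2)\times(\tau-\eta,\tau)$, the bounds $u(0,t)\le M(\tau-t)$, $|u(x,t)-u(0,t)-U(x)|\le \tfrac{\tilde M}{2}x^2$, and $|u_x(x,t)-U'(x)|\le \tilde M x$. Moreover, the nonnegativity $u(0,t)\ge 0$ on $[\tau-\eta,\tau]$ is free from the classical positivity of $u(0,\cdot)$ in \eqref{defRBCtime} together with $u(0,\tau)=0$. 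Everything that follows is bookkeeping with the similarity scaling, organized around the identities $a^k U(a^{-1/2}x)=U(x)$, $a^{k-1/2}U'(a^{-1/2}x)=U'(x)$, and the arithmetic relations $k-1=-(1+\beta)/2$ and $1+\beta=p/(p-1)$.

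For the upper estimate, I would combine the first two pointwise bounds into $u(x,t)\le M(\tau-t)+U(x)+\tfrac{\tilde M}{2}x^2$ and rescale to obtain $u_a(x,t)-U(x)\le Ma^{k-1}(\tau-t)+\tfrac{\tilde M}{2}a^{k-1}x^2$. The arithmetic identities $k-1=-m-2q$ and $k-1+2q=-m$ then convert this directly into the upper halves of \eqref{eq:u_a-upper2-LBC} on $(0,a^q)$ and of \eqref{eq:u_a-upper3-LBC} on $(a^q,a^{1/2}D)$; in the second range one additionally writes $x^2=c_p^{-1}x^{1+\beta}U(x)$ and uses $a^{k-1}x^{1+\beta}\le D^{1+\beta}=D^{p/(p-1)}$ to absorb the polynomial into $D^{p/(p-1)}U(x)$. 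For the lower estimates, the nonnegativity of $u(0,t)$ yields $u(x,t)\ge U(x)-\tfrac{\tilde M}{2}x^2$ with no $(\tau-t)$ term surviving, so the same rescaling directly delivers the lower halves of \eqref{eq:u_a-upper2-LBC} and \eqref{eq:u_a-upper3-LBC}. This is the reverse asymmetry to the GBU setting of Lemma~\ref{lemma:u_a}, where the $(T-t)$ factor sat in the lower bound on $u_a-U$; here it sits in the upper bound, reflecting the fact that in the RBC regime the residual boundary singularity is driven by the small but positive term $u(0,t)$ rather than by a blowing-up gradient.

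The uniform derivative convergence \eqref{uU0x-RBC} is then a one-line consequence of $|u_x-U'|\le \tilde M x$: rescaling gives $|u_{a,x}(x,t)-U'(x)|\le \tilde M a^{k-1}x$, and on $(0,a^{1/2}D]$ this is bounded by $\tilde M D a^{k-1/2}=\tilde M D a^{-\beta/2}\to 0$ as $a\to\infty$. In particular, no analogue of the lower gradient bound \eqref{eq:u_a-lower4} is needed, since Proposition~\ref{prop:PS-LBC}(ii) already keeps $u_x-U'$ uniformly bounded up to the boundary. I do not anticipate a substantial obstacle: the argument is essentially routine once the RBC expansion is in hand, and the only point requiring some care is the applicability of Proposition~\ref{prop:PS-LBC}(ii) in the case $R=\infty$ (the proposition is stated for $R<\infty$), which is handled by using the interior gradient estimate \eqref{boundSZux} of \cite{SZ} to localize the argument near $x=0$ and repeating the proof on any fixed interval $(0,R')$, the resulting constants being independent of $R'$.
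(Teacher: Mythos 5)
Your argument is correct and is essentially the paper's own proof: rescale the RBC estimates \eqref{eq:u-M-LBC}--\eqref{eq:u_x-LBC} of Proposition~\ref{prop:PS-LBC}(ii) using $a^kU(a^{-1/2}x)=U(x)$ and $k-1=-\frac{p}{2(p-1)}$, note that $u_a(0,t)\ge 0$ kills the $(\tau-t)$ term in the lower bounds, and then repeat the elementary manipulations from the end of the proof of Lemma~\ref{lemma:u_a} to pass to the two spatial ranges. Your side remark on extending Proposition~\ref{prop:PS-LBC}(ii) to $R=\infty$ by localizing to a fixed bounded interval near $x=0$ is also the right way to handle that point, which the paper leaves implicit.
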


\begin{proof}
From \eqref{eq:u-LBC}, for $\eta\in(0,\tau/2]$, we obtain
\be{eq:u_a-LBC}
| u_a (x,t) - u_a (0,t) - U(x) | \le  \frac{\tilde M}{2} a^{ - \frac{p}{2(p-1)} } x^2 
\quad \mbox{ in } [0, a^{1/2} R] \times (\tau -a \eta, \tau).
\ee
Moreover, by \eqref{eq:u-M-LBC} we have
$$u_a (0,t) \le M a^{ - \frac{p}{2(p-1)} } (\tau -t) \quad \mbox{ in } (\tau -a \eta, \tau),$$
and this combined with \eqref{eq:u_a-LBC} implies
\[
- \frac{\tilde M}{2} a^{ - \frac{p}{2(p-1)} } x^2 \le u_a(x,t) - U(x) 
\le M a^{ - \frac{p}{2(p-1)} } (\tau -t) +  \frac{\tilde M}{2} a^{ - \frac{p}{2(p-1)} } x^2
\quad \mbox{ in } [0, a^{1/2} R] \times (\tau -a \eta, \tau).
\]
Estimates \eqref{eq:u_a-upper2-LBC}, \eqref{eq:u_a-upper3-LBC} then follow similarly as in the proof of Lemma \ref{lemma:u_a}
(see the end of the paragraphs after \eqref{eq:u_a-x-upper} and \eqref{eq:u_a-lower}).
Finally, \eqref{uU0x-RBC} follows from \eqref{eq:u_x-LBC} and the definition of $ u_a $.
\end{proof}

By similar argument as in the proof of Lemma~\ref{lemma:zero-curve}, we then obtain:

\begin{lem}
	\label{lemma:zero-curveRBC}
Let $n$ be the number of vanishing intersections between $ u(\cdot, t) $ and $ U (\cdot) $ at $ (x,t) = (0,\tau) $
(cf.~\eqref{defvanishnumber}). 
 Denoting these intersections by $ 0 < x_1(t) < x_2 (t) < \cdots < x_n (t) $, we have
$ \lim_{t \to \tau_-} x_n (t) = 0 $.
\end{lem}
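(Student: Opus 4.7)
\medskip

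\noindent\emph{Proof plan for Lemma~\ref{lemma:zero-curveRBC}.} The strategy is to adapt the contradiction argument from the GBU case (Lemma~\ref{lemma:zero-curve}), replacing the special GBU solutions of Theorem~\ref{prop:special} by the special RBC solutions of Theorem~\ref{prop:special-LBC} and replacing the rescaling Lemma~\ref{lemma:u_a} by its RBC counterpart Lemma~\ref{lemma:LBC}. The underlying idea is: if $x_n(t)$ did not converge to $0$, then $u(\cdot,\tau)$ would equal $U$ on an entire interval $(0,R_0]$, but then by perturbing $u$ via a time rescaling we could squeeze in an RBC special solution with arbitrarily many vanishing intersections, violating the intersection-comparison principle.

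First, I would apply Proposition~\ref{ZeroNumberConst}(ii) to fix $r\in(0,R]$, $t_0<\tau$ and an integer $m\ge 1$ such that $u(\cdot,t)-U$ has exactly $m$ nondegenerate zeros $0<x_1(t)<\cdots<x_m(t)$ on $(0,r)$ for all $t\in(t_0,\tau)$, and $u(r,t)-U(r)\ne 0$ throughout $[t_0,\tau]$. Assuming for contradiction that $R_0:=\limsup_{t\to\tau_-} x_n(t)>0$, the continuity of the zero curves from Proposition~\ref{ZeroNumberConst}(iii) combined with the closedness of the zero set yields $u(x,\tau)=U(x)$ for all $x\in(0,R_0]$, and clearly $R_0<r$. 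Next I would pick any integer $q>m$ and invoke Theorem~\ref{prop:special-LBC} (after a time shift, so that $v$ undergoes RBC at $t=\tau$) to obtain a solution $v$ of \eqref{equREC} on $(0,\infty)\times(t_0,\tau)$ with exactly $q$ nondegenerate zeros $0<X_1(t)<\cdots<X_q(t)$ of $v(\cdot,t)-U$, all confined to $(0,C(\tau-t)^{1/2})$ by assertions (i)(iii) of that theorem.

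For $a>1$ define $u_a$ as in \eqref{eq:u_a-def-LBC} and use Lemma~\ref{lemma:LBC} to choose $t_2\in(t_0,\tau)$ sufficiently close to $\tau$ and then $a\gg 1$ so that $u_a(\cdot,t_2)$ is uniformly close to $U$ on $[0,r]$, the $m$ zeros of $u_a(\cdot,t_2)-U$ on $(0,r)$ are inherited from those of $u(\cdot,t_2)-U$ (by the covariance $u_a-U$ behaves like a rescaling of $u-U$), and moreover
\[
z\bigl(v(\cdot,t_2)-u_a(\cdot,t_2):[0,r]\bigr)=m,\qquad v(r,t)-u_a(r,t)\ne 0 \hbox{ for } t\in[t_2,\tau].
\]
Then for $\lambda<1$ close to $1$ set $\tu(x,t):=\lambda^k u(\lambda^{-1/2}x,t_2+\lambda^{-1}(t-t_2))$ on $(0,\lambda^{1/2}R)\times(t_2,\tilde\tau)$ with $\tilde\tau:=t_2+\lambda(\tau-t_2)<\tau$. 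For $\lambda$ close enough to $1$, the above two properties persist with $u_a$ replaced by $\tu$ on $[0,\lambda^{1/2}r]$. Apply now the zero-number monotonicity to the difference $v-\tu$: on the lateral boundary $\{r\}\times[t_2,\tilde\tau)$ the two solutions stay separated, and at $x=0$ both $v$ and $\tu$ are positive (since each undergoes RBC strictly after $t_2$), so Proposition~\ref{propZeroNumber}(ii) applies and gives
\[
z\bigl(v(\cdot,t)-\tu(\cdot,t):[0,r]\bigr)\le m,\qquad t\in[t_2,\tilde\tau).
\]
To close the argument, observe that $\tu(x,\tilde\tau)=\lambda^k u(\lambda^{-1/2}x,\tau)=\lambda^k U(\lambda^{-1/2}x)=U(x)$ for $x\in(0,\lambda^{1/2}R_0]$, combined with the confinement of the zeros of $v(\cdot,t)-U$ to an $O(\sqrt{\tau-t})$-neighborhood of $0$, forces $z(v(\cdot,t)-\tu(\cdot,t):[0,r])\ge q$ for $t<\tilde\tau$ close enough to $\tilde\tau$, contradicting $q>m$.

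The principal obstacle is step three, where one must carry over the intersection-comparison machinery to viscosity (not classical) RBC solutions whose spatial derivative blows up at $x=0$. This is precisely where Proposition~\ref{propZeroNumber}(ii) is needed in place of the classical version used in Lemma~\ref{lemma:zero-curve}; its applicability rests on the estimate \eqref{uxUx}, which is guaranteed here by Proposition~\ref{prop:PS-LBC}(ii) applied to both $v$ and $\tu$. A secondary technical point is to verify that, for $a\gg 1$, the zeros of $u_a(\cdot,t_2)-U$ on $(0,r)$ and those of $v(\cdot,t_2)-U$ interleave (or more precisely, combine) so as to give exactly $m$ intersections with $v(\cdot,t_2)$ on $[0,r]$; this will follow from the sharp uniform closeness in \eqref{uU0x-RBC}, the confinement of the $X_i(t_2)$ in a small neighborhood of $0$, and the nondegeneracy of the $x_i(t_2)$.
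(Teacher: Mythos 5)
Your overall strategy is the right one, and it is exactly what the paper intends (the paper proves this lemma by declaring it "similar" to Lemma~\ref{lemma:zero-curve}, with Theorem~\ref{prop:special-LBC} and Lemma~\ref{lemma:LBC} replacing their GBU counterparts): contradiction hypothesis $R_0=\limsup x_n(t)>0$, hence $u(\cdot,\tau)\equiv U$ on $(0,R_0]$; a special RBC solution $v$ with $q>m$ intersections confined in an $O(\sqrt{\tau-t})$-neighborhood of the origin; a mild $\lambda$-rescaling $\tu$ of $u$ with earlier recovery time $\tilde\tau<\tau$; zero-number monotonicity via Proposition~\ref{propZeroNumber}(ii); and the count $\ge q$ near $\tilde\tau$ coming from $\tu(\cdot,\tilde\tau)=U$ on $(0,\lambda^{1/2}R_0]$. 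Your final step and your remarks on why Proposition~\ref{propZeroNumber}(ii) (rather than the classical zero-number principle) is needed are both correct.

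However, there is a concrete error in the middle of your argument: you rescale the wrong solution by $a$. In the proof of Lemma~\ref{lemma:zero-curve} it is the \emph{special} solution that is rescaled, $v_a(x,t)=a^k v(a^{-1/2}x,\tau+a^{-1}(t-\tau))$, so that by Lemma~\ref{lemma:LBC} \emph{applied to $v$} the function $v_a(\cdot,t_2)$ hugs $U$ on $[0,r]$ away from a small neighborhood of the origin, and the count $z(v_a(\cdot,t_2)-u(\cdot,t_2):[0,r])=m$ is inherited from the $m$ nondegenerate zeros of $u(\cdot,t_2)-U$. You instead rescale $u$ to $u_a$ and claim $z(v(\cdot,t_2)-u_a(\cdot,t_2):[0,r])=m$. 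This fails for two reasons. First, the zeros of $u_a(\cdot,t)-U$ sit at $a^{1/2}x_i(\tau+a^{-1}(t-\tau))$; under your contradiction hypothesis some of these are of order $a^{1/2}R_0\gg r$, so $u_a(\cdot,t_2)-U$ does \emph{not} have $m$ zeros on the fixed interval $(0,r)$ — squeezing $u_a$ onto $U$ erases rather than preserves the $m$ intersections on $[0,r]$, and the difference $v-u_a$ would then tend to inherit the $q$ sign changes of $v-U$ rather than $m$. Second, the sentence "the above two properties persist with $u_a$ replaced by $\tu$" is a non sequitur: $u_a$ for $a\gg1$ is close to $U$, whereas $\tu$ for $\lambda$ close to $1$ is close to $u$; no continuity argument transfers a zero-count for $v-u_a$ to one for $v-\tu$. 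The repair is simply to swap the roles: keep $u$ un-dilated, put $v_a$ in place of $v$ throughout (its intersections with $U$ remain confined in $(0,C(\tau-t)^{1/2})$ under the rescaling), establish $z(v_a(\cdot,t_2)-u(\cdot,t_2):[0,r])=m$ together with $v_a(r,t)\ne u(r,t)$, and only then perturb $u$ to $\tu$ by continuity. The confusion likely comes from the braid argument for the \emph{lower} rate estimate, where one does compare $v$ with $u_a$ — but there the spatial interval is the dilated one $[0,a^{1/2}D]$, not the fixed $[0,r]$ used here.
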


A key step in the proof of Theorem \ref{th:RBC}(i) is the following lemma, which relies on the braid group argument.

\begin{lem}
\label{lemmaRBCbraid}
We have
\be{liminfsupREC}
0<\liminf_{t\to \tau_-} \, (\tau-t)^{-n} u(0,t)\le
\limsup_{t\to \tau_-} \, (\tau-t)^{-n} u(0,t)<\infty.
\ee
\end{lem}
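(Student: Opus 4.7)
\medskip

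\textbf{Proof plan for Lemma \ref{lemmaRBCbraid}.} The strategy will closely parallel the proof of Theorem~\ref{mainThm1}(i) above, but with four systematic replacements: (a) the rate $(\tau-t)^{-n}u(0,t)$ substitutes for $(T-t)^{n/(p-2)}u_x(0,t)$; (b) the special GBU solutions of Theorem~\ref{prop:special} are replaced by the special RBC solutions of Theorem~\ref{prop:special-LBC}; (c) the $a$-rescaling and its properties are provided by Lemma~\ref{lemma:LBC} (in place of Lemma~\ref{lemma:u_a}), with the vanishing curves controlled by Lemma~\ref{lemma:zero-curveRBC}; and (d) the braid-theoretic obstruction now comes from Lemma~\ref{lemma:braid-even-odd-2} using the braids $\hat A_n,\hat B_n$ (beginning with $Y$ rather than $X$) instead of Lemma~\ref{lemma:braid-even-odd}. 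The reason for the $X\leftrightarrow Y$ swap is geometric: in the GBU case, the ``hidden'' crossings at $x=0$ correspond to $u_x(0,t)\to\infty$ i.e.~to the two competing solutions coming up from the boundary and exchanging positions above $U$, whereas in the RBC case both solutions sit above $U$ near $x=0$ and the hidden sign changes of $u_a-v$ at $x=0$ (shortly before $t=\tau$) are of opposite orientation.

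To prove the lower bound, I would first apply Lemma~\ref{lemma:zero-curveRBC} together with Proposition~\ref{ZeroNumberConst} to fix $D\in(0,R)$ small, $t_0<\tau$, and $\delta_0>0$ with $z(u(\cdot,t)-U:(0,D])=n$ and $|u(D,t)-U(D)|\ge\delta_0 U(D)$ on $[t_0,\tau)$. Pick a special solution $v$ from Theorem~\ref{prop:special-LBC} with $\ell=n$ (after a time shift, undergoing RBC at $t=\tau$), whose $n$ intersections with $U$ are localized in $(0,C(\tau-t)^{1/2})$. Then, invoking Lemma~\ref{lemma:LBC} to make $u_a$ essentially agree with $U$ on the bulk, for $a\gg1$ and $t_1<\tau$ close to $\tau$ one secures the analogues of \eqref{eqndescr1}--\eqref{eqndescr3} on $(0,a^{1/2}D]$, with the last sign condition of the form $v(0,t_1)>u_a(0,t_1)$. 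Assuming for contradiction that $\liminf_{t\to\tau_-}(\tau-t)^{-n}u(0,t)=0$, there is $t_2\in(t_1,\tau)$ at which $v(0,\cdot)-u_a(0,\cdot)$ has lost an odd number of zeros at $x=0$. One then slightly shrinks $u_a$ to $\tilde u_a(x,t):=\lambda^ku_a(\lambda^{-1/2}x,t_1+\lambda^{-1}(t-t_1))$ with $\lambda<1$ close to $1$, so that $\tilde u_a$ recovers at a time $\tilde\tau<\tau$ and, near $\tilde\tau$, $\tilde u_a(0,\cdot)$ drops below $v(0,\cdot)$, producing another odd-order vanishing of $v(0,\cdot)-\tilde u_a(0,\cdot)$ at $x=0$ at some $t_3\in(t_2,\tilde\tau)$. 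Choosing $\rho_2\ll1$ and restricting attention to $[\rho_2,a^{1/2}D]$, the three-solution configuration $U,\tilde u_a,v$ at $t=t_1$ is topologically equivalent to $\hat A_n$ (small RBC coil of $\tilde u_a$ around $U$ enclosed by the larger coil of $v$), and at $t=t_3$ to $\hat B_n$, the hidden crossings at $x=0$ contributing the leading $Y^2$. By the reduction principle \eqref{reduction-principle} applied to the three curves (two of which touch at $x=0$, so Proposition~\ref{propZeroNumber} has to be used in place of Proposition~\ref{propZeroNumber0} to validate intersection-comparison), one obtains $\hat A_n\Rrightarrow\hat B_n$, contradicting Lemma~\ref{lemma:braid-even-odd-2}.

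For the upper bound $\limsup<\infty$, I would exchange the roles of $u$ and $v$: now rescale the \emph{special} solution to $v_a$ (with $v_a(x,t)=a^k v(a^{-1/2}x,\tau+a^{-1}(t-\tau))$) and compare with the original $u$ on a fixed interval $[\rho_2,D]$. By Theorem~\ref{prop:special-LBC}, all zeros of $v_a(\cdot,t)-U$ lie in $(0,C(\tau-t)^{1/2})$, so choosing $t_0<\tau$ with $C(\tau-t_0)^{1/2}<D$ reduces the analysis to the same compact window. A parallel contradiction with $\hat A_n\not\Rrightarrow\hat B_n$ then yields the upper bound.

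The main obstacle will be the precise verification of the braid identification in the RBC setting: namely, that the initial spatial arrangement of $U,\tilde u_a,v$ on $[\rho_2,a^{1/2}D]$ together with the two hidden sign changes at $x=0$ encode $\hat A_n$ (resp.~$\hat B_n$) rather than some other positive braid on three strands. This requires a careful accounting of the orientations of the $n$ transversal intersections in the bulk (which follows from Lemma~\ref{lemma:LBC}, Theorem~\ref{prop:special-LBC}, and Proposition~\ref{ZeroNumberConst}(iii)), combined with the correct interpretation of each vanishing at $x=0$ as a $Y$-type rather than $X$-type hidden crossing, based on the sign of $v(0,\cdot)-\tilde u_a(0,\cdot)$ before and after. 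A secondary technical point is justifying the parabolic intersection-comparison across the boundary $x=0$ via Proposition~\ref{propZeroNumber}, since $u,v$ are viscosity (not classical) solutions there and $u_x$ is infinite at the moments of LBC preceding $\tau$.
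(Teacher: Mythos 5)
Your proposal is correct and follows essentially the same route as the paper: the same contradiction argument with the rescaled solution $u_a$ (then $\tilde u_a$ with $\lambda<1$), the special RBC solution of Theorem~\ref{prop:special-LBC}, the braids $\hat A_n,\hat B_n$ and Lemma~\ref{lemma:braid-even-odd-2} for the lower bound, and the role exchange with $v_a$ on a fixed interval for the upper bound. The paper likewise invokes Proposition~\ref{propZeroNumber}(ii) to validate the reduction principle for the viscosity solutions at $x=0$, exactly as you anticipate.
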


\begin{proof}
 By Lemma~\ref{lemma:zero-curveRBC} we have $ \lim_{t \to \tau_-}  x_n (t) = 0 $. 
Therefore, for any $ 0 < D \ll 1 $, there exist 
 $ t_0 = t_0(D)<\tau$ and $\delta _0= \delta_0 (D)>0$ such that 
\begin{equation}
\label{eq:zerosRBC} 
 z\bigl( u(\cdot,t) - U:  (0, D] \bigr) = n
\quad \mbox{ and } \quad |u(D,t) - U(D)| \ge \delta_0 U(D),  \quad t\in[t_0, \tau). 
\end{equation}
Let $ v $ be a special solution obtained in Theorem~\ref{prop:special-LBC} for $\ell=n$ 
and $\Omega=(0,\infty)$.
By a time shift we may assume that $v$ undergoes RBC at $(x,t)=(0,\tau)$.

We prove the first inequality in \eqref{liminfsupREC}.
Owing to Theorem~\ref{prop:special-LBC}, Lemma \ref{lemma:LBC} and \eqref{eq:zerosRBC}, we may choose $0<D\ll 1$ and then 
	$a \gg 1 $, $ t_1 < \tau $, $ \delta_1 \in (0,1) $,  such that 
	\be{eqndescr1-REC}
	|U(a^{1/2} D) -  v (a^{1/2} D, t)| > \delta_1 U(a^{1/2} D) > |U(a^{1/2} D) -  u_a (a^{1/2} D, t) | >0, \quad t\in [t_1, \tau),
	\ee
	\be{eqndescr2b-REC}
 z\bigl( u_a(\cdot,t) - U:  (0, a^{1/2} D] \bigr) = z\bigl( v( \cdot, t)-U:  (0, a^{1/2} D]\bigr)= n, \quad t\in [t_1, \tau), 
\ee
\be{eqndescr2-REC1}
z\bigl( u_a( \cdot,t_1) - v( \cdot, t_1):  (0, a^{1/2} D]\bigr)= n,
\ee
\be{eqndescr3-REC1}
  v (0, t_1) > u_a (0, t_1).
\ee
Assume for contradiction that the first inequality in \eqref{liminfsupREC} does not hold.
Then there exists $ t_2 \in (t_1, \tau) $ such that $ v(\cdot,\tau + t) - u_a(\cdot,t) $ loses one zero (or odd number of zeros) at $ x = 0 $ 
and some $\hat t_2< t_2 $ close enough $ t_2 $, and 
\begin{equation}
\label{eq:t_2-RBC}
 v (0, t_2)  < u_a (0, t_2).
\end{equation}
For $ 0 < \lambda < 1 $, let 
\[
\tu_a (x,t) := 
\lambda^k u_a ( \lambda^{-1/2} x, t_1 + \lambda^{-1} (t - t_1) ) 
\quad \mbox{ in } (0, \lambda^{1/2} a^{1/2}) \times ( t_1, \tilde\tau )
\]
with $ \tilde\tau : =t_1 + \lambda (\tau- t_1) $.
For $ 0 < \lambda < 1 $ close enough to $ 1 $, 
 \eqref{eqndescr1-REC}-\eqref{eq:t_2-RBC} hold true with $ u_a $ replaced by $ \tu_a $.
Therefore, the situation of $ U, \tu_a(t), v(t) $ on $ [0, a^{1/2} D ] $ at $ t = t_1 $ is represented by~$ \hA_n $. 
Since $ \tu_a $ recovers boundary condition at $(x,t) = (0, \tilde\tau) $ with $ \tilde\tau < \tau $, there exists $ t_3 \in (t_2, \tilde\tau) $ such that $ v (t) - \tu_a (t) $ loses one zero 
(or odd number of zeros) at $ x = 0 $ and some $\hat t_3 < t_3 $ close to $ t_3 $, and $  v (0,  t_3) > \tu_a (0, t_3) $. 
 By arguing similarly as in the proof of Theorem \ref{mainThm1}(i), we see that\footnote{The alternative argument 
in appendix, for readers who are not familiar with application of braid group theory,
where explicit intersections are considered instead of ``hidden'' intersections, can also be easily modified in this case.}
the situation of $ U, \tu_a(t), v(t) $ 
on $ [0, a^{1/2} D ] $ at $ t = t_3 $ is translated into $ \hB_n $ and that the process from $ t = t_1 $ to $ t = t_3 $ implies that $ \hA_n \Rrightarrow \hB_n $
  (indeed the reduction principle for parabolic evolution stated in Section 6
   is still valid for the viscosity solutions under consideration, owing to the zero-number property in Proposition~\ref{propZeroNumber}(ii)).
On the other hand, we have $ \hA_n \not\Rrightarrow \hB_n $ by Lemma~\ref{lemma:braid-even-odd-2}. 
This contradiction implies the first inequality.

In order to prove the last inequality of \eqref{liminfsupREC}, it suffices to take the same way as above with $ v, u_a $ and the spatial 
interval $ [0, a^{1/2} D] $ replaced by $ u, v_a $ and $ [0, D] $, respectively since all zeros of $ v_a (\cdot, t) - U$  
(and $ \tv_a(\cdot, t) - U $) locate in $ \bigl(0, C (\tau-t)^{1/2} \bigr) $ for $ t \in [t_0, \tau) $ with some $ C > 0 $ for $ a \gg 1 $. 
\end{proof}

 With Lemma~\ref{lemmaRBCbraid} at hand, the proof of Theorem \ref{th:RBC}(i) 
will now be completed by means of dynamical systems arguments.

\begin{proof}[Proof of the space-time profile \eqref{recovery-profile}]
 We note that this will imply \eqref{rateell-REC}.
\smallskip

{\bf Step 1.} {\it First estimates.}
It is sufficient to consider the cases $R=1$ and $R=\infty$.
As before, so as to work with unknown functions defined on the entire half-line, 
letting $s_0=-\log\tau$, we recall the extentions introduced in Lemma~\ref{BernsteinEst}:
$$ 
\begin{aligned}
\tilde u(x,t)&=\zeta(x) u(x,t) &&\quad\hbox{ in $[0,\infty)\times[0,\tau)$,} \\
\tilde w(y,s)&=e^{ks}\tilde u(ye^{-s/2},\tau-e^{-s}) 
&&\quad\hbox{ in $[0,\infty)\times[s_0,\infty)$,} \\
\tilde v(y,s)&=\tilde w(y,s)-U(y) &&\quad\hbox{ in $[0,\infty)\times[s_0,\infty)$,}
\end{aligned}
$$  
where for $R=1$, $\zeta\in C^2([0,\infty))$ is a fixed cut-off function such that $0\le\zeta\le 1$,
 $\zeta=1$ in $[0,\frac{1}{3}]$ and $\zeta=0$ in $[\frac{1}{2},\infty)$,
 whereas for $R=\infty$ we just set $\zeta=1$.
Moreover, $\tilde v$ satisfies the equation
\be{eq:tildeF-LBC-0}
\tilde v_s+\mathcal{L}\tilde v=\tilde F(\tilde v_y,s),\quad y>0,\ s>s_0,
\ee
where 
\be{eq:tildeF-LBC}
\tilde F(\tilde v_y,s):=e^{(k-1)s}g(ye^{-s/2},\tau-e^{-s})-U_y^p-pU_y^{p-1}\tilde v_y,
\quad\hbox{ with } g(x,t):=|u_x|^p\zeta -2u_x \zeta_x-u\zeta_{xx}.
\ee
Recast in terms of $\tilde v$, the sought-for profile \eqref{recovery-profile} is equivalent to 
$$\tilde v(y,s) = \tilde L e^{-\lambda_n s}  \varphi_n (y) + o (e^{-\lambda_n s}) \quad \mbox{ as } s \to \infty,
\quad\hbox{ uniformly for $0\le y\le y_0$,}$$
for each $y_0>0$, with some constant $\tilde L>0$ (note that $\phi_n$ in the statement and $\varphi_n$ (cf.~subsection~\ref{subseceigen})
 differ only by a multiplicative constant).

\smallskip

We start with some basic estimates of $\tilde v$ and $\tilde F$.
We deduce from \eqref{eq:u-LBC} that, for some $s_1>s_0$,
$$\begin{aligned}
|\tilde v(y,s)|
&=\bigl|e^{ks}\zeta(ye^{-s/2})u(ye^{-s/2},\tau-e^{-s})-U(y)\bigr|\\
&\le \zeta(ye^{-s/2})\bigl|e^{ks}u(ye^{-s/2},\tau-e^{-s})-U(y)\bigr|+\bigl(1-\zeta(ye^{-s/2})\bigr)U(y)\\
&\le e^{ks}\bigl(u(0,\tau-e^{-s})+ Cy^2e^{-s}\bigl)+\chi_{\{y\ge \frac13 e^{s/2}\}}U(y),\qquad y>0,\ s>s_1,
\end{aligned}$$
hence, recalling $k=\frac{1-\beta}{2}$, using \eqref{eq:u-M-LBC} and noting that $U(y)\le Cy^2e^{-\alpha s/2}$ for $y\ge \frac13 e^{s/2}$,
\be{eq:v-LBC}
|\tilde v(y,s)|\le Ce^{-\alpha s/2}(1+y^2),\qquad y>0,\ s>s_1.
\ee
Likewise, using also \eqref{eq:u_x-LBC}, we get
$$\begin{aligned}
|\tilde v_y(y,s)|
&=\bigl|e^{(k-\frac12)s}\zeta(ye^{-s/2})u_x(ye^{-s/2},\tau-e^{-s})+e^{ks}e^{-s/2}\zeta'(ye^{-s/2})u(ye^{-s/2},\tau-e^{-s})-U'(y)\bigr| \\
&\le \zeta(ye^{-s/2})\bigl|e^{(k-\frac12)s}u_x(ye^{-s/2},\tau-e^{-s})-U'(y)\bigr|+\bigl(1-\zeta(ye^{-s/2})\bigr)U'(y) \\
&\quad +e^{-s/2}|\zeta'(ye^{-s/2})|e^{ks}u(ye^{-s/2},\tau-e^{-s})\\
&\le Ce^{-\alpha s/2}y+\chi_{\{y\ge \frac13 e^{s/2}\}}U'(y)+Ce^{-s/2}\chi_{\{\frac13 e^{s/2}\le y\le \frac12 e^{s/2}\}}
\bigl(e^{-\alpha s/2}(1+y^2)+U(y)\bigr)
\end{aligned}$$
hence
\be{eq:vy-LBC}
|\tilde v_y(y,s)|\le  Ce^{-\alpha s/2}y,\qquad y>0,\ s>s_1.
\ee
For $R=1$, by \eqref{eq:u-LBC}-\eqref{eq:u_x-LBC}, we may also assume that
\be{eq:uux-LBC}
|u(x,t)|+|u_x(x,t)|\le C,\qquad \ts\frac13\le x\le\ts\frac12,\ \ \tau-e^{-s_1}\le t<\tau.
\ee
 By a time shift, we may assume $s_1=0$ 
 and, by \eqref{eq:tildeF-LBC-0} and Proposition~\ref{locexistvapproxExt}(ii), $\tilde v$ satisfies the variation of constants formula
\be{varconstv2tildeProfile}
\tilde v(s)=e^{-s\mathcal{L}}\tilde v(0)+\int_0^s e^{-(s-\tau)\mathcal{L}}\tilde F(\tilde v_y(\tau))\,d\tau, \quad s>0.
\ee
Next we claim that
\be{eq:Fy-LBC}
|\tilde F(\tilde v_y)|\le C(y^{\beta-1}+y^{p-2})(\tilde v_y)^2+C\chi_{\{y\ge \frac13 e^{s/2}\}},\ \quad y>0,\ s>0.
\ee
To check this, we first note that
$$
0\le F(v_y):=|U_y+v_y|^p-U_y^p-pU_y^{p-1}v_y=\frac{p(p-1)}{2}|U_y+\bar\theta v_y|^{p-2}(v_y)^2
$$
for some $\bar\theta\in(0,1)$, which in view of \eqref{eq:vy-LBC} ensures \eqref{eq:Fy-LBC}, 
except when $R=1$ and $y\ge \frac13 e^{s/2}$.
In the latter case, it suffices to observe that
$|\tilde F(\tilde v_y,s)|\le C+\tilde v_y^2$ for $y\ge \ts\frac13 e^{s/2}$, 
owing to \eqref{eq:tildeF-LBC}, \eqref{eq:uux-LBC}.

\smallskip

{\bf Step 2.} {\it $H^1_\rho$ estimate.}
We show that there exist $\ell\ge 1$ and a constant $K\in\R\setminus\{0\}$ such that
\be{cv1LBC}
\|\tilde v(s)-Ke^{-\lambda_\ell s}\varphi_\ell \|_{H^1_\rho}=o(e^{-\lambda_\ell s}),\quad\hbox{as } s\to\infty.
\ee
This is suggested by dynamical systems theory.
However the latter cannot directly applied because of the inadequate functional framework.
To prove \eqref{cv1LBC} we shall use some arguments from \cite{HV93} (see the proof of Proposition~3.8),
where a related result was proved for type~I blowup 
solutions of the one-dimensional Fujta equation.
However, these arguments need suitable modifications to overcome some specific difficulties 
of our problem (singular vs. regular steady state, gradient nonlinearity).

Let $E=\bigl\{\mu>0;\ \sup_{s>0} e^{\mu s}\|\tilde v(s)\|_{H^1_\rho}<\infty\bigr\}$
and $\mu^*=\sup E$.
Estimates \eqref{eq:v-LBC}-\eqref{eq:vy-LBC} guarantee that 
\be{Enonempty}
\alpha/2\in E,
\ee
hence $E\neq\emptyset$.
Moreover, by \eqref{liminfsupREC} and \eqref{eq:vy-LBC}, for some constants $C_1,C_2>0$, we have
$$\tilde v(y,s)\ge \tilde v(0,s)-Cy^2
\ge 2C_1e^{(k-n)s}-Cy^2
\ge C_1e^{(k-n)s}\quad\hbox{ for all $0<y<C_2e^{(k-n)s/2}$},$$
hence
$$\|\tilde v(s)\|_{L^2_\rho}^2 
\ge Ce^{2(k-n)s}\int_0^{C_2e^{(k-n)s/2}} y^\alpha\,dy=Ce^{(\alpha+5)(k-n)s/2},\quad s>0.$$
Consequently $\mu^*<\infty$.

Fixing $\mu\in (\mu^*-\frac{\alpha}{4},\mu^*)$ such that $\mu+\frac{\alpha}{4}+k$ is noninteger,
we have
\be{hypvAmu}
\|\tilde v(s)\|_{H^1_\rho}\le Ce^{-\mu s},\quad s>0.
\ee
For any $\eps\in(0,1]$, using \eqref{eq:vy-LBC}, \eqref{eq:Fy-LBC}, \eqref{hypvAmu},
H\"older's inequality and setting
$\eta_0(s):=C\bigl(\int_{y>\frac13 e^{s/2}} \rho\,dy\bigr)^2$, we obtain
$$\begin{aligned}
\|\tilde F(\tilde v_y)\|_{L^2_\rho}^2
&\le C\int \rho (y^{\beta-1}+y^{p-2})^2 |\tilde v_y|^4dy+\eta_0(s) \\
&\le Ce^{-2(1+\eps)\frac{\alpha}{2}s}\int \rho |\tilde v_y|^{2(1-\eps)} y^{2(1+\eps)} (y^{\beta-1}+y^{p-2})^2dy+\eta_0(s) \\
&\le Ce^{-(1+\eps)\alpha s} \Bigl(\int \rho |\tilde v_y|^2 \,dy\Bigr)^{1-\eps} 
\Bigl(\int \rho \bigl[y^{\beta+\eps}+y^{p-1+\eps}\bigr]^{2/\eps}\,dy\Bigr)^{\eps} +\eta_0(s)\\
&\le C(\eps)e^{-[(1+\eps)\alpha+(1-\eps)2\mu]s},
\end{aligned}$$
hence in particular
\be{estimFL2}
\|\tilde F(\tilde v_y)\|_{L^2_\rho}\le Ce^{-(\mu+\frac{\alpha}{4})s},\quad s>0.
\ee
Recalling that $\lambda_j=j-k$, there exists a unique integer $J\ge 0$ such that
\be{choiceJ}
\lambda_J<\mu+\frac{\alpha}{4}<\lambda_{J+1}.
\ee
Using \eqref{varconstv2tildeProfile},\eqref{defSG1}, we then split $\tilde v$ as 
\be{defT1234a}
\tilde v(y,s)=T_1+T_2+T_3+T_4,
\ee
where
\be{defT1234}
\begin{cases}
T_1(y,s)&=\ds\sum_{j=0}^{J} a_j e^{-\lambda_j s}\varphi_j(y),\qquad
T_2(y,s)=\ds\sum_{j=J+1}^\infty a_j e^{-\lambda_j s}\varphi_j(y),\\
\noalign{\vskip 1mm}
T_3(y,s)&=\ds\sum_{j=0}^{J} \varphi_j(y)\int_0^s e^{-\lambda_j(s-\tau)}(\tilde F(\tilde v_y(\tau)),\varphi_j)\,d\tau,\\
\noalign{\vskip 1mm}
T_4(y,s)&=\ds\sum_{j=J+1}^\infty \varphi_j(y)\int_0^s e^{-\lambda_j(s-\tau)}(\tilde F(\tilde v_y(\tau)),\varphi_j)\,d\tau
\end{cases}
\ee
and $\sum |a_j|^2<\infty$. To estimate $T_2$, for $s\ge 1$, we write
$$\begin{aligned}
\|T_2(s)\|_{H^1_\rho}^2
&=\ds\sum_{j=J+1}^\infty (1+j)|a_j|^2 e^{-2\lambda_j s}
\le \Bigl(\ds\sum_{j=0}^\infty|a_j|^2 \Bigr)
\sup_{j\ge J+1} (1+j)e^{-2\lambda_j s}\\
&\le C e^{-2\lambda_{J+1} s} \sup_{j\ge J+1} (1+j) e^{-2(j-J-1) s}
=C e^{-2\lambda_{J+1} s} \sup_{i\ge 0} (J+2+i) e^{-2i},
\end{aligned}$$
hence
\be{estimT2H1}
\|T_2(s)\|_{H^1_\rho}\le C e^{-\lambda_{J+1} s},\quad s\ge 1.
\ee
To estimate $T_4$, we first write
$$
\|T_4(s)\|_{L^2_\rho}
\le \ds\sum_{j=J+1}^\infty \int_0^s e^{-\lambda_j(s-\tau)}\bigl|(\tilde F(\tilde v_y(\tau)),\varphi_j)\bigr|\,d\tau
\le \int_0^s \Bigl(\ds\sum_{j=J+1}^\infty e^{-2\lambda_j(s-\tau)}\Bigr)^{1/2}\|\tilde F(\tilde v_y(\tau))\|_{L^2_\rho}\,d\tau.
$$
Noting that 
$$\begin{aligned}
\ds\sum_{j=J+1}^\infty e^{-2\lambda_j(s-\tau)}
&=e^{-2\lambda_{J+1}(s-\tau)}\ds\sum_{i=0}^\infty e^{-2i(s-\tau)}
=e^{-2\lambda_{J+1}(s-\tau)}(1-e^{-2(s-\tau)})^{-1}\\
&\le Ce^{-2\lambda_{J+1}(s-\tau)}\bigl(1+(s-\tau)^{-1}\bigr)
\end{aligned}$$
and using \eqref{estimFL2} and \eqref{choiceJ}, we obtain
\be{estimT4L2}
\begin{aligned}
\|T_4(s)\|_{L^2_\rho}
&\le C\int_0^s e^{-\lambda_{J+1}(s-\tau)}\bigl(1+(s-\tau)^{-1/2}\bigr)e^{-(\mu+\frac{\alpha}{4})\tau}\,d\tau\\
&\le C\int_0^s e^{-\lambda_{J+1}\sigma}(1+\sigma^{-1/2})e^{-(\mu+\frac{\alpha}{4})(s-\sigma)}\,d\sigma \\
&= Ce^{-(\mu+\frac{\alpha}{4})s}\int_0^s (1+\sigma^{-1/2})e^{(\mu+\frac{\alpha}{4}-\lambda_{J+1})\sigma}\,d\sigma
\le Ce^{-(\mu+\frac{\alpha}{4})s}.
\end{aligned}
\ee
Next observe that $Z:=T_4$ solves
$$Z_s-\mathcal{L}Z=H(y,s):=\tilde F(\tilde v_y)- \ds\sum_{j=0}^{J} (\tilde F(\tilde v_y(\tau)),\varphi_j)\varphi_j(y),$$
and that 
\be{estimHH1}
\|H(s)\|_{L^2_\rho}\le \|\tilde F(\tilde v_y(s))\|_{L^2_\rho} \le Ce^{-(\mu+\frac{\alpha}{4})s}.
\ee
Using the variation of constants formula
$Z(s)=e^{\mathcal{L}}Z(s-1)+\int_0^1 e^{(1-\tau) \mathcal{L}}H(s-1+\tau)\,d\tau$ for $s\ge 1$, we get, 
using \eqref{estimT4L2}-\eqref{estimHH1} and \eqref{smoothing0}-\eqref{smoothing1},
\be{estimT4H1}
\|T_4(s)\|_{H^1_\rho}\le \|T_4(s-1)\|_{L^2_\rho}+C\int_0^1 (1-\tau)^{-1/2} \|H(s-1+\tau)\|_{L^2_\rho}\,d\tau
\le Ce^{-(\mu+\frac{\alpha}{4})s},\quad s\ge 1.
\ee
Now, for $0\le j\le J$, we write
\be{estimT3bj}
\begin{aligned}
\int_0^s e^{\lambda_j\tau}(\tilde F(\tilde v_y(\tau)),\varphi_j)\,d\tau
&=\int_0^\infty e^{\lambda_j\tau}(\tilde F(\tilde v_y(\tau)),\varphi_j)\,d\tau
-\int_s^\infty e^{\lambda_j\tau}(\tilde F(\tilde v_y(\tau)),\varphi_j)\,d\tau\\
&\equiv b_j-\int_s^\infty e^{\lambda_j\tau}(\tilde F(\tilde v_y(\tau)),\varphi_j)\,d\tau,
\end{aligned}
\ee
where, owing to \eqref{estimFL2} and \eqref{choiceJ},
\be{estimT3bj2}
\Bigl|\int_s^\infty e^{\lambda_j\tau}(\tilde F(\tilde v_y(\tau)),\varphi_j)\,d\tau\Bigr|
\le \int_s^\infty e^{\lambda_j\tau}\|\tilde F(\tilde v_y(\tau))\|_{L^2_\rho}\,d\tau
\le Ce^{(\lambda_j-\mu-\frac{\alpha}{4})s}
\ee
 and $b_j$ is finite.
Substituting \eqref{estimT3bj} into the identity \eqref{defT1234a}, 
we obtain
$$\tilde v(s)=\ds\sum_{j=0}^{J} (a_j+b_j) e^{-\lambda_j s}\varphi_j(y)
+T_2(y,s)+T_4(y,s)-
\ds\sum_{j=0}^{J} \varphi_j(y) \int_s^\infty e^{-\lambda_j(s-\tau)}(\tilde F(\tilde v_y(\tau)),\varphi_j)\,d\tau.
$$
By the bounds \eqref{estimT2H1}, \eqref{estimT4H1}, \eqref{estimT3bj2}, it follows that
$$\tilde v(s)=\ds\sum_{j=0}^{J} (a_j+b_j) e^{-\lambda_j s}\varphi_j(y)+R(y,s),
\quad\hbox{with } \|R(s)\|_{H^1_\rho} \le Ce^{-(\mu+\frac{\alpha}{4})s},\quad s\ge 1.$$
Let $\tilde E=\bigl\{j\in\{0,\dots,J\};\ a_j+b_j\ne 0\bigr\}$.
We have $\tilde E\ne\emptyset$ since otherwise $\mu^*<\mu+\frac{\alpha}{4}\in E$,
contradicting the definition of $\mu^*$. Letting $\ell=\min\tilde E$, we then have
$$\tilde v(s)=(a_\ell+b_\ell) e^{-\lambda_\ell s}\varphi_\ell(y)+\tilde R(y,s),
\quad\hbox{with } \|\tilde R(s)\|_{H^1_\rho} \le Ce^{-\gamma s},\quad s\ge 1,$$
where $\gamma=\min(\lambda_{\ell+1},\mu+\frac{\alpha}{4})>\lambda_\ell$ owing to \eqref{choiceJ}.
Moreover we necessarily have $\ell\ge 1$ in view of \eqref{Enonempty} and $\lambda_0=-k<0$.
This proves \eqref{cv1LBC}.

\smallskip

{\bf Step 3.} {\it Bootstrap and conclusion.}
Denote $\varphi=\varphi_\ell$, $\lambda=\lambda_\ell$.
We prove that, for all $2\le q<\infty$,
\be{hypboot1}
\|\tilde v(s)-Ke^{-\lambda s}\varphi \|_{W^{1,q}_\rho}=o(e^{-\lambda s})\quad\hbox{as } s\to\infty.
\ee
 By the imbedding \eqref{Imbedd-q},
property \eqref{liminfsupREC} then guarantees that $\ell=n$ and that \eqref{recovery-profile} is true.

\smallskip

Let $\theta:=\tilde v(s)-Ke^{-\lambda s}\varphi$.
The function $\theta$ satisfies the equation $\theta_s-\mathcal{L}\theta=F(v_y)$,
hence the variation of constants formula
$$\theta(s+\bar s)=e^{\bar s \mathcal{L}}\theta(s)+\int_0^{\bar s}e^{(\bar s-\tau) \mathcal{L}}
\tilde F(\tilde v_y(s+\tau))\,d\tau,\quad s, \bar s>0.$$
We shall use a bootstrap argument.
Note that \eqref{hypboot1} is true for $q=2$ by Step 1. Thus fix some $q\in[2,\infty)$ and suppose that
\eqref{hypboot1} is true, i.e. 
$e^{\lambda s} \|\theta(s)\|_{W^{1,q}_\rho}\to 0$ as $s\to\infty$.
 Let $m\in (q,\infty)$. 
For each $\eps\in(0,q)$, by \eqref{eq:vy-LBC}, \eqref{eq:Fy-LBC} and H\"older's inequality,
setting $\eta_1(s):=C\bigl(\int_{y>\frac13 e^{s/2}} \rho\,dy\bigr)^m$, we obtain
$$\begin{aligned}
\|\tilde F(\tilde v_y)\|_{L^m_\rho}^m
&\le C\int \rho (y^{\beta-1}+y^{p-2})^m |\tilde v_y|^{2m}dy+\eta_1(s) \\
&\le Ce^{-(2m-q+\eps)\frac{\alpha}{2}s}\int \rho  (y^{\beta-1}+y^{p-2})^m|\tilde v_y|^{q-\eps} y^{2m-q+\eps} dy +\eta_1(s)\\
&\le Ce^{-(2m-q+\eps)\frac{\alpha}{2}s}\Bigl(\int \rho |\tilde v_y|^q dy\Bigr)^{1-(\eps/q)}
\Bigl(\int \rho  \bigl(y^{(\beta+1)m-q+\eps}+y^{pm-q+\eps}\bigr)^{q/\eps}  dy\Bigr)^{\eps/q}+\eta_1(s)\\
&\le C_\eps e^{-(2m-q+\eps)\frac{\alpha}{2}s}\Bigl(\int \rho |\tilde v_y|^q dy\Bigr)^{(q-\eps)/q}+\eta_1(s).
\end{aligned}$$
Since \eqref{hypboot1} in particular implies $\|\tilde v(s)\|_{W^{1,q}_\rho}\le Ce^{-\lambda s}$, we get 
$$\|\tilde F(\tilde v_y)\|_{L^m_\rho}
\le C_\eps e^{-[\alpha +\frac{q-\eps}{m}(\lambda-\frac{\alpha}{2})]s},\quad s>0.$$
Also, we may choose $r>1$ depending only on $\alpha,\lambda$ and then $\eps>0$ small such that
$m=qr$ satisfies
$\alpha +\frac{q-\eps}{m}(\lambda-\frac{\alpha}{2})>\lambda+\frac{\alpha}{4}$. Consequently,
$$\|\tilde F(\tilde v_y)\|_{L^{qr}_\rho}\le C e^{-[\lambda+\frac{\alpha}{4}]s},\quad s>0.$$
Now let $\bar s:=s^*(q,qr)$ be given by Proposition~\ref{Prop-smoothing}.
By \eqref{smoothing0}--\eqref{smoothing3}, it follows that
$$
\|\theta(s+\bar s)\|_{W^{1,qr}_\rho}
\le C\|\theta(s)\|_{W^{1,q}_\rho}+C\int_0^{\bar s}(\bar s-\tau)^{-3/4}  
\|\tilde F(\tilde v_y(s+\tau))\|_{L^{qr}_\rho}\,d\tau 
\le C\|\theta(s)\|_{W^{1,q}_\rho}+C e^{-[\lambda+\frac{\alpha}{4}]s}.
$$
Consequently, \eqref{hypboot1} is satisfied with $q$ replaced by $qr$.
Since \eqref{hypboot1} is true for $q=2$ (and since the spaces $W^{1,q}_\rho$ decrease with $q$),
it follows that it is true for all finite $q$. This completes the proof.
\end{proof}

\smallskip 

We finally prove Theorem~\ref{mainThmRBCstab}
which determines the stability/instability of the RBC space-time profile with the continuity/discontinuity of RBC times.

\begin{proof}[Proof of Theorem~\ref{mainThmRBCstab}] 
{\bf Step 1.} {\it Preliminaries.}
By our assumptions and Lemma~\ref{lemma:zero-curveRBC}, there exists $\eta>0$ such that $u(0,t)>0$ for $t\in [\tau-\eta,\tau)$, $u(0,\tau)=0$
and there exist $\sigma\in(0,\eta)$, $\delta>0$ and $D\in(0,R)$ such that
\be{xnD1-REC} 
x_n (t) < D \  \mbox{ and } \  u(\cdot,t)-U\ne 0 \ \hbox{ in $(x_n(t),D]$,} \quad \mbox{ for all } t\in[\tau-\sigma, \tau),
\ee
and
\be{xnD2-REC} 
u(D,t) - U(D) \left\{
\begin{array}{ll}
	\ge 2\delta \quad  &\mbox{ if } n \mbox{ is even}\\ 
	\noalign{\vskip 1mm}
	\le -2\delta \quad &\mbox{ if } n \mbox{ is odd}
\end{array}
\right.
\qquad \hbox{ for all $t\in [\tau-\sigma,\tau+\sigma]$.}
\ee
Let $\eps\in(0,\sigma)$. By Proposition~\ref{prop:PS-LBC2}(i), for
$\hat u_0\in \mathcal{W}$ with $\|\hat u_0-u_0\|_\infty \ll 1$, we get
\be{hatTT2a-REC} 	
\hu(D,t) - U(D) \left\{
\begin{array}{ll}
	\ge \delta \quad  &\mbox{ if } n \mbox{ is even}\\ 
	\noalign{\vskip 1mm}
	\le -\delta \quad &\mbox{ if } n \mbox{ is odd}
\end{array}
\right.
\qquad \hbox{ for all $t\in [\tau-\sigma,\tau+\sigma]$,}
\ee
\be{hatTT2a0-REC} 
\hu(0,\tau-\eps)>0.
\ee

\smallskip

{\bf Step 2.} {\it Continuity of RBC time for $n$ odd and stability of the RBC profile for $n=1$.}
If $n$ is odd, then $u(\cdot,\tau)<U$ on $(0,D]$
and it follows from \cite[Proposition~7.1]{MizSou} that $u$ becomes immediately classical at $x=0$ after $t=\tau$.
Thus, for $\eps>0$ small, by Proposition~\ref{prop:PS-LBC2}(ii), we have $\hat u(0,\tau+\eps)=0$ when
$\|\hat u_0-u_0\|_\infty \ll 1$. We deduce from \eqref{hatTT2a0-REC} 
 that $\hu$ undergoes RBC at $(0,\hat \tau)$ for some time $\hat \tau\in (\tau-\eps,\tau+\eps]$.
This proves that RBC time is continuous at $ u_0 $ for $n$ odd.
Note that we may consider the smallest such $\hat \tau$
(recall that there can be only finitely many), 
hence 
\be{hatTT3-REC} 	
\hu(0,t)>0\quad\hbox{on $[\tau-\eps,\hat \tau)$.}
\ee

\smallskip

For $n=1$, by \eqref{xnD1-REC} and Proposition~\ref{prop:PS-LBC2}(i)
 (recalling that the zeros of $u(\cdot,\tau-\eps) - U$ in $[0, D]$ are nondegenerate), we see that
$z(\hu(\cdot,\tau-\eps) - U : [0, D])=1$ when $\|\hat u_0-u_0\|_\infty \ll 1$.
Then 
we deduce from \eqref{hatTT2a-REC}, \eqref{hatTT3-REC} 
and Proposition~\ref{propZeroNumber}(ii) that 
$z(\hu(\cdot,t) - U : [0, D])\le 1$ for all $t\in[\tau-\eps,\hat \tau)$
 (note that Proposition~\ref{propZeroNumber}(ii) applies owing to 
Lemma~\ref{lemsmoothut2} and \eqref{eq:u_x-LBC-0} in Proposition~\ref{prop:PS-LBC}).
Consequently, the RBC profile of $\hu$ at $(x,t)=(0,\hat \tau)$ satisfies $n=1$.
Hence the RBC profile with $n=1$ is stable.

\smallskip

{\bf Step 3.} {\it Discontinuity of RBC time for $n$ even and instability of the RBC profile for $n\ge 2$.}
Consider general $n\ge 2$. 
From now on we take $\hu_0=\lambda u_0$ with $\lambda>1$  close to $1$ 
and denote by $\hu_\lambda$ the corresponding solution
of \eqref{equ}.
Since $\underline u:=\lambda u$ satisfies $\underline u_t-\underline u_{xx}-|\underline u_x|^p
=\lambda(u_t-u_{xx}-\lambda^{p-1}|u_x|^p)\le 0$ in $(0,R)\times(0,\infty)$, it follows from the comparison principle
for viscosity solutions that 
\be{orderhulambda-REC}
\hu_\lambda\ge \lambda u\quad\hbox{ in $(0,R)\times(0,\infty)$.}
\ee
By \eqref{eq:u-LBC-0} in Proposition~\ref{prop:PS-LBC}(i), we infer that
\be{huxfinite-REC} 
\hu_\lambda(0,\tau)>0.
\ee

If $n$ is even, then $\hu_\lambda(\cdot,\tau)>u(\cdot,\tau)\ge U$ on $(0,D]$ 
 (hence in particular $u$ immediately loses BC after $t=T$, by \cite[Proposition~7.1]{MizSou}).
In view of \eqref{hatTT2a-REC} and \eqref{huxfinite-REC}, there exists $\mu>1$ (depending on $\lambda$) such that
$$\hat u_\lambda\ge \mu U\quad\hbox{on $([0,D]\times\{\tau\}) \cup (\{D\}\times[\tau,\tau+\sigma])$}.$$
Since $-(\mu U)''-(\mu U')^p\le 0$, it follows from the comparison principle that
$\hu_\lambda\ge \mu U$ in $[0,D]\times[\tau,\tau+\sigma]$. 
We then deduce from Proposition~\ref{prop:PS-LBC}(i) that $\hu_\lambda(0,t)>0$ for $t\in (\tau,\tau+\sigma]$,
hence for $t\in [\tau-\eta,\tau+\sigma]$,
which proves that RBC time is discontinuous at $ u_0 $.

Finally assume that $n\ge 3$ is odd. 
Let $\hat \tau_\lambda$ be $ \hat \tau $ in Step 2 for $ \hat u_\lambda$, 
which now satisfies $\hat \tau_\lambda\in (\tau,\tau+\eps)$.
By \eqref{xnD1-REC}, \eqref{xnD2-REC}, \eqref{orderhulambda-REC} and Proposition~\ref{prop:PS-LBC2},
we see that, for $\lambda>1$ close to $1$,
$$z\bigl(\hu_\lambda(\cdot,\tau-\eps) - U : (x_{n-1}(\tau-\eps),D)\bigr)=1.$$ 
From Proposition~\ref{propZeroNumber0}, we first deduce that 
$z\bigl(\hu_\lambda(\cdot,t) - U : (x_{n-1}(t),D)\bigr)=1$ for all $t\in[\tau-\eps,\tau)$
and then, also using \eqref{huxfinite-REC}, that $z(\hu_\lambda(\cdot,\tau) - U : [0,D])=1$.
By a further application of Proposition~\ref{propZeroNumber0}, we get
$z(\hu_\lambda (\cdot, t) - U : [0, D])\le 1$ for $t\in [\tau,\hat\tau_\lambda)$.
Therefore the RBC profile of $\hu_\lambda$ at $(x,t)=(0,\hat\tau_\lambda)$ satisfies $n=1$. 
This proves that the RBC profile is unstable for $n\ge 3$ odd.
\end{proof}

\begin{rem} \label{RemStabRBC}
Theorem~\ref{mainThmRBCstab} is formulated for problem \eqref{equ} rather than \eqref{equREC}.
Indeed, continuity and stability properties are usually studied in the context of locally well-posed initial boundary value problems, 
and \eqref{equREC} does not seem to enter in that category, since there is no local existence theory available for this problem.
In this respect, recall that the viscosity existence theory in \cite{BdaLio} requires that there is no LBC at $t=0$ (i.e., $u(0,0)=u(R,0)=0$)
while Proposition~\ref{locexistvapprox} (developed for the specific purpose of the proof of Theorem~\ref{th:RBC}(ii)) requires the strong assumption $u(.,0)\in C^1(0,R)$ with $u_x(.,0)-U'$ bounded.
\end{rem}

\section{Appendix. Alternative argument for Step~3 of the proof of Theorem \ref{mainThm1}{\rm(i)}}

We here present an alternative argument,
making use of explicit intersections instead of ``hidden'' intersections,
to show that, if the first inequality in \eqref{liminfsupGBU} fails, then the evolution from $t_1$ to some suitable time $t\in(t_1,\tilde T)$ leads to 
$\tilde A_n\Rrightarrow \tilde B_n$, hence a contradiction.
The idea is to split the evolution, carrying out a sequence of parabolic reductions along suitable time and space intervals,
chosen in such a way as to satisfy the condition (cf.~after~\eqref{eq:v}) 
that no intersections between $\tilde u_a$ and $v$ appear at the endpoints of the space intervals.

Keeping the notation from the proof of Theorem \ref{mainThm1}{\rm(i)} we set $\bar D=a^{1/2} D$. 
For $t\in [t_1,\tilde T)$ we denote by $\tilde X_1(t),\cdots,\tilde X_n(t)$ the curves of zeros of 
$\tilde u_a (\cdot, t)-v(\cdot,t)$ in $(0,\bar D)$
and $\tau_1,\dots,\tau_n$ their maximal existence times
(cf.~Remark~\ref{rem-zerocurves} and recall that some zeros may vanish or collapse before $t=\tilde T$).
Set 
$$\hbox{$J_d=\bigl\{t\in(t_1,\tilde T);\ [\tilde u_a-v](\cdot,t)$ has a denenerate zero in $(0,\bar D)\bigr\}$
\ and \ $J_{nd}=(t_1,\tilde T)\setminus J_d$}$$
(note that $J_d$ is finite).
Denote ${\N}_n=\{1,\cdots,n\}$ and let $\Sigma=\bigl\{i\in {\N}_n;\ \tau_i\in(t_1,\tilde T)\ \hbox{and}\ \tilde X_i(\tau_i^-)=0\bigr\}$.
We rewrite $\{\tau_i;\,i\in \Sigma\}=\{T_1,\dots,T_q\}$, with $T_1<\cdots<T_q$ and $q\ge 2$.
Set
$$\Sigma_j:=\bigl\{i\in\Sigma;\,\tau_i=T_j\bigl\} 
\quad\hbox{and}\quad \nu_j=|\Sigma_j|,\quad\hbox{ for $j\in {\N}_q$,}$$
and 
$$\bar\rho:=\min\,\Bigl\{\,\displaystyle\min_{t\in [t_1,\tilde T]} X_1(t),
 \displaystyle\inf_{i\in {\N}_n\setminus\Sigma,\,t\in [t_1,\tau_i)}\tilde X_i(t)\Bigr\}.$$
 Since $ \tu_a $ undergoes GBU at  $ (x,t) = (0,\tT) $ with $ \tT < T $ and $\lim_{t\to \tilde T}\tilde x_n(t)=0$,
there exists $\bar T_{q+1}\in (T_q,\tilde T)\cap J_{nd}$ such that 
$\tilde x_n(\bar T_{q+1})<\bar\rho$, and
\be{ordervua2}
v(x, \bar T_{q+1})<\tilde u_a (x, \bar T_{q+1}),\quad x\to 0^+.
\ee
It follows from \eqref{eqndescr3} and \eqref{ordervua2} that $\nu:=\sum_{j=1}^q \nu_j$ is even ($\ge 2$).

Now take any $\rho_{q+1}>0$ such that $\rho_{q+1} <\displaystyle\min_{t\in [t_1,\bar T_{q+1}]} \tilde x_1(t)$ 
and let $T_0=\hat T_0:=t_1$.
By (downward) induction, we may then define times $\hat T_1,\cdots,\hat T_q$ with
$\hat T_j\in (T_{j-1},T_j)\cap J_{nd}$ and numbers $0<\rho_1<\cdots<\rho_q<\rho_{q+1}$ such that
\be{defrhoT}
\displaystyle\sup_{i\in \Sigma_j,\,t\in[\hat T_j,T_j)}\tilde X_i(t)<\rho_{j+1},
\qquad \rho_j < \displaystyle\min_{i\in \Sigma_j,\, t\in [t_1,\hat T_j]} \tilde X_i(t),
\qquad\hbox{ for $j\in {\N}_q$.}
\ee
For given time $t$ and space interval $I$, we denote by $G(t,I)$ the braid associated with the curves $v(\cdot,t)$, $\tilde u_a(\cdot,t)$ and $U$ on $I$.
Owing to \eqref{defrhoT}, for each $j\in {\N}_{q+1}$, 
$ v(\cdot,t), \tu_a(\cdot, t) $ and $ U $ are mutually distinct.
Therefore, by the reduction principle for parabolic evolution (cf.~\eqref{reduction-principle}), we have
$$G(\hat T_{j-1},[\rho_j,\bar D]) \Rrightarrow G(\hat T_j,[\rho_j,\bar D]) = 
X^{\nu_j}G(\hat T_j,[\rho_{j+1},\bar D]),\quad j\in {\N}_q,$$
and $G(\hat T_q,[\rho_{q+1},\bar D]) \Rrightarrow G(\hat T_{q+1},[\rho_{q+1},\bar D]))$.
Consequently, since $\nu$ is even, we obtain
$$ \tilde A_n=G(t_1,[\rho_1,\bar D]) =G(\hat T_0,[\rho_1,\bar D])\Rrightarrow X^\nu G(\hat T_q,[\rho_{q+1},\bar D])
\Rrightarrow X^2G(\hat T_{q+1},[\rho_{q+1},\bar D]).$$

Finally, by \eqref{eqndescr1} and the fact that $\tilde x_n(\bar T_{q+1})<\bar\rho<X_1(\bar T_{q+1})$, 
we see that $\tu_a(\cdot, \hat T_{q+1}) $ and $v(\cdot, \hat T_{q+1})$ have at least two intersections
$x'\in (\tilde x_n(\hat T_{q+1}),X_1(\hat T_{q+1}))$ and $x''\in(X_n(\hat T_{q+1}),a^{1/2} D)$. 
There may be more intersections in $(x',x'')$ (see Fig.~5),
but we can delete them by parabolic reduction, removing the additional factors $X^2$.
It follows that
$G(\hat T_{q+1},[\rho_{q+1},\bar D]) \Rrightarrow Y^{2k} X Y^{2k} X$ or $Y^{2k+1} X^{2k+1}Y$, depending on the parity of $n$.
We conclude that $\tilde A_n\Rrightarrow X^2G(\hat T_{q+1},[\rho_{q+1},\bar D])\Rrightarrow \tilde B_n$,
which provides the desired contradiction.

\medskip

{\bf Acknowledgement.} 
NM is supported by the JSPS Grant-in-Aid for Scientific Research (B) (No.20H01814).
 PhS is partially supported by the Labex MME-DII (ANR11-LBX-0023-01).

\end{document}